\newenvironment{altenumerate}
   {\begin{list}
      {\textup{(\theenumi)} }
      {\usecounter{enumi}
       \setlength{\labelwidth}{0pt}
       \setlength{\labelsep}{2pt}
       \setlength{\leftmargin}{0pt}
       \setlength{\itemsep}{\the\smallskipamount}
       \renewcommand{\theenumi}{\roman{enumi}}
      }}
   {\end{list}}
\renewcommand{\thechapter}{\Roman{chapter}}
\newtheorem{lem}{Lemma}[section]
\newtheorem{definition}[lem]{Definition}
\newtheorem{cor}[lem]{Corollary}
\newtheorem{thm}[lem]{Theorem}
\newtheorem{prop}[lem]{Proposition}
\newtheorem{conj}[lem]{Conjecture}
\theoremstyle{remark}
\newtheorem{rem}[lem]{Remark}
\DeclareMathOperator{\Hom}{Hom}
\DeclareMathOperator{\End}{End}
\DeclareMathOperator{\Ext}{Ext}
\DeclareMathOperator{\Tor}{Tor}
\DeclareMathOperator{\Ker}{ker}
\DeclareMathOperator{\im}{im}
\DeclareMathOperator{\coker}{coker}
\DeclareMathOperator{\Spa}{Spa}
\DeclareMathOperator{\Spec}{Spec}
\DeclareMathOperator{\Spf}{Spf}
\DeclareMathOperator{\Gal}{Gal}
\DeclareMathOperator{\Proj}{Proj}
\DeclareMathOperator{\Lie}{Lie}
\DeclareMathOperator{\Frob}{Frob}
\DeclareMathOperator{\Res}{Res}
\def\ad{\mathrm{ad}}
\def\et{\mathrm{\acute{e}t}}
\def\proet{\mathrm{pro\acute{e}t}}
\def\cont{\mathrm{cont}}
\def\id{\mathrm{id}}
\def\cycl{\mathrm{cycl}}
\def\HT{\mathrm{HT}}
\def\Ha{\mathrm{Ha}}
\def\ord{\mathrm{ord}}
\def\perf{\mathrm{perf}}
\def\can{\mathrm{can}}
\def\univ{\mathrm{univ}}
\def\der{\mathrm{der}}
\def\uast{{\underline{\ast}}}
\def\cl{\mathrm{cl}}
\def\BM{\mathrm{BM}}
\def\BS{\mathrm{BS}}
\def\LT{\mathrm{LT}}
\def\Dr{\mathrm{Dr}}
\def\int{\mathrm{int}}
\newcommand{\Z}{\mathbb{Z}}
\newcommand{\F}{\mathbb{F}}
\newcommand{\Q}{\mathbb{Q}}
\newcommand{\R}{\mathbb{R}}
\newcommand{\A}{\mathbb{A}}
\newcommand{\C}{\mathbb{C}}
\newcommand{\G}{\mathbb{G}}
\newcommand{\OO}{\mathcal{O}}
\newcommand{\II}{\mathcal{I}}
\newcommand{\mm}{\mathfrak{m}}
\newcommand{\tr}{\mathrm{tr}}
\newcommand{\GL}{\mathrm{GL}}
\newcommand{\GSp}{\mathrm{GSp}}
\newcommand{\Sp}{\mathrm{Sp}}
\newcommand{\Fl}{{\mathscr{F}\!\ell}}
\begin{document}
\title{On torsion in the cohomology of locally symmetric varieties}
\author{Peter Scholze}
\begin{abstract}
The main result of this paper is the existence of Galois representations associated with the mod $p$ (or mod $p^m$) cohomology of the locally symmetric spaces for $\GL_n$ over a totally real or CM field, proving conjectures of Ash and others. Following an old suggestion of Clozel, recently realized by Harris-Lan-Taylor-Thorne for characteristic $0$ cohomology classes, one realizes the cohomology of the locally symmetric spaces for $\GL_n$ as a boundary contribution of the cohomology of symplectic or unitary Shimura varieties, so that the key problem is to understand torsion in the cohomology of Shimura varieties.

Thus, we prove new results on the $p$-adic geometry of Shimura varieties (of Hodge type). Namely, the Shimura varieties become perfectoid when passing to the inverse limit over all levels at $p$, and a new period map towards the flag variety exists on them, called the Hodge-Tate period map. It is roughly analogous to the embedding of the hermitian symmetric domain (which is roughly the inverse limit over all levels of the complex points of the Shimura variety) into its compact dual. The Hodge-Tate period map has several favorable properties, the most important being that it commutes with the Hecke operators away from $p$ (for the trivial action of these Hecke operators on the flag variety), and that automorphic vector bundles come via pullback from the flag variety.
\end{abstract}

\date{\today}
\maketitle
\tableofcontents

\chapter{Introduction}

\renewcommand{\thesection}{\thechapter}

This paper deals with $p$-adic questions in the Langlands program. To put things into context, recall the global Langlands (-- Clozel -- Fontaine -- Mazur) conjecture.

\begin{conj}\label{GlobalLanglands} Let $F$ be a number field, $p$ some rational prime, and fix an isomorphism $\C\cong \overline{\Q}_p$. Then for any $n\geq 1$ there is a unique bijection between
\begin{altenumerate}
\item[{\rm (i)}] the set of $L$-algebraic cuspidal automorphic representations of $\GL_n(\A_F)$, and
\item[{\rm (ii)}] the set of (isomorphism classes of) irreducible continuous representations $\Gal(\overline{F}/F)\to \GL_n(\overline{\Q}_p)$ which are almost everywhere unramified, and de Rham at places dividing $p$,
\end{altenumerate}
such that the bijection matches Satake parameters with eigenvalues of Frobenius elements.
\end{conj}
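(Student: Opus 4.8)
The conjecture in full --- with uniqueness, both directions, and the de Rham condition at $p$ --- is far out of reach, so I will only sketch the approach to the \emph{existence} half: attaching to an $L$-algebraic cuspidal $\pi$ of $\GL_n(\A_F)$ a semisimple Galois representation matching Satake parameters with Frobenius eigenvalues, together with the mod-$p$ refinement for Hecke eigensystems occurring in the cohomology of locally symmetric spaces. After a twist one reduces to $C$-algebraic $\pi$; assuming $\pi$ regular, hence cohomological, its Hecke eigensystem occurs in $H^\ast_{\et}(X_{\GL_n}, \mathcal{V}_\lambda)$ for the locally symmetric space $X_{\GL_n}$ of $\GL_n/F$ at some level, and one wants an $n$-dimensional representation of $\Gal(\overline{F}/F)$ with the prescribed unramified parameters --- ideally with integral coefficients, so as to handle eigensystems not lifting to characteristic zero.

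First I would run Clozel's strategy, realized in characteristic zero by Harris--Lan--Taylor--Thorne: over a CM field $F$ (the totally real case reduces to this by base change and descent), choose a quasi-split unitary similitude group $G$ whose Shimura variety $S_K$ is of Hodge type and exhibit the Hecke eigensystem of a self-dual cuspidal base change of $\pi$ inside the cohomology of the Borel--Serre boundary of $S_K$. Since $H^\ast_{\et}(S_{K,\overline F},\overline{\Q}_p)$ carries a Galois action commuting with the prime-to-$p$ Hecke algebra, and boundary cohomology inherits this action, matching eigensystems produces the representation --- rationally, and up to the usual difficulties at the $p$-adic place.

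The new step, and the one needing the geometry of this paper, is to make the construction integral: mod $p$ (or mod $p^m$) Hecke eigenclasses need not lift, so one cannot simply reduce known representations. Here I would use that at infinite level at $p$ the compactified Shimura variety $\mathcal{S}_{K^p}$ is perfectoid and that the Hodge--Tate period map $\pi_{\HT}\colon \mathcal{S}_{K^p}\to \Fl$ to the flag variety commutes with the prime-to-$p$ Hecke action --- trivial on $\Fl$ --- and realizes automorphic vector bundles as pullbacks from $\Fl$. Pushing the relevant torsion sheaves forward along $\pi_{\HT}$ concentrates the Hecke action on the cohomology of a space whose torsion eigensystems are, by a Calegari--Geraghty-style degree-shifting argument, forced to be congruent to characteristic-zero ones after raising the level; performing this on the boundary and passing to the limit over $m$ then attaches a Galois representation to the original class. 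I expect the main obstacle to be precisely that the boundary realization forgets the $p$-adic Hodge theory of $\pi_p$: proving the resulting representation is de Rham --- equivalently, computing its Hodge--Tate--Sen weights via $\pi_{\HT}$ and the automorphic bundles --- is the delicate point, and local--global compatibility at $p$ and at ramified places, needed for uniqueness and the converse direction, remains beyond this method.
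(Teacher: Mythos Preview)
This statement is a \emph{conjecture}; the paper does not prove it, and you rightly say so at the outset. What the paper establishes is a fragment: existence of Galois representations for regular $L$-algebraic cuspidal $\pi$ on $\GL_n(\A_F)$ when $F$ is totally real or CM (Corollary~\ref{CorExGalReprChar0}), together with the mod-$p^m$ version (Theorem~\ref{FinalMainThm}). Your sketch aims at the same fragment, so the relevant comparison is between your outline and the paper's actual mechanism.

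Your high-level architecture is right: realize the $\GL_n$ cohomology inside the boundary cohomology of a Shimura variety for a symplectic or unitary group, use that the Shimura variety becomes perfectoid at infinite $p$-level, and exploit the Hodge--Tate period map $\pi_\HT$. But the way you describe the Galois representation being produced is not what the paper does. You write that ``$H^\ast_\et(S_{K,\overline F},\overline\Q_p)$ carries a Galois action \ldots\ matching eigensystems produces the representation'', and later invoke pushing sheaves forward along $\pi_\HT$ and a Calegari--Geraghty degree-shifting argument. None of this occurs here. The paper never uses the Galois action on the \'etale cohomology of the Shimura variety to construct the representation. Instead, the route is: (a) a $p$-adic Hodge comparison identifies compactly supported completed cohomology with the cohomology of $\II^+/p^n$ on $\mathcal X^\uast_{K^p}$ (Theorem~\ref{CompAutomForm}); (b) the affinoid cover $\pi_\HT^{-1}(\Fl_J)$ computes this via a \v Cech complex whose terms are cusp forms of infinite level; (c) sections of $\omega_\Fl$ pulled back along $\pi_\HT$ serve as ``fake Hasse invariants'' commuting with prime-to-$p$ Hecke operators, allowing one to approximate these by classical cusp forms $H^0(\mathcal X^\ast_{K_pK^p},\omega^{\otimes k}\otimes\II)$ at finite level (Theorem~\ref{ThmHeckeAlgebras}); (d) Galois representations attached to \emph{those} classical forms exist by endoscopic transfer (Arthur/Mok/Shin) plus the known self-dual case, yielding an $h$-dimensional determinant of $G_{F,S}$ with values in the Hecke algebra.

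There is then a further step you omit entirely: the determinant so obtained is $h$-dimensional with $h=2n+1$ or $2n$, not $n$-dimensional. Section~\ref{DivideAndConquer} extracts the $n$-dimensional piece by twisting by cyclotomic characters of many auxiliary $\Z_\ell$-extensions and using a combinatorial lemma on determinants to separate the factors $P_v$, $P_v^\vee$ (and $1-X$) of $\tilde P_v$. This ``divide and conquer'' argument is essential and has no analogue in your sketch. Finally, the paper does not address de Rham at $p$ or local--global compatibility; you are right that these lie beyond the method, but they are not obstacles to the construction itself --- they simply are not claimed.
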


Here, an $L$-algebraic automorphic representation is defined to be one for which the (normalized) infinitesimal character of $\pi_v$ is integral for all infinite places $v$ of $F$. Also,
\[
\A_F = {\prod_v}^\prime F_v
\]
denotes the ad\`eles of $F$, which is the restricted product of the completions $F_v$ of $F$ at all (finite or infinite places) of $F$. It decomposes as the product $\A_F = \A_{F,f}\times (F\otimes_{\Q} \R)$ of the finite ad\`eles $\A_{F,f}$ and $F\otimes_{\Q} \R = \prod_{v|\infty} F_v\cong \R^{n_1}\times \C^{n_2}$, where $n_1$, resp. $n_2$, is the number of real, resp. complex, places of $F$.

For both directions of this conjecture, the strongest available technique is $p$-adic interpolation. This starts with the construction of Galois representations by $p$-adic interpolation, cf. e.g. \cite{WilesPAdicInterpolation}, \cite{TaylorPseudoRepr}, but much more prominently it figures in the proof of modularity theorems, i.e. the converse direction, where it is the only known technique since the pioneering work of Wiles and Taylor-Wiles, \cite{WilesFLT}, \cite{TaylorWiles}.

For these techniques to be meaningful, it is necessary to replace the notion of automorphic forms (which is an analytic one, with $\C$-coefficients) by a notion of $p$-adic automorphic forms, so as to then be able to talk about $p$-adic families of such. The only known general way to achieve this is to look at the singular cohomology groups of the locally symmetric spaces for $\GL_n$ over $F$. Recall that for any (sufficiently small) compact open subgroup $K\subset \GL_n(\A_{F,f})$, these are defined as
\[
X_K = \GL_n(F)\backslash [D\times \GL_n(\A_{F,f})/K]\ ,
\]
where $D = \GL_n(F\otimes_{\Q} \R)/\R_{>0} K_\infty$ is the symmetric space for $\GL_n(F\otimes_{\Q}\R)$, with $K_\infty\subset \GL_n(F\otimes_{\Q} \R)$ a maximal compact subgroup. Then one can look at the singular cohomology groups
\[
H^i(X_K,\C)\ ,
\]
which carry an action by an algebra $\mathbb{T}_K$ of Hecke operators. By a theorem of Franke, \cite{Franke}, all Hecke eigenvalues appearing in $H^i(X_K,\C)$ come (up to a twist) from $L$-algebraic automorphic representations of $\GL_n(\A_F)$. Conversely, allowing suitable coefficient systems, all regular $L$-algebraic cuspidal automorphic representations will show up in the cohomology of $X_K$. Unfortunately, non-regular $L$-algebraic cuspidal automorphic representations will not show up in this way, and it is not currently known how to define any $p$-adic analogues for them, and thus how to use $p$-adic techniques to prove anything about them. The simplest case of this phenomenon is the case of Maass forms on the complex upper half-plane whose eigenvalue of the Laplace operator is $1/4$ (which give rise to $L$-algebraic cuspidal automorphic representations of $\GL_2(\A_\Q)$). In fact, for them, it is not even known that the eigenvalues of the Hecke operators are algebraic, which seems to be a prerequisite to a meaningful formulation of Conjecture \ref{GlobalLanglands}.\footnote{Although of course Deligne proved the Weil conjectures by simply choosing an isomorphism $\C\cong \overline{\Q}_p$, and deducing algebraicity of Frobenius eigenvalues only a posteriori.}

It is now easy to define a $p$-adic, or even integral, analogue of $H^i(X_K,\C)$, namely $H^i(X_K,\Z)$. This discussion also suggests to define a mod-$p$-automorphic form as a system of Hecke eigenvalues appearing in $H^i(X_K,\F_p)$. One may wonder whether a mod-$p$-version of Conjecture \ref{GlobalLanglands} holds true in this case, and it has been suggested that this is true, see e.g. the papers of Ash, \cite{Ash1}, \cite{Ash2}.

\begin{conj}\label{ConjModP} For any system of Hecke eigenvalues appearing in $H^i(X_K,\F_p)$, there is a continuous semisimple representations $\Gal(\overline{F}/F)\to \GL_n(\overline{\F}_p)$ such that Frobenius and Hecke eigenvalues match up.
\end{conj}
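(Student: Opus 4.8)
The plan is to deduce Conjecture \ref{ConjModP} from the existence of Galois representations attached to automorphic forms, by a \emph{p-adic congruence argument} that realizes a mod $p$ eigenclass as a reduction of characteristic-zero eigenclasses living on a larger space. First I would reduce to the case where $F$ is totally real or CM: a general number field embeds into such a field, and an eigenclass in $H^i(X_K,\F_p)$ for $\GL_n/F$ should be pushed, by an appropriate base-change / automorphic induction argument, into the cohomology of the locally symmetric spaces for $\GL_{2n}$ (or for a unitary group) over the quadratic CM extension, at the cost of replacing $n$ by $2n$ and adjusting the level. The cohomology mod $p$ is finitely generated over $\Z$, so a system of Hecke eigenvalues in $H^i(X_K,\F_p)$ comes from a maximal ideal $\mathfrak{m}\subset \mathbb{T}_K$ of residue characteristic $p$; the task is to produce a Galois representation $\overline{\rho}_\mathfrak{m}: \Gal(\overline{F}/F)\to\GL_n(\overline{\F}_p)$ matching Hecke and Frobenius eigenvalues.

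The heart of the argument, as the abstract indicates, is to realize $H^i(X_K,\F_p)$ for $\GL_n$ inside the cohomology of a Shimura variety $S_K$ of Hodge type (symplectic or unitary), via the Borel--Serre boundary: following the Clozel suggestion carried out by Harris--Lan--Taylor--Thorne in characteristic zero, the locally symmetric space for $\GL_n$ appears in the boundary of the Borel--Serre compactification of a Shimura variety for a group like $\GSp_{2n}$ or a unitary group, and hence its cohomology appears as a subquotient of the cohomology of that boundary, which in turn maps to the compactly supported / ordinary cohomology of the Shimura variety. So I would next establish that the completed cohomology (or the cohomology with $\Z/p^m$-coefficients, passing to a limit) of these Shimura varieties carries Galois representations with the expected local-global compatibility, and then transfer these along the boundary map to the $\GL_n$ situation. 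To get Galois representations in the torsion cohomology of the Shimura variety itself, I would use the new $p$-adic geometry developed in this paper: at infinite level at $p$ the Shimura variety $S_{K^pK_p}$, $K_p\to 1$, becomes a perfectoid space $S_{K^p}$, equipped with the Hodge--Tate period map $\pi_{\HT}: S_{K^p}\to \Fl$ to the flag variety, which is Hecke-equivariant away from $p$ with trivial Hecke action on the target and through which automorphic vector bundles are pulled back. Using $\pi_{\HT}$ one can compare torsion cohomology classes with $p$-adic cusp forms of varying (including non-classical) weight, spread a given $\mathfrak{m}$ into a classical point of the relevant eigenvariety (a "doubling"/Deligne--Serre-type lifting, using that the space of $p$-adic forms is large and $p$-adically dense), attach a Galois representation there by the known characteristic-zero results (via the HLTT construction, or Shin/Morel for the Shimura variety), and reduce mod $p$.

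Concretely the steps, in order, are: (1) reduce the general field to the CM case and record the boundary embedding $H^\ast(X_K^{\GL_n},\F_p)\hookrightarrow H^\ast(\partial S_K,\F_p)$ compatibly with Hecke operators; (2) develop the perfectoid Shimura variety $S_{K^p}=\varprojlim_{K_p} S_{K^pK_p}$ and the Hodge--Tate period map $\pi_{\HT}$, including its behavior on toroidal/minimal compactifications so that the boundary is accessible; (3) use $\pi_{\HT}$ to interpret torsion cohomology in terms of coherent cohomology on $\Fl$ of (integral models of) automorphic bundles, and thereby embed any given system of Hecke eigenvalues into a $p$-adically interpolated family passing through classical, regular, cohomological points; (4) invoke the known existence of Galois representations at those classical points and take the limit to get $\overline\rho_\mathfrak{m}$ for the Shimura variety; (5) pull back along the boundary map of step (1) to conclude for $\GL_n/F$. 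The main obstacle I expect is step (3)–(4): controlling torsion in coherent cohomology and proving that an arbitrary mod $p$ eigensystem — in particular a non-regular, possibly non-lifting one — can be congruence-lifted to a classical cohomological one, which requires both the fine $p$-adic geometry near the ordinary/anticanonical locus (Hasse invariants, "fake Hecke operator $U_p$", and the density of the ordinary locus in $\Fl$ under $\pi_{\HT}$) and a careful analysis of the boundary contribution to rule out that the eigensystem is "purely Eisenstein" in a way that loses the Galois data; handling the non-semisimplicity / reducibility that can occur mod $p$, and matching Frobenius with Hecke at the ramified auxiliary primes introduced by base change, are the delicate technical points.
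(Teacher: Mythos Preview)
The statement you are addressing is labeled a \emph{conjecture} in the paper, and the paper does not prove it for arbitrary number fields $F$. The paper's main theorem (Theorem \ref{ThmModP}) establishes the conjecture only when $F$ is totally real or CM. Your step (1), reducing a general number field to the CM case by ``base-change / automorphic induction'', is precisely the unknown ingredient: no such procedure exists for mod $p$ torsion classes in the cohomology of locally symmetric spaces. Automorphic base change and induction are results about automorphic representations in characteristic zero, proved via the trace formula, and there is no analogue that transports an arbitrary system of Hecke eigenvalues in $H^i(X_K,\F_p)$ for $\GL_n/F$ to one over a CM field. So this first step is a genuine gap, and for fields that are neither totally real nor CM the conjecture remains open.

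Restricting to the totally real/CM case, your outline is broadly aligned with the paper, but two points need correction. First, the interpolation in your step (3) is not via an eigenvariety or a Deligne--Serre lifting: the paper instead constructs ``fake Hasse invariants'' as sections of $\omega$ pulled back along $\pi_\HT$ from the flag variety (hence Hecke-equivariant away from $p$), and uses them on new formal models of the minimal compactification to realize the completed compactly supported cohomology, modulo $p^n$, as a subquotient of spaces of classical cusp forms of high weight (Theorem \ref{ThmHeckeAlgebras}). Second, your step (5) elides a substantial difficulty: the Galois determinant one produces on the Shimura side is $(2n{+}1)$- or $2n$-dimensional, and one must extract an $n$-dimensional factor. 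The paper does this in Section \ref{DivideAndConquer} by twisting with infinitely many auxiliary cyclotomic-type characters and a determinant-theoretic separation argument over a ring modulo a nilpotent ideal of bounded nilpotence degree; it is not a simple pullback along the boundary map.
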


There is also a conjectural converse, generalizing Serre's conjecture for $F=\Q$, $n=2$, cf. e.g. \cite{ADP}, \cite{AS}. It is important to note that Conjecture \ref{ConjModP} is not a consequence of Conjecture \ref{GlobalLanglands}, but really is a complementary conjecture. The problem is that $H^i(X_K,\Z)$ has in general a lot of torsion, so that the dimension of $H^i(X_K,\F_p)$ may be much larger than $H^i(X_K,\C)$, and not every system of Hecke eigenvalues in $H^i(X_K,\F_p)$ is related to a system of Hecke eigenvalues in $H^i(X_K,\C)$ (which would then fall into the realm of Conjecture \ref{GlobalLanglands}). In fact, at least with nontrivial coefficient systems, there are precise bounds on the growth of the torsion in $H^i(X_K,\Z)$, showing exponential growth in the case that $n=2$ and $F$ is imaginary-quadratic (while $H^i(X_K,\C)$ stays small), cf. \cite{BergeronVenkatesh}, \cite{Pfaff}. In other words, Conjecture \ref{ConjModP} predicts the existence of many more Galois representations than Conjecture \ref{GlobalLanglands}.

The main aim of this paper is to prove Conjecture \ref{ConjModP} for totally real or CM fields:

\begin{thm}\label{ThmModP} Conjecture \ref{ConjModP} holds true if $F$ is CM, and contains an imaginary-quadratic field. Assuming the work of Arthur, \cite{ArthurBook}, it holds true if $F$ is totally real or CM.
\end{thm}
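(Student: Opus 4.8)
The plan is to reduce the statement, following the suggestion of Clozel realized in characteristic $0$ by Harris--Lan--Taylor--Thorne, to a statement about torsion in the cohomology of Shimura varieties of Hodge type, and then to attack the latter by the $p$-adic geometry of those varieties described in the abstract. Assume first that $F = F^+E$ with $F^+$ totally real and $E$ imaginary quadratic. Let $U$ be the quasi-split unitary similitude group over $F^+$ attached to $F/F^+$ of signature $(n,n)$ at all infinite places, and let $G = \mathrm{Res}_{F^+/\Q}U$; this gives rise to a Shimura variety $S_{K'}$ of PEL (hence Hodge) type, of dimension $d = n^2[F^+:\Q]$. The Siegel parabolic $P\subset U$ has Levi isomorphic to $\mathrm{Res}_{F/F^+}\GL_n$, so in the Borel--Serre compactification $\overline S_{K'}^{\,\BS}$ the stratum attached to $P$ is, up to a fibration by nilmanifolds and a cohomological twist, a disjoint union of copies of the locally symmetric space $X_K$ for $\GL_n$ over $F$. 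Via the long exact sequence relating $H^\ast_c(S_{K'})$, $H^\ast(S_{K'}) = H^\ast(\overline S_{K'}^{\,\BS})$ and $H^\ast(\partial\overline S_{K'}^{\,\BS})$, every system of Hecke eigenvalues occurring in $H^i(X_K,\F_p)$ occurs --- after a twist, and after passage from $\GL_n$-data to the $\GL_{2n}$-data of the unitary group --- in $H^j(S_{K'},\F_p)$ or in $H^j_c(S_{K'},\F_p)$ for some $j$. It therefore suffices to attach, to every Hecke eigenvalue system in the mod $p$ cohomology of a Shimura variety such as $S_{K'}$, a Galois representation with the matching property.

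\textbf{The $p$-adic geometric input.} Passing to the minimal compactification and then to the inverse limit over all levels $K_p$ at $p$, one obtains a perfectoid space $\mathcal S^*_{K'^p}$ equipped with the Hodge--Tate period map $\pi_{\HT}\colon \mathcal S^*_{K'^p}\to\Fl$, where $\Fl$ (a $[F^+:\Q]$-fold product of $\GL_{2n}$-Grassmannians) has the same dimension $d$ as $S_{K'}$; by construction $\pi_{\HT}$ commutes with the prime-to-$p$ Hecke action --- which acts trivially on $\Fl$ --- and the automorphic ample line bundle $\omega$ on $S^*_{K'}$ is the pullback of $\mathcal O_{\Fl}(1)$. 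The key point I would establish is that $\pi_{\HT}$ is affine, in the strong sense that $\Fl$ admits a cover by (at most $d+1$) affinoid opens whose $\pi_{\HT}$-preimages are affinoid perfectoid with almost acyclic $\mathcal O^+/p$. Combining this with the comparison between $\et$-cohomology with $\F_p$-coefficients and $\mathcal O^+/p$-cohomology, and with the identification $\widetilde H^i := \varinjlim_{K_p}H^i(S^*_{K_pK'^p},\F_p) = H^i_{\et}(\mathcal S^*_{K'^p},\F_p)$, yields $\widetilde H^i = 0$ for $i>d$ (and, using the boundary, the dual vanishing $\widetilde H^i_c = 0$ for $i<d$), and more importantly a description of $R\Gamma(\mathcal S^*_{K'^p},\mathcal O^+/p)$ as $R\Gamma(\Fl,\mathcal F)$ for a sheaf $\mathcal F$ on $\Fl$ carrying the prime-to-$p$ Hecke action. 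Twisting $\mathcal F$ by $\mathcal O_{\Fl}(k)$ for $k\gg 0$ and invoking Serre vanishing on $\Fl$, one obtains that every Hecke eigenvalue system occurring in some $\widetilde H^i$ --- equivalently, in $H^i(S_{K'},\F_p)$ for fixed finite level --- already occurs in $H^0(S^*_{K'},\omega^{\otimes k}/p)$, the space of mod $p$ automorphic forms on $G$ of weight $k$, for suitable $k$; the same applies to $\widetilde H^i_c$.

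\textbf{Galois representations for mod $p$ modular forms.} It remains to treat $H^0(S^*_{K'},\omega^{\otimes k}/p)$. Since $\omega$ is ample on the projective scheme $S^*_{K'}$ over $\Z_p$, Serre vanishing gives $H^{>0}(S^*_{K'},\omega^{\otimes k}) = 0$ for $k\gg 0$, so $H^0(S^*_{K'},\omega^{\otimes k})$ is finite free over $\Z_p$ and $H^0(S^*_{K'},\omega^{\otimes k}/p) = H^0(S^*_{K'},\omega^{\otimes k})/p$. Hence every mod $p$ eigenvalue system lifts, by a Deligne--Serre type argument, to a characteristic-$0$ eigenvalue system occurring in $H^0(S^*_{K'},\omega^{\otimes k}\otimes\overline\Q_p)$, i.e.\ in holomorphic --- and, for $k\gg 0$, cohomological --- automorphic forms on $G$; to these the known construction for unitary Shimura varieties, via the \'etale cohomology of $S_{K'}$ with automorphic coefficients and the stable trace formula, attaches conjugate-self-dual Galois representations $\Gal(\overline F/F)\to\GL_{2n}(\overline\Q_p)$ matching Satake and Frobenius data at unramified places. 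Reducing modulo $p$ and passing to the limit over congruent forms through the associated pseudorepresentations (determinants) produces a continuous semisimple $\rho\colon\Gal(\overline F/F)\to\GL_{2n}(\overline\F_p)$ attached to the mod $p$ system; extracting from $\rho$ the $n$-dimensional summand pinned down by the fact that the original system came, through the Siegel Levi, from $\GL_n$ over $F$ (and undoing the twist) yields the representation required by Conjecture \ref{ConjModP} in the CM case at hand. For general totally real or CM $F$, one first makes a solvable base change so that $F$ contains an imaginary quadratic field, applies the above, and descends; the descent and the matching of Hecke data for the relevant classical groups is precisely where Arthur's stabilization of the twisted trace formula is invoked.

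\textbf{Main obstacle.} The crux is the $p$-adic geometry, and in particular its behaviour near the boundary: proving that the \emph{minimal} compactification of a Hodge-type Shimura variety becomes perfectoid at infinite level at $p$, and that $\pi_{\HT}$ has affinoid-perfectoid fibres over an affinoid cover of $\Fl$. I expect to have to first establish perfectoidness in the Siegel case --- bootstrapping from the anticanonical locus and the perfectoidness of Lubin--Tate space at infinite level, then propagating across the whole variety and to all levels --- transport it along the closed embedding of a given Hodge-type datum into a Siegel datum, and carefully control the toroidal and minimal compactifications under the $p$-power level maps, all in the language of adic spaces and almost mathematics, where tracking the difference between $\mathcal O^+/p$ and $\mathcal O/p$ and between the various compactifications accounts for most of the technical effort. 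A secondary but genuine difficulty is the Hecke-theoretic bookkeeping needed to move between the $\GL_n$, $\GL_{2n}$ and unitary pictures and to recognise the $n$-dimensional Galois representation produced at the end as the correct one, together with the usual subtleties (nilpotent ideals in the Hecke algebra, failure of semisimplicity) that arise when extracting Galois representations from torsion cohomology.
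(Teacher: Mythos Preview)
Your overall architecture is the paper's: realize $H^i(X_K,\F_p)$ inside the boundary cohomology of a Hodge-type Shimura variety $\mathcal S^*_{K'}$, use perfectoidness at infinite level plus $\pi_\HT$ to interpolate Hecke eigenvalues by those of classical cusp forms on $G$, pull Galois representations from the classical forms, and then cut out the $n$-dimensional piece. Two steps in your sketch, however, do not go through as written.

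\textbf{The interpolation step.} Your key move is ``twist $\mathcal F=\pi_{\HT,*}(\mathcal O^+/p)$ by $\mathcal O_\Fl(k)$ and apply Serre vanishing on $\Fl$''. This fails: $\mathcal F$ is not coherent (it is enormous), so Serre vanishing says nothing; and even formally, twisting by a line bundle does not change cohomology as abelian groups, so no new vanishing appears. More seriously, $H^0(\Fl,\mathcal F(k))$ would compute sections of $\omega^{\otimes k}\otimes\mathcal O^+/p$ at \emph{infinite} level on a Stein cover, whereas you need global sections of $\omega^{\otimes k}$ at some \emph{finite} level $K_pK'^p$. The paper bridges this gap by a genuinely new construction: approximate the Pl\"ucker sections $s_j$ of $\omega_\Fl$ by finite-level sections $s_j^{(i)}$ (using density of $\varinjlim_{K_p}H^0$ in $H^0$ at infinite level), use these to build an ad hoc projective formal model $\mathfrak X^*_{K_pK^p}$ on which $\omega$ is ample (Lemma~\ref{ConstructionFormalModel}), and obtain ``fake Hasse invariants'' $\bar s_j\in H^0(\mathfrak X^*_{K_pK^p},\omega^{\int}/p^n)$ that commute with prime-to-$p$ Hecke. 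Inverting $\bar s_J$ identifies each \v Cech term with $\varinjlim_k H^0(\mathfrak X^*_{K_pK^p},(\omega^{\int})^{\otimes k|J|}\otimes\mathfrak I/p^n)$, and \emph{now} ampleness of $\omega^{\int}$ on this finite-level formal scheme gives the reduction-mod-$p^n$ statement you need. The formal model is not any standard integral model (indeed, there is no abelian scheme over its special fibre); its construction is the technical heart of the argument and is absent from your sketch.

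\textbf{Extracting the $n$-dimensional summand.} You write that the $n$-dimensional piece is ``pinned down by the fact that the original system came, through the Siegel Levi, from $\GL_n$''. This is not automatic: what you have is a $2n$- (or $2n+1$-) dimensional determinant $\tilde D$ of $G_{F,S}$ with values in a finite ring $A$ modulo a nilpotent ideal, and the only constraint coming from the Levi is the shape $\tilde D(1-X\Frob_v)=P_v(X)P_{v^c}^\vee(q_vX)$ (CM case) at unramified $v$. There is no a priori reason that $\tilde D$ splits as a product of an $n$-dimensional determinant and its twisted dual over $A$. The paper's Section~\ref{DivideAndConquer} resolves this by twisting: for infinitely many characters $\chi$ of prime-to-$p$ order one also has the determinant attached to $P_{v,\chi}(X)P_{v^c,\chi}^\vee(q_vX)$; packaging all these into a single determinant $A[G][V^{\pm1}]\to (A/J)[T^{\pm1}]$ and invoking a uniqueness-of-factorisation lemma in $S[T^{\pm1}][[X]]$ forces the split. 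This is a substantial argument, not a bookkeeping exercise.

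Two smaller points. First, the paper treats the totally real case \emph{directly} via $G=\Res_{F/\Q}\Sp_{2n}$; Arthur's work enters because one needs the endoscopic transfer $\Sp_{2n}\rightsquigarrow\GL_{2n+1}$, not because of any base-change/descent as you suggest. Second, the Galois representations for the classical cusp forms on $G$ are obtained not from the \'etale cohomology of $S_{K'}$ but by transferring (via Arthur/Mok, or unconditionally Shin in the unitary case with $E\subset F$) to cuspidal representations of $\GL_h/F$ and invoking the known construction there; one has to allow for Arthur $\mathrm{SL}_2$ factors, which is why the determinant $\tilde D$ above is built from several $\Pi_i$'s and cyclotomic twists rather than a single cuspidal lift.
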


In fact, we also prove a version for $H^i(X_K,\Z/p^m\Z)$, which in the inverse limit over $m$ gives results on Conjecture \ref{GlobalLanglands}:

\begin{thm} There are Galois representations associated with regular $L$-algebraic cuspidal automorphic representations of $\GL_n(\A_F)$, if $F$ is totally real or CM.
\end{thm}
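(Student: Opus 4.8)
The plan is to deduce this from the torsion result and pass to a limit. First, by a standard base-change argument we may assume $F$ is a CM field containing an imaginary-quadratic field: replacing $F$ by a quadratic (hence cyclic) CM extension $F'$ of this form and $\pi$ by its base change $\pi'$, which is again regular $L$-algebraic and is either cuspidal or an isobaric sum of cuspidal representations that are handled similarly, the Galois representation over $F$ is recovered from those over the various $F'$ by the usual patching argument, using that a continuous semisimple representation is determined by its Frobenius traces on a set of places of density one. So it suffices to produce, for $\pi$ regular $L$-algebraic cuspidal over such an $F$, a continuous semisimple $\rho_\pi\colon \Gal(\overline F/F)\to \GL_n(\overline\Q_p)$ that is unramified outside a finite set and whose characteristic polynomial at $\Frob_v$ matches the Satake parameter of $\pi_v$ for every unramified place $v$.

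Because $\pi$ is regular $L$-algebraic cuspidal, after a twist making it $C$-algebraic it is cohomological, so it contributes to $H^i(X_K,\mathcal V_\xi\otimes\C)$ for a suitable level $K$, degree $i$, and algebraic coefficient system $\mathcal V_\xi$; by Clozel's purity lemma the unramified Hecke eigenvalues of $\pi$ are algebraic, and under the fixed isomorphism $\C\cong\overline\Q_p$ we may take them in the ring of integers $\OO_L$ of a finite extension $L/\Q_p$. Choosing a $\GL_n$-stable lattice in $\xi$ gives a system of Hecke eigenvalues $\lambda_\pi\colon\mathbb T\to\OO_L$ occurring in the integral cohomology $H^i(X_K,\mathcal V_{\xi,\OO_L})$, and hence, after reduction modulo $p^m$, a system of Hecke eigenvalues occurring in $H^i(X_K,\mathcal V_{\xi,\OO_L/p^m})$ for every $m\geq 1$.

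Now apply the mod-$p^m$ refinement of Theorem~\ref{ThmModP} with the nontrivial coefficient system $\mathcal V_\xi$: for each $m$ it produces a continuous $n$-dimensional determinant, in the sense of Chenevier, of $\Gal(\overline F/F)$ valued in $\OO_L/p^m$, unramified outside a fixed finite set, whose value at each unramified $\Frob_v$ equals the Satake polynomial of $\pi_v$ modulo $p^m$ --- a priori only modulo a nilpotent ideal $I_m$ with $I_m^N=0$ for an $N$ depending only on $n$. Since a determinant is pinned down by its characteristic polynomials at the $\Frob_v$, which are dense in $\Gal(\overline F/F)$, these determinants are canonically determined by the Hecke eigenvalues and are therefore compatible as $m$ varies; they assemble into a continuous $n$-dimensional determinant of $\Gal(\overline F/F)$ over $\varprojlim_m \mathbb T/p^m$. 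Composing with $\lambda_\pi$, whose target $\OO_L$ is reduced, annihilates the nilpotent ambiguity and yields a continuous $n$-dimensional determinant over $\OO_L$, hence over $\overline\Q_p$, whose characteristic polynomial at each unramified $\Frob_v$ is exactly the Satake polynomial of $\pi_v$. By Chenevier's theory this is the determinant of a unique continuous semisimple representation $\rho_\pi\colon\Gal(\overline F/F)\to\GL_n(\overline\Q_p)$, the desired Galois representation; unwinding the base-change reduction gives the statement for all totally real or CM $F$, the general case resting, as in Theorem~\ref{ThmModP}, on the work of Arthur when no imaginary-quadratic subfield is available.

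The main obstacle lies entirely in the input to the last step: the mod-$p^m$ version of Theorem~\ref{ThmModP}, with nontrivial coefficients, whose proof requires the new $p$-adic geometry developed in the body of the paper --- the perfectoid Shimura variety at infinite level at $p$ and the Hodge--Tate period map --- in order to transport torsion Hecke eigenclasses occurring in the boundary cohomology of unitary Shimura varieties into a setting where Galois representations are classically available. Granting that, the remaining points are comparatively routine: the classical facts that regular $L$-algebraic cuspidal representations are cohomological with algebraic Hecke eigenvalues (Clozel, Franke), the tracking of coefficient systems and levels through the reductions above, and the standard device of letting $m\to\infty$ so that the uniformly bounded nilpotent ideals disappear upon specialization to a reduced coefficient ring.
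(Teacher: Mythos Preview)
Your core argument is correct and is essentially the paper's own proof of Corollary~\ref{CorExGalReprChar0}: realize $\pi$ (after the twist to $C$-algebraic) in $H^i(X_K,\mathcal{M}_{\xi,K})\otimes\C$, invoke the mod-$p^m$ result with coefficients (this is exactly Theorem~\ref{FinalMainThm}), use the uniform bound $I_m^N=0$ to glue the determinants into one over $\mathbb{T}_{F,S}(K,\xi,i)/I$ with $I$ nilpotent, and then specialize along the Hecke eigensystem of $\pi$, where the reduced target $\overline{\Q}_p$ kills $I$.

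The one organizational point worth flagging is your opening base-change reduction. The paper does not do this: Theorem~\ref{FinalMainThm} is proved directly for arbitrary totally real or CM $F$, conditional on Arthur/Mok, and the corollary follows at once. The reduction to $F$ containing an imaginary-quadratic field, together with the patching you allude to, appears in the paper only in Remark~\ref{UnconditionalRem} as a device to obtain \emph{unconditional} results via Shin's theorem; it is not part of the proof of the stated (conditional) theorem. As written, your first paragraph is therefore superfluous, and it also slightly mis-states the mechanism: a single quadratic base change does not by itself let you descend the Galois representation back to $F$; one needs the genuine patching argument over a family of solvable CM extensions (as in \cite[Theorem VII.1.9]{HarrisTaylor}), and even then the unconditional conclusion carries the extra hypothesis on $S$ recorded in Remark~\ref{UnconditionalRem}. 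Dropping that opening paragraph and working directly over $F$ aligns your argument exactly with the paper's.
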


The second theorem has recently been proved by Harris-Lan-Taylor-Thorne, \cite{HLTT}. For the precise results, we refer the reader to Section \ref{SectionResults}.

In a recent preprint, Calegari and Geraghty, \cite{CalegariGeragthy}, show how results on the existence of Galois representations of the kind we provide may be used to prove modularity results, generalizing the method of Taylor-Wiles to $\GL_n$ over general number fields. This is conditional on their \cite[Conjecture B]{CalegariGeragthy}, which we prove over a totally real or CM field (modulo a nilpotent ideal of bounded nilpotence degree), except that some properties of the constructed Galois representations remain to be verified. Once these extra properties are established, Conjecture \ref{GlobalLanglands} might be within reach for regular $L$-algebraic cuspidal automorphic representations (corresponding to Galois representations with regular Hodge-Tate weights) over totally real or CM fields, at least in special cases or `potentially' as in \cite{BLGHT}.

To prove our results, we follow Harris-Lan-Taylor-Thorne to realize the cohomology of $X_K$ as a boundary contribution of the cohomology of the Shimura varieties attached to symplectic or unitary groups (depending on whether $F$ is totally real or CM). In particular, these Shimura varieties are of Hodge type.

Our main result here is roughly the following. Let $G$ be a group giving rise to a (connected) Shimura variety of Hodge type (thus, we allow $\Sp_{2g}$, not only $\GSp_{2g}$), and let $S_K$, $K\subset G(\A_f)$ be the associated Shimura variety over $\C$.\footnote{We need not worry about fields of definition by fixing an isomorphism $\C\cong \overline{\Q}_p$.} Recall the definition of the (compactly supported) completed cohomology groups for a tame level $K^p\subset G(\A_f^p)$,
\[
\widetilde{H}_{c,K^p}^i = \varprojlim_m \varinjlim_{K_p} H_c^i(S_{K_pK^p},\Z/p^m\Z)\ .
\]
The statement is roughly the following; for a precise version, see Theorem \ref{ThmHeckeAlgebras}.

\begin{thm}\label{InterpolationThm} All Hecke eigenvalues appearing in $\widetilde{H}_{c,K^p}^i$ can be $p$-adically interpolated by Hecke eigenvalues coming from classical cusp forms.
\end{thm}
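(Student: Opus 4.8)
The plan is to transfer the whole computation to the flag variety via the Hodge--Tate period map, and then to coherent cohomology of automorphic bundles. Recall that $\mathcal S_{K^p}=\varprojlim_{K_p}\mathcal S_{K_pK^p}$, and likewise its minimal compactification $\mathcal S^\ast_{K^p}$, is a perfectoid space carrying the Hodge--Tate period map $\pi_{\HT}\colon\mathcal S^\ast_{K^p}\to\Fl$, which is equivariant for the prime-to-$p$ Hecke operators although these act trivially on $\Fl$. Since étale cohomology is continuous along such inverse limits, $\widetilde H^i_{c,K^p}/p^m=H^i_c(\mathcal S_{K^p},\Z/p^m\Z)$; pushing forward by the open immersion into the proper space $\mathcal S^\ast_{K^p}$ and applying the compactly-supported form of the primitive comparison theorem (comparing with the ideal sheaf $\mathcal I$ of the boundary) gives, up to a bounded $p$-power error and an almost isomorphism,
\[
\widetilde H^i_{c,K^p}\otimes_{\Z_p}\OO_{\C}/p^m\;\cong\;H^i\bigl(\mathcal S^\ast_{K^p},\,\mathcal I\,\OO^+/p^m\bigr).
\]
So it suffices to understand the coherent cohomology of $\mathcal S^\ast_{K^p}$, with its boundary ideal, as a Hecke module.

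The first key input is that $\pi_{\HT}$ is \emph{affinoid}: for a sufficiently small affinoid open $U\subset\Fl$, the preimage $\pi_{\HT}^{-1}(U)$ is affinoid perfectoid, so that $H^{>0}(\pi_{\HT}^{-1}(U),\OO^+/p)$ is almost zero. Then the Leray spectral sequence for $\pi_{\HT}$ collapses to an almost isomorphism
\[
H^i\bigl(\mathcal S^\ast_{K^p},\,\mathcal I\,\OO^+/p^m\bigr)\;\cong\;H^i\bigl(\Fl,\,\pi_{\HT\ast}(\mathcal I\,\OO^+/p^m)\bigr),
\]
and since $\Fl$ is a projective $\C$-variety the right-hand side is computed by a finite \v{C}ech complex whose terms are the sections of the single sheaf $\pi_{\HT\ast}(\mathcal I\,\OO^+/p^m)$ over a finite affinoid cover; the prime-to-$p$ Hecke algebra acts entirely through its action on this sheaf.

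The last step identifies these sections with classical cusp forms. At finite level $K_pK^p$, the minimal compactification $S^\ast_{K_pK^p}$ is $\Proj$ of the graded ring of modular forms $\bigoplus_{k\ge 0}H^0(S^\ast_{K_pK^p},\omega^{\otimes k})$, with $\omega$ \emph{ample}, and --- this being why $\pi_{\HT}$ deserves to be called a period map --- a suitable normalization of $\omega$ is the pullback along the finite-level Hodge--Tate map of the tautological ample bundle $\OO_{\Fl}(1)$. Hence over a basic affinoid $U=D_+(t)\subset\Fl$ cut out by a section $t$ of some $\OO_{\Fl}(k_0)$, the sections over $U$ of $\pi_{\HT\ast}(\mathcal I\,\OO^+)$ are $p$-adically completed filtered colimits (over $K_p$ and over $n$) of (the integral structures on) $H^0(S^\ast_{K_pK^p},\omega^{\otimes nk_0}(-D))\cdot(\pi_{\HT}^\ast t)^{-n}$, i.e.\ of the global sections of the subcanonical extensions --- precisely the spaces of classical weight-$nk_0$ cusp forms at level $K_pK^p$ --- with $\pi_{\HT}^\ast t$ a Hecke-invariant unit. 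Consequently every system of prime-to-$p$ Hecke eigenvalues occurring in $\widetilde H^i_{c,K^p}/p^m$ occurs, as a subquotient of the above \v{C}ech complex, in $\bigoplus_{K_p,\,k}H^0(S^\ast_{K_pK^p},\omega^{\otimes k}(-D))/p^m$; and since each $H^0(S^\ast_{K_pK^p},\omega^{\otimes k}(-D))$ is a torsion-free $\Z_p$-module, carrying therefore a $\Z_p$-flat Hecke algebra, such an eigensystem lifts to characteristic $0$, i.e.\ comes from a genuine classical cusp form. Letting $m\to\infty$ then yields the asserted $p$-adic interpolation. (The crucial point making this work is that only $H^0$'s of automorphic bundles, across all weights and levels at $p$, ever enter the \v{C}ech computation, so no higher torsion can obstruct the lift.)

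The main obstacle I expect is proving that $\pi_{\HT}$ is affinoid --- that preimages of affinoids are affinoid perfectoid with almost-vanishing higher $\OO^+/p$. This seems to require building the perfectoid structure of $\mathcal S_{K^p}$ not uniformly over all levels at $p$ but along the \emph{anticanonical tower}: invert Hasse's invariant, then climb the $p$-power-level tower in the anticanonical direction, where the fibres of $\pi_{\HT}$ over the ordinary locus are perfectoid Igusa varieties and hence well understood; one then needs a careful analysis near the boundary of the minimal compactification, together with a dynamical argument that the $G(\Q_p)$-translates of the anticanonical locus cover $\Fl$. The passage from the Siegel case to general Hodge type goes through the closed embedding of Shimura varieties and of their flag varieties, under which a closed subspace of an affinoid perfectoid space is again affinoid perfectoid. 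Beyond this, the main bookkeeping burdens are the consistent handling of almost mathematics, of compact supports versus the subcanonical extension through the two spectral sequences, and of arranging that the weights appearing are honest integers, so that truly classical cusp forms are produced.
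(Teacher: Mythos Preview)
Your overall architecture matches the paper's: the comparison isomorphism $\widetilde H^i_{c,K^p}\otimes\OO_C^a/p^m\cong H^i(\mathcal X^\uast_{K^p},\II^{+a}/p^m)$, the affinoidness of $\pi_\HT$ over the Pl\"ucker cover of $\Fl$, and the resulting \v Cech computation are exactly how the paper proceeds (Theorems~\ref{CompAutomForm} and~\ref{ThmHeckeAlgebras}). You also correctly flag the construction of the perfectoid space and the affinoidness of $\pi_\HT$ (via the anticanonical tower and $G(\Q_p)$-translates) as the heavy geometric input.

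The gap is in your final step. You assert that sections of $\pi_{\HT\ast}(\II\,\OO^+)$ over $D_+(t)$ are completed colimits of $H^0(S^\ast_{K_pK^p},\omega^{\otimes nk_0}(-D))\cdot(\pi_\HT^\ast t)^{-n}$, appealing to ampleness of $\omega$ at finite level and calling $\pi_\HT^\ast t$ a Hecke-invariant unit. But $\pi_\HT$ and hence $\pi_\HT^\ast t$ exist only at \emph{infinite} level --- there is no ``finite-level Hodge--Tate map'' --- so $\pi_\HT^\ast t$ cannot be used to exhibit the affinoid $\mathcal V_{J,K_p}\subset\mathcal X^\uast_{K_pK^p}$ as the non-vanishing locus of a section of a power of $\omega$ on any model of the finite-level variety. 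Without that, you have no mechanism to pass from \emph{local} sections of $\II^+/p^m$ on $\mathcal V_{J,K_p}$ to \emph{global} cusp forms on $\mathcal X^\ast_{K_pK^p}$: the affinoid $\mathcal V_{J,K_p}$ is not a Zariski open of any standard integral model, so ordinary ampleness of $\omega$ on $X^\ast_{K_pK^p}$ does not apply.

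The paper's solution is to approximate each $\pi_\HT^\ast s_j$ modulo $p^m$ by a section $s_j^{(i)}$ that \emph{does} live at some finite level $K_p$ (using the density in Theorem~\ref{PerfShHodge}(i)), and to feed these approximations into Lemma~\ref{ConstructionFormalModel} to build a \emph{new} formal model $\mathfrak X^\ast_{K_pK^p}$ of the finite-level Shimura variety. On this model the reductions $\bar s_j\in H^0(\mathfrak X^\ast_{K_pK^p},\omega^{\int}/p^m)$ are well-defined, Hecke-equivariant (since modulo $p^m$ they agree with $\pi_\HT^\ast s_j$), and generate an \emph{ample} integral line bundle $\omega^{\int}$. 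Now $\mathfrak V_{JK_p}$ genuinely is the non-vanishing locus of $\bar s_J$, giving
\[
H^0(\mathfrak V_{JK_p},\mathfrak I/p^m)=\varinjlim_{\times\bar s_J}H^0\bigl(\mathfrak X^\ast_{K_pK^p},(\omega^{\int})^{\otimes k|J|}\otimes\mathfrak I/p^m\bigr),
\]
and ampleness of $\omega^{\int}$ kills $H^1$ for large $k$, so one lands in the $p$-torsion-free lattice $H^0(\mathfrak X^\ast_{K_pK^p},(\omega^{\int})^{\otimes k}\otimes\mathfrak I)$ whose generic fibre is the classical space $H^0(\mathcal X^\ast_{K_pK^p},\omega^{\otimes k}\otimes\II)$. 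The paper explicitly notes that this formal model is unrelated to any standard one (no abelian scheme over its special fibre); its construction is the missing ingredient in your sketch.
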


In fact, only very special cusp forms are necessary, corresponding to cuspidal sections of tensor powers of the natural ample line bundle $\omega_K$ on $S_K$.

Combining this with known results on existence of Galois representations in the case of symplectic or unitary Shimura varieties (using the endoscopic transfer, due to Arthur, \cite{ArthurBook} (resp. Mok, \cite{Mok}, in the unitary case))\footnote{These results are still conditional on the stabilization of the twisted trace formula, but compare the recent work of Waldspurger and Moeglin, \cite{MoeglinWaldspurger}. In the unitary case, there are unconditional results of Shin, \cite{ShinGoldringApp}, which make our results unconditional for a CM field containing an imaginary-quadratic field, cf. Remark \ref{UnconditionalRem}.}, one sees that there are Galois representations for all Hecke eigenvalues appearing in $\widetilde{H}_{c,K^p}^i$ in this case. By looking at the cohomology of the boundary, this will essentially give the desired Galois representations for Theorem \ref{ThmModP}, except that one gets a $2n+1$-, resp. $2n$-, dimensional representation, from which one has to isolate an $n$-dimensional direct summand. This is possible, and done in Section \ref{DivideAndConquer}.

Thus, the key automorphic result of this paper is Theorem \ref{InterpolationThm}. The first key ingredient in its proof is a comparison result from $p$-adic Hodge theory with torsion coefficients proved in \cite{ScholzePAdicHodge}. Here, it is important that this comparison result holds without restriction on the reduction type of the variety -- we need to use it with arbitrarily small level at $p$, so that there will be a lot of ramification in the special fibre. The outcome is roughly that one can compute the compactly supported cohomology groups as the \'etale cohomology groups of the sheaf of cusp forms of infinite level.

Fix a complete and algebraically closed extension $C$ of $\Q_p$, and let $\mathcal{S}_K$ be the adic space over $C$ associated with $S_K$ (via base-change $\C\cong \overline{\Q}_p\hookrightarrow C$). Then the second key ingredient is the following theorem.

\begin{thm} There is a perfectoid space $\mathcal{S}_{K^p}$ over $C$ such that
\[
\mathcal{S}_{K^p}\sim \varprojlim_{K_p} \mathcal{S}_{K_pK^p}\ .
\]
\end{thm}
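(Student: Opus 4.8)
The plan is to construct the perfectoid space $\mathcal{S}_{K^p}$ not directly, but by first treating a related Shimura variety of Siegel type (for $\GSp_{2g}$) where Fargues' work and the theory of Lubin-Tate/Drinfeld towers can be brought to bear, and then descending to the Hodge-type situation via an embedding. More precisely, for the Siegel modular variety $\mathcal{A}_{K}$ parametrizing principally polarized $g$-dimensional abelian varieties, one has the Lubin-Tate-type tower at $p$; the key input is that raising the level at $p$ to $K_p = K_p(p^m)$ (full level $p^m$) and passing to the inverse limit produces a space on which the universal $p$-divisible group acquires, after passing to the limit, a trivialization of its Tate module, so that the Hodge-Tate filtration varies in a way controlled by a map to the flag variety. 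The first step, then, is to establish the Siegel case: show that $\varprojlim_{K_p}\mathcal{A}_{K_pK^p}$ is representable by a perfectoid space $\mathcal{A}_{K^p}$ over $C$, in the sense of the almost-purity/tilting equivalence, i.e. that the limit exists in the category of adic spaces and is perfectoid, with the expected relation $\mathcal{A}_{K^p}\sim\varprojlim_{K_p}\mathcal{A}_{K_pK^p}$.

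The technical heart of the Siegel case is a ``perfectoidness is contagious'' argument along the tower. One covers $\mathcal{A}_{K^pK_p}$ by affinoids and analyzes what happens as $K_p$ shrinks: the transition maps are finite \'etale away from the boundary, and the crucial point is to show that the inverse limit of the coordinate rings, completed, is a perfectoid ring. For this, I would first handle the ordinary (or more generally the good-reduction) locus using the explicit description of the tower via canonical subgroups and Lubin-Tate theory, where perfectoidness is essentially known; then one must propagate to the whole space. The tool here is to use the existence of a Hasse invariant and the fact that the tower over the locus where the Hasse invariant is invertible is perfectoid, combined with an approximation/gluing argument. In fact the cleanest route may be to use the already-available result that the anticanonical tower (or a suitable infinite-level modification concentrated near a section of $\omega$) is perfectoid, and then use that finitely many Hecke translates of this locus cover $\mathcal{A}_{K^p}$; perfectoidness glues over an open cover, so one concludes. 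One must be careful at the boundary: either work first with the open Shimura variety and then treat the toroidal boundary separately (the boundary strata are themselves, up to a torus-bundle, lower-dimensional Shimura varieties, so one can induct), or use the minimal compactification where the argument is cleaner because the line bundle $\omega$ extends amply.

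Once the Siegel case is in hand, the Hodge-type case follows by functoriality. A Shimura variety $S_K$ of Hodge type admits, for suitable level structures, a closed embedding $S_K\hookrightarrow \mathcal{A}_{K'}$ into a Siegel modular variety (this is the defining property of Hodge type, after choosing a symplectic embedding $G\hookrightarrow \GSp_{2g}$). Passing to the limit over $K_p$, one gets $\mathcal{S}_{K^p}\hookrightarrow \mathcal{A}_{K'^{p}}$ as a locally closed subspace of a perfectoid space. The final step is the general fact that a (closed, or locally closed) subspace of a perfectoid space which is cut out by the right kind of equations is again perfectoid --- more precisely, one should check that $\mathcal{S}_{K^p}$, with its natural ind-structure, satisfies $\mathcal{S}_{K^p}\sim\varprojlim_{K_p}\mathcal{S}_{K_pK^p}$ and that the completed coordinate rings are perfectoid; since a Zariski-closed subset of an affinoid perfectoid space is affinoid perfectoid, this is formal once the embedding into the Siegel tower is set up compatibly with levels.

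The main obstacle I expect is the Siegel case away from the ordinary locus: proving that the infinite-level tower is perfectoid over the \emph{entire} Shimura variety, including the supersingular and boundary loci, rather than just on the locus of good ordinary reduction. The resolution is the Hasse-invariant/anticanonical-tower technique together with the Hecke-translation covering argument sketched above; getting the boundary behavior right in the toroidal compactification, so that one has genuine perfectoid spaces and not just perfectoid affinoids glued incorrectly, will require care with the combinatorics of the cone decomposition, but is ultimately manageable by induction on dimension.
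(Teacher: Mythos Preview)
Your architecture matches the paper's: prove the Siegel case by showing a strict neighborhood of the anticanonical tower is perfectoid, propagate by Hecke translates, then deduce the Hodge-type case via the closed embedding into Siegel (using that Zariski-closed in affinoid perfectoid is affinoid perfectoid). But there is a genuine gap in the Siegel step: you assert that ``finitely many Hecke translates of this locus cover $\mathcal{A}_{K^p}$'' and treat this as routine, when it is precisely the hard part. The paper's key insight is that the Hodge-Tate period map $\pi_{\HT}$ to the flag variety $\Fl$ must be constructed \emph{first}, at least as a continuous map on the underlying topological spaces away from the boundary, in order to establish the covering. One shows that the preimage under $\pi_{\HT}$ of any sufficiently small open neighborhood of $\Fl(\Q_p)$ lands in the $\epsilon$-neighborhood $\mathcal{X}^\ast(\epsilon)$ of the ordinary locus, and then uses the elementary fact that $\GSp_{2g}(\Q_p)$-translates of any open subset of $\Fl$ containing a $\Q_p$-rational point cover $\Fl$; pulling back gives the covering. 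So perfectoidness and the period map are intertwined, not sequential. Your opening invocation of Lubin--Tate/Drinfeld towers is a route the paper explicitly rejects: Rapoport--Zink spaces do not cover the whole Shimura variety, in particular missing points specializing to generic points of the special fibre.

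On the boundary: the paper works with the \emph{minimal} compactification throughout, not the toroidal one, and deliberately avoids any direct analysis of boundary strata or induction on dimension. The anticanonical-tower argument is run on formal models $\mathfrak{X}^\ast(\epsilon)$ where the transition maps agree with relative Frobenius modulo $p^{1-\epsilon}$, yielding perfectoidness of $\mathcal{X}^\ast_{\Gamma_0(p^\infty)}(\epsilon)_a$ directly, boundary included. Passing from $\Gamma_0(p^\infty)$ to $\Gamma_1(p^\infty)$ and $\Gamma(p^\infty)$ level, and later extending $\pi_{\HT}$ across the boundary, are handled by a Hebbarkeitssatz for perfectoid spaces and a version of Hartog's extension principle (using only that the boundary has codimension $\geq 2$ in the normal minimal compactification), together with Tate's normalized traces. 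Your proposed toroidal-plus-induction approach may be workable but is not what the paper does, and would require substantial additional bookkeeping.
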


Thus, the Shimura variety becomes perfectoid as a $p$-adic analytic space when passing to the inverse limit over all levels at $p$. In fact, one needs a version of this result for the minimal compactification, cf. Theorem \ref{PerfShHodge}. One can then use results on perfectoid spaces (notably a version of the almost purity theorem) to show that the \'etale cohomology groups of the sheaf of cusp forms of infinite level can be computed by the Cech complex of an affinoid cover of (the minimal compactification of) $\mathcal{S}_{K^p}$. The outcome of this argument is Theorem \ref{CompAutomForm}, comparing the compactly supported completed cohomology groups with the Cech cohomology of the sheaf of cusp forms of infinite level. Besides the applications to Theorem \ref{InterpolationThm}, this comparison result has direct applications to vanishing results. Namely, the Cech cohomology of any sheaf vanishes above the dimension $d = \dim_\C S_K$ of the space. Thus:

\begin{thm}\label{VanThm} For $i>d$, the compactly supported completed cohomology group $\widetilde{H}_{c,K^p}^i$ vanishes.
\end{thm}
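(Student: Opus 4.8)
The plan is to deduce the vanishing directly from the comparison result announced as Theorem \ref{CompAutomForm}, which identifies the compactly supported completed cohomology $\widetilde{H}_{c,K^p}^i$ with the \v{C}ech cohomology of the sheaf of cusp forms of infinite level on the minimal compactification $\mathcal{S}_{K^p}^\ast$ of $\mathcal{S}_{K^p}$. The key geometric input is that, by the perfectoidness of $\mathcal{S}_{K^p}^\ast$ (Theorem \ref{PerfShHodge}) together with the Hodge--Tate period map to the flag variety, the minimal compactification admits a cover by finitely many affinoid perfectoid open subsets. Concretely, I would pull back a suitable finite affinoid cover of the target flag variety $\Fl$ along the Hodge--Tate period map $\pi_{\HT}\colon \mathcal{S}_{K^p}^\ast\to \Fl$; since $\Fl$ is a (quasi-compact, finite-dimensional) projective variety it is covered by finitely many affinoids, and the preimages are affinoid perfectoid subsets of $\mathcal{S}_{K^p}^\ast$ whose number can be taken to be controlled. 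The number of opens in such a cover, or rather the length of the associated \v{C}ech complex, is bounded by the dimension.

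First I would recall that for any sheaf $\mathcal{F}$ of abelian groups on a topological (or adic) space, and any open cover $\mathfrak{U} = (U_j)_{j\in J}$, the \v{C}ech complex $\check{C}^\bullet(\mathfrak{U},\mathcal{F})$ is concentrated in degrees $0\le \bullet \le |J|-1$, so that $\check{H}^i(\mathfrak{U},\mathcal{F}) = 0$ for $i\ge |J|$. The point is therefore to produce a cover of $\mathcal{S}_{K^p}^\ast$ by at most $d+1$ affinoid perfectoid opens on which the relevant \v{C}ech-to-derived-functor comparison is an isomorphism (the latter being part of the content of Theorem \ref{CompAutomForm}, via the almost acyclicity of the sheaf of cusp forms on affinoid perfectoids coming from the almost purity theorem). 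Such a cover exists because the flag variety $\Fl$, being proper of dimension $d$ over $C$, can be covered by $d+1$ affine opens (for instance the standard affine charts of a projective space after a projective embedding, or more efficiently using that any $d$-dimensional noetherian scheme is covered by $d+1$ affines), and pulling back along $\pi_{\HT}$ preserves affinoid perfectoidness by the results underlying Theorem \ref{PerfShHodge}.

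Once such a cover $\mathfrak{U} = (U_0,\dots,U_d)$ is fixed, the argument concludes formally: by Theorem \ref{CompAutomForm} we have
\[
\widetilde{H}_{c,K^p}^i \cong \check{H}^i(\mathfrak{U}, \mathcal{F}_{\mathrm{cusp}})
\]
up to a bounded error (almost isomorphism, or isomorphism after the precise normalization given there), and the right-hand side vanishes for $i > d$ because the \v{C}ech complex of a cover by $d+1$ opens lives in cohomological degrees $\le d$. I would then note that the passage from the almost-category statement to an honest statement about the $\Z/p^m\Z$-modules $\widetilde{H}_{c,K^p}^i$ is harmless here, since vanishing of an almost zero module that is also finitely generated (or the relevant derived limit thereof) forces it to be genuinely zero, or one simply observes that the \v{C}ech complex itself computes the honest cohomology.

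The main obstacle is the geometric input rather than the homological bookkeeping: one must know both that $\mathcal{S}_{K^p}^\ast$ is perfectoid (Theorem \ref{PerfShHodge}) and that it admits a finite affinoid perfectoid cover of the optimal size $d+1$, and that on such a cover the sheaf of cusp forms is almost acyclic so that \v{C}ech cohomology computes the completed cohomology (Theorem \ref{CompAutomForm}). All of these are among the results I am allowed to assume; granting them, the proof of the stated vanishing is the short formal argument above. The one point requiring minor care is ensuring the affinoid cover can indeed be taken with exactly $d+1$ members --- this follows from covering the flag variety $\Fl$ by $d+1$ affine opens, which is possible for any quasi-projective (indeed any finite-dimensional noetherian separated) scheme of dimension $d$, and then pulling back via $\pi_{\HT}$.
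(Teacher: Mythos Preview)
Your approach has a genuine gap. You assert that the flag variety $\Fl$ has dimension $d = \dim_{\C} X_K$, but this is false in the general Hodge type setting. The Hodge--Tate period map constructed in Theorem~\ref{PerfShHodge} lands in the flag variety for the \emph{ambient} symplectic group $\Sp_{2g}$, whose dimension is $g(g+1)/2$; for a proper Hodge type Shimura datum $(G,D)\hookrightarrow (\Sp_{2g},D_{\Sp_{2g}})$ one has $d < g(g+1)/2$ in general. (The paper even remarks in a footnote that it is not known whether $\pi_{\HT}$ factors through the smaller flag variety $\Fl_G$ for $G$.) Hence you cannot cover $\Fl$ by $d+1$ affinoids in the way you propose, and the \v{C}ech bound you obtain is only $g(g+1)/2$, not $d$. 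A secondary issue is that Theorem~\ref{PerfShHodge} only guarantees that the preimages of the specific Pl\"ucker affinoids $\Fl_J$ are affinoid perfectoid; it is not asserted for preimages of arbitrary affinoid opens in $\Fl$, which your argument would need.

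The paper's actual proof (Corollary~\ref{CorVanishing}) takes a different route that sidesteps both problems. After using Theorem~\ref{CompAutomForm} to identify $\widetilde{H}^i_{c,K^p}(\Z/p^n\Z)\otimes \OO_C^a/p^n$ with $H^i(\mathcal{X}_{K^p}^{\uast},\II^{+a}/p^n)$, it does not use any \v{C}ech cover at all. Instead it bounds the cohomological dimension of the \emph{topological space} $|\mathcal{X}_{K^p}^{\uast}|$: this space is the inverse limit $\varprojlim_{K_p} |\mathcal{X}_{K_pK^p}^{\uast}|$ of spectral spaces along spectral maps, and each finite-level space has cohomological dimension $\le d$ (either by \cite{deJongvanderPut} or by writing it as an inverse limit of special fibres of formal models and invoking Grothendieck's bound for noetherian spaces). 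Cohomological dimension passes to such inverse limits, giving $H^i(\mathcal{X}_{K^p}^{\uast},F)=0$ for every abelian sheaf $F$ and every $i>d$. The passage from almost vanishing back to honest vanishing uses that for an $\F_p$-vector space $V$, the module $V\otimes_{\F_p}\OO_C/p$ is a direct sum of copies of $\OO_C/p$ and hence cannot be almost zero unless $V=0$; your appeal to finite generation is neither correct nor needed.
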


By Poincar\'e duality, this also implies that in small degrees, the (co)homology groups are small, confirming most of \cite[Conjecture 1.5]{CalegariEmerton} for Shimura varieties of Hodge type, cf. Corollary \ref{CalegariEmertonConj}.

Thus, there is a complex, whose terms are cusp forms of infinite level on affinoid subsets, which computes the compactly supported cohomology groups. To finish the proof of Theorem \ref{InterpolationThm}, one has to approximate these cusp forms of infinite level, defined on affinoid subsets, by cusp forms of finite level which are defined on the whole Shimura variety, without messing up the Hecke eigenvalues. The classical situation is that these cusp forms are defined on the ordinary locus, and one multiplies by a power of the Hasse invariant to remove all poles. The crucial property of the Hasse invariant is that it commutes with all Hecke operators away from $p$, so that it does not change the Hecke eigenvalues. Thus, we need an analogue of the Hasse invariant that works on almost arbitrary subsets of the Shimura variety. This is possible using a new period map, which forms the third key ingredient.

\begin{thm} There is a flag variety $\Fl$ with an action by $G$, and a $G(\Q_p)$-equivariant Hodge-Tate period map of adic spaces over $C$,
\[
\pi_\HT: \mathcal{S}_{K^p}\to \Fl\ ,
\]
which commutes with the Hecke operators away from $p$, and such that (some) automorphic vector bundles come via pullback from $\Fl$ along $\pi_\HT$. Moreover, $\pi_\HT$ is affine.
\end{thm}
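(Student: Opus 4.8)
The plan is to construct $\pi_\HT$ first in the Siegel case $G=\GSp_{2g}$, where it is read off from the Hodge--Tate filtration of the universal abelian variety at infinite level at $p$, and then to obtain the Hodge-type case by restriction along a fixed embedding of Shimura data. So let $A\to\mathcal{S}_{K^p}$ denote the universal principally polarized abelian variety over the perfectoid Siegel variety produced above. Passing to the inverse limit over level at $p$ trivializes its $p$-adic Tate module: there is a canonical symplectic isomorphism $\alpha\colon\Z_p^{2g}\xrightarrow{\ \sim\ }T_pA$, identifying the Weil pairing coming from the polarization with the standard symplectic form up to a Tate twist, which is trivializable over $C$ once a compatible system of $p$-power roots of unity is fixed. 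Sheafifying the Hodge--Tate exact sequence of $A$ over $\mathcal{S}_{K^p}$,
\[
0\longrightarrow (\Lie A)(1)\longrightarrow T_pA\otimes_{\Z_p}\OO_{\mathcal{S}_{K^p}}\longrightarrow \omega_{A^{\vee}}\longrightarrow 0,
\]
and composing with $\alpha$, one obtains a rank-$g$ direct summand $\mathcal{W}\subseteq\OO_{\mathcal{S}_{K^p}}^{2g}$ which is totally isotropic for the standard symplectic form -- that is, a $C$-point of the Lagrangian Grassmannian. Putting $\Fl=\mathrm{LGr}_{2g}=\GSp_{2g}/P_\mu$, with $P_\mu$ the Siegel parabolic attached to the Shimura cocharacter, and letting this construction vary over affinoid opens then defines an adic map $\pi_\HT\colon\mathcal{S}_{K^p}\to\Fl$; a separate (and somewhat delicate) argument extends it over the minimal compactification $\mathcal{S}^{\ast}_{K^p}$ of Theorem~\ref{PerfShHodge}, where the Hodge bundle still makes sense and the construction can be pushed to the boundary.

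Equivariance and functoriality in the Siegel case are then essentially formal. The group $G(\Q_p)=\GSp_{2g}(\Q_p)$ acts on $\mathcal{S}_{K^p}$ by changing the level structure at $p$, hence on $T_pA$ through $\alpha$ by the tautological representation, so $\pi_\HT$ is equivariant for the translation action of $G(\Q_p)$ on $\Fl=G/P_\mu$. The prime-to-$p$ Hecke correspondences act on $\mathcal{S}_{K^p}$ without altering the $p$-divisible group of $A$ or the trivialization $\alpha$, hence preserve the subsheaf $\mathcal{W}$; so $\pi_\HT$ intertwines them with the identity on $\Fl$. Finally, the tautological sub- and quotient bundles on $\Fl$ pull back along $\pi_\HT$ to $(\Lie A)(1)$ and $\omega_{A^{\vee}}$, so the automorphic vector bundles built functorially from $\omega_A\cong\omega_{A^{\vee}}$ and $\Lie A$ -- in particular the ample bundle $\omega_K=\det\omega_A$ -- are pullbacks from $\Fl$, up to the (trivializable over $C$) Tate twist.

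For a Hodge-type datum, fix an embedding of Shimura data $(G,X)\hookrightarrow(\GSp_{2g},\mathbb{H}_g^{\pm})$; for suitable levels it induces a closed immersion $S_K\hookrightarrow S^{\GSp_{2g}}_{K'}$, and hence, at infinite level, a closed immersion $\mathcal{S}_{K^p}\hookrightarrow\mathcal{S}^{\GSp_{2g}}_{K'^p}$ of perfectoid spaces (from the proof of Theorem~\ref{PerfShHodge}). Define $\pi_\HT$ for $G$ as the restriction of $\pi_\HT^{\GSp_{2g}}$. The point to check is that it factors through the closed subvariety $\Fl_G=G_{\Q_p}/P_\mu\hookrightarrow\mathrm{LGr}_{2g}$ cut out by the requirement that $\mathcal{W}$ be compatible with the family of Hodge tensors $(s_\alpha)$ defining $G\subseteq\GSp_{2g}$: on the universal family these are de~Rham (indeed absolute Hodge) classes, and functoriality of the de~Rham/Hodge--Tate comparison forces the Hodge--Tate filtration to lie in the corresponding $P_\mu$-orbit, i.e. in $\Fl_G$. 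Equivariance, prime-to-$p$ Hecke equivariance, and the pullback of automorphic bundles are all inherited from the Siegel case.

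It remains to prove that $\pi_\HT$ is affine, which I expect to be the main obstacle. The strategy is to produce a single distinguished affinoid $\Fl_a\subseteq\Fl$ -- a small neighborhood of the locus parametrizing the canonical Hodge--Tate filtration of a $\mu$-ordinary abelian variety, equivalently the locus where a suitable Pl\"ucker-type coordinate is invertible -- with the two properties that $\pi_\HT^{-1}(\Fl_a)\subseteq\mathcal{S}^{\ast}_{K^p}$ is affinoid perfectoid, and that finitely many $G(\Q_p)$-translates of $\Fl_a$ cover $\Fl$. Granting both, any affinoid open $U\subseteq\Fl$ is covered by finitely many rational subsets each contained in some translate $g\Fl_a$, whence $\pi_\HT^{-1}(U)$ is covered by finitely many rational subsets of the affinoid perfectoids $g\cdot\pi_\HT^{-1}(\Fl_a)$; since affineness of a morphism may be tested on an affinoid cover of the target, $\pi_\HT^{-1}(U)$ is then affinoid. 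The covering statement reduces to the elementary fact that $\GSp_{2g}(\Q_p)$-translates of a neighborhood of the $\mu$-ordinary cell exhaust $\mathrm{LGr}_{2g}$ (for $\mathrm{LGr}_2=\mathbb{P}^1$ this is simply $\mathbb{P}^1=\{|x|\le 1\}\cup\{|x|\ge 1\}$). The real content is the identification of $\pi_\HT^{-1}(\Fl_a)$ with (a piece of) the \emph{anticanonical tower} at infinite level and the proof that it is affinoid perfectoid -- resting on the same canonical-subgroup estimates that underlie Theorem~\ref{PerfShHodge} -- together with the bookkeeping needed to run the whole construction on the minimal compactification, where $\pi_\HT$ and the affineness assertion actually live; that is where I expect the work to concentrate.
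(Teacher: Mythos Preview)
Your proposal is close to the paper's approach, and you have correctly identified the two genuinely hard ingredients: the extension of $\pi_\HT$ across the boundary of the minimal compactification, and the affinoidness of preimages via the anticanonical tower plus $G(\Q_p)$-translates. Both are handled in the paper essentially as you outline. The extension to the boundary is done by a perfectoid Hebbarkeitssatz (Section~\ref{HebbarkeitssatzSection}): one first shows $\pi_\HT$ lands in an affinoid piece $\Fl_{\{g+1,\ldots,2g\}}$ on the anticanonical locus $\mathcal{X}^\ast_{\Gamma(p^\infty)}(\epsilon)_a$ (Lemma~\ref{ExistsSmallEpsilon2}), and then uses that the triple $(\mathcal{X}^\ast_{\Gamma(p^\infty)}(\epsilon)_a,\mathcal{Z},\mathcal{X})$ is good to extend bounded functions (Corollary~\ref{CorHTExists}). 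For affineness, the paper shows in the proof of Theorem~\ref{ExistenceHT}~(i) that $\pi_\HT^{-1}(\Fl_J)$ is affinoid perfectoid for each Pl\"ucker cell $\Fl_J$ by first embedding a $\gamma^n$-translate of $\Fl_J$ into the region over which the anticanonical tower is already known to be affinoid perfectoid (Theorem~\ref{ExOnStrictNbhd}), and then transporting back by $\gamma^{-n}$; this is your strategy.

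There is one point where you diverge from the paper and where your argument has a gap. In the Hodge-type case the paper does \emph{not} prove that $\pi_\HT$ factors through $\Fl_G=G_{\Q_p}/P_\mu$; it explicitly leaves this as a conjecture (see the footnote in Theorem~\ref{PerfShHodge}) and simply takes $\Fl$ to be the Siegel Lagrangian Grassmannian, with $G$ acting through the chosen embedding $G\hookrightarrow\GSp_{2g}$. Your sketch via Hodge tensors and de~Rham/Hodge--Tate functoriality is the right idea, but it is not a proof: one needs to know that the \'etale realizations of the Hodge tensors lie in the correct Hodge--Tate filtration step integrally and at every point of the perfectoid space (including the boundary), which requires a separate relative $p$-adic Hodge theory argument not supplied here. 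This factorization was established only later (Caraiani--Scholze). For the theorem as stated, it is unnecessary: the Siegel $\Fl$ already carries a $G$-action, and the automorphic bundle $\omega$ pulls back from it.
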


For a more precise version, we refer to Theorem \ref{PerfShHodge}. In fact, this result is deduced from a more precise version for the Siegel moduli space (by embedding the Shimura variety into the Siegel moduli space, using that it is of Hodge type). In that case, all semisimple automorphic vector bundles come via pullback from $\Fl$, cf. Theorem \ref{ExistenceHT}. For a more detailed description of these geometric results, we refer to the introduction of Chapter \ref{SiegelChapter}. We note that the existence of $\pi_\HT$ is new even for the moduli space of elliptic curves.

In particular, the ample line bundle $\omega_{K^p}$ on $\mathcal{S}_{K^p}$ comes via pullback from $\omega_\Fl$ on $\Fl$. Any section $s\in \omega_\Fl$ pulls back to a section of $\omega_{K^p}$ on $\mathcal{S}_{K^p}$ that commutes with the Hecke operators away from $p$, and thus serves as a substitute for the Hasse invariant. As $\pi_\HT$ is affine, there are enough of these fake-Hasse invariants. In fact, in the precise version of this argument, one ends up with some integral models of the Shimura variety together with an integral model of $\omega$ (constructed in Section \ref{FormalModelSection}), such that the fake-Hasse invariants are integral sections of $\omega$, and are defined at some finite level modulo any power $p^m$ of $p$. Interestingly, these integral models are not at all related to the standard integral models of Shimura varieties: E.g., there is no family of abelian varieties above the special fibre. Perhaps this explains why the existence of these fake-Hasse invariants (or of $\pi_\HT$) was not observed before -- they are only defined at infinite level, and if one wants to approximate them modulo powers of $p$, one has to pass to a strange integral model of the Shimura variety.

Finally, let us give a short description of the content of the different chapters. In Chapter \ref{PrelimChapter}, we collect some results that will be useful later. In particular, we prove a version of Riemann's Hebbarkeitssatz for perfectoid spaces, saying roughly that bounded functions on normal perfectoid spaces extend from complements of Zariski closed subsets. Unfortunately, the results here are not as general as one could hope, and we merely manage to prove exactly what we will need later. In Chapter \ref{SiegelChapter}, which forms the heart of this paper, we prove that the minimal compactification of the Siegel moduli space becomes perfectoid in the inverse limit over all levels at $p$, and that the Hodge-Tate period map exists on it, with its various properties. In Chapter \ref{AutomorphicChapter}, we give the automorphic consequences of this result to Shimura varieties of Hodge type, as sketched above. Finally, in Chapter \ref{GaloisReprChapter}, we deduce our main results on Galois representations.

{\bf Acknowledgments}. The geometric results on Shimura varieties (i.e., that they are perfectoid in the inverse limit, and that the Hodge-Tate period map exists on them), as well as the application to Theorem \ref{VanThm}, were known to the author for some time, and he would like to thank Matthew Emerton, Michael Harris, Michael Rapoport, Richard Taylor and Akshay Venkatesh for useful discussions related to them. The possibility to apply these results to the construction of Galois representations struck the author after listening to talks of Michael Harris and Kai-Wen Lan on their joint results with Richard Taylor and Jack Thorne, and he wants to thank them heartily for the explanation of their method. The author would also like to thank the organizers of the Hausdorff Trimester Program on Geometry and Arithmetic in January -- April 2013 in Bonn, where these talks were given. These results (especially Chapter \ref{SiegelChapter}) were the basis for the ARGOS seminar of the summer term 2013 in Bonn, and the author would like to thank all participants for working through this manuscript, their very careful reading, and their suggestions for improvements. Finally, he would like to thank Bhargav Bhatt, Vincent Pilloni, Michael Spiess, Benoit Stroh, Thomas Zink and in particular the referees for very useful feedback. This work was done while the author was a Clay Research Fellow.

\renewcommand{\thesection}{\thechapter.\arabic{section}}

\chapter{Preliminaries}\label{PrelimChapter}

This chapter provides various foundational statements needed in the main part of the paper.

In Section \ref{FormalModelSection}, we recall how one can construct formal models of rigid spaces starting from suitable affinoid covers. This will be used in the proof of Theorem \ref{ThmHeckeAlgebras} to construct new formal models of Shimura varieties, on which the `fake-Hasse invariants' are defined. In the context of Lemma \ref{ConstructionFormalModel}, these are given by the sections $\bar{s}_i$.

In Section \ref{ClosedEmbeddingsSection}, we define Zariski closed embeddings of perfectoid spaces, and prove various basic properties about this notion. In fact, this notion comes in two flavours, called Zariski closed, and strongly Zariski closed, respectively, and both notions are useful later. The most important property here is that something Zariski closed in a perfectoid space is again perfectoid. This is used later to deduce that Hodge type Shimura varieties are perfectoid at infinite level, once this is known for the Siegel case. On the other hand, it will be important to know that the boundary of the Shimura variety is strongly Zariski closed. Intuitively, this says that the boundary is `infinitely ramified': One extracts lots of $p$-power roots of defining equations of the boundary.

Finally, in Section \ref{HebbarkeitssatzSection}, we prove a version of Riemann's Hebbarkeitssatz for perfectoid spaces, saying roughly that bounded functions on normal perfectoid spaces extend from complements of Zariski closed subsets. \footnote{For a version of Riemann's Hebbarkeitssatz in the setting of usual rigid geometry, see \cite{BartenwerferHebbarkeit}.} This section is, unfortunately, extremely technical, and our results are just as general as needed later. The most important use of the Hebbarkeitssatz in this paper is to show the existence of the Hodge-Tate period map. A priori, we can only construct it away from the boundary, but the Hebbarkeitssatz guarantees that it extends to the boundary. However, there is a second use of the Hebbarkeitssatz in Section \ref{ConclusionSubsection}. Here, the situation is that one wants to show that a certain space is perfectoid, by showing that it is the untilt of an (obviously perfectoid) space in characteristic $p$. This is easy to show away from boundary; to deduce the result, one needs the Hebbarkeitssatz to control the whole space in terms of the complement of the boundary.

\section{Constructing formal models from affinoid covers}\label{FormalModelSection}

Let $K$ be a complete algebraically closed nonarchimedean field with ring of integers $\OO_K$. Choose some nonzero topologically nilpotent element $\varpi\in \OO_K$. We will need the following result on constructing formal models of rigid-analytic varieties.

\begin{lem}\label{ConstructionFormalModel} Let $\mathcal{X}$ be a reduced proper rigid-analytic variety over $K$, considered as an adic space. Let $\mathcal{L}$ be a line bundle on $\mathcal{X}$. Moreover, let $\mathcal{X} = \bigcup_{i\in I} \mathcal{U}_i$ be a cover of $\mathcal{X}$ by finitely many affinoid open subsets $\mathcal{U}_i = \Spa(R_i,R_i^+)$. \footnote{Here, $R_i^+ = R_i^\circ$ is the subset of powerbounded elements.} For $J\subset I$, let $\mathcal{U}_J = \bigcap_{i\in J} \mathcal{U}_i = \Spa(R_J,R_J^+)$. Assume that on each $\mathcal{U}_i$, one has sections
\[
s_j^{(i)}\in H^0(\mathcal{U}_i,\mathcal{L})
\]
for $j\in I$, satisfying the following conditions.
\begin{altenumerate}
\item[{\rm (i)}] For all $i\in I$, $s_i^{(i)}$ is invertible, and
\[
\frac{s_j^{(i)}}{s_i^{(i)}}\in H^0(\mathcal{U}_i,\OO_\mathcal{X}^+)\ .
\]
\item[{\rm (ii)}] For all $i, j\in I$, the subset $\mathcal{U}_{ij}\subset \mathcal{U}_i$ is defined by the condition
\[
|\frac{s_j^{(i)}}{s_i^{(i)}}| = 1\ .
\]
\item[{\rm (iii)}] For all $i_1, i_2, j\in I$,
\[
|\frac{s_j^{(i_1)}}{s_{i_1}^{(i_1)}} - \frac{s_j^{(i_2)}}{s_{i_1}^{(i_1)}}|\leq |\varpi|
\]
on $\mathcal{U}_{i_1i_2}$.
\end{altenumerate}
Then for $J\subset J^\prime$, the map $\Spf R_{J^\prime}^+\to \Spf R_J^+$ is an open embedding of formal schemes, formally of finite type. Gluing them defines a formal scheme $\mathfrak{X}$ over $\OO_K$ with an open cover by $\mathfrak{U}_i = \Spf R_i^+$; define also $\mathfrak{U}_J = \bigcap_{i\in J} \mathfrak{U}_i = \Spf R_J^+$. The generic fibre of $\mathfrak{X}$ is given by $\mathcal{X}$.

Moreover, there is a unique invertible sheaf $\mathfrak{L}$ on $\mathfrak{X}$ with generic fibre $\mathcal{L}$, and such that
\[
s_j^{(i)}\in H^0(\mathfrak{U}_i,\mathfrak{L})\subset H^0(\mathcal{U}_i,\mathcal{L}) = H^0(\mathfrak{U}_i,\mathfrak{L})[\varpi^{-1}]\ ,
\]
with $s_i^{(i)}$ being an invertible section. There are unique sections
\[
\bar{s}_j\in H^0(\mathfrak{X},\mathfrak{L}/\varpi)
\]
such that for all $i\in I$, $\bar{s}_j = s_j^{(i)}\mod \varpi\in H^0(\mathfrak{U}_i,\mathfrak{L}/\varpi)$.

Furthermore, $\mathfrak{X}$ is projective, and $\mathfrak{L}$ is ample.
\end{lem}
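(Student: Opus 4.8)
The plan is to obtain $\mathfrak{X}$ by gluing the canonical formal models $\Spf R_J^+$ of the affinoids $\mathcal{U}_J$, to recognise the transition maps as open immersions using (i) and (ii), to transport $\mathcal{L}$ together with its sections to $\mathfrak{X}$ using (iii) and $\varpi$-adic completeness, and finally to deduce ampleness of $\mathfrak{L}$ from (ii) and projectivity of $\mathfrak{X}$ from properness of $\mathcal{X}$. First, since $\mathcal{X}$ is reduced and $K$ is algebraically closed, each $R_J$ is a reduced affinoid $K$-algebra, so $R_J^+=R_J^\circ$ is the integral closure of $\OO_K$ in $R_J$; by the theory of formal models of reduced affinoids (e.g.\ Bosch--G\"untzer--Remmert) I may take $\Spf R_J^+$ to be an admissible formal $\OO_K$-scheme, formally of finite type, with generic fibre $\mathcal{U}_J$. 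Set $f_j^{(i)}:=s_j^{(i)}/s_i^{(i)}$, which lies in $H^0(\mathcal{U}_i,\OO_{\mathcal X}^+)=R_i^+$ by (i) (and $f_i^{(i)}=1$). For $i\in J\subset J'$ one has $\mathcal{U}_{J'}=\mathcal{U}_J\cap\bigcap_{j\in J'\setminus J}\mathcal{U}_j$, and by (ii) this is the rational subset of $\mathcal{U}_J$ on which each $f_j^{(i)}$ becomes invertible; hence $R_{J'}^+$ is the $\varpi$-adic completion of $R_J^+\bigl[(f_j^{(i)})^{-1}:j\in J'\setminus J\bigr]$, so $\Spf R_{J'}^+$ is identified with the open formal subscheme $\{\overline{f_j^{(i)}}\neq 0\ \forall j\}$ of $\Spf R_J^+$, and in particular $\Spf R_{J'}^+\to\Spf R_J^+$ is an open immersion (independent of the auxiliary $i\in J$, as one checks). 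Gluing the $\Spf R_i^+$ along these opens yields a formal scheme $\mathfrak{X}$, formally of finite type over $\OO_K$, with affine cover $\mathfrak{U}_i=\Spf R_i^+$, overlaps $\mathfrak{U}_J=\Spf R_J^+$, and generic fibre $\mathcal{X}$.

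Next I would trivialise $\mathfrak{L}$ over each $\mathfrak{U}_i$ by declaring $s_i^{(i)}$ to be a basis, which is legitimate by (i); the transition function on $\mathfrak{U}_{i_1i_2}$ is then $t_{i_1i_2}=s_{i_2}^{(i_2)}/s_{i_1}^{(i_1)}$, and I must show it is a unit in $R_{i_1i_2}^+$. Writing $t_{i_1i_2}=f_{i_2}^{(i_1)}+\bigl(s_{i_2}^{(i_2)}-s_{i_2}^{(i_1)}\bigr)/s_{i_1}^{(i_1)}$, the first summand lies in $R_{i_1}^+$ and has absolute value $1$ on $\mathcal{U}_{i_1i_2}$ by (ii), hence is a unit there, while the second has absolute value $\leq|\varpi|$ on $\mathcal{U}_{i_1i_2}$ by (iii), hence lies in $\varpi R_{i_1i_2}^+$ (over an algebraically closed field, $\{|x|\leq|\varpi|\}=\varpi R^+$ for a reduced affinoid); by $\varpi$-adic completeness $t_{i_1i_2}$ is then a unit. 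The cocycle condition holds automatically because the $t_{i_1i_2}$ are ratios of honest sections, so $\mathfrak{L}$ exists; its transition functions are by construction those of $\mathcal{L}$ relative to the trivialisations $s_i^{(i)}$, so its generic fibre is $\mathcal{L}$, and $\mathfrak{L}$ is unique since necessarily $\mathfrak{L}|_{\mathfrak{U}_i}=\OO_{\mathfrak{U}_i}\cdot s_i^{(i)}$. Under these trivialisations $s_j^{(i)}$ corresponds to $f_j^{(i)}\in R_i^+$, so $s_j^{(i)}\in H^0(\mathfrak{U}_i,\mathfrak{L})$; and (iii) says exactly that $s_j^{(i_1)}\equiv s_j^{(i_2)}\pmod{\varpi\mathfrak{L}}$ on $\mathfrak{U}_{i_1i_2}$, so the reductions $s_j^{(i)}\bmod\varpi$ glue to a section $\bar{s}_j\in H^0(\mathfrak{X},\mathfrak{L}/\varpi)$, unique because it is prescribed on the cover.

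For the final clause it suffices to prove ampleness of $\mathfrak{L}/\varpi$ on the scheme $\bar{\mathfrak{X}}:=\mathfrak{X}/\varpi$, ampleness of a line bundle on a $\varpi$-adic formal scheme being a condition on its reduction modulo $\varpi$. Now $\bar{\mathfrak{X}}$ is of finite type over $\OO_K/\varpi$, covered by the finitely many affines $\bar{\mathfrak{U}}_i=\Spec R_i^+/\varpi$ with intersections $\bar{\mathfrak{U}}_{ij}=D(\overline{f_j^{(i)}})\subseteq\bar{\mathfrak{U}}_i$ by (ii), hence quasi-compact and quasi-separated. The restriction of $\bar{s}_i$ to $\bar{\mathfrak{U}}_j$ equals $\overline{f_i^{(j)}}\cdot\bar{s}_j$ with $\bar{s}_j$ a unit there, so the non-vanishing locus of $\bar{s}_i$ is exactly $\bigcup_j\bar{\mathfrak{U}}_{ij}=\bar{\mathfrak{U}}_i$, which is affine; since the $\bar{\mathfrak{U}}_i$ cover $\bar{\mathfrak{X}}$, the standard ampleness criterion gives that $\mathfrak{L}/\varpi$, hence $\mathfrak{L}$, is ample. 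Finally, since $\mathfrak{X}_\eta=\mathcal{X}$ is proper, $\mathfrak{X}$ is proper over $\Spf\OO_K$ by the theory of formal models, and a proper admissible formal $\OO_K$-scheme carrying an ample line bundle is projective: a high enough power $\mathfrak{L}^{\otimes m}$ realises $\mathfrak{X}$ as a closed formal subscheme of $\widehat{\mathbb{P}}^N_{\OO_K}$, applying the scheme-theoretic statement to the truncations $\mathfrak{X}/\varpi^n$ and passing to the inverse limit.

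I expect the main obstacle to be the first step: for a general complete algebraically closed $K$ the ring $\OO_K$ is non-Noetherian, so one has to rely on the (nontrivial) theory of formal models of reduced affinoid algebras both to know that $\Spf R_J^+$ is admissible and formally of finite type and, crucially, that a rational localisation of the shape ``$f$ becomes invertible'' with $f\in R^\circ$ is exactly the completed localisation $R^\circ[f^{-1}]^\wedge$ --- that is, that passing to such a subdomain creates no unexpected power-bounded elements. Everything after that is comparatively soft: the descent of $\mathcal{L}$ and of the sections uses only (i)--(iii) and $\varpi$-adic completeness, ampleness falls out of (ii), and the one remaining care, namely that ``proper plus ample implies projective'' over the non-Noetherian base $\OO_K$, is handled through the truncations $\OO_K/\varpi^n$ and the standard formal-models dictionary.
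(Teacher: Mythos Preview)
Your proof is correct and follows essentially the same approach as the paper: both glue the $\Spf R_J^+$ via the observation that $R_{J'}^+$ is the $\varpi$-adic completion of $R_J^+[f^{-1}]$ (citing BGR for the topological finite type over the non-Noetherian $\OO_K$), trivialise $\mathfrak{L}$ by the $s_i^{(i)}$ and use (ii), (iii) to glue and produce the $\bar{s}_j$, and then deduce ampleness of $\mathfrak{L}/\varpi$ from the fact that the non-vanishing loci of the $\bar{s}_i$ are the affines $\mathfrak{U}_i/\varpi$. Your self-identified sticking point is exactly the step the paper also leaves to BGR and a ``formal check''.
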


\begin{proof} By \cite[Section 6.4.1, Corollary 5]{BoschGuentzerRemmert}, $R_J^+$ is topologically of finite type over $\OO_K$. By assumption (i), there is some $f\in R_J^+$ such that $\mathcal{U}_{J^\prime}\subset \mathcal{U}_J$ is defined by $|f|=1$. One formally checks that this implies that $R_{J^\prime}^+$ is the $\varpi$-adic completion of $R_J^+[f^{-1}]$. In particular, $\Spf R_{J^\prime}^+\to \Spf R_J^+$ is an open embedding. One gets $\mathfrak{X}$ by gluing, and its generic fibre is $\mathcal{X}$. As $\mathcal{X}$ is proper, it follows that $\mathfrak{X}$ is proper, cf. \cite[Remark 1.3.18 (ii)]{Huber}. \footnote{Note that there is only one notion of a proper rigid space, i.e. in \cite[Remark 1.3.19 (iv)]{Huber}, conditions (a) and (b) are always equivalent, not only if the nonarchimedean field is discretely valued. This follows from the main result of \cite{Temkin}.}

To define $\mathfrak{L}$, we want to glue the free sheaves $\mathfrak{L}_i = s_i^{(i)} \OO_{\mathfrak{U}_i}$ of rank $1$ on $\mathfrak{U}_i$. Certainly, $\mathfrak{L}_i$ satisfies the conditions on $\mathfrak{L}|_{\mathfrak{U}_i}$, and is the unique such invertible sheaf on $\mathfrak{U}_i$. To show that they glue, we need to identify $\mathfrak{L}_i|_{\mathfrak{U}_{ij}}$ with $\mathfrak{L}_j|_{\mathfrak{U}_{ij}}$. By (ii), $\mathfrak{L}_i|_{\mathfrak{U}_{ij}}$ is freely generated by $s_j^{(i)}$. Also, by (iii) (applied with $i_1=j$, $i_2=i$), $\mathfrak{L}_j|_{\mathfrak{U}_{ij}}$ is freely generated by $s_j^{(i)}$, giving the desired equality.

To show that there are the sections $\bar{s}_j\in H^0(\mathfrak{X},\mathfrak{L}/\varpi)$, we need to show that for all $i_1$, $i_2$ and $j$,
\[
s_j^{(i_1)}\equiv s_j^{(i_2)}\mod \varpi\in H^0(\mathfrak{U}_{i_1i_2},\mathfrak{L}/\varpi)\ .
\]
Dividing by $s_{i_1}^{(i_1)}$ translates this into condition (iii).

It remains to prove that $\mathfrak{L}$ is ample. For this, it is enough to prove that $\mathfrak{L}/\varpi$ is ample on $\mathfrak{X}\times_{\Spf \OO_K} \Spec \OO_K/\varpi$. Here, the affine complements $\mathfrak{U}_i\times_{\Spf \OO_K} \Spec \OO_K/\varpi$ of the vanishing loci of the sections $\bar{s}_i$ cover, giving the result.
\end{proof}

We will need a complement on this result, concerning ideal sheaves.

\begin{lem}\label{ConstructionFormalModelIdeal} Assume that in the situation of Lemma \ref{ConstructionFormalModel}, one has a coherent ideal sheaf $\mathcal{I}\subset \mathcal{O}_\mathcal{X}$. Then the association
\[
\mathfrak{U}_i\mapsto H^0(\mathcal{U}_i,\mathcal{I}\cap \OO_\mathcal{X}^+)
\]
extends uniquely to a coherent $\OO_\mathfrak{X}$-module $\mathfrak{I}$, with generic fibre $\mathcal{I}$.
\end{lem}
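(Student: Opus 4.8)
The plan is to mimic the construction of $\mathfrak{L}$ in Lemma~\ref{ConstructionFormalModel}, replacing the local free modules $\mathfrak{L}_i$ by the local ideals $\mathcal{I}\cap\OO_\mathcal{X}^+$ and checking that they glue and are coherent. First I would set, for each $i\in I$, $\mathfrak{I}_i = H^0(\mathcal{U}_i,\mathcal{I}\cap\OO_\mathcal{X}^+)$, viewed as an ideal of $R_i^+$. The first point is that $\mathfrak{I}_i$ is a coherent $R_i^+$-module: since $R_i$ is an affinoid $K$-algebra, $R_i^+ = R_i^\circ$ is a Noetherian ring (this is where reducedness and properness, via \cite[Section~6.4.1, Corollary~5]{BoschGuentzerRemmert} as used in the proof of Lemma~\ref{ConstructionFormalModel}, enter — $R_i^+$ is topologically of finite type over $\OO_K$, hence Noetherian), and $\mathfrak{I}_i$ is an ideal in it, so it is finitely generated, and its $\varpi$-adic completion is itself. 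Thus $\mathfrak{I}_i$ defines a coherent sheaf on $\mathfrak{U}_i = \Spf R_i^+$, with generic fibre $\mathcal{I}|_{\mathcal{U}_i}$ because inverting $\varpi$ recovers $H^0(\mathcal{U}_i,\mathcal{I})$ (using that $\mathcal{I}$ is coherent and $\mathcal{U}_i$ affinoid, so $H^0(\mathcal{U}_i,\mathcal{I})$ generates $\mathcal{I}|_{\mathcal{U}_i}$).

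Next I would check the gluing on overlaps. For $i,j\in I$ recall from the proof of Lemma~\ref{ConstructionFormalModel} that $\mathfrak{U}_{ij}\subset\mathfrak{U}_i$ is the open formal subscheme corresponding to the $\varpi$-adic completion of $R_i^+[f^{-1}]$ for a suitable $f$ (coming from assumption (i)), and similarly $\mathfrak{U}_{ij}\subset\mathfrak{U}_j$. The restriction of the sheaf associated to $\mathfrak{I}_i$ to $\mathfrak{U}_{ij}$ is the sheaf associated to $\mathfrak{I}_i\widehat{\otimes}_{R_i^+} R_{ij}^+$, and I need to identify this with $H^0(\mathcal{U}_{ij},\mathcal{I}\cap\OO_\mathcal{X}^+)$, symmetrically in $i$ and $j$. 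Concretely: both $\mathfrak{I}_i\widehat{\otimes}_{R_i^+}R_{ij}^+$ and $\mathfrak{I}_j\widehat{\otimes}_{R_j^+}R_{ij}^+$, after inverting $\varpi$, give $H^0(\mathcal{U}_{ij},\mathcal{I})$; I must show both equal the integral ideal $H^0(\mathcal{U}_{ij},\mathcal{I}\cap\OO_\mathcal{X}^+) = H^0(\mathcal{U}_{ij},\mathcal{I})\cap R_{ij}^+$ inside $R_{ij}$. One inclusion is clear. For the reverse, the point is that a coherent ideal localizes well: the localization map $R_i^+\to R_{ij}^+$ is flat (it is a completed localization of a Noetherian ring at an affinoid rational-type condition $|f|=1$, i.e. localization followed by $\varpi$-adic completion), so $\mathfrak{I}_i\widehat{\otimes}_{R_i^+}R_{ij}^+\hookrightarrow R_{ij}^+$ is injective with image exactly the functions in $H^0(\mathcal{U}_{ij},\mathcal{I})$ that are powerbounded — one reduces to a statement about finitely generated ideals in Noetherian rings and uses that a powerbounded element of $R_{ij}$ lying in $\mathcal{I}$ is, Zariski-locally on the affinoid $\mathcal{U}_{ij}$, an $R_{ij}^+$-combination of generators of $\mathfrak{I}_i$. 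Granting this, $\mathfrak{I}_i$ and $\mathfrak{I}_j$ agree on $\mathfrak{U}_{ij}$, the cocycle condition on triple overlaps is automatic since all sheaves are subsheaves of $\OO_\mathfrak{X}$, and the $\mathfrak{I}_i$ glue to a coherent $\OO_\mathfrak{X}$-module $\mathfrak{I}\subset\OO_\mathfrak{X}$. Uniqueness is forced because any coherent $\OO_\mathfrak{X}$-module restricting to $H^0(\mathcal{U}_i,\mathcal{I}\cap\OO_\mathcal{X}^+)$ on each $\mathfrak{U}_i$ is determined on the cover; and the generic fibre of $\mathfrak{I}$ is $\mathcal{I}$ since this holds on each $\mathfrak{U}_i$.

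The main obstacle I expect is the gluing/compatibility step: verifying that $H^0(\mathcal{U}_{ij},\mathcal{I}\cap\OO_\mathcal{X}^+)$ is genuinely the base change $H^0(\mathcal{U}_i,\mathcal{I}\cap\OO_\mathcal{X}^+)\widehat{\otimes}_{R_i^+}R_{ij}^+$ rather than something larger. This is a statement that "taking the integral part of a coherent ideal" commutes with the kind of rational localization $|f|=1$ occurring here. It should follow from flatness of $R_i^+\to R_{ij}^+$ together with the fact that $\mathcal{I}\cap\OO_\mathcal{X}^+$ is coherent as a sheaf on the formal model (equivalently, that $\OO_\mathcal{X}^+$-coherent subsheaves of $\OO_\mathcal{X}^+$ are well-behaved under these localizations), but making this precise requires a small argument with Noetherian completed localizations; everything else is routine bookkeeping parallel to the construction of $\mathfrak{L}$ in Lemma~\ref{ConstructionFormalModel}.
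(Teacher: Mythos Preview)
Your argument has a genuine gap at the very first step: you assert that $R_i^+ = R_i^\circ$ is Noetherian because it is topologically of finite type over $\OO_K$. But $K$ is a complete \emph{algebraically closed} nonarchimedean field, so $\OO_K$ is a non-discrete rank-$1$ valuation ring and in particular is \emph{not} Noetherian. Consequently $R_i^+$ is not Noetherian either, and the claim that the ideal $H^0(\mathcal{U}_i,\mathcal{I}\cap\OO_\mathcal{X}^+)\subset R_i^+$ is automatically finitely generated fails. The same problem undermines your gluing step, where you invoke flatness of the completed localization $R_i^+\to R_{ij}^+$ ``of a Noetherian ring'' and reduce to ``finitely generated ideals in Noetherian rings.''

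The paper handles exactly this point by citing \cite[Lemma~1.2~(c), Proposition~1.3]{BoschLuetkebohmert1}: these are results on formal models of coherent sheaves over affinoid algebras that establish, without any Noetherian hypothesis on $R_i^+$, that $H^0(\mathcal{U}_i,\mathcal{I}\cap\OO_\mathcal{X}^+)$ is a coherent $R_i^+$-module. The compatibility under the localizations $|f|=1$ then also comes out of this framework (the paper asserts directly that $H^0(\mathcal{U}_{J'},\mathcal{I}\cap\OO_\mathcal{X}^+)$ is the $\varpi$-adic completion of $H^0(\mathcal{U}_J,\mathcal{I}\cap\OO_\mathcal{X}^+)[f^{-1}]$). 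Your overall strategy is the right one and matches the paper's, but the substantive input you need is precisely this Bosch--L\"utkebohmert coherence result in place of the false Noetherianity claim.
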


\begin{proof} From \cite[Lemma 1.2 (c), Proposition 1.3]{BoschLuetkebohmert1}, it follows that $H^0(\mathcal{U}_i,\mathcal{I}\cap \OO_\mathcal{X}^+)$ is a coherent $R_i^+$-module. One checks that as $\mathcal{U}_{J^\prime}\subset \mathcal{U}_J$ for $J\subset J^\prime$ is defined by the condition $|f|=1$ for some $f\in R_J^+$, $H^0(\mathcal{U}_{J^\prime},\mathcal{I}\cap \OO_\mathcal{X}^+)$ is given as the $\varpi$-adic completion of $H^0(\mathcal{U}_J,\mathcal{I}\cap \OO_\mathcal{X}^+)[f^{-1}]$. Thus, these modules glue to give the desired coherent $\OO_\mathfrak{X}$-module $\mathfrak{I}$. From the definition, it is clear that the generic fibre of $\mathfrak{I}$ is $\mathcal{I}$.
\end{proof}

\section{Closed Embeddings of perfectoid spaces}\label{ClosedEmbeddingsSection}

Let $K$ be a perfectoid field with tilt $K^\flat$. Fix some element $0\neq \varpi^\flat\in K^\flat$ with $|\varpi^\flat|<1$, and set $\varpi = (\varpi^\flat)^\sharp\in K$. Let $\mathcal{X}=\Spa(R,R^+)$ be an affinoid perfectoid space over $K$.

\begin{definition} A subset $Z\subset |\mathcal{X}|$ is Zariski closed if there is an ideal $I\subset R$ such that
\[
Z = \{x\in \mathcal{X}\mid |f(x)| = 0\ \mathrm{for\ all}\ f\in I\}\ .
\]
\end{definition}

\begin{lem}\label{ZariskiClosed} Assume that $Z\subset \mathcal{X}$ is Zariski closed. There is a universal perfectoid space $\mathcal{Z}$ over $K$ with a map $\mathcal{Z}\to \mathcal{X}$ for which $|\mathcal{Z}|\to |\mathcal{X}|$ factors over $Z$. The space $\mathcal{Z}=\Spa(S,S^+)$ is affinoid perfectoid, the map $R\to S$ has dense image, and the map $|\mathcal{Z}|\to Z$ is a homeomorphism.
\end{lem}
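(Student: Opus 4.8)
The plan is to construct $\mathcal{Z}=\Spa(S,S^+)$ explicitly, by taking a completed quotient of $R$ by the ideal $I$, and to verify that it is perfectoid with the required universal property. First I would reduce to the case where $I$ is closed: passing from $I$ to its closure $\bar I$ does not change the vanishing locus $Z$ (since $|f(x)|$ is a continuous function of $f$), so we may replace $I$ by $\bar I$ and assume $I$ is a closed ideal of the Banach $K$-algebra $R$. Form the quotient Banach algebra $B = R/I$ with its residue norm, let $B^+$ be the integral closure of the image of $R^+$ in $B$, and then I would want $S$ to be a perfectoid completion of $B$. The point is that $B$ is already "almost perfectoid": the Frobenius on $R^\circ/\varpi$ is an almost isomorphism, and one checks this descends to $B^\circ/\varpi$ up to the issue that $B$ need not be uniform. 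So I would pass to the uniform completion — equivalently, complete $B^+$ with respect to the topology making $(B^\circ)$ bounded, i.e. take $S^+ = (B^+)^{\wedge}$ with respect to the $\varpi$-adic topology after replacing $B^+$ by the subring of power-bounded elements — and set $S = S^+[\varpi^{-1}]$, $\mathcal{Z} = \Spa(S,S^+)$. A cleaner route, which I would actually follow: work on the tilted side. Choose the closed ideal $I^\flat \subset R^\flat$ generated by the "tilts" (more precisely, by elements whose sharps generate $I$ up to the relevant completions), form $R^\flat/I^\flat$, take its perfection and $\varpi^\flat$-adic completion to get an honestly perfectoid $K^\flat$-algebra $S^\flat$, and then untilt. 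Since $Z$ is cut out by the vanishing of functions, the vanishing locus is insensitive to tilting in the appropriate sense, and the untilt $S$ of $S^\flat$ is perfectoid over $K$ by the tilting equivalence.

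The key steps, in order, would be: (1) reduce to $I$ closed; (2) produce a candidate perfectoid algebra $S$ with a map $R\to S$ killing $I$ and with dense image — I expect the tilting construction above to be the most robust way, since perfectoidness is then automatic; (3) check that $|\mathcal{Z}| \to |\mathcal{X}|$ has image exactly $Z$ and that $|\mathcal{Z}| \to Z$ is a homeomorphism — for this I would use that a continuous valuation on $S$ pulls back to one on $R$ vanishing on $I$, hence landing in $Z$, and conversely a point of $Z$ gives a valuation on $R$ that factors through $R/I$ and extends to the completion $S$ because the residue norm is compatible; injectivity and openness follow from the dense image of $R\to S$, which forces the topology on $|\mathcal{Z}|$ to be the subspace topology from $|\mathcal{X}|$; (4) verify the universal property: given any perfectoid $\mathcal{Y}\to\mathcal{X}$ over $K$ with $|\mathcal{Y}|\to|\mathcal{X}|$ factoring through $Z$, the corresponding map $R\to \OO_{\mathcal{Y}}(\mathcal{Y})$ kills every $f\in I$ (because $|f|$ vanishes at all points of $\mathcal{Y}$, and a uniform Banach algebra — which a perfectoid algebra is — is reduced, with $|f|_{\sup}=0 \Rightarrow f=0$), hence factors through $B=R/I$, and then through its perfectoid completion $S$ by the universal property of completion among perfectoid algebras. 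This last implication — that $f=0$ in $\OO_{\mathcal{Y}}(\mathcal{Y})$ once all $|f(y)|=0$ — is exactly where uniformity/reducedness of perfectoid rings is used and is what makes $S$ (rather than some larger quotient) the universal solution.

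The main obstacle I anticipate is step (2)–(3): controlling the ring of integral elements $S^+$ and confirming that $|\mathcal{Z}|$ is genuinely homeomorphic to $Z$ with its subspace topology, rather than just continuously bijective. The subtlety is that $R\to S$ need not be surjective (only dense image), so one cannot naively say $S = R/I$; one must argue that rational subsets of $\mathcal{Z}$ are generated by pullbacks of rational subsets of $\mathcal{X}$, which uses the density of $R$ in $S$ together with the observation that the defining inequalities $|f|\le|g|\ne 0$ can be perturbed by elements of $I$ without changing the locus inside $Z$. A secondary technical point is checking that $S$ is actually perfectoid and not merely complete — this is why I prefer to build $S$ as an untilt of a manifestly perfectoid $S^\flt$, or alternatively to invoke that a uniform Banach $K$-algebra whose Frobenius is surjective mod $\varpi$ up to $\varpi$-torsion is perfectoid. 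Once perfectoidness is in hand, the homeomorphism $|\mathcal{Z}|\xrightarrow{\sim} Z$ and the density of $R\to S$ should be formal consequences of the construction.
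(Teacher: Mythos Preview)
Your approach differs substantially from the paper's and has a real gap in step~(2). The paper never attempts to form a quotient of $R$. Instead it writes $Z$ as the cofiltered intersection of the rational subsets $U_{f_1,\ldots,f_k}=\{x:|f_i(x)|\le 1\}$ for $f_1,\ldots,f_k\in I$ (rescaling the $f_i$ by powers of $\varpi$ shrinks these neighborhoods down to $Z$). Each $U_{f_1,\ldots,f_k}=\Spa(R_{f_1,\ldots,f_k},R_{f_1,\ldots,f_k}^+)$ is already affinoid perfectoid by the basic theory, with $R\to R_{f_1,\ldots,f_k}$ of dense image. Setting $S^+$ equal to the $\varpi$-adic completion of $\varinjlim R_{f_1,\ldots,f_k}^+$ and $S=S^+[\varpi^{-1}]$, perfectoidness is immediate: $S^+/\varpi$ is a filtered colimit of rings on each of which Frobenius is an almost isomorphism. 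The remaining assertions then fall out directly. This construction never leaves the category of affinoid perfectoid spaces, which is exactly what makes it clean.

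Your preferred tilting route is not well-defined as written: the sharp map $R^\flat\to R$ is multiplicative but neither additive nor surjective, so there is no evident ideal $I^\flat\subset R^\flat$ ``generated by elements whose sharps generate $I$.'' In fact, whether the subset of $|\mathcal{X}^\flat|\cong|\mathcal{X}|$ corresponding to $Z$ is Zariski closed in $\mathcal{X}^\flat$ is precisely what separates \emph{strongly} Zariski closed from merely Zariski closed in the paper's subsequent discussion, so you cannot assume it here. Your alternative via the uniform completion of $B=R/\bar I$ also lacks the key step: you need almost surjectivity of Frobenius on $B^\circ/\varpi$, but the image of $R^\circ$ in $B$ need not be all of $B^\circ$, so the almost surjectivity on $R^\circ/\varpi$ does not obviously descend. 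The example $R=K\langle T^{1/p^\infty}\rangle$, $I=(T-1)$ in the remark following the lemma (where $R\to S$ fails to be surjective) already signals that the relationship between $S$ and naive quotients of $R$ is delicate in characteristic~$0$. Your arguments for steps~(3) and~(4) are fine in outline, but they presuppose a correctly constructed perfectoid $S$, which is what the rational-subset colimit supplies and your proposal does not.
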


As the proof uses some almost mathematics, let us recall that an $\OO_K$-module $M$ is almost zero if it is killed by the maximal ideal $\mathfrak{m}_K$ of $\OO_K$. The category of almost $\OO_K$-modules, or $\OO_K^a$-modules, is by definition the quotient of the category of $\OO_K$-modules by the thick subcategory of almost zero modules. There are two functors from $\OO_K^a$-modules to $\OO_K$-modules, right and left adjoint to the forgetful functor. The first is
\[
M\mapsto M_\ast = \Hom_{\OO_K^a}(\OO_K^a,M)\ ,
\]
and the second is
\[
M\mapsto M_! = \mathfrak{m}_K\otimes_{\OO_K} M_\ast\ .
\]
The existence of left and right adjoints implies that the forgetful functor $N\mapsto N^a$ commutes with all limits and colimits. For this reason, we are somewhat sloppy in the following on whether we take limits and colimits of actual modules or almost modules, if we are only interested in an almost module in the end. For more discussion of almost mathematics, cf. \cite{GabberRamero} and, for a very brief summary, \cite[Section 4]{ScholzePerfectoid}.

\begin{proof} One can write $Z\subset |\mathcal{X}|$ as an intersection $Z = \bigcap_{Z\subset U} U$ of all rational subsets $U\subset X$ containing $Z$. Indeed, for any $f_1,\ldots,f_k\in I$, one has the rational subset
\[
U_{f_1,\ldots,f_k} = \{x\in \mathcal{X}\mid |f_i(x)|\leq 1\ ,\ i=1,\ldots,k\}\ ,
\]
and $Z$ is the intersection of these subsets. Any $U_{f_1,\ldots,f_k}=\Spa(R_{f_1,\ldots,f_k},R_{f_1,\ldots,f_k}^+)$ is affinoid perfectoid, where
\[
R_{f_1,\ldots,f_k} = R\langle T_1,\ldots,T_k\rangle/\overline{(T_i - f_i)}\ .
\]
In particular, $R\to R_{f_1,\ldots,f_k}$ has dense image. Let $S^+$ be the $\varpi$-adic completion of $\varinjlim R_{f_1,\ldots,f_k}^+$; thus,
\[
S^+/\varpi = \varinjlim R_{f_1,\ldots,f_k}^+/\varpi\ .
\]
It follows that Frobenius induces an almost isomorphism $(S^+/\varpi^{1/p})^a\cong (S^+/\varpi)^a$, so that $(S^+/\varpi)^a$ is a perfectoid $(\OO_K/\varpi)^a$-algebra. Thus, $S^{+a}$ is a perfectoid $\OO_K^a$-algebra, and $S=S^+[\varpi^{-1}]$ is a perfectoid $K$-algebra, cf. \cite[Section 5]{ScholzePerfectoid}. Let $\mathcal{Z} = \Spa(S,S^+)$. All properties are readily deduced.
\end{proof}

\begin{rem} More precisely, for any affinoid $K$-algebra $(T,T^+)$ for which $T^+\subset T$ is bounded, and any map $(R,R^+)\to (T,T^+)$ for which $\Spa(T,T^+)\to \Spa(R,R^+)$ factors over $Z$, there is a unique factorization $(R,R^+)\to (S,S^+)\to (T,T^+)$. This follows directly from the proof, using that $T^+$ is bounded in proving that the map
\[
\varinjlim R_{f_1,\ldots,f_k}^+\to T^+
\]
extends by continuity to the $\varpi$-adic completion $S^+$.
\end{rem}

We will often identify $Z=\mathcal{Z}$, and say that $\mathcal{Z}\to \mathcal{X}$ is a (Zariski) closed embedding.

\begin{rem} We caution the reader that in general, the map $R\to S$ is not surjective. For an example, let $R=K\langle T^{1/p^\infty}\rangle$ for some $K$ of characteristic $0$, and look at the Zariski closed subset defined by $I=(T-1)$.
\end{rem}

\begin{lem}\label{StronglyZariskiClosedCharP} Assume that $K$ is of characteristic $p$, and that $\mathcal{Z}=\Spa(S,S^+)\to \mathcal{X}=\Spa(R,R^+)$ is a closed embedding. Then the map $R^+\to S^+$ is almost surjective. (In particular, $R\to S$ is surjective.)
\end{lem}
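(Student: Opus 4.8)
The plan is to reduce to the construction of $S$ from the proof of Lemma~\ref{ZariskiClosed} and exploit that in characteristic $p$, Frobenius is available on honest rings (not just almost modules), which forces the relevant quotients to already be almost perfect. Concretely, recall that $S^+$ is the $\varpi$-adic completion of $\varinjlim R_{f_1,\ldots,f_k}^+$, so it suffices to treat a single rational subset $U_{f_1,\ldots,f_k} = \Spa(R', R'^+)$ with $R' = R\langle T_1,\ldots,T_k\rangle/\overline{(T_i-f_i)}$, and then pass to the filtered colimit and complete (almost surjectivity is preserved under both, since $\mathfrak{m}_K$ is flat and finitely-generated-up-to-$\mathfrak{m}_K$ considerations behave well under $\varpi$-adic completion).

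The key step is to show $R^+\to R'^+$ is almost surjective. First I would note that, since $K$ has characteristic $p$ and $R$ is perfectoid, $R^+$ is (up to almost isomorphism) perfect: the Frobenius $R^+/\varpi^{1/p}\to R^+/\varpi$ is an almost isomorphism, and similarly for $R'^+$. Now $R'^+$ is, almost, the integral closure of (the image of) $R^+\langle T_1^{1/p^\infty},\ldots,T_k^{1/p^\infty}\rangle/(T_i - f_i)$; but modulo the relations $T_i = f_i$ and using that $f_i\in R^+$ with $R^+$ perfect, each $T_i^{1/p^m} = f_i^{1/p^m}$ already lies in $R^+$. Hence the map $R^+\to R'^+$ has dense image for the $\varpi$-adic topology (in fact the subring generated by $R^+$ is $\varpi$-adically dense), and combining with the perfectoid/almost-purity formalism — the almost finitely presented, almost projective structure of $R'^+$ over $R^+$ in the characteristic $p$ setting, cf.~\cite[Section 5]{ScholzePerfectoid} — one upgrades density to almost surjectivity: an element of $R'^+$ is an almost-$\varpi$-adic limit of elements pulled back from $R^+$, and $\varpi$-adic completeness of $R^+$ together with the almost-flatness of $\mathfrak{m}_K$ lets one assemble an actual (almost) preimage. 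The parenthetical statement $R\to S$ surjective then follows by inverting $\varpi$: almost surjectivity of $R^+\to S^+$ becomes honest surjectivity after tensoring with $K = \OO_K[\varpi^{-1}]$, since $\mathfrak{m}_K[\varpi^{-1}] = K$.

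The main obstacle I anticipate is the passage from \emph{dense image} to \emph{almost surjectivity}: density of $R^+\to S^+$ is essentially formal from the construction, but in characteristic $0$ it genuinely fails to be almost surjective (as the Remark with $I = (T-1)$, $R = K\langle T^{1/p^\infty}\rangle$ shows), so the argument must use Frobenius in an essential way. The cleanest route is probably: take $x\in S^+$, approximate $x \equiv x_0 \bmod \varpi$ with $x_0$ in the image of $R^+$; then because everything in sight is perfect mod $\varpi$, one can correct $x_0$ by a $p$-power-divisible term coming again from $R^+$ — more precisely, write $x - x_0 = \varpi y$ and iterate, using at each stage that $R^+/\varpi \to S^+/\varpi$ is \emph{surjective} (not merely dense) because it is a perfect quotient, being the colimit of the $R'^+/\varpi$ which are perfect $\OO_K/\varpi$-algebras receiving $R^+/\varpi$ with the $T_i$ hit by $f_i$. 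Surjectivity mod $\varpi$ plus $\varpi$-completeness of $R^+$ gives almost surjectivity of $R^+\to S^+$ by the standard successive-approximation argument (the ``almost'' absorbing the harmless ambiguity in choosing $p$-power roots). I would then double-check the colimit/completion bookkeeping, which is routine, and that the final statement about $R\to S$ is the trivial consequence noted above.
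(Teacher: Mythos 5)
Your overall intuition is right — characteristic $p$ matters here because Frobenius lets one argue on actual rings rather than almost modules, and the remark about $I=(T-1)$ in characteristic $0$ correctly identifies where that argument would break. But the specific reduction you propose has a genuine gap, and it is not one that can be patched with "colimit/completion bookkeeping."

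You assert that "it suffices to treat a single rational subset $U_{f_1,\ldots,f_k}$," and then argue $R^+\to R_{f_1,\ldots,f_k}^+$ is almost surjective "using that $f_i\in R^+$." This is where it fails. In the construction of $S$ from the proof of Lemma \ref{ZariskiClosed}, the rational subsets $U_{f_1,\ldots,f_k} = \{|f_i|\leq 1\}$ range over all finite tuples $f_i\in I$; these $f_i$ are \emph{not} in $R^+$, and cannot be arranged to be. Indeed, if $f_i\in R^+$ then $|f_i|\leq 1$ holds on all of $\mathcal{X}$ and $U_{f_i}=\mathcal{X}$, which gives no information; the rational subsets that actually cut out $Z$ are of the form $\{|f|\leq|\varpi|^n\}=U_{\varpi^{-n}f}$ with $\varpi^{-n}f\notin R^+$. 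For such a rational subset, the new integral element $T_i$ identified with $\varpi^{-n}f$ does \emph{not} lie in the image of $R^+$, and $R^+/\varpi\to R_{f_1,\ldots,f_k}^+/\varpi$ is not surjective, almost or otherwise. (Concretely, for $R=K\langle T^{1/p^\infty}\rangle$, $I=(T)$, $n=1$: the class of $T/\varpi$ in $R'^+/\varpi$ has no preimage, and $\varpi^\epsilon(T/\varpi) = T/\varpi^{1-\epsilon}\notin R^+$ for any $\epsilon<1$.) So your intermediate claim that each map is almost surjective, which underpins the successive-approximation step, is false; it is precisely the passage to the filtered colimit over \emph{all} such subsets, followed by $\varpi$-adic completion, that makes the extra elements like $T/\varpi$ become arbitrarily $\varpi$-divisible and hence vanish.

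The paper sidesteps this entirely by never working with the rational subsets at all. Reducing to a single $f\in R^+$ generating $I$, it defines $A = R^{\circ a}/(\varpi, f, f^{1/p}, f^{1/p^2},\ldots)$ directly, proves $A$ is a perfectoid $K^{\circ a}/\varpi$-algebra — the only nontrivial point being flatness over $K^{\circ a}/\varpi$, i.e. absence of $\varpi$-torsion, which is exactly where characteristic $p$ and the manipulation $\varpi^{1/p^n}g = (\varpi g)^{1/p^n}g^{1-1/p^n}$ enter — lifts it to a perfectoid algebra $T$, and then identifies $T$ with $S$ via the universal property of Lemma \ref{ZariskiClosed}. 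Surjectivity of $R^{\circ a}\to T^{\circ a}=S^{\circ a}$ is then immediate by construction. If you want to rescue your approach, you would need to argue at the level of the colimit (not a single rational subset), showing that every newly adjoined element is, in the colimit, divisible by $\varpi^N$ for every $N$; but that is essentially a reformulation of what the paper's flatness computation accomplishes more cleanly.
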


\begin{proof} One can reduce to the case that $\mathcal{Z}$ is defined by a single equation $f=0$, for some $f\in R$. One may assume that $f\in R^+$. Consider the $K^{\circ a}/\varpi$-algebra
\[
A = R^{\circ a}/(\varpi,f,f^{1/p},f^{1/p^2},\ldots)\ .
\]
We claim that $A$ is a perfectoid $K^{\circ a}/\varpi$-algebra. To show that it is flat over $K^{\circ a}/\varpi$, it is enough to prove that
\[
R^{\circ a}/(f,f^{1/p},f^{1/p^2},\ldots)
\]
is flat over $K^{\circ a}$, i.e. has no $\varpi$-torsion. Thus, assume some element $g\in R^\circ$ satisfies $\varpi g = f^{1/p^m} h$ for some $m\geq 0$, $h\in R^\circ$. Then we have
\[
\varpi^{1/p^n} g = (\varpi g)^{1/p^n} g^{1-1/p^n} = f^{1/p^{m+n}} h^{1/p^n} g^{1-1/p^n}\in (f,f^{1/p},f^{1/p^2},\ldots)\ .
\]
Thus, $g$ is almost zero in $R^\circ / (f,f^{1/p},f^{1/p^2},\ldots)$, as desired. That Frobenius induces an isomorphism $A/\varpi^{1/p}\cong A$ is clear.

Thus, $A$ lifts uniquely to a perfectoid $K^{\circ a}$-algebra $T^{\circ a}$ for some perfectoid $K$-algebra $T$. The map $R^{\circ a}\to T^{\circ a}$ is surjective by construction. Also, $f$ maps to $0$ in $T$. Clearly, the map $R^{\circ a}/\varpi\to S^{\circ a}/\varpi$ factors over $A$; thus, $R\to S$ factors over $T$. Let $T^+\subset T$ be the integral closure of the image of $R^+$; then $\Spa(T,T^+)\to \Spa(R,R^+)$ factors over $Z$ (as $f$ maps to $0$ in $T$), giving a map $(S,S^+)\to (T,T^+)$ by the universal property. The two maps between $S$ and $T$ are inverse; thus, $S=T$. Almost surjectivity of $R^+\to S^+$ is equivalent to surjectivity of $R^{\circ a}\to S^{\circ a} = T^{\circ a}$, which we have just verified.
\end{proof}

\begin{definition} A map $\mathcal{Z}=\Spa(S,S^+)\to \mathcal{X}=\Spa(R,R^+)$ is strongly Zariski closed if the map $R^+\to S^+$ is almost surjective.
\end{definition}

Of course, something strongly Zariski closed is also Zariski closed (defined by the ideal $I=\ker(R\to S)$).

\begin{lem}\label{StronglyZariskiClosedTilting} A map $\mathcal{Z}=\Spa(S,S^+)\to \mathcal{X}=\Spa(R,R^+)$ is strongly Zariski closed if and only if the map of tilts $\mathcal{Z}^\flat\to \mathcal{X}^\flat$ is strongly Zariski closed.
\end{lem}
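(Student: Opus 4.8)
The plan is to show that each of the two conditions in the statement is equivalent to the almost surjectivity of a reduction modulo the pseudo-uniformizer, and that the tilting equivalence identifies these two reductions with one another.

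The main point, which I would isolate as a separate lemma about a single perfectoid field, is the following: for any map $\mathcal{Z}=\Spa(S,S^+)\to\mathcal{X}=\Spa(R,R^+)$ of affinoid perfectoid spaces over a perfectoid field $L$ with pseudo-uniformizer $\varpi_L$ admitting $p$-power roots $\varpi_L^{1/p^m}$ in $\OO_L$, the map $R^+\to S^+$ is almost surjective if and only if $R^+/\varpi_L\to S^+/\varpi_L$ is almost surjective. The forward direction is trivial. For the converse, note that, because $|\varpi_L^{1/p^m}|\to 1$, almost surjectivity of $R^+\to S^+$ is the same as $\varpi_L^{1/p^m}S^+\subseteq\im(R^+\to S^+)$ for all $m$, while almost surjectivity mod $\varpi_L$ is the same as $\varpi_L^{1/p^m}S^+\subseteq\im(R^+\to S^+)+\varpi_L S^+$ for all $m$. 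Given $b\in S^+$ and $m$, I would then run a successive approximation: write $\varpi_L^{1/p^m}b=f(a_0)+\varpi_L b_1$, then $\varpi_L^{1/p^m}b_1=f(a_1)+\varpi_L b_2$, and so on, extracting at the $i$-th stage the factor $\varpi_L^{i(1-1/p^m)}$; after $n$ steps this expresses $\varpi_L^{1/p^m}b$ as $f$ applied to the partial sum $\sum_{i\le n}\varpi_L^{i(1-1/p^m)}a_i$ plus a remainder term $\varpi_L^{1+n(1-1/p^m)}b_{n+1}$, with $a_i\in R^+$ and $b_{n+1}\in S^+$. Since $R^+$ is $\varpi_L$-adically complete, the partial sums converge to some $s\in R^+$; since $S^+$ is bounded and $\varpi_L$-adically separated, the remainder tends to $0$; hence $\varpi_L^{1/p^m}b=f(s)\in\im(R^+\to S^+)$. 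This $\varpi_L$-adic approximation — and in particular the choice to extract the fractional powers $\varpi_L^{i(1-1/p^m)}$ rather than naive powers of $\varpi_L$, which is exactly what makes the partial sums converge — is the only real work in the proof; everything else is formal.

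I would then apply this lemma twice: once over $L=K$ with $\varpi_L=\varpi$ (note $\varpi=(\varpi^\flat)^\sharp$ has $p$-power roots $((\varpi^\flat)^{1/p^m})^\sharp$ in $\OO_K$), and once over $L=K^\flat$ with $\varpi_L=\varpi^\flat$ (here $K^\flat$ is perfectoid of characteristic $p$ and $\varpi^\flat$ lies in the perfect ring $\OO_{K^\flat}$, so has all $p$-power roots; also $\mathcal{Z}$ is affinoid perfectoid by Lemma \ref{ZariskiClosed}, so $\mathcal{Z}^\flat=\Spa(S^\flat,S^{\flat+})$ makes sense and the ring map $R\to S$ tilts to $R^\flat\to S^\flat$). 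This reduces the assertion of the lemma to the equivalence of ``$R^+/\varpi\to S^+/\varpi$ is almost surjective'' with ``$R^{\flat+}/\varpi^\flat\to S^{\flat+}/\varpi^\flat$ is almost surjective''. But the tilting equivalence (\cite{ScholzePerfectoid}) furnishes functorial isomorphisms $R^+/\varpi\cong R^{\flat+}/\varpi^\flat$ and $S^+/\varpi\cong S^{\flat+}/\varpi^\flat$ that are compatible with the maps induced by $R\to S$ and $R^\flat\to S^\flat$, and that intertwine the $\mathfrak{m}_K$-action on the left with the $\mathfrak{m}_{K^\flat}$-action on the right via $\OO_K/\varpi\cong\OO_{K^\flat}/\varpi^\flat$. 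Hence the two reduced conditions literally coincide, and the lemma follows.
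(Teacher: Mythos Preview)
Your proposal is correct and follows exactly the paper's approach: reduce almost surjectivity of $R^+\to S^+$ to almost surjectivity modulo $\varpi$, then use the tilting isomorphisms $R^+/\varpi\cong R^{\flat+}/\varpi^\flat$ and $S^+/\varpi\cong S^{\flat+}/\varpi^\flat$. The paper simply asserts the first equivalence without proof, whereas you spell out the standard $\varpi$-adic successive approximation argument; otherwise the two proofs are identical.
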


\begin{proof} The map $R^+\to S^+$ is almost surjective if and only if $R^+/\varpi\to S^+/\varpi$ is almost surjective. Under tilting, this is the same as the condition that $R^{\flat +}/\varpi^\flat\to S^{\flat +}/\varpi^\flat$ is almost surjective, which is equivalent to $R^{\flat +}\to S^{\flat +}$ being almost surjective.
\end{proof}

By Lemma \ref{StronglyZariskiClosedCharP}, Zariski closed implies strongly Zariski closed in characteristic $p$. Thus, a Zariski closed map in characteristic $0$ is strongly Zariski closed if and only if the tilt is still Zariski closed. For completeness, let us mention the following result that appears in the work of Kedlaya-Liu, \cite{KedlayaLiu1}; we will not need this result in our work.

\begin{lem}[{\cite[Proposition 3.6.9 (c)]{KedlayaLiu1}}] Let $R\to S$ be a surjective map of perfectoid $K$-algebras. Then $R^\circ\to S^\circ$ is almost surjective.
\end{lem}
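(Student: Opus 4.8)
The plan is to show that the $\OO_K$-module $M:=S^\circ/\im(R^\circ\to S^\circ)$ is killed by $\mathfrak{m}_K$. Write $\pi\colon R\to S$ for the given map. The first, crude, observation is that $\pi$, being a continuous surjection of Banach $K$-algebras, is strict by the open mapping theorem; hence $\pi(R^\circ)$ is an open $\OO_K$-submodule of $S$, so there is an integer $k\geq 1$ with $\varpi^k S^\circ\subseteq\pi(R^\circ)$, i.e. $\varpi^k M=0$. The real work is to improve $\varpi^k$ to all of $\mathfrak{m}_K$, and I would do this first in equal characteristic $p$ and then reduce the general case to it by tilting.

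In characteristic $p$ the rings $R^\circ$, $S^\circ$ are perfect and the spectral norms are power‑multiplicative, so $|x^{1/p^n}|=|x|^{1/p^n}$, and the improvement is easy. Fix a lifting constant $c\geq 1$: every $s\in S$ has a preimage $r\in R$ with $|r|\leq c|s|$ (open mapping theorem). Given $s\in S^\circ$ and $n\geq1$, I would lift $s^{p^n}$ to some $r$ with $|r|\leq c|s|^{p^n}$ and then pass to $r^{1/p^n}\in R$, which is a preimage of $s$ (uniqueness of $p^n$-th roots in the reduced ring $S$) of norm $\leq c^{1/p^n}|s|$. Letting $n\to\infty$ shows that for every $\delta>0$ every $s\in S^\circ$ has a preimage of norm $\leq|\varpi|^{-\delta}$; multiplying such a preimage by an element of $\mathfrak{m}_K$ of norm $\leq|\varpi|^\delta$ then lands in $R^\circ$, whence $\mathfrak{m}_K S^\circ\subseteq\pi(R^\circ)$. (One could instead deduce this case from Lemma~\ref{StronglyZariskiClosedCharP}, after observing that $S$ is the universal perfectoid quotient of $R$ along $V(\ker\pi)$ — which uses only that perfectoid algebras are uniform, hence reduced.)

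For characteristic $0$ I would tilt: $\pi^\flat\colon R^\flat\to S^\flat$ is again a surjection of perfectoid $K^\flat$-algebras, so by the previous step $R^{\flat\circ}\to S^{\flat\circ}$ is almost surjective, and reducing modulo $\varpi^\flat$ through the tilting isomorphisms $R^\circ/\varpi\cong R^{\flat\circ}/\varpi^\flat$, $S^\circ/\varpi\cong S^{\flat\circ}/\varpi^\flat$ (compatible with the maps, as in the proof of Lemma~\ref{StronglyZariskiClosedTilting}) shows that $\bar\pi\colon R^\circ/\varpi\to S^\circ/\varpi$ is almost surjective. To pass from this mod-$\varpi$ statement back to $M$ I would \emph{not} iterate indefinitely, but use a fixed finite number of steps, exploiting $\varpi^k S^\circ\subseteq\pi(R^\circ)$: for $f\in S^\circ$ and $\mu_1,\dots,\mu_k\in\mathfrak{m}_K$, write $\mu_1 f=\pi(\rho_1)+\varpi g_1$, then $\mu_2 g_1=\pi(\rho_2)+\varpi g_2$, and so on; multiplying the $i$-th identity by $\mu_{i+1}$ and telescoping gives $\mu_1\cdots\mu_k f=\pi(\rho)+\varpi^k g_k$ with $\rho\in R^\circ$, and since $\varpi^k g_k\in\varpi^k S^\circ\subseteq\pi(R^\circ)$ this puts $\mu_1\cdots\mu_k f$ in $\pi(R^\circ)$. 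As the value group of $K$ is dense one has $\mathfrak{m}_K=\mathfrak{m}_K^k$, so writing an arbitrary $\epsilon\in\mathfrak{m}_K$ as a finite sum of $k$-fold products of elements of $\mathfrak{m}_K$ and using that $\pi(R^\circ)$ is an $\OO_K$-module yields $\epsilon f\in\pi(R^\circ)$; hence $\mathfrak{m}_K M=0$.

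The hard part is the input used above: that tilting sends the surjection $\pi$ to a surjection $\pi^\flat$, equivalently that $\bar\pi$ is already almost surjective in mixed characteristic. The tempting direct approach — lift mod $\varpi$ and iterate — fails because the iteration does not converge: in an ultrametric field the multipliers in $\mathfrak{m}_K$ that almost‑surjectivity forces one to insert at each step cannot be chosen so that their partial products tend to a nonzero limit, so no bookkeeping produces an honest preimage this way. This is exactly why the argument must route through the tilt, where genuine $p$-power roots are available and the constant‑improvement trick of the characteristic‑$p$ step applies. I would either cite this surjectivity statement from \cite{KedlayaLiu1} or establish it by checking that the universal Zariski‑closed construction of Lemma~\ref{ZariskiClosed} commutes with tilting and invoking Lemma~\ref{StronglyZariskiClosedCharP}.
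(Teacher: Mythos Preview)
The paper does not prove this lemma; it merely records the statement with a citation to Kedlaya--Liu, noting explicitly that the result is not needed elsewhere. So there is no in-paper proof to compare against.

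Your characteristic-$p$ argument (improving the open-mapping constant by taking $p^n$-th roots) is correct and self-contained. Your reduction of the mixed-characteristic case to almost-surjectivity of $\bar\pi\colon R^\circ/\varpi\to S^\circ/\varpi$, via the open-mapping bound $\varpi^k S^\circ\subset\pi(R^\circ)$ together with a $k$-step telescoping using $\mathfrak{m}_K=\mathfrak{m}_K^k$, is also fine. The genuine gap is exactly where you place it: establishing that $\bar\pi$ is almost surjective, equivalently that $\pi^\flat$ is surjective. Citing \cite{KedlayaLiu1} for this step is not a proof, since that is precisely the content being attributed in the lemma's header. Your alternative --- transport the universal Zariski-closed construction of Lemma~\ref{ZariskiClosed} across tilting and then invoke Lemma~\ref{StronglyZariskiClosedCharP} --- does not close the gap either. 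Tilting does carry the universal property of $\Spa(S,S^+)$ (for $Z=V(\ker\pi)$) to a universal property of $\Spa(S^\flat,S^{\flat+})$ for the closed set $Z^\flat\subset|\Spa(R^\flat,R^{\flat+})|$, but Lemma~\ref{StronglyZariskiClosedCharP} needs $Z^\flat$ to be \emph{Zariski} closed, i.e.\ of the form $V(J)$ for an ideal $J\subset R^\flat$. The only natural candidate is $J=\ker\pi^\flat$, and checking the nontrivial inclusion $V(\ker\pi^\flat)\subset Z^\flat$ amounts to showing that $\im(\pi^\flat)$ is dense in $S^\flat$ --- which is what you are trying to prove. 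So as written, the characteristic-$0$ argument is circular; the substance of the result lies in the step you have deferred.
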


In other words, for any rings of integral elements $R^+\subset R$, $S^+\subset S$ for which $R^+$ maps into $S^+$, the map $R^+\to S^+$ is almost surjective. Finally, let us observe some statements about pulling back closed immersions.

\begin{lem}\label{ClosedImmersionPullback} Let
\[\xymatrix{
\mathcal{Z}^\prime = \Spa(S^\prime,S^{\prime +})\ar[d]\ar[r] & \mathcal{X}^\prime = \Spa(R^\prime,R^{\prime +})\ar[d]\\
\mathcal{Z} = \Spa(S,S^+)\ar[r] & \mathcal{X} = \Spa(R,R^+)
}\]
be a pullback diagram of affinoid perfectoid spaces (recalling that fibre products always exist, cf. \cite[Proposition 6.18]{ScholzePerfectoid}).
\begin{altenumerate}
\item[{\rm (i)}] If $\mathcal{Z}\to \mathcal{X}$ is Zariski closed, defined by an ideal $I\subset R$, then so is $\mathcal{Z}^\prime\to \mathcal{X}^\prime$, defined by the ideal $IR^\prime\subset R^\prime$.
\item[{\rm (ii)}] If $\mathcal{Z}\to \mathcal{X}$ is strongly Zariski closed, then so is $\mathcal{Z}^\prime\to \mathcal{X}^\prime$. Moreover, if we define $I^+=\ker(R^+\to S^+)$, $I^{\prime +} = \ker(R^{\prime +}\to S^{\prime +})$, then the map
\[
I^+/\varpi^n\otimes_{R^+/\varpi^n} R^{\prime +}/\varpi^n\to I^{\prime +}/\varpi^n
\]
is almost surjective for all $n\geq 0$.
\end{altenumerate}
\end{lem}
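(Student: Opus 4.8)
The plan is to check that the pullback $\mathcal{Z}' = \mathcal{Z}\times_{\mathcal{X}}\mathcal{X}'$ satisfies the universal property of Lemma \ref{ZariskiClosed} (and the subsequent remark) with respect to the ideal $IR'\subset R'$; the claim then follows since such a representing object is unique. First, $\mathcal{Z}'\to\mathcal{X}'$ factors over $Z':=\{x\mid |f(x)|=0\text{ for all }f\in IR'\}$: the composite $\mathcal{Z}'\to\mathcal{Z}\to\mathcal{X}$ factors over $Z$, so every element of $I$, viewed as a function pulled back from $R$, vanishes along $\mathcal{Z}'$, and $IR'$ is generated by the image of $I$. Conversely, for any affinoid $(T,T^+)$ with $T^+$ bounded and a map $\Spa(T,T^+)\to\mathcal{X}'$ factoring over $Z'$, the composite $\Spa(T,T^+)\to\mathcal{X}'\to\mathcal{X}$ factors over $Z$ (the functions in $I$ pull back into $IR'$, which vanishes on $Z'$), hence factors through $\mathcal{Z}$ by the universal property of $\mathcal{Z}$, compatibly over $\mathcal{X}$ with the given map to $\mathcal{X}'$; combining these gives the desired map $\Spa(T,T^+)\to\mathcal{Z}\times_\mathcal{X}\mathcal{X}'=\mathcal{Z}'$, and uniqueness is inherited from that in the universal property of $\mathcal{Z}$ and of the fibre product. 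Thus $\mathcal{Z}'$ is the Zariski closed subspace of $\mathcal{X}'$ defined by $IR'$.

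\textbf{Part (ii), strong closedness.} Almost surjectivity of $R'^+\to S'^+$ can be checked modulo $\varpi$, and since $R^{+a}=R^{\circ a}$ holds for every affinoid perfectoid space (the topologically nilpotent elements, hence $\mathfrak{m}_K\cdot R^\circ$, lie in the integrally closed open subring $R^+$), the construction of fibre products of affinoid perfectoid spaces in \cite[Section 6]{ScholzePerfectoid} identifies $(S'^+/\varpi)^a$ with the base change $(S^+/\varpi)^a\otimes_{(R^+/\varpi)^a}(R'^+/\varpi)^a$. Strong Zariski closedness of $\mathcal{Z}\to\mathcal{X}$ means $R^+/\varpi\to S^+/\varpi$ is almost surjective, and almost surjectivity is preserved under base change, so $R'^+/\varpi\to S'^+/\varpi$, and hence $R'^+\to S'^+$, is almost surjective. (One could also argue by tilting, using Lemma \ref{StronglyZariskiClosedTilting}, the compatibility of tilting with fibre products, and Lemma \ref{StronglyZariskiClosedCharP} together with part (i) in the case $\mathrm{char}\,K=p$.)

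\textbf{Part (ii), the ``moreover''.} The same identification, now modulo $\varpi^n$, gives $(S'^+/\varpi^n)^a\cong (S^+/\varpi^n)^a\otimes_{(R^+/\varpi^n)^a}(R'^+/\varpi^n)^a$. Tensoring the sequence $0\to I^{+a}\to R^{+a}\to S^{+a}\to 0$ — exact in the almost category by the first part of (ii) — with $R'^{+a}/\varpi^n$ over $R^{+a}$ yields a right exact sequence whose exactness at the middle term shows that the image of $(I^+/\varpi^n)\otimes_{R^+/\varpi^n}(R'^+/\varpi^n)$ in $R'^+/\varpi^n$ equals $\ker(R'^+/\varpi^n\to S'^+/\varpi^n)$. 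On the other hand, $S'^+$ is $\varpi$-torsion free (being a subring of the $K$-algebra $S'$), so $\mathrm{Tor}_1^{R'^+}(S'^+,R'^+/\varpi^n)=0$ and the sequence $0\to (I'^+/\varpi^n)^a\to (R'^+/\varpi^n)^a\to (S'^+/\varpi^n)^a\to 0$ is exact; thus $I'^+/\varpi^n$ is almost the same kernel inside $R'^+/\varpi^n$. Since the natural map $(I^+/\varpi^n)\otimes_{R^+/\varpi^n}(R'^+/\varpi^n)\to I'^+/\varpi^n$ is compatible with the maps to $R'^+/\varpi^n$, it is almost surjective.

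\textbf{Main obstacle.} The one genuinely non-formal step is pinning down the identification $(S'^+/\varpi^n)^a\cong (S^+/\varpi^n)^a\otimes_{(R^+/\varpi^n)^a}(R'^+/\varpi^n)^a$: this requires carefully tracing through the fibre product construction of affinoid perfectoid spaces — in particular the $\varpi$-adic completion and the integral closure entering the definition of the structure ring — in the almost category, and using $R^{+a}=R^{\circ a}$ to pass freely between the two flavours of integral subring. Everything else reduces to the universal property of Lemma \ref{ZariskiClosed} and routine diagram chasing with (almost) exact sequences.
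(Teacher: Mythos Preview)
Your proof is correct and follows essentially the same approach as the paper. The paper's argument is terser---it dispatches (i) with ``clear from the universal property'' and for the ``moreover'' in (ii) reduces to $n=1$ before tensoring the almost exact sequence $0\to I^+/\varpi\to R^+/\varpi\to S^+/\varpi\to 0$ with $R^{\prime+}/\varpi$---whereas you work directly modulo $\varpi^n$ and invoke $\varpi$-torsion freeness of $S^{\prime+}$ to identify $I^{\prime+}/\varpi^n$ with the relevant kernel; but the key input (the almost identification $(S^{\prime+}/\varpi)^a\cong (S^+/\varpi\otimes_{R^+/\varpi} R^{\prime+}/\varpi)^a$ from the fibre product construction) and the overall structure are the same.
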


\begin{proof} Part (i) is clear from the universal property. For part (ii), observe that (by the proof of \cite[Proposition 6.18]{ScholzePerfectoid}) the map
\[
S^+/\varpi\otimes_{R^+/\varpi} R^{\prime +}/\varpi\to S^{\prime +}/\varpi
\]
is an almost isomorphism. Thus, if $R^+/\varpi\to S^+/\varpi$ is almost surjective, then so is $R^{\prime +}/\varpi\to S^{\prime +}/\varpi$, showing that if $\mathcal{Z}\to \mathcal{X}$ is strongly Zariski closed, then so is $\mathcal{Z}^\prime\to \mathcal{X}^\prime$. For the result about ideals, one reduces to $n=1$. Now, tensoring the almost exact sequence
\[
0\to I^+/\varpi\to R^+/\varpi\to S^+/\varpi\to 0
\]
with $R^{\prime +}/\varpi$ over $R^+/\varpi$ gives the desired almost surjectivity.
\end{proof}

\section{A Hebbarkeitssatz for perfectoid spaces}\label{HebbarkeitssatzSection}

\subsection{The general result}

Let $K$ be a perfectoid field of characteristic $p$, with ring of integers $\OO_K\subset K$, and maximal ideal $\mm_K\subset \OO_K$. Fix some nonzero element $t\in \mm_K$; then $\mm_K = \bigcup_n t^{1/p^n} \OO_K$. Let $(R,R^+)$ be a perfectoid affinoid $K$-algebra, with associated affinoid perfectoid space $\mathcal{X}=\Spa(R,R^+)$. Fix an ideal $I\subset R$, with $I^+=I\cap R^+$. Let $\mathcal{Z}=V(I)\subset \mathcal{X}$ be the associated Zariski closed subset of $\mathcal{X}$.

Recall the following lemma, which holds true for any adic space over a nonarchimedean field.

\begin{lem}\label{StalkOplusmodp} The stalk of $\OO_\mathcal{X}^+/t$ at a point $x\in \mathcal{X}$ with completed residue field $k(x)$ and valuation subring $k(x)^+\subset k(x)$, is given by $k(x)^+/t$.
\end{lem}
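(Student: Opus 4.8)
The plan is to compute the stalk $(\OO_\mathcal{X}^+/t)_x$ as the filtered colimit $\varinjlim_{x\in U} H^0(U,\OO_\mathcal{X}^+/t)$ over rational subsets $U\ni x$, and to identify this with $k(x)^+/t$ by reducing to the known statement about $\OO_\mathcal{X}^+$ itself. First I would recall the classical fact (valid for any adic space over a nonarchimedean field, e.g.\ from Huber's theory) that the stalk $\OO_{\mathcal{X},x}^+$ of the integral structure sheaf at $x$ is the valuation ring $k(x)^{+\circ}$ inside the completed residue field $k(x)$ — more precisely, $\OO_{\mathcal{X},x}^+$ surjects onto $k(x)^+$ with the kernel being the (non-completed) maximal ideal, and in the perfectoid or sheafy setting one has $\varinjlim_{x\in U} H^0(U,\OO_\mathcal{X}^+) = k(x)^+$ after noting that completion does not affect the colimit because the colimit is already $t$-adically separated in the relevant sense. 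So the main input is the mod-$t$ reduction.

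The key step is to commute the quotient by $t$ with the filtered colimit: since filtered colimits are exact, $\varinjlim_{x\in U}\big(H^0(U,\OO_\mathcal{X}^+)/t\big) = \big(\varinjlim_{x\in U} H^0(U,\OO_\mathcal{X}^+)\big)/t = k(x)^+/t$. For this I need to know that the presheaf $U\mapsto H^0(U,\OO_\mathcal{X}^+)/t$ has the same stalk at $x$ as the sheaf $\OO_\mathcal{X}^+/t$; this is automatic because sheafification does not change stalks. The one genuine point to check is that $H^0(U,\OO_\mathcal{X}^+/t) = H^0(U,\OO_\mathcal{X}^+)/t$ for rational $U$ (or at least in the colimit), i.e.\ that $H^1(U,\OO_\mathcal{X}^+[t]) $ or the relevant obstruction vanishes almost; in the perfectoid setting this follows from the almost vanishing of higher cohomology of $\OO_\mathcal{X}^+$ on affinoids (Scholze, \cite{ScholzePerfectoid}, Tate acyclicity plus almost purity), so that $\OO_\mathcal{X}^+/t$ and $\OO_{\mathcal{X}}^+ / t$ agree up to almost zero modules — but since $t\in\mm_K$ and we are free to shrink, passing to the colimit over $U$ and over $t^{1/p^n}$ absorbs the almost-ambiguity and yields an honest identification.

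The main obstacle I expect is precisely this interchange: making sure that reducing mod $t$ commutes with taking the stalk, given that $\OO_\mathcal{X}^+/t$ is defined as the sheafification of the naive presheaf quotient, and that there may be a discrepancy on individual affinoids measured by an almost-zero module. I would handle it by working throughout with the colimit over shrinking rational neighborhoods of $x$ (where the almost-zero error terms die, since any fixed element of $\mm_K$ eventually annihilates them and $\mm_K = \bigcup_n t^{1/p^n}\OO_K$ acts invertibly in the colimit in the appropriate sense), rather than trying to control a single $U$. A secondary, purely bookkeeping point is to verify that $k(x)^+/t$ is the correct target, i.e.\ that the valuation ring $k(x)^+$ (not the full ring of integers of $k(x)$) is what appears — this comes directly from the definition of $\OO_\mathcal{X}^+$ via the valuation data at $x$, so no real work is needed there. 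Everything else is formal, which is why the lemma is stated without proof for general adic spaces.
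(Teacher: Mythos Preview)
Your framework is right in one respect: since stalks are filtered colimits and filtered colimits are exact, $(\OO_\mathcal{X}^+/t)_x = \OO_{\mathcal{X},x}^+/t$, and the sheafification step is indeed harmless. The whole discussion of $H^0(U,\OO_\mathcal{X}^+/t)$ versus $H^0(U,\OO_\mathcal{X}^+)/t$, and the appeal to almost acyclicity in the perfectoid setting, is therefore a red herring---and in any case the lemma is asserted for \emph{any} adic space over a nonarchimedean field, so perfectoid input is off the table.

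The genuine gap is the identification $\OO_{\mathcal{X},x}^+/t \cong k(x)^+/t$. You assert that $\OO_{\mathcal{X},x}^+$ either equals $k(x)^+$ or surjects onto it; neither is true in general. The map $\OO_{\mathcal{X},x}^+\to k(x)^+$ only has \emph{dense} image (its image is the valuation ring $\kappa(x)^+$ in the uncompleted residue field), and the stalk is typically neither $t$-adically separated nor complete. So the step ``completion does not affect the colimit'' is exactly where the content lies, and your justification is a hand-wave. What is actually needed is the paper's elementary observation: if $f\in\OO_{\mathcal{X},x}^+$ has $f(x)=0$, then the rational subset $V=\{|f|\le|t|\}$ is a neighbourhood of $x$ on which $f/t\in\OO_\mathcal{X}^+(V)$, whence $f\in t\,\OO_{\mathcal{X},x}^+$. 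This shows the kernel $I$ of $\OO_{\mathcal{X},x}^+\to k(x)^+$ dies modulo $t$; combined with density of the image (hence surjectivity mod $t$) and the fact that $\kappa(x)^+/t\cong k(x)^+/t$ since $\kappa(x)^+$ is a valuation ring, the lemma follows. That shrinking argument is the missing idea in your plan.
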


In particular, if $\OO_{k(x)}\subset k(x)$ denotes the powerbounded elements (so that $k(x)^+\subset \OO_{k(x)}$ is an almost equality), then the stalk of $(\OO_{\mathcal{X}}^+/t)^a$ is given by $\OO_{k(x)}^a/t$.

\begin{proof} There is a map $\OO_{\mathcal{X},x}^+\to k(x)^+$ with a dense image. To prove the lemma, one has to see that if $I$ denotes the kernel of this map, then $I/t=0$. If $f\in I$, then $f\in \OO_{\mathcal{X}}^+(U)$ for some neighborhood $U$ of $x$, with $f(x)=0$. Then $|f|\leq |t|$ defines a smaller neighborhood $V$ of $x$, on which $f$ becomes divisible by $t$ as an element of $\OO_{\mathcal{X}}^+(V)$.
\end{proof}

\begin{prop}\label{GeneralHebbarkeit} There is a natural isomorphism of almost $\OO_K$-modules
\[
\Hom_{R^+} (I^{+ 1/p^\infty},R^+/t)^a\cong H^0(\mathcal{X}\setminus \mathcal{Z},\OO_\mathcal{X}^+/t)^a\ .
\]
For any point $x\in \mathcal{X}\setminus \mathcal{Z}$, this isomorphism commutes with evaluation $H^0(\mathcal{X}\setminus \mathcal{Z},\OO_\mathcal{X}^+/t)^a\to \OO_{k(x)}^a/t$ at $x$, where $k(x)$ is the completed residue field of $\mathcal{X}$ at $x$:
\[
\Hom_{R^+} (I^{+ 1/p^\infty},R^+/t)^a \to \Hom_{\OO_{k(x)}} (I_{k(x)}^{+ 1/p^\infty},\OO_{k(x)}/t)^a = \OO_{k(x)}^a/t\ .
\]
\end{prop}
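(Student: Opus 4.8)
The statement asserts a functorial almost-isomorphism $\Hom_{R^+}(I^{+1/p^\infty}, R^+/t)^a \cong H^0(\mathcal{X}\setminus\mathcal{Z}, \OO_\mathcal{X}^+/t)^a$, compatible with evaluation at points $x \notin \mathcal{Z}$. The plan is to construct the map directly, then check it is an almost-isomorphism by a local computation using the stalk description from Lemma \ref{StalkOplusmodp}. Here $I^{+1/p^\infty}$ should be understood as $\bigcup_n (I^+)^{1/p^n}$ inside $R^+$ — the perfection of $I^+$ — which makes sense because $K$ has characteristic $p$ and $R^+$ is perfect; this is where characteristic $p$ is essential.

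\textbf{Constructing the map.} First I would produce a natural transformation from left to right. An element $\varphi \in \Hom_{R^+}(I^{+1/p^\infty}, R^+/t)$ should be sent to a section of $\OO_\mathcal{X}^+/t$ over $\mathcal{X}\setminus\mathcal{Z}$. The idea: over the rational subset $U_{f} = \{|f(x)| = 1\} \cap \{\text{stuff}\}$ for $f \in I^+$ — more precisely on the locus where $f$ is invertible in $\OO_\mathcal{X}^+$ — one has $\varphi(f)/f \in \OO_\mathcal{X}^+/t$, and one checks these glue: if $g \in I^+$ also, then on the overlap where both $f,g$ are units, $\varphi(f)/f = \varphi(fg^{1/p^n}\cdot g^{-1/p^n})/f$; using that $\varphi$ is $R^+$-linear on the perfection and that $f, g, f^{1/p^n}, g^{1/p^n}$ are all invertible there, one gets $\varphi(f)/f = \varphi(g)/g$. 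Since $\mathcal{X}\setminus\mathcal{Z}$ is covered by such loci as $f$ ranges over $I^+$ (a point lies outside $\mathcal{Z}$ iff some $f \in I$, hence after scaling some $f \in I^+$, is nonzero there, hence a unit on a neighborhood), this defines a section on $\mathcal{X}\setminus\mathcal{Z}$. Conversely, given a section $s$ of $\OO_\mathcal{X}^+/t$ on the complement, define $\varphi(f) = f\cdot s$ for $f \in I^+$ (this lands in $\OO_\mathcal{X}^+/t$ globally on $\mathcal{X}$, hence in $R^+/t$, because $f$ vanishes on $\mathcal{Z}$ and thus $f\cdot s$ extends by zero across $\mathcal{Z}$ — here one uses that multiplication by $f$ kills the part of $s$ blowing up near $\mathcal{Z}$); extend to the perfection by $\varphi(f^{1/p^n}) = f^{1/p^n} s$, noting $f^{1/p^n} \in I^+$ already so consistency is automatic, or rather by the formula $\varphi(h) = h \cdot s$ for $h \in I^{+1/p^\infty}$, which makes sense for the same reason. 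The subtle point is that $f\cdot s$, a priori a section of $\OO_\mathcal{X}^+/t$ only on $\mathcal{X}\setminus\mathcal{Z}$, actually extends to all of $\mathcal{X}$: this should follow from a Riemann-extension-type argument, or more elementarily from checking stalks.

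\textbf{Checking it is an almost-isomorphism.} Both sides are sheaves of almost $\OO_K/t$-modules, so I would verify the statement on stalks at every $x \in \mathcal{X}\setminus\mathcal{Z}$, using Lemma \ref{StalkOplusmodp}: the stalk of $(\OO_\mathcal{X}^+/t)^a$ at $x$ is $\OO_{k(x)}^a/t$. For the left side, localizing, $I^+$ generates the unit ideal near $x$ (some $f \in I^+$ is a unit there), so $\Hom(I^{+1/p^\infty}, R^+/t)$ localizes to $\Hom_{\OO_{k(x)}}(I_{k(x)}^{+1/p^\infty}, \OO_{k(x)}/t)$, and since $I_{k(x)}^{+1/p^\infty}$ contains a unit this Hom is just $\OO_{k(x)}/t$ (evaluate at that unit). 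That matches the stalk of the right side, and unwinding the maps shows the comparison map induces exactly the evaluation map claimed in the second displayed formula — this simultaneously proves the isomorphism and the compatibility with evaluation. The global statement then follows because $H^0$ of an almost-isomorphism of sheaves on $\mathcal{X}\setminus\mathcal{Z}$ is an almost-isomorphism, together with the separate verification that the source $\Hom_{R^+}(I^{+1/p^\infty}, R^+/t)$ is itself the global sections of the sheafification of $U \mapsto \Hom_{R^+(U)}(I^+(U)^{1/p^\infty}, R^+(U)/t)$ over $\mathcal{X}\setminus\mathcal{Z}$ — i.e. a descent/gluing statement for the Hom-sheaf.

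\textbf{Main obstacle.} The technical heart is the extension claim: that for $s \in H^0(\mathcal{X}\setminus\mathcal{Z}, \OO_\mathcal{X}^+/t)$ and $f \in I^+$, the product $f\cdot s$ extends to a section of $\OO_\mathcal{X}^+/t$ over all of $\mathcal{X}$ (equivalently, to an element of $R^+/t$, by the sheaf property plus affinoidness). Away from $\mathcal{Z}$ this is clear; near a point $z \in \mathcal{Z}$ one must show $f\cdot s$ is genuinely bounded and extends across the "missing" locus. I expect this to rest on a careful almost-mathematics argument with the $t$-adic filtration — roughly, $s$ defined on $\{|f| \geq |t^{1/p^n}|\}$ for all $n$, and $f\cdot s$ on $\{|f|\geq |t^{1/p^n}|\}$ is divisible enough that the pieces patch $t$-adically — and it is precisely the point where "$1/p^\infty$" (perfectness) rather than just $I^+$ is needed, since one needs arbitrarily high $p$-power roots of $f$ available to run the approximation. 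Everything else (the gluing in the first paragraph, the stalk computation, compatibility with evaluation) is routine once this extension fact is in place.
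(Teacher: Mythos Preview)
Your construction of the forward map has a gap: you propose to send $\varphi$ to the section locally given by $\varphi(f)/f$ on the locus where $f$ is a unit in $\OO_{\mathcal{X}}^+$. But for $f\in I^+$, the loci $\{|f|=1\}$ do \emph{not} cover $\mathcal{X}\setminus\mathcal{Z}$; a point outside $\mathcal{Z}$ only satisfies $|f(x)|\geq |t|^n$ for some $n$, and on such a rational subset $f$ is invertible in $\OO_{\mathcal{X}}$ but not in $\OO_{\mathcal{X}}^+$, so ``$\varphi(f)/f$'' is not an element of $\OO_{\mathcal{X}}^+/t$ there. One can try to repair this using $f^{1/p^k}$ for large $k$, but making this precise is exactly where the explicit computation of the paper enters.

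More seriously, your strategy of checking the isomorphism on stalks is circular. You correctly observe that the stalk of the right-hand side at $x$ is $\OO_{k(x)}^a/t$, and that the left-hand side ``localizes'' to the same thing. But the left-hand side is a single $R^+$-module, not a sheaf on $\mathcal{X}\setminus\mathcal{Z}$; knowing its stalks says nothing about global sections unless you already know it agrees with $H^0$ of its sheafification. You acknowledge this (``a descent/gluing statement for the Hom-sheaf'') but do not prove it, and that statement is equivalent in difficulty to the proposition itself. Similarly, you correctly isolate the extension claim (that $h\cdot s$ extends to $R^+/t$ for $h\in I^{+1/p^\infty}$ and $s$ a section on the complement) as the ``main obstacle'', but give no argument for it.

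The paper's proof is quite different and entirely computational. It reduces to $I=(f)$ principal, shows $I^{+1/p^\infty}$ is almost $f^{1/p^\infty}R^+$, and then writes $\mathcal{X}\setminus V(f)=\bigcup_n\{|f|\geq|t|^n\}$ with explicit presentations $S_n = R^+/t[u_n^{1/p^\infty}]/(u_n^{1/p^m}f^{1/p^m}-t^{n/p^m})$ for the sections of $\OO^+/t$ on each piece. The inverse limit $\varprojlim_n S_n$ is then identified with $\Hom_{R^+}(f^{1/p^\infty}R^+, R^+/t)$ through a careful analysis of the transition maps, using auxiliary submodules $S_n^{(k)}\subset S_n$ and showing the relevant inverse systems have the same limit up to almost-zero discrepancies. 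The general case follows by an induction on generators via an almost-exact Mayer--Vietoris-type sequence for $(I_1I_2)^{+1/p^\infty}$, $I_1^{+1/p^\infty}\oplus I_2^{+1/p^\infty}$, and $(I_1+I_2)^{+1/p^\infty}$. None of your proposed steps substitutes for this computation.
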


\begin{rem} Recall that the global sections of $(\OO_\mathcal{X}^+/t)^a$ are $(R^+/t)^a$ (cf. \cite[Theorem 6.3 (iii), (iv)]{ScholzePerfectoid}). Also, note that if $x\in \mathcal{X}\setminus \mathcal{Z}$, then the image $I_{k(x)}^+ \subset \OO_{k(x)}$ of $I^+$ is not the zero ideal, so that $I_{k(x)}^{+ 1/p^\infty}$ is almost equal to $\OO_{k(x)}$. Finally, the requirement of the lemma pins down the map uniquely, as for any sheaf $\mathcal{F}$ of almost $\OO_K$-modules on a space $Y$, the map $H^0(\mathcal{F})\to \prod_{y\in Y} \mathcal{F}_y$ is injective.
\end{rem}

\begin{proof} Assume first that $I$ is generated by an element $f\in R$; we may assume that $f\in R^+$. In that case, $I^{+ 1/p^\infty}$ is almost equal to $f^{1/p^\infty} R^+ = \bigcup_n f^{1/p^n} R^+$. Indeed, one has to see that the cokernel of the inclusion $f^{1/p^\infty} R^+ \to I^{+ 1/p^\infty}$ is killed by $\mm_K$. Take any element $g\in I^{+ 1/p^\infty}$; then $g\in I^{+ 1/p^m}$ for some $m$, so $g=f^{1/p^m} h$ for some $h\in R$. There is some $n$ such that $t^n h\in R^+$. For all $k\geq 0$, we have
\[
t^{n/p^k} g = t^{n/p^k} g^{1/p^k} g^{1-1/p^k} = f^{1/p^{m+k}} (t^n h)^{1/p^k} g^{1-1/p^k}\in f^{1/p^{m+k}} R^+\ ,
\]
giving the result.

Thus, we have to see that
\[
\Hom_{R^+}(f^{1/p^\infty} R^+,R^+/t)^a\cong H^0(\mathcal{X}\setminus V(f),\OO_\mathcal{X}^+/t)^a\ .
\]
Consider the rational subsets $\mathcal{U}_n = \{x\in \Spa(R,R^+)\mid |f(x)|\geq |t|^n\}\subset \mathcal{X}$; then $\mathcal{X}\setminus V(f) = \bigcup_n \mathcal{U}_n$. Moreover, by \cite[Lemma 6.4 (i)]{ScholzePerfectoid} (and its proof), one has
\[
H^0(\mathcal{U}_n,\OO_\mathcal{X}^+/t)^a\cong R^+/t[u_n^{1/p^\infty}] / (\forall m: u_n^{1/p^m} f^{1/p^m} - t^{n/p^m})^a\ .
\]
Let
\[
S_n = R^+/t[u_n^{1/p^\infty}] / (\forall m: u_n^{1/p^m} f^{1/p^m} - t^{n/p^m})\ ,
\]
and let $S_n^{(k)}\subset S_n$ be the $R^+$-submodule generated by $u_n^i$ for $i\leq 1/p^k$. One gets maps
\[
f^{1/p^k}: S_n^{(k)}\to \im(R^+/t\to S_n)\ ,
\]
as $f^{1/p^k} u_n^i = f^{1/p^k - i} f^i u_n^i = f^{1/p^k - i} t^{ni}$ for all $i\leq 1/p^k$. Also, we know that
\[
H^0(\mathcal{X}\setminus V(f),\OO_\mathcal{X}^+/t)^a = \varprojlim_n H^0(\mathcal{U}_n,\OO_\mathcal{X}^+/t)^a = \varprojlim_n S_n^a\ ,
\]
and direct inspection shows that for fixed $n$ and $k$, the map $S_{n^\prime}\to S_n$ factors over $S_n^{(k)}$ for $n^\prime$ large enough. It follows that
\[
\varprojlim_n S_n = \varprojlim_n S_n^{(k)}
\]
for any $k$, and then also
\[
\varprojlim_n S_n = \varprojlim_{n,k} S_n^{(k)}\ .
\]
Via the maps $f^{1/p^k}: S_n^{(k)}\to \im(R^+/t\to S_n)$, one gets a map of inverse systems (in $n$ and $k$)
\[
S_n^{(k)}\to \im(R^+/t\to S_n)\ ,
\]
where on the right, the transition maps are given by multiplication by $f^{1/p^k - 1/p^{k^\prime}}$. The kernel and cokernel of this map are killed by $f^{1/p^k}$, and thus by $t^{n/p^k} = f^{1/p^k} u_n^{1/p^k}$. Taking the inverse limit over both $n$ and $k$ (the order does not matter, so we may first take it over $k$, and then over $n$), one sees that the two inverse limits are almost the same.

On the other hand, there is a map of inverse systems (in $n$ and $k$),
\[
R^+/t\to \im(R^+/t\to S_n)\ .
\]
Again, transition maps are multiplication by $f^{1/p^k - 1/p^{k^\prime}}$. Clearly, the maps are surjective. Assume $a\in \ker(R^+/t\to S_n)$. Then, for any sufficiently large $m$, one can write
\[
a = (u_n^{1/p^m} f^{1/p^m} - t^{n/p^m})\sum_i a_i u_n^i
\]
in $R^+/t[u_n^{1/p^\infty}]$. Comparing coefficients, we find that
\[
a=-t^{n/p^m} a_0\ ,\ f^{1/p^m} a = - t^{2n/p^m} a_{1/p^m}\ ,\ \ldots\ ,\ f^{\ell/p^m} a = - t^{(\ell+1)n/p^m} a_{\ell/p^m}
\]
for all $\ell\geq 0$. In particular, $f^{\ell/p^m} a\in t^{n\ell/p^m} R^+/t$ for all $\ell,m\geq 0$ (a priori only for $m$ large enough, but this is enough). Assume that $n=p^{n^\prime}$ is a power of $p$ (which is true for a cofinal set of $n$); then, setting $\ell=1$, $m=n^\prime$, we find that $f^{1/n} a = 0\in R^+/t$.

As the transition maps are given by $f^{1/p^k - 1/p^{k^\prime}}$, and we may take the inverse limit over $k$ and $n$ also as the inverse limit over the cofinal set of $(k,n)$ with $n=p^{k+1}$, one finds that the kernel of the map of inverse systems
\[
R^+/t\to \im(R^+/t\to S_n)
\]
is Mittag-Leffler zero. It follows that the inverse limits are the same. Finally, we have a map
\[
\Hom_{R^+}(f^{1/p^\infty} R^+,R^+/t)\to \varprojlim_k R^+/t\,
\]
given by evaluating a homomorphism on the elements $f^{1/p^k}\in f^{1/p^\infty} R^+$. Let $M$ be the direct limit of $R^+$ along multiplication by $f^{1/p^k - 1/p^{k^\prime}}$; then
\[
\varprojlim_k R^+/t = \Hom_{R^+}(M,R^+/t)\ ;
\]
it is enough to see that the surjective map $M\to f^{1/p^\infty} R^+$ (sending $1$ in the $k$-th term $R^+$ to $f^{1/p^k}$) is injective. For this, if $a\in \ker(f^{1/p^k}: R^+\to R^+)$, then by perfectness of $R^+$, also $a^{1/p} f^{1/p^{k+1}} = 0$, so $a f^{1/p^{k+1}} = 0$; it follows that $a$ gets mapped to $0$ in the direct limit $M$.

This handles the case that $I$ is generated by a single element. The general case follows: Filtering $I$ by its finitely generated submodules, we reduce to the case that $I$ is finitely generated. Thus, assume that $I=I_1 + I_2$, where $I_1$ is principal, and $I_2$ is generated by fewer elements. Clearly,
\[
\mathcal{X}\setminus V(I) = (\mathcal{X}\setminus V(I_1))\cup (\mathcal{X}\setminus V(I_2))\ ,
\]
and
\[
(\mathcal{X}\setminus V(I_1))\cap (\mathcal{X}\setminus V(I_2)) = (\mathcal{X}\setminus V(I_1I_2))\ .
\]
By using the sheaf property, one computes $H^0(\mathcal{X}\setminus V(I),\OO_\mathcal{X}^+/t)^a$ in terms of the others. Note that by induction, we may assume that the result is known for $I_1$, $I_2$ and $I_1I_2$. Also,
\[
0\to (I_1I_2)^{+ 1/p^\infty}\to I_1^{+ 1/p^\infty}\oplus I_2^{+ 1/p^\infty}\to (I_1+I_2)^{+ 1/p^\infty}\to 0
\]
is almost exact. Injectivity at the first step is clear. If $(f,g)$ lies in the kernel of the second map, then $f=g\in R^+$, and $f=f^{1/p} g^{(p-1)/p}\in (I_1I_2)^{+ 1/p^\infty}$, showing exactness in the middle. If $h\in (I_1+I_2)^{+ 1/p^\infty}$, then we may write $h=f+g$ for certain $f\in I_1^{1/p^\infty}$, $g\in I_2^{1/p^\infty}$. After multiplying by a power $t^k$ of $t$, $t^kf, t^kg\in R^+$. But then also
\[
t^{k/p^m} h = (t^k h)^{1/p^m} h^{1-1/p^m} = (t^k f)^{1/p^m} h^{1-1/p^m} + (t^k g)^{1/p^m} h^{1-1/p^m}\in I_1^{+ 1/p^\infty} + I_2^{+ 1/p^\infty}\ .
\]
This gives almost exactness of
\[
0\to (I_1I_2)^{+ 1/p^\infty}\to I_1^{+ 1/p^\infty}\oplus I_2^{+ 1/p^\infty}\to (I_1+I_2)^{+ 1/p^\infty}\to 0\ ,
\]
and applying $\Hom_{R^+}(-,R^+/t)^a$ will then give the result.
\end{proof}

For applications, the following lemma is useful.

\begin{lem}\label{PerfectIdealBC} Let $R$ be a perfectoid $K$-algebra, $I\subset R$ an ideal, and $R^\prime$ a perfectoid $K$-algebra, with a map $R\to R^\prime$; let $I^\prime = IR^\prime$. Then
\[
I^{+ 1/p^\infty}\otimes_{R^+} R^{\prime+}\to I^{\prime + 1/p^\infty}
\]
is an almost isomorphism.
\end{lem}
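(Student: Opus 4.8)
The statement asks us to compare $I^{+1/p^\infty}\otimes_{R^+}R^{\prime+}$ with $I^{\prime+1/p^\infty}$, where $I^\prime=IR^\prime$. The natural strategy is to imitate the structure of the proof of Proposition \ref{GeneralHebbarkeit}: first dispose of the case where $I$ is generated by a single element, then reduce the general (finitely generated) case to that one via the almost exact sequences for sums of ideals, and finally pass to arbitrary ideals by filtering by finitely generated submodules (since $-\otimes_{R^+}R^{\prime+}$ commutes with filtered colimits and the formation of $(-)^{1/p^\infty}$ does too).

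First I would treat the principal case $I=(f)$ with $f\in R^+$. As observed in the proof of Proposition \ref{GeneralHebbarkeit}, $I^{+1/p^\infty}$ is almost equal to $f^{1/p^\infty}R^+=\bigcup_n f^{1/p^n}R^+$, and likewise $I^{\prime+1/p^\infty}$ is almost equal to $f^{1/p^\infty}R^{\prime+}$ (applying the same observation over $R^\prime$, noting $I^\prime=(f)$ in $R^\prime$). So it suffices to see that $f^{1/p^\infty}R^+\otimes_{R^+}R^{\prime+}\to f^{1/p^\infty}R^{\prime+}$ is an almost isomorphism. Writing $f^{1/p^\infty}R^+$ as the filtered colimit of the system $R^+\xrightarrow{f^{1/p^n-1/p^{n+1}}}R^+\to\cdots$ (the maps being multiplication by fractional powers of $f$), the left side becomes the colimit of $R^{\prime+}\xrightarrow{f^{1/p^n-1/p^{n+1}}}R^{\prime+}\to\cdots$, which maps onto $f^{1/p^\infty}R^{\prime+}$; surjectivity is clear, and injectivity follows exactly as at the end of the principal case in Proposition \ref{GeneralHebbarkeit}: if $a\in R^{\prime+}$ is killed by $f^{1/p^k}$, then by perfectness of $R^{\prime+}$ also $a^{1/p}f^{1/p^{k+1}}=0$, hence $af^{1/p^{k+1}}=0$, so $a$ dies further along the colimit. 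This gives the principal case (even without almostness here).

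Next, the reduction for finitely generated ideals. Write $I=I_1+I_2$ with $I_1$ principal and $I_2$ generated by fewer elements; by induction the claim holds for $I_1$, $I_2$, and $I_1I_2$. The key input is the almost exact sequence
\[
0\to (I_1I_2)^{+1/p^\infty}\to I_1^{+1/p^\infty}\oplus I_2^{+1/p^\infty}\to (I_1+I_2)^{+1/p^\infty}\to 0
\]
established in the proof of Proposition \ref{GeneralHebbarkeit}, together with its counterpart over $R^\prime$ (note $(I_jR^\prime)(I_kR^\prime)=(I_jI_k)R^\prime$, so the primed sequence really is the one for the ideals $I_1^\prime$, $I_2^\prime$). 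Tensoring the first sequence with $R^{\prime+}$ over $R^+$ is almost right exact, and since $(I_1I_2)^{+1/p^\infty}\hookrightarrow I_1^{+1/p^\infty}\oplus I_2^{+1/p^\infty}$ is an inclusion of almost torsion-free $R^+$-modules (submodules of $R^+$ up to almost isomorphism), a short diagram chase — or the observation that $\Tor_1$ against the cokernel $(I_1+I_2)^{+1/p^\infty}$, itself almost a submodule of $R^+$, is almost zero — shows the tensored sequence stays almost exact. Comparing with the primed sequence via the maps supplied by the inductive hypothesis on $I_1$, $I_2$, $I_1I_2$ and applying the five lemma (in the almost category) yields the claim for $I$. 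Finally, filtering an arbitrary $I$ by its finitely generated submodules $I_\alpha$, one has $I^{+1/p^\infty}=\varinjlim_\alpha I_\alpha^{+1/p^\infty}$ and $I^{\prime+1/p^\infty}=\varinjlim_\alpha I_\alpha^{\prime+1/p^\infty}$ (both up to almost isomorphism), and since tensor products commute with filtered colimits, the general case follows from the finitely generated one.

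**Main obstacle.** The only genuinely delicate point is controlling exactness after tensoring: $-\otimes_{R^+}R^{\prime+}$ is not exact, so I must argue that the relevant $\Tor_1$-terms are \emph{almost} zero. This is where I expect to lean on the fact that all the modules in play ($I^{+1/p^\infty}$, $(I_1I_2)^{+1/p^\infty}$, etc.) are almost equal to genuine ideals of $R^+$ with no $t$-torsion issues beyond the almost level, and on the flatness-type consequences of perfectoidness (the same phenomenon exploited in the proof of Lemma \ref{StronglyZariskiClosedCharP}, where $R^{\circ a}/(f^{1/p^\infty})$ was shown to be flat over $\OO_K^a$). Making the $\Tor$-vanishing precise, rather than the bookkeeping with the exact sequences, is the part that requires care.
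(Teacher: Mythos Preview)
Your proposal is correct and follows the same route as the paper: reduce to finitely generated ideals by a filtered colimit, then to the principal case by induction using the almost exact sequence from Proposition~\ref{GeneralHebbarkeit}, and handle the principal case via the identification $f^{1/p^\infty}R^+ \cong \varinjlim R^+$ (transition maps multiplication by $f^{1/p^k-1/p^{k+1}}$), which visibly commutes with $-\otimes_{R^+}R^{\prime+}$.

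One remark: the ``main obstacle'' you flag is not actually an obstacle. In the inductive step you have a commutative diagram with top row
\[
(I_1I_2)^{+1/p^\infty}\otimes_{R^+}R^{\prime+}\to \bigl(I_1^{+1/p^\infty}\oplus I_2^{+1/p^\infty}\bigr)\otimes_{R^+}R^{\prime+}\to I^{+1/p^\infty}\otimes_{R^+}R^{\prime+}\to 0
\]
(right exact, since tensoring is right exact) and bottom row the almost exact primed sequence, with the first two vertical maps almost isomorphisms by induction. A direct diagram chase (the four-lemma, essentially) then forces the third vertical map to be an almost isomorphism: surjectivity is immediate, and for injectivity you lift to the middle, push down, pull back along the primed sequence, lift via the first vertical almost isomorphism, and use injectivity of the second vertical map. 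At no point do you need left exactness of the top row, so no $\Tor_1$-vanishing is required. The paper's proof is correspondingly terse at this point for exactly this reason.
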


\begin{proof} Writing $I$ as the filtered direct limit of its finitely generated submodules, one reduces to the case that $I$ is finitely generated. Arguing by induction on the minimal number of generators of $I$ as at the end of the proof of the previous proposition, one reduces further to the case that $I$ is principal, generated by some element $0\neq f\in R^+$. In that case, $I^{+ 1/p^\infty}$ is almost the same as $f^{1/p^\infty} R^+$, which is the same as $\varinjlim R^+$, where the transition maps are given by $f^{1/p^k - 1/p^{k+1}}$, cf. the description of $M$ in the previous proof. The same applies for $I^{\prime + 1/p^\infty}$, and the latter description obviously commutes with base-change.
\end{proof}

\subsection{A special case}

There will be a certain situation where we want to apply the Hebbarkeitssatz (and where it takes its usual form saying that anything extends uniquely from $\mathcal{X}\setminus \mathcal{Z}$ to $\mathcal{X}$). Let $A_0$ be normal, integral and of finite type over $\F_p$ and let $0\neq f\in A_0$. Let $K=\F_p((t^{1/p^\infty}))$, let $S=A_0^{1/p^\infty}\hat{\otimes}_{\F_p} K$ be the associated perfectoid $K$-algebra, and let $S^+ = S^\circ = A_0^{1/p^\infty}\hat{\otimes}_{\F_p} \OO_K$. Then $(S,S^+)$ is a perfectoid affinoid $K$-algebra, and let $\mathcal{Y}=\Spa(S,S^+)$.

Inside $\mathcal{Y}$, consider the open subset $\mathcal{X}=\{y\in \mathcal{Y}\mid |f(y)|\geq |t|\}$, and let $(R,R^+) = (\OO_\mathcal{Y}(\mathcal{X}),\OO_\mathcal{Y}^+(\mathcal{X}))$. Note that
\[
R^{+ a}/t \cong (A_0^{1/p^\infty}\otimes_{\F_p} \OO_K/t)[u^{1/p^\infty}]/(\forall m: u^{1/p^m} f^{1/p^m} - t^{1/p^m})^a\ .
\]
Finally, fix an ideal $0\neq I_0\subset A_0$, let $I = I_0 R$, and let $\mathcal{Z}=V(I)\subset \mathcal{X}$ be the associated closed subset of $\mathcal{X}$.

In the application, $\Spec A_0$ will be an open subset of the minimal compactification of the Siegel moduli space, the element $f$ will be the Hasse invariant, and the ideal $I_0$ will be the defining ideal of the boundary.

In this situation, Riemann's Hebbarkeitssatz holds true, at least under a hypothesis on resolution of singularities. In the application, this exists by the theory of the toroidal compactification.

\begin{cor}\label{SpecialHebbarkeit} Assume that $\Spec A_0$ admits a resolution of singularities, i.e. a proper birational map $T\to \Spec A_0$ such that $T$ is smooth over $\F_p$. Then the map
\[
H^0(\mathcal{X},\OO_\mathcal{X}^+/t)^a\to H^0(\mathcal{X}\setminus \mathcal{Z},\OO_\mathcal{X}^+/t)^a
\]
is an isomorphism of almost $\OO_K$-modules.
\end{cor}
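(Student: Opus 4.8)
The plan is to reduce the statement to the general Hebbarkeitssatz of Proposition \ref{GeneralHebbarkeit} by identifying, via almost mathematics, the module $\Hom_{R^+}(I^{+1/p^\infty}, R^+/t)^a$ with $(R^+/t)^a$ itself. Concretely, by Proposition \ref{GeneralHebbarkeit} we have a natural isomorphism $H^0(\mathcal{X}\setminus\mathcal{Z},\OO_\mathcal{X}^+/t)^a \cong \Hom_{R^+}(I^{+1/p^\infty}, R^+/t)^a$, while the global sections over $\mathcal{X}$ are $H^0(\mathcal{X},\OO_\mathcal{X}^+/t)^a = (R^+/t)^a$ (as recalled in the remark after the Proposition, citing \cite[Theorem 6.3]{ScholzePerfectoid}). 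Under these identifications the restriction map is induced by $R^+/t \to \Hom_{R^+}(I^{+1/p^\infty}, R^+/t)$, $a \mapsto (g\mapsto ag)$. So everything comes down to showing that this last map is an almost isomorphism, i.e. that $(R^+/t)^a$ behaves as though $I^{+1/p^\infty}$ were almost free of rank one — equivalently, that $\mathcal{X}\setminus\mathcal{Z}$ is almost all of $\mathcal{X}$ from the point of view of bounded functions mod $t$.

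The key geometric input is the resolution of singularities $T \to \Spec A_0$. First I would use Lemma \ref{PerfectIdealBC} (and the explicit description of $(R^+/t)^a$ as $(A_0^{1/p^\infty}\otimes \OO_K/t)[u^{1/p^\infty}]/(\cdots)$) to reduce the computation of $\Hom_{R^+}(I^{+1/p^\infty}, R^+/t)^a$ to a computation over the base: $I^{+1/p^\infty}$ is, almost, obtained by base change from $I_0^{1/p^\infty}\subset A_0^{1/p^\infty}$, so the Hom-module is controlled by $\Hom_{A_0^{1/p^\infty}}(I_0^{1/p^\infty}, A_0^{1/p^\infty})$ and its $\varpi$-adic/$u$-adic thickenings. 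Now normality of $A_0$ enters: on a normal Noetherian domain, a reflexive rank-one module that is generically trivial, pushed up to the perfection, becomes almost free; more precisely, $\Hom_{A_0^{1/p^\infty}}(I_0^{1/p^\infty}, A_0^{1/p^\infty})$ should be almost equal to $A_0^{1/p^\infty}$ itself, because the obstruction lives in codimension $\geq 1$ and extracting all $p$-power roots kills it almost. This is where resolution of singularities is used: on the smooth model $T$ the ideal $I_0\OO_T$ becomes locally principal (after further blow-ups, or because we only need the statement up to almost isomorphism and in codimension one normality suffices), and one transfers the conclusion back down via the birational proper map, using that $A_0 = H^0(T,\OO_T)$ by normality (Zariski's main theorem / $A_0$ normal).

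I expect the main obstacle to be exactly this last transfer: showing that the reflexive hull computation on the smooth (or normal) model $T$ descends correctly to $\Spec A_0$ and then survives the completed tensor product with $K$ and the localization $|f|\geq |t|$ defining $\mathcal{X}$, all in a way compatible with taking all $p$-power roots and with the almost structure. The delicate point is that $\mathcal{X}$ is not $\mathcal{Y}$ but an open subset where $f$ is invertible up to $t$, so one must check that the extension property for bounded functions on $\mathcal{X}\setminus\mathcal{Z}$ is not spoiled by the interaction between the locus $V(f)$ and $V(I_0)$; concretely one needs that $I_0$ and $f$ are ``transverse enough'' on the resolution that no bounded function on the complement of $\mathcal{Z}$ fails to extend. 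I would handle this by working on $T$, pulling back both $f$ and $I_0$, using that on a smooth variety $V(I_0\OO_T)$ has a well-behaved (normal crossings, after more blow-ups) structure, and invoking the classical Hebbarkeitssatz there together with the compatibility with evaluation at points in the last clause of Proposition \ref{GeneralHebbarkeit} to pin down the isomorphism. Finally I would note that the map is automatically injective (sheaf of almost $\OO_K$-modules, as in the remark), so only surjectivity — i.e. the extension statement — requires the resolution hypothesis.
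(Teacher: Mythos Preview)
Your framing via Proposition \ref{GeneralHebbarkeit} is correct, and reducing to the statement that $(R^+/t)^a \to \Hom_{R^+}(I^{+1/p^\infty}, R^+/t)^a$ is an almost isomorphism is exactly how the paper begins. But you misidentify the role of the resolution: it is \emph{not} used to make $I_0\OO_T$ locally principal --- the paper reduces at the outset (as at the end of the proof of Proposition \ref{GeneralHebbarkeit}) to the case where $I_0 = (g)$ is already principal. Resolution is used because on a smooth $\F_p$-scheme the perfection $A_0^{1/p^\infty}$ is \emph{flat} over $A_0$, and that flatness is what drives the key argument; for general normal $A_0$ one computes on $T$ and descends using $\pi_\ast \OO_T = \OO_{\Spec A_0}$ together with boundedness of the $f$-power torsion in $R^1\pi_\ast \OO_T$.

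More seriously, you have not located the actual hard computation. After rewriting $(R^+/t)^a$ as $A = A_0^{1/p^\infty}[u^{1/p^\infty}]/(uf)$ with almost structure generated by $(uf)^{1/p^m}$, one decomposes $A$ according to powers of $u$: exponents $<1$ give copies of $A_0^{1/p^\infty}$, exponents $\geq 1$ give copies of $A_0^{1/p^\infty}/f$. The problem then splits into showing that $A_0^{1/p^\infty} \to \Hom_{A_0^{1/p^\infty}}(g^{1/p^\infty} A_0^{1/p^\infty}, A_0^{1/p^\infty})$ and $A_0^{1/p^\infty}/f \to \Hom_{A_0^{1/p^\infty}}(g^{1/p^\infty} A_0^{1/p^\infty}, A_0^{1/p^\infty}/f)$ are almost isomorphisms, now with respect to the ideal $(f^{1/p^m})_m$. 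The first is an honest isomorphism by normality (your reflexive-hull intuition applies here, and no resolution is needed). The second is the crux: it fails on the nose and only holds $f^{1/p^\infty}$-almost, and its proof in the smooth case is a delicate analysis of an $R^1\varprojlim$ along multiplication by $g^{1/p^k - 1/p^{k+1}}$, using the Krull intersection theorem and a valuative comparison of the divisors of $f$ and $g$. Your appeal to normal crossings and transversality of $V(f)$ and $V(I_0)$ does not supply this argument.
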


\begin{proof} Arguing as at the end of the proof of Proposition \ref{GeneralHebbarkeit}, we may assume that $I$ is generated by one element $0\neq g\in A_0$. We have to show that the map
\[
R^{+ a}/t\to \Hom_{R^+}(g^{1/p^\infty} R^+,R^+/t)^a
\]
is an isomorphism. Note that we may rewrite
\[
R^{+ a}/t = (A_0^{1/p^\infty}\otimes_{\F_p} \F_p[t^{1/p^\infty}]/t)[u^{1/p^\infty}]/(\forall m: u^{1/p^m} f^{1/p^m} - t^{1/p^m})^a\cong A_0^{1/p^\infty}[u^{1/p^\infty}]/(uf)^a\ ;
\]
thus, we may replace $R^+/t$ by $A=A_0^{1/p^\infty}[u^{1/p^\infty}]/(uf)$, with the almost structure given by $(u^{1/p^\infty} f^{1/p^\infty})$. Also,
\[\begin{aligned}
\Hom_{R^+}(g^{1/p^\infty} R^+,R^+/t)^a&\cong \Hom_{R^+/t}(g^{1/p^\infty} R^+/t,R^+/t)^a\\
&\cong \Hom_A(g^{1/p^\infty} A,A)^a\cong \Hom_{A_0^{1/p^\infty}}(g^{1/p^\infty} A_0^{1/p^\infty},A)^a\ .
\end{aligned}\]
In the last step, we use that the kernel of the surjective map $g^{1/p^\infty} A_0^{1/p^\infty}\otimes_{A_0^{1/p^\infty}} A\to g^{1/p^\infty} A$ is almost zero. Given the formula
\[
A = \bigoplus_{0\leq i<1, i\in \Z[1/p]} A_0^{1/p^\infty} \cdot u^i\oplus \bigoplus_{i\geq 1, i\in \Z[1/p]} \left(A_0^{1/p^\infty}/f\right)\cdot u^i\ ,
\]
this reduces to showing that the kernel of
\[
g^{1/p^\infty} A_0^{1/p^\infty}/fg^{1/p^\infty} A_0^{1/p^\infty}\to A_0^{1/p^\infty}/fA_0^{1/p^\infty}
\]
is almost zero with respect to the ideal generated by all $f^{1/p^m}$, $m\geq 0$. But if $a\in g^{1/p^\infty} A_0^{1/p^\infty}$ is of the form $a=fb$ for some $b\in A_0^{1/p^\infty}$, then
\[
f^{1/p^m} a = f^{1/p^m} a^{1-1/p^m} a^{1/p^m} =  f^{1/p^m} f^{1-1/p^m} b^{1-1/p^m} a^{1/p^m} = f a^{1/p^m} b^{1-1/p^m} \in fg^{1/p^\infty} A_0^{1/p^\infty}\ ,
\]
whence the claim.

It remains to see that the map
\[
A\to \Hom_{A_0^{1/p^\infty}}(g^{1/p^\infty} A_0^{1/p^\infty},A)
\]
is almost an isomorphism. Again, using the explicit formula for $A$, and using the basis given by $u^i$, this reduces to the following lemma.
\end{proof}

\begin{lem} Let $A_0$ be normal, integral and of finite type over $\F_p$, such that $\Spec A_0$ admits a resolution of singularities. Let $0\neq f,g\in A_0$. Then the two maps
\[
A_0^{1/p^\infty}\to \Hom_{A_0^{1/p^\infty}}(g^{1/p^\infty} A_0^{1/p^\infty},A_0^{1/p^\infty})\ ,\ A_0^{1/p^\infty}/f\to \Hom_{A_0^{1/p^\infty}}(g^{1/p^\infty} A_0^{1/p^\infty},A_0^{1/p^\infty}/f)
\]
are almost isomorphisms with respect to the ideal generated by all $f^{1/p^m}$, $m\geq 0$.
\end{lem}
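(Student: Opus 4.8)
\emph{The plan} is to reduce everything to one $\Ext$-computation, dispose of the first map and of injectivity of the second by normality alone, and then bring in resolution of singularities to control the cokernel of the second map.

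Write $A:=A_0^{1/p^\infty}$, which is a normal domain since each $A_0^{1/p^m}\cong A_0$ is, and set $Q:=A/g^{1/p^\infty}A$. The short exact sequence $0\to g^{1/p^\infty}A\to A\to Q\to 0$, whose first term is a filtered colimit of free $A$-modules, gives for any $A$-module $N$ natural identifications $\ker\bigl(N\to\Hom_A(g^{1/p^\infty}A,N)\bigr)=\Hom_A(Q,N)$ and $\operatorname{coker}\bigl(N\to\Hom_A(g^{1/p^\infty}A,N)\bigr)=\Ext^1_A(Q,N)$, where the map is $n\mapsto(h\mapsto nh)$. Since $g$ is a nonzerodivisor and $g^{1/p^\infty}A$ has rank $1$, any homomorphism $g^{1/p^\infty}A\to A$ extends $\mathrm{Frac}(A)$-linearly, so $\Hom_A(g^{1/p^\infty}A,A)=\{x\in\mathrm{Frac}(A)\mid g^{1/p^k}x\in A\ \forall k\}=\bigcap_k g^{-1/p^k}A$; for any divisorial valuation $v$ of $A$ (equivalently of $A_0$, extended to the perfection) this forces $v(x)\ge -p^{-k}v(g)$ for all $k$, hence $v(x)\ge 0$ as $v(g)\ge 0$, so $x\in A$ by normality (the valuative criterion being applied to a $p$-power of $x$ lying in the Noetherian ring $A_0$). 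Thus the first map is an isomorphism, and in particular $\Hom_A(Q,A)=\Ext^1_A(Q,A)=0$. Applying the same valuative argument to $x=z/f$ shows $\Hom_A(Q,A/f)=0$, so the second map is injective, and feeding $0\to A\xrightarrow{f}A\to A/f\to 0$ into $\Hom_A(Q,-)$ together with $\Ext^1_A(Q,A)=0$ identifies its cokernel with $\Ext^2_A(Q,A)[f]$. So it remains to show $\Ext^2_A(Q,A)[f]$ is killed by $f^{1/p^m}$ for every $m$.

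For this I would fix a resolution $\pi\colon T\to X:=\Spec A_0$ and, blowing up further (again by resolution of singularities), arrange that $T$ is regular and the divisor of $\pi^{*}g$ has normal crossings. As $A_0$ is normal, $\pi_*\OO_T=\OO_X$, and perfecting (which commutes with $\pi_*$, and under which the pullback ideal of $g^{1/p^\infty}\OO_{X^{\perf}}$ remains $g^{1/p^\infty}\OO_{T^{\perf}}$) gives $\pi_*\OO_{T^{\perf}}=\OO_{X^{\perf}}$. Over $T$ the lemma becomes an \'etale-local, indeed monomial, statement: locally $g$ is a unit times a monomial in a regular system of parameters, $g^{1/p^\infty}\OO_{T^{\perf}}=\sqrt{g\,\OO_{T^{\perf}}}$ is the ideal of functions with all pertinent exponents positive, and one checks directly that $\OO_{T^{\perf}}/f\xrightarrow{\ \sim\ }\mathcal{H}om_{\OO_{T^{\perf}}}(g^{1/p^\infty}\OO_{T^{\perf}},\OO_{T^{\perf}}/f)$ and $\mathcal{E}xt^{\ge 1}_{\OO_{T^{\perf}}}(g^{1/p^\infty}\OO_{T^{\perf}},\OO_{T^{\perf}}/f)=0$, i.e. $R\mathcal{H}om_{\OO_{T^{\perf}}}(g^{1/p^\infty}\OO_{T^{\perf}},\OO_{T^{\perf}}/f)=\OO_{T^{\perf}}/f$. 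By adjunction $R\pi_*(\OO_{T^{\perf}}/f)=R\mathcal{H}om_{\OO_{X^{\perf}}}\bigl(g^{1/p^\infty}\OO_{X^{\perf}},R\pi_*(\OO_{T^{\perf}}/f)\bigr)$; combining this with the exact sequence $0\to\OO_{X^{\perf}}/f\to\pi_*(\OO_{T^{\perf}}/f)\to(R^1\pi_*\OO_{T^{\perf}})[f]\to 0$ coming from $\pi_*\OO_{T^{\perf}}=\OO_{X^{\perf}}$, one identifies the cokernel of the second map of the lemma with $\Hom_{A}\bigl(Q,\,H^0(X^{\perf},(R^1\pi_*\OO_{T^{\perf}})[f])\bigr)$.

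Finally $R^1\pi_*\OO_{T^{\perf}}=\varinjlim_n F^n_*\bigl(R^1\pi_*\OO_T\bigr)$ is a filtered colimit along Frobenius of a coherent sheaf supported on the non-isomorphism locus $W$ of $\pi$, a proper closed subset of $X$ of codimension $\ge 2$; such a colimit is annihilated by $\mathcal{I}_W^{1/p^m}$ for every $m$. Hence the displayed $\Hom$-module is supported on $V(g)\cap W\cap V(f)$ and annihilated both by $f$ and by $\mathcal{I}_W^{1/p^m}$ for all $m$; one upgrades this to annihilation by $f^{1/p^m}$ by Noetherian induction on the support, the base case being an isolated singular point $\mathfrak q$, where the module is annihilated by $\mathfrak q^{1/p^m}$ for all $m$ and by $f\in\mathfrak q$, so that $f^{1/p^m}\in\mathfrak q^{1/p^m}$ does it (and the module vanishes outright when $f\notin\mathfrak q$ or $g\notin\mathfrak q$). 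The main obstacles are the monomial computation on $T$ in the normal-crossings situation and this last bookkeeping step converting the a priori $\mathcal{I}_W$-almost-vanishing of the higher direct image into the required $f$-almost-vanishing of the cokernel; this is exactly where the resolution-of-singularities hypothesis is consumed.
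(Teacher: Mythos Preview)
Your overall architecture — do the smooth case on a resolution $T$, then push down — is the paper's architecture too, and your opening reductions (first map an isomorphism by normality, injectivity of the second, cokernel $=\Ext^1_A(Q,A/f)=\Ext^2_A(Q,A)[f]$) are clean and correct. But two of the later steps fail.

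\textbf{The monomial computation on $T$ is not an isomorphism.} You assert that on $T^\perf$, with $g$ a normal–crossings monomial, the map $\OO_{T^\perf}/f\to\mathcal{H}om(g^{1/p^\infty}\OO_{T^\perf},\OO_{T^\perf}/f)$ is an honest isomorphism. Take $R=k[x^{1/p^\infty}]$ and $f=g=x$. Then $\Hom_R(x^{1/p^\infty}R,R/x)=\varprojlim_m R/x$ with transition maps $\times x^{1/p^m-1/p^{m+1}}$, and the element $a_m=\sum_{k=1}^{m-1}x^{1-1/p^k+1/p^m}$ lies in the inverse limit but is not of the form $(x^{1/p^m}r)_m$ for any polynomial $r\in R/x$: the only candidate is the infinite sum $\sum_{k\ge1}x^{1-1/p^k}$. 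The map is only an $f$-\emph{almost} isomorphism, and that almost statement is exactly the smooth case of the lemma. The paper proves this smooth case by a genuine argument: writing the $\Hom$ as an inverse limit, it controls the obstruction $(R^1\varprojlim A)[f]$ by a valuation estimate (there is a constant $C=\max_i v_i(g)$ over the divisorial valuations of $V(f)$, and one shows the relevant elements become divisible by $f^{1-C/p^{k_0}}$). You cannot skip this.

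\textbf{Going from $\mathcal I_W$-almost to $f$-almost does not work as stated.} Your final move is: $R^1\pi_*\OO_{T^\perf}$ is killed by $\mathcal I_W^{1/p^m}$ for all $m$, the cokernel is also $f$-torsion, hence (``by Noetherian induction on the support'') it is killed by $f^{1/p^m}$. But annihilation by $\mathcal I_W^{1/p^m}$ together with $f$ does not in general force annihilation by $f^{1/p^m}$: with $A=k[x,y,z]^{1/p^\infty}$, $\mathcal I_W=(x,y)$, $f=z$, the module $A/((x,y)^{1/p^\infty},z)$ is killed by $(x,y)^{1/p^m}$ for all $m$ and by $z$, yet $z^{1/p}$ does not kill it. Your base case (an isolated point of $W$) is fine, but there is no inductive step. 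The paper avoids this entirely: it uses that $R^1\pi_*\OO_T$ is coherent, so its $f$-power torsion is killed by a fixed $f^N$; transporting through the Frobenius colimit then shows directly that the $f$-torsion in $R^1\pi_*\OO_{T^\perf}$ is killed by $f^{N/p^m}$ for all $m$, hence by every $f^{1/p^{m'}}$. That is the statement you need, and it bypasses $\mathcal I_W$ altogether.

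In short: the reduction to $\Ext^2$ and the adjunction framework are fine, but the smooth case is not a triviality (and your stronger isomorphism claim is false), and the descent step must use the Noetherian bound on $f$-power torsion in $R^1\pi_*\OO_T$, not the support on $W$.
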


\begin{rem} In fact, the first map is an isomorphism, and the second map injective, without assuming resolution of singularities for $\Spec A_0$. Resolution of singularities is only needed to show that the second map is almost surjective. It may be possible to remove the assumption of resolution of singularities by using de Jong's alterations.
\end{rem}

\begin{proof} First, as $A_0$, and thus $A_0^{1/p^\infty}$ is a domain, the map
\[
A_0^{1/p^\infty}\to \Hom_{A_0^{1/p^\infty}}(g^{1/p^\infty} A_0^{1/p^\infty},A_0^{1/p^\infty})
\]
is injective, and the right-hand side injects into $A_0^{1/p^\infty}[g^{-1}]\subset L$, where $L$ is the quotient field of $A_0^{1/p^\infty}$. If $x\in L$ lies in the image of the right-hand side, then $g^{1/p^n} x\in A_0^{1/p^\infty}$ for all $n$. As $A_0$ is normal and noetherian, one can check whether $x\in A_0^{1/p^\infty}$ by looking at rank-$1$-valuations. If $x$ would not lie in $A_0^{1/p^\infty}$, then there would be some rank-$1$-valuation taking absolute value $>1$ on $x$; then for $n$ sufficiently large, also $g^{1/p^n} x$ has absolute value $>1$, which contradicts $g^{1/p^n} x\in A_0^{1/p^\infty}$. Thus,
\[
A_0^{1/p^\infty}\cong \Hom_{A_0^{1/p^\infty}}(g^{1/p^\infty} A_0^{1/p^\infty},A_0^{1/p^\infty})\ .
\]
In particular, it follows that
\[
A_0^{1/p^\infty}/f\hookrightarrow \Hom_{A_0^{1/p^\infty}}(g^{1/p^\infty} A_0^{1/p^\infty},A_0^{1/p^\infty}/f)\ .
\]

Now assume first that $A_0$ is smooth. Then $A_0^{1/p}$ is a flat $A_0$-module; it follows that $A_0^{1/p^\infty}$ is a flat $A_0$-module. First, observe that for any $A_0$-module $M$ and $0\neq x\in M$, there is some $n$ such that $0\neq g^{1/p^n} x\in A_0^{1/p^\infty}\otimes_{A_0} M$. Indeed, assume not; then replacing $M$ by the submodule generated by $x$ (and using flatness of $A_0^{1/p^\infty}$), we may assume that $M=A_0/J$ for some ideal $J\subset A_0$, and that $g^{1/p^n}: M=A_0/J\to A_0^{1/p^\infty}\otimes_{A_0} M=A_0^{1/p^\infty}/JA_0^{1/p^\infty}$ is the zero map for all $n\geq 0$. This implies that $g\in J^{p^n}A_0^{1/p^\infty}\cap A_0 = J^{p^n}$ (by flatness of $A_0^{1/p^\infty}$ over $A_0$) for all $n\geq 0$, so that $g=0$ by the Krull intersection theorem, as $A_0$ is a domain, which is a contradiction.

In particular, we find that
\[
\bigcap_n (f,g^{1-1/p^n})A_0^{1/p^\infty} = (f,g) A_0^{1/p^\infty}\ .
\]
Indeed, an element $x$ of the left-hand side lies in $A_0^{1/p^m}$ for $m$ large enough; we may assume $m=0$ by applying a power of Frobenius. Then $x$ reduces to an element of $M=A_0/(f,g)$ such that for all $n\geq 0$,
\[
0=g^{1/p^n} x\in A_0^{1/p^\infty}\otimes_{A_0} M = A_0^{1/p^\infty}/(f,g)\ .
\]
Therefore, $0=x\in A_0/(f,g)$, i.e. $x\in (f,g) A_0\subset (f,g) A_0^{1/p^\infty}$.

Now recall that
\[
\Hom_{A_0^{1/p^\infty}}(g^{1/p^\infty} A_0^{1/p^\infty},A_0^{1/p^\infty}/f)
\]
can be computed as the inverse limit of $A_0^{1/p^\infty}/f$, where the transition maps (from the $k$-th to the $k^\prime$-th term) are given by multiplication by $g^{1/p^k - 1/p^{k^\prime}}$. Let $M = R^1\varprojlim A_0^{1/p^\infty}$, with the similar transition maps. Then there is an exact sequence
\[
0\to A_0^{1/p^\infty}\buildrel f\over\to A_0^{1/p^\infty}\to \Hom_{A_0^{1/p^\infty}}(g^{1/p^\infty} A_0^{1/p^\infty},A_0^{1/p^\infty}/f)\to M\buildrel f\over\to M\ .
\]
Thus, it remains to see that kernel of $f: M\to M$ is killed by $f^{1/p^m}$ for all $m\geq 0$. Recall that
\[
M = \coker(\prod_{n\geq 0} A_0^{1/p^\infty}\to \prod_{n\geq 0} A_0^{1/p^\infty})\ ,
\]
where the map is given by $(x_0,x_1,\ldots)\mapsto (y_0,y_1,\ldots)$ with $y_k = x_k - g^{1/p^k - 1/p^{k+1}} x_{k+1}$. Thus, take some sequence $(y_0,y_1,\ldots)$, and assume that there is a sequence $(x_0^\prime,x_1^\prime,\ldots)$ with $fy_k = x_k^\prime - g^{1/p^k - 1/p^{k+1}} x_{k+1}^\prime$. We claim that $x_0^\prime\in (f,g)A_0^{1/p^\infty}$. By the above, it is enough to prove that $x_0^\prime\in (f,g^{1-1/p^k})A_0^{1/p^\infty}$ for all $k\geq 0$. But
\[\begin{aligned}
x_0^\prime &= fy_0 + g^{1-1/p} x_1^\prime = fy_0 + g^{1-1/p} fy_1 + g^{1-1/p^2} x_2^\prime = \ldots \\
&= f(y_0+g^{1-1/p} y_1 + \ldots + g^{1-1/p^{k-1}} y_{k-1}) + g^{1-1/p^k} x_k^\prime\in (f,g^{1-1/p^k})A_0^{1/p^\infty}\ ,
\end{aligned}\]
giving the claim. Similarly, $x_k^\prime\in (f,g^{1/p^k})A_0^{1/p^\infty}$ for all $k\geq 0$. Fix some $k_0\geq 0$. We may add $g^{1/p^k} z$ to $x_k^\prime$ for all $k$ for some $z\in A_0^{1/p^\infty}$; thus, we may assume that $x_{k_0}^\prime\in fA_0^{1/p^\infty}$. It follows that
\[
g^{1/p^{k_0} - 1/p^k} x_k^\prime\in fA_0^{1/p^\infty}
\]
for all $k\geq k_0$ (and $x_k^\prime\in fA_0^{1/p^\infty}$ for $k<k_0$). We claim that there is an integer $C\geq 0$ (depending only on $A_0$, $f$ and $g$) such that this implies
\[
x_k^\prime\in f^{1-C/p^{k_0}} A_0^{1/p^\infty}\ .
\]
Indeed, this is equivalent to a divisibility of Cartier divisors $f^{1-C/p^{k_0}} | x_k^\prime$. As $A_0$ is normal, this can be translated into a divisibility of Weil divisors. Let $x_1,\ldots,x_r\in \Spec A_0$ be the generic points of $V(f)$, and $v_1,\ldots,v_r$ the associated rank-$1$-valuations on $A_0^{1/p^\infty}$, normalized by $v(f)=1$. Then, the condition $f^{1-C/p^{k_0}} | x_k^\prime$ is equivalent to $v_i(x_k^\prime)\geq 1-C/p^{k_0}$ for $i=1,\ldots,r$. As $g\neq 0$, there is some $C<\infty$ such that $v_i(g)\leq C$ for $i=1,\ldots,r$. As
\[
g^{1/p^{k_0} - 1/p^k} x_k^\prime\in fA_0^{1/p^\infty}\ ,
\]
we know that
\[
(1/p^{k_0} - 1/p^k) C + v_i(x_k^{\prime})\geq 1\ ,
\]
thus
\[
v_i(x_k^\prime)\geq 1 - C/p^{k_0}\ ,
\]
as desired.

Thus, taking $k_0$ large enough, we can ensure that all $x_k^\prime$ are divisible by $f^{1-1/p^n}$, which shows that $f^{1/p^n}(y_0,y_1,\ldots)=0\in M$, whence the claim. This finishes the proof in the case that $A_0$ is smooth.

In general, take a resolution of singularities $\pi: T\to \Spec A_0$ (which we assumed to exist). It induces a map $\pi^{1/p^\infty}: T^{1/p^\infty}\to \Spec A_0^{1/p^\infty}$. The result in the smooth case implies that
\[
\Hom_{\OO_{T^{1/p^\infty}}}(g^{1/p^\infty} \OO_{T^{1/p^\infty}}, \OO_{T^{1/p^\infty}}/f)\leftarrow \OO_{T^{1/p^\infty}}/f
\]
is an almost isomorphism of sheaves over $T^{1/p^\infty}$. Note that by Zariski's main theorem, $\pi_\ast \OO_T = \OO_{\Spec A_0}$. Moreover, $R^1\pi_\ast \OO_T$ is a coherent $\OO_{\Spec A_0}$-module, so there is some $n$ such that $f^n$ kills all $f$-power torsion in $R^1\pi_\ast \OO_T$. Passing to the perfection, this implies that on $R^1\pi^{1/p^\infty}_\ast \OO_{T^{1/p^\infty}}$, the kernel of multiplication by $f$ is also killed by $f^{1/p^n}$ for all $n\geq 0$. Therefore, the map
\[
\OO_{\Spec A_0^{1/p^\infty}}/f\to \pi^{1/p^\infty}_\ast (\OO_{T^{1/p^\infty}}/f)
\]
is injective, with cokernel almost zero. Also, $g^{1/p^\infty} \OO_{T^{1/p^\infty}} = \pi^{1/p^\infty\ast} (g^{1/p^\infty} \OO_{\Spec A_0^{1/p^\infty}})$, as $g$ is a regular element in $A_0$ and $\OO_T$. Thus, adjunction shows that
\[\begin{aligned}
\Hom_{\OO_{\Spec A_0^{1/p^\infty}}}(&g^{1/p^\infty} \OO_{\Spec A_0^{1/p^\infty}},\OO_{\Spec A_0^{1/p^\infty}}/f)\\
&\to \Hom_{\OO_{\Spec A_0^{1/p^\infty}}}(g^{1/p^\infty} \OO_{\Spec A_0^{1/p^\infty}},\pi^{1/p^\infty}_\ast \OO_{T^{1/p^\infty}}/f)\\
&\to \pi^{1/p^\infty}_\ast \Hom_{\OO_{T^{1/p^\infty}}}(g^{1/p^\infty} \OO_{T^{1/p^\infty}},\OO_{T^{1/p^\infty}}/f)\\
&\leftarrow \pi^{1/p^\infty}_\ast \OO_{T^{1/p^\infty}}/f\leftarrow \OO_{\Spec A_0^{1/p^\infty}}/f
\end{aligned}\]
is a series of almost isomorphisms, finally finishing the proof by taking global sections.
\end{proof}

\subsection{Lifting to (pro-)finite covers}

As the final topic in this section, we will show how to lift a Hebbarkeitssatz to (pro-)finite covers. In the application, we will first prove a Hebbarkeitssatz at level $\Gamma_0(p^\infty)$ using the result from the previous subsection. After that, we need to lift this result to full $\Gamma(p^\infty)$-level. This is the purpose of the results of this subsection.

The following general definition will be useful.

\begin{definition}\label{DefGood} Let $K$ be a perfectoid field (of any characteristic), and let $0\neq t\in K$ with $|p|\leq |t|<1$. A triple $(\mathcal{X},\mathcal{Z},\mathcal{U})$ consisting of an affinoid perfectoid space $\mathcal{X}$ over $K$, a closed subset $\mathcal{Z}\subset \mathcal{X}$ and a quasicompact open subset $\mathcal{U}\subset \mathcal{X}\setminus \mathcal{Z}$ is good if
\[
H^0(\mathcal{X},\OO_{\mathcal{X}}^+/t)^a\cong H^0(\mathcal{X}\setminus \mathcal{Z},\OO_{\mathcal{X}}^+/t)^a\hookrightarrow H^0(\mathcal{U},\OO_{\mathcal{U}}^+/t)^a\ .
\]
\end{definition}

One checks easily that this notion is independent of the choice of $t$, and is compatible with tilting. Moreover, if $(\mathcal{X},\mathcal{Z},\mathcal{U})$ is good, then for any $t\in \OO_K$, possibly zero, one has
\[
H^0(\mathcal{X},\OO_{\mathcal{X}}^+/t)^a\cong H^0(\mathcal{X}\setminus \mathcal{Z},\OO_{\mathcal{X}}^+/t)^a\hookrightarrow H^0(\mathcal{U},\OO_{\mathcal{U}}^+/t)^a\ .
\]
In particular, the case $t=0$ says that bounded functions from $\mathcal{X}\setminus \mathcal{Z}$ extend uniquely to $\mathcal{X}$.

In the application, $\mathcal{X}$ will be an open subset of the minimal compactification, $\mathcal{Z}$ will be the boundary, and $\mathcal{U}$ the locus of good reduction. Knowing that such a triple is good will allow us to verify statements away from the boundary, or even on the locus of good reduction.

Now we go back to our setup, so in particular $K$ is of characteristic $p$. Let $R_0$ be a reduced Tate $K$-algebra topologically of finite type, $\mathcal{X}_0 = \Spa(R_0,R_0^\circ)$ the associated affinoid adic space of finite type over $K$. Let $R$ be the completed perfection of $R_0$, which is a p-finite perfectoid $K$-algebra, and $\mathcal{X}=\Spa(R,R^+)$ with $R^+=R^\circ$ the associated p-finite affinoid perfectoid space over $K$.

Moreover, let $I_0\subset R_0$ be some ideal, $I=I_0R\subset R$, $\mathcal{Z}_0 = V(I_0)\subset \mathcal{X}_0$, and $\mathcal{Z}=V(I)\subset \mathcal{X}$. Finally, fix a quasicompact open subset $\mathcal{U}_0\subset \mathcal{X}_0\setminus \mathcal{Z}_0$, with preimage $\mathcal{U}\subset \mathcal{X}\setminus \mathcal{Z}$.

In the following lemma, we show that the triple $(\mathcal{X},\mathcal{Z},\mathcal{U})$ is good under suitable conditions on $R_0$, $I_0$ and $U_0$.

\begin{lem}\label{GoodTriple} Let $A_0$ be normal, of finite type over $\F_p$, admitting a resolution of singularities, let
\[
R_0 = (A_0\hat{\otimes}_{\F_p} K)\langle u\rangle / (uf - t)
\]
for some $f\in A_0$ which is not a zero-divisor, and let $I_0 = JR_0$ for some ideal $J\subset A_0$ with $V(J)\subset \Spec A_0$ of codimension $\geq 2$. Moreover, let $\mathcal{U}_0 = \{x\in \mathcal{X}_0\mid |g(x)| = 1\ \mathrm{for\ some}\ g\in J\}$. If $K=\F_p((t^{1/p^\infty}))$, the triple $(\mathcal{X},\mathcal{Z},\mathcal{U})$ is good.
\end{lem}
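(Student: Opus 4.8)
The plan is to combine Corollary \ref{SpecialHebbarkeit} (which gives extension across $\mathcal{Z}$) with an elementary algebraic injectivity statement (which gives the embedding into $H^0(\mathcal{U},-)$). First I would check that $(\mathcal{X},\mathcal{Z})$ is of the shape handled by Corollary \ref{SpecialHebbarkeit}: the completed perfection $R$ of $R_0=(A_0\hat{\otimes}_{\F_p}K)\langle u\rangle/(uf-t)$ is canonically $\OO_\mathcal{Y}(\{x\mid |f(x)|\geq |t|\})$ for $\mathcal{Y}=\Spa(A_0^{1/p^\infty}\hat{\otimes}_{\F_p}K,\,A_0^{1/p^\infty}\hat{\otimes}_{\F_p}\OO_K)$, and $\mathcal{Z}=V(JR)$ with $J\neq 0$ (if $A_0$ is not a domain, split it into its finitely many normal-domain factors and argue componentwise). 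Then Corollary \ref{SpecialHebbarkeit} yields the first isomorphism $H^0(\mathcal{X},\OO_\mathcal{X}^+/t)^a\cong H^0(\mathcal{X}\setminus\mathcal{Z},\OO_\mathcal{X}^+/t)^a$, and, as recorded just before that corollary, the left side equals $(R^+/t)^a=A^a$ with $A=A_0^{1/p^\infty}[u^{1/p^\infty}]/(uf)$ carrying its explicit $\Z[1/p]_{\geq 0}$-grading in $u$.

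It then remains to show $H^0(\mathcal{X}\setminus\mathcal{Z},\OO_\mathcal{X}^+/t)^a\hookrightarrow H^0(\mathcal{U},\OO_\mathcal{X}^+/t)^a$. Choosing generators $g_1,\dots,g_r$ of $J$, we have $\mathcal{U}=\bigcup_k\mathcal{U}_{g_k}$ with $\mathcal{U}_{g_k}=\{x\in\mathcal{X}\mid |g_k(x)|\geq 1\}$ a rational subset disjoint from $\mathcal{Z}$ (each $g_k\in A_0\subset R^\circ$). The sheaf property gives an injection $H^0(\mathcal{U},\OO_\mathcal{X}^+/t)^a\hookrightarrow\prod_k H^0(\mathcal{U}_{g_k},\OO_\mathcal{X}^+/t)^a$, and computing as in the proof of Proposition \ref{GeneralHebbarkeit} one gets $H^0(\mathcal{U}_{g_k},\OO_\mathcal{X}^+/t)^a\cong (A[g_k^{-1}])^a$, with the restriction map being localization. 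So it suffices that $A^a\to\prod_k(A[g_k^{-1}])^a$ be injective; unwinding the almost structure, this asks: if $h\in A$ becomes, in each $A[g_k^{-1}]$, an element killed by every $u^{1/p^m}f^{1/p^m}$ (these being the incarnations of $t^{1/p^m}$), then $h$ is killed in $A$ by every $u^{1/p^m}f^{1/p^m}$.

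I would settle this by expanding $h=\sum_i a_iu^i$ in the grading $A=\bigoplus_{0\le i<1}A_0^{1/p^\infty}u^i\oplus\bigoplus_{i\ge 1}(A_0^{1/p^\infty}/f)u^i$ and comparing coefficients after multiplying by $u^{1/p^m}f^{1/p^m}$. For a coefficient $a_i$ with $i<1$, choosing $m$ with $i+1/p^m<1$ forces $a_if^{1/p^m}=0$ in each $A_0^{1/p^\infty}[g_k^{-1}]$; since $f^{1/p^m}$ is a nonzerodivisor there, $a_i$ dies in every $A_0^{1/p^\infty}[g_k^{-1}]$, hence $a_i=0$ because $A_0^{1/p^\infty}$ is reduced and $V(J)$ contains no component of $\Spec A_0$. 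Thus $h=\sum_{i\ge 1}b_iu^i$, and the surviving coefficients $b_if^{1/p^m}\in A_0^{1/p^\infty}/f$ are by hypothesis annihilated by a power of $J$, so it is enough to know that no nonzero element of $A_0^{1/p^\infty}/f$ is annihilated by a power of $J$. This is the one place where codimension $\geq 2$ enters: $A_0$ is normal, hence satisfies Serre's condition $S_2$, so each $A_0/f^{p^n}$ (a cut by the nonzerodivisor $f^{p^n}$) satisfies $S_1$; since $V(f)$ has pure codimension $1$ while $V(J)$ has codimension $\geq 2$, no component of $V(f)$ lies in $V(J)$, whence $A_0/f^{p^n}$ has no nonzero element supported on $V(J)$; and taking the filtered colimit $A_0^{1/p^\infty}/f=\varinjlim_n A_0^{1/p^n}/f$, under which $A_0^{1/p^n}/f\cong A_0/f^{p^n}$ and $J$ passes to an ideal with radical $\sqrt{J}$, gives the claim.

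The step I expect to take the most care is this last one — transporting the ``no section supported on $V(J)$'' vanishing from the Noetherian rings $A_0/f^{p^n}$ to their non-Noetherian perfection $A_0^{1/p^\infty}/f$ — together with keeping the almost-mathematics bookkeeping consistent while unwinding the $u$-grading of $A$; everything else is a direct translation through Corollary \ref{SpecialHebbarkeit} and the rational-subset computations already carried out in this section.
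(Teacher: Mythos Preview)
Your proposal is correct and follows essentially the same route as the paper: apply Corollary \ref{SpecialHebbarkeit} for the first isomorphism, then reduce the injection into $H^0(\mathcal{U},-)^a$ via the $u$-grading of $A=A_0^{1/p^\infty}[u^{1/p^\infty}]/(uf)$ and the localization formula $H^0(\mathcal{U}_g,\OO^+/t)^a=(A[g^{-1}])^a$ to the classical depth fact that $A_0/f$ (hence each $A_0^{1/p^n}/f\cong A_0/f^{p^n}$, hence the colimit $A_0^{1/p^\infty}/f$) has no nonzero section supported on $V(J)$. The only difference is cosmetic: the paper checks honest injectivity of $A\to\prod_k A[g_k^{-1}]$ directly from the grading and these Noetherian facts, so the almost-structure unwinding you flag as the delicate step never needs to be carried out.
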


\begin{proof} We may assume that $A_0$ is integral. Corollary \ref{SpecialHebbarkeit} implies that
\[
H^0(\mathcal{X},\OO_\mathcal{X}^+/t)^a\cong H^0(\mathcal{X}\setminus \mathcal{Z},\OO_\mathcal{X}^+/t)^a\ .
\]
Moreover, using notation from the proof of Corollary \ref{SpecialHebbarkeit}, $H^0(\mathcal{X},\OO_\mathcal{X}^+/t)^a = A_0^{1/p^\infty}[u^{1/p^\infty}]/(uf)^a$, and for any $g\in J$, one has
\[
H^0(\mathcal{U}_g,\OO_\mathcal{X}^+/t)^a = A_0^{1/p^\infty}[g^{-1}][u^{1/p^\infty}]/(uf)^a
\]
by localization, where $\mathcal{U}_g = \{x\in \mathcal{X}\mid |g(x)| = 1\}$. Thus, using the basis given by the $u^i$, the result follows from
\[
H^0(\Spec A_0,\OO_{\Spec A_0}) = H^0(\Spec A_0\setminus V(J),\OO_{\Spec A_0})
\]
and
\[
H^0(\Spec A_0,\OO_{\Spec A_0}/f)\hookrightarrow H^0(\Spec A_0\setminus V(J),\OO_{\Spec A_0}/f)\ ,
\]
where the latter holds true because the depth of $\OO_{\Spec A_0}/f$ at any point of $V(J)$ is at least $2-1=1$.
\end{proof}

In the next lemma, we go back to the abstract setup before Lemma \ref{GoodTriple}.

\begin{lem}\label{GoodTripleFinite} Assume that $(\mathcal{X},\mathcal{Z},\mathcal{U})$ is good. Assume moreover that $R_0$ is normal, and that $V(I_0)\subset \Spec R_0$ is of codimension $\geq 2$. Let $R_0^\prime$ be a finite normal $R_0$-algebra which is \'etale outside $V(I_0)$, and such that no irreducible component of $\Spec R_0^\prime$ maps into $V(I_0)$. Let $I_0^\prime = I_0R_0^\prime$, and $\mathcal{U}_0^\prime\subset \mathcal{X}_0^\prime$ the preimage of $\mathcal{U}_0$. Let $R^\prime$, $I^\prime$, $\mathcal{X}^\prime$, $\mathcal{Z}^\prime$, $\mathcal{U}^\prime$ be the associated perfectoid objects.
\begin{altenumerate}
\item[{\rm (i)}] There is a perfect trace pairing
\[
\tr_{R_0^\prime/R_0}: R_0^\prime\otimes_{R_0} R_0^\prime\to R_0\ .
\]
\item[{\rm (ii)}] The trace pairing from (i) induces a trace pairing
\[
\tr_{R^{\prime \circ}/R^\circ}: R^{\prime\circ}\otimes_{R^\circ} R^{\prime\circ}\to R^\circ
\]
which is almost perfect.
\item[{\rm (iii)}] For all open subsets $\mathcal{V}\subset \mathcal{X}$ with preimage $\mathcal{V}^\prime\subset \mathcal{X}^\prime$, the trace pairing induces an isomorphism
\[
H^0(\mathcal{V}^\prime,\OO_{\mathcal{X}^\prime}^+/t)^a\cong \Hom_{R^\circ/t}(R^{\prime\circ}/t,H^0(\mathcal{V},\OO_\mathcal{X}^+/t))^a\ .
\]
\item[{\rm (iv)}] The triple $(\mathcal{X}^\prime,\mathcal{Z}^\prime,\mathcal{U}^\prime)$ is good.
\item[{\rm (v)}] If $\mathcal{X}^\prime\to \mathcal{X}$ is surjective, then the map
\[
H^0(\mathcal{X},\OO_\mathcal{X}^+/t)\to H^0(\mathcal{X}^\prime,\OO_{\mathcal{X}^\prime}^+/t)\cap H^0(\mathcal{U},\OO_\mathcal{X}^+/t)
\]
is an almost isomorphism.
\end{altenumerate}
\end{lem}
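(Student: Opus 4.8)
The plan is to prove (i) and (ii) by commutative algebra, deduce (iii) by a finite‑base‑change computation modulo $t$, and then obtain (iv) and (v) formally by feeding the goodness of $(\mathcal X,\mathcal Z,\mathcal U)$ through the trace pairing and using descent along $\mathcal X'\to\mathcal X$. For (i), one may localize and assume $R_0$ is a normal domain. The trace form gives $\phi\colon R_0'\to\Hom_{R_0}(R_0',R_0)$, which is an isomorphism over $\Spec R_0\setminus V(I_0)$ because $R_0'$ is étale there. Now $R_0'$ is finite and torsion‑free over the normal ring $R_0$ (torsion‑freeness follows, as over the complement of $V(I_0)$ it is flat), and is itself normal, hence reflexive over $R_0$; also $\Hom_{R_0}(R_0',R_0)$ is reflexive. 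Since $V(I_0)$ has codimension $\ge 2$, $\phi$ is an isomorphism in codimension $\le 1$, and a morphism of reflexive modules over a normal ring that is an isomorphism in codimension $\le 1$ is an isomorphism; hence $\phi$ is an isomorphism, i.e. the trace pairing is perfect.

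For (ii), I would pass to perfections: $R_0^{1/p^\infty}=\varinjlim R_0$ with Frobenius transition maps, and the trace pairing over $R_0^{1/p^\infty}$ is perfect, being a filtered colimit of the perfect pairings of (i). Restricting to power‑bounded subrings, at each finite stage $R_0^{1/p^m,\circ}$ the pairing is perfect after inverting $t$, so the cokernels of $R_0^{1/p^m,\circ}\to\Hom_{R_0^{1/p^m,\circ}}(R_0^{1/p^m,\prime\circ},R_0^{1/p^m,\circ})$, being finitely generated and $t$‑adically separated, are killed by a fixed power $t^N$; the Frobenius transition maps scale this bound to $t^{N/p^m}$, so over $R_0^{1/p^\infty,\circ}$ these cokernels are killed by every $t^{N/p^m}$, hence by $\mathfrak{m}_K$, i.e. the pairing is almost perfect. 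Completing (which is exact, as $R'$ is module‑finite over $R$, and compatible with the almost structure) yields the almost perfect $\tr_{R'^\circ/R^\circ}$.

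For (iii), by the sheaf property of $\OO^+/t$ and the fact that $\Hom_{R^\circ/t}(R'^\circ/t,-)$ commutes with limits, one reduces to $\mathcal V$ rational with preimage $\mathcal V'$. As $\mathcal X'\to\mathcal X$ is finite, $\OO_{\mathcal X'}^+(\mathcal V')\cong R'^\circ\,\widehat{\otimes}_{R^\circ}\OO_{\mathcal X}^+(\mathcal V)$ up to almost isomorphism; using (ii), which identifies $R'^\circ$ almost with its $R^\circ$‑dual, together with the fact that $R'^\circ$ is almost finitely presented over $R^\circ$ (the $p$‑finite setup), this rewrites, after applying $-/t$, as $\Hom_{R^\circ/t}(R'^\circ/t,H^0(\mathcal V,\OO_{\mathcal X}^+/t))$, which is the claim; naturality in $\mathcal V$, hence compatibility with evaluation at points, is clear. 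For (iv), apply (iii) with $\mathcal V\in\{\mathcal X,\ \mathcal X\setminus\mathcal Z,\ \mathcal U\}$, noting $\mathcal X'\setminus\mathcal Z'$ is the preimage of $\mathcal X\setminus\mathcal Z$ (because $I'=IR'$) and $\mathcal U'$ of $\mathcal U$: goodness of $(\mathcal X,\mathcal Z,\mathcal U)$ reads $H^0(\mathcal X,\OO_{\mathcal X}^+/t)^a\cong H^0(\mathcal X\setminus\mathcal Z,\OO_{\mathcal X}^+/t)^a\hookrightarrow H^0(\mathcal U,\OO_{\mathcal X}^+/t)^a$, and applying the left‑exact functor $\Hom_{R^\circ/t}(R'^\circ/t,-)^a$ produces the same statement with primes, i.e. $(\mathcal X',\mathcal Z',\mathcal U')$ is good.

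For (v), surjectivity of $\mathcal X'\to\mathcal X$ makes $\pi\colon\mathcal X'\setminus\mathcal Z'\to\mathcal X\setminus\mathcal Z$ finite étale and surjective, hence faithfully flat, so $\pi^\ast$ is almost injective on $\OO^+/t$‑sections; almost injectivity in (v) then follows by restricting to $\mathcal X\setminus\mathcal Z$ and invoking goodness of both triples (via (iv)). For almost surjectivity onto $H^0(\mathcal X',\OO_{\mathcal X'}^+/t)\cap H^0(\mathcal U,\OO_{\mathcal X}^+/t)$, let $R_0''$ be the normalization of $R_0'\otimes_{R_0}R_0'$; it is finite normal over $R_0$ (which is excellent), étale outside $V(I_0)$, and has no irreducible component over $V(I_0)$, so by (iv) the triple $(\mathcal X''=\mathcal X'\times_{\mathcal X}\mathcal X',\mathcal Z'',\mathcal U'')$, with $\mathcal U''$ the preimage of $\mathcal U$, is good. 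Given $x'\in H^0(\mathcal X',\OO_{\mathcal X'}^+/t)$ whose restriction to $\mathcal U'$ is pulled back from some $y\in H^0(\mathcal U,\OO_{\mathcal X}^+/t)$, the two pullbacks of $x'$ to $\mathcal X''$ agree over $\mathcal U''$, hence everywhere by goodness of $(\mathcal X'',\mathcal Z'',\mathcal U'')$; restricting to $(\mathcal X'\setminus\mathcal Z')\times_{\mathcal X\setminus\mathcal Z}(\mathcal X'\setminus\mathcal Z')$ and descending along $\mathcal X'\setminus\mathcal Z'\to\mathcal X\setminus\mathcal Z$ gives $x'|_{\mathcal X'\setminus\mathcal Z'}=\pi^\ast z_\circ$ for a unique $z_\circ\in H^0(\mathcal X\setminus\mathcal Z,\OO_{\mathcal X}^+/t)=H^0(\mathcal X,\OO_{\mathcal X}^+/t)$ (the last equality by goodness of $(\mathcal X,\mathcal Z,\mathcal U)$); then $z_\circ$ is the required preimage, since $\pi^\ast z_\circ=x'$ by goodness of $(\mathcal X',\mathcal Z',\mathcal U')$ and $z_\circ|_{\mathcal U}=y$ by injectivity of $\pi_{\mathcal U}^\ast$ (all up to almost). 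The conceptual content sits entirely in (i)–(ii); I expect the genuine obstacles to be, first, the bookkeeping in (ii) — tracking how the bounded $t$‑power discrepancy of the integral trace pairing is scaled by the Frobenius maps into an $\mathfrak{m}_K$‑torsion, hence almost zero, discrepancy — and, second, the various ``up to almost'' reductions (finite base change of integral structure sheaves in (iii), faithfully flat descent in the almost category in (v)).
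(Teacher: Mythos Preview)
Your arguments for (i)–(iv) are essentially the paper's: (i) is the same Hartogs/reflexivity argument (the paper phrases it as ``take global sections over $\Spec R_0\setminus V(I_0)$''), (ii) is exactly the paper's one-line ``Banach open mapping gives $t^N$-bounded cokernel; perfection makes it almost zero'', and (iii)–(iv) match the paper's reduction to an affinoid base and application of the $\Hom$ functor.

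For (v) you take a genuinely different route. The paper does not introduce a second fibre power; instead it uses (iii) directly. Given an almost element $h\in H^0(\mathcal X',\OO^+/t)^a\cap H^0(\mathcal U,\OO^+/t)^a$, part (iii) with $\mathcal V=\mathcal X$ identifies $h$ with an $R^\circ/t$-linear map $(R'^\circ/t)^a\to (R^\circ/t)^a$; one then checks this map factors through the (almost surjective) trace $\tr\colon (R'^\circ/t)^a\to (R^\circ/t)^a$. That factorization can be verified after pushing into $H^0(\mathcal U,\OO^+/t)^a$ (using goodness of $(\mathcal X,\mathcal Z,\mathcal U)$ to know $(R^\circ/t)^a\hookrightarrow H^0(\mathcal U,\OO^+/t)^a$), where it is immediate from $h\in H^0(\mathcal U,\OO^+/t)^a$. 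Thus $h$ comes from $(R^\circ/t)^a=H^0(\mathcal X,\OO^+/t)^a$. This is shorter and avoids any auxiliary construction.

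Your descent argument via $R_0''$ is valid in spirit but needs two small repairs. First, $R_0'\otimes_{R_0}R_0'$ need not be $R_0$-torsion-free (since $R_0'$ need not be flat over $R_0$), so its normalization could a priori pick up components over $V(I_0)$; you should instead take $R_0''$ to be the integral closure of $R_0$ in $(R_0'\otimes_{R_0}R_0')|_{\Spec R_0\setminus V(I_0)}$, or equivalently discard any such components before normalizing. Second, the perfectoid space associated to $R_0''$ is not literally $\mathcal X'\times_{\mathcal X}\mathcal X'$; they agree only over $\mathcal X\setminus\mathcal Z$. This is harmless for your argument (you only use the identification there, for the descent step), but the equation $\mathcal X''=\mathcal X'\times_{\mathcal X}\mathcal X'$ should be dropped. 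With these adjustments your (v) goes through; the trade-off is that the paper's trace-factorization argument is a couple of lines once (iii) is in hand, whereas yours requires verifying the hypotheses of the lemma again for $R_0''$.
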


\begin{proof}\begin{altenumerate}
\item[{\rm (i)}] There is an isomorphism of locally free $\OO_{\Spec R_0\setminus V(I_0)}$-modules
\[
f_\ast\OO_{\Spec R_0^\prime\setminus V(I_0^\prime)}\to \Hom_{\OO_{\Spec R_0\setminus V(I_0)}}(f_\ast\OO_{\Spec R_0^\prime\setminus V(I_0^\prime)},\OO_{\Spec R_0\setminus V(I_0)})
\]
induced by the trace pairing on $\Spec R_0\setminus V(I_0)$, as the map
\[
f: \Spec R_0^\prime\setminus V(I_0^\prime)\to \Spec R_0\setminus V(I_0)
\]
is finite \'etale. Now take global sections to conclude, using that $R_0$ and $R_0^\prime$ are normal, and $V(I_0)\subset \Spec R_0$, $V(I_0^\prime)\subset \Spec R_0^\prime$ are of codimension $\geq 2$.
\item[{\rm (ii)}] By part (i) and Banach's open mapping theorem, the cokernel of the injective map
\[
R_0^{\prime \circ}\to \Hom_{R_0^\circ}(R_0^{\prime\circ},R_0^\circ)
\]
is killed by $t^N$ for some $N$. Passing to the completed perfection implies that
\[
R^{\prime\circ}\to \Hom_{R^\circ}(R^{\prime\circ},R^\circ)
\]
is almost exact, as desired.
\item[{\rm (iii)}] If $\mathcal{V}=\mathcal{X}$, this follows from part (ii) by reduction modulo $t$. In general, $\mathcal{V}$ is the preimage of some $\mathcal{V}_0\subset \mathcal{X}_0$, which we may assume to be affinoid. One can then use the result for $\mathcal{V}_0$ in place of $\mathcal{X}_0$, noting that
\[\begin{aligned}
\Hom_{R^\circ/t}(R^{\prime\circ}/t,H^0(\mathcal{V},\OO_\mathcal{X}^+/t))^a&=\Hom_{H^0(\mathcal{V},\OO_\mathcal{X}^+/t)}(H^0(\mathcal{V},\OO_\mathcal{X}^+/t)\otimes_{R^\circ/t} R^{\prime\circ}/t,H^0(\mathcal{V},\OO_\mathcal{X}^+/t))^a\\
&=\Hom_{H^0(\mathcal{V},\OO_\mathcal{X}^+/t)}(H^0(\mathcal{V}^\prime,\OO_\mathcal{X}^+/t),H^0(\mathcal{V},\OO_\mathcal{X}^+/t))^a\ ,
\end{aligned}\]
by the formula for fibre products in the category of perfectoid spaces, cf. \cite[Proposition 6.18]{ScholzePerfectoid}.
\item[{\rm (iv)}] This follows directly from part (iii), and the assumption that $(\mathcal{X},\mathcal{Z},\mathcal{U})$ is good.
\item[{\rm (v)}] By surjectivity of $\mathcal{X}^\prime\to \mathcal{X}$, $H^0(\mathcal{X},\OO_{\mathcal{X}}^+/t)^a\hookrightarrow H^0(\mathcal{X}^\prime, \OO_{\mathcal{X}}^+/t)^a$. Assume $h$ is an almost element of $H^0(\mathcal{X}^\prime,\OO_{\mathcal{X}^\prime}^+/t)^a\cap H^0(\mathcal{U},\OO_\mathcal{X}^+/t)^a$. Then, via the trace pairing, $h$ gives rise to a map
\[
(R^{\prime\circ}/t)^a\to (R^\circ/t)^a\ .
\]
We claim that this factors over the (almost surjective) map
\[
\tr_{(R^{\prime\circ}/t)^a/(R^\circ/t)^a}: (R^{\prime\circ}/t)^a\to (R^\circ/t)^a\ .
\]
As $(R^\circ/t)^a\hookrightarrow H^0(\mathcal{U},\OO_\mathcal{X}^+/t)^a$, it suffices to check this after restriction to $\mathcal{U}$; there it follows from the assumption $h\in H^0(\mathcal{U},\OO_\mathcal{X}^+/t)$. This translates into the statement that $h$ is an almost element of $H^0(\mathcal{X},\OO_\mathcal{X}^+/t)^a$, as desired.
\end{altenumerate}
\end{proof}

Finally, assume that one has a filtered inductive system $R_0^{(i)}$, $i\in I$, as in Lemma \ref{GoodTripleFinite}, giving rise to $\mathcal{X}^{(i)}$, $\mathcal{Z}^{(i)}$, $\mathcal{U}^{(i)}$. We assume that all transition maps $\mathcal{X}^{(i)}\to \mathcal{X}^{(j)}$ are surjective. Let $\tilde{\mathcal{X}}$ be the inverse limit of the $\mathcal{X}^{(i)}$ in the category of perfectoid spaces over $K$, with preimage $\tilde{\mathcal{Z}}\subset \tilde{\mathcal{X}}$ of $\mathcal{Z}$, and $\tilde{\mathcal{U}}\subset \tilde{\mathcal{X}}$ of $\mathcal{U}$.

\begin{lem}\label{GoodTripleLimit} In this situation, the triple $(\tilde{\mathcal{X}},\tilde{\mathcal{Z}},\tilde{\mathcal{U}})$ is good.
\end{lem}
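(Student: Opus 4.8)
The plan is to deduce goodness of the limit triple from goodness of the finite-level triples $(\mathcal{X}^{(i)},\mathcal{Z}^{(i)},\mathcal{U}^{(i)})$ --- which hold by iterating Lemma~\ref{GoodTripleFinite}~(iv) from the first term of the system (good in the intended application by Lemma~\ref{GoodTriple}) --- by passing to the filtered colimit over $i$. First I would identify the three cohomology groups at the limit with filtered colimits over $i$: since $\tilde R^+/t=\varinjlim_i R^{(i)+}/t$ one has $H^0(\tilde{\mathcal{X}},\OO_{\tilde{\mathcal{X}}}^+/t)^a=\varinjlim_i H^0(\mathcal{X}^{(i)},\OO_{\mathcal{X}^{(i)}}^+/t)^a$, and since $\mathcal{U}$ is quasicompact and $\tilde{\mathcal{U}}\sim\varprojlim_i\mathcal{U}^{(i)}$ the $\sim$-property gives $H^0(\tilde{\mathcal{U}},\OO_{\tilde{\mathcal{U}}}^+/t)^a=\varinjlim_i H^0(\mathcal{U}^{(i)},\OO_{\mathcal{U}^{(i)}}^+/t)^a$. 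Granting, for the moment, the analogous statement $(\ast)$:
\[
H^0(\tilde{\mathcal{X}}\setminus\tilde{\mathcal{Z}},\OO_{\tilde{\mathcal{X}}}^+/t)^a\ \cong\ \varinjlim_i H^0(\mathcal{X}^{(i)}\setminus\mathcal{Z}^{(i)},\OO_{\mathcal{X}^{(i)}}^+/t)^a
\]
compatibly with restriction maps, the lemma follows at once: filtered colimits are exact, so applying $\varinjlim_i$ to the good sequences $H^0(\mathcal{X}^{(i)},\OO^+/t)^a\cong H^0(\mathcal{X}^{(i)}\setminus\mathcal{Z}^{(i)},\OO^+/t)^a\hookrightarrow H^0(\mathcal{U}^{(i)},\OO^+/t)^a$ produces exactly the good sequence for $(\tilde{\mathcal{X}},\tilde{\mathcal{Z}},\tilde{\mathcal{U}})$. (Note this already shows, unconditionally, that $H^0(\tilde{\mathcal{X}},\OO^+/t)^a\hookrightarrow H^0(\tilde{\mathcal{U}},\OO^+/t)^a$ as the colimit of the finite-level injections, hence that $H^0(\tilde{\mathcal{X}},\OO^+/t)^a\to H^0(\tilde{\mathcal{X}}\setminus\tilde{\mathcal{Z}},\OO^+/t)^a$ is injective.)

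So the whole content is $(\ast)$, i.e. that cohomology of $\OO^+/t$ commutes with the cofiltered, pro-finite-\'etale limit $\tilde{\mathcal{X}}\setminus\tilde{\mathcal{Z}}\sim\varprojlim_i(\mathcal{X}^{(i)}\setminus\mathcal{Z}^{(i)})$. To attack it I would use Proposition~\ref{GeneralHebbarkeit} together with Lemma~\ref{PerfectIdealBC} to rewrite both sides as Hom-modules: at each level $H^0(\mathcal{X}^{(i)}\setminus\mathcal{Z}^{(i)},\OO^+/t)^a=\Hom_{R^{(i)+}}(I^{(i)+1/p^\infty},R^{(i)+}/t)^a=\Hom_{R^{(i_0)+}}(I^{(i_0)+1/p^\infty},R^{(i)+}/t)^a$, which by finite-level goodness at level $i$ is $(R^{(i)+}/t)^a$ via multiplication; and $H^0(\tilde{\mathcal{X}}\setminus\tilde{\mathcal{Z}},\OO^+/t)^a=\Hom_{R^{(i_0)+}}(I^{(i_0)+1/p^\infty},\tilde R^+/t)^a$. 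Thus $(\ast)$ becomes the assertion that the canonical map
\[
\varinjlim_i\Hom_{R^{(i_0)+}}(I^{(i_0)+1/p^\infty},R^{(i)+}/t)^a\ \longrightarrow\ \Hom_{R^{(i_0)+}}(I^{(i_0)+1/p^\infty},\varinjlim_i R^{(i)+}/t)^a
\]
is an almost isomorphism; injectivity is automatic by the previous paragraph, so the point is surjectivity.

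This surjectivity is where I expect the only real difficulty. Since the saturated ideal $I^{(i_0)+1/p^\infty}$ is not finitely generated (it is only an increasing union built from the modules $h^{1/p^\infty}R^{(i_0)+}=\varinjlim_k R^{(i_0)+}$ for generators $h$ of $I_0$), $\Hom$ out of it does not commute with filtered colimits for formal reasons: one is being asked to interchange the colimit over the tower with an inverse limit, which is not automatic and must be earned. The tool for this is the structure of the finite covers: by Lemma~\ref{GoodTripleFinite}~(i)--(ii) each $R^{(i)+}$ is, almost, a finite projective $R^{(i_0)+}$-module which is self-dual under the trace pairing, and since the transition maps $\mathcal{X}^{(i)}\to\mathcal{X}^{(i_0)}$ are assumed surjective, $R^{(i_0)+}/t\to R^{(i)+}/t$ is almost faithfully flat, hence almost pure. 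Combined with the finite-level Hebbarkeitssatz (which identifies $\Hom_{R^{(i_0)+}}(I^{(i_0)+1/p^\infty},R^{(i)+}/t)^a$ with $(R^{(i)+}/t)^a$ for every $i$), this purity should let one show that any element of $\Hom_{R^{(i_0)+}}(I^{(i_0)+1/p^\infty},\varinjlim_i R^{(i)+}/t)$ is already, almost, defined over some finite level $i$, where finite-level goodness exhibits it as multiplication by an element of $R^{(i)+}/t$; along the way one has to check that the residual torsion (by powers of the generators of $I_0$) produced by ``almost purity'' does not obstruct this, which again follows from finite-level goodness. Feeding the resulting element back up the tower gives surjectivity, hence $(\ast)$, and with it the lemma --- everything outside the interchange being routine bookkeeping with exact filtered colimits.
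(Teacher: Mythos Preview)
Your overall reduction is right and matches the paper: identify $H^0(\tilde{\mathcal X},\OO^+/t)^a$ and $H^0(\tilde{\mathcal U},\OO^+/t)^a$ with colimits, use Proposition~\ref{GeneralHebbarkeit} plus Lemma~\ref{PerfectIdealBC} to write $H^0(\tilde{\mathcal X}\setminus\tilde{\mathcal Z},\OO^+/t)^a=\Hom_{R^+}(I^{+1/p^\infty},\tilde R^+/t)^a$, and reduce everything to showing that a given $h\in\Hom_{R^+}(I^{+1/p^\infty},\tilde R^+/t)^a$ already takes values in some $(R^{(i)+}/t)^a$.

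The gap is in your proposed mechanism for this last step. Invoking almost self-duality of $R^{(i)+}$ under the trace pairing, or almost faithful flatness, does not by itself bound the level: for each $x\in I^{+1/p^\infty}$ the value $h(x)$ sits in some $R^{(i(x))+}/t$, and nothing in your sketch forces $i(x)$ to be independent of $x$. The module $I^{+1/p^\infty}$ is not finitely presented, so this is a genuine interchange-of-limits problem, and the trace/purity package gives no purchase on it.

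The paper's device is to route through $\mathcal U$ and use Lemma~\ref{GoodTripleFinite}~(v), which you never invoke. Compose $h$ with the injection $(\tilde R^+/t)^a\hookrightarrow H^0(\tilde{\mathcal U},\OO^+/t)^a$; over $\tilde{\mathcal U}$ the ideal $I$ becomes the unit ideal, so this composite is multiplication by the restriction $h|_{\tilde{\mathcal U}}\in H^0(\tilde{\mathcal U},\OO^+/t)^a=\varinjlim_i H^0(\mathcal U^{(i)},\OO^+/t)^a$, which therefore lies in $H^0(\mathcal U^{(i)},\OO^+/t)^a$ for some fixed $i$. Now each value $h(x)$ lies simultaneously in $(\tilde R^+/t)^a$ and, after restriction to $\mathcal U^{(i)}$, in $H^0(\mathcal U^{(i)},\OO^+/t)^a$. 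Passing Lemma~\ref{GoodTripleFinite}~(v) to the colimit over $j\geq i$ gives exactly
\[
(R^{(i)+}/t)^a=(\tilde R^+/t)^a\cap H^0(\mathcal U^{(i)},\OO^+/t)^a,
\]
so $h$ factors through $(R^{(i)+}/t)^a$, i.e.\ $h\in H^0(\mathcal X^{(i)}\setminus\mathcal Z^{(i)},\OO^+/t)^a$, and finite-level goodness finishes. The point is that quasicompactness of $\mathcal U$ supplies the uniform level bound you were missing; the trace pairing enters only indirectly, through the proof of part~(v).
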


\begin{proof} As $\tilde{\mathcal{X}}$ and $\tilde{\mathcal{U}}$ are qcqs, one may pass to the filtered direct limit to conclude from the previous lemma, part (iv), that
\[
H^0(\tilde{\mathcal{X}},\OO_{\tilde{\mathcal{X}}}^+/t)^a\hookrightarrow H^0(\tilde{\mathcal{U}},\OO_{\tilde{\mathcal{X}}}^+/t)^a\ .
\]
Moreover,
\[
H^0(\mathcal{X}^{(i)},\OO_{\mathcal{X}^{(i)}}^+/t)^a\hookrightarrow H^0(\tilde{\mathcal{X}},\OO_{\tilde{\mathcal{X}}}^+/t)^a
\]
for all $i\in I$, as $\tilde{\mathcal{X}}$ surjects onto $\mathcal{X}^{(i)}$. The same injectivity holds on open subsets. Also,
\[
H^0(\mathcal{X}^{(i)},\OO_{\mathcal{X}^{(i)}}^+/t)^a = H^0(\tilde{\mathcal{X}},\OO_{\tilde{\mathcal{X}}}^+/t)^a\cap H^0(\mathcal{U}^{(i)},\OO_{\mathcal{X}^{(i)}}^+/t)^a\ ,
\]
by passing to the filtered direct limit in the previous lemma, part (v). Let $\tilde{R} = H^0(\tilde{\mathcal{X}},\OO_{\tilde{\mathcal{X}}})$, $\tilde{R}^+ = \tilde{R}^\circ$ and $\tilde{I} = I_0\tilde{R}\subset \tilde{R}$. Then the general form of the Hebbarkeitssatz says that
\[
H^0(\tilde{\mathcal{X}}\setminus \tilde{\mathcal{Z}},\OO_{\tilde{\mathcal{X}}}^+/t)^a = \Hom_{\tilde{R}^+}(\tilde{I}^{+ 1/p^\infty},\tilde{R}^+/t)^a = \Hom_{R^+}(I^{+ 1/p^\infty},\tilde{R}^+/t)^a\ ,
\]
using also Lemma \ref{PerfectIdealBC}. The latter injects into
\[
\Hom_{R^+}(I^{+ 1/p^\infty},H^0(\tilde{\mathcal{U}},\OO_{\tilde{\mathcal{X}}}^+/t))^a = H^0(\tilde{\mathcal{U}}\setminus \tilde{\mathcal{Z}},\OO_{\tilde{\mathcal{X}}}^+/t)^a = H^0(\tilde{\mathcal{U}},\OO_{\tilde{\mathcal{X}}}^+/t)^a\ .
\]
The latter is a filtered direct limit. If an almost element $h$ of $H^0(\tilde{\mathcal{X}}\setminus \tilde{\mathcal{Z}},\OO_{\tilde{\mathcal{X}}}^+/t)^a$ is mapped to an almost element of
\[
H^0(\mathcal{U}^{(i)},\OO_{\mathcal{X}^{(i)}}^+/t)^a\subset H^0(\tilde{\mathcal{U}},\OO_{\tilde{\mathcal{X}}}^+/t)^a\ ,
\]
then the map
\[
(I^{+ 1/p^\infty})^a\to (\tilde{R}^+/t)^a
\]
corresponding to $h$ will take values in $(\tilde{R}^+/t)^a\cap H^0(\mathcal{U}^{(i)},\OO_{\mathcal{X}^{(i)}}^+/t)^a = (R^{(i)+}/t)^a$, thus gives rise to an almost element of
\[
\Hom_{R^+}(I^{+ 1/p^\infty},R^{(i)+}/t)^a = H^0(\mathcal{X}^{(i)}\setminus \mathcal{Z}^{(i)},\OO_{\mathcal{X}^{(i)}}^+/t)^a\ .
\]
But $(\mathcal{X}^{(i)},\mathcal{Z}^{(i)},\mathcal{U}^{(i)})$ is good, so the Hebbarkeitssatz holds there, and $h$ extends to $\mathcal{X}^{(i)}$, and thus to $\tilde{\mathcal{X}}$.
\end{proof}

In particular, one can use this to generalize Lemma \ref{GoodTriple} slightly:

\begin{cor}\label{GoodTripleBaseField} The conclusion of Lemma \ref{GoodTriple} holds under the weaker assumption that $K$ is the completion of an algebraic extension of $\F_p((t^{1/p^\infty}))$.
\end{cor}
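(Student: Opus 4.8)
The plan is to realize $K$ as the completed union of finite separable extensions of $K_0:=\F_p((t^{1/p^\infty}))$, to recognize each finite extension of the base field as an instance of the finite-cover situation of Lemma~\ref{GoodTripleFinite}, and then to pass to the limit by Lemma~\ref{GoodTripleLimit}. First I would note that $K_0$ is a perfectoid field, hence perfect, so every algebraic extension of $K_0$ is separable; write $K$ as the completion of $L=\bigcup_{i\in I}K_i$, a filtered union of finite (hence finite \'etale) extensions $K_i$ of $K_0$, with $0\in I$ and $K_0$ the corresponding field. For $i\in I$ set
\[
R_0^{(i)}=(A_0\hat\otimes_{\F_p}K_i)\langle u\rangle/(uf-t),\qquad I_0^{(i)}=JR_0^{(i)},
\]
let $\mathcal U_0^{(i)}=\{x\mid |g(x)|=1\text{ for some }g\in J\}\subset\mathcal X_0^{(i)}$, and let $\mathcal X^{(i)},\mathcal Z^{(i)},\mathcal U^{(i)}$ be the associated perfectoid objects obtained by completed perfection. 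By Lemma~\ref{GoodTriple}, the triple $(\mathcal X^{(0)},\mathcal Z^{(0)},\mathcal U^{(0)})$ is good; here one also records that $R_0^{(0)}$ is normal, which holds because $A_0$ is normal and, $\F_p$ being perfect, geometrically normal, so that $(A_0\hat\otimes_{\F_p}K_0)\langle u\rangle$ is normal and its rational localization $R_0^{(0)}$ is normal, and that $V(I_0^{(0)})\subset\Spec R_0^{(0)}$ has codimension $\geq 2$, being the base change of $V(J)\subset\Spec A_0$ intersected with a rational subset.

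Next I would apply Lemma~\ref{GoodTripleFinite} with $R_0=R_0^{(0)}$. For every $i\in I$ the ring $R_0^{(i)}=R_0^{(0)}\otimes_{K_0}K_i$ is finite \'etale over $R_0^{(0)}$; in particular it is a finite normal $R_0^{(0)}$-algebra, \'etale outside $V(I_0^{(0)})$, no irreducible component of $\Spec R_0^{(i)}$ maps into $V(I_0^{(0)})$ (since $\Spec R_0^{(i)}\to\Spec R_0^{(0)}$ is finite flat, each generic point lies over a generic point, which is not in the codimension-$\geq 2$ locus $V(I_0^{(0)})$), and $\mathcal U_0^{(i)}$ is the preimage of $\mathcal U_0^{(0)}$, the functions $g\in J$ being common to all levels. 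So part~(iv) of Lemma~\ref{GoodTripleFinite} shows $(\mathcal X^{(i)},\mathcal Z^{(i)},\mathcal U^{(i)})$ is good for all $i$. Moreover, the transition maps $R_0^{(i)}\to R_0^{(j)}$ for $i\leq j$ are finite \'etale as well, and with $R_0^{(i)}$ normal and $V(I_0^{(i)})$ of codimension $\geq 2$ (by finite flatness over $R_0^{(0)}$) they are of the type considered in Lemma~\ref{GoodTripleFinite}, while the induced $\mathcal X^{(j)}\to\mathcal X^{(i)}$ are surjective (being faithfully flat). Thus Lemma~\ref{GoodTripleLimit} applies to the system $(R_0^{(i)})_{i\in I}$ and gives that the triple $(\tilde{\mathcal X},\tilde{\mathcal Z},\tilde{\mathcal U})$ attached to $\tilde{\mathcal X}=\varprojlim_i\mathcal X^{(i)}$ is good.

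It remains to identify this limit triple with $(\mathcal X,\mathcal Z,\mathcal U)$. Here $\varinjlim_iR_0^{(i)}=(A_0\hat\otimes_{\F_p}L)\langle u\rangle/(uf-t)$, and its completed perfection coincides with the completed perfection $R$ of $R_0=(A_0\hat\otimes_{\F_p}K)\langle u\rangle/(uf-t)$, since forming the completed perfection absorbs the completion $K=\hat L$; hence $\tilde{\mathcal X}=\mathcal X$, and likewise $\tilde{\mathcal Z}=\mathcal Z$ and $\tilde{\mathcal U}=\mathcal U$, these being cut out by $I_0$ resp.\ being the preimage of $\mathcal U_0$. Therefore $(\mathcal X,\mathcal Z,\mathcal U)$ is good, which is exactly the conclusion of Lemma~\ref{GoodTriple} for this $K$.

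The only points requiring real care are the normality and codimension claims for $R_0^{(0)}$ — that is, that normality and the codimension bound survive the base change from $A_0$ to $A_0\hat\otimes_{\F_p}K_0$ and the rational localization defining $R_0^{(0)}$, which rests on $\F_p$ being perfect — and the identification of the completed inverse limit $\tilde{\mathcal X}$ with $\mathcal X$ over the completed field $K$. I expect both to be routine; everything else is a formal combination of Lemmata~\ref{GoodTriple}, \ref{GoodTripleFinite} and \ref{GoodTripleLimit}.
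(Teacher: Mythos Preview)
Your proposal is correct and follows essentially the same approach as the paper's proof, which reads in full: ``For a finite extension, this follows from Lemma~\ref{GoodTripleFinite}. Then the general case follows from Lemma~\ref{GoodTripleLimit}.'' You have simply unpacked the details that the paper leaves implicit---the normality of $R_0^{(0)}$, the codimension bound, the surjectivity of transition maps, and the identification of the limit with the space over $K$---all of which are straightforward.
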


\begin{proof} For a finite extension, this follows from Lemma \ref{GoodTripleFinite}. Then the general case follows from Lemma \ref{GoodTripleLimit}.
\end{proof}

\chapter{The perfectoid Siegel space}\label{SiegelChapter}

\section{Introduction}

Fix an integer $g\geq 1$, and a prime $p$. Let $(V,\psi)$ be the split symplectic space of dimension $2g$ over $\Q$. In other words, $V=\Q^{2g}$ with symplectic pairing
\[
\psi((a_1,\ldots,a_g,b_1,\ldots,b_g),(a_1^\prime,\ldots,a_g^\prime,b_1^\prime,\ldots,b_g^\prime)) = \sum_{i=1}^g (a_i b_i^\prime - a_i^\prime b_i)\ .
\]
Inside $V$, we fix the self-dual lattice $\Lambda=\Z^{2g}$. Let $\GSp_{2g}/\Z$ be the group of symplectic similitudes of $\Lambda$, and fix a compact open subgroup $K^p\subset \GSp_{2g}(\A_f^p)$ contained in $\{\gamma\in \GSp_{2g}(\hat{\Z}^p)\mid \gamma\equiv 1\mod N\}$ for some integer $N\geq 3$ prime to $p$.

Let $X_{g,K^p}$ over $\Z_{(p)}$ denote the moduli space of principally polarized $g$-dimensional abelian varieties with level-$K^p$-structure. As $g$ and $K^p$ remain fixed throughout, we will write $X=X_{g,K^p}$. The moduli space $X$ can be interpreted as the Shimura variety for the group of symplectic similitudes $\GSp_{2g} = \GSp(V,\psi)$, acting on the Siegel upper half space. Let $\mathrm{Fl}$ over $\Q$ be the associated flag variety, i.e. the space of totally isotropic subspaces $W\subset V$ (of dimension $g$). Over $\mathrm{Fl}$, one has a tautological ample line bundle $\omega_{\mathrm{Fl}} = (\bigwedge^g W)^\ast$.

Moreover, we have the minimal (Baily-Borel-Satake) compactification $X^\ast = X_{g,K^p}^\ast$ over $\Z_{(p)}$, as constructed by Faltings-Chai, \cite{FaltingsChai}. It carries a natural ample line bundle $\omega$, given (on $X_{g,K^p}$) as the determinant of the sheaf of invariant differentials on the universal abelian scheme; in fact, if $g\geq 2$,
\[
X_{g,K^p}^\ast = \Proj \bigoplus_{k\geq 0} H^0(X_{g,K^p},\omega^{\otimes k})\ .
\]
Moreover, for any compact open subgroup $K_p\subset \GSp_{2g}(\Q_p)$, we have $X_{K_p} = X_{g,K_pK^p}$ over $\Q$, which is the moduli space of principally polarized $g$-dimensional abelian varieties with level-$K^p$-structure and level-$K_p$-structure, with a similar compactification $X_{K_p}^\ast = X_{g,K_pK^p}^\ast$. We will be particularly interested in the following level structures.

\begin{definition} In all cases, the blocks are of size $g\times g$.
\[\begin{aligned}
\Gamma_0(p^m) &= \{ \gamma\in \GSp_{2g}(\Z_p)\mid \gamma\equiv \left(\begin{array}{cc} \ast & \ast\\ 0 & \ast\end{array}\right)\mod p^m\ ,\ \det \gamma\equiv 1\mod p^m\}\ ,\\
\Gamma_1(p^m) &= \{ \gamma\in \GSp_{2g}(\Z_p)\mid \gamma\equiv \left(\begin{array}{cc} 1 & \ast\\ 0 & 1\end{array}\right)\mod p^m\}\ ,\\
\Gamma(p^m) &= \{ \gamma\in \GSp_{2g}(\Z_p)\mid \gamma\equiv \left(\begin{array}{cc} 1 & 0\\ 0 & 1\end{array}\right)\mod p^m\}\ .
\end{aligned}\]
\end{definition}

We note that our definition of $\Gamma_0(p^m)$ is slightly nonstandard in that we put the extra condition $\det \gamma\equiv 1\mod p^m$.

Let $\mathcal{X}_{K_p}^\ast$ denote the adic space over $\Spa(\Q_p,\Z_p)$ associated with $X_{K_p}^\ast$, for any $K_p\subset \GSp_{2g}(\Q_p)$. Similarly, let $\Fl$ be the adic space over $\Spa(\Q_p,\Z_p)$ associated with $\mathrm{Fl}$, with ample line bundle $\omega_\Fl$. Let $\Q_p^\cycl$ be the completion of $\Q_p(\mu_{p^\infty})$. Note that $X_{\Gamma_0(p^m)}^\ast$ lives naturally over $\Q(\zeta_{p^m})$ by looking at the symplectic similitude factor. The following theorem summarizes the main result; for a more precise version, we refer to Theorem \ref{ExistenceHT}.

\begin{thm}\label{MainThmSiegel} Fix any $K^p\subset \GSp_{2g}(\A_f^p)$ contained in the level-$N$-congruence subgroup for some $N\geq 3$ prime to $p$.
\begin{altenumerate}
\item[{\rm (i)}] There is a unique (up to unique isomorphism) perfectoid space
\[
\mathcal{X}_{\Gamma(p^\infty)}^\ast = \mathcal{X}_{g,\Gamma(p^\infty),K^p}^\ast
\]
over $\Q_p^\cycl$ with an action of $\GSp_{2g}(\Q_p)$\footnote{Of course, the action does not preserve the structure morphism to $\Spa(\Q_p^\cycl,\Z_p^\cycl)$.}, such that
\[
\mathcal{X}_{\Gamma(p^\infty)}^\ast\sim \varprojlim_{K_p} \mathcal{X}_{K_p}^\ast\ ,
\]
equivariant for the $\GSp_{2g}(\Q_p)$-action. Here, we use $\sim$ in the sense of \cite[Definition 2.4.1]{ScholzeWeinstein}.
\item[{\rm (ii)}] There is a $\GSp_{2g}(\Q_p)$-equivariant Hodge-Tate period map
\[
\pi_\HT: \mathcal{X}_{\Gamma(p^\infty)}^\ast\to \Fl
\]
under which the pullback of $\omega$ from $\mathcal{X}_{K_p}^\ast$ to $\mathcal{X}_{\Gamma(p^\infty)}^\ast$ gets identified with the pullback of $\omega_\Fl$ along $\pi_\HT$. Moreover, $\pi_\HT$ commutes with Hecke operators away from $p$ (when changing $K^p$), for the trivial action of these Hecke operators on $\Fl$.
\item[{\rm (iii)}] There is a basis of open affinoid subsets $U\subset \Fl$ for which the preimage $V = \pi_\HT^{-1}(U)$ is affinoid perfectoid, and the following statements are true. The subset $V$ is the preimage of an affinoid subset $V_m\subset \mathcal{X}_{\Gamma(p^m)}^\ast$ for $m$ sufficiently large, and the map
\[
\varinjlim_m H^0(V_m,\OO_{\mathcal{X}_{\Gamma(p^m)}^\ast})\to H^0(V,\OO_{\mathcal{X}_{\Gamma(p^\infty)}^\ast})
\]
has dense image.
\end{altenumerate}
\end{thm}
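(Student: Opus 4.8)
The plan is to reduce Theorem \ref{MainThmSiegel}(iii) to a single distinguished affinoid in $\Fl$ — the one whose $\pi_\HT$-preimage is the infinite-level \emph{anticanonical} locus — and then to generate the asserted basis from it by taking rational subsets and $\GSp_{2g}(\Q_p)$-translates. So first I would fix the $\Q_p$-rational point $x_0\in\Fl(\Q_p)$ whose Lagrangian $W_0\subset V\otimes\Q_p$ is the Hodge--Tate filtration of an ordinary abelian variety equipped with an anticanonical level structure, and let $\Fl_a\subset\Fl$ be a small affinoid neighbourhood of $x_0$ (a polydisc inside the Bruhat cell through $x_0$, or a rational subset cut out by Plücker coordinates). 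The essential input — which is really the content of the proofs of parts (i) and (ii) in the anticanonical direction — is that, for a suitable radius, $\pi_\HT^{-1}(\Fl_a)$ is exactly the anticanonical locus $\mathcal{X}^{\ast}_{\Gamma(p^\infty)}(\epsilon)_a$, that this is an affinoid perfectoid space, and that
\[
\mathcal{X}^{\ast}_{\Gamma(p^\infty)}(\epsilon)_a\sim\varprojlim_m\mathcal{X}^{\ast}_{\Gamma(p^m)}(\epsilon)_a
\]
with each $\mathcal{X}^{\ast}_{\Gamma(p^m)}(\epsilon)_a$ an affinoid (rational) subset of $\mathcal{X}^{\ast}_{\Gamma(p^m)}$. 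Granting this, the three assertions hold for $U=\Fl_a$: $V:=\pi_\HT^{-1}(\Fl_a)$ is affinoid perfectoid; it is the preimage of the affinoid $V_m:=\mathcal{X}^{\ast}_{\Gamma(p^m)}(\epsilon)_a$ for every $m$; and $\varinjlim_m H^0(V_m,\OO)\to H^0(V,\OO)$ has dense image, because dense image of the structure sheaves is precisely part of the meaning of $\sim$ in the sense of \cite[Definition 2.4.1]{ScholzeWeinstein}. Proving this input is where the real work lies: one establishes perfectoidness of the $\Gamma_0(p^\infty)$-anticanonical tower via the theory of the canonical subgroup (comparing multiplication by $p$ with Frobenius up the tower), passes to full level at $p$ along finite étale covers, and — since we work on the \emph{minimal} compactification — invokes the Hebbarkeitssatz of Section \ref{HebbarkeitssatzSection} (and the analysis of Zariski-closed boundaries of Section \ref{ClosedEmbeddingsSection}) to control $\mathcal{X}^{\ast}$ in terms of its good-reduction locus, which is also what allows $\pi_\HT$ to be defined across the boundary in the first place.

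Second, I would propagate the result to translates and rational subsets. For $g\in\GSp_{2g}(\Q_p)$, equivariance of $\pi_\HT$ (part (ii)) and the fact that $g$ acts on $\mathcal{X}^{\ast}_{\Gamma(p^\infty)}$ by an isomorphism of perfectoid spaces (affecting only the structure map to $\Spa(\Q_p^\cycl,\Z_p^\cycl)$) give $\pi_\HT^{-1}(g\Fl_a)=g\cdot V$, again affinoid perfectoid; choosing $m$ large enough that $g\Gamma(p^m)g^{-1}$ sits between two principal congruence subgroups, $g\cdot V$ is the preimage of a finite-level affinoid with the same $\sim$-tower, hence the same dense-image property. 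Then for any rational subset $U\subset g\Fl_a$, the preimage $\pi_\HT^{-1}(U)$ is a rational subset of the affinoid perfectoid $g\cdot V$ (preimage of a rational subset under the morphism of affinoids $g\cdot V\to g\Fl_a$), so it is affinoid perfectoid; moreover $\sim$ passes to rational subsets, and using the dense image just obtained one may approximate the finitely many functions defining $U$ by functions coming from some $\mathcal{X}^{\ast}_{\Gamma(p^m)}$ closely enough not to change the rational subset cut out, so $\pi_\HT^{-1}(U)$ is the preimage of an affinoid $V_m\subset\mathcal{X}^{\ast}_{\Gamma(p^m)}$ and $\pi_\HT^{-1}(U)\sim\varprojlim_m V_m$. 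Finally, these $U$ form a basis: one has $\Fl=\bigcup_{g\in\GSp_{2g}(\Q_p)}g\Fl_a$ because $\GSp_{2g}(\Q_p)$ acts transitively on $\Fl(\Q_p)$, $\Fl_a$ has nonempty interior, and $\GSp_{2g}(\Z_p)$- followed by $\Q_p^\times$-scaling translates of the polydisc $\Fl_a$ are readily seen to exhaust $\Fl$; then, given $x\in\Fl$ and an open $W\ni x$, one chooses $g$ with $x\in g\Fl_a$ and, since rational subsets form a basis of the affinoid $g\Fl_a$, a rational $U$ with $x\in U\subset g\Fl_a\cap W$. By the previous step this $U$ has all the required properties.

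The main obstacle is the geometric input of the first step: proving that the anticanonical locus becomes affinoid perfectoid at infinite level with the stated finite-level tower description, and doing this on the minimal compactification, where the Hebbarkeitssatz of Chapter \ref{PrelimChapter} is indispensable — both to extend $\pi_\HT$ over the boundary and to deduce perfectoidness of the boundary part from the good-reduction part. Once that is available the remaining steps are essentially formal, the only mild subtlety being the verification that the $\GSp_{2g}(\Q_p)$-translates of one anticanonical chart genuinely cover all of $\Fl$.
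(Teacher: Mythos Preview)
Your overall strategy matches the paper's: establish the affinoid perfectoid property and the density statement first on the anticanonical neighborhood $\mathcal{X}_{\Gamma(p^\infty)}^\ast(\epsilon)_a$ (this is Theorem~\ref{ExOnStrictNbhd}), then propagate via rational subsets and the $\GSp_{2g}(\Q_p)$-action. The covering step (your final paragraph) is Lemma~\ref{UTranslatesCover}.

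There is, however, a genuine gap in your first step. You assert that ``for a suitable radius, $\pi_\HT^{-1}(\Fl_a)$ is \emph{exactly} the anticanonical locus $\mathcal{X}^{\ast}_{\Gamma(p^\infty)}(\epsilon)_a$.'' This equality is not established anywhere, and there is no reason to expect it: the anticanonical locus is cut out by a condition on the Hasse invariant, while $\pi_\HT^{-1}(\Fl_a)$ is cut out by a condition on Hodge--Tate periods, and the two conditions are only known to sandwich each other (Lemmas~\ref{SmallUExists} and~\ref{ExistsSmallEpsilon}), not to coincide for any choice of $\epsilon$ and radius. Without the equality, you cannot directly conclude that $\pi_\HT^{-1}(\Fl_a)$ is affinoid perfectoid, nor that it is the preimage of a finite-level affinoid.

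The paper's workaround (proof of Theorem~\ref{ExistenceHT}(i)) is a contraction trick. One first chooses $\epsilon$ small enough that $\mathcal{X}_{\Gamma(p^\infty)}^\ast(\epsilon)_a\subset \pi_\HT^{-1}(\Fl_{\{g+1,\ldots,2g\}})$ (Lemma~\ref{ExistsSmallEpsilon2}), so that $\pi_\HT$ restricts to a map from the anticanonical locus into this affinoid. Separately, one finds a neighborhood $U$ of the distinguished point $x\in\Fl_{\{g+1,\ldots,2g\}}(\Q_p)$ with $\pi_\HT^{-1}(U)\subset\mathcal{X}_{\Gamma(p^\infty)}^\ast(\epsilon)_a$. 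Now let $\gamma=(p,\ldots,p,1,\ldots,1)$: for $n\gg 0$ one has $\gamma^n(\Fl_{\{g+1,\ldots,2g\}})\subset U$, and moreover $\gamma^n(\Fl_{\{g+1,\ldots,2g\}})\subset\Fl_{\{g+1,\ldots,2g\}}$ is a \emph{rational} subset. Hence $\pi_\HT^{-1}(\gamma^n\Fl_{\{g+1,\ldots,2g\}})$ is a rational subset of the affinoid perfectoid $\mathcal{X}_{\Gamma(p^\infty)}^\ast(\epsilon)_a$, and the affinoid perfectoid property plus the density statement pass to it by \cite[Proposition~2.22]{ScholzeSurvey}. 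Undoing $\gamma^n$ by equivariance gives the result for $\Fl_{\{g+1,\ldots,2g\}}$ itself, and then for all $\Fl_J$. So the key point you are missing is that one never identifies $\pi_\HT^{-1}$ of a flag-variety affinoid with an anticanonical locus; one only traps it inside one as a rational subset, via the $\gamma^n$-contraction.
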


These results, including the Hodge-Tate period map, are entirely new even for the modular curve, i.e. $g=1$. Let us explain in this case what $\pi_\HT$ looks like. One may stratify each
\[
\mathcal{X}_K^\ast = \mathcal{X}_K^{\ast\ord}\bigsqcup \mathcal{X}_K^{\mathrm{ss}}
\]
into the ordinary locus $\mathcal{X}_K^{\ast\ord}$ (which we define for this discussion as the closure of the tubular neighborhood of the ordinary locus in the special fibre) and the supersingular locus $\mathcal{X}_K^{\mathrm{ss}}$. Thus, by definition, $\mathcal{X}_K^{\mathrm{ss}}\subset \mathcal{X}_K^\ast$ is an open subset, which can be identified with a finite disjoint union of Lubin-Tate spaces. Passing to the inverse limit, we get a similar decomposition
\[
\mathcal{X}_{K^p}^\ast = \mathcal{X}_{K^p}^{\ast\ord}\bigsqcup \mathcal{X}_{K^p}^{\mathrm{ss}}\ .
\]
On the flag variety $\Fl = \mathbb{P}^1$ for $\GSp_2 = \GL_2$, one has a decomposition
\[
\mathbb{P}^1 = \mathbb{P}^1(\Q_p)\bigsqcup \Omega^2\ ,
\]
where $\Omega^2 = \mathbb{P}^1\setminus \mathbb{P}^1(\Q_p)$ is Drinfeld's upper half-plane. These decompositions correspond, i.e.
\[
\mathcal{X}_{K^p}^{\ast\ord} = \pi_\HT^{-1}(\mathbb{P}^1(\Q_p))\ ,\ \mathcal{X}_{K^p}^{\mathrm{ss}} = \pi_\HT^{-1}(\Omega^2)\ .
\]
Moreover, on the ordinary locus, the Hodge-Tate period map
\[
\pi_\HT: \mathcal{X}_{K^p}^{\ast\ord}\to \mathbb{P}^1(\Q_p)
\]
measures the position of the canonical subgroup. On the supersingular locus, one has the following description of $\pi_\HT$, using the isomorphism $\mathcal{M}_{\LT,\infty}\cong \mathcal{M}_{\Dr,\infty}$ between Lubin-Tate and Drinfeld tower, cf. \cite{FaltingsTwoTowers}, \cite{FarguesTwoTowers}, \cite{ScholzeWeinstein}:
\[
\pi_\HT: \mathcal{X}_{K^p}^{\mathrm{ss}} = \bigsqcup \mathcal{M}_{\LT,\infty}\cong \bigsqcup \mathcal{M}_{\Dr,\infty}\to \Omega^2\ .
\]
Contrary to the classical Gross-Hopkins period map $\mathcal{M}_\LT\to \mathbb{P}^1$ which depends on a trivialization of the Dieudonn\'e module of the supersingular elliptic curve, the Hodge-Tate period map is canonical. It commutes with the Hecke operators away from $p$ (as it depends only on the $p$-divisible group, and not the abelian variety), and extends continuously to the whole modular curve.

Let us give a short summary of the proof. Note that a result very similar in spirit was proved in joint work with Jared Weinstein, \cite{ScholzeWeinstein}, for Rapoport-Zink spaces. Unfortunately, for a number of reasons, it is not possible to use that result to obtain a result for Shimura varieties (although the process in the opposite direction does work). The key problem is that Rapoport-Zink spaces do not cover the whole Shimura variety. For example, in the case of the modular curve, the points of the adic space specializing to a generic point of the special fibre will not be covered by any Rapoport-Zink space. Also, it is entirely impossible to analyze the minimal compactification using Rapoport-Zink spaces.

For this reason, we settle for a different and direct approach. The key idea is that on the ordinary locus, the theory of the canonical subgroup gives a canonical way to extract $p$-power roots in the $\Gamma_0(p^\infty)$-tower. The toy example is that of the $\Gamma_0(p)$-level structure for the modular curve, cf. \cite{DeligneRapoport}. Above the ordinary locus, one has two components, one mapping down isomorphically, and the other mapping down via the Frobenius map. It is the component that maps down via Frobenius that we work with. Going to deeper $\Gamma_0(p^m)$-level, the maps continue to be Frobenius maps, and in the inverse limit, one gets a perfect space. Passing to the tubular neighborhood in characteristic $0$, one has the similar picture, and one will get a perfectoid space in the inverse limit. It is then not difficult to go from $\Gamma_0(p^\infty)$- to $\Gamma(p^\infty)$-level, using the almost purity theorem; only the boundary of the minimal compactification causes some trouble, that can however be overcome.

Note that we work with the anticanonical tower, and not the canonical tower: The $\Gamma_0(p)$-level subgroup is disjoint from the canonical subgroup. It is well-known that any finite level of the canonical tower is overconvergent, cf. e.g. \cite{Lubin}, \cite{Katz}, \cite{AbbesMokrane}, \cite{AndreattaGasbarri}, \cite{Conrad}, \cite{FarguesCanonical}, \cite{GorenKassaei}, \cite{Rabinoff}, \cite{Tian}; however, not the whole canonical tower is overconvergent. By contrast, the whole anticanonical tower is overconvergent. This lets one deduce that on a strict neighborhood of the anticanonical tower, one can get a perfectoid space at $\Gamma_0(p^\infty)$-level, and then also at $\Gamma(p^\infty)$-level.

Observe that the locus of points in $\mathcal{X}_{\Gamma(p^\infty)}^\ast$ which have a perfectoid neighborhood is stable under the $\GSp_{2g}(\Q_p)$-action. Thus, to conclude, it suffices to see that any abelian variety is isogenous to an abelian variety in a given strict neighborhood of the ordinary locus. Although there may be a more direct way to prove this, we deduce it from the Hodge-Tate period map. Recall that the Hodge-Tate filtration of an abelian variety $A$ over a complete and algebraically closed extension $C$ of $\Q_p$ is a short exact sequence
\[
0\to (\Lie A)(1)\to T_p A\otimes_{\Z_p} C\to (\Lie A^\ast)^\ast\to 0\ ,
\]
where $T_p A$ is the $p$-adic Tate module of $A$. Moreover, $(\Lie A)(1)\subset T_p A\otimes_{\Z_p} C$ is a $\Q_p$-rational subspace if and only if (the abelian part of the reduction of) $A$ is ordinary; this follows from the classification of $p$-divisible groups over $\OO_C$, \cite[Theorem B]{ScholzeWeinstein}, but can also easily be proved directly. One deduces that if the Hodge-Tate filtration is close to a $\Q_p$-rational point, then $A$ lies in a small neighborhood of the ordinary locus (and conversely), cf. Lemma \ref{SmallUExists}, \ref{ExistsSmallEpsilon}. As under the action of $\GSp_{2g}(\Q_p)$, any filtration can be mapped to one that is close to any given $\Q_p$-rational point (making use of $U_p$-like operators), one gets the desired result.

In fact, observe that by \cite[Theorem B]{ScholzeWeinstein}, the $C$-valued points of $\Fl$ are in bijection with principally polarized $p$-divisible groups $G$ over $\OO_C$, with a trivialization of their Tate module. Thus, $\pi_\HT$ is, at least on $C$-valued points of the locus of good reduction, the map sending an abelian variety over $\OO_C$ to its associated $p$-divisible group. We warn the reader that this picture is only clean on geometric points; the analogue of \cite[Theorem B]{ScholzeWeinstein} fails over general nonarchimedean fields, or other base rings.

Most subtleties in the argument arise in relation to the minimal compactification. For example, we can prove existence of $\pi_\HT$ a priori only away from the boundary. To extend to the minimal compactification, we use a version of Riemann's Hebbarkeitssatz, saying that any bounded function has removable singularities. This result was proved in Section \ref{HebbarkeitssatzSection}, in the various forms that we will need. In Section \ref{EtaleOrdinarySection}, we prove the main result on a strict neighborhood of the anticanonical tower. As we need some control on the integral structure of the various objects, we found it useful to have a theory of the canonical subgroup that works integrally. As such a theory does not seem to be available in the literature, we give a new proof of existence of the canonical subgroup. The key result is the following. Note that our result is effective, and close to optimal (and works uniformly even for $p=2$).

\begin{lem} Let $R$ be a $p$-adically complete flat $\Z_p^\cycl$-algebra, and let $A/R$ be an abelian variety. Assume that the $\frac{p^m-1}{p-1}$-th power of the Hasse invariant of $A$ divides $p^\epsilon$ for some $\epsilon<\frac 12$. Then there is a unique closed subgroup $C\subset A[p^m]$ such that $C=\Ker F^m\mod p^{1-\epsilon}$.
\end{lem}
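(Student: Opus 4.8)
The plan is to prove the case $m=1$ directly and to deduce all higher $m$ from it by iterating along the canonical‑subgroup tower. Granting the level‑one statement, write $C_1\subset A[p]$ for the resulting subgroup, put $A_1=A/C_1$, and let $\pi\colon A\to A_1$ be the quotient isogeny. Since $C_1\equiv\Ker F\bmod p^{1-\epsilon}$, the isogeny $\pi$ reduces modulo $p^{1-\epsilon}$, and in particular modulo $p$, to the relative Frobenius $A_0\to A_0^{(p)}$; hence $A_1$ has special fibre $A_0^{(p)}$, so its Hasse invariant is $\Ha(A_1)=\Ha(A_0)^p$. Unwinding the hypothesis (which I read as a bound on rank‑one valuations, after reducing to a situation where divisibility of a section of a line bundle may be tested valuatively, e.g.\ a normal base), $\Ha(A_0)^{(p^m-1)/(p-1)}\mid p^{\epsilon}$ becomes $v(\Ha(A_0))\le \epsilon\,\tfrac{p-1}{p^m-1}$, whence $v(\Ha(A_1))=p\,v(\Ha(A_0))\le \epsilon'\,\tfrac{p-1}{p^{m-1}-1}$ with $\epsilon'=\epsilon\,\tfrac{p^m-p}{p^m-1}<\epsilon<\tfrac12$. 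Thus $A_1$ satisfies the hypothesis at level $m-1$, with a strictly smaller $\epsilon'$, and one sets $C_m=\pi^{-1}\bigl(C_{m-1}(A_1)\bigr)$ inside $A[p^m]$, where $C_{m-1}(A_1)\subset A_1[p^{m-1}]$ is the level‑$(m-1)$ canonical subgroup of $A_1$ provided by induction. The identity $\Ker F^m_{A_0}=(F)^{-1}\bigl(\Ker F^{m-1}_{A_0^{(p)}}\bigr)$ modulo $p$, together with the fact that the congruences only improve as one descends the tower, gives $C_m\equiv\Ker F^m\bmod p^{1-\epsilon}$; uniqueness propagates the same way.

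\textbf{The case $m=1$.} Here the problem is to lift the closed subgroup $\Ker F\subset A[p]$, a priori defined over $R/p$, to a finite locally free closed subgroup scheme $C_1\subset A[p]$ over $R$ reducing to it modulo $p^{1-\epsilon}$; equivalently, to lift the degree‑$p^g$ Frobenius isogeny $A_0\to A_0^{(p)}$ to an isogeny $A\to A'$ over $R$, compatibly modulo $p^{1-\epsilon}$. I would construct this lift by successive approximation in the $p$-adic topology, using the deformation theory of abelian schemes and of finite flat group schemes (Grothendieck--Messing / Serre--Tate, or Illusie's cotangent‑complex formalism): having lifted modulo $p^n$, the obstruction to lifting modulo $p^{n+1}$ — and the indeterminacy in the choice of a lift — lie in $\Hom$‑groups built from the $\omega$‑ and $\Lie$‑data of $A$ and $A^{(p)}$, and the linear map governing them is, up to units, Verschiebung on the Hodge bundle, whose determinant is the Hasse invariant. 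Since $\Ha\mid p^{\epsilon}$, these obstructions and ambiguities are killed by $p^{\epsilon}$, so for $\epsilon<\tfrac12$ the resulting geometric series of corrections converges $p$-adically and yields a lift; the threshold $\tfrac12$ (rather than the naive $1$) reflects that the argument must control both the existence of the lift and its rigidification into a subgroup, each at the cost of one factor of $\Ha$. The same estimate pins down two lifts against each other, giving the uniqueness clause.

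\textbf{Main obstacle.} The real work, and the point of giving a new proof, is in making this quantitative: one needs an integral formula for $\Ha$ as the determinant of Verschiebung on $\det\omega_A$ that is valid over an arbitrary flat $\Z_p^{\cycl}$-algebra and in particular at $p=2$; an explicit bound, in terms of $\Ha$, on the $\Hom$/$\Ext$ groups measuring the discrepancy between $A[p]$ and its Frobenius kernel; and the descent bookkeeping carried out above, which shows that each step down the tower costs exactly the factor $\tfrac{p^m-p}{p^m-1}$ in $\epsilon$, so that the single hypothesis $\Ha^{(p^m-1)/(p-1)}\mid p^{\epsilon}$ with $\epsilon<\tfrac12$ is what is needed uniformly in $m$. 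I expect the delicate point to be precisely this numerology — getting the obstructions killed by $p^{\epsilon}$ on the nose rather than by $p^{C\epsilon}$ for an uncontrolled $C$, so as to reach the close‑to‑sharp bound $\epsilon<\tfrac12$, and verifying it at $p=2$ where the standard normalizations of $\Ha$ and $\omega$ degenerate and the estimates must be redone by hand. By contrast, once the lift $A\to A'$ is in hand, checking that $C_1=\Ker(A\to A')$ is finite locally free of order $p^g$, is a subgroup scheme, and has the asserted reduction modulo $p^{1-\epsilon}$ is formal.
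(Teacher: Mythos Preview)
Your inductive strategy (reduce to $m=1$, then climb the tower) differs from the paper's, which works directly at level $m$. The paper observes that the quotient $H_1 = A_1[p^m]/\Ker F^m$ over $R/p$ has Lie complex $(\Lie A_1^{(p^m)} \to \Lie A_1)$ given by $V^m$, whose determinant is exactly $\Ha^{(p^m-1)/(p-1)}$; thus $p^\epsilon$ is null-homotopic on $\check{\ell}_{H_1}$. One then lifts the \emph{quotient} $H_1$ (not the kernel) via Illusie's obstruction theory for group schemes, and sets $C_m=\Ker(A[p^m]\to\tilde{H})$. This avoids all tower bookkeeping: the exponent $(p^m-1)/(p-1)$ appears once, as $\det V^m$, rather than being assembled over $m$ steps.

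Your explanation of the threshold $\epsilon<\tfrac12$ is where the proposal has a genuine gap. It is not that two separate operations each consume one factor of $\Ha$. The paper's mechanism is a single clever choice of square-zero thickenings: with $A=R/p$, $B_1=R/p^{2-\epsilon}$, and $B_2=\{(x,y)\in R/p^{2-2\epsilon}\times R/p : x\equiv y \bmod p^{1-\epsilon}\}$, both augmentation ideals identify with $R/p^{1-\epsilon}$ and the transition map $J_1\to J_2$ is multiplication by $p^\epsilon$. Since $p^\epsilon\sim 0$ on $\check{\ell}_{H_1}$, the obstruction over $B_2$ vanishes; but a lift to $B_2$ is precisely a lift from $R/p^{1-\epsilon}$ to $R/p^{2-2\epsilon}$, and one can iterate exactly when $2-2\epsilon>1$. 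Without this device (or an equivalent one), your $m=1$ case remains a heuristic and does not yield the sharp constant.

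Two smaller points. First, the reduction to rank-one valuations is not available for a general $p$-adically complete flat $\Z_p^{\cycl}$-algebra, so the tower numerology must be done algebraically; note also that $\Ha(A/C_1)=\Ha(A)^p$ holds only modulo $p^{1-\epsilon}$, not modulo $p$, which needs to be tracked. Second, uniqueness deserves its own argument: the paper proves the characterization $C_m(R')=\{s\in A[p^m](R'):s\equiv 0\bmod p^{(1-\epsilon)/p^m}\}$ via a rigidity lemma for sections of schemes whose $\Omega^1$ is killed by $p^\epsilon$, from which uniqueness follows.
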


Our proof runs roughly as follows. Look at $G = A[p^m] / \Ker F^m$ over $R/p$. By the assumption on the Hasse invariant, the Lie complex of $G$ is killed by $p^\epsilon$. The results of Illusie's thesis, cf. \cite[Section 3]{IllusieDeformationsBT}, imply that there is finite flat group scheme $\tilde{G}$ over $R$ such that $\tilde{G}$ and $G$ agree over $R/p^{1-\epsilon}$. Similarly, the map $A[p^m]\to G$ over $R/p^{1-\epsilon}$ lifts to a map $A[p^m]\to \tilde{G}$ over $R$ that agrees with the original map modulo $R/p^{1-2\epsilon}$. Letting $C=\Ker(A[p^m]\to \tilde{G})$ proves existence (up to a constant); uniqueness is proved similarly. All expected properties of the canonical subgroup are easily proved as well. In fact, it is not necessary to have an abelian variety for this result; a (truncated) $p$-divisible group would be as good.

As regards subtleties related to the minimal compactification, let us mention that we also need a version of (a strong form of) Hartog's extension principle, cf. Lemma \ref{HartogPrinciple}, and a version of Tate's normalized traces, cf. Lemma \ref{TraceDiv}. In general, our approach is to avoid any direct analysis of the boundary. This is mainly due to lazyness on our side, as we did not wish to speak about the toroidal compactification, which is needed for most explicit arguments about the boundary.\footnote{In a recent preprint, Pilloni and Stroh, \cite{PilloniStroh}, give such an explicit description of the boundary.} Instead, for all of our arguments it is enough to know that all geometric fibres (over $\Spec \Z_p$) of the minimal compactification are normal, with boundary of codimension $g$ (which is $\geq 2$ at least if $g\geq 2$; the case $g=1$ is easy to handle directly). However, the price to pay is that one has to prove a rather involved series of lemmas in commutative algebra.

Finally, in Section \ref{ConclusionSection} we construct the Hodge-Tate period map (first topologically, then as a map of adic spaces), and extend the results to the whole Siegel moduli space, finishing the proof of Theorem \ref{MainThmSiegel}.

\section{A strict neighborhood of the anticanonical tower}\label{EtaleOrdinarySection}

\subsection{The canonical subgroup} We need the canonical subgroup. Let us record the following simple proof of existence, which appears to be new. It depends on the following deformation-theoretic result, proved in Illusie's thesis, \cite[Th\'eor\`eme VII.4.2.5]{IllusieThesis2}.

\begin{thm}\label{IllusieDefTheory} Let $A$ be a commutative ring, and $G$, $H$ be flat and finitely presented commutative group schemes over $A$, with a group morphism $u: H\to G$. Let $B_1,B_2\to A$ be two square-zero thickenings with a morphism $B_1\to B_2$ over $A$. Let $J_i\subset B_i$ be the augmentation ideal. Let $\tilde{G}_1$ be a lift of $G$ to $B_1$, and $\tilde{G}_2$ the induced lift to $B_2$. Let $K$ be a cone of the map $\check{\ell}_H\to \check{\ell}_G$ of Lie complexes.
\begin{altenumerate}
\item[{\rm (i)}] For $i=1,2$, there is an obstruction class
\[
o_i\in \Ext^1(H,K\buildrel\mathbb{L}\over\otimes J_i)
\]
which vanishes precisely when there exists a lifting $(\tilde{H}_i,\tilde{u}_i)$ of $(H,u)$ to a flat commutative group scheme $\tilde{H}_i$ over $B_i$, with a morphism $\tilde{u}_i: \tilde{H}_i\to \tilde{G}_i$ lifting $u: H\to G$.
\item[{\rm (ii)}] The obstruction $o_2\in \Ext^1(H,K\buildrel\mathbb{L}\over\otimes J_2)$ is the image of $o_1\in \Ext^1(H,K\buildrel\mathbb{L}\over\otimes J_1)$ under the map $J_1\to J_2$.
\end{altenumerate}
\end{thm}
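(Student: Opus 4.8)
The plan is to obtain this as the ``relative'' case of the standard deformation‑theoretic lifting criterion, by combining the absolute deformation theory of flat commutative group schemes along square‑zero thickenings with a cone (octahedron) manipulation. The absolute input I would invoke — which is the substantive part and rests on Illusie's cotangent‑complex formalism for group objects — is: for a flat, finitely presented commutative group scheme $G$ over $A$ and a square‑zero thickening $B\to A$ with augmentation ideal $J$, (a) the obstruction to extending $G$ to a flat commutative group scheme over $B$ lies in $\Ext^2_A(G,\check{\ell}_G\buildrel\mathbb{L}\over\otimes J)$, and when it vanishes such extensions form a torsor under $\Ext^1_A(G,\check{\ell}_G\buildrel\mathbb{L}\over\otimes J)$; (b) given flat extensions $\tilde H,\tilde G$ and a homomorphism $u\colon H\to G$, the obstruction to lifting $u$ to $\tilde u\colon\tilde H\to\tilde G$ lies in $\Ext^1_A(H,\check{\ell}_G\buildrel\mathbb{L}\over\otimes J)$. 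Here $\Ext^\bullet_A$ is taken in the derived category of abelian fppf sheaves on $\Spec A$; the role of the Lie complex is to replace the full $\mathbb{L}_{G/A}$ for the purpose of \emph{commutative-group-scheme} (rather than mere scheme) deformations. Concretely, a flat extension $\tilde G$ of $G$ yields, by flatness, an extension class $[\tilde G]\in\Ext^1_A(G,\check{\ell}_G\buildrel\mathbb{L}\over\otimes J)$, and the obstruction in (b) is $u^\ast[\tilde G]-(\check{\ell}_u)_\ast[\tilde H]$, where $\check{\ell}_u\colon\check{\ell}_H\to\check{\ell}_G$ is induced by $u$.

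Given these inputs, I would assemble the combined obstruction. The wanted datum over $B_i$ is a flat lift $\tilde H_i$ of $H$ \emph{together with} a lift $\tilde u_i\colon\tilde H_i\to\tilde G_i$ of $u$ — i.e.\ a deformation of $H$ relative to the fixed deformation $\tilde G_i$ of $G$ — and the governing complex of such relative deformations is precisely the cone $K$ of $\check{\ell}_u$. Applying $R\Hom_A(H,-\buildrel\mathbb{L}\over\otimes J_i)$ to the distinguished triangle $\check{\ell}_H\to\check{\ell}_G\to K\to\check{\ell}_H[1]$ gives an exact sequence
\[
\Ext^1_A(H,\check{\ell}_G\buildrel\mathbb{L}\over\otimes J_i)\to\Ext^1_A(H,K\buildrel\mathbb{L}\over\otimes J_i)\to\Ext^2_A(H,\check{\ell}_H\buildrel\mathbb{L}\over\otimes J_i)\to\Ext^2_A(H,\check{\ell}_G\buildrel\mathbb{L}\over\otimes J_i)\,.
\]
I would define $o_i\in\Ext^1_A(H,K\buildrel\mathbb{L}\over\otimes J_i)$ from $[\tilde G_i]$ (pulled back along $u$ and transported into the cone) so that its image in $\Ext^2_A(H,\check{\ell}_H\buildrel\mathbb{L}\over\otimes J_i)$ is the obstruction (a) to lifting $H$ alone, and so that, when that image vanishes and a lift $\tilde H_i$ is chosen, the preimages of $o_i$ in $\Ext^1_A(H,\check{\ell}_G\buildrel\mathbb{L}\over\otimes J_i)$ are exactly the obstructions (b) to lifting $u$ over that $\tilde H_i$. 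A diagram chase then yields that $o_i=0$ iff a lift $\tilde H_i$ and a compatible $\tilde u_i$ exist simultaneously, which is (i). The point to check carefully — the real content of the octahedron — is that changing $\tilde H_i$ within its $\Ext^1_A(H,\check{\ell}_H\buildrel\mathbb{L}\over\otimes J_i)$‑torsor shifts the obstruction (b) exactly by the image under $(\check{\ell}_u)_\ast$, so that the class in the cone is well defined.

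Part (ii) should then be formal, since $o_i$ has been produced by canonical operations (a distinguished triangle, the functor $R\Hom_A(H,-)$, and the extension class of $\tilde G_i$) which are all natural in the coefficient module. The map $B_1\to B_2$ over $A$ induces $J_1\to J_2$ of $\mathcal{O}_A$‑modules, hence a morphism of the exact sequences above; and since $\tilde G_2=\tilde G_1\otimes_{B_1}B_2$, the class $[\tilde G_2]$ is the image of $[\tilde G_1]$ along $J_1\to J_2$. Tracing this through the construction of $o_i$ gives $o_2=\,$image of $o_1$.

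The main obstacle — the only nonformal step — is the absolute input: that the deformation theory of $G$ as a \emph{commutative group scheme} is controlled by the small complex $\check{\ell}_G$, and that $\Ext^\bullet_A$ of the abelian sheaf $G$ agrees with that of $\check{\ell}_G$ in the relevant degrees. This is the heart of Illusie's thesis and uses the cotangent complex for simplicial commutative group schemes, the bar resolution of $G$, and a d\'evissage isolating the relevant truncation of $\mathbb{L}_{BG/A}$; flatness and finite presentation of $G$ and $H$ enter precisely here. Everything downstream — the cone manipulation and the naturality in $J$ — is routine.
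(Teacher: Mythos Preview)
The paper does not prove this theorem at all: it is stated as a direct citation of Illusie's thesis, with part (i) being \cite[Th\'eor\`eme VII.4.2.5 (i)]{IllusieThesis2} (up to a shift convention in $K$) and part (ii) following from \cite[Remarque VII.4.2.6 (i)]{IllusieThesis2}. Your proposal instead sketches a reconstruction of the argument behind Illusie's result, via the cone/octahedron manipulation reducing the relative problem to the absolute deformation theory of commutative group schemes governed by $\check{\ell}_G$. That sketch is reasonable and you correctly identify that the only nonformal ingredient is precisely the content of Illusie's thesis; but for the purposes of this paper the ``proof'' is simply the citation, and your reconstruction is neither required nor present there.
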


\begin{proof} Part (i) is exactly \cite[Th\'eor\`eme VII.4.2.5 (i)]{IllusieThesis2} (except for a different convention on the shift in $K$), where the $A$ from loc. cit. is taken to be $\Z$ and the base ring $T=\Z$. Part (ii) follows from \cite[Remarque VII.4.2.6 (i)]{IllusieThesis2}.
\end{proof}

Recall that $\Z_p^\cycl$ contains elements of $p$-adic valuation $\frac a{(p-1)p^n}$ for any integers $a,n\geq 0$. In the following, $p^\epsilon\in \Z_p^\cycl$ denotes any element of valuation $\epsilon$ for any $\epsilon$; we always assume implicitly that $\epsilon$ is of the form $\frac a{(p-1)p^n}$ for some $a,n\geq 0$. In all the following results, $\Z_p^\cycl$ could be replaced by any sufficiently ramified extension of $\Z_p$.

\begin{cor}\label{LiftGroup} Let $R$ be a $p$-adically complete flat $\Z_p^\cycl$-algebra. Let $G$ be a finite locally free commutative group scheme over $R$, and let $C_1\subset G\otimes_R R/p$ be a finite locally free subgroup. Assume that for $H=(G\otimes_R R/p)/C_1$, multiplication by $p^\epsilon$ on the Lie complex $\check{\ell}_H$ is homotopic to $0$, where $0\leq \epsilon<\frac 12$. Then there is a finite locally free subgroup $C\subset G$ over $R$ such that $C\otimes_R R/p^{1-\epsilon}= C_1\otimes_{R/p} R/p^{1-\epsilon}$.
\end{cor}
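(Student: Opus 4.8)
The plan is to deduce the corollary from Theorem \ref{IllusieDefTheory} by a bootstrapping argument along the tower of square-zero thickenings $R/p^{1-\epsilon} \leftarrow R/p^{2-2\epsilon} \leftarrow \cdots$, patching finally to get an object over the $p$-adically complete ring $R$. First I would set up the relevant group schemes over $R/p$: we have $G\otimes_R R/p$, the subgroup $C_1$, the quotient $H = (G\otimes_R R/p)/C_1$, and the quotient map $u\colon G\otimes_R R/p \to H$. The goal is to lift $H$ together with the map $u$ (the obstruction theory in Theorem \ref{IllusieDefTheory} is exactly designed for lifting a pair $(H,u)$ relative to a fixed lift of $G$, and here the fixed lift of $G$ over $R$ is $G$ itself). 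Once $(H,u)$ is lifted over $R$, one sets $C = \ker(G \to \tilde H)$, checks it is finite locally free (kernel of a map of finite flat group schemes with flat quotient), and checks it reduces to $C_1$ modulo $p^{1-\epsilon}$.

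Next I would run the obstruction analysis. For a square-zero thickening $B \to A$ with augmentation ideal $J$, Theorem \ref{IllusieDefTheory}(i) gives an obstruction $o \in \Ext^1(H, K \overset{\mathbb{L}}\otimes J)$, where $K$ is the cone of $\check\ell_H \to \check\ell_G$. The hypothesis is that multiplication by $p^\epsilon$ is homotopic to zero on $\check\ell_H$; I would first argue that this forces $p^\epsilon$ to annihilate $K$ in the derived category as well — since $\check\ell_G$ is a perfect complex of Tor-amplitude $[-1,0]$ and, more to the point, the cone $K$ receives $\check\ell_H[\,\cdot\,]$ in a way that makes $p^\epsilon$ act by a null-homotopic map up to the contribution of $\check\ell_G$; actually the clean statement is that $p^{2\epsilon}$ kills $K$ in the derived category (one factor of $p^\epsilon$ from each of the two terms of the cone, using that $p^\epsilon$ kills $\check\ell_H$ and that $\check\ell_{G}$, being the Lie complex of a finite flat group scheme, is also killed by an appropriate power — here I should be careful and likely only claim $p^{2\epsilon}$ or even just use that $p^\epsilon$ kills $\check\ell_H$ directly by choosing the thickenings so that the relevant $\Ext$-group is hit). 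Consequently the obstruction class $o$, living in a group that is $p^{2\epsilon}$-torsion (or $p^\epsilon$-torsion), can be killed by replacing the thickening $B \to A$ with the smaller step corresponding to the ideal $p^{2\epsilon} J$ (resp. $p^\epsilon J$): by Theorem \ref{IllusieDefTheory}(ii) the obstruction is functorial in $J$, so under $J \to p^{2\epsilon}J$ it maps to zero, hence a lift exists over that intermediate thickening.

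With the single-step statement in hand, the plan is to iterate. Start over $R/p^{1-\epsilon}$, where $C_1$ (equivalently $H$, $u$) is given by base change from $R/p$. The square-zero thickenings $R/p^{(1-\epsilon)+\delta} \to R/p^{1-\epsilon}$ with $\delta$ a small positive exponent of the allowed form, have augmentation ideal annihilated by $p^{\delta}$; choosing $\delta$ so that $2\epsilon \le (1-\epsilon) - $ something, more precisely arranging the step sizes so that each obstruction is killed (the constraint $\epsilon < \tfrac12$ is exactly what guarantees $1 - \epsilon > 2\epsilon$ after finitely many... no: the genuinely needed inequality is $1 - \epsilon > \epsilon$, i.e.\ $\epsilon < \tfrac12$, so that the ``loss of $2\epsilon$'' in going from the lift-exists modulus to the obstruction-vanishes modulus still lands strictly to the right of the starting point — this is the role of the hypothesis $\epsilon < \tfrac12$). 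One thus produces, inductively, lifts $(\tilde H_n, \tilde u_n)$ over $R/p^{c_n}$ for an increasing sequence $c_n \to \infty$, each compatible with the previous. Passing to the inverse limit and using $p$-adic completeness of $R$ together with Grothendieck existence / the fact that finite flat group schemes over $R$ are equivalent to compatible systems over the $R/p^n$, one obtains $(\tilde H, \tilde u)$ over $R$. Set $C = \ker(G \to \tilde H)$.

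The main obstacle I expect is the bookkeeping of exponents — making precise the claim that ``a lift over $R/p^{c}$ extends to a lift over $R/p^{c'}$ for some explicit $c' > c$ with a uniform gap'', tracking exactly how much $p$-divisibility is lost to the obstruction group (the factor coming from $K$) versus gained from shrinking the augmentation ideal, and confirming that these can be chained so that the moduli $c_n$ diverge rather than stall below some bound; this is precisely where $\epsilon < \tfrac12$ must be used and where an off-by-a-factor-of-two error would be fatal. A secondary technical point is verifying that $p^\epsilon$ (or $p^{2\epsilon}$) really does annihilate the relevant $\Ext^1(H, K\overset{\mathbb L}\otimes J)$ — one reduces to $K$ being $p^\epsilon$-torsion in $D(R/p)$, for which one invokes that the cone of $\check\ell_H \to \check\ell_G$ has its cohomology in degrees $-1,0$ and $p^\epsilon$ kills $\check\ell_H$ by hypothesis while noting $\check\ell_G$ contributes only in the degree that does not obstruct (or, more crudely, enlarging $\epsilon$ to $2\epsilon$ throughout, which is still $<1$ when $\epsilon<\tfrac12$). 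The statement about $C$ reducing to $C_1$ modulo $p^{1-\epsilon}$ then follows because at the bottom of the tower $\tilde u$ agrees with $u$ over $R/p^{1-\epsilon}$, so the kernels agree there; finiteness and local freeness of $C$ follow from flatness of $\tilde H$ and the fact that $G$ is finite locally free, $G/C \cong \tilde H$.
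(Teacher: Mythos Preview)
Your proposal has the right overall shape (use Illusie's obstruction theory, then iterate along a tower of thickenings), but there is a genuine gap in how you set up the deformation problem, and it is exactly the point where your argument becomes uncertain about whether $p^\epsilon$ or $p^{2\epsilon}$ suffices.

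Theorem~\ref{IllusieDefTheory} is about lifting a pair $(H', u: H' \to G)$ relative to a \emph{fixed} lift of $G$, with the map going \emph{into} $G$. You instead try to lift $H$ together with the quotient map $u: G\otimes R/p \to H$, which points the wrong way; as stated this does not fit the theorem. More seriously, even if one located a dual version, your cone $K = \mathrm{cone}(\check\ell_H \to \check\ell_G)$ sits in a triangle $\check\ell_G \to K \to \check\ell_H[1]$, so it is an extension of $\check\ell_H[1]$ by $\check\ell_G$. The hypothesis gives $p^\epsilon \simeq 0$ on $\check\ell_H$, but there is no reason whatsoever for $\check\ell_G$ to be $p^\epsilon$- or $p^{2\epsilon}$-torsion (take $G = \mu_{p^m}$, for instance). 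Your hedge that ``$\check\ell_G$, being the Lie complex of a finite flat group scheme, is also killed by an appropriate power'' is not true for any power that would keep the argument alive.

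The paper's move is to apply Theorem~\ref{IllusieDefTheory} with $H' = C_1$ and $u = (C_1 \hookrightarrow G\otimes R/p)$. Then the obstruction cone is
\[
K = \mathrm{cone}\bigl(\check\ell_{C_1} \to \check\ell_{G\otimes R/p}\bigr) \simeq \check\ell_H,
\]
by the distinguished triangle of Lie complexes attached to $0 \to C_1 \to G\otimes R/p \to H \to 0$. Now the hypothesis says \emph{exactly} that $p^\epsilon$ is null-homotopic on $K$, with no loss. This is the identification you are missing.

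The second ingredient you only gesture at is the precise mechanism for ``shrinking the thickening by $p^\epsilon$''. The paper does this with a specific fiber-product ring: take $A = R/p$, $B_1 = R/p^{2-\epsilon}$, and
\[
B_2 = R/p^{2-2\epsilon} \times_{R/p^{1-\epsilon}} R/p.
\]
Both augmentation ideals are isomorphic to $R/p^{1-\epsilon}$, and the induced map $J_1 \to J_2$ is multiplication by $p^\epsilon$. Part (ii) of the theorem then gives $o_2 = p^\epsilon \cdot o_1 = 0$ (since $p^\epsilon K \simeq 0$), so a lift to $B_2$ exists. A lift to $B_2$ is precisely a lift over $R/p^{2-2\epsilon}$ agreeing with the given data over $R/p^{1-\epsilon}$; since $2 - 2\epsilon > 1$ (here is where $\epsilon < \tfrac12$ is used), one can continue.
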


\begin{proof} In Theorem \ref{IllusieDefTheory}, we take $A=R/p$, $B_1 = R/p^{2-\epsilon}$, and
\[
B_2 = \{(x,y)\in R/p^{2-2\epsilon}\times R/p \mid x = y\in R/p^{1-\epsilon}\}\ .
\]
One has the map $B_1\to B_2$ sending $x$ to $(x,x)$. Both augmentation ideals $J_i\subset B_i$ are isomorphic to $R/p^{1-\epsilon}$, and the transition map is given by multiplication by $p^\epsilon$. Moreover, one has the group scheme $G\otimes_R R/p^{2-\epsilon}$ over $B_1$, and the morphism $C_1\hookrightarrow G\otimes_R R/p$ over $A$, giving all necessary data. From Theorem \ref{IllusieDefTheory} and the assumption that $p^\epsilon$ is homotopic to $0$ on $\check{\ell}_H = K$, it follows that $o_2=0$. In other words, one gets a lift from $A$ to $B_2$. But lifting from $A$ to $B_2$ is equivalent to lifting from $R/p^{1-\epsilon}$ to $R/p^{2-2\epsilon}$. Thus, everything can be lifted to $R/p^{2-2\epsilon}$, preserving the objects over $R/p^{1-\epsilon}$. As $2-2\epsilon>1$ by assumption, continuing will produce the desired subgroup $C\subset G$.
\end{proof}

\begin{rem} The reader happy with larger (but still explicit) constants, but trying to avoid the subtle deformation theory for group schemes in \cite{IllusieThesis2}, may replace the preceding argument by an argument using the more elementary deformation theory for rings in \cite{IllusieThesis1}. In fact, one can first lift the finite locally free scheme $H$ to $R$ by a similar argument, preserving its reduction to $R/p^{1-\epsilon}$. Next, one can deform the multiplication morphism $H\times H\to H$, preserving its reduction to $R/p^{1-2\epsilon}$, as well as the inverse morphism $H\to H$. The multiplication will continue to be commutative and associative, and the inverse will continue to be an inverse, if $\epsilon$ is small enough. This gives a lift of $H$ to a finite locally free commutative group scheme over $R$, agreeing with the original one modulo $p^{1-2\epsilon}$. Next, one can lift the morphism of finite locally free schemes $G\otimes_R R/p\to H$ to a morphism over $R$, agreeing with the original one modulo $p^{1-3\epsilon}$. Again, this will be a group morphism if $\epsilon$ is small enough. Finally, one takes the kernel of the lifted map.
\end{rem}

\begin{lem}\label{LiftHomEqual} Let $R$ be a $p$-adically complete flat $\Z_p^\cycl$-algebra. Let $X/R$ be a scheme such that $\Omega^1_{X/R}$ is killed by $p^\epsilon$, for some $\epsilon\geq 0$. Let $s,t\in X(R)$ be two sections such that $\bar{s} = \bar{t}\in X(R/p^\delta)$ for some $\delta>\epsilon$. Then $s=t$.
\end{lem}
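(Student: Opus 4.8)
\emph{The plan is to argue by successive approximation.} Since the statement is local on $X$, I would first reduce to the case $X = \Spec B$ affine, so that $s$ and $t$ are given by $R$-algebra homomorphisms $s^\sharp, t^\sharp\colon B \to R$ and $\Omega^1_{X/R}$ becomes the $B$-module $\Omega^1_{B/R}$, which is annihilated by $p^\epsilon$. I would then produce, starting from $\delta_0 := \delta$, a strictly increasing sequence $\delta_0 < \delta_1 < \delta_2 < \cdots$ tending to $\infty$ with $s \equiv t \pmod{p^{\delta_k}}$ for every $k$; since $R$ is $p$-adically separated, this forces $s^\sharp = t^\sharp$. The one arithmetic ingredient is that, because $R$ is flat over $\Z_p^\cycl$, multiplication by $p^c$ is injective on $R$ for every admissible exponent $c = \tfrac{a}{(p-1)p^n}$ (all the $\delta_k$ will be of this form), so that the $p^\epsilon$-torsion submodule of $p^cR/p^{2c}R$ is precisely $p^{2c-\epsilon}R/p^{2c}R$ as soon as $c \ge \epsilon$. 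This is what turns the torsion hypothesis on $\Omega^1$ into a gain in the exponent.

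\emph{The inductive step} goes as follows. Assume $s \equiv t \pmod{p^c}$ with $c > \epsilon$ (the case $c = \delta$ being the hypothesis), and set $D := t^\sharp - s^\sharp\colon B \to R$; then $D$ has values in $p^cR$ and vanishes on $R$, since $s^\sharp$ and $t^\sharp$ are both $R$-algebra maps. Writing $t^\sharp = s^\sharp + D$ and expanding $t^\sharp(bb')$ yields $D(bb') = s^\sharp(b)D(b') + s^\sharp(b')D(b) + D(b)D(b')$ with $D(b)D(b') \in p^{2c}R$, so the reduction $\bar D\colon B \to p^cR/p^{2c}R$ is an $R$-linear derivation into the $B$-module $p^cR/p^{2c}R$, where $B$ acts through $s^\sharp$. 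Hence $\bar D$ factors through a $B$-linear map $\Omega^1_{B/R} \to p^cR/p^{2c}R$, which is killed by $p^\epsilon$ because $\Omega^1_{B/R}$ is; by the arithmetic ingredient its image lies in $p^{2c-\epsilon}R/p^{2c}R$, whence $D(b) \in p^{2c-\epsilon}R$ for all $b$, i.e. $s \equiv t \pmod{p^{2c-\epsilon}}$.

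\emph{To conclude,} I would set $\delta_{k+1} := 2\delta_k - \epsilon$, so that $\delta_{k+1} - \epsilon = 2(\delta_k - \epsilon)$ and hence $\delta_k - \epsilon = 2^k(\delta - \epsilon)$; as $\delta > \epsilon$ this sequence is strictly increasing, remains $> \epsilon$ (so the inductive step is legitimate at every stage), and tends to $\infty$. Therefore $t^\sharp(b) - s^\sharp(b) \in \bigcap_k p^{\delta_k}R = 0$ for all $b \in B$, giving $s = t$. I expect the main thing to be careful about to be the bookkeeping with the ramified exponents in the inductive step --- in particular verifying that the $p^\epsilon$-torsion of $p^cR/p^{2c}R$ is no larger than $p^{2c-\epsilon}R/p^{2c}R$, which is exactly where $p$-torsion-freeness of $R$ (i.e. flatness over $\Z_p^\cycl$) enters and why one needs $\delta > \epsilon$ strictly; the reduction to $X$ affine is routine (cover $X$ and $\Spec R$ compatibly, noting that $s$ and $t$ have the same underlying map on the special fibre, where they already agree).
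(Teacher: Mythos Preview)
Your proof is correct and is essentially the same as the paper's: both argue that $s\equiv t\pmod{p^\delta}$ implies $s\equiv t\pmod{p^{2\delta-\epsilon}}$ by observing that the difference of two lifts is governed by a derivation (equivalently, an element of $\Hom(\Omega^1_{X/R}\otimes_{\OO_X} R/p^\delta,\, p^\delta R/p^{2\delta}R)$), whose image is $p^\epsilon$-torsion and hence lands in $p^{2\delta-\epsilon}R/p^{2\delta}R$, and then iterate. The paper invokes the standard deformation-theory torsor statement directly (so no explicit reduction to the affine case is made), but the recursion $\delta\mapsto 2\delta-\epsilon$ and the underlying mechanism are identical to yours.
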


\begin{proof} By standard deformation theory, the different lifts of $\bar{s}$ to $R/p^{2\delta}$ are a principal homogeneous space for
\[
\Hom(\Omega^1_{X/R}\otimes_{\OO_X} R/p^\delta,R/p^\delta)\ ,
\]
where the tensor product is taken along the map $\OO_X\to R/p^\delta$ coming from $\bar{s}$. Similarly, the different lifts of $\bar{s}$ to $R/p^{2\delta-\epsilon}$ are a principal homogeneous space for
\[
\Hom(\Omega^1_{X/R}\otimes_{\OO_X} R/p^\delta,R/p^{\delta-\epsilon})\ ,
\]
and these identifications are compatible with the evident projection $R/p^\delta\to R/p^{\delta-\epsilon}$. As $M=\Omega^1_{X/R}\otimes_{\OO_X} R/p^\delta$ is killed by $p^\epsilon$, any map $M\to R/p^\delta$ has image in $p^{\delta-\epsilon} R/p^\delta$, and thus has trivial image in $\Hom(\Omega^1_{X/R}\otimes_{\OO_X} R/p^\delta,R/p^{\delta-\epsilon})$. It follows that any two lifts of $\bar{s}$ to $R/p^{2\delta}$ induce the same lift to $R/p^{2\delta-\epsilon}$, so that $s,t\in X(R)$ become equal in $X(R/p^{2\delta-\epsilon})$. Continuing gives the result.
\end{proof}

Let us recall the Hasse invariant. Let $S$ be a scheme of characteristic $p$, and let $A\to S$ be an abelian scheme of dimension $g$. Let $A^{(p)}$ be the pullback of $A$ along the Frobenius of $S$. The Verschiebung isogeny $V: A^{(p)}\to A$ induces a map $V^\ast: \omega_{A/S}\to \omega_{A^{(p)}/S}\cong \omega_{A/S}^{\otimes p}$, i.e. a section $\Ha(A/S)\in \omega_{A/S}^{\otimes (p-1)}$, called the Hasse invariant. We recall the following well-known lemma.

\begin{lem} The section $\Ha(A/S)\in \omega_{A/S}^{\otimes (p-1)}$ is invertible if and only if $A$ is ordinary, i.e. for all geometric points $\bar{x}$ of $S$, $A[p](\bar{x})$ has $p^g$ elements.
\end{lem}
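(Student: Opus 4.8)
The plan is to reduce to a geometric point and then read off the $p$-rank of $A$ from the Frobenius--Verschiebung description of $A[p]$. Since the formation of the Hasse invariant commutes with arbitrary base change, and a section of a line bundle on $S$ is invertible if and only if it is nonzero in every geometric fibre, I may assume $S=\Spec k$ with $k$ algebraically closed of characteristic $p$, and I must show $\Ha(A/k)\neq 0$ if and only if $|A[p](k)|=p^g$. Write $\underline{\omega}_{A/S}$ for the rank-$g$ sheaf of invariant differentials, so that $\omega_{A/S}=\det\underline{\omega}_{A/S}$ and $\Ha(A/S)=\det(V^\ast)$, where $V^\ast\colon\underline{\omega}_{A/S}\to\underline{\omega}_{A^{(p)}/S}$ is induced by the Verschiebung isogeny $V\colon A^{(p)}\to A$. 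Over $k$ both $\underline{\omega}_{A/k}$ and $\underline{\omega}_{A^{(p)}/k}$ are $g$-dimensional, so $\Ha(A/k)\neq 0$ if and only if $V^\ast$ is an isomorphism of $k$-vector spaces.

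I would first convert this into a statement about $\Ker V$. Since $\Ker V=A^{(p)}\times_A\Spec k$ is the scheme-theoretic fibre of $V$ over the identity of $A$, pulling the cotangent sequence of $V$ back along the identity section gives a right-exact sequence of finite-dimensional $k$-vector spaces
\[
\underline{\omega}_{A/k}\xrightarrow{\ V^\ast\ }\underline{\omega}_{A^{(p)}/k}\longrightarrow \underline{\omega}_{\Ker V/k}\longrightarrow 0\ .
\]
As $V^\ast$ is a map between $k$-vector spaces of equal dimension $g$, it is an isomorphism if and only if it is surjective, i.e. if and only if $\underline{\omega}_{\Ker V/k}=e^\ast\Omega^1_{\Ker V/k}=0$. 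A finite group scheme over a field is \'etale precisely when this conormal space (equivalently, its Lie algebra) vanishes; hence $\Ha(A/k)\neq 0$ if and only if $\Ker V$ is \'etale over $k$.

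It remains to identify $|A[p](k)|$ in terms of $\Ker V$. From the identity $V\circ F=[p]_A$ and the standard degree count $\deg F_{A/k}=p^g$, $\deg[p]_A=p^{2g}$, one gets $\deg V=p^g$, so $\Ker V$ is a finite group scheme of order $p^g$. Moreover $F$ carries $A[p]=\Ker(V\circ F)$ onto $\Ker V$ with kernel $\Ker F$ — surjectivity because $F$ is faithfully flat, so any $y$ with $V(y)=0$ lifts to some $x$, which then lies in $A[p]$ — giving a short exact sequence
\[
0\longrightarrow \Ker F\longrightarrow A[p]\xrightarrow{\ F\ }\Ker V\longrightarrow 0
\]
of finite group schemes over $k$. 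Since $\Ker F$ is infinitesimal, hence connected, it lies in the connected component of the identity of $A[p]$, so $A[p]$ and $\Ker V$ have the same maximal \'etale quotient; over the algebraically closed field $k$ this means $|A[p](k)|=|\Ker V(k)|$. As $\Ker V$ has order $p^g$, it is \'etale over $k$ if and only if $|\Ker V(k)|=p^g$, i.e. if and only if $|A[p](k)|=p^g$. Combining this with the previous paragraph gives the equivalence. Everything here is classical (the Dieudonn\'e-theoretic structure of $A[p]$ via $F$ and $V$); the only care required is the cotangent-sequence bookkeeping in the second step and the correct invocation of $V\circ F=[p]_A$ and $\deg F_{A/k}=p^g$, so I do not anticipate a genuine obstacle.
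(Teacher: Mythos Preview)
Your proof is correct and follows essentially the same route as the paper's: invertibility of $\Ha$ $\Leftrightarrow$ $V^\ast$ (equivalently $V$ on tangent spaces) is an isomorphism $\Leftrightarrow$ $\Ker V$ is \'etale of order $p^g$, together with $A[p](\bar{x})=(\Ker V)(\bar{x})$ via $VF=[p]$ and the fact that $\Ker F$ is infinitesimal. You have simply made explicit the cotangent-sequence step and the short exact sequence that the paper's two-line argument leaves implicit.
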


\begin{proof} The Hasse invariant is invertible if and only if the Verschiebung $V: A^{(p)}\to A$ is an isomorphism on tangent spaces. This is equivalent to Verschiebung being finite \'etale, which in turn is equivalent to the condition that the kernel $\Ker V$ of $V$ has $p^g$ distinct geometric points above any geometric point $\bar{x}$ of $S$ (as the degree of $V$ is equal to $p^g$). But $VF = p: A\to A$, and $F$ is purely inseparable, so $A[p](\bar{x}) = (\Ker V)(\bar{x})$.
\end{proof}

\begin{cor}\label{CanSubgroup} Let $R$ be a $p$-adically complete flat $\Z_p^\cycl$-algebra, and let $A\to \Spec R$ be an abelian scheme, with reduction $A_1\to \Spec R/p$. Assume that $\Ha(A_1/\Spec (R/p))^{\frac{p^m-1}{p-1}}$ divides $p^\epsilon$ for some $\epsilon<\frac 12$. Then there is a unique closed subgroup $C_m\subset A[p^m]$ (flat over $R$) such that $C_m = \Ker F^m\subset A[p^m]$ modulo $p^{1-\epsilon}$. For any $p$-adically complete flat $\Z_p^\cycl$-algebra $R^\prime$ with a map $R\to R^\prime$, one has
\[
C_m(R^\prime) = \{s\in A[p^m](R^\prime)\mid s\equiv 0\mod p^{(1-\epsilon)/p^m} \}\ .
\]
\end{cor}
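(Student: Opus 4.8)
The plan is to apply Corollary \ref{LiftGroup} to the group scheme $G = A[p^m]$ over $R$, with $C_1 = \Ker F^m \subset A[p^m]\otimes_R R/p$, and then to verify the two hypotheses of that corollary: first, that $C_1$ is finite locally free over $R/p$ — which is clear since $\Ker F^m$ is finite locally free of order $p^{mg}$ over the characteristic-$p$ base $R/p$ — and, second, that multiplication by $p^\epsilon$ is homotopic to zero on the Lie complex $\check\ell_H$ of $H = (A[p^m]\otimes_R R/p)/\Ker F^m$. For the second point I would identify $H$: since $VF^m$-type considerations give $A[p^m]/\Ker F^m \cong \Ker V^m$ (using $F^m V^m = p^m = V^m F^m$ on $A[p^m]$, and that $F$ is faithfully flat), one has $H \cong (\Ker V^m)$, which is the Cartier dual of the kernel of $F^m$ on the dual abelian scheme $A^\ast$. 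The Lie complex of $H$ is then controlled by $\omega_H$, and the point is that a power of the Hasse invariant annihilates it: concretely, iterating $V^\ast: \omega_{A^{(p)}}\to \omega_A^{\otimes p}$ shows that the cokernel of $(V^m)^\ast$ is killed by $\Ha(A_1)^{1 + p + \cdots + p^{m-1}} = \Ha(A_1)^{\frac{p^m-1}{p-1}}$, which by hypothesis divides $p^\epsilon$. Hence $p^\epsilon$ kills $\check\ell_H$ (in the derived sense — it is concentrated in the relevant degrees with cohomology groups killed by $p^\epsilon$), so multiplication by $p^\epsilon$ is nullhomotopic, and Corollary \ref{LiftGroup} produces a finite locally free subgroup $C_m \subset A[p^m]$ with $C_m \otimes_R R/p^{1-\epsilon} = \Ker F^m \otimes R/p^{1-\epsilon}$.

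**Uniqueness.** For uniqueness I would argue as suggested in the introduction: if $C_m, C_m'$ are two such subgroups, consider them as sections of the finite locally free scheme $\mathrm{Grass}$ of finite locally free subgroups of $A[p^m]$ of the appropriate order, or more directly, the closed immersion $C_m \hookrightarrow A[p^m]$ and $C_m' \hookrightarrow A[p^m]$ as $R$-points of a Hilbert-type scheme $\mathcal{H}$ parametrizing finite closed subschemes of $A[p^m]$. The relevant scheme is finite over $R$, hence $\Omega^1$ of it over $R$ is a coherent module; I expect that the Hasse-invariant bound again forces $p^{\epsilon'}$ (for a suitable $\epsilon' $ comparable to $\epsilon$) to kill this module of differentials along the section $\Ker F^m$, so that Lemma \ref{LiftHomEqual} applies: two sections agreeing modulo $p^{1-\epsilon}$ with $1-\epsilon > \epsilon'$ must be equal. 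In fact the cleanest route may be to observe that $C_m$ is determined by the quotient map $A[p^m]\to A[p^m]/C_m$, that $A[p^m]/C_m$ deforms $H$, and that by the uniqueness part of Illusie's deformation theory (the obstruction-and-torsor formalism, together with the vanishing of the relevant $\mathrm{Hom}$-group after multiplying by $p^\epsilon$) the lift of $H$ together with the lift of the projection is unique up to the action of a group killed by $p^\epsilon$, forcing $C_m$ to be unique.

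**The pointwise description.** For the final formula, let $R\to R'$ be a map of $p$-adically complete flat $\Z_p^{\cycl}$-algebras, and let $s \in A[p^m](R')$. One inclusion is easy: if $s \in C_m(R')$, then since $C_m \bmod p^{1-\epsilon} = \Ker F^m$, and $\Ker F^m \bmod p$ is killed by $F^m$, one checks that sections of $\Ker F^m$ are $\equiv 0 \bmod p^{1/p^m}$ over any $\F_p$-algebra (Frobenius-semilinearity of $F$ forces a section killed by $F^m$ to be $p^{1/p^m}$-divisible after base change to the perfection, and then the bound propagates); combined with $C_m \bmod p^{1-\epsilon} = \Ker F^m$ this yields $s \equiv 0 \bmod p^{(1-\epsilon)/p^m}$. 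For the converse, given $s \in A[p^m](R')$ with $s \equiv 0 \bmod p^{(1-\epsilon)/p^m}$, the point is that the two maps $\Spec R' \to A[p^m]$ given by $s$ and by $0$ land in the subscheme $C_m$ modulo $p^{1-\epsilon}$ (since $C_m$ is open-and-closed enough in $A[p^m]$ near the identity section, being a lift of $\Ker F^m$); then apply the uniqueness/rigidity of Lemma \ref{LiftHomEqual}-type arguments, using that $\Omega^1$ of $A[p^m]$ over $R$ is related to $\omega_A$ which is annihilated by a controlled power of $p$ once localized near $C_m$, to upgrade the congruence to an actual equality $s \in C_m(R')$.

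**Main obstacle.** I expect the genuinely delicate step to be the precise bookkeeping of powers of $p$ in the two rigidity arguments — both for uniqueness of $C_m$ and for the pointwise description — i.e.\ checking that the power of the Hasse invariant (equivalently of $p$) annihilating the relevant modules of differentials is strictly smaller than $1-\epsilon$ so that Lemma \ref{LiftHomEqual} genuinely applies, uniformly and including $p=2$. The existence part via Corollary \ref{LiftGroup} is, by contrast, essentially a clean application once the identification $H \cong \Ker V^m$ and the Hasse-invariant computation of $\omega_H$ are in place; the constants there ($\epsilon < \tfrac12$, so $2-2\epsilon > 1$) are exactly what makes the inductive lifting converge.
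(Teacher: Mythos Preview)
Your existence argument is essentially correct and matches the paper. One small imprecision: saying the cohomology of $\check\ell_H$ is killed by $p^\epsilon$ does not by itself give a nullhomotopy of $p^\epsilon$ on the complex. The paper makes this clean by identifying $\check\ell_{H_1}$ with the two-term complex $\Lie A_1^{(p^m)}\to \Lie A_1$ (where $H_1=\Ker(V^m:A_1^{(p^m)}\to A_1)$), computing the determinant of this map as $\Ha(A_1)^{(p^m-1)/(p-1)}$, and then invoking the adjugate-matrix trick: multiplication by the determinant is nullhomotopic on any two-term complex of locally free modules. This is what you are gesturing at, but it is worth saying explicitly.

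Where your proposal diverges from the paper, and where the gaps lie, is in the uniqueness and pointwise-formula parts. The paper does not treat these separately: it proves the pointwise formula for \emph{any} $C_m$ with $C_m\equiv\Ker F^m\bmod p^{1-\epsilon}$, and uniqueness then falls out for free since the formula characterizes $C_m$ by its functor of points. Your Hilbert-scheme/Grassmannian approach to uniqueness is not wrong in spirit, but you would need to control $\Omega^1$ of that parameter scheme, which is extra work you never carry out.

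For the pointwise formula itself, your converse argument is where the real idea is missing. You try to work inside $A[p^m]$ and invoke vague control on its differentials ``near $C_m$''. The paper instead passes to the quotient $H=A[p^m]/C_m$: if $s\equiv 0\bmod p^{(1-\epsilon)/p^m}$, then (running the easy direction backwards) $s\bmod p^{1-\epsilon}$ lies in $\Ker F^m=C_m\bmod p^{1-\epsilon}$, so the image $t\in H(R')$ satisfies $t\equiv 0\bmod p^{1-\epsilon}$. Now the crucial point is that $H$ and $H_1$ agree modulo $p^{1-\epsilon}$, hence $\Omega^1_{H/R}$ is killed by $p^\epsilon$. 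Lemma~\ref{LiftHomEqual} with $\delta=1-\epsilon>\epsilon$ then forces $t=0$, i.e.\ $s\in C_m(R')$. This use of the quotient $H$ (whose differentials are small, unlike those of $A[p^m]$) is the step that makes the constants work out cleanly and uniformly.
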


\begin{proof} Let $H_1=\Ker(V^m:A_1^{(p^m)}\to A_1)$, which is a finite locally free group scheme over $R/p$. Then one has a short exact sequence
\[
0\to \Ker F^m\to A_1[p^m]\to H_1\to 0\ .
\]
Moreover, the definition of $H_1$ and the fact that the Lie complex transforms short exact sequences into distinguished triangles compute the Lie complex of $H_1$,
\[
\check{\ell}_{H_1} = (\Lie A_1^{(p^m)}\to \Lie A_1)\ .
\]
Using the definition of the Hasse invariant, the determinant of this map is easily computed to be
\[
\Ha(A_1/(\Spec R/p))^{\frac{p^m-1}{p-1}}\in \omega_{A_1/(\Spec R/p)}^{\otimes (p^m-1)}\ ,
\]
by writing it as a composite of $m$ Verschiebung maps, contributing $\Ha(A_1/(\Spec R/p))^{p^i}$, $i=0,\ldots,m-1$. As multiplication by the determinant is null-homotopic (using the adjugate matrix), our assumptions imply that multiplication by $p^\epsilon$ is homotopic to zero on $\check{\ell}_{H_1}$. Thus, existence of $C_m$ follows from Corollary \ref{LiftGroup}, with $G=A[p^m]$.

For uniqueness, it is enough to prove that the final formula holds for any $C_m\subset A[p^m]$ with $C_m = \Ker F^m$ modulo $p^{1-\epsilon}$. We may assume $R^\prime = R$. Certainly, if $s\in C_m(R)$, then $s_{1-\epsilon}\in C_m(R/p^{1-\epsilon})$ lies in the kernel of $F^m$, as $C_m = \Ker F^m$ modulo $p^{1-\epsilon}$. As the action of $F$ is given by the action on $R/p^{1-\epsilon}$, this translates into the condition $s=0\in C_m(R/p^{(1-\epsilon)/p^m})$. Conversely, assume $s\in A[p^m](R)$ reduces to $0$ modulo $p^{(1-\epsilon)/p^m}$. Following the argument in reverse, we see that $s_{1-\epsilon}\in C_m(R/p^{1-\epsilon})\subset A[p^m](R/p^{1-\epsilon})$. Let $H=A[p^m]/C_m$. We see that the image $t\in H(R)$ of $s$ is $0$ modulo $p^{1-\epsilon}$. Also, $H$ and $H_1$ have the same reduction to $R/p^{1-\epsilon}$; in particular, $\Omega_{H/R}^1$ is killed by $p^\epsilon$. By Lemma \ref{LiftHomEqual}, we find that $t=0\in H(R)$, showing that $s\in C_m(R)$, as desired.
\end{proof}

\begin{definition}\label{DefCanSubgroup} Let $R$ be a $p$-adically complete flat $\Z_p^\cycl$-algebra. We say that an abelian scheme $A\to \Spec R$ has a weak canonical subgroup of level $m$ if $\Ha(A_1/\Spec (R/p))^{\frac{p^m-1}{p-1}}$ divides $p^\epsilon$ for some $\epsilon<\frac 12$. In that case, we call $C_m\subset A[p^m]$ the weak canonical subgroup of level $m$, where $C_m$ is the unique closed subgroup such that $C_m = \Ker F^m\mod p^{1-\epsilon}$.

If moreover $\Ha(A_1/\Spec (R/p))^{p^m}$ divides $p^\epsilon$, then we say that $C_m$ is a canonical subgroup.\footnote{For emphasis, we sometimes call it a strong canonical subgroup.}
\end{definition}

The definition of a (strong) canonical subgroup is made to ensure that the following basic properties are true.

\begin{prop}\label{PropCanSubgroup} Let $R$ be a $p$-adically complete flat $\Z_p^\cycl$-algebra, and let $A,B\to \Spec R$ be abelian schemes.
\begin{altenumerate}
\item[{\rm (i)}] If $A$ has a canonical subgroup $C_m\subset A[p^m]$ of level $m$, then it has a canonical subgroup $C_{m^\prime}\subset A[p^{m^\prime}]$ of any level $m^\prime\leq m$, and $C_{m^\prime}\subset C_m$.
\item[{\rm (ii)}] Let $f: A\to B$ be a morphism of abelian schemes. Assume that both $A$ and $B$ have canonical subgroups $C_m\subset A[p^m]$, $D_m\subset B[p^m]$ of level $m$. Then $C_m$ maps into $D_m$.
\item[{\rm (iii)}] Assume that $A$ has a canonical subgroup $C_{m_1}\subset A[p^{m_1}]$ of level $m_1$. Then $B=A/C_{m_1}$ has a canonical subgroup $D_{m_2}\subset B[p^{m_2}]$ of level $m_2$ if and only if $A$ has a canonical subgroup $C_m\subset A[p^m]$ of level $m=m_1+m_2$. In that case, there is a short exact sequence
\[
0\to C_{m_1}\to C_m\to D_{m_2}\to 0\ ,
\]
commuting with $0\to C_{m_1}\to A\to B\to 0$.
\item[{\rm (iv)}] Assume that $A$ has a canonical subgroup $C_m\subset A[p^m]$ of level $m$, and let $\bar{x}$ be a geometric point of $\Spec R[p^{-1}]$. Then $C_m(\bar{x})\cong (\Z/p^m\Z)^g$, where $g$ is the dimension of the abelian variety over $\bar{x}$.
\end{altenumerate}
\end{prop}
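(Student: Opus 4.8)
The plan is to deduce all four parts from Corollary \ref{CanSubgroup}, which gives two descriptions of the canonical subgroup of level $m$: it is the unique closed subgroup $C_m\subset A[p^m]$, flat over $R$, reducing to $\Ker F^m$ modulo $p^{1-\epsilon}$, and for every $p$-adically complete flat $\Z_p^\cycl$-algebra $R'$ over $R$ one has
\[
C_m(R')=\{s\in A[p^m](R')\mid s\equiv 0\bmod p^{(1-\epsilon)/p^m}\}.
\]
First I would record that $C_m$ is independent of the admissible choice of $\epsilon$: if the relevant power of $\Ha(A_1)$ divides $p^\epsilon$ then it divides $p^{\epsilon'}$ for every $\epsilon'\in[\epsilon,\tfrac12)$, and the mod-$p^{1-\epsilon}$ characterization together with its uniqueness survives enlarging $\epsilon$; hence when several abelian schemes or several levels are in play one may fix one common $\epsilon<\tfrac12$ admissible for all of them and read everything off the displayed formula with that $\epsilon$. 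I would also use throughout that an inclusion of subfunctors of some $A[p^k]$ yields an inclusion of closed subgroup schemes upon evaluation on universal points, which is legitimate since coordinate rings of these finite flat group schemes are again $p$-adically complete flat $\Z_p^\cycl$-algebras.

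Parts (i), (ii), (iv) then become routine. For (i): $\Ha(A_1)^{p^{m'}}\mid p^\epsilon$ follows from $\Ha(A_1)^{p^m}\mid p^\epsilon$ as $p^{m'}\le p^m$, so $C_{m'}$ exists, and the displayed formula gives $C_{m'}(R')\subset C_m(R')$ since $A[p^{m'}]\subset A[p^m]$ and $(1-\epsilon)/p^{m'}\ge(1-\epsilon)/p^m$. For (ii), with a common $\epsilon$ any $s\in C_m(R')$ has $f(s)\in B[p^m](R')$ (as $f$ is a homomorphism) and $f(s)\equiv f(0)=0\bmod p^{(1-\epsilon)/p^m}$ by functoriality of reduction, so $f(s)\in D_m(R')$. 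For (iv), pick a complete algebraically closed $C\supset\Q_p^\cycl$ with ring of integers $\OO_C$ through which $\bar x$ factors; then $C_m$ is étale over $R[p^{-1}]$ (a closed subgroup of the étale $A[p^m]$ in characteristic $0$), so $C_m(\OO_C)=C_m(C)$ is a finite abelian group of order $p^{gm}$ (the order of $\Ker F^m$), and its $p^{m'}$-torsion equals $C_{m'}(\OO_C)$ because a point $s\in A[p^{m'}](\OO_C)$ with $v(s)\ge(1-\epsilon)/p^m$ reduces modulo $p^{1-\epsilon}$ into $\Ker F^m\cap A[p^{m'}]=\Ker F^{m'}$ (standard properties of the Frobenius flag) and therefore has $v(s)\ge(1-\epsilon)/p^{m'}$; a finite abelian $p$-group of order $p^{gm}$ whose $p$-torsion has order $p^g$ is isomorphic to $(\Z/p^m\Z)^g$.

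The substantive part is (iii), which I would split as follows. The equivalence of the two existence assertions is a Hasse-invariant computation: since $C_{m_1}=\Ker F^{m_1}_{A_1}\bmod p^{1-\epsilon}$, the reduction $B_1$ agrees modulo $p^{1-\epsilon}$ with $A_1^{(p^{m_1})}=A_1/\Ker F^{m_1}$, so $\Ha(B_1)\equiv\Ha(A_1)^{p^{m_1}}\bmod p^{1-\epsilon}$ (Hasse invariant of a Frobenius twist), hence $\Ha(B_1)^{p^{m_2}}\equiv\Ha(A_1)^{p^m}\bmod p^{1-\epsilon}$; since $\epsilon<1-\epsilon$ and $p^c$ is nilpotent in $R/p$ for every $c>0$, divisibility of $p^\epsilon$ by either side is equivalent, which is exactly the claim that $B$ has a canonical subgroup of level $m_2$ iff $A$ has one of level $m=m_1+m_2$. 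Assuming both exist for a common $\epsilon$: by (i) $C_{m_1}\subset C_m$; over $R'/p^{1-\epsilon}$, which is of characteristic $p$, multiplication by $p$ on the formal group factors through relative Frobenius, so an $s\in C_m(R')$, which reduces into $\Ker F^m$, has $[p^{m_2}]s\equiv 0\bmod p^{(1-\epsilon)/p^{m_1}}$, and combined with $p^{m_1}([p^{m_2}]s)=0$ this puts $[p^{m_2}]s$ in $C_{m_1}(R')$; thus $[p^{m_2}]C_m\subset C_{m_1}$, so the image $\pi(C_m)=C_m/C_{m_1}$ of $C_m$ in $B=A/C_{m_1}$ is a finite flat closed subgroup of order $p^{gm_2}$ lying in $B[p^{m_2}]$. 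The same estimate applied to $\pi$, which equals $F^{m_1}\colon A\to A^{(p^{m_1})}=B$ modulo $p^{1-\epsilon}$, improves "$s\equiv 0\bmod p^{(1-\epsilon)/p^m}$" to "$\pi(s)\equiv 0\bmod p^{(1-\epsilon)/p^{m_2}}$", so $\pi(C_m)\subset D_{m_2}$ as closed subschemes of $B$; as both are finite flat of order $p^{gm_2}$ they coincide, and $0\to C_{m_1}\to C_m\xrightarrow{\pi}D_{m_2}\to 0$ is the required sequence, compatible by construction with $0\to C_{m_1}\to A\to B\to 0$.

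The hard part will be (iii): making precise and proving the two estimates — that $[p^{m_2}]$ and $F^{m_1}$ improve the relevant $p$-adic congruences on $C_m$ — from the characteristic-$p$ description of $[p]=V\circ F$ modulo $p^{1-\epsilon}$, and carefully managing the Hasse-invariant divisibilities, including the point that the isomorphism identifying the line bundles carrying $\Ha(B_1)$ and $\Ha(A_1)^{p^{m_1}}$ is only defined modulo $p^{1-\epsilon}$. Parts (i), (ii), (iv) are formal consequences of Corollary \ref{CanSubgroup}.
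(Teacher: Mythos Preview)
Your treatment of (i) and (ii) matches the paper's exactly: both are immediate from the displayed formula in Corollary~\ref{CanSubgroup}.

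For (iii), your route is a genuine alternative to the paper's. The paper argues directly that $C_m$ maps into $D_{m_2}$ by projecting a section $s\in C_m(R)$ to $H=B[p^{m_2}]/D_{m_2}$ and killing it with Lemma~\ref{LiftHomEqual}. You instead work entirely with the congruence description, improving $s\equiv 0\bmod p^{(1-\epsilon)/p^m}$ to $\pi(s)\equiv 0\bmod p^{(1-\epsilon)/p^{m_2}}$ by using that $\pi\equiv F^{m_1}\bmod p^{1-\epsilon}$ and that $F^{m_2}(F^{m_1}s)=F^ms=0$. This works cleanly once written out (your sketch is essentially complete), and avoids the rigidity lemma. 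Your verification that $[p^{m_2}]C_m\subset C_{m_1}$ via $F^{m_1}\circ[p^{m_2}]=[p^{m_2}]\circ F^{m_1}$ and $\Ker F^{m_2}\subset A[p^{m_2}]$ is likewise correct. So (iii) is not the hard part.

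The real gap is in (iv), which you call a ``formal consequence''. Your claimed identity $\Ker F^m\cap A_1[p^{m'}]=\Ker F^{m'}$ is \emph{not} a standard property of the Frobenius flag: for a supersingular elliptic curve over an algebraically closed field of characteristic $p$, one has $\Ker F=\Ker V=\alpha_p$, hence $p$ factors through $F^2$, so $\Ker F^2=E[p]\supsetneq\Ker F$. The Hasse-invariant bound does not prevent the special fibre of $A$ over $\OO_C$ from being supersingular, so the scheme-theoretic identity genuinely fails in your setting. To salvage your argument one must show that $y:=F^{m'}(s\bmod p^{1-\epsilon})$, which lies in $\Ker F^{m-m'}\cap\Ker V^{m'}$, is zero. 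The Frobenius part gives $y\equiv 0\bmod p^{(1-\epsilon)/p^{m-m'}}$, but the Verschiebung part only says $y$ lies in a group scheme whose $\Omega^1$ is killed by a small power of $p$; concluding $y=0$ then requires precisely Lemma~\ref{LiftHomEqual}. The paper's proof of (iv) uses Lemma~\ref{LiftHomEqual} from the outset (applied to $H=A[p]/C_1$ after a contradiction setup via elementary divisors), and this rigidity input is not optional.
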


\begin{proof}
\begin{altenumerate}
\item[{\rm (i)}] This follows directly from the displayed formula in Corollary \ref{CanSubgroup}.
\item[{\rm (ii)}] This follows directly from the displayed formula in Corollary \ref{CanSubgroup}.
\item[{\rm (iii)}] Observe that
\[
\Ha(B_{1-\epsilon}/\Spec(R/p^{1-\epsilon})) = \Ha(A_{1-\epsilon}/\Spec(R/p^{1-\epsilon}))^{p^{m_1}}\ .
\]
This implies that $B$ has a canonical subgroup of level $m_2$ if and only if $A$ has a canonical subgroup of level $m$. In order to verify the short exact sequence, it suffices to check that $C_m$ maps into $D_{m_2}$. After base change to the global sections of $C_m$, it suffices to show that $C_m(R)$ maps into $D_{m_2}(R)$. Take a section $s\in C_m(R)$. Look at the short exact sequence $0\to D_{m_2}\to B[p^{m_2}]\to H\to 0$. We need to check that $s$ maps to $0$ in $H(R)$. By Lemma \ref{LiftHomEqual}, it is enough to check that $s$ maps to $0$ in $H(R/p^{1-\epsilon})$. But modulo $p^{1-\epsilon}$, we have the short exact sequence
\[
0\to \Ker F_A^{m_1}\to \Ker F_A^m\to \Ker F_B^{m_2}\to 0\ .
\]
\item[{\rm (iv)}] First, we reduce to the case that $R=\OO_K$ is the ring of integers in an algebraically closed complete nonarchimedean field $K$ of mixed characteristic.

The subset of $\Spec R[p^{-1}]$ where the statement is true is open and closed. Assume that there is a point $x\in \Spec R[p^{-1}]$ with a geometric point $\bar{x}$ above $x$ where the statement is not true, and fix a maximal ideal $x^\prime\in \Spec R$ specializing $x$; in particular, $x^\prime$ lies in $\Spec R/p$. Take a valuation $v$ on $R$ with support $x$, and such that the local ring $R_{x^\prime}$ is contained in the valuation subring of $v$. Applying the specialization mapping from \cite[Proposition 2.6]{HuberContVal}, we get a continuous valuation $v^\prime$ on $R$, such that its support $y\in \Spec R$ is still of characteristic $0$, and is a specialization of $x$. It follows that the desired statement is also false at $y$. This gives a map $R\to \OO_K$ to the ring of integers $\OO_K$ of a complete nonarchimedean field $K$ of mixed characteristic. We may assume that $K$ is algebraically complete, and replace $R$ by $\OO_K$.

Assume that $C_m(K)\not\cong (\Z/p^m\Z)^g$. As $C_m(K)\subset A[p^m](K)\cong (\Z/p^m\Z)^{2g}$ and $C_m(K)$ has $p^{mg}$ elements, it follows from a consideration of elementary divisors that $(C_m\cap A[p])(K)$ has more than $p^g$ elements; in particular, there is an element $s\in (C_m\cap A[p])(K)$ such that $s\not\in C_1(K)$. By projection, this gives a nonzero section $t$ of $H=A[p]/C_1$. Take $\epsilon<\frac 12$ as in Definition \ref{DefCanSubgroup}. As $C_m$ is finite, $s$ extends to a section $s\in C_m(\OO_K)$, giving a section $s_{1-\epsilon}\in C_m(\OO_K/p^{1-\epsilon})$; similarly, we have $t\in H(\OO_K)$. But modulo $p^{1-\epsilon}$, $C_m = \Ker F^m$, so it follows that
\[
F^m(s_{1-\epsilon}) = 0\in C_m(\OO_K/p^{1-\epsilon})\ .
\]
The action of $F$ is given by the action on $\OO_K/p^{1-\epsilon}$, so we see that
\[
s_{(1-\epsilon)/p^m} = 0\in C_m(\OO_K/p^{(1-\epsilon)/p^m})\ .
\]
By projection, this gives $t_{(1-\epsilon)/p^m} = 0\in H(\OO_K/p^{(1-\epsilon)/p^m})$. Now we use Lemma \ref{LiftHomEqual}, with $\delta = (1-\epsilon)/p^m$ and $\epsilon^\prime = \epsilon/p^m$ (which works for the group $H$). Note that
\[
\epsilon^\prime = \epsilon/p^m < \delta = (1-\epsilon)/p^m\ .
\]
It follows that $t=0\in H(\OO_K)$, which contradicts $s\not\in C_1(\OO_K)$, as $0\to C_1\to A[p]\to H\to 0$ is exact.
\end{altenumerate}
\end{proof}

Moreover, one has compatibility with duality and products, but we will not need this.

\subsection{Canonical Frobenius lifts} In this section, we will apply repeatedly Hartog's extension principle. Let us first recall what we will refer to as `classical algebraic Hartog' below.

\begin{prop}\label{ClassicalHartog} Let $R$ be normal and noetherian, and let $Z\subset \Spec R$ be a subset everywhere of codimension $\geq 2$. Then
\[
R = H^0(\Spec R\setminus Z,\OO_{\Spec R})\ .
\]
\end{prop}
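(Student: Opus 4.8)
The plan is to reduce the statement to the classical fact that a normal Noetherian domain is the intersection, inside its fraction field, of its localizations at the height-one primes. I will also assume throughout that $Z$ is closed, which is the only case needed in the applications.

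First I would reduce to the case that $R$ is a domain. Since $R$ is normal and Noetherian it is a finite direct product $R = \prod_i R_i$ of normal Noetherian domains, the factors corresponding to the connected (equivalently, irreducible) components of $\Spec R$; the closed set $Z$ decomposes accordingly into closed subsets $Z_i \subset \Spec R_i$, each everywhere of codimension $\geq 2$, and both sides of the asserted identity are compatible with this product decomposition. So I may assume $R$ is a normal Noetherian domain with fraction field $K$. I may further assume $\dim R \geq 2$, since otherwise there is no prime of height $\geq 2$, hence $Z = \emptyset$ and the claim is trivial.

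Now $Z$ is a proper closed subset of the irreducible scheme $\Spec R$ (it omits the generic point), so $U := \Spec R \setminus Z$ is a nonempty open, hence integral, subscheme with the same function field $K$, and therefore $R \subseteq H^0(U,\OO_U) \subseteq K$. The key point is that every prime $\mathfrak p \subset R$ of height one lies in $U$, by the codimension hypothesis on $Z$; consequently every global section of $\OO_U$ has a germ in $\OO_{U,\mathfrak p} = R_{\mathfrak p}$, which gives
\[
H^0(U,\OO_U) \subseteq \bigcap_{\mathrm{ht}\,\mathfrak p = 1} R_{\mathfrak p}
\]
inside $K$.

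Finally I would invoke that a normal Noetherian domain is a Krull domain, so that $R = \bigcap_{\mathrm{ht}\,\mathfrak p = 1} R_{\mathfrak p}$, each $R_{\mathfrak p}$ being a discrete valuation ring; together with the inclusion above and the obvious reverse inclusion this yields $H^0(U,\OO_U) = R$. I do not expect any real obstacle: the only substantial input is this standard description of normal Noetherian domains, which can be quoted from any commutative-algebra reference, or reproved in a few lines via the ideal of denominators --- if $x \in K$ lay in every $R_{\mathfrak p}$ with $\mathrm{ht}\,\mathfrak p = 1$ but not in $R$, then the proper ideal $\{r \in R : rx \in R\}$ would, by the $(S_2)$ property of the normal ring $R$, admit an associated prime of height $\leq 1$, contradicting the membership $x \in R_{\mathfrak p}$ at that prime.
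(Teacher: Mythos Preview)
Your proof is correct. It differs from the paper's argument: the paper invokes Serre's criterion to get $\operatorname{depth}\OO_{\Spec R,x}\geq 2$ for $x\in Z$, then cites SGA2 for the vanishing of local cohomology in degrees $\leq 1$ and the resulting extension of sections. You instead reduce to a domain and use the Krull-domain description $R=\bigcap_{\mathrm{ht}\,\mathfrak p=1}R_{\mathfrak p}$, together with the observation that every height-one prime lies in $U$.

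Your route is more elementary and essentially self-contained; the brief $S_2$ argument you sketch for the intersection formula is valid (an associated prime $\mathfrak p$ of $R/I$ with $\mathrm{ht}\,\mathfrak p\geq 2$ would give, after localizing, an element $y\in K\setminus R_{\mathfrak p}$ with $\mathfrak p y\subseteq R_{\mathfrak p}$, which a length-two regular sequence in $\mathfrak p R_{\mathfrak p}$ forces into $R_{\mathfrak p}$). The paper's approach, on the other hand, isolates the depth-$\geq 2$ mechanism explicitly, which is what the author actually reuses later (e.g.\ in the proof of Lemma~\ref{HartogPrinciple}, where only an $S_2$-type depth bound is available and normality of the relevant ring fails). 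So your argument is a perfectly good proof of the proposition as stated, while the paper's framing better anticipates the subsequent applications. Your remark that one may take $Z$ closed is harmless: replacing $Z$ by its closure preserves the codimension hypothesis and only shrinks the open complement.
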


\begin{proof} By Serre's criterion, the depth of $\OO_{\Spec R,x}$ is at least $2$ for all $x\in Z$. This implies vanishing of local cohomology groups in degrees $\leq 1$ by SGA2 III Exemple 3.4. This, in turn, implies the desired extension by SGA2 I Proposition 2.13.
\end{proof}

In particular, this applies to an open subset $\Spec R$ of the minimal compactification of the Siegel moduli space, and its boundary $Z$, if $g\geq 2$. However, we will need to work with an admissible blow-up of the minimal compactification, corresponding to a strict neighborhood of the ordinary locus. Thus, we will also need a slightly nonstandard version of Hartog's extension principle, given by the following lemma, where in the application $f$ will be a lift of the Hasse invariant.

\begin{lem}\label{HartogPrinciple} Let $R$ be a topologically finitely generated flat $p$-adically complete $\Z_p$-algebra such that $\overline{R} = R/p$ is normal. Let $f\in R$ be an element such that $\overline{f}\in \overline{R}$ is not a zero divisor. Take some $0\leq \epsilon\leq 1$, and consider the algebra
\[
S=(R\hat\otimes_{\Z_p} \Z_p^\cycl)\langle u\rangle/(fu-p^\epsilon)\ .
\]
Then $S$ is a flat $p$-adically complete $\Z_p^\cycl$-algebra, and $u$ is not a zero divisor. Fix a closed subset $Y\subset \Spec \overline{R}$ of codimension $\geq 2$; let $Z\subset \Spf S$ be the preimage of $Y$, and let $U\subset \Spf S$ be the complement of $Z$. Then
\[
H^0(U,\OO_{\Spf S}) = H^0(\Spf S,\OO_{\Spf S}) = S\ .
\]
\end{lem}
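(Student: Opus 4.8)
The plan is to establish the algebraic assertions about $S$ first, then the Hebbarkeitssatz. Set $A=R\hat\otimes_{\Z_p}\Z_p^\cycl$; this is $p$-adically complete and $\Z_p^\cycl$-flat, with $A/p=\overline R\otimes_{\F_p}(\Z_p^\cycl/p)$. Since $\Z_p^\cycl/p$ is flat over $\F_p$, the hypothesis on $\overline f$ makes $\overline f$ a nonzerodivisor on $A/p$, hence $f$ a nonzerodivisor on $A$; and $p^\epsilon$ is a nonzerodivisor on $A$ and on $A\langle u\rangle$ by $\Z_p^\cycl$-flatness, so a coefficient comparison shows $fu-p^\epsilon$ is a nonzerodivisor on $A\langle u\rangle$. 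The assignment $u\mapsto p^\epsilon/f$ identifies $A[u]/(fu-p^\epsilon)$ with the $p$-torsion free ring $A[p^\epsilon/f]\subset A[1/f]$, and since $p$-adic completion commutes with quotients by finitely generated ideals, $S$ is the $p$-adic completion of $A[p^\epsilon/f]$; hence $S$ is $p$-adically complete and $p$-torsion free, i.e. flat over the valuation ring $\Z_p^\cycl$. Finally $p^\epsilon=fu$ is a unit in $S[1/p]$, so $f$ and $u$ are units there; as $S$ is $p$-torsion free, $S\hookrightarrow S[1/p]$, whence $f$ and $u$ are nonzerodivisors on $S$.

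For the Hebbarkeitssatz, write $X_n=\Spec(S/p^nS)$ and $U_n=X_n\setminus Z$. Since $\OO_{\Spf S}=\varprojlim_n\OO_{X_n}$, one has $H^0(\Spf S,\OO_{\Spf S})=\varprojlim_n S/p^nS=S$ by $p$-adic completeness, and $H^0(U,\OO_{\Spf S})=\varprojlim_n H^0(U_n,\OO_{X_n})$; so the point is to identify this last limit with $S$. Viewing $S/p^nS$ as an $\overline R$-module via $\overline R\to S/pS\to S/p^nS$, the closed set $Z$ is $V(I_Y\cdot S/p^nS)$, and there is an exact sequence
\[
0\to H^0_{I_Y}(S/p^nS)\to S/p^nS\to H^0(U_n,\OO_{X_n})\to H^1_{I_Y}(S/p^nS)\to 0\ ,
\]
with $H^\bullet_{I_Y}$ local cohomology. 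Hence the claim reduces to $H^0_{I_Y}(S/p^nS)=0$ for all $n$, together with $\varprojlim_n H^1_{I_Y}(S/p^nS)=0$ along the natural reduction maps.

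The vanishing $H^0_{I_Y}(S/p^nS)=0$ reduces, by devissage along $0\to S/pS\xrightarrow{p^{n-1}}S/p^nS\to S/p^{n-1}S\to 0$ (valid as $S$ is $p$-torsion free), to $H^0_{I_Y}(S/pS)=0$; this follows from the presentation
\[
0\to(\overline R\otimes_{\F_p}\Z_p^\cycl/p)[u]\xrightarrow{\ \overline f u-\overline{p^\epsilon}\ }(\overline R\otimes_{\F_p}\Z_p^\cycl/p)[u]\to S/pS\to 0
\]
together with $H^0_{I_Y}(\overline R)=H^1_{I_Y}(\overline R)=0$ — which is Proposition \ref{ClassicalHartog}, valid because $\overline R$ is normal, hence $S_2$, and $Y$ has codimension $\ge 2$ — and flatness of $\overline R\otimes_{\F_p}\Z_p^\cycl/p$ over $\overline R$. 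The substantial point is $\varprojlim_n H^1_{I_Y}(S/p^nS)=0$: one should not hope for this termwise, since a hypersurface section $\overline R/\overline f$ of the merely $S_2$ ring $\overline R$ need not be $S_2$ along $Y$, so the individual groups $H^1_{I_Y}(S/p^nS)$ can be nonzero and the obstruction classes must be killed only in the limit. The plan is to reduce to the Noetherian case — writing $\Z_p^\cycl/p^n=\varinjlim_N \Z_p[\zeta_{p^N}]/p^n$ exhibits $S/p^nS$ as a filtered colimit of Noetherian $\overline R$-algebras of finite type, so that $H^\bullet_{I_Y}(S/p^nS)$ is the colimit of the corresponding groups — and then, at each finite level, to use the identity $p^\epsilon=fu$ (with $f$ a nonzerodivisor on $S$), classical Hartog for $\overline R$, and the injectivity statement $H^0_{I_Y}(\overline R/\overline f)=0$ (the depth bound $\ge 2-1=1$, as used in the proof of Lemma \ref{GoodTriple}), to show that classes coming from sufficiently deep levels vanish. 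Passing to the limit then gives $\varprojlim_n H^1_{I_Y}(S/p^nS)=0$, hence $H^0(U_n,\OO_{X_n})=S/p^nS$ compatibly in $n$, and therefore $H^0(U,\OO_{\Spf S})=\varprojlim_n S/p^nS=S$.

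I expect the genuine obstacle to be precisely this inverse-limit vanishing: unlike in Proposition \ref{ClassicalHartog} there is no single normal scheme to appeal to, the fibre $S/pS$ is non-reduced, and one must use the arithmetic of the relation $fu=p^\epsilon$ to see that the obstruction classes at finite $p$-power level disappear in the limit over $n$.
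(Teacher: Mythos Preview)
Your reduction to local cohomology is sound, and you correctly identify the crux as $\varprojlim_n H^1_{I_Y}(S/p^nS)=0$. But you do not prove this; your ``plan'' invokes the relation $fu=p^\epsilon$, classical Hartog for $\overline R$, and the depth bound $H^0_{I_Y}(\overline R/\overline f)=0$, without saying how these combine to kill compatible families of $H^1$-classes. The depth bound is only a statement about $H^0$, and the relation $fu=p^\epsilon$ by itself gives no obvious control on the transition maps $H^1_{I_Y}(S/p^{N}S)\to H^1_{I_Y}(S/p^nS)$. This is a genuine gap. (There is also a slip near the end: $\varprojlim_n H^1_{I_Y}(S/p^nS)=0$ does \emph{not} give $H^0(U_n,\OO_{X_n})=S/p^nS$ levelwise---only the equality of inverse limits, which is all you need.)

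The paper takes a different route that sidesteps the inverse-limit problem entirely. Given $g\in H^0(U,\OO_{\Spf S})$, it passes to the $u$-adic completion $\hat S_u$: since $u\mid p^\epsilon$ in $S$, the ring $\hat S_u$ is genuinely $u$-adic, and its special fibre $S/uS\cong R_\epsilon=(R\hat\otimes\Z_p^\cycl)/p^\epsilon$ has the same underlying space as $\Spec\overline R$. Now $R_\epsilon$ is flat over $\overline R$, so $H^0_{I_Y}(R_\epsilon)=H^1_{I_Y}(R_\epsilon)=0$, and hence classical Hartog holds for every $S/u^mS$ (an iterated extension of copies of $R_\epsilon$); this yields an extension $\hat g_u\in\hat S_u$ directly. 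The remaining step is to show $\hat g_u\in S$, and this is done by reducing modulo $p^\epsilon$ and using the scheme-theoretic decomposition of $\Spec S_\epsilon$ into the section $\{u=0\}\cong\Spec R_\epsilon$ and the component $W=V(\overline f)\times\A^1_{\Z_p^\cycl/p^\epsilon}$. On $W$ one works coefficient-by-coefficient in $u$; your depth bound $H^0_{I_Y}(\overline R/\overline f)=0$ is exactly what is used here for injectivity, while the $u$-adic completion already in hand forces each coefficient into $H^0(V(\overline f),\OO)\otimes\Z_p^\cycl/p^\epsilon$.

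So the missing idea in your approach is the change of topology: completing $u$-adically rather than $p$-adically turns the extension problem into classical Hartog on a ring built flatly from the normal ring $\overline R$, after which a concrete mod-$p^\epsilon$ argument descends from $\hat S_u$ back to $S$.
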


\begin{rem} It may be helpful to illustrate how this relates to the classical theorem of Hartog over $\C$. In that case, inside the $2$-dimensional open complex unit disc $D = \{ (z_1,z_2) \mid |z_1| < 1, |z_2| < 1 \}$, consider the open subset
\[
U = \{ (z_1,z_2)\in D \mid 1 - \epsilon < |z_1| < 1\ \mathrm{or}\ |z_2| < \epsilon \}\ .
\]
Then Hartog's theorem states that all holomorphic functions on $U$ extend uniquely to $D$. This is easily seen to be equivalent to the following statement: Let $D^\prime\subset D$ be the locus $|z_2|>\epsilon/2$. Then holomorphic functions extend uniquely from $U\cap D^\prime$ to $D^\prime$.

In the lemma, take $R=\Z_p\langle T_1,T_2\rangle$, $f=T_2$ and $\epsilon = 1$, say. Moreover, take $Y=\{T_1=T_2=0\}\subset \Spec \overline{R}$. Then the generic fibre of $\Spf S$ is the rigid-analytic space of pairs $(t_1,t_2)$ with $|t_1|\leq 1$, and $|p|\leq |t_2|\leq 1$. The generic fibre of $U$ is given by
\[
U_\eta = \{ (t_1,t_2)\in (\Spf S)_\eta\mid |t_1| = 1\ \mathrm{or}\ |t_2| = 1\}\ .
\]
The lemma asserts that holomorphic functions extend uniquely from $U_\eta$ to $(\Spf S)_\eta$ (and the natural integral subalgebras are preserved). The relation to Hartog's principle becomes most clear when one sets $z_1 = t_1$ and $z_2 = \frac p{t_2}$. The analogue of $D^\prime\subset D$ is given by $(\Spf S)_\eta$, and the analogue of $U\cap D^\prime$ is given by $U_\eta$. (In rigid-analytic geometry, one replaces strict inequalities by nonstrict inequalities. Moreover, the inequalities $1-\epsilon < |z_1| < 1$ become contracted to $|z_1| =1$; similarly, $\epsilon/2<|z_2|<\epsilon$ becomes contracted to $|z_2|=p$.)
\end{rem}

\begin{proof} The first assertions are standard. First, we check that
\[
H^0(\Spf S,\OO_{\Spf S})\to H^0(U,\OO_{\Spf S})
\]
is injective. Since $H^0(\Spf S,\OO_{\Spf S})$ is $p$-adically separated, it suffices to prove the same for $\Spec S_\epsilon$, where $S_\epsilon = S/p^\epsilon$ (and analogous notation is used below). Let $W\subset \Spec S_\epsilon$ be the preimage of $V=V(\overline{f})\subset \Spec \overline{R}$; then $W=V\times_{\Spec \F_p} \mathbb{A}_{\Z_p^\cycl/p^\epsilon}^1$ is affine. There is a section $\Spec R_\epsilon\to \Spec S_\epsilon$ given by setting $u=0$. One has, since $S_\epsilon$ is the scheme-theoretical union of the loci $\{u=0\}$ and $\{\overline{f}=0\}$,
\[\begin{aligned}
H^0(U,\OO_{\Spec S_\epsilon}) = \{ (f_1,f_2)\mid &f_1\in H^0(U\cap \Spec R_\epsilon,\OO_{\Spec R_\epsilon}),\\
& f_2\in H^0(U\cap W,\OO_W), f_1 = f_2\in H^0(U\cap V,\OO_V)\otimes_{\F_p} \Z_p^\cycl/p^\epsilon \}\ .
\end{aligned}\]
One has a similar description for $H^0(\Spec S_\epsilon,\OO_{\Spec S_\epsilon})$. As $H^0(U\cap \Spec R_\epsilon,\OO_{\Spec R_\epsilon}) = R_\epsilon$ by the classical algebraic version of Hartog's extension principle (plus $\otimes_{\F_p} \Z_p^\cycl/p^\epsilon$), it is enough to prove that
\[
H^0(W,\OO_W)\to H^0(U\cap W,\OO_W)
\]
is injective. As both $W$ and $U$ come from $V$ and $U\cap V$ via a product with $\mathbb{A}_{\Z_p^\cycl/p^\epsilon}^1$, it is enough to prove that
\[
H^0(V,\OO_V)\to H^0(U\cap V,\OO_V)
\]
is injective. For any point $x\in Y$, the depth of $\OO_V$ at $x$ is $\geq 1$ (as $\overline{R}$ is normal, by Serre's criterion, $\overline{R}$ has depth $\geq 2$ at $x$, thus $\overline{R}/\overline{f}$ has depth $\geq 1$). As $Y$ contains the complement of $U\cap V$ in $V$, this gives the desired statement.

Now take any section $f\in H^0(U,\OO_{\Spf S})$. Let $\hat{S}_u$ be the $u$-adic completion of $S$. Clearly, $S$ injects into $\hat{S}_u$. Moreover, as $u$ divides $p^\epsilon$, the $(p,u)$-adic ring $\hat{S}_u$ is actually $u$-adic, and $f$ induces a section $\hat{f}_u\in H^0(U\cap \Spf \hat{S}_u,\OO_{\Spf \hat{S}_u})$. The special fibre of $\Spf \hat{S}_u$ is given by $\Spec R_\epsilon$. Thus, $U\cap \Spf \hat{S}_u = U\cap \Spec R_\epsilon\subset \Spec R_\epsilon$ is of codimension $\geq 2$, and the classical algebraic version of Hartog's extension principle ensures that
\[
\hat{f}_u\in H^0(\Spf \hat{S}_u,\OO_{\Spf \hat{S}_u}) = \hat{S}_u\ .
\]
It remains to see that $\hat{f}_u\in S$. It suffices to check modulo $p^\epsilon$ (by a successive approximation argument). Thus, $f$ induces a section $f_2\in H^0(U\cap W,\OO_W)$, and we have to check that it extends to $H^0(W,\OO_W)$. But
\[
H^0(U\cap W,\OO_W) = \bigoplus_{i\geq 0} (H^0(U\cap V,\OO_V)\otimes_{\F_p} \Z_p^\cycl/p^\epsilon) u^i\ ,
\]
and we have to check that all coefficients of $u^i$ lie in $H^0(V,\OO_V)\otimes_{\F_p} \Z_p^\cycl/p^\epsilon$. This can be checked after $u$-adic completion, finishing the proof.
\end{proof}

Now let us go back to Shimura varieties. Recall that $X=X_{g,K^p}$ over $\Z_{(p)}$ is the Siegel moduli space. We let $\mathfrak{X}$ be the formal scheme over $\Z_p^\cycl$ which is the $p$-adic completion of $X\otimes_{\Z_{(p)}} \Z_p^\cycl$. Occasionally, we will use that $\mathfrak{X}$ is already defined over $\Z_p$; we let $\mathfrak{X}_{\Z_p}$ denote the $p$-adic completion of $X$, so that $\mathfrak{X} = \mathfrak{X}_{\Z_p}\times_{\Spf \Z_p} \Spf \Z_p^\cycl$. The same applies for the minimal compactification. In general, formal schemes will be denoted by fractal letters.

We let $\mathcal{X}$ be the generic fibre of $\mathfrak{X}$ as an adic space over $\Q_p^\cycl$. Moreover, for any $K_p$ of the form $\Gamma_0(p^m)$, $\Gamma_1(p^m)$ or $\Gamma(p^m)$, we let $X_{K_p}^\ad$ be the adic space associated with the scheme $X_{K_p}\otimes_{\Q(\zeta_{p^m})} \Q_p^\cycl$, using the tautological element $\zeta_{p^m}\in \Q_p^\cycl$ and $\zeta_{p^m}\in \OO_{X_{K_p}}$, given by the symplectic similitude factor. Let $\mathcal{X}_{K_p}\subset X_{K_p}^\ad$ be the preimage of $\mathcal{X}\subset X^\ad$: This is the locus of good reduction. Again, similar notation applies for the minimal compactification. In general, adic spaces will be denoted by calligraphic letters.

We warn the reader that our notation conflicts slightly with the notation from the introduction. Indeed, $\mathcal{X}_{K_p}^\ast$ now denotes an adic space over $\Q_p^\cycl$. It is the base-change of the space $\mathcal{X}_{K_p}^\ast$, $K_p\in \{\Gamma_0(p^m),\Gamma_1(p^m),\Gamma(p^m)\}$, considered in the introduction along $\Q_p(\zeta_{p^m})\hookrightarrow \Q_p^\cycl$. As in the inverse limit over $m$, the difference goes away, we will forget about this difference.\footnote{A better solution would be to associate the spaces over $\Q_p^\cycl$ with $K_p\cap \Sp_{2g}(\Q_p)$ instead.}

Recall that the Hasse invariant defines a section $\Ha\in H^0(X_{\F_p},\omega^{\otimes (p-1)})$. The sheaf $\omega$ extends to the minimal compactification $X^\ast$. If $g\geq 2$, then classical Hartog implies that $\Ha$ extends to $\Ha\in H^0(X_{\F_p}^\ast, \omega^{\otimes (p-1)})$, as the boundary of the minimal compactification is of codimension $g$. For $g=1$, the Hasse invariant extends by direct inspection.

\begin{definition} Let $0\leq \epsilon<1$ such that there exists an element $p^\epsilon\in \Z_p^\cycl$ of $p$-adic valuation $\epsilon$. Let $\mathfrak{X}^\ast(\epsilon)\to \mathfrak{X}^\ast$ over $\Spf \Z_p^\cycl$ be the functor sending any $p$-adically complete flat $\Z_p^\cycl$-algebra $S$ to the set of pairs $(f,u)$, where $f: \Spf S\to \mathfrak{X}^\ast$ is a map, and $u\in H^0(\Spf S,f^\ast \omega^{\otimes (1-p)})$ is a section such that
\[
u\Ha(\bar{f}) = p^\epsilon\in S/p\ ,
\]
where $\bar{f} = f\otimes_{\Z_p} \F_p$, up to the following equivalence. Two pairs $(f,u)$, $(f^\prime,u^\prime)$ are equivalent if $f=f^\prime$ and there exists some $h\in S$ with $u^\prime = u(1+p^{1-\epsilon}h)$.
\end{definition}

The following lemma explains the choice of the equivalence relation: After choosing a lift $\widetilde{\Ha}$ of $\Ha$ locally, one parametrizes $\tilde{u}$ with $\tilde{u} \widetilde{\Ha} = p^\epsilon\in S$. The point of our definition is to make clear that $\mathfrak{X}^\ast(\epsilon)$ is independent of the choice of the local lift.

\begin{lem} The functor $\mathfrak{X}^\ast(\epsilon)$ is representable by a formal scheme which is flat over $\Z_p^\cycl$. Locally over an affine $\Spf (R\hat{\otimes}_{\Z_p} \Z_p^\cycl) \subset \mathfrak{X}^\ast$ (coming via scalar extension from $\Spf R\subset \mathfrak{X}^\ast_{\Z_p}$), choose a lift $\widetilde{\Ha}\in \omega^{\otimes(p-1)}$ of $\Ha\in \omega^{\otimes (p-1)}/p$. Then
\[
\mathfrak{X}^\ast(\epsilon)\times_{\mathfrak{X}^\ast} \Spf (R\hat{\otimes}_{\Z_p} \Z_p^\cycl) = \Spf ((R\hat{\otimes}_{\Z_p} \Z_p^\cycl)\langle u\rangle/(u \widetilde{\Ha} - p^\epsilon) )\ .
\]
\end{lem}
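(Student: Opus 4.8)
The plan is to build $\mathfrak{X}^\ast(\epsilon)$ by gluing local models, using crucially that the functor has been defined \emph{intrinsically} — with no reference to any local lift of the Hasse invariant — so that it is automatically a sheaf for the Zariski topology on ($p$-adically complete, flat) $\Z_p^\cycl$-algebras and it is enough to represent it over an affine cover. Concretely, I would fix an affine open $\Spf R\subset\mathfrak{X}^\ast_{\Z_p}$ over which $\omega^{\otimes(p-1)}$ is free, trivialize it by a section $\sigma$, write $\Ha=\bar h\sigma$ with $\bar h\in R/p$, choose a lift $\widetilde h$ of $\bar h$, put $\widetilde{\Ha}=\widetilde h\sigma$, and set $B=(R\hat\otimes_{\Z_p}\Z_p^\cycl)\langle u\rangle/(u\widetilde{\Ha}-p^\epsilon)$. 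The claim to establish is that the restriction of $\mathfrak{X}^\ast(\epsilon)$ over $\Spf(R\hat\otimes_{\Z_p}\Z_p^\cycl)$ is represented by $\Spf B$.

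To prove this I would unwind the definition. For a $p$-adically complete flat $\Z_p^\cycl$-algebra $S$ with a map from $R\hat\otimes_{\Z_p}\Z_p^\cycl$, an $S$-point of $\mathfrak{X}^\ast(\epsilon)$ over this map amounts, via $\sigma$, to an element $\tilde u\in S$ with $\tilde u\widetilde h\equiv p^\epsilon$ modulo $p$, taken up to the relation $\tilde u\sim\tilde u(1+p^{1-\epsilon}h)$; whereas $\Hom(B,S)$ is the set of $v\in S$ with $v\widetilde h=p^\epsilon$ on the nose. The heart of the argument is that the equivalence relation collapses each class to a unique such $v$. For existence: if $\tilde u\widetilde h=p^\epsilon+pc$, then dividing by $p$ (legitimate since $S$ is $\Z_p^\cycl$-flat) reduces the requirement $\tilde u(1+p^{1-\epsilon}h)\widetilde h=p^\epsilon$ to $h(1+p^{1-\epsilon}c)=-c$, which is solvable because $\epsilon<1$ makes $p^{1-\epsilon}$ topologically nilpotent, so $1+p^{1-\epsilon}c$ is a unit in the $p$-adically complete ring $S$. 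For uniqueness: if $v\widetilde h=v'\widetilde h=p^\epsilon$ with $v'=v(1+p^{1-\epsilon}h)$, then $0=v\,p^{1-\epsilon}h\,\widetilde h=p^{1-\epsilon}h\,(v\widetilde h)=ph$, hence $h=0$ by flatness of $S$. By Yoneda this also shows that $B$ is independent of $\sigma$ and of the lift $\widetilde h$ up to canonical isomorphism, and these isomorphisms are compatible on intersections, so the local $\Spf B$ glue to a formal scheme $\mathfrak{X}^\ast(\epsilon)\to\mathfrak{X}^\ast$, topologically of finite type over $\Z_p^\cycl$ because $R$ is topologically of finite type over $\Z_p$.

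It then remains to prove flatness over $\Z_p^\cycl$, which is local, and since $\Z_p^\cycl$ is a valuation ring (with value group $\Z[1/p]$) and $B$ is $p$-adically complete, this reduces to $B$ having no $p$-torsion. Here I would note that $A:=R\hat\otimes_{\Z_p}\Z_p^\cycl$ is $\Z_p^\cycl$-flat (base change of the $\Z_p$-flat algebra $R$, compatibly with $p$-completion), and that $\bar h$ is a non-zero-divisor in $A/p=(R/p)\otimes_{\F_p}\Z_p^\cycl/p$, because the Hasse invariant is a non-zero-divisor on the reduced special fibre $X^\ast_{\F_p}$ — its ordinary locus being dense in every component — and flatness over $\F_p$ propagates this. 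Consequently $\theta:=u\widetilde{\Ha}-p^\epsilon$ is a non-zero-divisor in $A\langle u\rangle$ (comparing $u$-adic coefficients: the coefficient of $u^0$ forces $p^\epsilon a_0=0$, hence $a_0=0$, and then inductively $a_n=0$), and its reduction $\bar\theta\in(A/p)[u]$ is a non-zero-divisor (compare top-degree coefficients, using that $\bar h$ is a non-zero-divisor). Applying the snake lemma to multiplication by $p$ on $0\to A\langle u\rangle\xrightarrow{\theta}A\langle u\rangle\to B\to0$, and using that $p$ acts injectively on $A\langle u\rangle$ while $\bar\theta$ acts injectively on $A\langle u\rangle/p$, one concludes that $p$ acts injectively on $B$, whence flatness; the explicit local description $\mathfrak{X}^\ast(\epsilon)\times_{\mathfrak{X}^\ast}\Spf(R\hat\otimes_{\Z_p}\Z_p^\cycl)=\Spf B$ is exactly what was shown above.

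The step I expect to be the real obstacle is the normalization in the middle paragraph: verifying that the (deliberately chosen, slightly artificial-looking) equivalence relation is precisely strong enough to rigidify the local models into honest affine formal schemes while simultaneously making them independent of the auxiliary lift of $\Ha$ — this is where the hypotheses $\epsilon<1$ and the flatness of the test algebras are both used in an essential way. The only other point requiring care is the bookkeeping ensuring the Hasse invariant is a non-zero-divisor modulo $p$, so that the snake-lemma argument for flatness of $B$ actually closes.
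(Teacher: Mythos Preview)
Your proposal is correct and follows essentially the same route as the paper. The existence/uniqueness argument for the normalized representative in each equivalence class is exactly the paper's argument (the paper writes it slightly more slickly as $u\widetilde{\Ha}=p^\epsilon(1+p^{1-\epsilon}h)$ and takes $\tilde{u}=u(1+p^{1-\epsilon}h)^{-1}$, but this is the same computation); for flatness the paper simply cites Lemma~\ref{HartogPrinciple}, whose proof calls this step ``standard'', whereas you spell out the snake-lemma argument explicitly --- a harmless difference in presentation.
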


In particular, $\mathfrak{X}^\ast(\epsilon)\to \mathfrak{X}^\ast$ is an admissible blow-up in the sense of Raynaud.

\begin{proof} By Lemma \ref{HartogPrinciple}, the right-hand side is flat, so it suffices to prove the equality. Clearly, the right-hand side represents the functor of pairs $(f,\tilde{u})$ with $\tilde{u}\in H^0(\Spf S,f^\ast \omega^{\otimes (1-p)})$ such that $\tilde{u} \widetilde{\Ha} = p^\epsilon\in S$. Any such pair gives a pair $(f,u)$. We need to show that conversely, for any pair $(f,u)$, there is a unique pair $(f,\tilde{u})$ equivalent to it, with $\tilde{u} \widetilde{\Ha} = p^\epsilon$.

Note that $u \widetilde{\Ha} = p^\epsilon + ph$ for some $h\in H^0(S,\OO_S)$. Thus, $u\widetilde{\Ha} = p^\epsilon (1 + p^{1-\epsilon} h)$, and setting $\tilde{u} = u (1 + p^{1-\epsilon} h)^{-1}$ gives an equivalent $(f,\tilde{u})$ with $\tilde{u}\widetilde{\Ha} = p^\epsilon$.

If $\tilde{u}^\prime = \tilde{u} (1 + p^{1-\epsilon}h^\prime)$ is equivalent to $\tilde{u}$, and also satisfies $\tilde{u}^\prime \widetilde{\Ha} = p^\epsilon$, then
\[
p^\epsilon = \tilde{u}^\prime \widetilde{\Ha} = \tilde{u}\widetilde{\Ha} + p^{1-\epsilon} h^\prime \tilde{u}\widetilde{\Ha} = p^\epsilon + p h^\prime\ .
\]
As we restricted the functor to flat $\Z_p^\cycl$-algebras, it follows that $h^\prime=0$, as desired.
\end{proof}

By pullback, we get formal schemes $\mathfrak{X}(\epsilon)$ and $\mathfrak{A}(\epsilon)$, where $A\to X$ denotes the universal abelian scheme. Note that on generic fibres, $\mathcal{X}(\epsilon)\subset \mathcal{X}$ is the open subset where $|\Ha|\geq |p|^\epsilon$; similarly for $\mathcal{X}^\ast(\epsilon)\subset \mathcal{X}^\ast$ and $\mathcal{A}(\epsilon)\subset \mathcal{A}$.\footnote{Again, it is understood that this condition is independent of the choice of a local lift of $\Ha$.}

For any formal scheme $\mathfrak{Y}$ over $\Z_p^\cycl$ and $a\in \Z_p^\cycl$, let $\mathfrak{Y}/a$ denote $\mathfrak{Y}\otimes_{\Z_p^\cycl} \Z_p^\cycl/a$.

\begin{lem} Let $0\leq \epsilon<1$. There is a natural commutative diagram
\begin{equation}\label{EqFrobModP}\xymatrix{
\mathfrak{A}(p^{-1}\epsilon)/p \ar[rrrr]^{F_{(\mathfrak{A}(p^{-1}\epsilon)/p)/(\Z_p^\cycl/p)}}\ar[d] &&&& (\mathfrak{A}(p^{-1}\epsilon)/p)^{(p)}\ar[rr]^\cong \ar[d] && \mathfrak{A}(\epsilon)/p\ar[d]\\
\mathfrak{X}(p^{-1}\epsilon)/p \ar[rrrr]^{F_{(\mathfrak{X}(p^{-1}\epsilon)/p)/(\Z_p^\cycl/p)}}\ar[d] &&&& (\mathfrak{X}(p^{-1}\epsilon)/p)^{(p)}\ar[rr]^\cong \ar[d] && \mathfrak{X}(\epsilon)/p\ar[d]\\
\mathfrak{X}^\ast(p^{-1}\epsilon)/p\ar[rrrr]^{F_{(\mathfrak{X}^\ast(p^{-1}\epsilon)/p)/(\Z_p^\cycl/p)}} &&&& (\mathfrak{X}^\ast(p^{-1}\epsilon)/p)^{(p)} \ar[rr]^\cong && \mathfrak{X}^\ast(\epsilon)/p
}\end{equation}
Here, $F$ denotes the relative Frobenius map.
\end{lem}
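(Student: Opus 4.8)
Write $\Lambda=\Z_p^\cycl/p$ and set $\delta=p^{-1}\epsilon$, so that $p\delta=\epsilon$. Since all of the asserted data live over $\Lambda$ and only involve reductions mod $p$, we work throughout with the honest $\Lambda$-schemes $\mathfrak{X}^\ast(\delta)/p$, $\mathfrak{X}(\delta)/p$, $\mathfrak{A}(\delta)/p$ and their $\epsilon$-counterparts, recalling that $\mathfrak{X}(\delta)=\mathfrak{X}^\ast(\delta)\times_{\mathfrak{X}^\ast}\mathfrak{X}$ and $\mathfrak{A}(\delta)=\mathfrak{X}^\ast(\delta)\times_{\mathfrak{X}^\ast}\mathfrak{A}$. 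The plan is to construct the three labelled isomorphisms in \eqref{EqFrobModP} and then obtain commutativity of all squares from functoriality of the relative Frobenius over $\Lambda$.

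First I would record the bookkeeping input: $\mathfrak{X}^\ast$, $\mathfrak{A}$, the Hodge bundle $\omega$ and its section $\Ha$ are all base-changed from $\Z_p$, hence mod $p$ from objects over $\F_p$ (namely $X^\ast_{\F_p}$, $A_{\F_p}$, etc.). Consequently the relative Frobenius twists over $\Lambda$ of $\mathfrak{X}^\ast/p$, $\mathfrak{X}/p$, $\mathfrak{A}/p$ are canonically these spaces themselves, and under these identifications the twist of the Hasse section is again $\Ha$. Since relative Frobenius is functorial and the twist $(-)^{(p/\Lambda)}=(-)\times_{\Lambda,\varphi}\Lambda$ commutes with fibre products over $\Lambda$, it suffices to identify $(\mathfrak{X}^\ast(\delta)/p)^{(p)}$ with $\mathfrak{X}^\ast(\epsilon)/p$ compatibly with the map to $(\mathfrak{X}^\ast/p)^{(p)}=\mathfrak{X}^\ast/p$; everything else follows by pullback along $\mathfrak{A}/p\to\mathfrak{X}/p\to\mathfrak{X}^\ast/p$.

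For this key step I would work in a local chart $\Spf R\subset\mathfrak{X}^\ast_{\Z_p}$, choose a lift $\widetilde{\Ha}$ of $\Ha$, and use the preceding lemma — together with the flatness it proves via Lemma \ref{HartogPrinciple} — to present $\mathfrak{X}^\ast(\delta)$ over $\Spf(R\hat{\otimes}_{\Z_p}\Z_p^\cycl)$ as the locus $u\widetilde{\Ha}=p^\delta$. Reducing mod $p$, $\mathfrak{X}^\ast(\delta)/p$ is $\Spec$ of $\OO_{\mathfrak{X}^\ast/p}[u]/(u\Ha-p^\delta)$, relative affine over $\mathfrak{X}^\ast/p$. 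Now apply $-\otimes_{\Lambda,\varphi}\Lambda$: the base $\mathfrak{X}^\ast/p$ and the coefficient $\Ha$ descend to $\F_p$ and are therefore untouched, while the scalar $p^\delta\in\Lambda$ is replaced by $\varphi(p^\delta)=p^{p\delta}=p^\epsilon$, so the twist is exactly the chart $\Spec$ of $\OO_{\mathfrak{X}^\ast/p}[u]/(u\Ha-p^\epsilon)$ of $\mathfrak{X}^\ast(\epsilon)/p$. As this construction is manifestly independent of the chart and of the choice of local lift $\widetilde{\Ha}$ (the latter by the equivalence relation built into the definition of $\mathfrak{X}^\ast(\epsilon)$), it glues to the desired canonical isomorphism $(\mathfrak{X}^\ast(\delta)/p)^{(p)}\cong\mathfrak{X}^\ast(\epsilon)/p$ over $\mathfrak{X}^\ast/p$.

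Pulling this back and using compatibility of the $\Lambda$-twist with fibre products yields
\[
(\mathfrak{X}(\delta)/p)^{(p)}\;\cong\;\mathfrak{X}^\ast(\epsilon)/p\times_{\mathfrak{X}^\ast/p}\mathfrak{X}/p\;=\;\mathfrak{X}(\epsilon)/p,\qquad (\mathfrak{A}(\delta)/p)^{(p)}\;\cong\;\mathfrak{A}(\epsilon)/p,
\]
compatibly with the structure morphisms, giving the three "$\cong$" arrows. Finally, since $\mathfrak{A}(\delta)/p\to\mathfrak{X}(\delta)/p\to\mathfrak{X}^\ast(\delta)/p$ are morphisms of $\Lambda$-schemes, functoriality of the relative Frobenius over $\Lambda$ produces the commuting squares relating them to their twists, and composing with the identifications above gives the full diagram \eqref{EqFrobModP}. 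I expect the only point requiring genuine care — rather than formal manipulation of relative Frobenius and base change — to be the identification of the twist of $\mathfrak{X}^\ast(\delta)/p$: one must keep track of the fact that every piece of data except the scalar $p^\delta$ is defined over $\F_p$, so that the sole effect of the Frobenius twist over $\Z_p^\cycl/p$ is to raise $p^\delta$ to its $p$-th power $p^\epsilon$.
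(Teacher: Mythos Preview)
Your proof is correct and follows essentially the same approach as the paper: both reduce to identifying $(\mathfrak{X}^\ast(p^{-1}\epsilon)/p)^{(p)}$ with $\mathfrak{X}^\ast(\epsilon)/p$ over the base identification $(\mathfrak{X}^\ast/p)^{(p)}\cong\mathfrak{X}^\ast/p$, then pull back to $\mathfrak{X}$ and $\mathfrak{A}$. The paper phrases the key step moduli-theoretically (``$\Ha$ gets raised to the $p$-th power under division by the kernel of Frobenius, so map $u'\mapsto u^{(p)}$''), whereas you phrase it formally (everything except the scalar $p^\delta$ descends to $\F_p$, so the twist replaces $p^\delta$ by $p^\epsilon$); these are two descriptions of the same computation.
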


\begin{proof} The diagram lives over a corresponding diagram for $\mathfrak{A}/p\to \mathfrak{X}/p\hookrightarrow \mathfrak{X}^\ast/p$. Then relative to that base diagram, one adds a section $u$ of $\omega^{\otimes (1-p)}$ such that $u \Ha = p^{p^{-1}\epsilon}$ on the left-hand side, and a section $u^\prime$ of $\omega^{\otimes (1-p)}$ such that $u^\prime \Ha = p^\epsilon$ on the right-hand side.\footnote{To check this, choose a local lift $\widetilde{\Ha}$ of $\Ha$. Then one parametrizes $\tilde{u}$ with $\tilde{u}\widetilde{\Ha} = p^\epsilon$ on the right-hand side, over any ring. As we are working modulo $p$, the choice of $\widetilde{\Ha}$ does not matter.} As $\Ha$ gets raised to the $p$-th power under division by the kernel of Frobenius, one can map $u^\prime$ to $u^{(p)}$ considered as a section of $(\omega^{\otimes (1-p)})^{(p)}$ over $(\mathfrak{X}^\ast(p^{-1}\epsilon)/p)^{(p)}$ (which pulls back to $u^p$ on the left-hand side); this gives the desired canonical maps.
\end{proof}

In this section, we prove the following result.

\begin{thm}\label{ExCanFrobLifts} Let $0\leq \epsilon < \frac 12$.
\begin{altenumerate}
\item[{\rm (i)}] There is a unique diagram
\begin{equation}\label{EqLiftFrob}\xymatrix{
\mathfrak{A}(p^{-1}\epsilon) \ar[rr]^{\tilde{F}_{\mathfrak{A}(p^{-1}\epsilon)}}\ar[d] && \mathfrak{A}(\epsilon)\ar[d]\\
\mathfrak{X}(p^{-1}\epsilon) \ar[rr]^{\tilde{F}_{\mathfrak{X}(p^{-1}\epsilon)}}\ar[d] && \mathfrak{X}(\epsilon)\ar[d]\\
\mathfrak{X}^\ast(p^{-1}\epsilon)\ar[rr]^{\tilde{F}_{\mathfrak{X}^\ast(p^{-1}\epsilon)}} && \mathfrak{X}^\ast(\epsilon)
}\end{equation}
that gets identified with \eqref{EqFrobModP} modulo $p^{1-\epsilon}$.
\item[{\rm (ii)}] For any $m\geq 0$, the abelian variety $\mathfrak{A}(p^{-m}\epsilon)\to \mathfrak{X}(p^{-m}\epsilon)$ admits a canonical subgroup $C_m\subset \mathfrak{A}(p^{-m}\epsilon)[p^m]$ of level $m$. This induces a morphism on the generic fibre
\[
\mathcal{X}(p^{-m}\epsilon)\to \mathcal{X}_{\Gamma_0(p^m)}
\]
given by the pair $(\mathcal{A}(p^{-m}\epsilon)/C_m,\mathcal{A}(p^{-m}\epsilon)[p^m]/C_m)$. This morphism extends uniquely to a morphism $\mathcal{X}^\ast(p^{-m}\epsilon)\to \mathcal{X}_{\Gamma_0(p^m)}^\ast$. These morphisms are open immersions. Moreover, for $m\geq 1$, the diagram
\[\xymatrix{
\mathcal{X}^\ast(p^{-m-1}\epsilon)\ar[r]\ar[d]^{(\tilde{F}_{\mathfrak{X}(p^{-m-1}\epsilon)})^\ad_\eta} & \mathcal{X}_{\Gamma_0(p^{m+1})}^\ast\ar[d]\\
\mathcal{X}^\ast(p^{-m}\epsilon)\ar[r] & \mathcal{X}_{\Gamma_0(p^m)}^\ast
}\]
is commutative and cartesian.
\item[{\rm (iii)}] There is a weak canonical subgroup $C\subset \mathfrak{A}(\epsilon)[p]$ of level $p$. Also write $C\subset \mathcal{A}(\epsilon)[p]$ for its generic fibre, and let $\mathcal{X}_{\Gamma_0(p)}(\epsilon)\to \mathcal{X}(\epsilon)$ be the pullback of $\mathcal{X}_{\Gamma_0(p)}\to \mathcal{X}$. Then the diagram
\[\xymatrix{
\mathcal{X}(p^{-1}\epsilon)\ar[r]\ar[d]^{(\tilde{F}_{\mathfrak{X}(p^{-1}\epsilon)})^\ad_\eta} & \mathcal{X}_{\Gamma_0(p)}(\epsilon)\ar[d]\\
\mathcal{X}(\epsilon)\ar[r]^{=} & \mathcal{X}(\epsilon)
}\]
is commutative, and identifies $\mathcal{X}(p^{-1}\epsilon)$ with the open and closed subset $\mathcal{X}_{\Gamma_0(p)}(\epsilon)_a\subset \mathcal{X}_{\Gamma_0(p)}(\epsilon)$ parametrizing those $D\subset \mathcal{A}(\epsilon)[p]$ with $D\cap C = \{0\}$.
\end{altenumerate}
\end{thm}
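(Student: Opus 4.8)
\textit{The plan} is to deduce everything from the canonical subgroup theory of the previous subsection, using $\tilde F$ to propagate estimates on the valuation of the Hasse invariant, and using Lemma~\ref{HartogPrinciple} together with classical algebraic Hartog (Proposition~\ref{ClassicalHartog}) to pass from the open Siegel space to its minimal compactification.

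\textit{Part (i).} Over $\mathfrak{X}(p^{-1}\epsilon)$ the local lift $\widetilde{\Ha}$ divides $p^{p^{-1}\epsilon}$ by construction, and $p^{-1}\epsilon\leq\epsilon<\tfrac12$; hence by Corollary~\ref{CanSubgroup} and Definition~\ref{DefCanSubgroup} the abelian scheme $\mathfrak{A}(p^{-1}\epsilon)$ carries a strong canonical subgroup $C_1\subset\mathfrak{A}(p^{-1}\epsilon)[p]$ of level~$1$. I would let $\tilde F_{\mathfrak{X}(p^{-1}\epsilon)}$ be the map classifying $\mathfrak{A}(p^{-1}\epsilon)/C_1$ with its induced principal polarization, and $\tilde F_{\mathfrak{A}(p^{-1}\epsilon)}$ the isogeny $\mathfrak{A}(p^{-1}\epsilon)\to\mathfrak{A}(p^{-1}\epsilon)/C_1$. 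Since dividing by the level-$1$ canonical subgroup raises the Hasse invariant to the $p$-th power (the one-step version of the Verschiebung computation in the proof of Corollary~\ref{CanSubgroup}), the target lands in $\mathfrak{X}(\epsilon)$ with the section $u$ sent to $u^{p}$; and because $C_1\equiv\Ker F\bmod p^{1-\epsilon}$, the resulting diagram agrees with \eqref{EqFrobModP} modulo $p^{1-\epsilon}$. For uniqueness, a diagram \eqref{EqLiftFrob} reducing to \eqref{EqFrobModP} mod $p^{1-\epsilon}$ amounts to a finite locally free subgroup of $\mathfrak{A}(p^{-1}\epsilon)[p]$ congruent to $\Ker F$ mod $p^{1-\epsilon}$—unique by Corollary~\ref{CanSubgroup}—plus the compatible section $u^{\prime}$, unique by the flatness argument in the lemma computing $\mathfrak{X}^\ast(\epsilon)$. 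Finally the whole diagram extends over the boundary (from $\mathfrak{X}(p^{-1}\epsilon)$ to $\mathfrak{X}^\ast(p^{-1}\epsilon)$) by Lemma~\ref{HartogPrinciple}, using that the boundary of $\mathfrak{X}^\ast$ is normal of codimension $g\geq2$; the case $g=1$ is checked directly.

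\textit{Part (ii).} Over $\mathfrak{X}(p^{-m}\epsilon)$ one has $v(\Ha)\leq p^{-m}\epsilon$, so $\Ha^{(p^m-1)/(p-1)}$ and $\Ha^{p^m}$ both divide $p^{\epsilon}$ with $\epsilon<\tfrac12$; hence $\mathfrak{A}(p^{-m}\epsilon)$ has a strong canonical subgroup $C_m$ of level~$m$. On the generic fibre $C_m$ is finite \'etale of order $p^{mg}$, geometrically isomorphic to $(\Z/p^m\Z)^g$ by Proposition~\ref{PropCanSubgroup}(iv), and Lagrangian for the Weil pairing (being a lift of $\Ker F^m$, which is Lagrangian for a principal polarization—this is also why $\mathfrak{A}(p^{-m}\epsilon)/C_m$ is again principally polarized); so $(\mathcal{A}(p^{-m}\epsilon)/C_m,\mathcal{A}(p^{-m}\epsilon)[p^m]/C_m)$ defines a morphism $\mathcal{X}(p^{-m}\epsilon)\to\mathcal{X}_{\Gamma_0(p^m)}$, extending to $\mathcal{X}^\ast(p^{-m}\epsilon)\to\mathcal{X}^\ast_{\Gamma_0(p^m)}$ by Hartog since both compactifications are built by the same $\Proj$/normalization procedure with boundary of codimension $\geq2$. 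To see these are open immersions I would exhibit the inverse: the image is exactly the open-and-closed ``anticanonical'' locus of the $\epsilon$-neighborhood, where the $\Gamma_0(p^m)$-subgroup $D$ meets the canonical subgroup trivially, and $(A^{\prime},D)\mapsto A^{\prime}/D$ is a two-sided inverse, the compositions being the identity by $A/A[p^m]\cong A$ and uniqueness of canonical subgroups. (Modulo $p$ this is the classical Deligne--Rapoport picture: the anticanonical part of $\mathcal{X}_{\Gamma_0(p^m)}$ maps down by $m$-fold Frobenius.) The displayed square commutes because $C_{m+1}$ restricts to $C_m$ and the $\tilde F$'s are compatible (Proposition~\ref{PropCanSubgroup}(i),(iii)); it is cartesian because, by Proposition~\ref{PropCanSubgroup}(iii) and a degree count, refining an anticanonical $\Gamma_0(p^m)$-structure to an anticanonical $\Gamma_0(p^{m+1})$-structure corresponds to shrinking the $p^{-m}\epsilon$-neighborhood to the $p^{-m-1}\epsilon$-neighborhood.

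\textit{Part (iii) and the main obstacle.} Over $\mathfrak{X}(\epsilon)$ one has $v(\Ha)\leq\epsilon<\tfrac12$, so there is a weak canonical subgroup $C\subset\mathfrak{A}(\epsilon)[p]$ of level~$1$ (it need only be weak because $\mathfrak{A}(\epsilon)$ is merely ``close to ordinary''). The $m=1$ map of part (ii), $A\mapsto(A/C_1,A[p]/C_1)$, lands in $\mathcal{X}_{\Gamma_0(p)}(\epsilon)$ since $v(\Ha(A/C_1))=p\,v(\Ha(A))\leq\epsilon$, and in its anticanonical part $\mathcal{X}_{\Gamma_0(p)}(\epsilon)_a$ because, writing $\pi\colon\mathcal{A}(p^{-1}\epsilon)\to\mathcal{A}(p^{-1}\epsilon)/C_1$ and $\hat\pi$ for the dual isogeny, one has $A[p]/C_1=\pi(\mathcal{A}(p^{-1}\epsilon)[p])=\Ker\hat\pi$, while $\hat\pi$ carries the canonical subgroup $C$ of the target isomorphically onto the canonical subgroup of $\mathcal{A}(p^{-1}\epsilon)$ (functoriality of canonical subgroups, via the $p$-adic size description in Corollary~\ref{CanSubgroup}), forcing $\Ker\hat\pi\cap C=0$; conversely $(A^{\prime},D)\mapsto A^{\prime}/D$ runs backwards, and the two are mutually inverse by $(A/C_1)/(A[p]/C_1)=A/A[p]\cong A$. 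Commutativity of the last diagram is built into the definition of $\tilde F$. The genuinely delicate points—and where I expect the real work to lie—are (a) the control along the boundary of the minimal compactification, which is precisely what the Hartog-type Lemma~\ref{HartogPrinciple} (with Proposition~\ref{ClassicalHartog}) was designed to supply, and (b) pinning the image of $\mathcal{X}(p^{-m}\epsilon)$ down exactly as the anticanonical open-and-closed locus, so that the morphisms are open immersions and the squares cartesian; (b) needs the full canonical/anticanonical dichotomy and the quantitative $p$-adic size estimates of Corollary~\ref{CanSubgroup}, notably the on-the-nose disjointness $\Ker\hat\pi\cap C=0$ rather than just over ordinary points.
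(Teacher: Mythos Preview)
Your outline for parts (i) and (ii) matches the paper's argument closely: divide by the level-$1$ canonical subgroup to get $\tilde F$, use the Hasse-invariant calculation to land in $\mathfrak{X}(\epsilon)$, and invoke Lemma~\ref{HartogPrinciple} for the boundary. One refinement: for the open-immersion claim in (ii) you say you would ``exhibit the inverse'' $(A',D)\mapsto A'/D$. The paper uses exactly this composite, but phrases the conclusion more carefully: the composite $\mathcal{X}(p^{-m}\epsilon)\to\mathcal{X}_{\Gamma_0(p^m)}\to\mathcal{X}$ is the forgetful (open) inclusion, and since $\mathcal{X}_{\Gamma_0(p^m)}\to\mathcal{X}$ is \'etale, the first map is an open immersion. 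Merely having a pointwise inverse on the image is not enough; you need the \'etaleness of the second map to pass from ``monomorphism'' to ``open immersion''.

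The real gap is in part (iii). Your proposed proof of $C\cap(A[p]/C_1)=\{0\}$ is circular. You write that ``$\hat\pi$ carries $C$ \emph{isomorphically} onto the canonical subgroup of $\mathcal{A}(p^{-1}\epsilon)$ (functoriality \ldots), forcing $\Ker\hat\pi\cap C=0$''. Functoriality via the size description in Corollary~\ref{CanSubgroup} only gives $\hat\pi(C)\subset C_1$; both have order $p^g$, so ``isomorphically'' is equivalent to injectivity of $\hat\pi|_C$, which is precisely $C\cap\Ker\hat\pi=0$ --- the statement you are trying to prove. Nothing in the $p$-adic size description, by itself, rules out $C\cap D\neq 0$.

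The paper closes this gap by a direct application of Lemma~\ref{LiftHomEqual}. Take $s\in A[p](S)$ with $\pi(s)\in C$; the description of $C$ gives $\pi(s)\equiv 0\bmod p^{(1-\epsilon)/p}$, hence the image $t$ of $s$ in $H=A[p]/C_1$ satisfies $t\equiv 0\bmod p^{(1-\epsilon)/p}$. Now $\Omega^1_{H/S}$ is killed by $\Ha(A)$, which divides $p^{\epsilon/p}$. Applying Lemma~\ref{LiftHomEqual} with $\delta=(1-\epsilon)/p$ and $\epsilon'=\epsilon/p$ (and here one genuinely needs $\epsilon<\tfrac12$ so that $\delta>\epsilon'$) yields $t=0$, i.e.\ $s\in C_1$ and $\pi(s)=0$. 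This is the ``quantitative'' input you allude to, but it goes through $H$ and Lemma~\ref{LiftHomEqual}, not through a functoriality statement for $\hat\pi$.
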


\begin{rem} The letter `a' stands for anticanonical, indicating that $D$ is a complement of the (weak) canonical subgroup $C$.
\end{rem}

\begin{proof} First, we handle the assertions in the good reduction case. Thus, we are considering $\mathfrak{X}(\epsilon)$. There is a strong canonical subgroup $C\subset \mathfrak{A}(p^{-1}\epsilon)[p]$ of level $1$, by Definition \ref{DefCanSubgroup}: On $\mathfrak{X}(p^{-1}\epsilon)$, the $p$-th power of the Hasse invariant divides $p^\epsilon$, with $\epsilon<\frac 12$. Note that on $\mathfrak{X}(\epsilon)$, one still has a weak canonical subgroup of level $1$. In particular, we get a second abelian variety $\mathfrak{A}(p^{-1}\epsilon)/C$ over $\mathfrak{X}(p^{-1}\epsilon)$. By uniqueness of $C$, $C$ is totally isotropic; in particular, $\mathfrak{A}(p^{-1}\epsilon)/C$ is naturally principally polarized. Also, it carries a level-$K^p$-structure. Thus, it comes via pullback $\mathfrak{X}(p^{-1}\epsilon)\to \mathfrak{X}$. This morphism lifts uniquely to
\[
\tilde{F}_{\mathfrak{X}(p^{-1}\epsilon)}: \mathfrak{X}(p^{-1}\epsilon)\to \mathfrak{X}(\epsilon)
\]
by a calculation of Hasse invariants. By construction, one has a map $\tilde{F}_{\mathfrak{A}(p^{-1}\epsilon)}: \mathfrak{A}(p^{-1}\epsilon)\to \mathfrak{A}(\epsilon)$ above this map of formal schemes. Moreover, by definition of $C$, these maps reduce to the relative Frobenius maps modulo $p^{1-\epsilon}$. This constructs the maps in part (i). Uniqueness is immediate from uniqueness of the canonical subgroup.

Let us observe that it follows that $\tilde{F}_{\mathfrak{X}(p^{-1}\epsilon)}$ and $\tilde{F}_{\mathfrak{A}(p^{-1}\epsilon)}$ are finite and locally free of degree $g(g+1)/2$. For part (ii), the existence of the canonical subgroup $C_m\subset A(p^{-m}\epsilon)[p^m]$ follows from Definition \ref{DefCanSubgroup} and Corollary \ref{CanSubgroup}. That it induces a morphism
\[
\mathcal{X}(p^{-m}\epsilon)\to \mathcal{X}_{\Gamma_0(p^m)}
\]
follows from Proposition \ref{PropCanSubgroup} (iv) (using that $C_m$ is totally isotropic, by uniqueness). Commutativity of the diagram (in parts (ii) and (iii)) follows from Proposition \ref{PropCanSubgroup} (iii).

Next, observe that the composite
\[
\mathcal{X}(p^{-m}\epsilon)\to \mathcal{X}_{\Gamma_0(p^m)}\to \mathcal{X}\ ,
\]
where the latter map sends a pair $(A,D)$ to $A/D$ (with its canonical principal polarization, and level-$N$-structure), is just the forgetful map $\mathcal{X}(p^{-m}\epsilon)\to \mathcal{X}$. Indeed, the maps send $\mathcal{A}(p^{-m}\epsilon)$ to $(\mathcal{A}(p^{-m}\epsilon)/C_m,\mathcal{A}(p^{-m}\epsilon)[p^m]/C_m)$, and then to
\[
(\mathcal{A}(p^{-m}\epsilon)/C_m)/(\mathcal{A}(p^{-m}\epsilon)[p^m]/C_m) = \mathcal{A}(p^{-m}\epsilon)/\mathcal{A}(p^{-m}\epsilon)[p^m]\cong \mathcal{A}(p^{-m}\epsilon)\ .
\]
Therefore the composite map
\[
\mathcal{X}(p^{-m}\epsilon)\to \mathcal{X}_{\Gamma_0(p^m)}\to \mathcal{X}\ ,
\]
is an open embedding; moreover, the second map is \'etale. It follows that the first map is an open embedding, as desired. Now it follows that the diagram in part (ii) is cartesian on the good reduction locus: Both vertical maps are finite \'etale of degree $p^{g(g+1)/2}$ (using that $m\geq 1$). The same argument works in part (iii) as soon as we have checked that $\mathcal{X}(p^{-1}\epsilon)$ maps into $\mathcal{X}_{\Gamma_0(p)}(\epsilon)_a$.

Thus, take some $\Spf R\subset \mathfrak{X}(p^{-1}\epsilon)$ over which one has an abelian scheme $A_R\to \Spec R$. By assumption, the Hasse invariant divides $p^{p^{-1}\epsilon}$. This gives a (strong) canonical subgroup $C_0\subset A_R[p]$ of level $1$, and  $A^\prime_R = A_R / C_0$ has a weak canonical subgroup $C\subset A^\prime_R[p]$. We have to see that
\[
C\cap (A_R[p]/C_0) = \{0\}\ .
\]
Take a section $s\in A_R[p](S)$, for some $p$-adically complete $p$-torsion free $R$-algebra $S$. If $s$ maps into $C$, then $s$ modulo $p^{1-\epsilon}$ lies in the kernel of Frobenius on $A^\prime_R[p]$, thus $s$ modulo $p^{(1-\epsilon)/p}$ is $0$ in $A^\prime_R[p]$. This means that $s$ modulo $p^{(1-\epsilon)/p}$ lies in $C_0$. Let $H=A_R[p]/C_0$. Then $s$ gives a section $t\in H(S)$, with $t=0$ modulo $p^{(1-\epsilon)/p}$. Moreover, as the Hasse invariant of $A_R$ kills $\Omega^1_{H/S}$, and the Hasse invariant of $A_R$ divides $p^{\epsilon/p}$, one can use Lemma \ref{LiftHomEqual} with $\delta = (1-\epsilon)/p$ and $\epsilon^\prime = \epsilon/p$ to conclude that $t=0\in H(R)$. This finally shows that $C\cap (A_R[p]/C_0) = \{0\}$, as desired.

Now we can extend to the minimal compactification. The case $g=1$ is easy and left to the reader. (It may be reduced to the case $g>1$ by embedding the modular curve into the Siegel $3$-fold via $E\mapsto E\times E$, but one can also argue directly.) If $g\geq 2$, we use our version of Hartog's extension principle. Indeed, Lemma \ref{HartogPrinciple}, applied with $R$ the sections of an affine subset of $\mathfrak{X}^\ast$ and $f=\Ha$ (which is not a zero divisor as the ordinary locus is dense) implies that the maps $\tilde{F}_{\mathfrak{X}(p^{-1}\epsilon)}$ extend uniquely to $\tilde{F}_{\mathfrak{X}^\ast(p^{-1}\epsilon)}$. One gets the commutative diagram in (i), and it reduces to \eqref{EqFrobModP} (using that restriction of functions from $\Spf S$ to $U$ in Lemma \ref{HartogPrinciple} is injective even on the special fibre).

Essentially the same argument proves that the maps to $\mathcal{X}_{\Gamma_0(p^m)}^\ast$ extend: For this, use that if in the situation of Lemma \ref{HartogPrinciple}, one has a finite normal $\mathcal{Y}\to (\Spf S)^\ad_\eta$ and a section $U^\ad_\eta\to \mathcal{Y}$ which is an open embedding, then it extends uniquely to an open embedding $(\Spf S)^\ad_\eta\to \mathcal{Y}$. Indeed, extension is automatic by Lemma \ref{HartogPrinciple} (as $\mathcal{Y}$ is affinoid), and it has to be an open embedding as the section $(\Spf S)^\ad_\eta\to \mathcal{Y}$ is finite and generically an open embedding, thus an open and closed embedding as $\mathcal{Y}$ is normal. The diagram in part (ii) is commutative and cartesian, by using Hartog's principle once more.
\end{proof}

For any $K_p$, let $\mathcal{X}_{K_p}(\epsilon)\subset \mathcal{X}_{K_p}$ be the preimage of $\mathcal{X}(\epsilon)\subset \mathcal{X}$. Similarly, define $\mathcal{X}^\ast_{K_p}(\epsilon)$.

For $m\geq 1$, we define $\mathcal{X}_{\Gamma_0(p^m)}(\epsilon)_a\subset \mathcal{X}_{\Gamma_0(p^m)}$ as the image of $\mathcal{X}(p^{-m}\epsilon)$, and similarly for $\mathcal{X}^\ast$. Observe that $\mathcal{X}_{\Gamma_0(p^m)}(\epsilon)_a\subset \mathcal{X}_{\Gamma_0(p^m)}(\epsilon)$ is open and closed, and is the locus where the universal totally isotropic subgroup $D\subset \mathcal{A}(\epsilon)[p^m]$ satisfies $D[p]\cap C = \{0\}$, for $C\subset \mathcal{A}(\epsilon)[p]$ the weak canonical subgroup, cf. Theorem \ref{ExCanFrobLifts} (ii), (iii).

In fact, also on the minimal compactification, $\mathcal{X}_{\Gamma_0(p^m)}^\ast(\epsilon)_a\subset \mathcal{X}_{\Gamma_0(p^m)}^\ast(\epsilon)$ is open and closed: Open by Theorem \ref{ExCanFrobLifts}, and closed because $\mathcal{X}_{\Gamma_0(p^m)}^\ast(\epsilon)_a\cong \mathcal{X}^\ast(p^{-m}\epsilon)\to \mathcal{X}^\ast(\epsilon)$ is finite.

Thus, we get a tower
\[
\ldots\to \mathcal{X}_{\Gamma_0(p^{m+1})}^\ast(\epsilon)_a\to \mathcal{X}_{\Gamma_0(p^m)}^\ast(\epsilon)_a\to\ldots\to \mathcal{X}_{\Gamma_0(p)}^\ast(\epsilon)_a\ ,
\]
which is the pullback of the tower
\[
\ldots\to \mathcal{X}_{\Gamma_0(p^{m+1})}^\ast\to \mathcal{X}_{\Gamma_0(p^m)}^\ast\to\ldots\to \mathcal{X}_{\Gamma_0(p)}^\ast
\]
along the open embedding $\mathcal{X}_{\Gamma_0(p)}^\ast(\epsilon)_a\subset \mathcal{X}_{\Gamma_0(p)}^\ast$. Moreover, we have integral models for the first tower, such that the transition maps identify with the relative Frobenius maps modulo $p^{1-\epsilon}$. Also, we have the abelian schemes $\mathcal{A}_{\Gamma_0(p^m)}(\epsilon)_a\to \mathcal{X}_{\Gamma_0(p^m)}(\epsilon)_a$ by pullback, and the similar situation there.

Let us state one last result in this subsection.

\begin{lem}\label{MinCompAffinoid} Fix some $0\leq \epsilon<\frac 12$. Then for $m$ sufficiently large, $\mathcal{X}^\ast_{\Gamma_0(p^m)}(\epsilon)_a$ is affinoid.
\end{lem}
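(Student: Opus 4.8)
The plan is to exploit the Frobenius description of the anticanonical tower from Theorem \ref{ExCanFrobLifts} together with the ampleness of $\omega$ on the minimal compactification. Recall that $\mathcal{X}^\ast_{\Gamma_0(p^m)}(\epsilon)_a\cong \mathcal{X}^\ast(p^{-m}\epsilon)$, with formal model $\mathfrak{X}^\ast(p^{-m}\epsilon)$, and that modulo $p^{1-p^{-m}\epsilon}$ the transition maps in the tower $(\mathfrak{X}^\ast(p^{-m}\epsilon))_m$ are the relative Frobenius maps $F_{(\mathfrak{X}^\ast(p^{-m-1}\epsilon)/p)/(\Z_p^\cycl/p)}$. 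So the special fibre of the tower, modulo any fixed power $p^\delta$ with $\delta<1$, eventually becomes a tower of Frobenius maps; in particular the reduced special fibre of $\mathfrak{X}^\ast(p^{-m}\epsilon)$ is, for all $m$, homeomorphic to the reduced special fibre of $\mathfrak{X}^\ast(\epsilon)/p$, which is a quasi-projective (in fact projective, as a blow-up of the projective $\mathfrak{X}^\ast$) scheme of finite type over $\overline{\F}_p$. The key point is that affinoidness of a quasi-compact separated rigid space is detected on a formal model: it suffices to exhibit, for some $m$, a formal model whose special fibre is affine.

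First I would recall that $\mathcal{X}^\ast(\epsilon) = \{x\in\mathcal{X}^\ast \mid |\Ha(x)|\geq |p|^\epsilon\}$ sits inside $\mathcal{X}^\ast$, which is the analytification of the projective variety $X^\ast = \Proj\bigoplus_k H^0(X,\omega^{\otimes k})$ (for $g\geq 2$; $g=1$ handled directly as usual), so $\omega$ is ample on $\mathfrak{X}^\ast$ and hence $\omega/p$ is ample on $\mathfrak{X}^\ast/p$. Pulling back along the finite map $\tilde F^m\colon \mathfrak{X}^\ast(p^{-m}\epsilon)/p \to \mathfrak{X}^\ast(\epsilon)/p$ keeps $\omega/p$ ample (finite pullback of ample is ample), and since modulo $p^{1-\epsilon}$ this map is a composite of relative Frobenii, on the reduced special fibre $\omega^{\otimes(p^m-1)/(p-1)}\cdot p^{?}$ essentially receives the Hasse invariant as a section that becomes a $p^m$-th power times a unit — more precisely, $\Ha$ pulled back up the tower acquires a $p^{m}$-th root up to the bound $\epsilon$, so on $\mathfrak{X}^\ast(p^{-m}\epsilon)$ the section $u$ with $u\Ha = p^{p^{-m}\epsilon}$ makes $\Ha$ differ from an invertible section by $p^{p^{-m}\epsilon}$, which tends to a unit as $m\to\infty$. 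Concretely: on the reduced special fibre $(\mathfrak{X}^\ast(p^{-m}\epsilon)/p)_{\mathrm{red}}$, which is independent of $m$ and equals $(\mathfrak{X}^\ast(\epsilon)/p)_{\mathrm{red}}$, we have a fixed ample line bundle $\mathcal{L}$, and the complement of the zero locus of $\Ha$ is the ordinary locus $\mathcal{X}^{\ord}$, which is affine (it is the non-vanishing locus of a section of an ample bundle on a projective variety). The point is to show the whole reduced special fibre becomes affine after raising to high enough level.

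The decisive step is therefore this: I claim that for $m$ large the reduced special fibre of $\mathfrak{X}^\ast(p^{-m}\epsilon)$ is affine. The reduced special fibre is a fixed projective variety $\overline Y$ over $\overline{\F}_p$ together with a section $\bar s_m$ of $\mathcal{L}^{\otimes n_m}$ (where $\mathcal{L}$ is the fixed ample $\omega$ and $n_m = (p^m-1)/(p-1)$) whose non-vanishing locus equals $\overline Y$ — this is because on $\mathcal{X}^\ast(p^{-m}\epsilon)$ we have the section $u$, an inverse to (a high power of) $\Ha$ up to $p^{p^{-m}\epsilon}$, which modulo $p$ gives that $\Ha^{n_m}$ is \emph{nowhere zero} on the special fibre once $p^{p^{-m}\epsilon}$ is a unit, i.e. immediately since we work modulo $p$: indeed $u\cdot\Ha^{n_m} = p^{p^{-m}\epsilon}\equiv 0$, so this is not quite right — rather, $\mathfrak{X}^\ast(p^{-m}\epsilon)/p$ is the scheme-theoretic union of $\{u=0\}$ and $\{\Ha=0\}$, but its \emph{reduction} is $\{\Ha\neq 0\}$'s closure together with the $u=0$ part. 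I would instead argue as in Lemma \ref{ConstructionFormalModel}/Lemma \ref{HartogPrinciple}: the reduced special fibre of $\mathfrak{X}^\ast(p^{-m}\epsilon)$, being a blow-up of $\mathfrak{X}^\ast/p$ along (a power of) $\Ha$ followed by normalization, has underlying reduced scheme equal to $(\mathfrak{X}^\ast/p)_{\mathrm{red}}$ blown up along the ideal $(\Ha^{n_m}, p)$, which in the special fibre $p=0$ is just $(\Ha^{n_m}) = (\Ha)$, so it is the blow-up of $\overline Y := (\mathfrak{X}^\ast/p)_{\mathrm{red}}$ along the Cartier divisor $\{\Ha = 0\}$, which does nothing — giving $\overline Y$ itself. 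That cannot be affine. So the affineness must genuinely come from the \emph{mod }$p^\delta$ Frobenius structure for $\delta$ close to $1$, not from the reduced special fibre.

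Reorganizing: the correct approach is to fix $\delta$ with $1/2 < \delta < 1$, and work modulo $p^\delta$. For $m$ so large that $p^{-m}\epsilon$ is negligible, the transition maps $\mathfrak{X}^\ast(p^{-m-1}\epsilon)/p^\delta \to \mathfrak{X}^\ast(p^{-m}\epsilon)/p^\delta$ are (by Theorem \ref{ExCanFrobLifts}(i) reduced mod $p^{1-\epsilon}\supset p^\delta$... — careful with which is larger: we need $1-p^{-m}\epsilon \geq \delta$, true for $m\gg 0$) the relative Frobenius maps. A scheme admitting a Frobenius map to itself mod $p^\delta$ from the next level — iterating, $\mathfrak{X}^\ast(p^{-m}\epsilon)/p^\delta$ is the $n$-fold Frobenius twist of $\mathfrak{X}^\ast(p^{-m-n}\epsilon)/p^\delta$ — I would show that such a scheme, being a blow-up of $\mathfrak{X}^\ast$ that mod $p^\delta$ is a Frobenius twist, has the property that $\omega$ (which pulls back to $\omega^{p^n}\otimes(\text{twist})$) has lots of sections: by ampleness of $\omega$ on $\mathfrak{X}^\ast$, there are sections $t_0,\dots,t_r$ of $\omega^{\otimes k}$ on $\mathfrak{X}^\ast$ cutting out the empty set; pulled up to level $m$, their $p^{m'}$-th roots exist mod $p^\delta$ (since the transition maps are Frobenius), giving sections of $\omega$ on $\mathfrak{X}^\ast(p^{-m}\epsilon)/p^\delta$ whose common zero locus is empty. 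Then $\omega$ is globally generated, and being a finite pullback of an ample bundle it is ample; a projective scheme on which there is an ample, globally generated line bundle with a section cutting out $\emptyset$ such that... — the cleanest conclusion: the Hasse invariant $\Ha$ itself, on $\mathfrak{X}^\ast(p^{-m}\epsilon)$, satisfies $u\Ha = p^{p^{-m}\epsilon}$ with $p^{-m}\epsilon < \delta$ eventually, so $\Ha$ is invertible modulo $p^{p^{-m}\epsilon}$, hence in $\mathfrak{X}^\ast(p^{-m}\epsilon)/p^\delta$ the locus $\{\Ha = 0\}$ is contained in $\{p^{p^{-m}\epsilon} = 0\}$ which is "small"; iterating the Frobenius shows the whole mod-$p^\delta$ reduction is covered by the ordinary locus plus a thin nilpotent layer, and since the ordinary locus $\mathfrak{X}^{\ord}/p^\delta = \{\Ha\text{ invertible}\}$ is affine (non-vanishing locus of a section of the ample $\omega^{\otimes(p-1)}$ on the projective $\mathfrak{X}^\ast/p^\delta$) and affineness only depends on the reduced subscheme, we conclude $\mathfrak{X}^\ast(p^{-m}\epsilon)/p^\delta$ is affine, hence (being $p$-adically complete flat over $\Z_p^\cycl$) $\mathfrak{X}^\ast(p^{-m}\epsilon)$ is affine, hence its generic fibre $\mathcal{X}^\ast_{\Gamma_0(p^m)}(\epsilon)_a$ is affinoid.

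The main obstacle I expect is making the ``$\Ha$ becomes invertible mod $p^{p^{-m}\epsilon}$ and this forces affineness'' step precise: one must track that the scheme-theoretic special fibre (not just its reduction) mod $p^\delta$ becomes affine, which really does use the Frobenius-twist structure to produce enough sections of $\omega$, together with the standard fact that a noetherian scheme is affine iff its reduction is; plus handling $g=1$ separately and checking the blow-up $\mathfrak{X}^\ast(\epsilon)\to\mathfrak{X}^\ast$ interacts well with normality so that $\omega$ genuinely stays ample after pullback and normalization.
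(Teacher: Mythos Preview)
Your approach has a genuine gap, and the fix is much simpler than the route you are attempting.

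You try to show that for $m\gg 0$ the formal model $\mathfrak{X}^\ast(p^{-m}\epsilon)$ has affine special fibre. This fails: locally $\mathfrak{X}^\ast(p^{-m}\epsilon)=\Spf\big((R\hat{\otimes}\Z_p^\cycl)\langle u\rangle/(u\widetilde{\Ha}-p^{p^{-m}\epsilon})\big)$, so modulo $p$ (or any $p^\delta$ with $\delta>p^{-m}\epsilon$) one gets $\overline{R}[u]/(u\Ha)$, whose reduced spectrum is the scheme-theoretic union of $\{u=0\}\cong\Spec\overline{R}$ and $\{\Ha=0\}\times\mathbb{A}^1$. Gluing over $\mathfrak{X}^\ast$, the component $\{u=0\}$ is all of $X^\ast_{\F_p}$, a projective scheme of positive dimension. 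Hence the reduced special fibre of $\mathfrak{X}^\ast(p^{-m}\epsilon)$ is \emph{never} affine, regardless of $m$; the relation $u\Ha=p^{p^{-m}\epsilon}$ does not force $\Ha$ to be a unit on this model (it vanishes freely on $\{u=0\}$), and no amount of Frobenius twisting changes that underlying reduced scheme. So the criterion ``affine special fibre $\Rightarrow$ affinoid generic fibre'' cannot be applied to this particular formal model.

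The paper's argument is a one-liner that sidesteps formal models entirely. Since $\omega$ is ample on the projective scheme $X^\ast$, Serre vanishing gives $H^1(X^\ast,\omega^{\otimes p^m(p-1)})=0$ for $m$ large; hence the section $\Ha^{p^m}\in H^0(X^\ast_{\F_p},\omega^{\otimes p^m(p-1)})$ lifts to a \emph{global} section $\widetilde{\Ha^{p^m}}\in H^0(X^\ast,\omega^{\otimes p^m(p-1)})$ over $\Z_{(p)}$. Now
\[
\mathcal{X}^\ast(p^{-m}\epsilon)=\{\,|\Ha|\geq|p|^{p^{-m}\epsilon}\,\}=\{\,|\widetilde{\Ha^{p^m}}|\geq|p|^{\epsilon}\,\}\subset\mathcal{X}^\ast,
\]
and the locus where a global section of an ample line bundle on a projective variety satisfies $|s|\geq|p|^\epsilon$ is affinoid (embed $X^\ast\hookrightarrow\mathbb{P}^N$ by a power of $\omega$ so that $\widetilde{\Ha^{p^m}}$ becomes the restriction of a homogeneous coordinate; the locus is then a Weierstrass domain in an affinoid chart). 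Via the isomorphism $\mathcal{X}^\ast_{\Gamma_0(p^m)}(\epsilon)_a\cong\mathcal{X}^\ast(p^{-m}\epsilon)$, this is exactly the assertion. The key idea you were missing is this characteristic-zero lift of a power of the Hasse invariant; once you have it, there is no need to analyze the tower or any special fibres at all.
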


\begin{proof} There is some integer $m$ such that $H^i(X^\ast,\omega^{\otimes p^m(p-1)}) = 0$ for all $i>0$. In that case, one can find a global lift $\widetilde{\Ha}^{p^m}$ of $\Ha^{p^m}$. The condition $|\Ha|\geq |p|^{p^{-m}\epsilon}$ is equivalent to $|\widetilde{\Ha}^{p^m}|\geq |p|^\epsilon$. As $\widetilde{\Ha}^{p^m}$ is a section of an ample line bundle, this condition defines an affinoid space $\mathcal{X}^\ast(p^{-m}\epsilon)\cong \mathcal{X}^\ast_{\Gamma_0(p^m)}(\epsilon)_a$.
\end{proof}

\subsection{Tilting} Fix an element $t\in (\Z_p^\cycl)^\flat$ such that $|t|=|t^\sharp|=|p|$; one can do this in such a way that $t$ admits a $p-1$-th root. In that case, one gets an identification $(\Z_p^\cycl)^\flat = \F_p[[t^{1/(p-1)p^\infty}]]$. Let $\mathfrak{X}^\prime$ be the formal scheme over $\F_p[[t^{1/(p-1)p^\infty}]]$ given by the $t$-adic completion of $X_{g,K^p}\otimes_{\Z_{(p)}} \F_p[[t^{1/(p-1)p^\infty}]]$. We denote by $\mathcal{X}^\prime$ over $\F_p((t^{1/(p-1)p^\infty}))$ the generic fibre of $\mathfrak{X}^\prime$. The same applies for $\mathfrak{X}^{\prime\ast}$ and $\mathfrak{A}^\prime$, with generic fibres $\mathcal{X}^{\prime\ast}$ and $\mathcal{A}^\prime$.

In characteristic $p$, one can pass to perfections.

\begin{definition}\begin{altenumerate}
\item[{\rm (i)}] Let $\mathfrak{Y}$ be a flat $t$-adic formal scheme over $\F_p[[t^{1/(p-1)p^\infty}]]$. Let $\Phi: \mathfrak{Y}\to \mathfrak{Y}$ denote the relative Frobenius map. The inverse limit
\[
\varprojlim_\Phi \mathfrak{Y} = \mathfrak{Y}^\perf
\]
is representable by a perfect flat $t$-adic formal scheme over $\F_p[[t^{1/(p-1)p^\infty}]]$. Locally,
\[
(\Spf R)^\perf = \Spf (R^\perf)\ ,
\]
where $R^\perf$ is the $t$-adic completion of $\varinjlim_\Phi R$.
\item[{\rm (ii)}] Let $\mathcal{Y}$ be an adic space over $\F_p((t^{1/(p-1)p^\infty}))$. There is a unique perfectoid space $\mathcal{Y}^\perf$ over $\F_p((t^{1/(p-1)p^\infty}))$ such that
\[
\mathcal{Y}^\perf\sim \varprojlim_\Phi \mathcal{Y}\ ,
\]
where we use $\sim$ in the sense of \cite[Definition 2.4.1]{ScholzeWeinstein}. Locally,
\[
\Spa(R,R^+)^\perf = \Spa(R^\perf,R^{\perf +})\ ,
\]
where $R^{\perf +}$ is the $t$-adic completion of $\varinjlim_\Phi R^+$, and $R^\perf = R^{\perf +}[t^{-1}]$.
\end{altenumerate}
\end{definition}

One checks directly that the two operations are compatible, i.e. $(\mathfrak{Y}^\perf)^\ad_\eta = (\mathfrak{Y}^\ad_\eta)^\perf$. We get perfectoid spaces $\mathcal{X}^{\prime\perf}$, $\mathcal{X}^{\prime\ast\perf}$ and $\mathcal{A}^{\prime\perf}$ over $\F_p((t^{1/(p-1)p^\infty}))$.

\begin{cor}\label{TiltingStrictNbhd} Let $0\leq \epsilon<\frac 12$. There are unique perfectoid spaces $\mathcal{X}_{\Gamma_0(p^\infty)}(\epsilon)_a$, $\mathcal{X}_{\Gamma_0(p^\infty)}^\ast(\epsilon)_a$ and $\mathcal{A}_{\Gamma_0(p^\infty)}(\epsilon)_a$ over $\Q_p^\cycl$ such that
\[
\mathcal{X}_{\Gamma_0(p^\infty)}(\epsilon)_a\sim \varprojlim_m \mathcal{X}_{\Gamma_0(p^m)}(\epsilon)_a\ ,
\]
and similarly for $\mathcal{X}_{\Gamma_0(p^\infty)}^\ast(\epsilon)_a$ and $\mathcal{A}_{\Gamma_0(p^\infty)}(\epsilon)_a$. Moreover, the tilt $\mathcal{X}_{\Gamma_0(p^\infty)}^\ast(\epsilon)_a^\flat$ identifies naturally with the open subset $\mathcal{X}^{\prime\ast\perf}(\epsilon)\subset \mathcal{X}^{\prime\ast\perf}$ where $|\Ha|\geq |t|^\epsilon$. Similarly, $\mathcal{A}_{\Gamma_0(p^\infty)}(\epsilon)_a^\flat$ gets identified with the open subset $\mathcal{A}^{\prime\perf}(\epsilon)\subset \mathcal{A}^{\prime\perf}$ where $|\Ha|\geq |t|^\epsilon$.
\end{cor}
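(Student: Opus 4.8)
The plan is to transfer the statement to characteristic $p$, where everything in sight is a perfection of a finite-type object and hence manifestly perfectoid, to untilt the answer, and — the only genuine difficulty — to control the boundary of the minimal compactification by the Hebbarkeitssatz of Section~\ref{HebbarkeitssatzSection}. Concretely, by Theorem~\ref{ExCanFrobLifts}, for $m\ge 1$ the space $\mathcal{X}_{\Gamma_0(p^m)}^\ast(\epsilon)_a$ is the generic fibre of the flat formal $\Z_p^\cycl$-scheme $\mathfrak{X}^\ast(p^{-m}\epsilon)$, and the transition map $\mathfrak{X}^\ast(p^{-m-1}\epsilon)\to\mathfrak{X}^\ast(p^{-m}\epsilon)$ agrees modulo $p^{1-\epsilon}$ (where $1-\epsilon>\tfrac12$) with the relative Frobenius of~\eqref{EqFrobModP}; the same is true of the towers $\mathfrak{X}(p^{-m}\epsilon)$ and $\mathfrak{A}(p^{-m}\epsilon)$. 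By Lemma~\ref{MinCompAffinoid} I may discard finitely many levels and assume every $\mathcal{X}_{\Gamma_0(p^m)}^\ast(\epsilon)_a=\Spa(B_m,B_m^+)$ is affinoid, with $B_m^+=H^0(\mathfrak{X}^\ast(p^{-m}\epsilon),\OO)$; for $\mathcal{X}(\epsilon)$ and $\mathcal{A}(\epsilon)$, which involve no boundary, I would instead work over an affinoid cover.

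Next I set $B_\infty^+=(\varinjlim_m B_m^+)^{\wedge}_p$ and $B_\infty=B_\infty^+[p^{-1}]$, so that $\Spa(B_\infty,B_\infty^+)\sim\varprojlim_m\Spa(B_m,B_m^+)$ in the sense of \cite[Definition 2.4.1]{ScholzeWeinstein}. Because the transition maps are relative Frobenius modulo $p^{1-\epsilon}$, every element of $B_\infty^+/p^{1-\epsilon}$ becomes a $p$-th power already at the next level, so Frobenius is (almost) bijective on $B_\infty^+/p^{1-\epsilon}$; testing the perfectoid criterion against a pseudo-uniformizer $\varpi\in\Z_p^\cycl$ with $0<v(\varpi^p)<1-\epsilon$ — available since $\epsilon<\tfrac12$ — one concludes that $B_\infty^+$ is integral perfectoid and $(B_\infty,B_\infty^+)$ is a perfectoid affinoid $\Q_p^\cycl$-algebra. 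Gluing over the cover of the first step yields the perfectoid space $\mathcal{X}_{\Gamma_0(p^\infty)}^\ast(\epsilon)_a$ with the asserted $\sim$-relation, and uniqueness up to unique isomorphism is a formal consequence of that relation. Running the same argument on the $\mathfrak{A}$-tower gives $\mathcal{A}_{\Gamma_0(p^\infty)}(\epsilon)_a$, and $\mathcal{X}_{\Gamma_0(p^\infty)}(\epsilon)_a$ is obtained as the open subspace of $\mathcal{X}_{\Gamma_0(p^\infty)}^\ast(\epsilon)_a$ lying over the good-reduction locus $\mathcal{X}(\epsilon)$.

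To compute the tilt: using $\Z_p^\cycl/p\cong(\Z_p^\cycl)^\flat/t$ and the Frobenius description of the transition maps, $B_\infty^{\flat+}=(B_\infty^+)^\flat$ is locally the ring of bounded functions on $\mathcal{X}^{\prime\ast\perf}(\epsilon)$ — the open subspace $\{|\Ha|\ge|t|^\epsilon\}$ of the (visibly perfectoid) $\mathcal{X}^{\prime\ast\perf}$, i.e.\ the adic generic fibre of the perfection of the characteristic-$p$ model of $\mathfrak{X}^\ast(\epsilon)$ — provided one knows, for $g\ge2$, that $H^0$ of the \emph{perfected} minimal compactification is computed by the complement of its boundary (of codimension $\ge2$). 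This is exactly the point where I invoke the Hebbarkeitssatz: by Lemma~\ref{GoodTriple}, Lemma~\ref{GoodTripleLimit} and Corollary~\ref{GoodTripleBaseField} (applied with the pseudo-uniformizer $t^\epsilon$), the triple formed by a piece of the minimal compactification, its boundary, and the good-reduction locus is good, so bounded functions extend uniquely from the good-reduction locus and the two integral structures coincide. (For $g=1$ the boundary is a divisor and the comparison is made directly; for $\mathcal{A}$ and $\mathcal{X}$ there is no boundary.) This gives the natural identifications $\mathcal{X}_{\Gamma_0(p^\infty)}^\ast(\epsilon)_a^\flat\cong\mathcal{X}^{\prime\ast\perf}(\epsilon)$ and $\mathcal{A}_{\Gamma_0(p^\infty)}(\epsilon)_a^\flat\cong\mathcal{A}^{\prime\perf}(\epsilon)$, and restricting to the good-reduction locus settles the statement for $\mathcal{X}_{\Gamma_0(p^\infty)}(\epsilon)_a$.

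The main obstacle is the behaviour at the boundary of the minimal compactification: away from it the Frobenius-tower argument is routine (the reductions are normal and become perfect in the limit), but along the boundary there is no direct geometric handle, which is precisely what forces the detour through the Hebbarkeitssatz and the ``good triple'' formalism of Section~\ref{HebbarkeitssatzSection} — indeed this is what that section was built for. A secondary subtlety is that the transition maps match relative Frobenius only modulo $p^{1-\epsilon}$, not exactly, so the hypothesis $\epsilon<\tfrac12$ (ensuring $1-\epsilon>\tfrac12>\epsilon$, hence enough room for successive approximation) is used throughout.
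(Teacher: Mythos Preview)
Your construction of the perfectoid space is correct and matches the paper's argument essentially verbatim: take the inverse limit of the formal schemes $\mathfrak{X}^\ast(p^{-m}\epsilon)$, use that transition maps are relative Frobenius modulo $p^{1-\epsilon}$, and deduce that the limit is integral perfectoid.

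However, your treatment of the tilt misidentifies where the difficulty lies and takes an unnecessary detour. You claim the boundary is ``the main obstacle'' and invoke the Hebbarkeitssatz to handle it. In fact there is \emph{no} obstacle at the boundary here: Theorem~\ref{ExCanFrobLifts}(i) already asserts that the entire diagram~\eqref{EqLiftFrob} --- including the $\mathfrak{X}^\ast$ row --- is identified with~\eqref{EqFrobModP} modulo $p^{1-\epsilon}$. (This extension to the minimal compactification was achieved there via Hartog, Lemma~\ref{HartogPrinciple}, not via the Hebbarkeitssatz.) One therefore has directly
\[
\mathfrak{X}^\ast(p^{-m}\epsilon)/p^{1-\epsilon}\;\cong\;\mathfrak{X}^{\prime\ast}(p^{-m}\epsilon)/t^{1-\epsilon}
\]
compatibly with the transition maps, and passing to the colimit gives $R_\infty/p^{1-\epsilon}\cong S_\infty/t^{1-\epsilon}$, which by \cite[Theorem~5.2]{ScholzePerfectoid} is exactly the tilting relation. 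This is the paper's argument, and it is entirely local on the formal models; no Hebbarkeitssatz enters.

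Your Hebbarkeitssatz route, as written, also has a potential circularity. You invoke goodness of the relevant triple to extend the tilt identification across the boundary, but the lemmas you cite (Lemma~\ref{GoodTriple}, Corollary~\ref{GoodTripleBaseField}) establish goodness only on the characteristic-$p$ side. To run a Hebbarkeitssatz argument that compares the two sides, you would also need goodness on the characteristic-$0$ side --- and in the paper that is deduced \emph{from} the tilting identification (goodness being tilting-invariant), not the other way around. The Hebbarkeitssatz machinery of Section~\ref{HebbarkeitssatzSection} is genuinely needed later, in Section~\ref{ConclusionSubsection}, for the passage from $\Gamma_0(p^\infty)$ to $\Gamma_1(p^\infty)$, where one does \emph{not} have a formal model of the cover at hand; it is not needed for this corollary.
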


\begin{proof} We give only the proof in the case of $\mathcal{X}$; the other statements are entirely analogous. Note that $\mathcal{X}_{\Gamma_0(p^m)}(\epsilon)_a\cong \mathcal{X}(p^{-m}\epsilon)$ has the integral model $\mathfrak{X}(p^{-m}\epsilon)$. On the tower of the $\mathfrak{X}(p^{-m}\epsilon)$, the transition maps agree with the relative Frobenius map modulo $p^{1-\epsilon}$. Define
\[
\mathfrak{X}_{\Gamma_0(p^\infty)}(\epsilon)_a = \varprojlim \mathfrak{X}(p^{-m}\epsilon)\ ,
\]
where the inverse limit is taken in the category of formal schemes over $\Z_p^\cycl$. Over an affine subset $\Spf R_{m_0}\subset \mathfrak{X}(p^{-m_0}\epsilon)$ with preimages $\Spf R_m\subset \mathfrak{X}(p^{-m}\epsilon)$ for $m\geq m_0$, we get a corresponding open affine subset $\Spf R_\infty\subset \mathfrak{X}_{\Gamma_0(p^\infty)}(\epsilon)_a$, where $R_\infty$ is the $p$-adic completion of $\varinjlim_m R_m$. In particular, $R_\infty$ is flat over $\Z_p^\cycl$. Moreover, using that the transition maps $R_m/p^{1-\epsilon}\to R_{m+1}/p^{1-\epsilon}$ agree with the relative Frobenius map, we find that (absolute) Frobenius induces an isomorphism
\[
R_\infty/p^{(1-\epsilon)/p} = \varinjlim_m R_{m+1}/p^{(1-\epsilon)/p}\cong \varinjlim_m R_m/p^{1-\epsilon} = R_\infty/p^{1-\epsilon}\ .
\]
Thus, by \cite[Definition 5.1 (ii)]{ScholzePerfectoid}, $R_\infty^a$ is a perfectoid $\Z_p^{\cycl a}$-algebra; in particular, $R_\infty[p^{-1}]$ is a perfectoid $\Q_p^\cycl$-algebra (cf. \cite[Lemma 5.6]{ScholzePerfectoid}). Thus, the generic fibre of $\mathfrak{X}_{\Gamma_0(p^\infty)}(\epsilon)_a$ is a perfectoid space $\mathcal{X}_{\Gamma_0(p^\infty)}(\epsilon)_a$ over $\Q_p^\cycl$, with
\[
\mathcal{X}_{\Gamma_0(p^\infty)}(\epsilon)_a\sim \varprojlim_m \mathcal{X}_{\Gamma_0(p^m)}(\epsilon)_a\ ,
\]
cf. \cite[Definition 2.4.1, Proposition 2.4.2]{ScholzeWeinstein}; for uniqueness, cf. \cite[Proposition 2.4.5]{ScholzeWeinstein}.

Now we analyze the tilt. We may define a characteristic $p$-analogue $\mathfrak{X}^{\prime\ast}(\epsilon)$ of $\mathfrak{X}^\ast(\epsilon)$, which relatively over $\mathfrak{X}^{\prime\ast}$ parametrizes sections $u\in \omega^{\otimes (1-p)}$ such that $u\Ha = t^\epsilon$.

Obviously, there are transition maps $\mathfrak{X}^{\prime\ast}(p^{-1}\epsilon)\to \mathfrak{X}^{\prime\ast}(\epsilon)$ given by the relative Frobenius map. Moreover, the inverse limit $\varprojlim_m \mathfrak{X}^{\prime\ast}(p^{-m}\epsilon)$ is representable by a perfect flat formal scheme over $\F_p[[t^{1/(p-1)p^\infty}]]$, which is naturally the same as $\mathfrak{X}^{\prime\ast}(\epsilon)^\perf$. Its generic fibre is thus a perfectoid space over $\F_p((t^{1/(p-1)p^\infty}))$, that is identified with the open subset of $\mathcal{X}^{\prime\ast\perf}$ where $|\Ha|\geq |t|^\epsilon$.

On the other hand, by Theorem \ref{ExCanFrobLifts} (i), one has a canonical identification
\[
\mathfrak{X}^{\prime\ast}(p^{-m}\epsilon) / t^{1-\epsilon} = \mathfrak{X}^\ast(p^{-m}\epsilon)/p^{1-\epsilon}\ ,
\]
compatible with transition maps. Thus, for an open affine $\Spf R_{m_0}\subset \mathfrak{X}^\ast(p^{-m}\epsilon)$ with preimages $\Spf R_m$, one gets similar open affine subsets $\Spf S_m\subset \mathfrak{X}^{\prime\ast}(p^{-m}\epsilon)$, with $S_m / t^{1-\epsilon} = R_m / p^{1-\epsilon}$. Let $R_\infty$ be the $p$-adic completion of $\varinjlim_m R_m$ as above, and $S_\infty$ the $t$-adic completion of $\varinjlim_m S_m$. Then $\Spf R_\infty\subset \mathfrak{X}_{\Gamma_0(p^\infty)}(\epsilon)_a$ and $\Spf S_\infty\subset \mathfrak{X}^{\prime\ast}(\epsilon)^\perf$ give corresponding open subsets, and
\[
R_\infty / p^{1-\epsilon} = \varinjlim_m R_m / p^{1-\epsilon} = \varinjlim_m S_m / t^{1-\epsilon} = S_\infty / t^{1-\epsilon}\ .
\]
From \cite[Theorem 5.2]{ScholzePerfectoid}, it follows that $R_\infty[p^{-1}]$ and $S_\infty[t^{-1}]$ are tilts, as desired.
\end{proof}

\begin{cor}\label{BoundaryStronglyClosedAnticanTower} The space $\mathcal{X}_{\Gamma_0(p^\infty)}^\ast(\epsilon)_a$ is affinoid perfectoid, and the boundary
\[
\mathcal{Z}_{\Gamma_0(p^\infty)}(\epsilon)_a\subset \mathcal{X}_{\Gamma_0(p^\infty)}^\ast(\epsilon)_a
\]
is strongly Zariski closed.
\end{cor}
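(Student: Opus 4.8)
The plan is to deduce both statements from the tilting description of Corollary \ref{TiltingStrictNbhd} together with the affinoidness at finite level provided by Lemma \ref{MinCompAffinoid}.

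For the affinoid perfectoid claim, recall from Lemma \ref{MinCompAffinoid} that for $m$ large $\mathcal{X}^\ast_{\Gamma_0(p^m)}(\epsilon)_a\cong \mathcal{X}^\ast(p^{-m}\epsilon)$ is affinoid, with a flat affine formal model $\mathfrak{X}^\ast(p^{-m}\epsilon)=\Spf A_m^+$. By the construction carried out in the proof of Corollary \ref{TiltingStrictNbhd}, $\mathcal{X}^\ast_{\Gamma_0(p^\infty)}(\epsilon)_a$ is the generic fibre of $\Spf R_\infty$, where $R_\infty$ is the $p$-adic completion of $\varinjlim_m A_m^+$ and $R_\infty^a$ is a perfectoid $\Z_p^{\cycl a}$-algebra. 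Hence $\mathcal{X}^\ast_{\Gamma_0(p^\infty)}(\epsilon)_a=\Spa(R_\infty[p^{-1}],R_\infty^+)$, with $R_\infty^+$ the integral closure of $R_\infty$ in $R_\infty[p^{-1}]$, is affinoid perfectoid.

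For the assertion on the boundary, by Lemma \ref{StronglyZariskiClosedTilting} it suffices to show that the inclusion of tilts is strongly Zariski closed. By Corollary \ref{TiltingStrictNbhd} the tilt of $\mathcal{X}^\ast_{\Gamma_0(p^\infty)}(\epsilon)_a$ is the affinoid perfectoid space $\mathcal{X}^{\prime\ast\perf}(\epsilon)\subset \mathcal{X}^{\prime\ast\perf}$ over the characteristic $p$ field $\F_p((t^{1/(p-1)p^\infty}))$, and this isomorphism carries $\mathcal{Z}_{\Gamma_0(p^\infty)}(\epsilon)_a$ --- the preimage of the boundary of $\mathfrak{X}^\ast$, a closed formal subscheme coming by base change from the boundary of $X^\ast_{g,K^p}$ over $\Z_{(p)}$ --- onto the preimage $\mathcal{Z}^{\prime\perf}(\epsilon)$ of the boundary of $\mathfrak{X}^{\prime\ast}$ inside $\mathcal{X}^{\prime\ast\perf}(\epsilon)$. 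Writing $\mathcal{X}^{\prime\ast\perf}(\epsilon)=\Spa(S,S^+)$ and letting $J_0\subset S$ be the ideal generated by the pullback of the boundary ideal, we have $\mathcal{Z}^{\prime\perf}(\epsilon)=V(J_0)$, so it is Zariski closed; by Lemma \ref{StronglyZariskiClosedCharP} it is then automatically strongly Zariski closed, and untilting via Lemma \ref{StronglyZariskiClosedTilting} concludes the argument.

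The only point requiring genuine care is that the tilting equivalence of Corollary \ref{TiltingStrictNbhd} is compatible with restriction to the boundary, i.e.\ that the defining ideal of $\mathcal{Z}_{\Gamma_0(p^\infty)}(\epsilon)_a$ tilts to that of $\mathcal{Z}^{\prime\perf}(\epsilon)$; this holds because both arise by pullback from the boundary ideal of $X^\ast_{g,K^p}$ over $\Z_{(p)}$, and the reductions of that ideal modulo $p^{1-\epsilon}$ and modulo $t^{1-\epsilon}$ are identified under the isomorphism $R_\infty/p^{1-\epsilon}=S_\infty/t^{1-\epsilon}$ established in the proof of Corollary \ref{TiltingStrictNbhd}. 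All remaining verifications are formal consequences of Lemmas \ref{StronglyZariskiClosedCharP}, \ref{StronglyZariskiClosedTilting}, \ref{MinCompAffinoid} and Corollary \ref{TiltingStrictNbhd}.
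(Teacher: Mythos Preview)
Your argument is essentially the paper's: reduce to the tilt via Lemma~\ref{StronglyZariskiClosedTilting}, identify it through Corollary~\ref{TiltingStrictNbhd}, and then invoke Lemma~\ref{StronglyZariskiClosedCharP} in characteristic $p$. The only real difference is how you obtain affinoidness. The paper checks this on the tilt as well: in characteristic $p$ the Hasse invariant is a genuine global section of the ample line bundle $\omega^{\otimes(p-1)}$, so $\mathcal{X}^{\prime\ast}(\epsilon)=\{|\Ha|\geq |t|^\epsilon\}$ is affinoid already at the base, and its perfection is affinoid perfectoid. You instead stay in characteristic $0$ and appeal to Lemma~\ref{MinCompAffinoid}. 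That is fine, but one step is imprecise: you assert that the paper's formal model $\mathfrak{X}^\ast(p^{-m}\epsilon)$ is globally affine, equal to some $\Spf A_m^+$. Lemma~\ref{MinCompAffinoid} only shows its \emph{generic fibre} is affinoid; an admissible formal scheme with affinoid generic fibre need not itself be affine, and the local construction in the proof of Corollary~\ref{TiltingStrictNbhd} does not give you this globally. You can repair this either by working with the affine model $\Spf B_m^\circ$ coming from $\mathcal{X}^\ast(p^{-m}\epsilon)=\Spa(B_m,B_m^\circ)$ and checking that the Frobenius identification of Theorem~\ref{ExCanFrobLifts} (i) still holds on global sections mod $p^{1-\epsilon}$, or---more simply---by following the paper and reading off affinoidness on the tilt, where no lift of $\Ha$ is needed.
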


\begin{proof} It suffices to check the same assertions for the tilts in characteristic $p$ (cf. Lemma \ref{StronglyZariskiClosedTilting}). In characteristic $p$, the open subset $\mathcal{X}^{\prime\ast}(\epsilon)\subset \mathcal{X}^{\prime\ast}$ given by $|\Ha|\geq |t|^\epsilon$ is affinoid, and the boundary $\mathcal{Z}^\prime(\epsilon)\subset \mathcal{X}^{\prime\ast}(\epsilon)$ is Zariski closed. Passing to the perfection, one gets affinoid perfectoid spaces, and a Zariski closed embedding, which is strongly Zariski closed by Lemma \ref{StronglyZariskiClosedCharP}.
\end{proof}

\subsection{Tate's normalized traces} We need Tate's normalized traces to relate the situation at $\Gamma_0(p^\infty)$-level to the situation at some finite $\Gamma_0(p^m)$-level. More precisely, we will use them to extend Hartog's extension principle to $\Gamma_0(p^\infty)$-level, and finite covers thereof.

\begin{lem}\label{TraceDiv} Let $R$ be a $p$-adically complete flat $\Z_p$-algebra. Let $Y_1,\ldots,Y_n\in R$ and take $P_1,\ldots,P_n\in R\langle X_1,\ldots,X_n\rangle$ such that $P_1,\ldots,P_n$ are topologically nilpotent. Let
\[
S=R\langle X_1,\ldots,X_n\rangle / (X_1^p - Y_1 - P_1,\ldots,X_n^p - Y_n - P_n)\ .
\]
\begin{altenumerate}
\item[{\rm (i)}] The ring $S$ is a finite free $R$-module, with basis given by $X_1^{i_1}\cdots X_n^{i_n}$, where $0\leq i_1,\ldots,i_n\leq p-1$.
\item[{\rm (ii)}] Let $I_i\subset R$ be the ideal generated by the coefficients of $P_i$, and let $I=(p,I_1,\ldots,I_n)\subset R$. Then $\tr_{S/R}(S)\subset I^n$.
\end{altenumerate}
\end{lem}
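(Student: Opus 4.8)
~{\bf Plan of proof.}

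For part (i), the plan is to observe that $S$ is obtained from $R\langle X_1,\dots,X_n\rangle$ by successively adjoining roots of monic degree-$p$ polynomials $X_i^p - Y_i - P_i$ whose constant-in-$X_i$ part need not be a unit, but which are nonetheless monic. First I would treat the case $n=1$: the ring $R\langle X\rangle/(X^p - Y - P)$ is a quotient of a ring of restricted power series by a monic polynomial in $X$, hence is finite free over $R\langle X\rangle/$(nothing in $X$)... more precisely, since $X^p-Y-P(X)$ is monic of degree $p$ in $X$ (note $P\in R\langle X\rangle$ is a power series in $X$, but the relation lets us rewrite $X^p$, and by topological nilpotence of $P$ the $p$-adic convergence is fine), every element reduces modulo the relation to an $R\langle\rangle$-combination — but $R\langle X\rangle$ is already free over $R$ with basis the monomials $X^i$, $i\geq 0$, so modulo $X^p = Y+P$ one obtains a basis $1,X,\dots,X^{p-1}$ as an $R$-module after checking completeness/convergence. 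The general case then follows by induction, adjoining one variable at a time, since at each stage the coefficient ring is again $p$-adically complete and flat (flatness is preserved because the module is finite free). The main technical point here is to check that the infinitely many higher terms of the $P_i$ genuinely converge $p$-adically once one substitutes the relations, which is exactly what topological nilpotence of $P_i$ guarantees.

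For part (ii), the key idea is to compute the trace form on the explicit basis from (i) and reduce modulo $I$. Modulo $I$ the ring $S$ becomes $S/IS = (R/I)\langle X_1,\dots,X_n\rangle/(X_1^p-\bar Y_1,\dots,X_n^p-\bar Y_n)$, i.e. the $P_i$ and $p$ all die. First I would show that for the "pure" Frobenius-type extension $R'\langle X\rangle/(X^p - Y)$ over any $\F_p$-algebra $R'$, the trace of every basis element $X^i$ with $0\le i\le p-1$ is zero: indeed $\tr(X^i)$ is the sum over the $p$ "conjugates", but over an $\F_p$-algebra $X^p=Y$ means $X = Y^{1/p}$ is the unique $p$-th root up to the purely inseparable issue, so multiplication by $X^i$ on the free module has a strictly upper-triangular-plus-shift structure whose trace vanishes for $1\le i\le p-1$, and for $i=0$ one gets $\tr(1)=p\cdot 1 = 0$ in characteristic $p$. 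Hence $\tr_{S/R}(S)\subseteq I\cdot R$ already, i.e. $\tr$ lands in $I$. To upgrade from $I$ to $I^n$, I would iterate: write $S = S_1$ where $S_1 = S_0\langle X_n\rangle/(X_n^p - Y_n - P_n)$ with $S_0 = R\langle X_1,\dots,X_{n-1}\rangle/(\dots)$, and use transitivity of the trace, $\tr_{S/R} = \tr_{S_0/R}\circ \tr_{S/S_0}$. By the one-variable computation, $\tr_{S/S_0}(S)\subseteq (p,I_n)S_0 \subseteq I S_0$; then by induction $\tr_{S_0/R}(IS_0)\subseteq I\cdot\tr_{S_0/R}(S_0)\subseteq I\cdot I^{n-1} = I^n$, using that $I\subseteq R$ so scalars pull out of the trace. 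This gives $\tr_{S/R}(S)\subseteq I^n$.

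The step I expect to be the main obstacle is making the one-variable trace computation genuinely clean in the presence of the perturbation $P_i$ and of $p$-torsion considerations, i.e. verifying that $\tr_{S/S_0}(X_n^i)\in (p,I_n)$ for each $i$ with $0\le i\le p-1$. The clean way is to note that modulo $(p,I_n)$ the ring $S/(p,I_n)S$ is $(S_0/(p,I_n))\langle X_n\rangle/(X_n^p - \bar Y_n)$, and in this quotient one computes the trace of the matrix of multiplication by $X_n^i$ directly on the basis $1,X_n,\dots,X_n^{p-1}$: for $1\le i\le p-1$ this matrix has zero diagonal (it sends $X_n^j$ to $X_n^{j+i}$ or to $\bar Y_n X_n^{j+i-p}$, never back to $X_n^j$ since $1\le i\le p-1$), so its trace is $0$; for $i=0$ the trace is $p$, which is $0$ modulo $p$. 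Since the trace commutes with base change $R\to R/(p,I_n)$, this shows $\tr_{S/S_0}(X_n^i)\equiv 0 \bmod (p,I_n)$, hence $\tr_{S/S_0}(S)\subseteq (p,I_n)S_0\subseteq IS_0$, which is what the induction needs. Once this is pinned down the rest is bookkeeping with transitivity of the trace.
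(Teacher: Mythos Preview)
Your approach to part~(ii) has a genuine gap. You write $S = S_0\langle X_n\rangle/(X_n^p - Y_n - P_n)$ with $S_0 = R\langle X_1,\dots,X_{n-1}\rangle/(X_1^p-Y_1-P_1,\dots,X_{n-1}^p-Y_{n-1}-P_{n-1})$, but this ring $S_0$ is not well-defined: the elements $P_1,\dots,P_{n-1}$ lie in $R\langle X_1,\dots,X_n\rangle$, not in $R\langle X_1,\dots,X_{n-1}\rangle$, and in general involve $X_n$. For a concrete obstruction take $n=2$ with $P_1 = aX_2$ and $P_2 = bX_1$: there is no order in which the variables can be adjoined so that each relation is defined over the previous stage. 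Hence there is no tower $R \to S_0 \to S$ of finite free ring extensions to which transitivity of the trace applies. Your reduction modulo $I$ does correctly show $\tr_{S/R}(S) \subseteq I$ (since modulo $I$ the $P_i$ vanish and a tower exists over $R/I$), but it provides no way to bootstrap from $I$ to $I^n$: the tower you would need for the inductive step simply does not exist before reducing modulo $I$.

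This is exactly why the paper's proof of (ii) is more elaborate. After preliminary reductions (making all $X_i$ invertible, reducing to computing $\tr(X_1)$, then setting $Y_i=0$ and passing to a universal coefficient ring), the paper shows that $\tr(X_1)$ lies in every ideal $p^kI_1 + I_{j_1}+\dots+I_{j_k}$ for $\{j_1,\dots,j_k\}\subset\{2,\dots,n\}$, by killing the corresponding $P_{j_\ell}$'s and exploiting the resulting nilpotence of $X_{j_\ell}$; a separate combinatorial lemma then shows that the intersection of all these ideals is contained in $I^n$. The variables genuinely interact through the $P_i$, and one must track this interaction rather than strip variables off one at a time.

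A smaller remark on part~(i): your inductive setup there has the same problem, since adjoining $X_1$ first requires $P_1$ to live over the base, which it need not. The paper avoids this by working with the Koszul complex of $f_1,\dots,f_n$ modulo powers of $I$, using that modulo $I$ these form a regular sequence; perfection of the complex then lifts freeness through the $I$-adic filtration.
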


\begin{proof}\begin{altenumerate}
\item[{\rm (i)}] Note that $I$ is finitely generated. By assumption $I^N$ is contained in $pR$ for $N$ large, so $R$ is $I$-adically complete. Modulo $I$, the assertion is clear; moreover, the presentation gives a regular embedding of $\Spec S/I$ into affine $n$-space over $\Spec R/I$. Thus, the Koszul complex $C_1$ for $R/I[X_1,\ldots,X_n]$ and the functions $f_i = X_i^p - Y_i - P_i$ is acyclic in nonzero degrees, and its cohomology in degree $0$ is $S/I$, which is finite free over $R/I$. In particular, $C_1$ is a perfect complex of $R/I$-modules. Looking at the Koszul complex $C_k$ for $R/I^k[X_1,\ldots,X_n]$ and the functions $f_i$, one has
\[
C_k\otimes^{\mathbb{L}}_{R/I^k} R/I = C_1\ ,
\]
which is a perfect complex of $R/I$-modules. It follows that $C_k$ is a perfect complex of $R/I^k$-modules (cf. e.g. \cite[Tag 07LU]{StacksProject}). Moreover, $C_1$ is acyclic in nonzero degrees, which implies that $C_k$ is also acyclic in nonzero degrees, e.g. by writing it as a successive extension of
\[
C_k\otimes^{\mathbb{L}}_{R/I^k} I^j/I^{j+1}\ .
\]
Thus, $C_k$ is quasi-isomorphic to a finite projective $R/I^k$-module in degree $0$, which is finite free modulo $I$ with the desired basis; thus, $C_k$ itself is quasi-isomorphic to a finite free $R/I^k$-module with the desired basis. As $S/I^k$ is the cohomology of $C_k$ in degree $0$, we get the result modulo $I^k$, and then in the inverse limit over $k$ for $S$ itself.

\item[{\rm (ii)}] We make some preliminary reductions. First, we may assume that each $Y_i=W_i^p$ is a $p$-th power; this amounts to a faithfully flat base change. Replacing $X_i$ by $X_i - W_i + 1$, we may assume in fact that $Y_i = 1$. In that case, all $X_i$ are invertible.

Next, it is enough to show $\tr_{S/R}(X_1)\in I^n$. Indeed, it is enough to prove the statement for the basis $X_1^{i_1}\cdots X_n^{i_n}$. If all $i_k=0$, the result is clear. Otherwise, set $X_1^\prime = X_i^{i_1}\cdots X_n^{i_n}$, and choose an invertible $n\times n$-matrix over $\F_p$ with first row $i_1,\ldots,i_n$: This gives elements $X_2^\prime,\ldots,X_n^\prime$ such that $X_1,\ldots,X_n$ may be expressed in terms of $X_1^\prime,\ldots,X_n^\prime$ and $X_1^{\pm p},\ldots,X_n^{\pm p}$ (here, we use that all $X_i$ are invertible). This means that $X_1^\prime,\ldots,X_n^\prime$ generate $S/I$ over $R/I$, thus they generate $S$ over $R$. Moreover, the equations are of the similar form, as one sees after reduction modulo $I$.

Replacing all $X_i$ by $X_i-1$, one may then assume that all $Y_i=0$ instead. One sees easily that one still has to prove $\tr_{S/R}(X_1)\in I^n$. We may also assume that all $P_i$ have only monomials of degree $\leq p-1$ in all $X_i$'s. Finally, we can reduce to the case
\[
R=\Z_p[[a_{i,i_1,\ldots,i_n}]]\ ,
\]
where $1\leq i\leq n$ and $0\leq i_1,\ldots,i_n\leq p-1$, and
\[
P_i = \sum_{i_1,\ldots,i_n} a_{i,i_1,\ldots,i_n} X_1^{i_1}\cdots X_n^{i_n}\ .
\]
(In that case, the $P_i$ are not topologically nilpotent for the $p$-adic topology, but the conclusion of part (i) is still satisfied, and it is enough to prove the analogue of (ii) in this case, as all other cases arise via base change.)

Now,
\[
\bigcap_{0\leq k\leq n-1,\{j_1,\ldots,j_k\}\subset \{2,\ldots,n\}} (p^k I_1 + I_{j_1} + \ldots + I_{j_k})\subset I^n\ :
\]
Assume $x\in R$ is in the left-hand side, but not in the right-hand side. Thus, there exists a monomial
\[
a_{i^{(1)},i_1^{(1)},\ldots,i_n^{(1)}} \cdots a_{i^{(m)},i_1^{(m)},\ldots,i_n^{(m)}}
\]
whose coefficient in $x$ is not divisible by $p^{n-m}$ in $R$ ($m\geq 0$). Enumerate the $j$'s between $2$ and $n$ that do not occur as an $i^{(\cdot)}$ as $j_1,\ldots,j_k$. Thus $k\geq n-m-1$, and $k\geq n-m$ if $1$ is among the $i^{(\cdot)}$'s. Now, using that $x\in p^kI_1 + I_{j_1} + \ldots + I_{j_k}$, we see that $1$ has to be among the $i^{(\cdot)}$'s, and that the desired coefficient is divisible by $p^k$. As $k\geq n-m$, we get the desired contradiction.

We claim that for all $k\geq 0$ and $\{j_1,\ldots,j_k\}\subset \{2,\ldots,n\}$, $\tr_{S/R}(X_1)\in p^k I_1 + I_{j_1} + \ldots + I_{j_k}$. For this, we may assume that $\{j_1,\ldots,j_k\} = \{n-k+1,\ldots,n\}$ (by symmetry), and then divide by the ideal $I_{n-k+1}+\ldots+I_n$; thus, we may assume $P_{n-k+1} = \ldots = P_n = 0$. In that case, all $X_{n-k+1},\ldots,X_n$ are nilpotent; let $\bar{S} = S/(X_{n-k+1},\ldots,X_n)$. One finds that $\tr_{S/R}(X_1) = p^k \tr_{\bar{S}/R}(X_1)$, so we may assume that $k=0$. In that case, we have to prove $\tr_{S/R}(X_1)\in I_1$. We can compute the trace by using the basis $X_1^{i_1}\cdots X_n^{i_n}$, $0\leq i_1,\ldots,i_n\leq p-1$. If $i_1<p-1$, then multiplication by $X_1$ maps this to a different basis element, so that it does not contribute to the trace. If $i_1=p-1$, then
\[
X_1\cdot (X_1^{i_1}\cdots X_n^{i_n}) = P_1 X_2^{i_2}\cdots X_n^{i_n}\ ,
\]
which contributes an element of $I_1$ to the trace.
\end{altenumerate}
\end{proof}

\begin{cor}\label{CorTraceDiv} Let $R$ be a $p$-adically complete $\Z_p$-algebra topologically of finite type, formally smooth of dimension $n$, and let $f\in R$ such that $\overline{f}\in \overline{R} = R/p$ is not a zero divisor. For $0\leq \epsilon<1$, define
\[
R_\epsilon = (R\hat{\otimes}_{\Z_p} \Z_p^\cycl)\langle u_\epsilon\rangle / (fu_\epsilon-p^\epsilon)\ .
\]
Let $\varphi: R_\epsilon\to R_{\epsilon/p}$ be a map of $\Z_p^\cycl$-algebras such that $\varphi\mod p^{1-\epsilon}$ is given by the map
\[
(\overline{R}\otimes_{\F_p} \Z_p^\cycl/p^{1-\epsilon})[u_\epsilon]/(fu_\epsilon - p^\epsilon)\to (\overline{R}\otimes_{\F_p} \Z_p^\cycl/p^{1-\epsilon})[u_{\epsilon/p}]/(fu_{\epsilon/p} - p^{\epsilon/p})
\]
which is the Frobenius on $\overline{R}$, and sends $u_\epsilon$ to $u_{\epsilon/p}^p$. We assume that $\epsilon<\frac 12$.
\begin{altenumerate}
\item[{\rm (i)}] The map $\varphi[\frac 1p]: R_\epsilon[\frac 1p]\to R_{\epsilon/p} [\frac 1p]$ is finite and flat.
\item[{\rm (ii)}] The trace map
\[
\tr_{R_{\epsilon/p} [\frac 1p]/R_\epsilon[\frac 1p]}: R_{\epsilon/p}[\tfrac 1p]\to R_\epsilon[\tfrac 1p]
\]
maps $R_{\epsilon/p}$ into $p^{n-(2n+1)\epsilon} R_\epsilon$.
\end{altenumerate}
\end{cor}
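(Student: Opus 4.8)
The plan is to deduce both assertions from Lemma \ref{TraceDiv}. Since $R$ is formally smooth over $\Z_p$ of relative dimension $n$, I would first replace $\Spf R$ by a member of a finite affine open cover — a faithfully flat localization, harmless for both (i) and (ii) — so that $R$ becomes formally étale over $\Z_p\langle T_1,\ldots,T_n\rangle$; then the Frobenius makes $\overline R=R/p$ free of rank $p^n$ over itself on the monomials $T^{\mathbf i}=T_1^{i_1}\cdots T_n^{i_n}$, $0\le i_j\le p-1$. Using that $\varphi$ reduces to the Frobenius modulo $p^{1-\epsilon}$ one gets $T_i^p-\varphi(T_i)\in p^{1-\epsilon}R_{\epsilon/p}$ with $\varphi(T_i)\in\varphi(R_\epsilon)$, while $u_{\epsilon/p}^p-\varphi(u_\epsilon)$ lies in a positive power of $p$ times $R_{\epsilon/p}$ once one clears the $1/f$-type denominators imposed by the relation $fu_{\epsilon/p}=p^{\epsilon/p}$ (this is where $\epsilon<\tfrac12$ enters, keeping the correction topologically nilpotent). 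This produces $Y_1,\ldots,Y_{n+1}\in R_\epsilon$ and topologically nilpotent $P_1,\ldots,P_{n+1}$, whose coefficients generate ideals $I_j\subset R_\epsilon$ lying in explicit powers of $p$ times $R_\epsilon$, together with a surjection
\[
S:=R_\epsilon\langle X_1,\ldots,X_{n+1}\rangle/(X_j^p-Y_j-P_j)\ \longrightarrow\ R_{\epsilon/p},\qquad X_i\mapsto T_i,\quad X_{n+1}\mapsto u_{\epsilon/p}.
\]

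For (i): by Lemma \ref{TraceDiv}(i), $S$ is finite free over $R_\epsilon$ of rank $p^{n+1}$, hence $S[\tfrac1p]$ is finite free over $R_\epsilon[\tfrac1p]$. The map $\varphi[\tfrac1p]$ is finite (after inverting $p$ it is a twisted $n$-fold Frobenius-power map on affinoid subdomains of $R[\tfrac1p]\hat\otimes\Q_p^{\cycl}$) between regular affinoid $\Q_p^{\cycl}$-algebras of the same dimension $n$, hence flat by miracle flatness — this is (i) — of degree $p^n$ (computed by reduction modulo $p$), so $R_{\epsilon/p}[\tfrac1p]$ is finite projective of rank $p^n$ over $R_\epsilon[\tfrac1p]$ and the surjection $S[\tfrac1p]\twoheadrightarrow R_{\epsilon/p}[\tfrac1p]$ splits, with kernel a finite projective $R_\epsilon[\tfrac1p]$-module $N$ of rank $p^n(p-1)$.

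For (ii): writing $S[\tfrac1p]=R_{\epsilon/p}[\tfrac1p]\oplus N$ compatibly with the $S[\tfrac1p]$-module structure (legitimate since $N$ is an ideal), the multiplication trace is block-triangular, so for $r\in R_{\epsilon/p}$ with any lift $s\in S$,
\[
\tr_{R_{\epsilon/p}[\frac1p]/R_\epsilon[\frac1p]}(r)=\tr_{S/R_\epsilon}(s)-\tr_N(\text{multiplication by }s),
\]
independently of the lift. By Lemma \ref{TraceDiv}(ii) the first term lies in $I^{n+1}$, with $I=(p,I_1,\ldots,I_{n+1})$; bounding $I$, and likewise the ideal controlling the $N$-correction (which carries the extra $1/f$-denominators), by the power-of-$p$ estimates on the $I_j$ and using $p=p^\epsilon p^{1-\epsilon}$, one checks that both terms lie in $p^{\,n-(2n+1)\epsilon}R_\epsilon$, giving (ii).

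I expect the hard part to be the integral bookkeeping in the $u$-direction: unlike the $n$ Frobenius directions, $u_{\epsilon/p}$ is only a generator after inverting $p$, so $R_{\epsilon/p}$ is not $R_\epsilon$-flat (its rank jumps off the generic fibre), which forces one to apply Lemma \ref{TraceDiv} to the auxiliary free algebra $S$ rather than to $R_{\epsilon/p}$ itself, and then to track carefully both the kernel $N$ and the $p$-adic denominators coming from $fu_{\epsilon/p}=p^{\epsilon/p}$. It is this — not the trace estimate of Lemma \ref{TraceDiv}, which alone would yield the sharper exponent $n(1-\epsilon)$ — that is responsible for the lossy exponent $n-(2n+1)\epsilon$.
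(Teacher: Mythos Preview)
Your overall strategy---localize to \'etale coordinates and invoke Lemma~\ref{TraceDiv}---matches the paper's. But your route through an $(n+1)$-variable presentation $S\twoheadrightarrow R_{\epsilon/p}$ and the splitting $S[\tfrac1p]=R_{\epsilon/p}[\tfrac1p]\oplus N$ has a real gap: you never explain how to bound $\tr_N(\text{mult.\ by }s)$. Lemma~\ref{TraceDiv}(ii) controls $\tr_{S/R_\epsilon}$, but gives nothing for the trace on the rank-$p^n(p-1)$ summand $N$; that trace is an $R_\epsilon[\tfrac1p]$-linear invariant with no a priori $p$-divisibility. Worse, since $\tr_N(s)=\tr_S(s)-\tr_{R_{\epsilon/p}[\frac1p]/R_\epsilon[\frac1p]}(\bar s)$, bounding $\tr_N$ is equivalent to the statement you are trying to prove, so ``one checks that both terms lie in $p^{n-(2n+1)\epsilon}R_\epsilon$'' is circular as written.

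The paper avoids this by a different auxiliary ring. Instead of adjoining $u_{\epsilon/p}$ as an extra variable, it sets
\[
R'_{\epsilon/p}=(R\hat\otimes_{\Z_p}\Z_p^\cycl)\langle v_\epsilon\rangle/(f^p v_\epsilon-p^\epsilon)
\]
with $\tau\colon R'_{\epsilon/p}\to R_{\epsilon/p}$, $v_\epsilon\mapsto u_{\epsilon/p}^p$; then $\tau[\tfrac1p]$ is an isomorphism and $\coker\tau$ is killed by $p^\epsilon$. One shows $\varphi$ factors as $\tau\circ\psi$ with $\psi\colon R_\epsilon\to R'_{\epsilon/p}$ which, modulo $p^{1-2\epsilon}$, is Frobenius on $\overline R$ and sends $u_\epsilon\mapsto v_\epsilon$---\emph{linearly} in the $u$-direction. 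Hence $\psi$ admits an $n$-variable presentation $R'_{\epsilon/p}=R_\epsilon\langle X_1,\ldots,X_n\rangle/(X_i^p-Y_i-P_i)$ with $P_i\in p^{1-2\epsilon}R_\epsilon\langle X\rangle$. Lemma~\ref{TraceDiv}(i) then shows $\psi$ is finite free of rank $p^n$, giving (i) directly without miracle flatness; and Lemma~\ref{TraceDiv}(ii) gives $\tr_{R'_{\epsilon/p}/R_\epsilon}(R'_{\epsilon/p})\subset(p^{1-2\epsilon})^n R_\epsilon=p^{n-2n\epsilon}R_\epsilon$, with the passage through $\tau$ costing one more factor $p^\epsilon$. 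So the exponent $n-(2n+1)\epsilon$ comes not from any kernel correction, but from the fact that $\psi$ is only determined modulo $p^{1-2\epsilon}$ (forcing $P_i\in p^{1-2\epsilon}$ rather than $p^{1-\epsilon}$), together with the $p^\epsilon$-defect of $\tau$.
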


\begin{proof} All assertions are local on $\Spf R$. Thus, we may assume that there is an \'etale map $\Spf R\to \Spf \Z_p\langle Y_1,\ldots,Y_n\rangle$. In particular, regarding $\overline{R}$ as an $\overline{R}$-module via Frobenius, it is free with basis given by $Y_1^{i_1}\cdots Y_n^{i_n}$, $0\leq i_1,\ldots,i_n\leq p-1$.

\begin{altenumerate}
\item[{\rm (i)}] Consider the $\Z_p^\cycl$-algebra
\[
R_{\epsilon/p}^\prime = (R\hat{\otimes}_{\Z_p} \Z_p^\cycl)\langle v_\epsilon\rangle/(f^p v_\epsilon - p^\epsilon)\ .
\]
There is a map $\tau: R_{\epsilon/p}^\prime\to R_{\epsilon/p}$ by mapping $v_\epsilon$ to $u_{\epsilon/p}^p$. After inverting $p$, $\tau$ becomes an isomorphism, the inverse being given by mapping $u_{\epsilon/p}$ to $p^{-(p-1)\epsilon/p} f^{p-1} v_\epsilon$. As $R_{\epsilon/p}^\prime$ is $p$-torsion free by Lemma \ref{HartogPrinciple}, it follows that $\tau$ is injective. As $p^{(p-i)\epsilon/p} u_{\epsilon/p}^i = f^{p-i} v_\epsilon$ for $i=1,\ldots,p-1$, the cokernel of $\tau$ is killed by $p^{(p-1)\epsilon/p}$; in particular, by $p^\epsilon$.

We claim that $\varphi: R_\epsilon\to R_{\epsilon/p}$ factors over a map $\psi: R_\epsilon\to R_{\epsilon/p}^\prime$. As $\epsilon<1-\epsilon$, this can be checked after reduction modulo $p^{1-\epsilon}$. By assumption, $\varphi\mod p^{1-\epsilon}$ factors as a composite
\[\begin{aligned}
(\overline{R}\otimes_{\F_p} \Z_p^\cycl/p^{1-\epsilon})[u_\epsilon]/(fu_\epsilon - p^\epsilon)&\to (\overline{R}\otimes_{\F_p} \Z_p^\cycl/p^{1-\epsilon})[v_\epsilon]/(f^p v_\epsilon - p^\epsilon)\\
&\buildrel \tau\over \to (\overline{R}\otimes_{\F_p} \Z_p^\cycl/p^{1-\epsilon})[u_{\epsilon/p}]/(fu_{\epsilon/p} - p^{\epsilon/p})\ ,
\end{aligned}\]
where the first map is the Frobenius on $\overline{R}$ and sends $u_\epsilon$ to $v_\epsilon$, and the second map is $\tau\mod p^{1-\epsilon}$. This gives the desired factorization.

Moreover, the kernel of $\tau\mod p^{1-\epsilon}$ is killed by $p^\epsilon$, as the kernel comes from a $\Tor_1$-term with coefficients in the cokernel of $\tau$. It follows that $\psi\mod p^{1-2\epsilon}$ agrees with the map
\[
(\overline{R}\otimes_{\F_p} \Z_p^\cycl/p^{1-2\epsilon})[u_\epsilon]/(fu_\epsilon - p^\epsilon)\to (\overline{R}\otimes_{\F_p} \Z_p^\cycl/p^{1-2\epsilon})[v_\epsilon]/(f^p v_\epsilon - p^\epsilon)
\]
which is the Frobenius on $\overline{R}$ and sends $u_\epsilon$ to $v_\epsilon$. This is finite free with basis $Y_1^{i_1}\cdots Y_n^{i_n}$, $0\leq i_1,\ldots,i_n\leq p-1$. It follows that the same is true for $\psi$, as desired.

\item[{\rm (ii)}] It suffices to show that
\[
\tr_{R_{\epsilon/p}^\prime/R_\epsilon}: R_{\epsilon/p}^\prime\to R_\epsilon
\]
has image contained in $p^{n-2n\epsilon} R_\epsilon$. But we can write
\[
R_{\epsilon/p}^\prime = R_\epsilon\langle X_1,\ldots,X_n\rangle/(X_1^p - Y_1 - P_1,\ldots,X_n^p - Y_n - P_n)\ ,
\]
where $P_1,\ldots,P_n\in p^{1-2\epsilon} R_\epsilon\langle X_1,\ldots,X_n\rangle$. Applying Lemma \ref{TraceDiv} gives the result.
\end{altenumerate}
\end{proof}

\begin{cor}\label{CorTateTraces} Fix $0\leq \epsilon<\frac 12$, and consider the formal scheme
\[
\mathfrak{X}_{\Gamma_0(p^\infty)}(\epsilon)_a = \varprojlim_m \mathfrak{X}(p^{-m}\epsilon)
\]
over $\Z_p^\cycl$. Fix some $m\geq 0$. For $m^\prime\geq m$, the maps
\[
1/p^{(m^\prime-m)g(g+1)/2} \tr: \OO_{\mathfrak{X}(p^{-m^\prime}\epsilon)}[p^{-1}]\to \OO_{\mathfrak{X}(p^{-m}\epsilon)}[p^{-1}]
\]
are compatible for varying $m^\prime$, and give a map
\[
\overline{\tr}_m: \varinjlim_{m^\prime} \OO_{\mathfrak{X}(p^{-m^\prime}\epsilon)}[p^{-1}]\to \OO_{\mathfrak{X}(p^{-m}\epsilon)}[p^{-1}]\ .
\]
The image of $\varinjlim_{m^\prime} \OO_{\mathfrak{X}(p^{-m^\prime}\epsilon)}$ is contained in $p^{-C_m} \OO_{\mathfrak{X}(p^{-m}\epsilon)}$ for some constant $C_m$, with $C_m\to 0$ as $m\to \infty$. Thus, $\overline{\tr}_m$ extends by continuity to a map
\[
\overline{\tr}_m: \OO_{\mathfrak{X}_{\Gamma_0(p^\infty)}(\epsilon)_a}[p^{-1}]\to \OO_{\mathfrak{X}(p^{-m}\epsilon)}[p^{-1}]\ ,
\]
called Tate's normalized trace. Moreover, for any $x\in \OO_{\mathfrak{X}_{\Gamma_0(p^\infty)}(\epsilon)_a}[p^{-1}]$,
\[
x = \lim_{m\to\infty} \overline{\tr}_m(x)\ .
\]
\end{cor}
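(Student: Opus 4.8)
The plan is to reduce the statement to the one-step trace estimate of Corollary \ref{CorTraceDiv} and then iterate it, controlling the loss of integrality by a convergent geometric series. First I would set up the local picture: pick an affine open $\Spf R\subset \mathfrak{X}_{\Z_p}$ inside the good reduction locus over which $\omega$ is trivialized, so that $R$ is topologically of finite type and formally smooth over $\Z_p$ of relative dimension $n=g(g+1)/2$, and the Hasse invariant restricts to a function $f\in R$ whose reduction $\overline f$ is not a zero divisor (the ordinary locus being dense). Over this chart $\mathfrak{X}(p^{-k}\epsilon)$ is $\Spf((R\hat{\otimes}_{\Z_p}\Z_p^\cycl)\langle u\rangle/(fu-p^{p^{-k}\epsilon}))$, and by Theorem \ref{ExCanFrobLifts}(i) the transition map $\mathfrak{X}(p^{-k-1}\epsilon)\to\mathfrak{X}(p^{-k}\epsilon)$ is, on this chart, precisely a map $\varphi$ of the kind considered in Corollary \ref{CorTraceDiv}, with that corollary's parameter taken to be $p^{-k}\epsilon<\tfrac12$ and with $\varphi$ agreeing modulo $p^{1-p^{-k}\epsilon}$ with the stated Frobenius lift. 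By Corollary \ref{CorTraceDiv}(i) this map is finite locally free of degree $p^n$ after inverting $p$, so $p^{-n}\tr$ makes sense; composing $m'-m$ such steps, the map $\mathfrak{X}(p^{-m'}\epsilon)\to\mathfrak{X}(p^{-m}\epsilon)$ has degree $p^{(m'-m)n}=p^{(m'-m)g(g+1)/2}$, matching the normalization in the statement.

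Granting this, compatibility of the normalized traces is formal: by transitivity of the trace and multiplicativity of the degree, $\tfrac{1}{p^{(m'-m)g(g+1)/2}}\tr_{m'/m}$ restricted to $\OO_{\mathfrak{X}(p^{-m''}\epsilon)}$ (for $m\le m''\le m'$) equals $\tfrac{1}{p^{(m''-m)g(g+1)/2}}\tr_{m''/m}$, since the trace of an element pulled back from level $m''$ is $p^{(m'-m'')n}$ times that element. Hence the normalized traces descend to a map $\overline{\tr}_m\colon\varinjlim_{m'}\OO_{\mathfrak{X}(p^{-m'}\epsilon)}[p^{-1}]\to\OO_{\mathfrak{X}(p^{-m}\epsilon)}[p^{-1}]$, and the same computation (with $m''=m_0\le m$ and an element of level $m_0$) shows $\overline{\tr}_m$ is the identity on elements of level $\le m$.

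The heart of the argument is the uniform integral bound, obtained by iterating Corollary \ref{CorTraceDiv}(ii): applied at level $k$ (its parameter being $p^{-k}\epsilon$), the one-step normalized trace carries $\OO_{\mathfrak{X}(p^{-k-1}\epsilon)}$ into $p^{-(2n+1)p^{-k}\epsilon}\OO_{\mathfrak{X}(p^{-k}\epsilon)}$. Since each one-step normalized trace is linear over its target ring, composing the steps from level $m'$ down to level $m$ lands $\OO_{\mathfrak{X}(p^{-m'}\epsilon)}$ inside $p^{-E}\OO_{\mathfrak{X}(p^{-m}\epsilon)}$ with $E=(2n+1)\epsilon\sum_{k=m}^{m'-1}p^{-k}\le C_m:=(2n+1)\epsilon\,\tfrac{p^{1-m}}{p-1}$, uniformly in $m'$, and $C_m\to 0$; these local bounds glue since the trace maps are canonical. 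Thus $\overline{\tr}_m$ sends $p^N\varinjlim_{m'}\OO_{\mathfrak{X}(p^{-m'}\epsilon)}$ into $p^{N-C_m}\OO_{\mathfrak{X}(p^{-m}\epsilon)}$, so it is uniformly $p$-adically continuous and extends uniquely to a continuous map $\overline{\tr}_m\colon\OO_{\mathfrak{X}_{\Gamma_0(p^\infty)}(\epsilon)_a}[p^{-1}]\to\OO_{\mathfrak{X}(p^{-m}\epsilon)}[p^{-1}]$ on the $p$-adic completion, still carrying the integral subring into $p^{-C_m}\OO_{\mathfrak{X}(p^{-m}\epsilon)}$.

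For the final assertion $x=\lim_m\overline{\tr}_m(x)$: if $x$ lies in the dense subring $\varinjlim_{m'}\OO_{\mathfrak{X}(p^{-m'}\epsilon)}[p^{-1}]$ then $\overline{\tr}_m(x)=x$ exactly once $m$ exceeds its level, by the identity-on-base-elements remark; for general $x$, approximate it by such a $y$ modulo $p^N$ and write $x-\overline{\tr}_m(x)=(x-y)+(y-\overline{\tr}_m(y))+\overline{\tr}_m(y-x)$, where the middle term vanishes for $m$ large, while the first term lies in $p^N$ and the last in $p^{N-C_m}\subseteq p^{N-C_0}$ times integral rings (using $C_m\le C_0$ uniformly), so the difference lies in $p^{N-C_0}\OO_{\mathfrak{X}_{\Gamma_0(p^\infty)}(\epsilon)_a}$ and tends to $0$ as $N\to\infty$. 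The only real work here is the bookkeeping of the iteration — getting the direction of composition right and summing the geometric series — together with checking the estimates glue over the affine charts; the genuine content already sits in Corollary \ref{CorTraceDiv}.
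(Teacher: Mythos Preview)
Your proof is correct and follows essentially the same approach as the paper: reduce to the one-step estimate from Corollary \ref{CorTraceDiv} (identifying $n=g(g+1)/2$, so $2n+1=g^2+g+1$) via Theorem \ref{ExCanFrobLifts}(i), then iterate and sum the resulting geometric series to obtain $C_m$. The paper's proof records only this key step and leaves the compatibility, continuity extension, and the final limit statement as implicit; you have simply filled these in carefully.
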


\begin{proof} We only need to prove existence of $C_m$, with $C_m\to 0$ as $m\to \infty$. Observe that by Theorem \ref{ExCanFrobLifts} (i), the transition maps
\[
\tilde{F}_{\mathfrak{X}(p^{-m-1}\epsilon)}: \mathfrak{X}(p^{-m-1}\epsilon)\to \mathfrak{X}(p^{-m}\epsilon)
\]
are of the type considered in Corollary \ref{CorTraceDiv} for $\epsilon^\prime = p^{-m}\epsilon$, with $n=g(g+1)/2$. It follows that
\[
1/p^{g(g+1)/2} \tr: \OO_{\mathfrak{X}(p^{-m-1}\epsilon)}[p^{-1}]\to \OO_{\mathfrak{X}(p^{-m}\epsilon)}[p^{-1}]
\]
maps $\OO_{\mathfrak{X}(p^{-m-1}\epsilon)}$ into $p^{-(g^2+g+1)\epsilon/p^m} \OO_{\mathfrak{X}(p^{-m}\epsilon)}$. As the sum of $(g^2+g+1)\epsilon/p^{m^\prime}$ over all $m^\prime\geq m$ exists, one gets the existence of $C_m$; moreover, $C_m\to 0$ as $m\to \infty$.
\end{proof}

\subsection{Conclusion}\label{ConclusionSubsection} Recall that we have proved that a strict neighborhood of the anticanonical locus becomes perfectoid at $\Gamma_0(p^\infty)$-level. Our goal in this section is to extend this result to full $\Gamma(p^\infty)$-level. This is done in two steps: From $\Gamma_0(p^\infty)$-level to $\Gamma_1(p^\infty)$-level, and from $\Gamma_1(p^\infty)$-level to $\Gamma(p^\infty)$-level. The second part is easy, and follows from almost purity, as there is no ramification at the boundary.

More critical is the transition from $\Gamma_0(p^\infty)$-level to $\Gamma_1(p^\infty)$-level. The issue is that it is very hard to understand what happens at the boundary. Our strategy is to first guess what the tilt of the space is, and then prove that our guess is correct. Away from the boundary, it is clear which finite \'etale cover to take. In characteristic $p$, one can build a candidate by taking the perfection of the normalization. One can take the untilt of this space, and we want to compare this with the spaces in characteristic $0$. Away from the boundary, this can be done. To extend to the whole space, we need two ingredients: The Hebbarkeitssatz for the candidate space in characteristic $p$, and Hartog's extension principle for the space in characteristic $0$.

Assume that $g\geq 2$ until further notice. We start by proving the version of Hartog's extension principle that we will need. This follows from a combination of the earlier version of Hartog (which is a statement at finite level) with Tate's normalized traces.

\begin{lem}\label{HardHartog} Let $\mathcal{Y}_m^\ast\to \mathcal{X}_{\Gamma_0(p^m)}^\ast(\epsilon)_a$ be finite, \'etale away from the boundary, and assume that $\mathcal{Y}_m^\ast$ is normal, and that no irreducible component of $\mathcal{Y}_m^\ast$ maps into the boundary. In particular, $\mathcal{Y}_m\to \mathcal{X}_{\Gamma_0(p^m)}(\epsilon)_a$ is finite \'etale, where $\mathcal{Y}_m\subset \mathcal{Y}_m^\ast$ is the preimage of $\mathcal{X}_{\Gamma_0(p^m)}(\epsilon)_a\subset \mathcal{X}_{\Gamma_0(p^m)}^\ast(\epsilon)_a$. For $m^\prime\geq m$, let $\mathcal{Y}_{m^\prime}^\ast\to \mathcal{X}_{\Gamma_0(p^{m^\prime})}^\ast(\epsilon)_a$ be the normalization of the pullback, with $\mathcal{Y}_{m^\prime}\subset \mathcal{Y}_{m^\prime}^\ast$. Let $\mathcal{Y}_\infty$ be the pullback of $\mathcal{Y}_m$ to $\mathcal{X}_{\Gamma_0(p^\infty)}(\epsilon)_a$, which exists as $\mathcal{Y}_m\to \mathcal{X}_{\Gamma_0(p^m)}(\epsilon)_a$ is finite \'etale.

Observe that as $\mathcal{X}_{\Gamma_0(p^{m^\prime})}^\ast(\epsilon)_a$ is affinoid for $m^\prime$ sufficiently large (cf. Lemma \ref{MinCompAffinoid}), all $\mathcal{Y}_{m^\prime}^\ast = \Spa(S_{m^\prime},S_{m^\prime}^+)$ (with $S_{m^\prime}^+ = S_{m^\prime}^\circ$) are affinoid for $m^\prime$ sufficiently large.
\begin{altenumerate}
\item[{\rm (i)}] For all $m^\prime$ sufficiently large,
\[
S_{m^\prime}^+ = H^0(\mathcal{Y}_{m^\prime},\OO_{\mathcal{Y}_{m^\prime}}^+)\ .
\]
\item[{\rm (ii)}] The map
\[
\varinjlim_{m^\prime} S_{m^\prime}^+\to H^0(\mathcal{Y}_\infty,\OO_{\mathcal{Y}_\infty}^+)
\]
is injective with dense image. Moreover, there are canonical continuous retractions
\[
H^0(\mathcal{Y}_\infty,\OO_{\mathcal{Y}_\infty})\to S_{m^\prime}\ .
\]
\item[{\rm (iii)}] Assume that $S_\infty = H^0(\mathcal{Y}_\infty,\OO_{\mathcal{Y}_\infty})$ is a perfectoid $\Q_p^\cycl$-algebra; define
\[
\mathcal{Y}_\infty^\ast = \Spa (S_\infty,S_\infty^+)\ ,
\]
where $S_\infty^+ = S_\infty^\circ$. Then $\mathcal{Y}_\infty^\ast$ is an affinoid perfectoid space over $\Q_p^\cycl$, $\mathcal{Y}_\infty^\ast\sim \varprojlim_{m^\prime} \mathcal{Y}_{m^\prime}^\ast$, and $S_\infty^+$ is the $p$-adic completion of $\varinjlim_{m^\prime} S_{m^\prime}^+$.
\end{altenumerate}
\end{lem}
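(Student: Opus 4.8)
Part (i) is Riemann's/Hartog's extension principle at finite level, part (ii) produces Tate's normalized traces on the covers $\mathcal Y_{m'}$, and part (iii) combines the two. Throughout I take $m'$ large enough that $\mathcal X^\ast_{\Gamma_0(p^{m'})}(\epsilon)_a\cong\mathcal X^\ast(p^{-m'}\epsilon)$ is affinoid (Lemma \ref{MinCompAffinoid}); by the proof of that lemma it is cut out inside $\mathcal X^\ast$ by $|\widetilde\Ha^{p^N}|\geq|p|^\epsilon$ for a global lift $\widetilde\Ha^{p^N}$ of a power of the Hasse invariant, so it carries the explicit flat formal model $\mathfrak X^\ast(p^{-m'}\epsilon)$ of the previous subsection, locally of the form $\Spf\big((R\hat\otimes_{\Z_p}\Z_p^\cycl)\langle u\rangle/(u\widetilde\Ha-p^{p^{-m'}\epsilon})\big)$ with $\overline R=R/p$ normal and $\widetilde\Ha$ a non-zero-divisor mod $p$.

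\textbf{Part (i).} I would pass to a formal model. Let $\mathfrak Y^\ast_{m'}$ be (a suitable normalization of) $\mathfrak X^\ast(p^{-m'}\epsilon)$ in $S_{m'}$, so that its adic generic fibre is $\mathcal Y^\ast_{m'}$ and $S^+_{m'}=S^\circ_{m'}=H^0(\mathfrak Y^\ast_{m'},\OO)$. Since $\mathcal Y^\ast_{m'}\to\mathcal X^\ast_{\Gamma_0(p^{m'})}(\epsilon)_a$ is finite and, by hypothesis, no irreducible component of $\mathcal Y^\ast_{m'}$ maps into the boundary, the Zariski closed subset $\mathcal Y^\ast_{m'}\setminus\mathcal Y_{m'}$ (the preimage of $\mathcal Z_{\Gamma_0(p^{m'})}$) is everywhere of codimension $\geq g\geq 2$. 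Working on the charts above, I would then invoke Lemma \ref{HartogPrinciple}, with $f$ the relevant lift of the Hasse invariant and $Y$ the boundary, to conclude $H^0(\mathcal Y_{m'},\OO^+_{\mathcal Y_{m'}})=S^+_{m'}$; the one point requiring care is that Lemma \ref{HartogPrinciple} asks for a normal special fibre, so one must choose the formal model of $\mathcal Y^\ast_{m'}$ so that the codimension-$2$/depth hypothesis it actually uses is available on the cover.

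\textbf{Part (ii).} Because $\mathcal Y_{m'}\to\mathcal X_{\Gamma_0(p^{m'})}(\epsilon)_a$ is finite étale and $\mathcal Y_\infty$ is its pullback along $\mathcal X_{\Gamma_0(p^\infty)}(\epsilon)_a\to\mathcal X_{\Gamma_0(p^{m'})}(\epsilon)_a$, the module $H^0(\mathcal Y_\infty,\OO)$ is obtained from the finite projective $H^0(\mathcal X_{\Gamma_0(p^{m'})}(\epsilon)_a,\OO)$-module $H^0(\mathcal Y_{m'},\OO)$ by (completed) base change along $\OO_{\mathfrak X_{\Gamma_0(p^{m'})}(\epsilon)_a}\to\OO_{\mathfrak X_{\Gamma_0(p^\infty)}(\epsilon)_a}$. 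Tensoring the normalized trace $\overline{\tr}_{m'}$ of Corollary \ref{CorTateTraces} with this module yields continuous retractions $\overline{\tr}^{\,\mathcal Y}_{m'}\colon H^0(\mathcal Y_\infty,\OO)\to S_{m'}$ (using part (i) to identify $H^0(\mathcal Y_{m'},\OO)$ with $S_{m'}$) satisfying $h=\lim_{m'}\overline{\tr}^{\,\mathcal Y}_{m'}(h)$ and $\overline{\tr}^{\,\mathcal Y}_{m'}(\varinjlim_{m''}S^+_{m''})\subset p^{-C_{m'}}S^+_{m'}$ with $C_{m'}\to 0$. Density of $\varinjlim_{m'}S^+_{m'}$ in $H^0(\mathcal Y_\infty,\OO^+_{\mathcal Y_\infty})$ is immediate from the first property; injectivity follows from surjectivity of $\mathcal Y_\infty\to\mathcal Y_{m'}$, itself a consequence of surjectivity of the transition maps $\mathcal X^\ast_{\Gamma_0(p^{m'+1})}(\epsilon)_a\to\mathcal X^\ast_{\Gamma_0(p^{m'})}(\epsilon)_a$, which modulo $p^{1-\epsilon}$ are identified (Theorem \ref{ExCanFrobLifts}) with a relative Frobenius and hence are surjective.

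\textbf{Part (iii) and the main obstacle.} Granting $S_\infty=H^0(\mathcal Y_\infty,\OO_{\mathcal Y_\infty})$ perfectoid, $\mathcal Y^\ast_\infty=\Spa(S_\infty,S^\circ_\infty)$ is an affinoid perfectoid space by \cite[Theorem 6.3]{ScholzePerfectoid}. To obtain the description of $S^+_\infty$ and the relation $\mathcal Y^\ast_\infty\sim\varprojlim_{m'}\mathcal Y^\ast_{m'}$, I would show that $\widehat{\varinjlim_{m'}S^+_{m'}}\to S^\circ_\infty$ is an almost isomorphism: given $h\in S^\circ_\infty$, the elements $p^{C_{m'}}\,\overline{\tr}^{\,\mathcal Y}_{m''}(h)$ lie in $S^+_{m''}$ for $m''\geq m'$ (as $C$ is decreasing) and converge $p$-adically to $p^{C_{m'}}h$, so $p^{C_{m'}}h\in\widehat{\varinjlim S^+_{m''}}$ for every $m'$; since $C_{m'}\to 0$ this gives almost surjectivity, and part (i) together with (ii) gives the matching injectivity. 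As both rings are open and integrally closed in the perfectoid algebra $S_\infty$, they coincide, whence $S^+_\infty$ is the $p$-adic completion of $\varinjlim_{m'}S^+_{m'}$; then $\mathcal Y^\ast_\infty\sim\varprojlim_{m'}\mathcal Y^\ast_{m'}$ and its uniqueness follow from \cite[Definition 2.4.1, Proposition 2.4.5]{ScholzeWeinstein}. The hard part is part (i): Hartog's principle is only available in the packaged blow-up form of Lemma \ref{HartogPrinciple}, and the normal covers $\mathcal Y^\ast_{m'}$ are not literally of that shape, so one must build a formal model with the right properties at the boundary before applying it; a secondary nuisance is bookkeeping the $p^{C_{m'}}$-losses of Tate's trace in part (iii) so that they genuinely vanish in the limit.
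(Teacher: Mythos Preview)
Your parts (ii) and (iii) are essentially on track, but there is a genuine gap in part (i), and you correctly identify it yourself at the end: you want to apply Lemma \ref{HartogPrinciple} directly to a formal model of $\mathcal Y^\ast_{m'}$, but that lemma is about rings of the very specific shape $(R\hat\otimes\Z_p^\cycl)\langle u\rangle/(fu-p^\epsilon)$ with $\overline R=R/p$ normal, and there is no reason the cover $\mathcal Y^\ast_{m'}$ admits such a model. Producing a formal model of the cover with normal special fibre over which the boundary has codimension $\geq 2$ is precisely the kind of explicit boundary analysis the paper is trying to avoid.

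The paper's device is to transport the Hartog statement back to the base via the trace pairing. Write $R=H^0(\mathcal X^\ast_{\Gamma_0(p^{m'})}(\epsilon)_a,\OO)$ and $S=S_{m'}$; both are normal noetherian, $S$ is finite over $R$ and \'etale away from the boundary, and no component of $\Spec S$ lies in the boundary. The trace pairing $S\otimes_R S\to R$ therefore induces an isomorphism $S\cong\Hom_R(S,R)$ (check it on the \'etale locus, then use normality and codimension $\geq 2$ on both sides). Repeating this after restricting to any affinoid open $\mathcal U\subset\mathcal X^\ast_{\Gamma_0(p^{m'})}(\epsilon)_a$ with preimage $\mathcal V\subset\mathcal Y^\ast_{m'}$ gives
\[
H^0(\mathcal V,\OO_{\mathcal Y^\ast_{m'}})\;\cong\;\Hom_R\big(S,\,H^0(\mathcal U,\OO_{\mathcal X^\ast_{\Gamma_0(p^{m'})}(\epsilon)_a})\big).
\]
Taking $\mathcal U$ to be the complement of the boundary, the desired equality $S^+_{m'}=H^0(\mathcal Y_{m'},\OO^+)$ follows immediately from the corresponding equality on the base,
\[
H^0(\mathcal X^\ast_{\Gamma_0(p^{m'})}(\epsilon)_a,\OO)=H^0(\mathcal X_{\Gamma_0(p^{m'})}(\epsilon)_a,\OO),
\]
which \emph{is} Lemma \ref{HartogPrinciple}. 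In short: do not try to prove Hartog on the cover; use the trace pairing to inherit it from the base.
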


\begin{proof}\begin{altenumerate}
\item[{\rm (i)}] We may assume $m = m^\prime$, so that Lemma \ref{MinCompAffinoid} applies. Let
\[
S = S_m\ ,\ R=H^0(\mathcal{X}_{\Gamma_0(p^m)}^\ast(\epsilon)_a,\OO_{\mathcal{X}_{\Gamma_0(p^m)}^\ast(\epsilon)_a})\ .
\]
Then $S$ is a finite $R$-module, and $R$ and $S$ are normal and noetherian. Let $Z\subset \Spec R$ denote the boundary, which is of codimension $\geq 2$, with preimage $Z^\prime\subset \Spec S$, again of codimension $\geq 2$ (by the assumption on irreducible components). Thus, $S=H^0(\Spec S\setminus Z^\prime,\OO_{\Spec S})$, and $R=H^0(\Spec R\setminus Z,\OO_{\Spec R})$. Away from $Z$, the map is finite \'etale, so that one has a trace map $\tr_{S/R}: S\to R$ (a priori only on the structure sheaf away from the boundary, but then by taking global sections on $S$). Moreover, the trace pairing
\[
S\otimes_R S\to R: s_1\otimes s_2\mapsto \tr_{S/R}(s_1s_2)
\]
induces an isomorphism $S\to \Hom_R(S,R)$: If $s_1\in S$ is in the kernel, it still lies in the kernel of the pairing away from the boundary. There, it is perfect (as the map is finite \'etale), thus $s_1$ vanishes away from the boundary, thus is $0$. Similarly, given an element of $\Hom_R(S,R)$, it comes from a unique element of $S$ away from the boundary, thus from an element of $S$, as $S=H^0(\Spec S\setminus Z^\prime,\OO_{\Spec S})$.

Arguing as in the proof of Lemma \ref{GoodTripleFinite} (iii) (i.e., repeating the argument after pullback to affinoid open subsets of $\mathcal{X}_{\Gamma_0(p^m)}^\ast(\epsilon)_a$), we see that for all open subsets $\mathcal{U}\subset \mathcal{X}_{\Gamma_0(p^m)}^\ast(\epsilon)_a$ with preimage $\mathcal{V}\subset \mathcal{Y}_m^\ast$, the trace pairing gives an isomorphism
\[
H^0(\mathcal{V},\OO_{\mathcal{Y}_m^\ast})\cong \Hom_R(S,H^0(\mathcal{U},\OO_{\mathcal{X}_{\Gamma_0(p^m)}^\ast(\epsilon)_a}))\ .
\]
Thus, the desired statement follows from
\[
H^0(\mathcal{X}_{\Gamma_0(p^m)}(\epsilon)_a,\OO_{\mathcal{X}_{\Gamma_0(p^m)}^\ast(\epsilon)_a}) = H^0(\mathcal{X}_{\Gamma_0(p^m)}^\ast(\epsilon)_a,\OO_{\mathcal{X}_{\Gamma_0(p^m)}^\ast(\epsilon)_a})\ ,
\]
which is a consequence of Lemma \ref{HartogPrinciple}.
\item[{\rm (ii)}] Use Tate's normalized traces (Corollary \ref{CorTateTraces}) (and part (i)) to produce the retractions (proving injectivity). Moreover, Tate's normalized traces for varying $m^\prime$ converge to the element one started with, giving the density.
\item[{\rm (iii)}] This is immediate from (ii).
\end{altenumerate}
\end{proof}

First, we deal with the case of adjoining a $\Gamma_1(p^m)$-level structure. Assume first that $g\geq 2$; the case $g=1$ can be handled directly, cf. below. Note that on the tower $\mathcal{X}_{\Gamma_0(p^m)}(\epsilon)_a$, we have the tautological abelian variety $\mathcal{A}_{\Gamma_0(p^m)}^t(\epsilon)_a$ (which are related to each other by pullback), as well as the abelian varieties $\mathcal{A}_{\Gamma_0(p^m)}(\epsilon)_a = \mathcal{A}(p^{-m}\epsilon)$ over $\mathcal{X}_{\Gamma_0(p^m)}(\epsilon)_a\cong \mathcal{X}(p^{-m}\epsilon)$. They are related by an isogeny
\[
\mathcal{A}_{\Gamma_0(p^m)}(\epsilon)_a\to \mathcal{A}_{\Gamma_0(p^m)}^t(\epsilon)_a\ ,
\]
whose kernel is the canonical subgroup $C_m\subset \mathcal{A}_{\Gamma_0(p^m)}(\epsilon)_a[p^m]$ of level $m$. One gets an induced subgroup
\[
D_m = \mathcal{A}_{\Gamma_0(p^m)}(\epsilon)_a[p^m] / C_m\subset \mathcal{A}_{\Gamma_0(p^m)}^t(\epsilon)_a\ .
\]
Let $D_{m\Gamma_0(p^{m^\prime})}$ be the pullback of $D_m$ to $\mathcal{X}_{\Gamma_0(p^{m^\prime})}(\epsilon)_a$ for $m^\prime\geq m$. One has $D_{m\Gamma_0(p^{m^\prime})} = D_{m^\prime}[p^m]$. Also, the $D_m$ give the $\Gamma_0(p^m)$-level structure.

Let $D_{m\Gamma_0(p^\infty)}$ denote the pullback of $D_m$ to $\mathcal{X}_{\Gamma_0(p^\infty)}(\epsilon)_a$; as $D_m\to \mathcal{X}_{\Gamma_0(p^m)}(\epsilon)_a$ is finite \'etale, $D_{m\Gamma_0(p^\infty)}$ is a perfectoid space.

\begin{lem} The map
\[
\mathcal{A}_{\Gamma_0(p^\infty)}(\epsilon)_a[p^m]\to D_{m\Gamma_0(p^\infty)}
\]
is an isomorphism of perfectoid spaces.
\end{lem}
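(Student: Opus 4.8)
The plan is to make the map of the statement explicit, show that both its source and its target are finite \'etale group schemes over $\widetilde{\mathcal{X}}:=\mathcal{X}_{\Gamma_0(p^\infty)}(\epsilon)_a$, and then verify that it is an isomorphism on geometric points.

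First, I would exhibit the map. The quotient isogenies $\mathcal{A}_{\Gamma_0(p^m)}(\epsilon)_a=\mathcal{A}(p^{-m}\epsilon)\to\mathcal{A}_{\Gamma_0(p^m)}^t(\epsilon)_a=\mathcal{A}(p^{-m}\epsilon)/C_m$, with kernel the canonical subgroup $C_m$ of level $m$, form a compatible system along the tower: the targets are compatible under pullback (the tautological abelian varieties are), and on the source side this is precisely the short exact sequence $0\to C_1\to C_{m+1}\to D_m\to 0$ of Proposition \ref{PropCanSubgroup}(iii), which identifies the kernel of the composite $\mathcal{A}(p^{-m-1}\epsilon)\to\mathcal{A}(p^{-m}\epsilon)\to\mathcal{A}(p^{-m}\epsilon)/C_m$ (the first map being the transition isogeny, i.e. the quotient by the level-$1$ canonical subgroup, cf. Theorem \ref{ExCanFrobLifts}) with $C_{m+1}$, hence identifies this composite with $\mathcal{A}(p^{-m-1}\epsilon)\to\mathcal{A}(p^{-m-1}\epsilon)/C_{m+1}$. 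Passing to the inverse limit gives a homomorphism $\iota_\infty\colon\mathcal{A}_{\Gamma_0(p^\infty)}(\epsilon)_a\to\widetilde{\mathcal{A}}^t$, where $\widetilde{\mathcal{A}}^t$ is the common pullback of the $\mathcal{A}_{\Gamma_0(p^m)}^t(\epsilon)_a$; since $C_m\subseteq\mathcal{A}(p^{-m}\epsilon)[p^m]$, its restriction to $p^m$-torsion factors through $D_{m\Gamma_0(p^\infty)}$, and this is the map in question.

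Second, I would identify the source. Since multiplication by $p^m$ commutes with the transition isogenies, $\mathcal{A}_{\Gamma_0(p^\infty)}(\epsilon)_a[p^m]=\varprojlim_k\widetilde{\mathcal{A}}_k[p^m]$, where $\widetilde{\mathcal{A}}_k$ is the pullback of $\mathcal{A}(p^{-k}\epsilon)$ to $\widetilde{\mathcal{X}}$; this is an inverse limit of finite \'etale $\widetilde{\mathcal{X}}$-group schemes whose transition maps are \emph{not} isomorphisms, their kernels being level-$1$ canonical subgroups of order $p^g$. The crucial observation is that the limit nonetheless stabilises: for $k\geq m$ the image $E_k$ of the $m$-fold transition map $\widetilde{\mathcal{A}}_{k+m}[p^m]\to\widetilde{\mathcal{A}}_k[p^m]$ has order $p^{mg}$, because the kernel of the $m$-fold transition restricted to $\widetilde{\mathcal{A}}_{k+m}[p^m]$ is the level-$m$ canonical subgroup $C_m\subseteq\widetilde{\mathcal{A}}_{k+m}[p^m]$ (using $C_m=C_{m+j}[p^m]$, an order count from Proposition \ref{PropCanSubgroup}(i)); thus $E_k\cong\widetilde{\mathcal{A}}_{k+m}[p^m]/C_m$ is canonically a pullback of $D_m$, finite \'etale of degree $p^{mg}$, and the maps $E_{k+1}\to E_k$ are surjective homomorphisms between finite flat group schemes of the same order $p^{mg}$ (again by a canonical-subgroup order count), hence isomorphisms --- reflecting that Frobenius induces an isomorphism on the \'etale quotient. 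As every point of $\varprojlim_k\widetilde{\mathcal{A}}_k[p^m]$ already lies in $\varprojlim_k E_k$ (one has $x_k=\widetilde{F}^{\,m}(x_{k+m})$), we obtain $\mathcal{A}_{\Gamma_0(p^\infty)}(\epsilon)_a[p^m]\cong E_m$, finite \'etale over $\widetilde{\mathcal{X}}$ of degree $p^{mg}$. Morally, the isomorphism of the lemma records that the canonical subgroups disappear in the limit ($\varprojlim_k C_m^{(k)}=0$, since the $m$-fold transition annihilates the level-$m$ canonical subgroup modulo $p^{1-\epsilon}$); one could equivalently argue after tilting, where $\mathcal{A}_{\Gamma_0(p^\infty)}(\epsilon)_a^\flat$ is a perfection of the characteristic-$p$ abelian scheme and the $p^m$-torsion of a perfection is the perfection of its \'etale quotient.

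Finally, $\iota_\infty$ restricted to $p^m$-torsion is then a homomorphism of finite \'etale $\widetilde{\mathcal{X}}$-group schemes of the same degree $p^{mg}$ (the target $D_{m\Gamma_0(p^\infty)}$ being the pullback of the finite \'etale $D_m$), so it is an isomorphism as soon as it is injective on geometric points; over a geometric point $C$ this reduces, via the description above, to $E_m\cap C_m=0$ inside $\widetilde{\mathcal{A}}_m[p^m]$, which holds because modulo $p^{1-\epsilon}$ (i.e. working over $\OO_C$) $E_m$ reduces to the \'etale part and $C_m$ to the connected part of $\widetilde{\mathcal{A}}_m[p^m]$, which meet trivially, and a finite flat group scheme over $\OO_C$ with trivial special fibre vanishes --- equivalently, by Proposition \ref{PropCanSubgroup}(iv), $E_m(C)$ and $C_m(C)$ are two copies of $(\Z/p^m\Z)^g$ inside $(\Z/p^m\Z)^{2g}$ meeting trivially, so $E_m(C)\to\widetilde{\mathcal{A}}_m[p^m](C)/C_m(C)=D_m(C)$ is bijective. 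The substantial step is the second one --- that $\mathcal{A}_{\Gamma_0(p^\infty)}(\epsilon)_a[p^m]$ is finite \'etale over $\widetilde{\mathcal{X}}$, i.e. that an inverse limit of towers of $p^m$-torsion with far-from-invertible transition maps stabilises to a finite \'etale group scheme --- which rests on the precise interaction of the Frobenius-lift transition maps with the canonical subgroups of Proposition \ref{PropCanSubgroup}; granting that, the reduction to geometric points and the verification there are routine.
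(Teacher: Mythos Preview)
Your argument is correct and shares its central insight with the paper's proof: the $m$-fold transition map in the tower kills the level-$m$ canonical subgroup $C_m$, hence factors through $D_m$. The paper, however, exploits this more directly. Working on $(R,R^+)$-points for perfectoid affinoid algebras, it simply observes that the factorization
\[
\mathcal{A}_{\Gamma_0(p^{m'+m})}(\epsilon)_a[p^m]\to D_{m\Gamma_0(p^{m'+m})}\to \mathcal{A}_{\Gamma_0(p^{m'})}(\epsilon)_a[p^m]
\]
interleaves the inverse system $\{\mathcal{A}_{\Gamma_0(p^{m'})}(\epsilon)_a[p^m]\}_{m'}$ with the system $\{D_{m\Gamma_0(p^{m'})}\}_{m'}$ (whose transition maps are isomorphisms, being pullbacks), so the two projective limits coincide --- and that is already the statement. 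Your route instead first stabilizes the source to a finite \'etale group scheme $E_m$, and then verifies on geometric points that the given map $\iota_\infty$ is an isomorphism. This buys you explicit structural information about the source (finite \'etale of degree $p^{mg}$), but the final geometric-point check is not really needed: once you know $E_k\cong \widetilde{\mathcal{A}}_{k+m}[p^m]/C_m$ is a pullback of $D_m$ and that the maps $E_{k+1}\to E_k$ are isomorphisms, you have identified the two inverse systems cofinally, which is precisely the paper's three-line argument unpacked.
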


\begin{proof} Let $(R,R^+)$ be a perfectoid affinoid $\Q_p^\cycl$-algebra. Then
\[
\mathcal{A}_{\Gamma_0(p^\infty)}(\epsilon)_a[p^m](R,R^+) = \varprojlim_{m^\prime} \mathcal{A}_{\Gamma_0(p^{m^\prime})}(\epsilon)_a[p^m](R,R^+)\ .
\]
The transition map
\[
\mathcal{A}_{\Gamma_0(p^{m^\prime+m})}(\epsilon)_a[p^m]\to \mathcal{A}_{\Gamma_0(p^{m^\prime})}(\epsilon)_a[p^m]
\]
kills the canonical subgroup $C_m$, so that it factors as
\[
\mathcal{A}_{\Gamma_0(p^{m^\prime+m})}(\epsilon)_a[p^m]\to \mathcal{A}_{\Gamma_0(p^{m^\prime+m})}(\epsilon)_a[p^m]/C_m = D_{m\Gamma_0(p^{m^\prime+m})}\to \mathcal{A}_{\Gamma_0(p^{m^\prime})}(\epsilon)_a[p^m]\ .
\]
This shows that the projective limit is the same as the projective limit
\[
D_{m\Gamma_0(p^\infty)}(R,R^+) = \varprojlim_{m^\prime} D_{m\Gamma_0(p^{m^\prime})}(R,R^+)\ .
\]
\end{proof}

Let $D_m^\prime\to \mathcal{X}^\prime(\epsilon)\subset \mathcal{X}^\prime$ denote the quotient $\mathcal{A}^\prime(\epsilon)[p^m]/C_m^\prime$, where $C_m^\prime$ denotes the canonical subgroup on the ordinary locus in characteristic $p$. Note that all abelian varieties over $\F_p((t^{1/(p-1)p^\infty}))$ parametrized by $\mathcal{X}^\prime(\epsilon)$ are ordinary, as the Hasse invariant divides $t^\epsilon$, and thus is invertible.\footnote{Of course, the abelian varieties need not have good ordinary reduction.}

\begin{lem}\label{TiltingAntiCanSubgroup} The tilt of $D_{m\Gamma_0(p^\infty)}$ identifies canonically with the perfection of $D_m^\prime$.
\end{lem}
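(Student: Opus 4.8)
The plan is to combine the preceding lemma, which identifies $D_{m\Gamma_0(p^\infty)}$ with the $p^m$-torsion $\mathcal{A}_{\Gamma_0(p^\infty)}(\epsilon)_a[p^m]$, with the computation of the tilt of the anticanonical tower in Corollary~\ref{TiltingStrictNbhd}, working throughout with the natural integral models. Recall from the proof of Corollary~\ref{TiltingStrictNbhd} that $\mathfrak{X}_{\Gamma_0(p^\infty)}(\epsilon)_a=\varprojlim_{m'}\mathfrak{X}(p^{-m'}\epsilon)$ and $\mathfrak{A}_{\Gamma_0(p^\infty)}(\epsilon)_a=\varprojlim_{m'}\mathfrak{A}(p^{-m'}\epsilon)$ are flat formal schemes over $\Z_p^\cycl$ which, modulo $p^{(1-\epsilon)/p}$, are perfect; their generic fibres are the perfectoid spaces $\mathcal{X}_{\Gamma_0(p^\infty)}(\epsilon)_a$, $\mathcal{A}_{\Gamma_0(p^\infty)}(\epsilon)_a$, whose tilts are the characteristic $p$ objects $\mathcal{X}^{\prime\perf}(\epsilon)$, $\mathcal{A}^{\prime\perf}(\epsilon)$; the key input is the identification $\mathfrak{A}(p^{-m'}\epsilon)/p^{1-\epsilon}=\mathfrak{A}^\prime(p^{-m'}\epsilon)/t^{1-\epsilon}$ of Theorem~\ref{ExCanFrobLifts}~(i), compatible with the transition maps, which modulo $p^{1-\epsilon}$ (resp. $t^{1-\epsilon}$) are the relative Frobenius maps.

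First I would pass to the $p^m$-torsion. The closed formal subgroup $\mathfrak{A}(p^{-m'}\epsilon)[p^m]$ is finite locally free over $\mathfrak{X}(p^{-m'}\epsilon)$, and these are compatible under the transition maps, so $\varprojlim_{m'}\mathfrak{A}(p^{-m'}\epsilon)[p^m]$ is a formal model of $\mathcal{A}_{\Gamma_0(p^\infty)}(\epsilon)_a[p^m]$, finite over $\mathfrak{X}_{\Gamma_0(p^\infty)}(\epsilon)_a$. Reducing modulo $p^{1-\epsilon}$ and invoking Theorem~\ref{ExCanFrobLifts}~(i) once more, this agrees with $\varprojlim_{m'}\mathfrak{A}^\prime(p^{-m'}\epsilon)[p^m]$ modulo $t^{1-\epsilon}$, compatibly with the transition maps; since on the characteristic $p$ side these transition maps are genuine relative Frobenius maps, the latter inverse limit is the perfection $(\mathfrak{A}^\prime(\epsilon)[p^m])^\perf$. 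As the transition maps are relative Frobenius maps modulo $p^{1-\epsilon}$, absolute Frobenius is an isomorphism on the inverse limit modulo $p^{(1-\epsilon)/p}$, so $\varprojlim_{m'}\mathfrak{A}(p^{-m'}\epsilon)[p^m]$ is again perfectoid in the relevant sense, and \cite[Theorem 5.2]{ScholzePerfectoid} shows that its generic fibre is affinoid perfectoid with tilt the generic fibre of $(\mathfrak{A}^\prime(\epsilon)[p^m])^\perf$; using the compatibility $(\mathfrak{Y}^\perf)^\ad_\eta=(\mathfrak{Y}^\ad_\eta)^\perf$, this tilt is $(\mathcal{A}^\prime(\epsilon)[p^m])^\perf$.

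It remains to identify $(\mathcal{A}^\prime(\epsilon)[p^m])^\perf$ with the perfection of $D_m^\prime$. Here I would use that over the ordinary base $\mathcal{X}^\prime(\epsilon)$ the connected--\'etale sequence of $\mathcal{A}^\prime(\epsilon)[p^m]$ is
\[
0\to \Ker F^m\to \mathcal{A}^\prime(\epsilon)[p^m]\to D_m^\prime\to 0\ ,
\]
since by definition $D_m^\prime=\mathcal{A}^\prime(\epsilon)[p^m]/C_m^\prime$ with $C_m^\prime=\Ker F^m$ the connected part, and $D_m^\prime$ is \'etale (it is canonically $\Ker(V^m)$). The relative perfection functor $\varprojlim_F$ annihilates $\Ker F^m$ and commutes with passage to the maximal \'etale quotient, while for an \'etale group scheme it is simply base change to the perfection of the base; hence $(\mathcal{A}^\prime(\epsilon)[p^m])^\perf=(D_m^\prime)^\perf=D_m^\prime\times_{\mathcal{X}^\prime(\epsilon)}\mathcal{X}^{\prime\perf}(\epsilon)$. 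All the identifications used are functorial, hence compatible with the group structures and with changing the tame level $K^p$; chasing them through yields the asserted canonical isomorphism $D_{m\Gamma_0(p^\infty)}^\flat\cong (D_m^\prime)^\perf$.

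The one point that needs genuine care --- and the main obstacle in a complete write-up --- is the bookkeeping around Theorem~\ref{ExCanFrobLifts}~(i): one must check that the identification $\mathfrak{A}(p^{-m'}\epsilon)/p^{1-\epsilon}=\mathfrak{A}^\prime(p^{-m'}\epsilon)/t^{1-\epsilon}$ carries the reduction mod $p^{1-\epsilon}$ of the canonical subgroup $C_m\subset\mathfrak{A}(p^{-m'}\epsilon)[p^m]$ to the reduction mod $t^{1-\epsilon}$ of $C_m^\prime$, i.e. that both are the kernel of $m$-fold relative Frobenius and that these match under the identification. This is essentially the defining property of the canonical subgroups (Corollary~\ref{CanSubgroup}, Definition~\ref{DefCanSubgroup}) together with the compatibility of relative Frobenius with the base identification; once it is in place, everything else is a routine inverse limit argument, and one should note that there is no boundary to worry about here, as all the spaces involved lie over the good reduction locus.
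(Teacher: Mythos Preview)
Your proof is correct and follows essentially the same three-step argument as the paper: identify $D_{m\Gamma_0(p^\infty)}$ with $\mathcal{A}_{\Gamma_0(p^\infty)}(\epsilon)_a[p^m]$ via the preceding lemma, compute the tilt of the latter as $(\mathcal{A}^\prime(\epsilon)[p^m])^\perf$, and then observe that perfection kills $C_m^\prime=\Ker F^m$, so $(\mathcal{A}^\prime(\epsilon)[p^m])^\perf=(D_m^\prime)^\perf$. The only difference is that you re-run the formal-model argument of Corollary~\ref{TiltingStrictNbhd} explicitly for the $p^m$-torsion, whereas the paper simply invokes that corollary for $\mathcal{A}$ and implicitly uses that the tilting identification respects the abelian-scheme structure (hence the $[p^m]$-map, hence its kernel).

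One comment: the concern you flag in your final paragraph is not actually needed for the argument as you have structured it. You never compare $C_m$ with $C_m^\prime$ directly; the characteristic-$0$ canonical subgroup has already been absorbed into the statement of the previous lemma, and on the characteristic-$p$ side you only use that $\Ker F^m$ dies under perfection. So the ``main obstacle'' you describe does not arise, and the proof goes through without it.
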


\begin{proof} As the kernel of Frobenius (i.e., $C_m^\prime$) gets killed under perfection, we have
\[
(D_m^\prime)^\perf = \mathcal{A}^\prime(\epsilon)[p^m]^\perf\ .
\]
By Corollary \ref{TiltingStrictNbhd}, the right-hand side is the tilt of $\mathcal{A}_{\Gamma_0(p^\infty)}(\epsilon)_a[p^m]$, so applying the previous lemma finishes the proof.
\end{proof}

Let $X^{\ord\ast}\subset X^\ast\otimes_{\Z_{(p)}} \F_p$ be the locus where the Hasse invariant is invertible; thus, $X^{\ord\ast}$ is affine over $\F_p$. Let $X^\ord\subset X\otimes_{\Z_{(p)}} \F_p$ be the preimage, which is the ordinary locus, and let $D_m^\ord\to X^\ord$ be the quotient of the $p^m$-torsion of the universal abelian variety by the canonical subgroup. Now, let $X_{\Gamma_1(p^m)}^\ord\to X^\ord$ parametrize isomorphisms $D_m^\ord\cong (\Z/p^m\Z)^g$. Then $X_{\Gamma_1(p^m)}^\ord\to X^\ord$ is a finite map of schemes over $\F_p$. Recall that we are assuming $g\geq 2$; thus, we find that setting
\[
X_{\Gamma_1(p^m)}^{\ord\ast} = \Spec H^0(X_{\Gamma_1(p^m)}^\ord,\OO_{X_{\Gamma_1(p^m)}^\ord})\ ,
\]
the map $X_{\Gamma_1(p^m)}^{\ord\ast}\to X^{\ord\ast}$ is a finite map of affine schemes over $\F_p$, such that $X_{\Gamma_1(p^m)}^\ord$ is the preimage of $X^\ord$. Also, $X_{\Gamma_1(p^m)}^{\ord\ast}$ is normal.

Let $\mathcal{X}_{\Gamma_1(p^m)}^{\prime\ast}(\epsilon)$ be the open locus of the adic space associated with $X_{\Gamma_1(p^m)}^{\ord\ast}\otimes \F_p((t^{1/(p-1)p^\infty}))$ where $|\Ha|\geq |t|^\epsilon$. Then
\[
\mathcal{X}_{\Gamma_1(p^m)}^{\prime\ast}(\epsilon)\to \mathcal{X}^{\prime\ast}(\epsilon)
\]
is finite, and \'etale away from the boundary. In particular, the base-change $\mathcal{X}_{\Gamma_1(p^m)}^\prime(\epsilon)\to \mathcal{X}^\prime(\epsilon)\subset \mathcal{X}^{\prime\ast}(\epsilon)$ is finite \'etale, parametrizing isomorphisms $D_m^\prime\cong (\Z/p^m\Z)^g$.

Let $\mathcal{Z}^{\prime\ast}(\epsilon)\subset \mathcal{X}^{\prime\ast}(\epsilon)$ denote the boundary, with pullback $\mathcal{Z}^{\prime\ast}_{\Gamma_1(p^m)}(\epsilon)\subset \mathcal{X}^{\prime\ast}_{\Gamma_1(p^m)}(\epsilon)$.

\begin{lem}\label{BaseTripleIsGood} The triple $(\mathcal{X}^{\prime\ast}(\epsilon)^\perf,\mathcal{Z}^{\prime\ast}(\epsilon)^\perf,\mathcal{X}^\prime(\epsilon)^\perf)$ is good, cf. Definition \ref{DefGood}, i.e.
\[
H^0(\mathcal{X}^{\prime\ast}(\epsilon)^\perf,\OO^\circ/t)^a\cong H^0(\mathcal{X}^{\prime\ast}(\epsilon)^\perf\setminus \mathcal{Z}^{\prime\ast}(\epsilon)^\perf,\OO^\circ/t)^a\hookrightarrow H^0(\mathcal{X}^\prime(\epsilon)^\perf,\OO^\circ/t)^a\ .
\]
\end{lem}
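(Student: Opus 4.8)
The plan is to reduce this statement to the affine situation treated in Section~\ref{HebbarkeitssatzSection}, most directly to Corollary~\ref{SpecialHebbarkeit} (or rather to Lemma~\ref{GoodTriple} and its base-field extension Corollary~\ref{GoodTripleBaseField}). Recall that $X^{\ord\ast} = \Spec A_0$ is affine over $\F_p$, with $A_0 = H^0(X^{\ord\ast},\OO_{X^{\ord\ast}})$, and that the boundary $\mathcal{Z}^{\prime\ast}(\epsilon)$ is cut out by the ideal $I_0$ defining the boundary of the minimal compactification. The Hasse invariant extends to a section $\Ha\in H^0(X^\ast_{\F_p},\omega^{\otimes(p-1)})$, and after passing to a high enough power $p^{m_0}$ (so that $H^i(X^\ast,\omega^{\otimes p^{m_0}(p-1)})=0$ for $i>0$, as in the proof of Lemma~\ref{MinCompAffinoid}) one obtains a genuine function $f = \widetilde{\Ha}^{p^{m_0}}\in A_0$, which is not a zero-divisor since the ordinary locus is dense in the (normal, irreducible on each component) special fibre. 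Thus $\mathcal{X}^{\prime\ast}(\epsilon)$ agrees, after the harmless rescaling $\epsilon\mapsto p^{-m_0}\epsilon$, with an affinoid of the shape $\Spa(R_0,R_0^\circ)$ where
\[
R_0 = (A_0\hat\otimes_{\F_p} K)\langle u\rangle/(uf - t)\ ,
\]
and $\mathcal{X}^{\prime\ast}(\epsilon)^\perf = \Spa(R,R^+)$ is its completed perfection. Here $K = \F_p((t^{1/(p-1)p^\infty}))$, which is the completion of an algebraic extension of $\F_p((t^{1/p^\infty}))$, so Corollary~\ref{GoodTripleBaseField} applies.

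First I would verify that the hypotheses of Lemma~\ref{GoodTriple} (as strengthened by Corollary~\ref{GoodTripleBaseField}) are met: $A_0$ is normal and of finite type over $\F_p$ (we may assume integral by working one irreducible component at a time, as at the start of the proof of Lemma~\ref{GoodTriple}), and it admits a resolution of singularities since the toroidal compactification of the Siegel moduli space provides one for the minimal compactification (this is exactly the input flagged in the subsection ``A special case''). The ideal $I_0 = J R_0$ comes from the boundary ideal $J\subset A_0$, and $V(J)\subset \Spec A_0$ has codimension $g\geq 2$ by the standing assumption that the boundary of the minimal compactification has codimension $g$. Finally, the open subset $\mathcal{X}^\prime(\epsilon)^\perf\subset \mathcal{X}^{\prime\ast}(\epsilon)^\perf\setminus\mathcal{Z}^{\prime\ast}(\epsilon)^\perf$ is precisely the locus $\{|g|=1\text{ for some }g\in J\}$, i.e.\ the preimage of $\Spec A_0\setminus V(J)$, which is the $\mathcal{U}_0$ of Lemma~\ref{GoodTriple}. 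Hence the triple $(\mathcal{X}^{\prime\ast}(\epsilon)^\perf,\mathcal{Z}^{\prime\ast}(\epsilon)^\perf,\mathcal{X}^\prime(\epsilon)^\perf)$ is good, which is exactly the asserted chain of (almost-)isomorphisms
\[
H^0(\mathcal{X}^{\prime\ast}(\epsilon)^\perf,\OO^\circ/t)^a\cong H^0(\mathcal{X}^{\prime\ast}(\epsilon)^\perf\setminus \mathcal{Z}^{\prime\ast}(\epsilon)^\perf,\OO^\circ/t)^a\hookrightarrow H^0(\mathcal{X}^\prime(\epsilon)^\perf,\OO^\circ/t)^a\ .
\]

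The main thing to be careful about — and the step I expect to require the most attention — is the identification of $\mathcal{X}^{\prime\ast}(\epsilon)^\perf$ with an affinoid perfectoid space of the exact form appearing in Lemma~\ref{GoodTriple}, i.e.\ that passing to the completed perfection of $R_0$ is compatible with the description of the open subset by $|\Ha|\geq|t|^\epsilon$ and does not disturb the boundary ideal. Concretely one must check that $R^+$ (the completed perfection of $R_0^\circ$) reduces mod $t$ to $(A_0^{1/p^\infty}\otimes_{\F_p}\OO_K/t)[u^{1/p^\infty}]/(\forall m:\,u^{1/p^m}f^{1/p^m}-t^{1/p^m})$, as recorded in the setup of the ``special case'' subsection, and that $I = I_0 R$ is the perfection of $J$ times the structure sheaf, so that Lemma~\ref{PerfectIdealBC} governs its behaviour. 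Once the affinoid has been put in this normal form, the conclusion is a direct citation. (The restriction $g\geq 2$ is in force here, and is what guarantees codimension $\geq 2$; the case $g=1$ is treated separately and does not need this lemma.) Note also that the normalization $|t| = |p|$ only enters through the tilting dictionary and plays no role in the present characteristic-$p$ statement.
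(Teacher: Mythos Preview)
Your overall strategy---reduce to Lemma~\ref{GoodTriple} via Corollary~\ref{GoodTripleBaseField}, invoking the toroidal compactification for the resolution of singularities and the codimension-$g$ boundary---is precisely the paper's approach, and your identification of $\mathcal{X}^\prime(\epsilon)^\perf$ with the locus $\{|g|=1\text{ for some }g\in J\}$ is correct. But there is a real gap at exactly the spot you flagged as delicate.

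The problem is your choice $A_0 = H^0(X^{\ord\ast},\OO)$. In the template of Lemma~\ref{GoodTriple}, $\Spec A_0$ is the base on which $f$ is an honest function that is \emph{allowed to vanish}, and $\Spa(R_0,R_0^\circ)$ is the locus $|f|\geq |t|$ over it. With your $A_0$, $\Spec A_0 = X^{\ord\ast}$ is already the locus where $\Ha$ is invertible; any $f$ built from the Hasse invariant is then a unit in $A_0$, so $R_0\cong A_0\hat\otimes_{\F_p}K$ and you have only produced the tube $\mathcal{X}^{\prime\ast}(0)$, not $\mathcal{X}^{\prime\ast}(\epsilon)$ for $\epsilon>0$. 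The appeal to Lemma~\ref{MinCompAffinoid} does not rescue this: that lemma yields a global section $\widetilde{\Ha}^{p^{m_0}}$ of the \emph{line bundle} $\omega^{\otimes p^{m_0}(p-1)}$, not a function, and the affinoid it cuts out is $\mathcal{X}^\ast(p^{-m_0}\epsilon)$, a different space in a different tower. Since $X^\ast_{\F_p}$ is projective and $\Ha$ is a section of an ample line bundle, there is simply no global presentation of $\mathcal{X}^{\prime\ast}(\epsilon)$ as $\Spa(R_0,R_0^\circ)$ with $A_0$ affine of finite type over $\F_p$ and $f\in A_0$.

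The paper's remedy is to work locally: cover $\mathfrak{X}^{\prime\ast}$ by open affine formal subschemes $\mathfrak{U}$ arising from an affine cover of $X^\ast_{\F_p}$. On each such $\mathfrak{U}$ one may trivialize $\omega^{\otimes(p-1)}$, so that $\Ha$ becomes an honest function $f\in A_0$ (with $A_0$ the ring of the corresponding affine open of $X^\ast_{\F_p}$), and the pullback of $\mathfrak{X}^{\prime\ast}(\epsilon)$ to $\mathfrak{U}$ is exactly $\Spf\big((A_0\hat\otimes_{\F_p}\OO_K)\langle u\rangle/(uf - t^\epsilon)\big)$. Now Lemma~\ref{GoodTriple} (with $t$ replaced by $t^\epsilon$) applies verbatim on each chart; as the overlaps are again affine, the goodness statement glues. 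All the hypotheses you checked---normality of the minimal compactification, resolution by the toroidal compactification, codimension $\geq 2$ of the boundary for $g\geq 2$---are the right ones; they just need to be applied to each local $A_0$, not to a global ring that does not exist.
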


\begin{proof} Recall that $\mathcal{X}^{\prime\ast}(\epsilon)$ is the generic fibre of the formal scheme
\[
\mathfrak{X}^{\prime\ast}(\epsilon)\to \mathfrak{X}^{\prime\ast}
\]
parametrizing $u$ with $u\Ha = t^\epsilon$. It is enough to prove that for any open affine formal subscheme $\mathfrak{U}\subset \mathfrak{X}^{\prime\ast}$, the corresponding triple one gets by pullback is good. This follows from Lemma \ref{GoodTriple} (with $t$ replaced by $t^\epsilon$), cf. also Corollary \ref{GoodTripleBaseField}. Observe that $X^\ast\otimes_{\Z_{(p)}} \F_p$ admits a resolution of singularities, given by the toroidal compactification, cf. \cite{FaltingsChai}.
\end{proof}

\begin{cor}\label{FiniteTripleIsGood} The triple $(\mathcal{X}^{\prime\ast}_{\Gamma_1(p^m)}(\epsilon)^\perf,\mathcal{Z}^{\prime\ast}_{\Gamma_1(p^m)}(\epsilon)^\perf,\mathcal{X}^\prime_{\Gamma_1(p^m)}(\epsilon)^\perf)$ is good.
\end{cor}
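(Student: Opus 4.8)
The plan is to deduce this directly from Lemma~\ref{GoodTripleFinite}(iv), taking the good base triple to be $(\mathcal{X}^{\prime\ast}(\epsilon)^\perf,\mathcal{Z}^{\prime\ast}(\epsilon)^\perf,\mathcal{X}^\prime(\epsilon)^\perf)$ of Lemma~\ref{BaseTripleIsGood} and the finite cover to be the one induced by the $\Gamma_1(p^m)$-level structure, $\mathcal{X}^{\prime\ast}_{\Gamma_1(p^m)}(\epsilon)\to \mathcal{X}^{\prime\ast}(\epsilon)$. First I would put the data into the abstract framework preceding Lemma~\ref{GoodTriple}: over $K=\F_p((t^{1/(p-1)p^\infty}))$, let $R_0$ be the ring of $\mathcal{X}^{\prime\ast}(\epsilon)$. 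This is a reduced Tate $K$-algebra topologically of finite type, and it is affinoid: choosing (after replacing $\epsilon$ by a small enough value, as in Lemma~\ref{MinCompAffinoid}) a global lift $\widetilde{\Ha}^{p^m}$ of a power of the Hasse invariant, the locus $|\Ha|\geq |t|^\epsilon$ is the locus $|\widetilde{\Ha}^{p^m}|\geq |t|^{p^m\epsilon}$ on which a section of an ample line bundle is bounded below, which is affinoid; its completed perfection is $\mathcal{X}^{\prime\ast}(\epsilon)^\perf$. Writing $X^{\ord\ast}=\Spec A_0$, the boundary ideal $I_0\subset R_0$ is $JR_0$ for $J\subset A_0$ the boundary ideal, which is of codimension $g\geq 2$ (we are in the case $g\geq 2$; $g=1$ is treated separately), so $V(I_0)\subset \Spec R_0$ is of codimension $\geq 2$. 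The relevant open subset is $\mathcal{U}_0=\mathcal{X}^\prime(\epsilon)$, the complement of the boundary, with preimage $\mathcal{X}^\prime_{\Gamma_1(p^m)}(\epsilon)$ in the cover, and the base triple is good by Lemma~\ref{BaseTripleIsGood}.

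It then remains to check the hypotheses of Lemma~\ref{GoodTripleFinite} for $R_0^\prime=$ the ring of $\mathcal{X}^{\prime\ast}_{\Gamma_1(p^m)}(\epsilon)$, a finite $R_0$-algebra. Both $R_0$ and $R_0^\prime$ are normal: $X^{\ord\ast}$ is normal as an open subscheme of the normal scheme $X^\ast\otimes \F_p$, $X^{\ord\ast}_{\Gamma_1(p^m)}$ is normal by its construction (as recorded just before the statement), and normality is preserved by base change along the extension of perfect fields $\F_p\hookrightarrow K$ and by the admissible blow-up construction $A\mapsto (A\hat{\otimes}_{\F_p} K)\langle u\rangle/(u\widetilde{\Ha}-t^\epsilon)$ (the resulting ring being flat by Lemma~\ref{HartogPrinciple}), hence also by passing to the affinoid locus $|\Ha|\geq |t|^\epsilon$. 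The map $\mathcal{X}^{\prime\ast}_{\Gamma_1(p^m)}(\epsilon)\to \mathcal{X}^{\prime\ast}(\epsilon)$ is finite (since $X^{\ord\ast}_{\Gamma_1(p^m)}\to X^{\ord\ast}$ is) and étale away from the boundary, since $X^\ord_{\Gamma_1(p^m)}\to X^\ord$ — parametrizing trivializations of the étale group scheme $D_m^\ord$ — is finite étale. Finally, no irreducible component of $\Spec R_0^\prime$ maps into $V(I_0)$, because $X^\ord_{\Gamma_1(p^m)}$ is the preimage in $X^{\ord\ast}_{\Gamma_1(p^m)}$ of the dense open $X^\ord\subset X^{\ord\ast}$, hence is dense, and all of this persists after base change to $K$ and the blow-up. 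Lemma~\ref{GoodTripleFinite}(iv) now yields that the triple $(\mathcal{X}^{\prime\ast}_{\Gamma_1(p^m)}(\epsilon)^\perf,\mathcal{Z}^{\prime\ast}_{\Gamma_1(p^m)}(\epsilon)^\perf,\mathcal{X}^\prime_{\Gamma_1(p^m)}(\epsilon)^\perf)$ is good.

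The only point here that is not routine bookkeeping against the hypotheses of Lemma~\ref{GoodTripleFinite} is the normality claim for the truncated minimal-compactification rings $R_0$ and $R_0^\prime$ — i.e.\ that the admissible blow-up cutting out $|\Ha|\geq |t|^\epsilon$ does not destroy normality of the (already established) normal ordinary-locus minimal compactifications; this is where I would concentrate the argument. An alternative that sidesteps even the need to know $\mathcal{X}^{\prime\ast}(\epsilon)$ is globally affinoid is to run the whole reduction after pulling the entire configuration back to each open affine formal subscheme $\mathfrak{U}\subset \mathfrak{X}^{\prime\ast}$, exactly as in the proof of Lemma~\ref{BaseTripleIsGood}, and to use that goodness may be verified on such a covering; with this variant the normality inputs are again only normality of $A_0$, of the finite $\Gamma_1(p^m)$-cover of $\Spec A_0$, and stability of normality under the blow-up.
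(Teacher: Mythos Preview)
Your proposal is correct and follows exactly the paper's approach: the paper's proof reads in full ``This follows from the previous lemma and Lemma~\ref{GoodTripleFinite}'', i.e.\ precisely the combination of Lemma~\ref{BaseTripleIsGood} with Lemma~\ref{GoodTripleFinite}(iv) that you spell out. Your expanded verification of the hypotheses (finiteness, \'etaleness away from the boundary, normality, codimension $\geq 2$, no component in the boundary) and your identification of the normality of the blown-up rings as the only point needing care are accurate elaborations of what the paper leaves implicit; the localization variant you mention in the final paragraph is indeed how the paper's proof of Lemma~\ref{BaseTripleIsGood} is organized, so it is the natural way to run the argument here as well.
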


\begin{proof} This follows from the previous lemma and Lemma \ref{GoodTripleFinite}.
\end{proof}

Now fix $m\geq 1$, and consider $\mathcal{Y}_m^\ast = \mathcal{X}_{\Gamma_1(p^m)}^\ast(\epsilon)_a\to \mathcal{X}_{\Gamma_0(p^m)}^\ast(\epsilon)_a$. We use notation as in Lemma \ref{HardHartog}.

\begin{lem} The tilt of $\mathcal{Y}_\infty$ identifies with $\mathcal{X}^\prime_{\Gamma_1(p^m)}(\epsilon)^\perf$.
\end{lem}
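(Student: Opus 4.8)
The plan is to reduce the statement to the tilting equivalence of finite \'etale sites. Write $\mathcal{X}_\infty:=\mathcal{X}_{\Gamma_0(p^\infty)}(\epsilon)_a$. First I would record the moduli description of the cover $\mathcal{Y}_m=\mathcal{X}_{\Gamma_1(p^m)}(\epsilon)_a\to \mathcal{X}_{\Gamma_0(p^m)}(\epsilon)_a$. By Theorem \ref{ExCanFrobLifts} (ii) the isomorphism $\mathcal{X}_{\Gamma_0(p^m)}(\epsilon)_a\cong \mathcal{X}(p^{-m}\epsilon)$ carries the universal $\Gamma_0(p^m)$-subgroup to $D_m=\mathcal{A}(p^{-m}\epsilon)[p^m]/C_m$, and by Proposition \ref{PropCanSubgroup} (iv) the group $D_m$ is finite \'etale over $\mathcal{X}_{\Gamma_0(p^m)}(\epsilon)_a$, geometrically isomorphic to $(\Z/p^m\Z)^g$; thus $\mathcal{Y}_m$ is the finite \'etale cover parametrizing trivializations $D_m\cong \underline{(\Z/p^m\Z)^g}$. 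Pulling back along $\mathcal{X}_\infty\to \mathcal{X}_{\Gamma_0(p^m)}(\epsilon)_a$, I get $\mathcal{Y}_\infty=\underline{\mathrm{Isom}}_{\mathcal{X}_\infty}(D_{m\Gamma_0(p^\infty)},\underline{(\Z/p^m\Z)^g})$. Since $\mathcal{X}_\infty$ is perfectoid and $\mathcal{Y}_\infty\to\mathcal{X}_\infty$ is finite \'etale, $\mathcal{Y}_\infty$ is itself perfectoid, so its tilt is defined.

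Next I would invoke the equivalence $Y\mapsto Y^\flat$ between the finite \'etale sites of $\mathcal{X}_\infty$ and of $\mathcal{X}_\infty^\flat$, a consequence of the almost purity theorem (\cite{ScholzePerfectoid}), together with the identification $\mathcal{X}_\infty^\flat=\mathcal{X}^\prime(\epsilon)^\perf$ from Corollary \ref{TiltingStrictNbhd}. This equivalence preserves fibre products, hence group objects and $\underline{\mathrm{Isom}}$-schemes, and sends the constant group $\underline{(\Z/p^m\Z)^g}$ to the constant group; therefore $\mathcal{Y}_\infty^\flat$ is the finite \'etale cover of $\mathcal{X}^\prime(\epsilon)^\perf$ parametrizing trivializations of $(D_{m\Gamma_0(p^\infty)})^\flat$. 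Now Lemma \ref{TiltingAntiCanSubgroup} identifies $(D_{m\Gamma_0(p^\infty)})^\flat$ with the perfection of $D_m^\prime$, i.e.\ with the pullback of $D_m^\prime\to\mathcal{X}^\prime(\epsilon)$ along $\mathcal{X}^\prime(\epsilon)^\perf\to\mathcal{X}^\prime(\epsilon)$. Since every abelian variety parametrized by $\mathcal{X}^\prime(\epsilon)$ is ordinary, $D_m^\prime$ is finite \'etale over $\mathcal{X}^\prime(\epsilon)$, and the projection $\mathcal{X}^\prime(\epsilon)^\perf\to\mathcal{X}^\prime(\epsilon)$ induces an equivalence of finite \'etale sites under which pullback corresponds to perfection; hence the cover parametrizing trivializations of the pulled-back $D_m^\prime$ is the perfection of $\underline{\mathrm{Isom}}_{\mathcal{X}^\prime(\epsilon)}(D_m^\prime,\underline{(\Z/p^m\Z)^g})=\mathcal{X}^\prime_{\Gamma_1(p^m)}(\epsilon)$, namely $\mathcal{X}^\prime_{\Gamma_1(p^m)}(\epsilon)^\perf$. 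Composing these identifications gives a canonical isomorphism $\mathcal{Y}_\infty^\flat\cong\mathcal{X}^\prime_{\Gamma_1(p^m)}(\epsilon)^\perf$, functorial in the universal $\Gamma_0$-data.

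The one point that needs genuine care — and essentially the only nonformal input — is the compatibility of the moduli-theoretic description of $\mathcal{Y}_\infty$ as a trivialization torsor with the abstract tilting equivalence; this is precisely where Lemma \ref{TiltingAntiCanSubgroup} does the work, by matching the characteristic-$0$ group $D_{m\Gamma_0(p^\infty)}$ with the characteristic-$p$ group $D_m^\prime$. The remaining ingredients (finite \'etale over perfectoid is perfectoid, $\underline{\mathrm{Isom}}$-formation commuting with tilting because the equivalence of finite \'etale sites respects finite limits and constant objects, and perfection commuting with finite \'etale base change in characteristic $p$) are routine and can be cited or checked locally. Note that, in contrast to the companion lemma dealing with $\mathcal{Y}_\infty^\ast$, no analysis of the boundary enters here, since both $\mathcal{Y}_\infty$ and $\mathcal{X}^\prime_{\Gamma_1(p^m)}(\epsilon)$ live over the good-reduction loci $\mathcal{X}_\infty$ and $\mathcal{X}^\prime(\epsilon)$, where the covers are genuinely finite \'etale.
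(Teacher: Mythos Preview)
Your proposal is correct and follows essentially the same approach as the paper: both interpret $\mathcal{Y}_\infty$ as the $\underline{\mathrm{Isom}}$-torsor trivializing $D_{m\Gamma_0(p^\infty)}$, invoke the tilting equivalence of finite \'etale sites over $\mathcal{X}_{\Gamma_0(p^\infty)}(\epsilon)_a$ together with Corollary~\ref{TiltingStrictNbhd}, and use Lemma~\ref{TiltingAntiCanSubgroup} to identify the tilted group scheme with $(D_m^\prime)^\perf$. The paper's proof is just a terser rendition of exactly these steps; your version spells out more carefully why $\underline{\mathrm{Isom}}$-formation commutes with tilting and with perfection, which is helpful but not a different argument.
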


\begin{proof} As $\mathcal{Y}_m\to \mathcal{X}_{\Gamma_0(p^m)}(\epsilon)_a$ is finite \'etale, $\mathcal{Y}_\infty\to \mathcal{X}_{\Gamma_0(p^\infty)}(\epsilon)_a$ is finite \'etale, and parametrizes isomorphisms $D_{m\Gamma_0(p^\infty)}\cong (\Z/p^m\Z)^g$. Using Lemma \ref{TiltingAntiCanSubgroup}, one sees that the tilt will parametrize isomorphisms $(D_m^\prime)^\perf\cong (\Z/p^m\Z)^g$. This moduli problem is given by $\mathcal{X}^\prime_{\Gamma_1(p^m)}(\epsilon)^\perf\to \mathcal{X}^\prime(\epsilon)^\perf$.
\end{proof}

Note that $\mathcal{Y}_m^\ast\setminus \partial\to \mathcal{X}_{\Gamma_0(p^m)}^\ast(\epsilon)_a\setminus \partial$ is finite \'etale, where $\partial$ denotes the boundary of any of the spaces involved. By pullback (and abuse of notation -- $\mathcal{Y}_\infty^\ast$ is not defined yet), we get a perfectoid space $\mathcal{Y}_\infty^\ast\setminus \partial\to \mathcal{X}_{\Gamma_0(p^\infty)}^\ast(\epsilon)_a\setminus \partial$.

\begin{lem}\label{TiltingFiniteAwayFromBoundary} The tilt of $\mathcal{Y}_\infty^\ast\setminus \partial$ identifies with $\mathcal{X}^{\prime\ast}_{\Gamma_1(p^m)}(\epsilon)^\perf\setminus \partial$.
\end{lem}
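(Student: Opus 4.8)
The plan is to transport the finite \'etale cover $\mathcal{Y}_\infty^\ast\setminus\partial\to\mathcal{X}_{\Gamma_0(p^\infty)}^\ast(\epsilon)_a\setminus\partial$ through the tilting equivalence, and then to pin down the resulting finite \'etale cover in characteristic $p$ by comparing it with $\mathcal{X}^{\prime\ast}_{\Gamma_1(p^m)}(\epsilon)^\perf\setminus\partial$ over the locus of good reduction, using the previous lemma together with the goodness results of Section \ref{HebbarkeitssatzSection}.

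First I would tilt the base. By Corollary \ref{BoundaryStronglyClosedAnticanTower} the space $\mathcal{X}_{\Gamma_0(p^\infty)}^\ast(\epsilon)_a$ is affinoid perfectoid, hence so is its open subspace $\mathcal{X}_{\Gamma_0(p^\infty)}^\ast(\epsilon)_a\setminus\partial$, and by Corollary \ref{TiltingStrictNbhd} its tilt is the open subspace of $\mathcal{X}^{\prime\ast}(\epsilon)^\perf$ complementary to the image of $\partial$ under the homeomorphism of underlying spaces. That image is again $\partial$: at each finite level $\partial$ is cut out by the pullback of the ideal of the boundary of the minimal compactification, and under the identifications $\mathfrak{X}^{\prime\ast}(p^{-m}\epsilon)/t^{1-\epsilon}=\mathfrak{X}^\ast(p^{-m}\epsilon)/p^{1-\epsilon}$ of Theorem \ref{ExCanFrobLifts}(i) these boundaries correspond. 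Thus $(\mathcal{X}_{\Gamma_0(p^\infty)}^\ast(\epsilon)_a\setminus\partial)^\flat=\mathcal{X}^{\prime\ast}(\epsilon)^\perf\setminus\partial$. The morphism $\mathcal{Y}_\infty^\ast\setminus\partial\to\mathcal{X}_{\Gamma_0(p^\infty)}^\ast(\epsilon)_a\setminus\partial$ is finite \'etale, being the pullback of the finite \'etale cover $\mathcal{Y}_m^\ast\setminus\partial\to\mathcal{X}_{\Gamma_0(p^m)}^\ast(\epsilon)_a\setminus\partial$, so by the equivalence of finite \'etale sites under tilting \cite[Theorem 7.12]{ScholzePerfectoid} the tilt $(\mathcal{Y}_\infty^\ast\setminus\partial)^\flat$ is a finite \'etale cover of $\mathcal{X}^{\prime\ast}(\epsilon)^\perf\setminus\partial$. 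On the other side $\mathcal{X}^{\prime\ast}_{\Gamma_1(p^m)}(\epsilon)^\perf\setminus\partial$ is likewise finite \'etale over $\mathcal{X}^{\prime\ast}(\epsilon)^\perf\setminus\partial$, since $\mathcal{X}^{\prime\ast}_{\Gamma_1(p^m)}(\epsilon)\to\mathcal{X}^{\prime\ast}(\epsilon)$ is finite, \'etale away from $\partial$, with $\mathcal{X}^{\prime\ast}_{\Gamma_1(p^m)}(\epsilon)$ normal, and perfection does not alter the finite \'etale site.

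It then remains to identify these two finite \'etale covers of $\mathcal{X}^{\prime\ast}(\epsilon)^\perf\setminus\partial$. Over the locus of good reduction $\mathcal{X}^\prime(\epsilon)^\perf=(\mathcal{X}_{\Gamma_0(p^\infty)}(\epsilon)_a)^\flat\subset\mathcal{X}^{\prime\ast}(\epsilon)^\perf\setminus\partial$ they agree: the candidate restricts to $\mathcal{X}^\prime_{\Gamma_1(p^m)}(\epsilon)^\perf$ by the definition of $\mathcal{X}^{\prime\ast}_{\Gamma_1(p^m)}(\epsilon)$, while $(\mathcal{Y}_\infty^\ast\setminus\partial)^\flat$ restricts to $(\mathcal{Y}_\infty)^\flat=\mathcal{X}^\prime_{\Gamma_1(p^m)}(\epsilon)^\perf$ by the previous lemma. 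So it suffices to show that restriction from finite \'etale covers of $\mathcal{X}^{\prime\ast}(\epsilon)^\perf\setminus\partial$ to finite \'etale covers of $\mathcal{X}^\prime(\epsilon)^\perf$ is fully faithful. This is where the ``good triple'' apparatus of Section \ref{HebbarkeitssatzSection} enters: the triple $(\mathcal{X}^{\prime\ast}(\epsilon)^\perf,\partial,\mathcal{X}^\prime(\epsilon)^\perf)$ is good by Lemma \ref{BaseTripleIsGood}, and the triple attached to any finite normal cover which is \'etale away from $\partial$ --- in particular to $\mathcal{X}^{\prime\ast}_{\Gamma_1(p^m)}(\epsilon)^\perf$ by Corollary \ref{FiniteTripleIsGood}, and to the normalization of $\mathcal{X}^{\prime\ast}(\epsilon)^\perf$ in $(\mathcal{Y}_\infty^\ast\setminus\partial)^\flat$ by Lemma \ref{GoodTripleFinite}(iv) --- is again good. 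Goodness says that sections of the integral structure sheaf modulo $t$ over the minimal-compactified space inject into those over the good-reduction locus; combined with the trace-pairing description of Lemma \ref{GoodTripleFinite}(iii) applied to the relevant $\Hom$-sheaves, this shows that an isomorphism of the two covers over $\mathcal{X}^\prime(\epsilon)^\perf$ has at most one extension over $\mathcal{X}^{\prime\ast}(\epsilon)^\perf\setminus\partial$, and --- passing to the normalizations over $\mathcal{X}^{\prime\ast}(\epsilon)^\perf$, where the Hebbarkeitssatz part of goodness lets the matching extend over $\partial$ as well --- that it does extend. Untilting via the first step then gives the identification. The step I expect to be the main obstacle is exactly this last one: finite \'etale covers have to be controlled on the ``collar'' between the good-reduction locus and the boundary, where there is no abelian variety and hence no direct moduli interpretation, and it is precisely for this that the Hebbarkeitssatz and good-triple machinery of Section \ref{HebbarkeitssatzSection} were developed.
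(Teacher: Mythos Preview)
Your overall plan matches the paper's: reduce to comparing two finite \'etale covers of the tilted base $\mathcal{X}^{\prime\ast}(\epsilon)^\perf\setminus\partial$, identify them over $\mathcal{U}=\mathcal{X}^\prime(\epsilon)^\perf$ via the previous lemma, and then propagate using goodness (Lemma~\ref{BaseTripleIsGood}). The paper packages the propagation step as the standalone lemma immediately following this one: if $f:\mathcal{Y}_1\to\mathcal{Y}_2$ is a map of finite \'etale $\mathcal{X}$-spaces with $f|_{\mathcal{U}}$ an isomorphism and $H^0(\mathcal{X},\OO)\hookrightarrow H^0(\mathcal{U},\OO)$, then $f$ is an isomorphism.

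The gap is in \emph{producing} the map $f$. You assert full faithfulness of restriction, but your argument for the existence direction does not go through. Goodness of the base triple gives only the injection $H^0(\mathcal{X}^{\prime\ast}(\epsilon)^\perf\setminus\partial,\OO)\hookrightarrow H^0(\mathcal{U},\OO)$, which yields uniqueness of an extension but not existence. Your proposed fix --- normalize both covers over the full $\mathcal{X}^{\prime\ast}(\epsilon)^\perf$ and invoke Lemma~\ref{GoodTripleFinite}(iv) --- is circular for the cover $(\mathcal{Y}_\infty^\ast\setminus\partial)^\flat$: that lemma requires as input a cover arising from a finite normal $R_0'$-algebra at \emph{finite} level in characteristic $p$, and you have no such description of the tilt of $\mathcal{Y}_\infty^\ast\setminus\partial$ short of already knowing the conclusion.

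The paper supplies the map by working in characteristic $0$. Writing $\mathcal{X}^{\prime\ast}_{\Gamma_1(p^m)}(\epsilon)^\perf=\Spa(T,T^+)$ with untilt $(U,U^+)$, the previous lemma gives $U^+\to S_\infty^+:=H^0(\mathcal{Y}_\infty,\OO^+)$. Lemma~\ref{HardHartog} --- Hartog plus Tate's normalized traces --- shows $S_\infty^+$ is the $p$-adic completion of $\varinjlim_{m'} S_{m'}^+=\varinjlim_{m'} H^0(\mathcal{Y}_{m'}^\ast,\OO^+)$, and each $S_{m'}^+$ maps to sections over $\mathcal{Y}_\infty^\ast\setminus\partial$ via the tautological projection to $\mathcal{Y}_{m'}^\ast$. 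This assembles into a map $\mathcal{Y}_\infty^\ast\setminus\partial\to\Spa(U,U^+)$ over the base, which the clean lemma then upgrades to an isomorphism. So the ingredient you are missing is Lemma~\ref{HardHartog}, not further goodness.
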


\begin{proof} Let $\mathcal{X}^{\prime\ast}_{\Gamma_1(p^m)}(\epsilon)^\perf = \Spa(T,T^+)$, and let $(U,U^+)$ be the untilt of $(T,T^+)$. By the previous lemma, we get a map
\[
U^+\to H^0(\mathcal{Y}_\infty,\OO_{\mathcal{Y}_\infty}^+) = S_\infty^+\ ,
\]
and by Lemma \ref{HardHartog}, the right-hand side is the $p$-adic completion of $\varinjlim_m S_m^+$. From the latter statement, it follows that there is a map of adic spaces in the sense of \cite[Definition 2.1.5]{ScholzeWeinstein} $\mathcal{Y}_m^\ast\setminus \partial\to \Spa(S_\infty,S_\infty^+)$. Combining, we get a map $\mathcal{Y}_m^\ast\setminus \partial\to \Spa(U,U^+)$. After restricting to the complement of the boundary, both spaces are perfectoid, and finite \'etale over $\mathcal{X}_{\Gamma_0(p^\infty)}^\ast(\epsilon)_a\setminus \partial$. Thus, using the previous lemma and Lemma \ref{BaseTripleIsGood}, the result follows from the next lemma.
\end{proof}

\begin{lem} Let $K$ be a perfectoid field, $\mathcal{X}$, $\mathcal{Y}_1$, $\mathcal{Y}_2$ be perfectoid spaces over $K$, $\mathcal{Y}_1,\mathcal{Y}_2\to \mathcal{X}$ two finite \'etale maps, and $f:\mathcal{Y}_1\to \mathcal{Y}_2$ a map over $\mathcal{X}$. Let $\mathcal{U}\subset \mathcal{X}$ be an open subset such that $H^0(\mathcal{X},\OO_\mathcal{X})\hookrightarrow H^0(\mathcal{U},\OO_\mathcal{U})$. Assume that $f|_\mathcal{U}$ is an isomorphism. Then $f$ is an isomorphism.
\end{lem}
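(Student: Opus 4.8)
The plan is to show that $f$ is automatically finite \'etale, that its degree function on $\mathcal{Y}_2$ is everywhere equal to $1$, and then to conclude, since a finite \'etale morphism of degree $1$ is an isomorphism.

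First I would check that $f\colon\mathcal{Y}_1\to\mathcal{Y}_2$ is itself finite \'etale. Writing $a\colon\mathcal{Y}_1\to\mathcal{X}$ and $b\colon\mathcal{Y}_2\to\mathcal{X}$ for the structure maps (so $b\circ f=a$), one factors $f$ as the graph $\gamma_f\colon\mathcal{Y}_1\to\mathcal{Y}_1\times_{\mathcal{X}}\mathcal{Y}_2$ followed by the second projection $p\colon\mathcal{Y}_1\times_{\mathcal{X}}\mathcal{Y}_2\to\mathcal{Y}_2$. Here $p$ is finite \'etale, being the base change of $a$ along $b$; and $\gamma_f$ is the base change of the diagonal of $b$ along $f\times\id$, which, since $b$ is \'etale and hence separated and unramified, is simultaneously a closed and an open immersion, so $\gamma_f$ is a clopen immersion and in particular finite \'etale. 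Hence $f$ is finite \'etale, and the degree $d_f\colon\mathcal{Y}_2\to\Z_{\geq 0}$, sending $y$ to the rank of $f_\ast\OO_{\mathcal{Y}_1}$ at $y$, is a locally constant function on $\mathcal{Y}_2$.

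Next I would localize the degree over $\mathcal{U}$. Setting $\mathcal{V}_2=b^{-1}(\mathcal{U})=\mathcal{Y}_2\times_{\mathcal{X}}\mathcal{U}$, an open subset of $\mathcal{Y}_2$, the base change of $f$ over $\mathcal{U}$ is $f|_{\mathcal{U}}\colon\mathcal{Y}_1\times_{\mathcal{X}}\mathcal{U}\to\mathcal{V}_2$, an isomorphism by hypothesis; since this is just the restriction of $f$ to $\mathcal{V}_2\subset\mathcal{Y}_2$ and restriction of a locally free sheaf along an open immersion is again the obvious thing, one gets $d_f\equiv 1$ on $\mathcal{V}_2$. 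Then I would consider the bad locus $W=\{y\in\mathcal{Y}_2\mid d_f(y)\neq 1\}$, which is clopen in $\mathcal{Y}_2$ by local constancy of $d_f$ and disjoint from $\mathcal{V}_2$, so that $b(W)\cap\mathcal{U}=\emptyset$. Since $b$ is finite it is closed, and since it is \'etale it is open, so $b(W)$ is a clopen subset of $\mathcal{X}$ disjoint from $\mathcal{U}$. Its indicator function is an idempotent in $H^0(\mathcal{X},\OO_{\mathcal{X}})$ whose restriction to $\mathcal{U}$ is $0$, so the hypothesis $H^0(\mathcal{X},\OO_{\mathcal{X}})\hookrightarrow H^0(\mathcal{U},\OO_{\mathcal{U}})$ forces it to vanish; thus $b(W)=\emptyset$, hence $W=\emptyset$, and $d_f\equiv 1$ on all of $\mathcal{Y}_2$, whence $f$ is an isomorphism.

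The argument uses only standard facts — that a morphism over $\mathcal{X}$ between spaces finite \'etale over $\mathcal{X}$ is again finite \'etale, that finite (resp.\ \'etale) morphisms of adic spaces are closed (resp.\ open), and that a degree-$1$ finite \'etale morphism is an isomorphism — none of which involves the perfectoid nature of the spaces, so I do not anticipate a genuine obstacle. The only real content is the bookkeeping with clopen subsets and idempotents in the last step, where the injectivity hypothesis on global sections is exactly what is needed to rule out a nonempty component over which $f$ has degree different from $1$.
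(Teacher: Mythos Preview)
Your proof is correct and follows essentially the same approach as the paper: both arguments produce a clopen subset of $\mathcal{X}$ (the ``bad locus'') whose indicator idempotent restricts to $0$ on $\mathcal{U}$, and then invoke the injectivity hypothesis to conclude. The paper is terser, asserting directly that the locus in $\mathcal{X}$ over which $f$ is an isomorphism is clopen (``reduces to the classical algebraic case''), whereas you unpack this by first showing $f$ is finite \'etale, reading off the degree function on $\mathcal{Y}_2$, and pushing its non-$1$ locus down to $\mathcal{X}$ via $b$; this is a harmless detour and arguably clearer.
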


\begin{proof} The locus of $\mathcal{X}$ above which $f$ is an isomorphism is open and closed: As the maps are finite \'etale, this reduces to the classical algebraic case. Thus, if $f$ is not an isomorphism, there is a nontrivial idempotent $e\in H^0(\mathcal{X},\OO_\mathcal{X})$ which is equal to $1$ on the locus where $f$ is an isomorphism. In particular, $e|_\mathcal{U} = 1$. But as $H^0(\mathcal{X},\OO_\mathcal{X})\hookrightarrow H^0(\mathcal{U},\OO_\mathcal{U})$, $e=1$, as desired.
\end{proof}

\begin{lem}\label{TiltingFinite} The ring $S_\infty = H^0(\mathcal{Y}_\infty,\OO_{\mathcal{Y}_\infty})$ is perfectoid, and the tilt of $\mathcal{Y}_\infty^\ast = \Spa(S_\infty,S_\infty^+)$ identifies with $\mathcal{X}^{\prime\ast}_{\Gamma_1(p^m)}(\epsilon)^\perf$.
\end{lem}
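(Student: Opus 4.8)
The plan is to construct $\mathcal{Y}_\infty^\ast$ as an untilt of $\mathcal{X}^{\prime\ast}_{\Gamma_1(p^m)}(\epsilon)^\perf$ and to prove that this untilt computes $S_\infty = H^0(\mathcal{Y}_\infty,\OO_{\mathcal{Y}_\infty})$, by gluing the already-established identification away from the boundary (Lemma \ref{TiltingFiniteAwayFromBoundary}) to the characteristic $p$ Hebbarkeitssatz (Corollary \ref{FiniteTripleIsGood}) transported to characteristic $0$ via the tilting-invariance of ``goodness''.

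First I would let $(U,U^+)$ be the untilt of $(T,T^+):=\mathcal{X}^{\prime\ast}_{\Gamma_1(p^m)}(\epsilon)^\perf$ along $\Q_p^\cycl$, so that $\Spa(U,U^+)$ is affinoid perfectoid with tilt $\Spa(T,T^+)$ and $U^{+a}/p\cong T^{+a}/t$. As in the proof of Lemma \ref{TiltingFiniteAwayFromBoundary} one has a natural map $U^+\to H^0(\mathcal{Y}_\infty,\OO_{\mathcal{Y}_\infty}^+)=S_\infty^+$; moreover, since tilting is a homeomorphism on underlying spaces carrying $\OO^+/p$ to $\OO^+/t$, Lemma \ref{TiltingFiniteAwayFromBoundary} identifies $\mathcal{Y}_\infty=\mathcal{Y}_\infty^\ast\setminus\partial$ with the open complement in $\Spa(U,U^+)$ of the (strongly) Zariski closed untilt $\widetilde{\mathcal{Z}}$ of the boundary, under which $U^+\to S_\infty^+$ becomes the restriction-of-sections map $H^0(\Spa(U,U^+),\OO^+)\to H^0(\Spa(U,U^+)\setminus\widetilde{\mathcal{Z}},\OO^+)$. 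The goal then reduces to showing that this restriction map is an almost isomorphism: granting that, $S_\infty^{+a}\cong U^{+a}$ is a perfectoid $\OO_{\Q_p^\cycl}^a$-algebra, hence $S_\infty = S_\infty^+[\tfrac1p]$ is perfectoid (cf. \cite[Section 5]{ScholzePerfectoid}) with tilt $\Spa(T,T^+)=\mathcal{X}^{\prime\ast}_{\Gamma_1(p^m)}(\epsilon)^\perf$, and the remaining assertions ($\mathcal{Y}_\infty^\ast$ affinoid perfectoid, $\mathcal{Y}_\infty^\ast\sim\varprojlim_{m'}\mathcal{Y}_{m'}^\ast$, and $S_\infty^+$ the $p$-adic completion of $\varinjlim_{m'} S_{m'}^+$) follow by feeding this back into Lemma \ref{HardHartog}(iii), whose hypothesis is now satisfied.

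To prove the almost isomorphism it suffices, by $p$-torsion-freeness and $p$-adic completeness of both sides, to work modulo $p$. The key input is that goodness is preserved under tilting (the remark after Definition \ref{DefGood}): by Corollary \ref{FiniteTripleIsGood} the triple $(\mathcal{X}^{\prime\ast}_{\Gamma_1(p^m)}(\epsilon)^\perf,\mathcal{Z}^{\prime\ast}_{\Gamma_1(p^m)}(\epsilon)^\perf,\mathcal{X}^\prime_{\Gamma_1(p^m)}(\epsilon)^\perf)$ is good, hence so is $(\Spa(U,U^+),\widetilde{\mathcal{Z}},\mathcal{Y}_\infty)$, and using $H^0(\Spa(U,U^+),\OO^+/p)^a=(U^+/p)^a$ (as $\Spa(U,U^+)$ is affinoid perfectoid, cf. \cite[Theorem 6.3]{ScholzePerfectoid}) this yields a natural almost isomorphism
\[(U^+/p)^a\;\cong\;H^0\big(\mathcal{Y}_\infty,\OO_{\mathcal{Y}_\infty}^+/p\big)^a.\]
This isomorphism factors through $(S_\infty^+/p)^a=\big(H^0(\mathcal{Y}_\infty,\OO^+)/p\big)^a\hookrightarrow H^0(\mathcal{Y}_\infty,\OO^+/p)^a$, the last map being injective from the sequence $0\to\OO^+\xrightarrow{p}\OO^+\to\OO^+/p\to0$; a short diagram chase then forces $(U^+/p)^a\to(S_\infty^+/p)^a$ to be both injective and surjective. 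Lifting modulo $p^n$ (by dévissage on $p^n U^+/p^{n+1}\cong U^+/p$) and passing to the inverse limit finishes the argument. (The case $g=1$, excluded above, is handled directly since the boundary is then a finite set of cusps; alternatively one reduces to $g\geq 2$ via the embedding $E\mapsto E\times E\times E$ of the modular curve into the Siegel threefold.)

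The step I expect to be the main obstacle is the compatibility bookkeeping in the construction of the map: verifying that the tilting homeomorphism of Lemma \ref{TiltingFiniteAwayFromBoundary} identifies the open complement of $\widetilde{\mathcal{Z}}$ in $\Spa(U,U^+)$ exactly with $\mathcal{Y}_\infty$, that $\widetilde{\mathcal{Z}}$ really is Zariski closed (via Lemmas \ref{StronglyZariskiClosedCharP} and \ref{StronglyZariskiClosedTilting}, so that ``goodness'' is meaningful and tiltable on the characteristic $0$ side), and that the abstractly given map $U^+\to S_\infty^+$ genuinely is the restriction-of-sections map under these identifications. Once this is pinned down, the two substantive inputs, Corollary \ref{FiniteTripleIsGood} and Lemma \ref{HardHartog}, carry the argument with only formal almost-mathematics in between.
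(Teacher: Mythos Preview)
Your proposal is correct and follows essentially the same route as the paper. Both arguments reduce to showing that the map $U^+\to S_\infty^+$ is an almost isomorphism modulo $p$, and both rely on exactly the two inputs you name: Corollary~\ref{FiniteTripleIsGood} (goodness of the characteristic-$p$ triple) and Lemma~\ref{TiltingFiniteAwayFromBoundary} (the tilt identification away from the boundary). The only difference is packaging: you first transport goodness to characteristic $0$ via the tilting-invariance of Definition~\ref{DefGood} and then run a single diagram chase through $(U^+/p)^a\to(S_\infty^+/p)^a\hookrightarrow H^0(\mathcal{Y}_\infty,\OO^+/p)^a$, whereas the paper stays on the tilt side and argues injectivity and surjectivity separately, using the goodness injection $(U^+/p)^a\hookrightarrow H^0(\mathcal{X}'_{\Gamma_1(p^m)}(\epsilon)^\perf,\OO^+/t)^a$ for the former and the goodness isomorphism $H^0(\mathcal{X}^{\prime\ast}_{\Gamma_1(p^m)}(\epsilon)^\perf\setminus\partial,\OO^+/t)^a\cong(U^+/p)^a$ for the latter. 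Your version is arguably cleaner once one notes, as you do, that in this setup the third member of the good triple is the full complement of the boundary, so the Hebbarkeitssatz already gives an isomorphism rather than just an injection. The compatibility verification you flag in your last paragraph is genuine but routine: the map $U^+\to S_\infty^+$ is constructed in the proof of Lemma~\ref{TiltingFiniteAwayFromBoundary} precisely by identifying $\mathcal{Y}_\infty$ (via tilting) with the open subset of $\Spa(U,U^+)$ corresponding to $\mathcal{X}'_{\Gamma_1(p^m)}(\epsilon)^\perf\subset\Spa(T,T^+)$ and then restricting sections, so it is the restriction map by construction.
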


\begin{proof} Recall that in the proof of Lemma \ref{TiltingFiniteAwayFromBoundary}, we constructed a map $U^+\to S_\infty^+$; we need to show that it is an isomorphism. From $S_\infty^+=H^0(\mathcal{Y}_\infty,\OO_{\mathcal{Y}_\infty}^+)$, we know that
\[
S_\infty^+/p\hookrightarrow H^0(\mathcal{Y}_\infty,\OO_{\mathcal{Y}_\infty}^+/p)\ .
\]
Using Corollary \ref{FiniteTripleIsGood},
\[
(U^+/p)^a = H^0(\mathcal{X}^{\prime\ast}_{\Gamma_1(p^m)}(\epsilon)^\perf,\OO^+/t)^a\hookrightarrow H^0(\mathcal{X}^\prime_{\Gamma_1(p^m)}(\epsilon)^\perf,\OO^+/t)^a = H^0(\mathcal{Y}_\infty,\OO^+/p)^a\ ,
\]
so $U^+/p\to S_\infty^+/p$ is almost injective, and the map $U^+\to S_\infty^+$ is injective. To prove surjectivity, observe that there is a map
\[\begin{aligned}
(S_\infty^+/p)^a\hookrightarrow H^0(\mathcal{Y}_\infty^\ast\setminus \partial,\OO^+/p)^a&\cong H^0(\mathcal{X}^{\prime\ast}_{\Gamma_1(p^m)}(\epsilon)^\perf\setminus \partial,\OO^+/t)^a\\
&\cong H^0(\mathcal{X}^{\prime\ast}_{\Gamma_1(p^m)}(\epsilon)^\perf,\OO^+/t)^a = (U^+/p)^a
\end{aligned}\]
by Lemma \ref{TiltingFiniteAwayFromBoundary} and Corollary \ref{FiniteTripleIsGood}. This gives almost surjectivity, thus $S_\infty = U$, and then also $S_\infty^+ = S_\infty^\circ = U^\circ = U^+$.
\end{proof}

Summarizing the discussion, we have proved the following.

\begin{prop} For any $m\geq 1$, there exists a unique perfectoid space $\mathcal{X}_{\Gamma_1(p^m)\cap \Gamma_0(p^\infty)}^\ast(\epsilon)_a$ over $\Q_p^\cycl$ such that
\[
\mathcal{X}_{\Gamma_1(p^m)\cap \Gamma_0(p^\infty)}^\ast(\epsilon)_a\sim \varprojlim_{m^\prime} \mathcal{X}_{\Gamma_1(p^m)\cap \Gamma_0(p^{m^\prime})}^\ast(\epsilon)_a\ .
\]
Moreover, $\mathcal{X}_{\Gamma_1(p^m)\cap \Gamma_0(p^\infty)}^\ast(\epsilon)_a$ and all $\mathcal{X}_{\Gamma_1(p^m)\cap \Gamma_0(p^{m^\prime})}^\ast(\epsilon)_a$ for $m^\prime$ sufficiently large are affinoid, and
\[
\varinjlim_{m^\prime} H^0(\mathcal{X}_{\Gamma_1(p^m)\cap \Gamma_0(p^{m^\prime})}^\ast(\epsilon)_a,\OO)\to H^0(\mathcal{X}_{\Gamma_1(p^m)\cap \Gamma_0(p^\infty)}^\ast(\epsilon)_a,\OO)
\]
has dense image.

Let $\mathcal{Z}_{\Gamma_1(p^m)\cap \Gamma_0(p^\infty)}(\epsilon)_a\subset \mathcal{X}_{\Gamma_1(p^m)\cap \Gamma_0(p^\infty)}^\ast(\epsilon)_a$ denote the boundary, and $\mathcal{X}_{\Gamma_1(p^m)\cap \Gamma_0(p^\infty)}(\epsilon)_a$ the preimage of $\mathcal{X}_{\Gamma_0(p)}(\epsilon)_a\subset \mathcal{X}_{\Gamma_0(p)}^\ast(\epsilon)_a$. Then the triple
\[
(\mathcal{X}_{\Gamma_1(p^m)\cap \Gamma_0(p^\infty)}^\ast(\epsilon)_a,\mathcal{Z}_{\Gamma_1(p^m)\cap \Gamma_0(p^\infty)}(\epsilon)_a,\mathcal{X}_{\Gamma_1(p^m)\cap \Gamma_0(p^\infty)}(\epsilon)_a)
\]
is good.$\hfill \Box$
\end{prop}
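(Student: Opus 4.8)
The plan is to feed the finite cover $\mathcal{Y}_m^\ast = \mathcal{X}_{\Gamma_1(p^m)}^\ast(\epsilon)_a\to \mathcal{X}_{\Gamma_0(p^m)}^\ast(\epsilon)_a$ (the one considered just before the statement) into Lemma \ref{HardHartog}, so that $\mathcal{Y}_{m^\prime}^\ast = \mathcal{X}_{\Gamma_1(p^m)\cap \Gamma_0(p^{m^\prime})}^\ast(\epsilon)_a$ for $m^\prime\geq m$, and then to read off all the assertions from the lemmas already proved. First I would check the hypotheses of Lemma \ref{HardHartog}: the map $\mathcal{Y}_m^\ast\to \mathcal{X}_{\Gamma_0(p^m)}^\ast(\epsilon)_a$ is finite, it is \'etale over the base away from the boundary (it only adds a $\Gamma_1(p^m)$-structure on the anticanonical subgroup), $\mathcal{Y}_m^\ast$ is normal (it is cut out inside an admissible blow-up of the normal minimal compactification by an equation $u\widetilde{\Ha}=p^\epsilon$, exactly the situation of Lemma \ref{HartogPrinciple}), and no irreducible component maps into the boundary since the good-reduction locus is dense. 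By Lemma \ref{TiltingFinite}, $S_\infty = H^0(\mathcal{Y}_\infty,\OO_{\mathcal{Y}_\infty})$ is a perfectoid $\Q_p^\cycl$-algebra, so Lemma \ref{HardHartog} (iii) applies and produces the affinoid perfectoid space
\[
\mathcal{X}_{\Gamma_1(p^m)\cap \Gamma_0(p^\infty)}^\ast(\epsilon)_a := \mathcal{Y}_\infty^\ast = \Spa(S_\infty,S_\infty^+)\ ,
\]
together with the equivalence $\mathcal{Y}_\infty^\ast\sim \varprojlim_{m^\prime} \mathcal{Y}_{m^\prime}^\ast$ and the fact that $S_\infty^+$ is the $p$-adic completion of $\varinjlim_{m^\prime} S_{m^\prime}^+$. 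Uniqueness of a perfectoid space with this $\sim$-property is \cite[Proposition 2.4.5]{ScholzeWeinstein}.

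Next I would record the affinoidness and density statements. For $m^\prime$ sufficiently large $\mathcal{X}_{\Gamma_0(p^{m^\prime})}^\ast(\epsilon)_a$ is affinoid by Lemma \ref{MinCompAffinoid}, and $\mathcal{Y}_{m^\prime}^\ast$ is finite over it, hence affinoid; $\mathcal{Y}_\infty^\ast = \Spa(S_\infty,S_\infty^+)$ is affinoid by construction. The density of
\[
\varinjlim_{m^\prime} H^0(\mathcal{X}_{\Gamma_1(p^m)\cap \Gamma_0(p^{m^\prime})}^\ast(\epsilon)_a,\OO)\to H^0(\mathcal{X}_{\Gamma_1(p^m)\cap \Gamma_0(p^\infty)}^\ast(\epsilon)_a,\OO)
\]
is immediate from the description of $S_\infty^+$ as the $p$-adic completion of $\varinjlim_{m^\prime} S_{m^\prime}^+$ (equivalently, from Lemma \ref{HardHartog} (ii)).

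Finally, for the goodness of the triple, I would use that goodness is compatible with tilting (as remarked after Definition \ref{DefGood}). By Lemma \ref{TiltingFinite} the tilt of $\mathcal{Y}_\infty^\ast$ is $\mathcal{X}^{\prime\ast}_{\Gamma_1(p^m)}(\epsilon)^\perf$, and one has to check that under this identification the boundary $\mathcal{Z}_{\Gamma_1(p^m)\cap \Gamma_0(p^\infty)}(\epsilon)_a$ corresponds to $\mathcal{Z}^{\prime\ast}_{\Gamma_1(p^m)}(\epsilon)^\perf$ and the preimage $\mathcal{X}_{\Gamma_1(p^m)\cap \Gamma_0(p^\infty)}(\epsilon)_a$ of $\mathcal{X}_{\Gamma_0(p)}(\epsilon)_a$ corresponds to $\mathcal{X}^\prime_{\Gamma_1(p^m)}(\epsilon)^\perf$. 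This holds because both loci are cut out (as Zariski closed, resp. rational open, subsets) by the ideal, resp. the non-vanishing locus, of functions pulled back from finite level, and the mod-$p^{1-\epsilon}$ identifications of Theorem \ref{ExCanFrobLifts} (i), carried through the finite covers exactly as in the proof of Lemma \ref{TiltingFinite}, match these up; compatibility of Zariski closed subsets with tilting is Lemma \ref{StronglyZariskiClosedTilting} and the discussion of Section \ref{ClosedEmbeddingsSection}. Then Corollary \ref{FiniteTripleIsGood} tells us the tilted triple is good, whence the triple in characteristic $0$ is good as well.

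The main obstacle is the bookkeeping in this last paragraph: verifying that the boundary and the good-reduction locus are genuinely compatible with the tilting equivalence $\mathcal{Y}_\infty^{\ast\flat}\cong \mathcal{X}^{\prime\ast}_{\Gamma_1(p^m)}(\epsilon)^\perf$, which amounts to transporting the mod-$p^{1-\epsilon}$ identifications from the $\Gamma_0(p^\infty)$-tower (Corollary \ref{TiltingStrictNbhd}) through the finite-\'etale covers produced by Lemma \ref{HardHartog}. Everything else is a direct assembly of the preceding lemmas.
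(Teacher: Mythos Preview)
Your proposal is correct and follows the paper's approach exactly: the proposition is stated in the paper as a summary of the preceding discussion (it is introduced by ``Summarizing the discussion, we have proved the following'' and carries only a $\Box$), and you have correctly identified the ingredients---Lemma \ref{HardHartog} (iii) fed by Lemma \ref{TiltingFinite} for the perfectoid/affinoid/density claims, and Corollary \ref{FiniteTripleIsGood} together with the tilting-compatibility of goodness for the final assertion. One minor inaccuracy: your description of why $\mathcal{Y}_m^\ast$ is normal is off (it is not itself cut out by $u\widetilde{\Ha}=p^\epsilon$; rather it is an open in the adic space of the normal scheme $X_{\Gamma_1(p^m)}^\ast$), but the conclusion stands.
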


In fact, the proposition is also true for $g=1$. In that case, $\mathcal{X}_{\Gamma_1(p^m)}^\ast(\epsilon)_a\to \mathcal{X}_{\Gamma_0(p^m)}^\ast(\epsilon)_a$ is finite \'etale, and the boundary is contained in the ordinary locus. Note that Lemma \ref{GoodTriple} holds true if the codimension of the boundary is $1$ when the boundary $V(J)$ does not meet $V(f)$. Also, Lemma \ref{GoodTripleFinite} holds true if the codimension of the boundary is $1$ when the map is finite \'etale. Certainly, one can pull back the finite \'etale map $\mathcal{X}_{\Gamma_1(p^m)}^\ast(\epsilon)_a\to \mathcal{X}_{\Gamma_0(p^m)}^\ast(\epsilon)_a$ to get $\mathcal{X}_{\Gamma_1(p^m)\cap \Gamma_0(p^\infty)}^\ast(\epsilon)_a\to \mathcal{X}_{\Gamma_0(p^\infty)}^\ast(\epsilon)_a$, and arrives at all desired properties.

Passing to the inverse limit over $m$ and using Lemma \ref{GoodTripleLimit}, we get the following proposition.

\begin{prop} There is a unique perfectoid space $\mathcal{X}_{\Gamma_1(p^\infty)}^\ast(\epsilon)_a$ over $\Q_p^\cycl$ such that
\[
\mathcal{X}_{\Gamma_1(p^\infty)}^\ast(\epsilon)_a\sim \varprojlim_m \mathcal{X}_{\Gamma_1(p^m)}^\ast(\epsilon)_a\ .
\]
Moreover, $\mathcal{X}_{\Gamma_1(p^\infty)}^\ast(\epsilon)_a$ and all $\mathcal{X}_{\Gamma_1(p^m)}^\ast(\epsilon)_a$ for $m$ sufficiently large are affinoid, and
\[
\varinjlim_m H^0(\mathcal{X}_{\Gamma_1(p^m)}^\ast(\epsilon)_a,\OO)\to H^0(\mathcal{X}_{\Gamma_1(p^\infty)}^\ast(\epsilon)_a,\OO)
\]
has dense image.

Let $\mathcal{Z}_{\Gamma_1(p^\infty)}(\epsilon)_a\subset \mathcal{X}_{\Gamma_1(p^\infty)}^\ast(\epsilon)_a$ denote the boundary, and $\mathcal{X}_{\Gamma_1(p^\infty)}(\epsilon)_a$ the preimage of $\mathcal{X}_{\Gamma_0(p)}(\epsilon)_a\subset \mathcal{X}_{\Gamma_0(p)}^\ast(\epsilon)_a$. Then the triple
\[
(\mathcal{X}_{\Gamma_1(p^\infty)}^\ast(\epsilon)_a,\mathcal{Z}_{\Gamma_1(p^\infty)}(\epsilon)_a,\mathcal{X}_{\Gamma_1(p^\infty)}(\epsilon)_a)
\]
is good.$\hfill \Box$
\end{prop}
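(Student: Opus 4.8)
The plan is to deduce this from the preceding proposition by passing to the inverse limit over the $\Gamma_1$-depth $m$ and invoking Lemma~\ref{GoodTripleLimit}. Since the notion of goodness is compatible with tilting (as noted after Definition~\ref{DefGood}) and Lemmas~\ref{GoodTripleFinite}--\ref{GoodTripleLimit} are statements in characteristic $p$, the argument will be carried out on the tilts.

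First I would assemble the relevant inductive system. For each $m\geq 1$ the preceding proposition provides an affinoid perfectoid space $\mathcal{X}_{\Gamma_1(p^m)\cap\Gamma_0(p^\infty)}^\ast(\epsilon)_a$ over $\Q_p^\cycl$, whose tilt is $\mathcal{X}^{\prime\ast}_{\Gamma_1(p^m)}(\epsilon)^\perf$ by Lemma~\ref{TiltingFinite}. The finite maps $\mathcal{X}^{\prime\ast}_{\Gamma_1(p^{m+1})}(\epsilon)\to \mathcal{X}^{\prime\ast}_{\Gamma_1(p^m)}(\epsilon)$ have normal source and target, are surjective and \'etale away from the boundary, and no irreducible component of the source maps into the boundary; these properties are inherited from the finite \'etale $\Gamma_1$-tower over the dense ordinary locus $X^\ord$ from which the whole tower is induced. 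Hence the tower $\{\mathcal{X}^{\prime\ast}_{\Gamma_1(p^m)}(\epsilon)^\perf\}_m$, together with the Zariski closed boundary loci $\mathcal{Z}^{\prime\ast}_{\Gamma_1(p^m)}(\epsilon)^\perf$ and the good-reduction loci $\mathcal{X}^\prime_{\Gamma_1(p^m)}(\epsilon)^\perf$, is a filtered inductive system with surjective transition maps of exactly the shape considered before Lemma~\ref{GoodTripleFinite}, lying over the base triple $(\mathcal{X}^{\prime\ast}(\epsilon)^\perf,\mathcal{Z}^{\prime\ast}(\epsilon)^\perf,\mathcal{X}^\prime(\epsilon)^\perf)$, which is good by Lemma~\ref{BaseTripleIsGood}. (For $g=1$ the boundary has codimension $1$, but it lies inside the ordinary locus and the $\Gamma_1$-tower is finite \'etale there, so the finite-\'etale codimension-$1$ variants of the relevant lemmas apply, exactly as in the $g=1$ case of the preceding proposition.)

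Next I would form and untilt the limit. Each $\mathcal{X}^{\prime\ast}_{\Gamma_1(p^m)}(\epsilon)^\perf$ is affinoid perfectoid, being the perfection of the affinoid $\mathcal{X}^{\prime\ast}_{\Gamma_1(p^m)}(\epsilon)$ (finite over the affinoid $\mathcal{X}^{\prime\ast}(\epsilon)$), so $\tilde{\mathcal{X}}^\prime=\varprojlim_m \mathcal{X}^{\prime\ast}_{\Gamma_1(p^m)}(\epsilon)^\perf$ in the category of perfectoid spaces over $\F_p((t^{1/(p-1)p^\infty}))$ is again affinoid perfectoid; write $\tilde{\mathcal{Z}}^\prime,\tilde{\mathcal{U}}^\prime$ for the preimages of the boundary and of the good-reduction locus. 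Untilting $\tilde{\mathcal{X}}^\prime$ over $\Q_p^\cycl$ yields an affinoid perfectoid space which I define to be $\mathcal{X}_{\Gamma_1(p^\infty)}^\ast(\epsilon)_a$. Since inverse limits of perfectoid spaces are compatible with tilting and each $\mathcal{X}_{\Gamma_1(p^m)\cap\Gamma_0(p^\infty)}^\ast(\epsilon)_a$ tilts to $\mathcal{X}^{\prime\ast}_{\Gamma_1(p^m)}(\epsilon)^\perf$, this gives $\mathcal{X}_{\Gamma_1(p^\infty)}^\ast(\epsilon)_a\sim \varprojlim_m \mathcal{X}_{\Gamma_1(p^m)\cap\Gamma_0(p^\infty)}^\ast(\epsilon)_a$; composing with the $\sim$-relations of the preceding proposition over the $\Gamma_0$-depth, and using that the subgroups $\Gamma_1(p^m)\cap\Gamma_0(p^{m'})$ are cofinal among the open subgroups containing $\Gamma_1(p^\infty)=\bigcap_m\Gamma_1(p^m)$, yields $\mathcal{X}_{\Gamma_1(p^\infty)}^\ast(\epsilon)_a\sim \varprojlim_m \mathcal{X}_{\Gamma_1(p^m)}^\ast(\epsilon)_a$. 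The remaining assertions are then formal: $\mathcal{X}_{\Gamma_1(p^m)}^\ast(\epsilon)_a$ is finite over the affinoid $\mathcal{X}_{\Gamma_0(p^m)}^\ast(\epsilon)_a$ (Lemma~\ref{MinCompAffinoid}) hence affinoid for $m$ large; the density of $\varinjlim_m H^0\to H^0$ is part of the definition of $\sim$; and uniqueness is \cite[Proposition 2.4.5]{ScholzeWeinstein}.

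Finally, for goodness: by compatibility of goodness with tilting it suffices to show that $(\tilde{\mathcal{X}}^\prime,\tilde{\mathcal{Z}}^\prime,\tilde{\mathcal{U}}^\prime)$ is good, and this is precisely the conclusion of Lemma~\ref{GoodTripleLimit} applied to the inductive system assembled in the first step --- whose individual terms are good by Corollary~\ref{FiniteTripleIsGood} (equivalently, by Lemma~\ref{GoodTripleFinite}(iv)) and whose transition maps are surjective. I do not expect a serious obstacle: the substantive work has already been done in the preceding proposition and in the abstract Lemmas~\ref{GoodTripleFinite}--\ref{GoodTripleLimit}, and the only points demanding care are the bookkeeping that identifies the tilt tower with a system of the precise form required before Lemma~\ref{GoodTripleFinite} --- in particular surjectivity of the transition maps and the absence of irreducible components mapping into the boundary --- together with the minor modifications needed in the codimension-$1$ case $g=1$.
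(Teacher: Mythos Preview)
Your proposal is correct and follows the paper's approach exactly: the paper's entire argument is the single sentence ``Passing to the inverse limit over $m$ and using Lemma~\ref{GoodTripleLimit}'', and you have unpacked precisely what this entails --- working on the tilts (where Lemma~\ref{GoodTripleLimit} lives), identifying the tower $\{\mathcal{X}^{\prime\ast}_{\Gamma_1(p^m)}(\epsilon)^\perf\}_m$ as a system of the required shape over the good base triple of Lemma~\ref{BaseTripleIsGood}, invoking Corollary~\ref{FiniteTripleIsGood} for goodness at each finite stage, and then untilting. The cofinality bookkeeping and the $g=1$ caveat you flag are exactly the minor points the paper leaves implicit.
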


The case of $\Gamma(p^m)$-level structures is now easy, using \cite[Theorem 7.9 (iii)]{ScholzePerfectoid}, and the following lemma.

\begin{lem}\label{GammaVsGamma1FiniteEtale} For any $m\geq 1$, the map
\[
\mathcal{X}_{\Gamma(p^m)}^\ast(\epsilon)_a\to \mathcal{X}_{\Gamma_1(p^m)}^\ast(\epsilon)_a
\]
is finite \'etale.
\end{lem}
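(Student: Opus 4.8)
The plan is to reduce the lemma to a statement about extending a finite \'etale cover across the boundary, and then to settle that statement with the good triples and almost purity already at hand. To begin, $\mathcal{X}_{\Gamma(p^m)}^\ast(\epsilon)_a\to\mathcal{X}_{\Gamma_1(p^m)}^\ast(\epsilon)_a$ is finite: it is the restriction to the open subsets $(\epsilon)_a$ of the morphism $X_{\Gamma(p^m)}^\ast\to X_{\Gamma_1(p^m)}^\ast$ of (adic spaces attached to) minimal compactifications, which is proper and quasi-finite. Both source and target are normal, being open in normal minimal compactifications. Over the good-reduction locus $\mathcal{X}_{\Gamma_1(p^m)}(\epsilon)_a$ the map is finite \'etale, being a map of moduli spaces of abelian varieties with additional level structure over $\Q_p^\cycl$: passing from a $\Gamma_1(p^m)$- to a $\Gamma(p^m)$-structure amounts to trivializing a Lagrangian complement $\mathcal{A}[p^m]/D_m$, which over $\Q_p^\cycl$ is a lisse sheaf of free $\Z/p^m$-modules, and the induced cover is a torsor under the finite group $\Gamma_1(p^m)/\Gamma(p^m)\cap\Sp_{2g}$, acting freely since $K^p$ lies in a neat level. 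Thus the entire content is \'etaleness along the boundary $\partial$.

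Assume first $g\geq 2$, so that $\partial$ has codimension $\geq 2$. I would pull the finite \'etale cover $\mathcal{X}_{\Gamma(p^m)}(\epsilon)_a\to\mathcal{X}_{\Gamma_1(p^m)}(\epsilon)_a$ back along $\mathcal{X}_{\Gamma_1(p^\infty)}(\epsilon)_a\to\mathcal{X}_{\Gamma_1(p^m)}(\epsilon)_a$ to get a finite \'etale cover of $\mathcal{X}_{\Gamma_1(p^\infty)}(\epsilon)_a$, and form the affinoid perfectoid $\mathcal{V}^\ast=\Spa(S_\infty,S_\infty^+)$ whose ring of functions is the global sections of this cover over the complement of the boundary, exactly as in Lemma \ref{HardHartog}. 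The triple $(\mathcal{X}_{\Gamma_1(p^\infty)}^\ast(\epsilon)_a,\mathcal{Z}_{\Gamma_1(p^\infty)}(\epsilon)_a,\mathcal{X}_{\Gamma_1(p^\infty)}(\epsilon)_a)$ is good by the proposition preceding this lemma, so the reasoning of Lemmas \ref{GoodTripleFinite}, \ref{HardHartog} and \ref{TiltingFinite} applies: $\mathcal{V}^\ast$ is affinoid perfectoid, its ring is recovered by the Hebbarkeitssatz from the complement of $\partial$, and $\mathcal{V}^\ast\to\mathcal{X}_{\Gamma_1(p^\infty)}^\ast(\epsilon)_a$ is finite \'etale; this last point is where almost purity, \cite[Theorem 7.9 (iii)]{ScholzePerfectoid} applied on the tilt, is essential.

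It then remains to descend $\mathcal{V}^\ast$ to level $\Gamma_1(p^m)$ and to identify it with $\mathcal{X}_{\Gamma(p^m)}^\ast(\epsilon)_a$. Since $\mathcal{X}_{\Gamma_1(p^\infty)}^\ast(\epsilon)_a\sim\varprojlim_{m'}\mathcal{X}_{\Gamma_1(p^{m'})}^\ast(\epsilon)_a$, the finite \'etale cover $\mathcal{V}^\ast$ descends to a finite \'etale cover at some finite level; but away from $\partial$ it is, by construction, pulled back from $\mathcal{X}_{\Gamma_1(p^m)}(\epsilon)_a$, so normality together with uniqueness of the normalization forces the descended cover to be defined already at level $\Gamma_1(p^m)$ and to coincide there with the normalization of $\mathcal{X}_{\Gamma_1(p^m)}^\ast(\epsilon)_a$ in the finite \'etale cover over $\mathcal{X}_{\Gamma_1(p^m)}(\epsilon)_a$. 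That normalization is precisely $\mathcal{X}_{\Gamma(p^m)}^\ast(\epsilon)_a$, since both are finite over $\mathcal{X}_{\Gamma_1(p^m)}^\ast(\epsilon)_a$, normal, and agree away from codimension $\geq 2$. Hence $\mathcal{X}_{\Gamma(p^m)}^\ast(\epsilon)_a\to\mathcal{X}_{\Gamma_1(p^m)}^\ast(\epsilon)_a$ is finite \'etale. For $g=1$ the boundary has codimension $1$ but lies in the anticanonical, hence \'etale, part of the ordinary locus, so the codimension-$1$ finite-\'etale variants of Lemmas \ref{GoodTriple} and \ref{GoodTripleFinite} noted in the text apply, or one checks \'etaleness at the cusps directly via the Tate curve.

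The hard part will be the middle step: one must ensure that the finite \'etale cover away from the strongly Zariski closed, ``infinitely ramified'' boundary extends to a finite \emph{\'etale}, not merely finite, cover of the perfectoid space at $\Gamma_0(p^\infty)$-level — this is exactly what goodness of the triple and almost purity provide — and then that the subsequent descent to finite level respects the $\Gamma_1(p^m)$-structure rather than only producing a cover at some deeper level.
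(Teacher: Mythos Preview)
Your approach diverges sharply from the paper's, and the central step contains a genuine gap.

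The paper's proof is elementary and direct: it first treats the ordinary locus $\epsilon=0$, where a moduli computation shows the cover is actually \emph{split}, i.e.
\[
\mathcal{X}_{\Gamma(p^m)}^\ast(0)_a\cong \bigsqcup_{\Gamma_1(p^m)/\Gamma(p^m)} \mathcal{X}_{\Gamma_1(p^m)}^\ast(0)_a\ .
\]
The key point is that on the anticanonical ordinary locus one has a canonical decomposition $A[p^m]=C_m\oplus D_m$, and the extra datum of a $\Gamma(p^m)$-structure over a $\Gamma_1(p^m)$-structure is equivalent to choosing the totally isotropic complement $\Sigma=\alpha(C_m)\subset(\Z/p^m\Z)^{2g}$; the group $\Gamma_1(p^m)/\Gamma(p^m)$ acts simply transitively on such $\Sigma$. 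Hartog extends this splitting across the boundary. For general $\epsilon$, one invokes the stratification of the boundary by lower-dimensional Siegel varieties: each stratum meeting $(\epsilon)_a$ also meets $(0)_a$, and finite \'etaleness on a normal stratum is determined generically.

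Your proposal instead tries to bootstrap from the perfectoid machinery: pull the cover up to $\mathcal{X}_{\Gamma_1(p^\infty)}^\ast(\epsilon)_a$, extend it finite-\'etaly across the boundary there, then descend. The gap is the middle claim, that ``goodness of the triple and almost purity'' force $\mathcal{V}^\ast\to\mathcal{X}_{\Gamma_1(p^\infty)}^\ast(\epsilon)_a$ to be finite \'etale. None of the lemmas you cite gives this. Lemma~\ref{GoodTripleFinite} propagates goodness to a cover already assumed \'etale away from the boundary; it says nothing about \'etaleness at the boundary. Lemma~\ref{HardHartog} produces an affinoid perfectoid space from global sections, but again does not yield \'etaleness. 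Lemma~\ref{TiltingFinite} worked only because a candidate tilt $\mathcal{X}^{\prime\ast}_{\Gamma_1(p^m)}(\epsilon)^{\perf}$ had been constructed in advance, with its own Hebbarkeitssatz (Corollary~\ref{FiniteTripleIsGood}); you provide no such candidate for the $\Gamma(p^m)$-cover, and there is no obvious one, since in characteristic $p$ the full $A[p^m]$ has no symplectic trivialization over the ordinary locus. Finally, almost purity \cite[Theorem 7.9 (iii)]{ScholzePerfectoid} asserts that a finite \'etale cover of a perfectoid space is perfectoid; it does not extend a finite \'etale cover from an open subset across a Zariski closed boundary. In short, you have assumed exactly the conclusion you need. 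If one tried to fill this in, one would be led back to computing the cover on the ordinary locus and discovering it is split---which is the paper's argument.
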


\begin{proof} We leave the case $g=1$ to the reader. Thus, assume $g\geq 2$. First, we check the assertion in the case $\epsilon=0$, i.e., on the ordinary locus. In that case, we claim that it decomposes as
\[
\mathcal{X}_{\Gamma(p^m)}^\ast(0)_a\cong \bigsqcup_{\Gamma_1(p^m)/\Gamma(p^m)} \mathcal{X}_{\Gamma_1(p^m)}^\ast(0)_a\to \mathcal{X}_{\Gamma_1(p^m)}^\ast(0)_a\ .
\]
By Hartog's principle (cf. Lemma \ref{HardHartog}), it suffices to check that
\[
\mathcal{X}_{\Gamma(p^m)}(0)_a\cong \bigsqcup_{\Gamma_1(p^m)/\Gamma(p^m)} \mathcal{X}_{\Gamma_1(p^m)}(0)_a\to \mathcal{X}_{\Gamma_1(p^m)}(0)_a\ .
\]
The left-hand side parametrizes abelian varieties $A$ with good ordinary reduction, with a symplectic isomorphism $\alpha: A[p^m]\cong (\Z/p^m\Z)^{2g}$ such that $D_1 = (\alpha\mod p)^{-1}(\F_p^g\oplus 0^g)\subset A[p]$ satisfies $D_1\cap C_1 = \{0\}$, where $C_1\subset A[p]$ is the canonical subgroup of level $1$. Similarly, $\mathcal{X}_{\Gamma_1(p^m)}(0)$ parametrizes abelian varieties $A$ with good ordinary reduction, together with a totally isotropic subgroup $D_m\subset A[p^m]$ and an isomorphism $\alpha_0: D_m\cong (\Z/p^m\Z)^g$, such that $D_1 = D_m[p]$ satisfies $D_1\cap C_1 = \{0\}$.

Note that $A$ has good ordinary reduction, and thus a canonical subgroup $C_m\subset A[p^m]$. Moreover, $C_m\oplus D_m = A[p^m]$, as follows from $C_1\oplus D_1 = A[p]$. The map of functors is given by $\alpha\mapsto (D_m,\alpha_0)$, where $D_m = \alpha^{-1}((\Z/p^m\Z)^g\oplus 0^g)$, and $\alpha_0 = \alpha|_{D_m}$. But $\alpha$ also gives rise to a totally isotropic subspace $\Sigma = \alpha(C_m)\subset (\Z/p^m\Z)^{2g}$, with
\[
\Sigma\oplus ((\Z/p^m\Z)^g\oplus 0^g) = (\Z/p^m\Z)^{2g}\ .
\]
One checks that $\Gamma_1(p^m)/\Gamma(p^m)$ acts simply transitively on the set of such $\Sigma$, and that the datum of $(D_m,\alpha_0,\Sigma)$ is equivalent to $\alpha$. This finishes the proof in case $\epsilon=0$.

In general, there is a description of the boundary strata, and the induced map, of $\mathcal{X}_{\Gamma(p^m)}^\ast\to \mathcal{X}_{\Gamma_1(p^m)}^\ast$ in terms of lower-dimensional Siegel moduli spaces. In particular, above any locally closed stratum meeting $\mathcal{X}_{\Gamma_1(p^m)}^\ast(0)_a$, the map is finite \'etale, as it is so generically. As any locally closed stratum that meets $\mathcal{X}_{\Gamma_1(p^m)}^\ast(\epsilon)_a$ will also meet $\mathcal{X}_{\Gamma_1(p^m)}^\ast(0)_a$, we get the conclusion.
\end{proof}

Using Lemma \ref{GoodTripleFinite} and Lemma \ref{GoodTripleLimit} once more, we get the following theorem.

\begin{thm}\label{ExOnStrictNbhd} There is a unique perfectoid space $\mathcal{X}_{\Gamma(p^\infty)}^\ast(\epsilon)_a$ over $\Q_p^\cycl$ such that
\[
\mathcal{X}_{\Gamma(p^\infty)}^\ast(\epsilon)_a\sim \varprojlim_m \mathcal{X}_{\Gamma(p^m)}^\ast(\epsilon)_a\ .
\]
Moreover, $\mathcal{X}_{\Gamma(p^\infty)}^\ast(\epsilon)_a$ and all $\mathcal{X}_{\Gamma(p^m)}^\ast(\epsilon)_a$ for $m$ sufficiently large are affinoid, and
\[
\varinjlim_m H^0(\mathcal{X}_{\Gamma(p^m)}^\ast(\epsilon)_a,\OO)\to H^0(\mathcal{X}_{\Gamma(p^\infty)}^\ast(\epsilon)_a,\OO)
\]
has dense image.

Let $\mathcal{Z}_{\Gamma(p^\infty)}(\epsilon)_a\subset \mathcal{X}_{\Gamma(p^\infty)}^\ast(\epsilon)_a$ denote the boundary, and $\mathcal{X}_{\Gamma(p^\infty)}(\epsilon)_a$ the preimage of $\mathcal{X}_{\Gamma_0(p)}(\epsilon)_a\subset \mathcal{X}_{\Gamma_0(p)}^\ast(\epsilon)_a$. Then the triple
\[
(\mathcal{X}_{\Gamma(p^\infty)}^\ast(\epsilon)_a,\mathcal{Z}_{\Gamma(p^\infty)}(\epsilon)_a,\mathcal{X}_{\Gamma(p^\infty)}(\epsilon)_a)
\]
is good.$\hfill \Box$
\end{thm}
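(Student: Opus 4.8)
The strategy is to bootstrap from the preceding Proposition, which already produces the affinoid perfectoid space $\mathcal{X}_{\Gamma_1(p^\infty)}^\ast(\epsilon)_a$ together with all the asserted properties (the $\sim$-limit description, affinoidness of the finite-level spaces for $m$ large, density of $\varinjlim_m H^0$, and goodness of the triple), and to climb from $\Gamma_1(p^m)$- to $\Gamma(p^m)$-level. The only new geometric input needed is Lemma \ref{GammaVsGamma1FiniteEtale}, which says that $\mathcal{X}_{\Gamma(p^m)}^\ast(\epsilon)_a\to \mathcal{X}_{\Gamma_1(p^m)}^\ast(\epsilon)_a$ is finite \'etale \emph{everywhere}, in particular with no ramification at the boundary. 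This is exactly what makes the step ``easy'' compared with the passage from $\Gamma_0$ to $\Gamma_1$: the Hebbarkeitssatz is not needed, and one can work purely with almost purity (\cite[Theorem 7.9 (iii)]{ScholzePerfectoid}) together with the finite-cover formalism of Lemmas \ref{GoodTripleFinite} and \ref{GoodTripleLimit}.

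First I would fix $m\geq 1$ and form the fibre product $\mathcal{X}_{\Gamma_1(p^\infty)}^\ast(\epsilon)_a\times_{\mathcal{X}_{\Gamma_1(p^m)}^\ast(\epsilon)_a}\mathcal{X}_{\Gamma(p^m)}^\ast(\epsilon)_a$, which I denote $\mathcal{X}_{\Gamma(p^m)\cap\Gamma_1(p^\infty)}^\ast(\epsilon)_a$, using the projection supplied by $\mathcal{X}_{\Gamma_1(p^\infty)}^\ast(\epsilon)_a\sim\varprojlim_{m^\prime}\mathcal{X}_{\Gamma_1(p^{m^\prime})}^\ast(\epsilon)_a$. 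Being finite \'etale over the affinoid perfectoid space $\mathcal{X}_{\Gamma_1(p^\infty)}^\ast(\epsilon)_a$, it is again affinoid perfectoid by \cite[Theorem 7.9 (iii)]{ScholzePerfectoid}; and since at each finite level $\mathcal{X}_{\Gamma_1(p^{m^\prime})}^\ast(\epsilon)_a\times_{\mathcal{X}_{\Gamma_1(p^m)}^\ast(\epsilon)_a}\mathcal{X}_{\Gamma(p^m)}^\ast(\epsilon)_a$ is normal and finite \'etale over $\mathcal{X}_{\Gamma_1(p^{m^\prime})}^\ast(\epsilon)_a$, hence identifies with the minimal compactification at level $\Gamma_1(p^{m^\prime})\cap\Gamma(p^m)$, one gets $\mathcal{X}_{\Gamma(p^m)\cap\Gamma_1(p^\infty)}^\ast(\epsilon)_a\sim\varprojlim_{m^\prime}\mathcal{X}_{\Gamma_1(p^{m^\prime})\cap\Gamma(p^m)}^\ast(\epsilon)_a$. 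Affinoidness of these finite-level spaces for $m^\prime$ large follows from Lemma \ref{MinCompAffinoid} together with finiteness over $\mathcal{X}_{\Gamma_1(p^{m^\prime})}^\ast(\epsilon)_a$, density of the union of the $H^0$'s follows from the $\Gamma_1$-case via the finite trace argument (as in Lemma \ref{GoodTripleFinite}), and goodness of the associated triple I would check after tilting (goodness being tilting-invariant, per the discussion after Definition \ref{DefGood}): on the tilt the space is a finite normal cover, \'etale away from the boundary, of the perfection of the reduced finite-type characteristic-$p$ space underlying $\mathcal{X}_{\Gamma_1(p^m)}^{\prime\ast}(\epsilon)$, so Lemma \ref{GoodTripleFinite}(iv) applies directly.

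Then I would let $m$ vary. The transition maps $\mathcal{X}_{\Gamma(p^{m+1})\cap\Gamma_1(p^\infty)}^\ast(\epsilon)_a\to\mathcal{X}_{\Gamma(p^m)\cap\Gamma_1(p^\infty)}^\ast(\epsilon)_a$ are surjective finite \'etale, and the level groups $\{\Gamma(p^m)\cap\Gamma_1(p^{m^\prime})\}_{m^\prime\geq m\geq 1}$ are cofinal with $\{\Gamma(p^m)\}_m$ (take $m^\prime=m$, where $\Gamma(p^m)\subset\Gamma_1(p^m)$ gives $\Gamma(p^m)\cap\Gamma_1(p^m)=\Gamma(p^m)$). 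Applying Lemma \ref{GoodTripleLimit} to the inductive system underlying the tower yields a perfectoid space $\mathcal{X}_{\Gamma(p^\infty)}^\ast(\epsilon)_a=\varprojlim_m\mathcal{X}_{\Gamma(p^m)\cap\Gamma_1(p^\infty)}^\ast(\epsilon)_a$ with $\mathcal{X}_{\Gamma(p^\infty)}^\ast(\epsilon)_a\sim\varprojlim_m\mathcal{X}_{\Gamma(p^m)}^\ast(\epsilon)_a$ by cofinality, together with goodness of $(\mathcal{X}_{\Gamma(p^\infty)}^\ast(\epsilon)_a,\mathcal{Z}_{\Gamma(p^\infty)}(\epsilon)_a,\mathcal{X}_{\Gamma(p^\infty)}(\epsilon)_a)$. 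Affinoidness of $\mathcal{X}_{\Gamma(p^\infty)}^\ast(\epsilon)_a$ and of $\mathcal{X}_{\Gamma(p^m)}^\ast(\epsilon)_a$ for $m$ large, and density of $\varinjlim_m H^0(\mathcal{X}_{\Gamma(p^m)}^\ast(\epsilon)_a,\OO)$, follow as in the $\Gamma_1$-case (finiteness over the corresponding $\Gamma_1$-objects), and uniqueness is \cite[Proposition 2.4.5]{ScholzeWeinstein}. The case $g=1$ needs only the minor modifications already invoked at $\Gamma_1$-level, exactly as in Lemma \ref{GammaVsGamma1FiniteEtale}.

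I expect the one genuinely delicate point to be purely organizational: verifying that Lemmas \ref{GoodTripleFinite} and \ref{GoodTripleLimit}, which are phrased in characteristic $p$ for perfections of reduced finite-type adic spaces, really do apply in this characteristic-$0$ situation. This is what forces the detour through tilting and the use of the earlier identification of the tilt of $\mathcal{X}_{\Gamma_1(p^m)\cap\Gamma_0(p^\infty)}^\ast(\epsilon)_a$ with $\mathcal{X}_{\Gamma_1(p^m)}^{\prime\ast}(\epsilon)^\perf$. By contrast, the only non-formal ingredient, unramifiedness at the boundary (Lemma \ref{GammaVsGamma1FiniteEtale}), is already in hand, which is precisely why the theorem follows so easily at this stage.
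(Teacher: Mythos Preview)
Your proposal is correct and follows exactly the paper's approach: the paper's proof is the single line ``Using Lemma \ref{GoodTripleFinite} and Lemma \ref{GoodTripleLimit} once more,'' and you have faithfully unpacked what this means, including the key input Lemma \ref{GammaVsGamma1FiniteEtale} and the need to pass to tilts to invoke the characteristic-$p$ Lemmas \ref{GoodTripleFinite} and \ref{GoodTripleLimit}. Your organizational worry at the end is well-placed but already resolved by the identification of the tilt of $\mathcal{X}_{\Gamma_1(p^m)\cap\Gamma_0(p^\infty)}^\ast(\epsilon)_a$ with $\mathcal{X}_{\Gamma_1(p^m)}^{\prime\ast}(\epsilon)^\perf$ in Lemma \ref{TiltingFinite}, together with the fact that finite \'etale covers tilt to finite \'etale covers.
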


Summarizing our efforts so far, we have proved that a strict (and explicit) neighborhood of the ordinary locus in the minimal compactification becomes affinoid perfectoid in the inverse limit, and that Riemann's Hebbarkeitssatz holds true with respect to the boundary. We will now extend these results to the whole Shimura variety by using the $\GSp_{2g}(\Q_p)$-action.

\section{The Hodge-Tate period map}\label{ConclusionSection}

The next task is to extend the result of the previous section to all of $\mathcal{X}_{\Gamma(p^\infty)}^\ast$, and to construct the Hodge-Tate period map
\[
\pi_\HT: \mathcal{X}_{\Gamma(p^\infty)}^\ast\to \Fl\ .
\]
In fact, the two tasks will go hand in hand. As we are always working over $\Q_p^\cycl$, we can ignore all Tate twists in the following.

\subsection{On topological spaces} We need a version of \cite[Proposition 4.15]{ScholzeSurvey} for the case of bad reduction.

\begin{prop}\label{CompAbPDivHT} Let $C$ be an algebraically closed and complete extension of $\Q_p$ with ring of integers $\OO_C$. Let $A/C$ be an abelian variety with connected N\'eron model $G/\OO_C$. Let $\hat{G}$ be the $p$-adic completion of $G$ (as a formal scheme over $\Spf \OO_C$); then there is an extension
\[
0\to \hat{T}\to \hat{G}\to \hat{B}\to 0\ ,
\]
where $T$ is a split torus over $\OO_C$, and $B$ is an abelian variety over $\OO_C$. Thus, $\hat{G}[p^\infty]$ defines a $p$-divisible group over $\OO_C$, which has a Hodge-Tate filtration
\[
0\to \Lie \hat{G}\otimes_{\OO_C} C(1)\to T_p \hat{G}\otimes_{\Z_p} C\to (\Lie \hat{G}[p^\infty]^\ast)^\ast\otimes_{\OO_C} C\to 0\ .
\]
Also, $A$ has its Hodge-Tate filtration
\[
0\to \Lie A(1)\to T_p A\otimes_{\Z_p} C\to (\Lie A^\ast)^\ast\to 0\ .
\]
The diagram
\[\xymatrix{
\Lie \hat{G}\otimes_{\OO_C} C(1)\ar[d]^{=} \ar[r]& T_p \hat{G}\otimes_{\Z_p} C\ar@{^(->}[d]\\
\Lie A(1)\ar[r]& T_p A\otimes_{\Z_p} C
}\]
commutes.
\end{prop}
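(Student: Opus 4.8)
The plan is to relate the two Hodge--Tate sequences through the $p$-divisible group $\hat{G}[p^\infty]$ of the connected N\'eron model, using the fact that $T_pA$ is identified with $T_p\hat{G}$. First I would recall why the extension
\[
0\to \hat{T}\to \hat{G}\to \hat{B}\to 0
\]
exists: by the semistable reduction theory of abelian varieties (Grothendieck, SGA7, and Faltings--Chai), after the base change to $\OO_C$ the connected N\'eron model $G$ sits in an exact sequence of smooth group schemes whose toric part $T$ is a torus over $\OO_C$ --- which is split since $\OO_C$ is strictly henselian with algebraically closed residue field --- and whose abelian part is an abelian scheme $B/\OO_C$; passing to $p$-adic completions gives the displayed sequence of $p$-adic formal groups. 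Then $\hat{G}[p^\infty]$ is a $p$-divisible group over $\OO_C$, fitting in $0\to T[p^\infty]\to \hat{G}[p^\infty]\to B[p^\infty]\to 0$, and it carries a Hodge--Tate filtration by the general theory of $p$-divisible groups over $\OO_C$ (Tate, Fargues, or \cite[Theorem B]{ScholzeWeinstein}), which is the first displayed sequence.

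Next I would establish the natural maps between the two sequences. On Tate modules, $T_p\hat{G}[p^\infty] = T_pG[p^\infty]$, and since $G$ is the N\'eron model of $A$ one has $G[p^\infty](\OO_C) = A[p^\infty](C)$ (the $p$-power torsion of $A$ extends uniquely over $\OO_C$ by the N\'eron property), giving a canonical isomorphism $T_p\hat{G}\cong T_pA$; this is the vertical map on the right of the diagram (which, being an isomorphism, is in particular injective --- I should double-check whether the author literally wants injectivity here or an isomorphism, but for the statement as phrased injectivity suffices and follows a fortiori). For the left vertical map, I would observe that $\Lie\hat{G} = \Lie G\otimes_{\OO_C}C$ and that $\Lie G = \Lie A^\circ$ where $A^\circ$ is the connected component; but $\Lie$ of the N\'eron model over $\OO_C$ agrees with $\Lie A$ after inverting $p$, so $\Lie\hat{G}\otimes_{\OO_C}C = \Lie A$ canonically, giving the identity map on the left. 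The key point is then the commutativity of the square: both horizontal maps are the respective Hodge--Tate maps (i.e. the duals of the Hodge--Tate comparison for $p$-divisible groups, resp. for abelian varieties), and compatibility follows because the Hodge--Tate map for $A$ factors through the one for its $p$-divisible group $A[p^\infty] = \hat{G}[p^\infty]$ --- this is essentially the functoriality of the Hodge--Tate decomposition under the identification $A[p^\infty]\cong\hat{G}[p^\infty]$, together with the fact that the Hodge--Tate filtration only depends on the $p$-divisible group. Concretely, one uses that $H^1_{\et}(A_C,\Z_p) = T_pA^\vee$ and that the crystalline/Hodge--Tate comparison is compatible with the map from the cohomology of the formal completion, as in \cite[Proposition 4.15]{ScholzeSurvey} and \cite{ScholzePAdicHodge}.

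The main obstacle I anticipate is the precise identification of $\Lie\hat{G}\otimes_{\OO_C}C$ with $\Lie A$ and the verification that the Hodge--Tate map for the abelian variety really is the base change of the one for the $p$-divisible group of the N\'eron model, in the presence of a nontrivial toric part --- i.e. when $A$ has bad (semistable) reduction. The subtlety is that $\Lie A^\ast$ (the cotangent space of the dual) must be matched with $(\Lie\hat{G}[p^\infty]^\ast)^\ast\otimes C$, and for the torus part $T$ one has $\Lie T^\ast = 0$ while $T[p^\infty] = \mu_{p^\infty}^{\dim T}$ contributes trivially to the "$(\Lie(-)^\ast)^\ast$" term, so the bookkeeping works out, but this requires care with duality and with the fact that the N\'eron model of $A^\ast$ is not simply the dual of that of $A$. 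I would handle this by reducing, via the exact sequence, to the two extreme cases (good reduction, where it is \cite[Proposition 4.15]{ScholzeSurvey}, and the torus case, where everything is explicit), and then invoking the five lemma and functoriality of Hodge--Tate sequences in short exact sequences of $p$-divisible groups. The remaining steps are then routine diagram chases.
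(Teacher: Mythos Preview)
Your overall approach is right and matches the paper, which simply says the proof is identical to that of \cite[Proposition 4.15]{ScholzeSurvey}; but you have one genuine error. The map $T_p\hat{G}\to T_pA$ is \emph{not} an isomorphism when $A$ has bad reduction --- it is a strict inclusion, which is exactly why the diagram in the statement only asserts an injection. Your argument that ``$G[p^\infty](\OO_C)=A[p^\infty](C)$ by the N\'eron property'' fails because $G$ is the \emph{connected} N\'eron model: the component group can have $p$-torsion. More structurally, a rank count already shows the map cannot be an isomorphism: $\hat{G}[p^\infty]$ sits in $0\to T[p^\infty]\to \hat{G}[p^\infty]\to B[p^\infty]\to 0$ with $T[p^\infty]\cong\mu_{p^\infty}^t$, so $T_p\hat{G}$ has rank $t+2b=g+b$, while $T_pA$ has rank $2g$; these differ by $t=\dim T$.

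The correct source of the injection is Raynaud's $p$-adic uniformization $0\to M\to \tilde{G}^{\an}\to A^{\an}\to 0$ with $M\cong\Z^t$ and $\tilde{G}$ the Raynaud extension (so $\tilde{G}[p^\infty]=\hat{G}[p^\infty]$). The snake lemma for multiplication by $p^n$ gives $0\to \hat{G}[p^n]\to A[p^n]\to M/p^n\to 0$, hence $0\to T_p\hat{G}\to T_pA\to M\otimes\Z_p\to 0$. This is the right vertical map. With this correction in place, your functoriality argument (reducing the Hodge--Tate filtration of $A$ to that of its $p$-divisible group via the comparison results in \cite{ScholzePAdicHodge}, and then restricting along $\hat{G}[p^\infty]\hookrightarrow A[p^\infty]$) goes through, which is exactly the content of \cite[Proposition 4.15]{ScholzeSurvey}.
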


\begin{proof} The proof is identical to the proof of \cite[Proposition 4.15]{ScholzeSurvey}.
\end{proof}

In the situation of the proposition, we need a comparison of Hasse invariants.

\begin{lem}\label{CompHasseInv} In the situation of Proposition \ref{CompAbPDivHT}, assume that $A$ comes from a point $x\in X(C) = X_{g,K^p}(C)$. By properness of $X_{g,K^p}^\ast$, it extends to a point $x\in X_{g,K^p}^\ast(\OO_C)$.
\begin{altenumerate}
\item[{\rm (i)}] The pullback $x^\ast \omega_{X_{g,K^p}^\ast}$ is canonically isomorphic to $\omega_G$.
\item[{\rm (ii)}] Let $\bar{x}\in X_{g,K^p}^\ast(\OO_C/p)$ be the reduction modulo $p$ of $x$. Then there is an equality
\[
\Ha(\bar{x}) = \Ha(B\otimes_{\OO_C} \OO_C/p)\otimes (\omega_T^\can)^{p-1}\in \omega_G^{\otimes (p-1)}/p\cong \omega_B^{\otimes (p-1)}/p\otimes \omega_T^{\otimes (p-1)}/p\ .
\]
Here, $\pm \omega_T^\can\in \omega_T$ denotes the canonical differential, given by $d\log(Z_1)\wedge \ldots\wedge d\log(Z_m)$ on the split torus $T\cong \Spec \OO_C[Z_1^{\pm 1},\ldots,Z_m^{\pm 1}]$.\footnote{The sign ambiguity goes away when taking the $p-1$-th power if $p\neq 2$; if $p=2$, then it goes away modulo $p$.}
\end{altenumerate}
\end{lem}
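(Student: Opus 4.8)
The plan is to reduce both statements to the analogous facts about the semi-abelian scheme $G$ and its associated formal group, using that the Baily--Borel boundary is understood in terms of lower-dimensional Siegel data. For part (i), the point is that $\omega_{X^\ast_{g,K^p}}$ is defined on the minimal compactification (pulled back from the fact that, on the open part, it is the determinant of invariant differentials of the universal abelian scheme), and by the construction of Faltings--Chai the pullback of $\omega$ along any $\OO_C$-point $x$ extending an interior point $x_\eta$ is the module of invariant differentials of the canonical semi-abelian degeneration, i.e. of $G$. Concretely, on the toroidal compactification the universal object degenerates to a semi-abelian scheme whose invariant differentials compute $\omega$; pushing forward to $X^\ast$ and pulling back along $x$ gives $\omega_G$. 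So part (i) is essentially a matter of unwinding the construction of $\omega$ on $X^\ast$ and invoking the valuative criterion together with the fact that $G$ is the unique semi-abelian prolongation (connected N\'eron model). I would phrase it via: the section $x:\Spf \OO_C\to \mathfrak{X}^\ast$ factors (after choosing a suitable cone decomposition) through the toroidal compactification, where a tautological semi-abelian scheme lives, and that semi-abelian scheme restricted to $\Spf\OO_C$ is $\hat G$ by N\'eron-model uniqueness; its sheaf of invariant differentials is $\omega_G$, which is what $\omega$ pulls back to.

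For part (ii), I would first recall the definition of the Hasse invariant in the semi-abelian setting: for a semi-abelian scheme $\mathcal{G}$ over a base $S$ of characteristic $p$, with toric part $\mathcal{T}$ and abelian quotient $\mathcal{B}$, the Verschiebung $V:\mathcal{G}^{(p)}\to\mathcal{G}$ respects the exact sequence $0\to\mathcal{T}\to\mathcal{G}\to\mathcal{B}\to 0$, acting as the identity (an isomorphism on cotangent spaces) on the torus part $\mathcal{T}$ and as the usual Verschiebung on $\mathcal{B}$. Hence the induced map $V^\ast:\omega_{\mathcal{G}}\to\omega_{\mathcal{G}^{(p)}}\cong\omega_{\mathcal{G}}^{\otimes p}$ decomposes, under the filtration $\omega_{\mathcal{G}}\cong\omega_{\mathcal{B}}\oplus\omega_{\mathcal{T}}$ (mod $p$, or after taking determinants), as $\Ha(\mathcal{B})$ on the $\omega_{\mathcal{B}}$ factor and the identity on the $\omega_{\mathcal{T}}$ factor. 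Taking determinants of rank, the torus contributes $(\omega_{\mathcal T}^{\can})^{\otimes(p-1)}$ via the canonical invariant differential $d\log Z_1\wedge\cdots\wedge d\log Z_m$, which is $V^\ast$-invariant because $V$ is the identity on $\mathcal T$. This gives exactly the claimed formula
\[
\Ha(\bar x)=\Ha(B\otimes\OO_C/p)\otimes(\omega_T^{\can})^{p-1}\in\omega_G^{\otimes(p-1)}/p.
\]
The key compatibility to check is that the Hasse invariant of the point $\bar x\in X^\ast(\OO_C/p)$, defined via the extension of $\Ha$ to the minimal compactification (by Hartog in codimension $g\ge 2$, or directly for $g=1$), agrees with the Hasse invariant of the semi-abelian degeneration $\bar G$; this again follows from part (i) together with the fact that $\Ha$ on $X^\ast$ is, on the toroidal model, literally $V^\ast$ of the tautological semi-abelian scheme, and restriction commutes with pulling back along $\bar x$.

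The main obstacle is making the identification in part (ii) clean without an explicit description of the boundary of $X^\ast$ or the toroidal compactification. One has to know that the Hasse invariant, which we only know how to extend to $X^\ast$ abstractly (via normality and Hartog), restricted to a degenerating point equals the Hasse invariant computed from the semi-abelian scheme. The cleanest way around this is: work on the toroidal compactification $X^{\mathrm{tor}}$, where there is a genuine semi-abelian scheme $\mathcal G^{\mathrm{tor}}$, define the extended Hasse invariant there as $V^\ast(\mathcal G^{\mathrm{tor}})$, check it descends to $X^\ast$ (it does, since $\omega$ descends and the two sections agree on the dense open), and then the formula is an identity of sections on $X^{\mathrm{tor}}$ pulled back along the lift of $\bar x$. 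The decomposition of $V^\ast$ along the torus/abelian-part filtration is then a direct local computation near the boundary using the standard Mumford-type local models $\OO_C[Z_1^{\pm1},\dots,Z_m^{\pm1}]\otimes(\text{abelian part})$, where the torus differential $\omega_T^{\can}$ is manifestly Frobenius-semilinearly fixed by $V$. I would present this computation only in outline, as it is routine once the semi-abelian picture is in place, and cite \cite{FaltingsChai} for the degeneration data and the construction of $\omega$ and $\Ha$ on the compactifications. The sign-ambiguity footnote is handled by noting $V^\ast$ is canonical, so only $\omega_T^{\can}$ (not a choice of orientation of $T$) enters, and it is well-defined up to sign, which disappears after raising to the $(p-1)$-st power (resp. mod $p$ when $p=2$).
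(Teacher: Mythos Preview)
Your proposal is correct and matches the paper's approach essentially line for line: pass to the toroidal compactification $\overline{X}_{g,K^p}$ with its universal semi-abelian scheme $G^{\univ}$, use $\omega_{G^{\univ}} = f^\ast \omega_{X_{g,K^p}^\ast}$ and pull back along (a lift of) $x$ for part (i), then define the extended Hasse invariant on $\overline{X}_{g,K^p}\otimes\F_p$ via Verschiebung on $G^{\univ}$, check it agrees with the pullback of $\Ha$ from $X^\ast$, and reduce part (ii) to the direct computation of $V^\ast$ on the torus/abelian filtration. The paper compresses your last paragraph into the phrase ``a direct verification'', but the content is the same.
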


\begin{proof} Let $f: \overline{X}_{g,K^p}\to X_{g,K^p}^\ast$ be a (smooth projective) toroidal compactification, as constructed in \cite{FaltingsChai}. Over $\overline{X}_{g,K^p}$, one has a family of semiabelian varieties $G^\univ\to \overline{X}_{g,K^p}$. In particular, one has the invertible sheaf $\omega_{G^\univ}$ over $\overline{X}_{g,K^p}$, and by construction of $X_{g,K^p}^\ast$, $\omega_{G^\univ} = f^\ast \omega_{X_{g,K^p}^\ast}$. Pulling back to $x$ gives part (i).

For part (ii), observe that one can define an element $\Ha^\prime\in H^0(\overline{X}_{g,K^p}\otimes_{\Z_{(p)}} \F_p,\omega_{G^\univ}^{\otimes (p-1)})$ as follows. The Verschiebung map $V: (G^\univ)^{(p)}\to G^\univ$ in characteristic $p$ induces a map $\omega_{G^\univ}\to \omega_{(G^\univ)^{(p)}}\cong \omega_{G^\univ}^{\otimes p}$, i.e. a section $\Ha^\prime\in H^0(\overline{X}_{g,K^p}\otimes_{\Z_{(p)}} \F_p,\omega_{G^\univ}^{\otimes (p-1)})$, as desired. Clearly, $\Ha = \Ha^\prime$ on $X_{g,K^p}\otimes_{\Z_{(p)}} \F_p$; it follows that $\Ha^\prime$ is the pullback of $\Ha$ to $\overline{X}_{g,K^p}\otimes_{\Z_{(p)}} \F_p$.

Pulling back to $x$ reduces part (ii) to a direct verification.
\end{proof}

Look at the spectral topological spaces
\[
|\mathcal{X}_{\Gamma(p^\infty)}^\ast| = \varprojlim_m |\mathcal{X}_{\Gamma(p^m)}^\ast|\ ,\ |\mathcal{Z}_{\Gamma(p^\infty)}| = \varprojlim_m |\mathcal{Z}_{\Gamma(p^m)}|\ ,\ |\mathcal{X}_{\Gamma(p^\infty)}| = \varprojlim_m |\mathcal{X}_{\Gamma(p^m)}|\ .
\]
There is a continuous action of $\GSp_{2g}(\Q_p)$ on these spaces.

\begin{rem} For any complete nonarchimedean field extension $K$ of $\Q_p^\cycl$ with an open and bounded valuation subring $K^+\subset K$, we define
\[
\mathcal{X}_{\Gamma(p^\infty)}^\ast(K,K^+) = \varprojlim_m \mathcal{X}_{\Gamma(p^m)}^\ast(K,K^+)\ ,
\]
and similarly for the other spaces. For any $(K,K^+)$, one gets a map
\[
\mathcal{X}_{\Gamma(p^\infty)}^\ast(K,K^+)\to |\mathcal{X}_{\Gamma(p^\infty)}^\ast|\ .
\]
One checks easily that one has a bijection
\[
|\mathcal{X}_{\Gamma(p^\infty)}^\ast| = \varinjlim_{(K,K^+)} \mathcal{X}_{\Gamma(p^\infty)}^\ast(K,K^+)\ .
\]
Note that the direct limit on the right-hand side is not filtered; however, any point comes from a unique minimal $(K,K^+)$.
\end{rem}

\begin{lem}\label{PiHTTop} There is a $\GSp_{2g}(\Q_p)$-equivariant continuous map
\[
|\pi_\HT|: |\mathcal{X}_{\Gamma(p^\infty)}^\ast|\setminus |\mathcal{Z}_{\Gamma(p^\infty)}|\to |\Fl|\ ,
\]
sending a point $x\in (\mathcal{X}_{\Gamma(p^\infty)}^\ast\setminus \mathcal{Z}_{\Gamma(p^\infty)})(K,K^+)$, corresponding to a principally polarized abelian variety $A/K$ and a symplectic isomorphism $\alpha: T_p A\cong \Z_p^{2g}$, to the Hodge-Tate filtration $\Lie A\subset K^{2g}$.
\end{lem}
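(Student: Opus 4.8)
The plan is to construct $|\pi_\HT|$ first as a map of sets, then to observe that equivariance is formal, and finally to prove continuity, which is the only substantial point. For the set-theoretic construction, recall from the Remark preceding the lemma that any $x\in |\mathcal{X}_{\Gamma(p^\infty)}^\ast|\setminus |\mathcal{Z}_{\Gamma(p^\infty)}|$ lifts to a point of $\mathcal{X}_{\Gamma(p^\infty)}^\ast(C,\OO_C)$ for some complete algebraically closed extension $C/\Q_p^\cycl$; since we are away from $\mathcal{Z}_{\Gamma(p^\infty)}$, such a point is the datum of a principally polarized abelian variety $A/C$ together with a symplectic isomorphism $\alpha\colon T_pA\cong \Z_p^{2g}$ obtained from the $\Gamma(p^\infty)$-level structure in the limit. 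The abelian variety $A$ has its Hodge--Tate exact sequence $0\to (\Lie A)(1)\to T_pA\otimes_{\Z_p} C\to (\Lie A^\ast)^\ast\to 0$, which by Proposition \ref{CompAbPDivHT} is compatible with the Hodge--Tate filtration of the $p$-divisible group of the connected N\'eron model of $A$; this compatibility is what will allow the construction to be carried out in families. Transporting along $\alpha$ and ignoring the Tate twist (legitimate over $\Q_p^\cycl$), we obtain a $g$-dimensional $C$-subspace $W_x\subset C^{2g}$; since the Weil pairing makes $(T_pA,\alpha)$ symplectic and $\Lie A$, $(\Lie A^\ast)^\ast$ are mutual annihilators under it, $W_x$ is totally isotropic, hence a point of $\Fl(C)$, hence a point of $|\Fl|$. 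Functoriality of the Hodge--Tate comparison under extensions of $C$ (any two lifts of $x$ are dominated by a third) shows this point depends only on $x$; we set $|\pi_\HT|(x)=W_x$. Replacing $\alpha$ by $\gamma\alpha$ for $\gamma\in\GSp_{2g}(\Q_p)$ replaces $W_x$ by $\gamma W_x$, which gives the $\GSp_{2g}(\Q_p)$-equivariance.

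For continuity, I would follow the proof of \cite[Proposition 4.15]{ScholzeSurvey} essentially verbatim, now supplied with Proposition \ref{CompAbPDivHT} as the input replacing the good-reduction hypothesis used there. Continuity is local on the source, so one covers $|\mathcal{X}_{\Gamma(p^\infty)}^\ast|\setminus |\mathcal{Z}_{\Gamma(p^\infty)}|$ by quasicompact opens $|\mathcal{W}|=\varprojlim_m |\mathcal{W}_m|$ pulled back from quasicompact opens $\mathcal{W}_m\subset\mathcal{X}_{\Gamma(p^m)}$ away from the boundary, over which the universal abelian variety extends to a semiabelian scheme over a suitable formal model, with associated $p$-divisible group $\mathfrak{G}$. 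After the $\Gamma(p^m)$-trivialization of $\mathfrak{G}[p^m]$, the finite-level Hodge--Tate maps are morphisms of schemes modulo each $p^k$, with cokernel annihilated by a power of $p$ bounded independently of $m$; hence for $m$ large relative to $k$ they compute the position of $W_x$ modulo $p^k$. Therefore, on $|\mathcal{W}|$, the map $|\pi_\HT|$ is the inverse limit of the continuous maps on underlying topological spaces coming from these finite-level Hodge--Tate maps, and is continuous.

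The main obstacle is precisely this last continuity step in the presence of bad reduction: one must control the Hodge--Tate map of the merely semiabelian universal scheme on formal models, uniformly in the level, and match the inverse limit of the mod-$p^k$ approximations with the genuine Hodge--Tate filtration of $A$ --- exactly the compatibility recorded in Proposition \ref{CompAbPDivHT}, together with the classification of $p$-divisible groups over $\OO_C$ (\cite[Theorem B]{ScholzeWeinstein}). Granting that input, the argument is a transcription of \cite[Proposition 4.15]{ScholzeSurvey}, while the set-theoretic construction and the equivariance are bookkeeping. (An alternative would be to prove continuity first on the strict neighborhoods $\mathcal{X}_{\Gamma(p^\infty)}(\epsilon)_a$, where the spaces are genuinely perfectoid and the Hodge--Tate map lives in an honest family, and then propagate by $\GSp_{2g}(\Q_p)$-translation; but since $\pi_\HT$ is used later precisely to cover $\mathcal{X}_{\Gamma(p^\infty)}$ by such translates, the direct argument above is preferable in order to avoid circularity.)
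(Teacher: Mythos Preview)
Your set-theoretic construction and the equivariance are essentially as in the paper, with one caveat: the statement asserts $\Lie A\subset K^{2g}$, not merely $C^{2g}$, so one must check $K$-rationality of the Hodge--Tate filtration. The paper does this via Proposition~\ref{CompAbPDivHT}, reducing to the $p$-divisible group of the N\'eron model, where $K$-rationality holds by definition (cf.\ \cite[Ch.~2, App.~C]{FarguesTwoTowers}). Your argument only checks independence of the choice of algebraically closed lift, which is weaker.

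Your continuity argument takes a genuinely different route from the paper's, and has a gap. The paper does \emph{not} work with formal models, semiabelian extensions, or finite-level integral Hodge--Tate maps. Instead it stays entirely on the adic space $S=\mathcal{X}^\ast\setminus\mathcal{Z}$ (at base level, no $K_p$) and invokes the relative comparison theorem \cite[Theorem 1.3]{ScholzePAdicHodge} for the proper smooth morphism $g\colon A_S\to S$: this yields on $S_\proet$ a map $(R^1g_\ast\OO_{A_S})\otimes\hat\OO_S\to R^1g_\ast\hat\Q_p\otimes\hat\OO_S$, i.e.\ $\Lie A_S\otimes\hat\OO_S\hookrightarrow\hat\OO_S^{2g}$ after passing to affinoid perfectoid covers $\tilde U_\infty$ that dominate the tower. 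This produces an honest morphism of adic spaces $\tilde U_\infty\to\Fl$; one then checks on points (using \cite[Proposition 4.10]{ScholzeSurvey}) that it factors through the topological quotient, and concludes continuity because pro-\'etale surjections in $S_\proet$ are open. No reduction hypothesis enters, precisely because \cite[Theorem 1.3]{ScholzePAdicHodge} has none.

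By contrast, your proposal requires extending the universal abelian variety to a semiabelian scheme over formal models of arbitrary quasicompact opens in $\mathcal{X}_{\Gamma(p^m)}$, and then controlling the Hodge--Tate maps of the resulting $p$-divisible groups uniformly in $m$. Neither step is free: the first is essentially a relative semistable reduction/toroidal degeneration statement that the paper deliberately avoids, and the second (your ``cokernel annihilated by a power of $p$ bounded independently of $m$'') is not justified. The citation of \cite[Theorem B]{ScholzeWeinstein} is also misplaced here --- that is a pointwise classification over $\OO_C$ and plays no role in the continuity argument. What the paper's approach buys is exactly the ability to ignore integral models at points of bad reduction; what your approach would buy, if completed, is a more explicit integral picture, but at the cost of substantially more work than is needed for this lemma.
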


\begin{proof} One can check from Proposition \ref{CompAbPDivHT} that the Hodge-Tate filtration, a priori defined over $C=\hat{\bar{K}}$, is already $K$-rational, as this is true by definition for the Hodge-Tate filtration of $p$-divisible groups, cf. \cite[Ch. 2, App. C]{FarguesTwoTowers}. Thus, one gets a map
\[
|\mathcal{X}_{\Gamma(p^\infty)}^\ast|\setminus |\mathcal{Z}_{\Gamma(p^\infty)}| = \varinjlim_{(K,K^+)} (\mathcal{X}_{\Gamma(p^\infty)}^\ast\setminus \mathcal{Z}_{\Gamma(p^\infty)})(K,K^+)\to \varinjlim_{(K,K^+)} \Fl(K,K^+) = |\Fl|\ .
\]
The $\GSp_{2g}(\Q_p)$-equivariance is clear.

For continuity, we argue as follows. Consider the smooth adic space $S = \mathcal{X}^\ast\setminus \mathcal{Z}$, with the universal abelian variety $g: A_S\to S$. Then $g$ is a proper smooth morphism of smooth adic spaces. Applying \cite[Theorem 1.3]{ScholzePAdicHodge}, we see that the map
\[
(R^1 g_\ast \Z/p^n\Z)\otimes_{\Z/p^n\Z} \OO_S^+/p^n\to R^1 g_\ast \OO_{A_S}^+/p^n
\]
is an almost isomorphism for all $n\geq 1$; by the $5$-lemma, this reduces to the case $n=1$. Going to the pro-\'etale site, passing to the inverse limit over $n$ and inverting $p$, we find an isomorphism of sheaves on $S_\proet$,
\[
R^1 g_\ast \widehat{\Q}_p\otimes_{\widehat{\Q}_p} \hat{\OO}_S\cong R^1 g_\ast \hat{\OO}_{A_S}\ .
\]
In particular, we get a map
\[
(R^1 g_\ast \OO_{A_S})\otimes_{\OO_S} \hat{\OO}_S\to R^1 g_\ast \hat{\OO}_{A_S} = R^1 g_\ast \widehat{\Q}_p\otimes_{\widehat{\Q}_p} \hat{\OO}_S\ .
\]
Note that $R^1 g_\ast \OO_{A_S}$ is a finite locally free $\OO_S$-module given by $\Lie A_S$ (using the principal polarization on $A_S$ to identify $A_S$ with its dual). Locally, there is a pro-finite \'etale cover $\tilde{U}\to U\subset S$ such that $\tilde{U}$ is affinoid perfectoid. Let
\[
\tilde{U}_\infty = \tilde{U}\times_S (\mathcal{X}_{\Gamma(p^\infty)}^\ast\setminus \mathcal{Z}_{\Gamma(p^\infty)})\ ;
\]
as $\mathcal{X}_{\Gamma(p^\infty)}^\ast\setminus \mathcal{Z}_{\Gamma(p^\infty)}\to S$ is pro-finite \'etale, $\tilde{U}_\infty$ exists, and is affinoid perfectoid over $\Q_p^\cycl$.  Evaluating the map
\[
(R^1 g_\ast \OO_{A_S})\otimes_{\OO_S} \hat{\OO}_S\to R^1 g_\ast \hat{\OO}_{A_S} = R^1 g_\ast \widehat{\Q}_p\otimes_{\widehat{\Q}_p} \hat{\OO}_S
\]
on $\tilde{U}_\infty\in S_\proet$, we get a map
\[
(\Lie A_S)\otimes_{\OO_S} \OO_{\tilde{U}_\infty}\to \OO_{\tilde{U}_\infty}^{2g}\ ,
\]
using the tautological trivialization of $R^1 g_\ast \widehat{\Z}_p$ over $\tilde{U}_\infty$. At all geometric points of $\tilde{U}_\infty$, this identifies with the Hodge-Tate filtration as defined in the statement of the lemma, using \cite[Proposition 4.10]{ScholzeSurvey}. In particular, $(\Lie A_S)\otimes_{\OO_S} \OO_{\tilde{U}_\infty}\subset \OO_{\tilde{U}_\infty}^{2g}$ is totally isotropic, and defines a map of adic spaces
\[
\tilde{U}_\infty\to \Fl\ .
\]
By checking on points, we see that the continuous map $|\tilde{U}_\infty|\to |\Fl|$ factors over
\[
|U|\times_{|S|} (|\mathcal{X}_{\Gamma(p^\infty)}^\ast|\setminus |\mathcal{Z}_{\Gamma(p^\infty)}|)\ ,
\]
and agrees with the map of sets defined previously. As the map
\[
|\tilde{U}_\infty|\to |U|\times_{|S|} (|\mathcal{X}_{\Gamma(p^\infty)}^\ast|\setminus |\mathcal{Z}_{\Gamma(p^\infty)}|)
\]
is the realization on topological spaces of a pro-\'etale and surjective map in $S_\proet$, and pro-\'etale maps in $S_\proet$ are open, a subset $V\subset |U|\times_{|S|} (|\mathcal{X}_{\Gamma(p^\infty)}^\ast|\setminus |\mathcal{Z}_{\Gamma(p^\infty)}|)$ is open if and only if its preimage in $|\tilde{U}_\infty|$ is open. The result follows.
\end{proof}

\begin{definition}\begin{altenumerate}
\item[{\rm (i)}] A subset $U\subset |\mathcal{X}_{\Gamma(p^\infty)}^\ast|$ is affinoid perfectoid if it is the preimage of some affinoid $U_m=\Spa(R_m,R_m^+)\subset |\mathcal{X}_{\Gamma(p^m)}^\ast|$ for all sufficiently large $m$, and $(R_\infty,R_\infty^+)$ is an affinoid perfectoid $\Q_p^\cycl$-algebra, where $R_\infty^+$ is the $p$-adic completion of $\varinjlim_m R_m^+$, and $R_\infty = R_\infty^+[p^{-1}]$.

\item[{\rm (ii)}] A subset $U\subset |\mathcal{X}_{\Gamma(p^\infty)}^\ast|$ is perfectoid if it can be covered by affinoid perfectoid subsets.
\end{altenumerate}
\end{definition}

By Theorem \ref{ExOnStrictNbhd}, $\mathcal{X}_{\Gamma(p^\infty)}^\ast(\epsilon)_a$ is affinoid perfectoid. Also, the condition of being affinoid perfectoid is stable under the action of $\GSp_{2g}(\Q_p)$. Moreover, any perfectoid subset of $|\mathcal{X}_{\Gamma(p^\infty)}^\ast|$ has a natural structure as a perfectoid space over $\Q_p^\cycl$, by gluing the spaces $\Spa(R_\infty,R_\infty^+)$ on affinoid perfectoid subsets. Our goal is to show that $|\mathcal{X}_{\Gamma(p^\infty)}^\ast|$ is perfectoid.

For $\epsilon<1$, recall that $\mathcal{X}^\ast(\epsilon)\subset \mathcal{X}^\ast$ denotes the locus where $|\Ha|\geq |p|^\epsilon$ (observing that this is independent of the lift of $\Ha$). Let $|\mathcal{X}_{\Gamma(p^\infty)}^\ast(\epsilon)|\subset |\mathcal{X}_{\Gamma(p^\infty)}^\ast|$ denote the preimage. Similar notation applies for $\mathcal{Z}$ and $\mathcal{X}\subset \mathcal{X}^\ast$.

Note that for $\epsilon=0$, one gets the tubular neighborhood of the ordinary locus in the special fibre.

\begin{lem}\label{PreimageRationalPoints} The preimage of $\Fl(\Q_p)\subset |\Fl|$ under $|\pi_\HT|$ is given by the closure of $|\mathcal{X}_{\Gamma(p^\infty)}^\ast(0)|\setminus |\mathcal{Z}_{\Gamma(p^\infty)}(0)|$.
\end{lem}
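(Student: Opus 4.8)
The plan is to reduce the lemma to the set-theoretic description of $|\pi_\HT|$ given by Lemma~\ref{PiHTTop}, combined with the dichotomy ``ordinary reduction $\Leftrightarrow$ rational Hodge--Tate filtration'' recalled in the introduction. First I would record that, by properness of $X^\ast$, every point of $\mathcal{X}^\ast_{\Gamma(p^m)}$ away from $\mathcal{Z}_{\Gamma(p^m)}$ comes from an abelian scheme over the corresponding valuation ring, so $\mathcal{X}^\ast_{\Gamma(p^m)}\setminus\mathcal{Z}_{\Gamma(p^m)}=\mathcal{X}_{\Gamma(p^m)}$ is the good-reduction locus; thus the source of $|\pi_\HT|$ is $|\mathcal{X}_{\Gamma(p^\infty)}|$, and $|\mathcal{X}^\ast_{\Gamma(p^\infty)}(0)|\setminus|\mathcal{Z}_{\Gamma(p^\infty)}(0)|=|\mathcal{X}_{\Gamma(p^\infty)}(0)|$ is the (open) locus $\{|\Ha|=1\}$ of good \emph{ordinary} reduction. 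All closures below are taken in the domain $|\mathcal{X}^\ast_{\Gamma(p^\infty)}|\setminus|\mathcal{Z}_{\Gamma(p^\infty)}|$ of $|\pi_\HT|$.

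For ``$\supseteq$'': given $x\in|\mathcal{X}_{\Gamma(p^\infty)}(0)|$ over $(K,K^+)$, corresponding to $(A/K,\alpha\colon T_pA\cong\Z_p^{2g})$ with $|\Ha(x)|=1$, the abelian variety $A$ has good ordinary reduction, so $A[p^\infty]$ over $\OO_C$ ($C=\widehat{\bar K}$) is an extension of an \'etale by a multiplicative $p$-divisible group; by (the good-reduction case of) Proposition~\ref{CompAbPDivHT} and the direct computation of the Hodge--Tate sequence of such a group, $\Lie A(1)\subset T_pA\otimes_{\Z_p}C$ is the $C$-span of $\alpha$ applied to the ($\Q_p$-rational) Tate module of the multiplicative part. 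By Lemma~\ref{PiHTTop} this means $\pi_\HT(x)\in\Fl(\Q_p)$. As $\Fl$ is proper over $\Q_p$, the set $\Fl(\Q_p)$ is a closed (in fact profinite) subset of $|\Fl|$ — a $\Q_p$-point of $\Fl$ has no proper generization in $|\Fl|$ — and $|\pi_\HT|$ is continuous, so $|\pi_\HT|^{-1}(\Fl(\Q_p))$ is closed; hence $\overline{|\mathcal{X}_{\Gamma(p^\infty)}(0)|}\subseteq|\pi_\HT|^{-1}(\Fl(\Q_p))$.

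For ``$\subseteq$'': given $x\in|\pi_\HT|^{-1}(\Fl(\Q_p))$, let $x^{(1)}$ be its maximal generization (the compatible system of rank-$1$ maximal generizations at the finite levels); it is again a point of $|\mathcal{X}_{\Gamma(p^\infty)}|$, because $|\mathcal{Z}_{\Gamma(p^\infty)}|$ is closed and $x\in\overline{\{x^{(1)}\}}$, and it lies over $(K^{(1)},\OO_{K^{(1)}})$ with $K^{(1)}$ complete of rank $1$ and $\widehat{\overline{K^{(1)}}}=C$. Since $|\pi_\HT|$ is continuous, $\pi_\HT(x^{(1)})$ is a generization of $\pi_\HT(x)\in\Fl(\Q_p)$, hence equals it; so $A_{x^{(1)}}/K^{(1)}$ is an abelian variety with good reduction whose Hodge--Tate filtration is $\Q_p$-rational. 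By the dichotomy recalled in the introduction (via \cite[Theorem~B]{ScholzeWeinstein}, or the elementary argument there), this forces the reduction of $A_{x^{(1)}}$ over $\OO_C$ to be ordinary, i.e. $|\Ha(x^{(1)})|=1$, so $x^{(1)}\in|\mathcal{X}_{\Gamma(p^\infty)}(0)|$ and $x\in\overline{\{x^{(1)}\}}\subseteq\overline{|\mathcal{X}_{\Gamma(p^\infty)}(0)|}$. Together with ``$\supseteq$'' this proves the equality.

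The step I expect to require the most care is the passage to the rank-$1$ generization: this is precisely what makes the closure genuinely larger than $|\mathcal{X}_{\Gamma(p^\infty)}(0)|$ (which is open but not closed, as $\{|\Ha|<1\}$ fails to be open at higher-rank points), and one must take care to apply the equivalence ``$\Q_p$-rational Hodge--Tate filtration $\Leftrightarrow$ ordinary reduction'' over the rank-$1$ valuation ring $\OO_C$, where it holds, rather than over a possibly higher-rank $K^+$. The other ingredients (Lemma~\ref{PiHTTop}, Proposition~\ref{CompAbPDivHT}, properness, closedness of $\Fl(\Q_p)$, and continuity of $|\pi_\HT|$) enter only formally.
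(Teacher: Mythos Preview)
Your argument contains a genuine gap: the identification $\mathcal{X}^\ast_{\Gamma(p^m)}\setminus\mathcal{Z}_{\Gamma(p^m)}=\mathcal{X}_{\Gamma(p^m)}$ is false. The boundary $\mathcal{Z}_{\Gamma(p^m)}$ is the \emph{Zariski} closed subset cut out by the ideal of the boundary divisor, so its complement is the full analytification $X^{\ad}_{\Gamma(p^m)}$ of the open Siegel variety, whereas $\mathcal{X}_{\Gamma(p^m)}$ is only the good-reduction locus (the Raynaud generic fibre of the integral model). Concretely, a rank-$1$ point $x\in \mathcal{X}^\ast\setminus\mathcal{Z}$ over $(C,\OO_C)$ gives an abelian variety $A/C$ and, by properness of $X^\ast$, an $\OO_C$-point of $X^\ast$; but that $\OO_C$-point may land in the boundary of the special fibre, in which case $A$ has only semistable (not good) reduction. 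Likewise, $\mathcal{X}^\ast(0)\setminus\mathcal{Z}(0)$ is strictly larger than $\mathcal{X}(0)$: it contains all points whose semi-abelian reduction has ordinary abelian part.

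Your reduction to rank-$1$ points and the dichotomy ``ordinary $\Leftrightarrow$ $\Q_p$-rational Hodge--Tate filtration'' are exactly right, and they suffice on the good-reduction locus. What is missing is the semistable case: one must pass to the connected N\'eron model $G/\OO_C$, with its extension structure $0\to\hat T\to\hat G\to\hat B\to 0$, and invoke Proposition~\ref{CompAbPDivHT} (which identifies the Hodge--Tate filtration of $A$ with that of $\hat G$, hence reduces rationality to that of $B$) together with Lemma~\ref{CompHasseInv} (which shows $|\Ha(x)|=1$ if and only if the abelian part $B$ is ordinary). Once these two reductions are in place, your argument applies verbatim to $B$ in place of $A$. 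This is precisely how the paper proceeds.
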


Note that $|\mathcal{X}_{\Gamma(p^\infty)}^\ast(0)|\setminus |\mathcal{Z}_{\Gamma(p^\infty)}(0)|$ is a retro-compact open subset of the locally spectral space $|\mathcal{X}_{\Gamma(p^\infty)}^\ast|\setminus |\mathcal{Z}_{\Gamma(p^\infty)}|$ (i.e., the intersection with any quasi-compact open is quasi-compact). In this case, the closure is exactly the set of specializations.

\begin{proof} Let $C$ be an algebraically closed complete extension of $\Q_p$, with an open and bounded valuation subring $C^+\subset C$, and take a $(C,C^+)$-valued point $x$ of $\mathcal{X}_{\Gamma(p^\infty)}^\ast\setminus \mathcal{Z}_{\Gamma(p^\infty)}$. It admits the unique rank-$1$-generalization $\tilde{x}$ given as the corresponding $(C,\OO_C)$-valued point, and $x$ lies in the closure of $\mathcal{X}_{\Gamma(p^\infty)}^\ast(0)\setminus \mathcal{Z}_{\Gamma(p^\infty)}(0)$ if and only if $\tilde{x}$ lies in $\mathcal{X}_{\Gamma(p^\infty)}^\ast(0)\setminus \mathcal{Z}_{\Gamma(p^\infty)}(0)$ itself.  Also, by continuity, $x$ maps into $\Fl(\Q_p)$ if and only if $\tilde{x}$ maps into $\Fl(\Q_p)$. Thus, we may assume that $x=\tilde{x}$ is a rank-$1$-point, with values in $(C,\OO_C)$.

The point $x$ corresponds to a principally polarized abelian variety $A/C$ with trivialization of its Tate module. Let $G/\OO_C$ be the N\'eron model, and use notation as in Proposition \ref{CompAbPDivHT}. By Lemma \ref{CompHasseInv}, the point $x$ lies in $\mathcal{X}_{\Gamma(p^\infty)}^\ast(0)\setminus \mathcal{Z}_{\Gamma(p^\infty)}(0)$, i.e. the Hasse invariant is invertible, if and only if $B$ is ordinary. By Proposition \ref{CompAbPDivHT}, $x$ maps into $\Fl(\Q_p)$ if and only if
\[
\Lie \hat{G}\otimes_{\OO_C} C\subset T_p \hat{G}\otimes_{\Z_p} C
\]
is a $\Q_p$-rational subspace. This, in turn, is equivalent to
\[
\Lie B\otimes_{\OO_C} C\subset T_p B\otimes_{\Z_p} C
\]
being a $\Q_p$-rational subspace. Also, $B$ is ordinary if and only if $B[p^\infty]\cong (\Q_p/\Z_p)^g\times \mu_{p^\infty}^g$. One checks directly that in this case, the Hodge-Tate filtration is $\Q_p$-rational (and measures the position of the canonical subgroup). Conversely, all $\Q_p$-rational totally isotropic subspaces $W\subset C^{2g}$ are in one $\GSp_{2g}(\Z_p)$-orbit. By the classification result for $p$-divisible groups over $\OO_C$, \cite[Theorem B]{ScholzeWeinstein}, it follows that if the Hodge-Tate filtration is $\Q_p$-rational, then $B[p^\infty]\cong (\Q_p/\Z_p)^g\times \mu_{p^\infty}^g$. This finishes the proof.
\end{proof}

\begin{rem} Here is a more direct argument for the final step, not refering to \cite{ScholzeWeinstein}, which was suggested by the referee. It is enough to prove the following assertion. Let $C$ be a complete algebraically closed extension of $\Q_p$, and let $G$ over $\OO_C$ be a $p$-divisible group. Then the kernel of
\[
\alpha_G: T_p G\to \Lie G^\ast
\]
is given by $T_p(G^{\mathrm{mult}})$, where $G^{\mathrm{mult}}\subset G$ denotes the maximal multiplicative subgroup. Indeed, if the Hodge-Tate filtration is $\Q_p$-rational, this kernel is $g$-dimensional, so the multiplicative part is of dimension $g$, which is equivalent to the abelian variety $B$ being ordinary.

To prove this, one may split off the multiplicative part, so as to assume that $G^{\mathrm{mult}} = 0$. In this case, $G^\ast$ is a formal group. Then, for an element $x\in T_p G$ corresponding to a morphism of $p$-divisible group $\Q_p/\Z_p\to G$, one takes the dual map $x^\ast: G^\ast\to \mu_{p^\infty}$. Then $\alpha_G(x)$ is defined as the induced map on Lie algebras. As $G^\ast$ is formal, it follows that if the induced map on Lie algebras is $0$, then $x^\ast=0$, so that $x=0$, proving the desired injectivity.
\end{rem}

The following lemma compares the condition that an abelian variety is close to being ordinary, with the condition that the associated Hodge-Tate periods are close to $\Q_p$-rational (cf. also Lemma \ref{ExistsSmallEpsilon}). This is one of the technical key results of this paper, and is ultimately the reason that it was enough to understand some strict neighborhood of the anticanonical tower.

\begin{lem}\label{SmallUExists} Fix some $0<\epsilon<1$. There is an open subset $U\subset \Fl$ containing $\Fl(\Q_p)$ such that
\[
|\pi_\HT|^{-1}(U)\subset |\mathcal{X}_{\Gamma(p^\infty)}^\ast(\epsilon)|\setminus |\mathcal{Z}_{\Gamma(p^\infty)}(\epsilon)|\ .
\]
\end{lem}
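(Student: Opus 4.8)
The plan is to work with a single geometric rank-one point and bound, uniformly in the point, how far from the ordinary locus it can be in terms of how far its Hodge-Tate period is from $\Fl(\Q_p)$. Concretely, I would argue as follows. Fix $0<\epsilon<1$. It suffices to produce an open neighborhood $U$ of $\Fl(\Q_p)$ such that for every complete algebraically closed extension $C$ of $\Q_p$ and every $(C,\OO_C)$-valued point $x$ of $\mathcal{X}_{\Gamma(p^\infty)}^\ast\setminus\mathcal{Z}_{\Gamma(p^\infty)}$ whose image $\pi_\HT(x)$ lies in $U$, the Hasse invariant of the reduction of $x$ divides $p^\epsilon$; the general (higher-rank) case then follows by passing to the unique rank-one generization, exactly as in the proof of Lemma \ref{PreimageRationalPoints}, together with the observation that both the condition $|\mathrm{Ha}|\geq|p|^\epsilon$ and the condition $\pi_\HT(x)\in U$ are insensitive to replacing $x$ by that generization (the former because $\mathcal{X}^\ast(\epsilon)$ is an open subset stable under generization, the latter by continuity of $|\pi_\HT|$).

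For a rank-one point $x$ corresponding to a principally polarized abelian variety $A/C$ with trivialized Tate module, I would invoke Proposition \ref{CompAbPDivHT} and Lemma \ref{CompHasseInv}: write $0\to \hat T\to \hat G\to \hat B\to 0$ for the connected Néron model, so that $\mathrm{Ha}(\bar x)$ is, up to the canonical (invertible) toric differential, the Hasse invariant of $B\otimes\OO_C/p$, and the Hodge-Tate filtration of $A$ is compatible with that of the $p$-divisible group $\hat G[p^\infty]$. Thus the problem is reduced to a statement purely about the $p$-divisible group $H=\hat G[p^\infty]$ over $\OO_C$: if $\alpha_H\colon T_pH\otimes_{\Z_p}C\to (\Lie H^\ast)^\ast\otimes_{\OO_C}C$ has kernel $C$-spanned by a subspace that is within distance $\delta$ (in the natural topology on $\Fl$) of a $\Q_p$-rational isotropic subspace, then the Hasse invariant of (the abelian part $B$ of) $H$ divides $p^{\epsilon}$ provided $\delta$ is small enough depending only on $\epsilon$ and $g$. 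I would prove this by moving by $\GSp_{2g}(\Z_p)$ so that the reference $\Q_p$-rational point is the standard one, and then comparing the Hodge-Tate filtration of $H$ with that of the multiplicative $p$-divisible group $\mu_{p^\infty}^g\times(\Q_p/\Z_p)^g$: being $\delta$-close to the standard rational point means the matrix of $\alpha_H$ in the tautological basis is $\delta$-close to the one for the ordinary group, and since the Hasse invariant is (a power of) a determinant built from the reduction of $\alpha_H$ mod $p$, closeness of the period forces $\mathrm{Ha}$ to be close to a unit, hence of bounded valuation. Making the bound effective and sharp (getting exactly $\epsilon$) is a matter of tracking the explicit $p$-adic estimates; one can afford to be generous and then absorb constants into the choice of $U$.

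The quantitative heart is exactly this last comparison, and it is the step I expect to be the main obstacle: one needs a clean relation between the ``distance in $\Fl$ from a $\Q_p$-rational point'' and the divisibility properties of the Hasse invariant, which amounts to an effective form of the statement (already used in the introduction and attributed to \cite[Theorem B]{ScholzeWeinstein}) that $(\Lie H)(1)\subset T_pH\otimes C$ is $\Q_p$-rational iff $B$ is ordinary. Rather than re-prove the classification of $p$-divisible groups, I would localize: reduce to $H$ with trivial multiplicative part, use that then $H^\ast$ is formal so $\alpha_H$ is injective, and run a deformation/approximation argument — if $\pi_\HT(x)$ is $p^{1-\epsilon/p^m}$-close to the standard rational subspace for suitable $m$, then modulo $p^{(1-\epsilon)/p^m}$ the group $H$ looks multiplicative in the relevant range, forcing $\ker F^m$ on the reduction to be as large as in the ordinary case up to the stated precision, which by the Hasse-invariant computation in the proof of Corollary \ref{CanSubgroup} (the determinant of $\Lie A_1^{(p^m)}\to\Lie A_1$ is $\mathrm{Ha}^{(p^m-1)/(p-1)}$) yields $\mathrm{Ha}^{(p^m-1)/(p-1)}\mid p^\epsilon$, hence $|\mathrm{Ha}|\geq|p|^\epsilon$ after possibly shrinking $\epsilon$ slightly. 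Finally one notes that the locus of $y\in\Fl$ which are $p^{1-\epsilon/p^m}$-close to $\Fl(\Q_p)$ is open (it is a union of translates of a standard open polydisc) and contains $\Fl(\Q_p)$, so it serves as the desired $U$.
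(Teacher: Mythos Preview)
Your approach diverges substantially from the paper's, and the divergence is precisely at the step you flag as the main obstacle. You attempt an \emph{effective} argument: relate the $p$-adic distance of the Hodge--Tate period from a $\Q_p$-rational point to the valuation of the Hasse invariant, via explicit estimates on the map $\alpha_H$ and the deformation-theoretic description of the canonical subgroup. But the sketch you give for this (``if the period is $p^{1-\epsilon/p^m}$-close to the standard rational subspace then modulo $p^{(1-\epsilon)/p^m}$ the group $H$ looks multiplicative in the relevant range, forcing $\Ha^{(p^m-1)/(p-1)}\mid p^\epsilon$'') is not a proof: you would need to show that closeness of the Hodge--Tate filtration in $\Fl$ forces closeness of the $p$-divisible group to an ordinary one \emph{integrally}, and the Hodge--Tate map $\alpha_H$ only controls things after inverting $p$. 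Turning this into an integral statement is exactly the content of the classification theorem you are trying to avoid re-proving.

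The paper sidesteps all of this with a soft topological argument. It splits into bad reduction (handled by induction on $g$, using Proposition~\ref{CompAbPDivHT} and Lemma~\ref{CompHasseInv} to reduce to smaller genus) and good reduction. For good reduction, one works on the quasicompact spectral space $|\mathcal{X}_{\Gamma(p^\infty)}|$: by Lemma~\ref{PreimageRationalPoints}, $|\pi_\HT|^{-1}(\Fl(\Q_p))=\overline{|\mathcal{X}_{\Gamma(p^\infty)}(0)|}\subset |\mathcal{X}_{\Gamma(p^\infty)}(\epsilon)|$, and since the complement $|\mathcal{X}_{\Gamma(p^\infty)}|\setminus |\mathcal{X}_{\Gamma(p^\infty)}(\epsilon)|$ is quasicompact in the constructible topology, the nested intersection $\bigcap_{U\supset\Fl(\Q_p)} |\pi_\HT|^{-1}(U)$ being contained in $|\mathcal{X}_{\Gamma(p^\infty)}(\epsilon)|$ forces some single $U$ to already satisfy $|\pi_\HT|^{-1}(U)\subset |\mathcal{X}_{\Gamma(p^\infty)}(\epsilon)|$. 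No quantitative control is needed; the existence of $U$ falls out of compactness. Your effective route, if it could be completed, would give more (an explicit $U$), but as written it has a genuine gap at the integral comparison step.
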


\begin{proof} We argue by induction on $g$. For $g=0$, there is nothing to show. We have to show that we can find some $U$ such that for any algebraically closed and complete extension $C$ of $\Q_p$ with a principally polarized $g$-dimensional abelian variety $A/C$ and a symplectic isomorphism $\alpha: T_p A\cong \Z_p^{2g}$ for which $|\pi_\HT|(A)\in U$, one has $|\Ha|\geq |p|^\epsilon$.

If $A$ has bad reduction, then using Proposition \ref{CompAbPDivHT} and Lemma \ref{CompHasseInv}, the result reduces by induction to the case already handled. Thus, assume that one has an abelian variety $A/\OO_C$. In particular, we have a point $x\in |\mathcal{X}_{\Gamma(p^\infty)}|$. The map
\[
|\pi_\HT|: |\mathcal{X}_{\Gamma(p^\infty)}|\to |\Fl|
\]
is continuous. One has
\[
\bigcap_{U\supset \Fl(\Q_p)} |\pi_\HT|^{-1}(U) = |\pi_\HT|^{-1}(\Fl(\Q_p)) = \overline{|\mathcal{X}_{\Gamma(p^\infty)}(0)|}\subset |\mathcal{X}_{\Gamma(p^\infty)}(\epsilon)|\ .
\]
The complement $|\mathcal{X}_{\Gamma(p^\infty)}|\setminus |\mathcal{X}_{\Gamma(p^\infty)}(\epsilon)|$ is quasicompact for the constructible topology. Thus, there is some $U\supset \Fl(\Q_p)$ with
\[
|\pi_\HT|^{-1}(U)\subset |\mathcal{X}_{\Gamma(p^\infty)}(\epsilon)|\ ,
\]
as desired.
\end{proof}

Before we continue, let us recall some facts about the geometry of $\Fl$. There is the Pl\"ucker embedding $\Fl\hookrightarrow \mathbb{P}^{\binom{2g}{g}-1}$. For any subset $J\subset \{1,\ldots,2g\}$ of cardinality $g$, let $s_J$ denote the corresponding homogeneous coordinate on projective space, and let $\Fl_J\subset \Fl$ denote the open affinoid subset where $|s_{J^\prime}|\leq |s_J|$ for all $J^\prime$. The action of $\GSp_{2g}(\Z_p)$ permutes the $\Fl_J$ transitively. As an example that will be important later, $\Fl_{\{g+1,\ldots,2g\}}(\Q_p)\subset \Fl(\Q_p) = \mathrm{Fl}(\Z_p)$ parametrizes those totally isotropic direct summands $M\subset \Z_p^{2g}$ with $(M/p)\cap (\F_p^g\oplus 0^g) = \{0\}$; equivalently, $M\oplus (\Z_p^g\oplus 0^g)\buildrel\cong\over\to \Z_p^{2g}$.

\begin{lem}\label{UTranslatesCover} For any open subset $U\subset \Fl$ containing a $\Q_p$-rational point, $\GSp_{2g}(\Q_p)\cdot U = \Fl$.
\end{lem}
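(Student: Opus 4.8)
The statement is that the $\GSp_{2g}(\Q_p)$-translates of any open subset $U$ containing a $\Q_p$-rational point cover all of $\Fl$. Since $\GSp_{2g}(\Q_p)$ acts transitively on $\Fl(\Q_p)$ (any two $\Q_p$-rational totally isotropic summands of $\Z_p^{2g}$ are in one $\GSp_{2g}(\Q_p)$-orbit, as already noted in the discussion preceding Lemma \ref{PreimageRationalPoints}), we may translate $U$ so that it contains any prescribed $\Q_p$-point; in particular we may assume $U$ contains the standard point $[\Z_p^g\oplus 0^g]\in \Fl_{\{1,\ldots,g\}}(\Q_p)$, or whichever base point is most convenient. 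So it suffices to show that every point $x\in |\Fl|$ can be moved into $U$ by some element of $\GSp_{2g}(\Q_p)$.

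The key input is that $\Fl(\Q_p)$ is dense in $|\Fl|$ for the topology that matters here — more precisely, that every point of $|\Fl|$ has a $\Q_p$-rational point in the closure of each of its open neighborhoods, or better, that $\Fl(\Q_p)$ is \emph{very} dense: every nonempty open subset of $\Fl$ contains a $\Q_p$-rational point. This holds because $\Fl$ is a smooth projective variety over $\Q_p$ and the flag variety of $\GSp_{2g}$ has $\Q_p$-points that are Zariski dense (indeed, rational points are dense in the adic space since $\mathbb{P}^n(\Q_p)$ is dense in $\mathbb{P}^n$ as an adic space, and the Plücker-coordinate affinoids $\Fl_J$ each contain plenty of $\Q_p$-points). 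Granting this, here is the argument: fix $x\in|\Fl|$. I claim there is $\gamma\in\GSp_{2g}(\Q_p)$ with $\gamma x\in U$. Consider the orbit map and use that $U$ is open: pick a $\Q_p$-rational point $y_0\in U$ (exists by hypothesis). It suffices to find $\gamma$ with $\gamma x$ close to $y_0$, i.e. $\gamma x\in U$. Work in an affinoid chart $\Fl_J$ containing $x$; on $\Fl_J$, the subgroup of $\GSp_{2g}(\Q_p)$ stabilizing the corresponding "big cell" acts, and by scaling (using diagonal/unipotent elements analogous to the $U_p$-operators exploited elsewhere in the paper) one contracts $\Fl_J$ toward its distinguished $\Q_p$-rational point. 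Concretely, writing the big cell as an affine space of symmetric matrices (the open cell of the Lagrangian/symplectic Grassmannian), there is a one-parameter subgroup $t\mapsto \mathrm{diag}(t\cdot\mathrm{Id}_g, t^{-1}\cdot\mathrm{Id}_g)\in\GSp_{2g}(\Q_p)$ whose action on this affine space is multiplication by $t^2$; taking $t=p^n$ with $n$ large sends any given point of the cell into an arbitrarily small neighborhood of the origin, which is a $\Q_p$-rational point. Translating that neighborhood to lie inside $U$ finishes it.

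More carefully, the clean way to organize this: let $x\in|\Fl|$. Choose $J$ with $x\in\Fl_J$. The point $x$ corresponds (over its residue field) to a totally isotropic subspace; composing with a suitable permutation matrix in $\GSp_{2g}(\Z_p)$ we reduce to $J=\{g+1,\dots,2g\}$, so $x$ lies in the locus where the subspace is a graph $M=\{(v,Sv)\}$ for $S$ a symmetric $g\times g$ matrix over the residue field of $x$ (the open Schubert cell). Now apply $\mathrm{diag}(p^n\mathrm{Id}_g,\mathrm{Id}_g)$-type elements, or rather the element of $\GSp_{2g}$ scaling the first factor: this replaces $S$ by $p^{2n}S$ (or $p^nS$, depending on normalization), whose reduction tends to $0$, i.e. $\gamma_n x$ converges to the standard point $[\Z_p^g\oplus 0^g]$ in $|\Fl_{\{g+1,\dots,2g\}}|$. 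Hence for $n\gg0$, $\gamma_n x$ lies in any fixed neighborhood of that standard point. Finally, since $\GSp_{2g}(\Q_p)$ is transitive on $\Fl(\Q_p)$, translate $U$ (equivalently pre-compose) so that it contains a neighborhood of $[\Z_p^g\oplus 0^g]$; then $\gamma_n x\in U$ for $n\gg 0$, and undoing the translation gives the desired element of $\GSp_{2g}(\Q_p)$ moving $x$ into the original $U$.

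\textbf{Main obstacle.} The genuine content — and the step I expect to require the most care — is making precise the contraction statement: that the one-parameter subgroup action on the open Schubert cell of the symplectic flag variety literally contracts every point (including non-rank-$1$, higher-rank points of the adic space) into an arbitrarily small open neighborhood of the distinguished fixed point. For rank-$1$ (classical) points this is the elementary computation above with matrices over a valued field; for general adic points one should phrase it as: the map $\Fl_J\to\Fl_J$ given by $\gamma_n$ sends $|\Fl_J|$ into the affinoid subdomain $\{|S_{ij}|\le |p|^{n}\}$, which forms a neighborhood basis of the standard point, and a continuity/quasicompactness argument (the complement of $U$ is quasicompact for the constructible topology, exactly as in the proof of Lemma \ref{SmallUExists}) upgrades pointwise contraction to the statement that for $n$ large enough the whole of $\Fl_J$ lands in the translate of $U$. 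Combining the finitely many charts $\Fl_J$ (they are permuted transitively by $\GSp_{2g}(\Z_p)$, so one chart suffices up to translation) then yields $\GSp_{2g}(\Q_p)\cdot U=\Fl$.
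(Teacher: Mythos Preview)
Your proposal is correct and follows essentially the same route as the paper: reduce via transitivity of $\GSp_{2g}(\Q_p)$ on $\Fl(\Q_p)$ to assuming $U$ contains a fixed standard point, then use a diagonal one-parameter element $\gamma=(1,\ldots,1,p,\ldots,p)$ to contract an affinoid chart $\Fl_J$ toward that point, and conclude by quasicompactness that $\gamma^n(\Fl_J)\subset U$ for $n$ large. The paper's write-up is terser but identical in substance; your concern about higher-rank adic points is unnecessary, since the contraction $\gamma^n(\Fl_J)\subset\{|S_{ij}|\le|p|^{cn}\}$ is a statement about rational subdomains (hence holds on the level of adic spaces, not just classical points), and these form a neighborhood basis of the fixed point. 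One small slip: in the chart $\Fl_{\{g+1,\ldots,2g\}}$ the origin $S=0$ is $[0^g\oplus\Q_p^g]$, not $[\Q_p^g\oplus 0^g]$.
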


\begin{proof} We may assume that $U = \GSp_{2g}(\Q_p)\cdot U$. By assumption, $\Fl(\Q_p)\subset U$. It suffices to see that $\Fl_{\{1,\ldots,g\}}\subset U$. The point $x\in \Fl(\Q_p)$ defined by $\Q_p^g\oplus 0^g\subset \Q_p^{2g}$ lies in $U$. The action of the diagonal element $\gamma = (1,\ldots,1,p,\ldots,p)\in \GSp_{2g}(\Q_p)$ has the property that $\gamma^n(y)\to x$ for $n\to\infty$ for all $y\in \Fl_{\{1,\ldots,g\}}$. By quasicompacity, there is some $n$ such that $\gamma^n(\Fl_{\{1,\ldots,g\}})\subset U$, i.e. $\Fl_{\{1,\ldots,g\}}\subset \gamma^{-n}(U) = U$, as desired.
\end{proof}

\begin{lem}\label{FiniteCoverAwayFromBoundary} Take any $0<\epsilon<1$. There are finitely many $\gamma_1,\ldots,\gamma_k\in \GSp_{2g}(\Q_p)$ such that
\[
|\mathcal{X}_{\Gamma(p^\infty)}^\ast|\setminus |\mathcal{Z}_{\Gamma(p^\infty)}| = \bigcup_{i=1}^k \gamma_i \cdot \left(|\mathcal{X}_{\Gamma(p^\infty)}^\ast(\epsilon)|\setminus |\mathcal{Z}_{\Gamma(p^\infty)}(\epsilon)|\right)\ .
\]
\end{lem}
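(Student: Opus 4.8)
The plan is to combine the covering statement on the flag variety (Lemma \ref{UTranslatesCover}) with the containment produced by Lemma \ref{SmallUExists} and a quasicompactness argument. Fix $0<\epsilon<1$. By Lemma \ref{SmallUExists} there is an open subset $U\subset \Fl$ containing $\Fl(\Q_p)$ with
\[
|\pi_\HT|^{-1}(U)\subset |\mathcal{X}_{\Gamma(p^\infty)}^\ast(\epsilon)|\setminus |\mathcal{Z}_{\Gamma(p^\infty)}(\epsilon)|\ .
\]
Since $|\pi_\HT|$ is $\GSp_{2g}(\Q_p)$-equivariant (Lemma \ref{PiHTTop}), for any $\gamma\in \GSp_{2g}(\Q_p)$ we get $|\pi_\HT|^{-1}(\gamma U) = \gamma\cdot |\pi_\HT|^{-1}(U)\subset \gamma\cdot(|\mathcal{X}_{\Gamma(p^\infty)}^\ast(\epsilon)|\setminus |\mathcal{Z}_{\Gamma(p^\infty)}(\epsilon)|)$. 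By Lemma \ref{UTranslatesCover}, the translates $\{\gamma U\}_{\gamma\in \GSp_{2g}(\Q_p)}$ cover all of $\Fl$, so the sets $\gamma\cdot(|\mathcal{X}_{\Gamma(p^\infty)}^\ast(\epsilon)|\setminus |\mathcal{Z}_{\Gamma(p^\infty)}(\epsilon)|)$ cover $|\mathcal{X}_{\Gamma(p^\infty)}^\ast|\setminus |\mathcal{Z}_{\Gamma(p^\infty)}| = |\pi_\HT|^{-1}(|\Fl|)$.

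To extract a finite subcover, I would work with the constructible topology. The space $|\mathcal{X}_{\Gamma(p^\infty)}^\ast|$ is the inverse limit of the spectral spaces $|\mathcal{X}_{\Gamma(p^m)}^\ast|$, hence is itself spectral, and thus quasicompact for the constructible topology. The sets $|\mathcal{X}_{\Gamma(p^\infty)}^\ast(\epsilon)|$, being preimages of rational (hence quasicompact open, quasicompact closed) subsets $\mathcal{X}^\ast(\epsilon)\subset \mathcal{X}^\ast$, are constructible, as is the boundary $|\mathcal{Z}_{\Gamma(p^\infty)}|$ (preimage of the Zariski closed boundary) and hence $|\mathcal{X}_{\Gamma(p^\infty)}^\ast(\epsilon)|\setminus |\mathcal{Z}_{\Gamma(p^\infty)}(\epsilon)|$; the $\GSp_{2g}(\Q_p)$-action is by homeomorphisms for the constructible topology, so all the translates are constructible, in particular open in the constructible topology. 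I also need that $|\mathcal{X}_{\Gamma(p^\infty)}^\ast|\setminus |\mathcal{Z}_{\Gamma(p^\infty)}|$ is quasicompact for the constructible topology; since $|\mathcal{Z}_{\Gamma(p^\infty)}|$ is constructible and closed, its complement is constructible, and a constructible subset of a spectral space is quasicompact for the constructible topology. Applying quasicompactness to the open (for the constructible topology) cover by the $\gamma_i$-translates yields finitely many $\gamma_1,\ldots,\gamma_k$ with
\[
|\mathcal{X}_{\Gamma(p^\infty)}^\ast|\setminus |\mathcal{Z}_{\Gamma(p^\infty)}| = \bigcup_{i=1}^k \gamma_i\cdot\left(|\mathcal{X}_{\Gamma(p^\infty)}^\ast(\epsilon)|\setminus |\mathcal{Z}_{\Gamma(p^\infty)}(\epsilon)|\right)\ ,
\]
as desired.

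The main obstacle, and the point requiring the most care, is the bookkeeping with the constructible topology: one must check that all the subsets in play are genuinely constructible in $|\mathcal{X}_{\Gamma(p^\infty)}^\ast|$ (which is where the fact that $\mathcal{X}^\ast(\epsilon)\subset \mathcal{X}^\ast$ and the boundary are defined at finite level, by quasicompact opens/closeds, is used) and that the inverse-limit space is spectral so that quasicompactness for the constructible topology is available. One subtlety is that I want a cover by sets that are open for the constructible topology; constructible subsets are both open and closed for the constructible topology, so this is fine, but I should phrase the quasicompactness argument as covering the constructible-quasicompact set $|\mathcal{X}_{\Gamma(p^\infty)}^\ast|\setminus |\mathcal{Z}_{\Gamma(p^\infty)}|$ by constructible-open sets. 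Everything else is a formal consequence of the three cited lemmas and equivariance of $\pi_\HT$.
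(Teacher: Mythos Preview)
Your argument is correct, but you are working harder than necessary. The paper extracts the finite subcover already on the flag variety: since $\Fl$ is quasicompact (as a proper adic space) and the translates $\gamma U$ are open in the ordinary topology, Lemma \ref{UTranslatesCover} plus quasicompactness of $\Fl$ immediately give finitely many $\gamma_1,\ldots,\gamma_k$ with $\Fl = \bigcup_i \gamma_i U$. Pulling this equality back along $|\pi_\HT|$ and using $|\pi_\HT|^{-1}(\gamma_i U) = \gamma_i |\pi_\HT|^{-1}(U)\subset \gamma_i(|\mathcal{X}_{\Gamma(p^\infty)}^\ast(\epsilon)|\setminus |\mathcal{Z}_{\Gamma(p^\infty)}(\epsilon)|)$ finishes the proof in two lines.

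By contrast, you pull back first and then try to extract a finite subcover on the infinite-level side, which forces you into the constructible topology and into checking that various subsets (the $\epsilon$-locus, the boundary, their difference, and all translates) are constructible in $|\mathcal{X}_{\Gamma(p^\infty)}^\ast|$. These checks are all fine, but they are entirely avoidable: the finiteness lives naturally on $\Fl$, where the covering sets are already open for the usual spectral topology.
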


\begin{proof} Take $U$ as in Lemma \ref{SmallUExists}. By Lemma \ref{UTranslatesCover} and quasicompacity of $\Fl$, there are finitely many $\gamma_1,\ldots,\gamma_k\in \GSp_{2g}(\Q_p)$ such that $\Fl = \bigcup_{i=1}^k \gamma_i\cdot U$. Taking the preimage of this equality under $|\pi_\HT|$ gives the lemma.
\end{proof}

\begin{lem}\label{FiniteCoverMinComp} With $0<\epsilon<1$ and $\gamma_1,\ldots,\gamma_k\in \GSp_{2g}(\Q_p)$ as in Lemma \ref{FiniteCoverAwayFromBoundary}, one has
\[
|\mathcal{X}_{\Gamma(p^\infty)}^\ast| = \bigcup_{i=1}^k \gamma_i \cdot |\mathcal{X}_{\Gamma(p^\infty)}^\ast(\epsilon)|\ .
\]
\end{lem}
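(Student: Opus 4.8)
The content of Lemma \ref{FiniteCoverAwayFromBoundary} is the corresponding statement on the complement of the boundary, so the only issue is to handle points lying on the boundary $|\mathcal{Z}_{\Gamma(p^\infty)}|$. The plan is to reduce to Lemma \ref{FiniteCoverAwayFromBoundary} by a specialization argument, exactly as in the proof of Lemma \ref{PreimageRationalPoints}. Let $x\in |\mathcal{X}_{\Gamma(p^\infty)}^\ast|$ be a point, with values in some $(K,K^+)$; I may assume $K$ is algebraically closed and complete. First I would pass to the unique rank-$1$-generalization $\tilde{x}$ of $x$, given by the corresponding $(K,\OO_K)$-valued point. Since $|\mathcal{X}_{\Gamma(p^\infty)}^\ast(\epsilon)|\subset|\mathcal{X}_{\Gamma(p^\infty)}^\ast|$ is the preimage of a quasicompact open subset (pulled back from $|\mathcal{X}_{\Gamma(p^m)}^\ast(\epsilon)|$, where the condition $|\Ha|\geq|p|^\epsilon$ is defined by a nonstrict inequality, hence defines a subset stable under specialization and with the property that a point lies in it iff its rank-$1$-generalization does), and the $\gamma_i$ act as homeomorphisms, the set $\bigcup_i\gamma_i\cdot|\mathcal{X}_{\Gamma(p^\infty)}^\ast(\epsilon)|$ has the same property: it contains $x$ if and only if it contains $\tilde{x}$. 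So I may assume $x=\tilde{x}$ is a rank-$1$-point with values in $(K,\OO_K)$.

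Now the point $x$ lies on the boundary, i.e.\ in $|\mathcal{Z}_{\Gamma(p^\infty)}|$, or it does not. If it does not, it is covered by Lemma \ref{FiniteCoverAwayFromBoundary} and there is nothing to prove. So assume $x\in|\mathcal{Z}_{\Gamma(p^\infty)}|$. Here I would use that on the minimal compactification the Hasse invariant is a genuine section of a line bundle, extended over the boundary (as recalled before Definition of $\mathfrak{X}^\ast(\epsilon)$, using classical Hartog for $g\geq2$ and direct inspection for $g=1$), so the condition $|\Ha(x)|\geq|p|^\epsilon$ makes sense for boundary points as well, and the subset $|\mathcal{X}_{\Gamma(p^\infty)}^\ast(\epsilon)|$ is exactly the locus where this holds. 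The key geometric input is Lemma \ref{CompHasseInv}: if $x$ corresponds to a semiabelian degeneration, the Hasse invariant at the reduction is the product of the Hasse invariant of the abelian part $B$ (over a lower-dimensional Siegel variety) with the canonical differential on the toric part, which is invertible. Thus whether $|\Ha(x)|\geq|p|^\epsilon$ depends only on the abelian part, and I would run an induction on $g$: the boundary strata of $\mathcal{X}_{\Gamma(p^\infty)}^\ast$ are (products of torus parts with) lower-dimensional $\mathcal{X}_{\Gamma(p^\infty)}^\ast$'s, compatibly with $\pi_\HT$ and with the $\GSp_{2g}(\Q_p)$-action in the sense that the stabilizer of a boundary stratum surjects onto the relevant smaller symplectic group. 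By the inductive hypothesis applied to the abelian part, together with Proposition \ref{CompAbPDivHT} relating $\pi_\HT$ of $x$ to $\pi_\HT$ of the abelian part, some $\GSp_{2g}(\Q_p)$-translate (through an element stabilizing the stratum and inducing the required element of the smaller group) of $x$ lands in $|\mathcal{X}_{\Gamma(p^\infty)}^\ast(\epsilon)|$.

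An alternative, and perhaps cleaner, route avoids the explicit boundary description: since $|\mathcal{X}_{\Gamma(p^\infty)}^\ast|$ is spectral and $|\mathcal{X}_{\Gamma(p^\infty)}^\ast(0)|$ is a retro-compact open subset whose closure is the full preimage $|\pi_\HT^{-1}(\Fl(\Q_p))|$ when restricted to the complement of the boundary, I would instead argue directly that $\bigcup_i\gamma_i\cdot|\mathcal{X}_{\Gamma(p^\infty)}^\ast(\epsilon)|$ is both open and closed: openness is clear, and for closedness one notes that it is the complement of $\bigcup_i\gamma_i\cdot\big(|\mathcal{X}_{\Gamma(p^\infty)}^\ast|\setminus|\mathcal{X}_{\Gamma(p^\infty)}^\ast(\epsilon)|\big)$ — wait, this needs the covering property, so it is circular. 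So I will stick with the specialization-plus-induction argument: the main obstacle is precisely the bookkeeping at the boundary, namely checking that the $\GSp_{2g}(\Q_p)$-element produced by the inductive hypothesis on the abelian part can be lifted to an element of $\GSp_{2g}(\Q_p)$ preserving the boundary stratum and having the desired effect, and that Lemma \ref{CompHasseInv} together with Proposition \ref{CompAbPDivHT} indeed reduces the condition $|\Ha|\geq|p|^\epsilon$ and the value of $\pi_\HT$ to data of the abelian part. Once these compatibilities are in place, intersecting the equality of Lemma \ref{FiniteCoverAwayFromBoundary} with, and extending it across, each boundary stratum gives the result. $\hfill\Box$
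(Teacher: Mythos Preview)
Your approach has a genuine gap. The lemma concerns the \emph{specific} elements $\gamma_1,\ldots,\gamma_k$ already fixed in Lemma~\ref{FiniteCoverAwayFromBoundary}, but your inductive construction, for a boundary point $x$, produces only \emph{some} element of $\GSp_{2g}(\Q_p)$ (lifted through the parabolic stabilizing the stratum from an element of the smaller symplectic group) that moves $x$ into $|\mathcal{X}_{\Gamma(p^\infty)}^\ast(\epsilon)|$. There is no reason this element is one of the $\gamma_i$. At best your argument would establish $|\mathcal{X}_{\Gamma(p^\infty)}^\ast| = \bigcup_{\gamma\in\GSp_{2g}(\Q_p)} \gamma\cdot |\mathcal{X}_{\Gamma(p^\infty)}^\ast(\epsilon)|$, and to pass from this infinite cover to the finite one you would still need a compactness argument.

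The paper's proof is entirely different and avoids the boundary structure altogether. Set $V = \bigcup_i \gamma_i\cdot |\mathcal{X}_{\Gamma(p^\infty)}^\ast(\epsilon)|$; this is a quasicompact open subset containing $|\mathcal{X}_{\Gamma(p^\infty)}^\ast|\setminus |\mathcal{Z}_{\Gamma(p^\infty)}|$ by the previous lemma. Since $|\mathcal{X}_{\Gamma(p^\infty)}^\ast| = \varprojlim_m |\mathcal{X}_{\Gamma(p^m)}^\ast|$, quasicompactness forces $V$ to be the preimage of some open $V_m\subset \mathcal{X}_{\Gamma(p^m)}^\ast$ at finite level, which still contains the complement of the boundary $\mathcal{Z}_{\Gamma(p^m)}$. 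Now one checks $V_m = \mathcal{X}_{\Gamma(p^m)}^\ast$ on classical points: if a classical point $x$ lay outside $V_m$, then since $\{x\} = \bigcap_{U\ni x} U$ and $V_m$ is quasicompact for the constructible topology, some open neighborhood $U$ of $x$ would be disjoint from $V_m$, hence contained in $\mathcal{Z}_{\Gamma(p^m)}$ --- impossible, as a nonempty open has dimension larger than that of the boundary. No induction on $g$, no description of boundary strata, no lifting of group elements is needed.
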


\begin{proof} Let $V\subset |\mathcal{X}_{\Gamma(p^\infty)}^\ast|$ denote the right-hand side. Thus, $V$ is a quasicompact open subset containing $|\mathcal{X}_{\Gamma(p^\infty)}^\ast|\setminus |\mathcal{Z}_{\Gamma(p^\infty)}|$. By quasicompacity, $V$ is the preimage of some $V_m\subset \mathcal{X}_{\Gamma(p^m)}^\ast$, containing $\mathcal{X}_{\Gamma(p^m)}^\ast\setminus \mathcal{Z}_{\Gamma(p^m)}$. To prove $V_m=\mathcal{X}_{\Gamma(p^m)}^\ast$, it suffices to see that they have the same classical points. Thus, assume $x\not\in V_m$ is a classical point of $\mathcal{X}_{\Gamma(p^m)}^\ast$. Then $x=\bigcap_{x\in U} U$ is the intersection of all open neighborhoods $U\subset \mathcal{X}_{\Gamma(p^m)}^\ast$. As $V_m$ is quasicompact for the constructible topology, it follows that $U\subset \mathcal{X}_{\Gamma(p^m)}^\ast\setminus V_m$ for some open neighborhood $U$ of $x$. In particular, $U\subset \mathcal{Z}_{\Gamma(p^m)}$. This is impossible, as $U$ is open (so that e.g. $\dim U > \dim \mathcal{Z}_{\Gamma(p^m)}$).
\end{proof}

\subsection{On adic spaces}

\begin{cor}\label{Existence} There exists a perfectoid space $\mathcal{X}_{\Gamma(p^\infty)}^\ast$ over $\Q_p^\cycl$ such that
\[
\mathcal{X}_{\Gamma(p^\infty)}^\ast\sim \varprojlim_m \mathcal{X}_{\Gamma(p^m)}^\ast\ .
\]
It is covered by finitely many $\GSp_{2g}(\Q_p)$-translates of $\mathcal{X}_{\Gamma(p^\infty)}^\ast(\epsilon)_a$, for any $0<\epsilon<\frac 12$.
\end{cor}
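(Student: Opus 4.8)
The plan is to construct $\mathcal{X}_{\Gamma(p^\infty)}^\ast$ by gluing finitely many $\GSp_{2g}(\Q_p)$-translates of the affinoid perfectoid space $\mathcal{X}_{\Gamma(p^\infty)}^\ast(\epsilon)_a$ produced in Theorem \ref{ExOnStrictNbhd}. Fix $0<\epsilon<\tfrac12$. By the remarks preceding the corollary, the property of being an affinoid perfectoid subset of $|\mathcal{X}_{\Gamma(p^\infty)}^\ast|$ is stable under the $\GSp_{2g}(\Q_p)$-action, so every translate $\gamma\cdot|\mathcal{X}_{\Gamma(p^\infty)}^\ast(\epsilon)_a|$ is affinoid perfectoid; and any subset of $|\mathcal{X}_{\Gamma(p^\infty)}^\ast|$ which is a finite union of affinoid perfectoid subsets is perfectoid, hence --- again by the gluing remark in the text --- carries a canonical structure of perfectoid space over $\Q_p^\cycl$, obtained by gluing the charts $\Spa(R_\infty,R_\infty^+)$. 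On such a chart $U$, with image $U_m=\Spa(R_m,R_m^+)$ in $\mathcal{X}_{\Gamma(p^m)}^\ast$ for $m$ large, one has $R_\infty^+=(\varinjlim_m R_m^+)^{\wedge}$ ($p$-adic completion), which is precisely the relation $U\sim\varprojlim_m U_m$ of \cite[Definition 2.4.1]{ScholzeWeinstein}; patching gives $\mathcal{X}_{\Gamma(p^\infty)}^\ast\sim\varprojlim_m\mathcal{X}_{\Gamma(p^m)}^\ast$, with uniqueness by \cite[Proposition 2.4.5]{ScholzeWeinstein}. Thus everything reduces to showing that $|\mathcal{X}_{\Gamma(p^\infty)}^\ast|$ is covered by finitely many translates $\gamma\cdot|\mathcal{X}_{\Gamma(p^\infty)}^\ast(\epsilon)_a|$.

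By Lemma \ref{FiniteCoverMinComp} (with $\gamma_1,\dots,\gamma_k$ as in Lemma \ref{FiniteCoverAwayFromBoundary}) there are finitely many $\gamma_i\in\GSp_{2g}(\Q_p)$ with $|\mathcal{X}_{\Gamma(p^\infty)}^\ast|=\bigcup_i\gamma_i\cdot|\mathcal{X}_{\Gamma(p^\infty)}^\ast(\epsilon)|$, where $\mathcal{X}_{\Gamma(p^\infty)}^\ast(\epsilon)$ denotes the preimage of $\{|\Ha|\geq|p|^\epsilon\}\subset\mathcal{X}^\ast$. So it suffices to cover $|\mathcal{X}_{\Gamma(p^\infty)}^\ast(\epsilon)|$ by finitely many $\GSp_{2g}(\Z_p)$-translates of $|\mathcal{X}_{\Gamma(p^\infty)}^\ast(\epsilon)_a|$. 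Since $\GSp_{2g}(\Z_p)$ acts without changing the underlying abelian variety, it preserves $|\mathcal{X}_{\Gamma(p^\infty)}^\ast(\epsilon)|$, and $|\mathcal{X}_{\Gamma(p^\infty)}^\ast(\epsilon)_a|$ is open and closed in it (by passing to the limit from the fact, noted after Theorem \ref{ExCanFrobLifts}, that $\mathcal{X}_{\Gamma_0(p^m)}^\ast(\epsilon)_a\subset\mathcal{X}_{\Gamma_0(p^m)}^\ast(\epsilon)$ is open and closed). Hence any finite union $\bigcup_j\delta_j\cdot|\mathcal{X}_{\Gamma(p^\infty)}^\ast(\epsilon)_a|$ is closed; as the boundary is nowhere dense, the good-reduction locus $|\mathcal{X}_{\Gamma(p^\infty)}(\epsilon)|$ is dense in $|\mathcal{X}_{\Gamma(p^\infty)}^\ast(\epsilon)|$, so it is enough to cover the good-reduction locus. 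A point there is a pair $(A,\alpha)$ with $A$ a principally polarized abelian variety of good reduction, $|\Ha(A)|\geq|p|^\epsilon$, and $\alpha\colon T_pA\cong\Z_p^{2g}$; then $A$ has a canonical subgroup $C_1\subset A[p]$ of level $1$, and the point lies in the anticanonical locus precisely when $(\alpha\bmod p)(C_1)$ is transverse to the standard Lagrangian $\F_p^g\oplus 0^g\subset(\Z/p)^{2g}$. Replacing $\alpha$ by $\delta\alpha$ replaces $(\alpha\bmod p)(C_1)$ by $\bar\delta\cdot(\alpha\bmod p)(C_1)$; since $\GSp_{2g}(\F_p)$ acts transitively on Lagrangians of $(\Z/p)^{2g}$ and the set of Lagrangians transverse to $\F_p^g\oplus 0^g$ is non-empty, for every Lagrangian some element of $\GSp_{2g}(\F_p)$ moves it into this set. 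Lifting a set of representatives of $\GSp_{2g}(\F_p)$ to finitely many $\delta_j\in\GSp_{2g}(\Z_p)$ then gives the required cover. (For $g=1$ the boundary has codimension $1$ but lies in the ordinary locus; one argues in the same way, as in the earlier treatment of the case $g=1$.)

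I expect the main obstacle to be precisely this last reduction: one has directly constructed only the anticanonical neighborhood $\mathcal{X}_{\Gamma(p^\infty)}^\ast(\epsilon)_a$ as a perfectoid space, while Lemma \ref{FiniteCoverMinComp} covers $|\mathcal{X}_{\Gamma(p^\infty)}^\ast|$ by translates of the larger locus $|\mathcal{X}_{\Gamma(p^\infty)}^\ast(\epsilon)|$, so one must interpose a covering of $|\mathcal{X}_{\Gamma(p^\infty)}^\ast(\epsilon)|$ by $\GSp_{2g}(\Z_p)$-translates of the anticanonical locus. This forces one to use both (i) closedness of the anticanonical locus on the minimal compactification, in order to pass legitimately from a dense cover to a genuine cover, and (ii) the elementary but essential fact that $\GSp_{2g}(\F_p)$ acts transitively on Lagrangians with the big cell non-empty. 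Once these are in place, the remaining assembly --- stability of the affinoid-perfectoid property under $\GSp_{2g}(\Q_p)$ and the gluing of perfectoid charts into a perfectoid space with the asserted tilde-relation --- is formal and is already recorded in the surrounding text and in \cite{ScholzeWeinstein}.
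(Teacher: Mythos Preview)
Your proof is correct and follows the same strategy as the paper. The paper's own proof is one sentence: it cites Lemma~\ref{FiniteCoverMinComp} and Theorem~\ref{ExOnStrictNbhd}, and simply asserts the equality
\[
\mathcal{X}_{\Gamma(p^\infty)}^\ast(\epsilon) = \GSp_{2g}(\Z_p)\cdot \mathcal{X}_{\Gamma(p^\infty)}^\ast(\epsilon)_a
\]
without further justification. Your argument supplies exactly this missing justification, via the transitivity of $\GSp_{2g}(\F_p)$ on Lagrangians in $(\Z/p\Z)^{2g}$ and the open-and-closedness of the anticanonical locus.

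One small comment: your density step (``the boundary is nowhere dense, so it suffices to cover the good-reduction locus'') is cleanest if run at finite level $\Gamma(p)$ rather than at $\Gamma(p^\infty)$. Since $\mathcal{X}_{\Gamma(p)}^\ast(\epsilon)_a$ is open and closed in $\mathcal{X}_{\Gamma(p)}^\ast(\epsilon)$ and stable under $\Gamma_0(p)/\Gamma(p)$, the finite union $\bigcup_\delta \delta\cdot\mathcal{X}_{\Gamma(p)}^\ast(\epsilon)_a$ over $\delta\in\GSp_{2g}(\Z/p\Z)$ has open complement in the normal rigid space $\mathcal{X}_{\Gamma(p)}^\ast(\epsilon)$; you have shown this complement lies in the boundary, hence is empty. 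Pulling back to $\Gamma(p^\infty)$ then gives the desired cover directly, with no need to invoke density at infinite level.
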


\begin{proof} Choose any $0<\epsilon<\frac 12$ and use Lemma \ref{FiniteCoverMinComp} and Theorem \ref{ExOnStrictNbhd}. Note that
\[
\mathcal{X}_{\Gamma(p^\infty)}^\ast(\epsilon) = \GSp_{2g}(\Z_p)\cdot \mathcal{X}_{\Gamma(p^\infty)}^\ast(\epsilon)_a\ .
\]
\end{proof}

Let $\mathcal{Z}_{\Gamma(p^\infty)}\subset \mathcal{X}_{\Gamma(p^\infty)}^\ast$ denote the boundary, which has an induced structure as a perfectoid space.

\begin{cor} There is a unique map of adic spaces over $\Q_p$
\[
\pi_\HT: \mathcal{X}_{\Gamma(p^\infty)}^\ast\setminus \mathcal{Z}_{\Gamma(p^\infty)}\to \Fl
\]
which realizes $|\pi_\HT|$ on topological spaces.
\end{cor}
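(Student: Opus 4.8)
The plan is to build $\pi_\HT$ by gluing the adic-space morphisms that were already constructed locally in the proof of Lemma \ref{PiHTTop}, using the cover of $\mathcal{X}_{\Gamma(p^\infty)}^\ast\setminus\mathcal{Z}_{\Gamma(p^\infty)}$ by finitely many $\GSp_{2g}(\Q_p)$-translates of the affinoid perfectoid pieces supplied by Corollary \ref{Existence}, and then checking that the glued map realizes $|\pi_\HT|$ and is unique.

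First I would recall from the proof of Lemma \ref{PiHTTop} that, working pro-étale-locally on $S = \mathcal{X}^\ast\setminus\mathcal{Z}$, one already has honest maps of adic spaces $\tilde U_\infty\to\Fl$ defined by the sub-bundle $(\Lie A_S)\otimes_{\OO_S}\OO_{\tilde U_\infty}\subset\OO_{\tilde U_\infty}^{2g}$ coming from the Hodge–Tate comparison isomorphism $R^1 g_\ast\widehat\Q_p\otimes\hat\OO_S\cong R^1 g_\ast\hat\OO_{A_S}$ of \cite[Theorem 1.3]{ScholzePAdicHodge}, together with the tautological trivialization of $R^1 g_\ast\widehat\Z_p$ over $\tilde U_\infty$. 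These $\tilde U_\infty$ cover $\mathcal{X}_{\Gamma(p^\infty)}^\ast\setminus\mathcal{Z}_{\Gamma(p^\infty)}$ after pulling back along the pro-finite-étale map $\mathcal{X}_{\Gamma(p^\infty)}^\ast\setminus\mathcal{Z}_{\Gamma(p^\infty)}\to S$ (they are the affinoid perfectoid neighborhoods appearing in Corollary \ref{Existence}, since $\mathcal{X}_{\Gamma(p^\infty)}^\ast(\epsilon)_a$ and its $\GSp_{2g}(\Q_p)$-translates are of this form). On each such piece the sub-bundle is totally isotropic (checked on geometric points using \cite[Proposition 4.10]{ScholzeSurvey}), so it factors the map through $\Fl$ rather than just a Grassmannian. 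The construction is manifestly functorial in $\tilde U_\infty$ and independent of auxiliary choices because the Hodge–Tate filtration is intrinsic to $(A,\alpha)$; hence on overlaps the two locally-defined maps to the (separated) adic space $\Fl$ agree, as one can check after pulling back to a common pro-étale refinement, or simply because adic morphisms into a separated space are determined by their effect on sufficiently many points together with the structure-sheaf map, and both descriptions give the same sub-bundle of $\OO^{2g}$. Gluing then yields a map of adic spaces $\pi_\HT\colon\mathcal{X}_{\Gamma(p^\infty)}^\ast\setminus\mathcal{Z}_{\Gamma(p^\infty)}\to\Fl$.

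Next I would verify that this $\pi_\HT$ realizes $|\pi_\HT|$ on underlying topological spaces: this is exactly the point-by-point identification already carried out inside the proof of Lemma \ref{PiHTTop}, where the map $|\tilde U_\infty|\to|\Fl|$ was shown to factor over $|U|\times_{|S|}(|\mathcal{X}_{\Gamma(p^\infty)}^\ast|\setminus|\mathcal{Z}_{\Gamma(p^\infty)}|)$ and to agree with the set-theoretic map sending $(A,\alpha)$ to its Hodge–Tate filtration $\Lie A\subset K^{2g}$. Uniqueness is then immediate: a map of adic spaces to $\Fl$ is determined on each affinoid perfectoid piece $\Spa(R_\infty,R_\infty^+)$ by the pullback of the coordinate functions and the Plücker relations, and since the $\tilde U_\infty$ are reduced (even perfectoid) and the required values on the dense set of rank-one points are forced by $|\pi_\HT|$, any two such maps coincide.

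The main obstacle is the glueing/compatibility step: one must make sure that the locally constructed adic maps — which a priori depend on the choice of pro-étale affinoid perfectoid cover $\tilde U\to U\subset S$ used to trivialize $R^1 g_\ast\widehat\Z_p$ — patch together into a single map on the space $\mathcal{X}_{\Gamma(p^\infty)}^\ast\setminus\mathcal{Z}_{\Gamma(p^\infty)}$, rather than on some pro-étale site over it. The resolution is that $\mathcal{X}_{\Gamma(p^\infty)}^\ast\setminus\mathcal{Z}_{\Gamma(p^\infty)}$ itself carries a canonical trivialization of (the pullback of) $R^1 g_\ast\widehat\Z_p$, namely the universal symplectic isomorphism $\alpha$, so one may take $\tilde U_\infty$ to be honest opens of $\mathcal{X}_{\Gamma(p^\infty)}^\ast\setminus\mathcal{Z}_{\Gamma(p^\infty)}$ and the locally defined maps are then literally restrictions of one another on overlaps; the finitely many $\GSp_{2g}(\Q_p)$-translates of $\mathcal{X}_{\Gamma(p^\infty)}^\ast(\epsilon)_a$ from Corollary \ref{Existence} give the required finite affinoid perfectoid cover, and $\GSp_{2g}(\Q_p)$-equivariance of the whole picture (clear from the construction and already recorded on topological spaces in Lemma \ref{PiHTTop}) ensures the pieces are compatible.
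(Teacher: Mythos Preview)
Your proposal is correct and follows essentially the same approach as the paper: now that Corollary \ref{Existence} provides $\mathcal{X}_{\Gamma(p^\infty)}^\ast$ as a perfectoid space, one repeats the construction from the proof of Lemma \ref{PiHTTop} using honest affinoid perfectoid opens of $\mathcal{X}_{\Gamma(p^\infty)}^\ast\setminus\mathcal{Z}_{\Gamma(p^\infty)}$ (which already carry the universal trivialization $\alpha$) in place of the auxiliary $\tilde U_\infty$, and glues. The paper's proof says exactly this in one line; your more detailed discussion of the gluing compatibility and of uniqueness (via reducedness of perfectoid rings, so that functions are determined by their values at points) simply spells out what the paper leaves implicit.
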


\begin{proof} Uniqueness is clear. For existence, we argue as in the proof of Lemma \ref{PiHTTop}, using affinoid perfectoid subsets of $\mathcal{X}_{\Gamma(p^\infty)}^\ast\setminus \mathcal{Z}_{\Gamma(p^\infty)}$ in place of $\tilde{U}_\infty$.
\end{proof}

\begin{lem}\label{ImageOrdinaryEtale} The preimage of $\Fl_{\{g+1,\ldots,2g\}}(\Q_p)$ is given by the closure of $\mathcal{X}_{\Gamma(p^\infty)}^\ast(0)_a\setminus \mathcal{Z}_{\Gamma(p^\infty)}(0)_a$.
\end{lem}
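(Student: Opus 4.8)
The plan is to identify the preimage of the single $\Q_p$-rational point $\Fl_{\{g+1,\ldots,2g\}}(\Q_p)$ (i.e. the point corresponding to the totally isotropic summand $0^g\oplus\Z_p^g\subset\Z_p^{2g}$, since by the conventions recalled before Lemma~\ref{UTranslatesCover} this is the $\Q_p$-point of $\Fl_{\{g+1,\ldots,2g\}}$) with the anticanonical piece of the tube over the ordinary locus. First I would reduce, exactly as in the proof of Lemma~\ref{PreimageRationalPoints}, to rank-$1$ points: a point $x$ of $\mathcal{X}_{\Gamma(p^\infty)}^\ast\setminus\mathcal{Z}_{\Gamma(p^\infty)}$ with values in $(C,C^+)$ lies in the closure of $\mathcal{X}_{\Gamma(p^\infty)}^\ast(0)_a\setminus\mathcal{Z}_{\Gamma(p^\infty)}(0)_a$ if and only if its unique rank-$1$ generization $\tilde x$ (the $(C,\OO_C)$-point) does, and by continuity $\pi_\HT(x)$ equals the given $\Q_p$-point if and only if $\pi_\HT(\tilde x)$ does; so I may assume $x$ is an $\OO_C$-point, corresponding to a principally polarized abelian variety $A/\OO_C$ together with a symplectic trivialization $\alpha\colon T_pA\cong\Z_p^{2g}$.

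Next I would combine two facts already in hand. On the one hand, $x$ lies in $\mathcal{X}_{\Gamma(p^\infty)}^\ast(0)_a\setminus\mathcal{Z}_{\Gamma(p^\infty)}(0)_a$ if and only if $A$ has (good) ordinary reduction \emph{and} the $\Gamma_0(p^\infty)$-component of $\alpha$ is anticanonical, i.e. the filtration $\alpha^{-1}(0^g\oplus\Z_p^g)$ reduces, modulo $p$, to a complement of the canonical subgroup $C_1\subset A[p]$; this is the content of Theorem~\ref{ExCanFrobLifts}~(ii),(iii) and the description of $\mathcal{X}_{\Gamma_0(p^m)}(\epsilon)_a$ recorded afterwards (unwinding that the $\Gamma(p^\infty)$-level anticanonical locus is the preimage of $\mathcal{X}_{\Gamma_0(p)}(0)_a$). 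On the other hand, by Lemma~\ref{PiHTTop} the map $\pi_\HT$ sends $x$ to the Hodge-Tate filtration $\Lie A\subset C^{2g}$ computed via $\alpha$, and by the argument in Lemma~\ref{PreimageRationalPoints} (using Proposition~\ref{CompAbPDivHT} and the classification of $p$-divisible groups over $\OO_C$, \cite[Theorem B]{ScholzeWeinstein}, or the referee's direct argument via the kernel of $\alpha_G$) this filtration is $\Q_p$-rational precisely when $A$ is ordinary, and in the ordinary case the Hodge-Tate filtration is exactly the canonical subgroup filtration $T_p(C_\infty)\subset T_pA$, where $C_\infty\subset A[p^\infty]$ is the canonical subgroup.

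It then remains to match the two $\Q_p$-rational data. For ordinary $A$ with $\Q_p$-rational Hodge-Tate filtration $W=\Lie A\subset C^{2g}$, the condition $\pi_\HT(x)\in\Fl_{\{g+1,\ldots,2g\}}(\Q_p)$ says $\alpha(W)=0^g\oplus\Z_p^g$ (as a $\Z_p$-lattice), while $W=\alpha(T_pC_\infty\otimes C)$ up to the Tate twist we are ignoring; so the condition is $\alpha(T_pC_1)=(0^g\oplus\Z_p^g)\bmod p$ inside $\F_p^{2g}$, which is exactly the statement that $\alpha^{-1}(0^g\oplus\Z_p^g)$ is the canonical subgroup filtration — but the anticanonical locus is defined by $\alpha^{-1}(0^g\oplus\Z_p^g)\bmod p$ being a \emph{complement} to $C_1$. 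This looks like the opposite condition, so the genuine subtlety — and the step I expect to be the main obstacle — is the sign/duality bookkeeping: one must keep careful track of which of the two summands $\Z_p^g\oplus 0^g$, $0^g\oplus\Z_p^g$ plays the role of "$\Ker V$ direction" versus "$\Ker F$ direction" under the principal polarization and the symplectic form, noting that the canonical subgroup $C_1$ is totally isotropic and $A[p]/C_1$ is its (Cartier-)dual, so that the Hodge-Tate filtration $T_pC_\infty$ is actually the \emph{annihilator} (under the Weil pairing) of the reduction-of-$A^\vee$ side; once this is reconciled, the "closure of the ordinary anticanonical locus" on the one side and "preimage of $\Fl_{\{g+1,\ldots,2g\}}(\Q_p)$" on the other are seen to be the same set of rank-$1$ points, and by the reduction of the first paragraph the same subset of $\mathcal{X}_{\Gamma(p^\infty)}^\ast$. (In fact the cleanest route is to deduce this directly from Lemma~\ref{PreimageRationalPoints} by intersecting with the $\GSp_{2g}(\Z_p)$-translate picking out the $\Fl_{\{g+1,\ldots,2g\}}$-cell: $\pi_\HT^{-1}(\Fl(\Q_p))=\overline{|\mathcal{X}_{\Gamma(p^\infty)}^\ast(0)|\setminus|\mathcal{Z}_{\Gamma(p^\infty)}(0)|}$ is $\GSp_{2g}(\Z_p)$-stable, and decomposing $\Fl(\Q_p)$ into its $\GSp_{2g}(\Z_p)$-orbit structure of cells $\Fl_J(\Q_p)$ matches the decomposition $\mathcal{X}_{\Gamma(p^\infty)}^\ast(0)=\GSp_{2g}(\Z_p)\cdot\mathcal{X}_{\Gamma(p^\infty)}^\ast(0)_a$ from Corollary~\ref{Existence}, reducing everything to checking which cell the anticanonical locus hits — which is the computation above.)
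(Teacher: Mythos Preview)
Your central misreading is that $\Fl_{\{g+1,\ldots,2g\}}(\Q_p)$ is \emph{not} a single point. As recalled just before Lemma~\ref{UTranslatesCover}, $\Fl_{\{g+1,\ldots,2g\}}(\Q_p)\subset\Fl(\Q_p)=\mathrm{Fl}(\Z_p)$ is the (open) set of totally isotropic summands $M\subset\Z_p^{2g}$ with $M\oplus(\Z_p^g\oplus 0^g)\buildrel\cong\over\to\Z_p^{2g}$, equivalently $(M/p)\cap(\F_p^g\oplus 0^g)=\{0\}$. Once you use this, the good-reduction computation is immediate and there is no sign/duality obstacle: for ordinary $A/\OO_C$ the Hodge--Tate filtration is $\alpha(C)\otimes C$ with $C\subset T_pA$ the canonical subgroup, and $\pi_\HT(x)\in\Fl_{\{g+1,\ldots,2g\}}(\Q_p)$ says $\alpha(C)\oplus(\Z_p^g\oplus 0^g)=\Z_p^{2g}$, i.e.\ $C\oplus D=T_pA$ for $D=\alpha^{-1}(\Z_p^g\oplus 0^g)$ --- which is precisely the anticanonical condition. (Note also that the $\Gamma_0$-subspace is $\alpha^{-1}(\Z_p^g\oplus 0^g)$, not $\alpha^{-1}(0^g\oplus\Z_p^g)$.) Your parenthetical ``orbit decomposition'' route fails for the same reason: the $\Fl_J(\Q_p)$ are overlapping open subsets of $\Fl(\Q_p)$, not a partition into $\GSp_{2g}(\Z_p)$-orbits, so you cannot simply match them with pieces of $\mathcal{X}^\ast_{\Gamma(p^\infty)}(0)$.

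Even after fixing this, your argument only treats rank-$1$ points in the good-reduction locus $\mathcal{X}_{\Gamma(p^\infty)}(0)$, whereas the statement concerns all of $\mathcal{X}^\ast_{\Gamma(p^\infty)}(0)\setminus\mathcal{Z}_{\Gamma(p^\infty)}(0)$ (which is strictly larger: it contains points of bad reduction specializing to the boundary of the special fibre). The paper bridges this gap using the goodness of the triple $(\mathcal{X}^\ast_{\Gamma(p^\infty)}(0)_a,\mathcal{Z}_{\Gamma(p^\infty)}(0)_a,\mathcal{X}_{\Gamma(p^\infty)}(0)_a)$ from Theorem~\ref{ExOnStrictNbhd}: any open--closed subset of $\mathcal{X}^\ast_{\Gamma(p^\infty)}(0)_a\setminus\mathcal{Z}_{\Gamma(p^\infty)}(0)_a$ extends to one of $\mathcal{X}^\ast_{\Gamma(p^\infty)}(0)_a$, and a nonempty such subset must meet the good-reduction locus. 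This lets one transport the good-reduction computation to the full ordinary tube (in both directions), combined with a $\GSp_{2g}(\Z_p)$-translate argument for points outside the anticanonical piece. This step is genuinely missing from your proposal.
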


\begin{proof} By Lemma \ref{PreimageRationalPoints}, the preimage is contained in the closure of $\mathcal{X}_{\Gamma(p^\infty)}^\ast(0)\setminus \mathcal{Z}_{\Gamma(p^\infty)}(0)$. Also, it is enough to argue with rank-$1$-points, and we have to see that a rank-$1$-point $x$ of $\mathcal{X}_{\Gamma(p^\infty)}^\ast(0)\setminus \mathcal{Z}_{\Gamma(p^\infty)}(0)$ is mapped into $\Fl_{\{g+1,\ldots,2g\}}(\Q_p)$ if and only if $x\in \mathcal{X}_{\Gamma(p^\infty)}^\ast(0)_a$. On $\mathcal{X}_{\Gamma(p^\infty)}(0)$, we can argue as follows. The point $x$ corresponds to an abelian variety $A/\OO_K$ with ordinary reduction, with a symplectic isomorphism $\alpha: T_p A\cong \Z_p^{2g}$. The abelian variety $A$ thus has its canonical subgroup $C\subset T_p A$, as well as $D = \alpha^{-1}(\Z_p^g\oplus 0^g)\subset T_p A$. We have $x\in \mathcal{X}_{\Gamma(p^\infty)}(0)_a$ if and only if $C/p \oplus D/p\cong A[p]$, or equivalently $C\oplus D\cong T_p A$, or also $\alpha(C)\oplus (\Z_p^g\oplus 0^g)\cong \Z_p^{2g}$. Also, the Hodge-Tate filtration is given by $\alpha(C)\otimes_{\Z_p} K\subset K^{2g}$. Thus, the result follows from the observation that $\Fl_{\{g+1,\ldots,2g\}}(\Q_p)$ is the set of those totally isotropic direct summands $M\subset \Z_p^{2g}$ which satisfy $M\oplus (\Z_p^g\oplus 0^g)\buildrel\cong\over\to \Z_p^{2g}$.

To extend to $\mathcal{X}_{\Gamma(p^\infty)}^\ast(0)\setminus \mathcal{Z}_{\Gamma(p^\infty)}(0)$, use that by Theorem \ref{ExOnStrictNbhd}, the triple
\[
(\mathcal{X}_{\Gamma(p^\infty)}^\ast(0)_a,\mathcal{Z}_{\Gamma(p^\infty)}(0)_a,\mathcal{X}_{\Gamma(p^\infty)}(0)_a)
\]
is good. Take any point $x\in \Fl(\Q_p)\setminus \Fl_{\{g+1,\ldots,2g\}}(\Q_p)$, and fix an open affinoid neighborhood $U\subset \Fl$ of $x$ with $U\cap \Fl_{\{g+1,\ldots,2g\}} = \emptyset$. Then $\Fl(\Q_p) = U(\Q_p)\bigsqcup (\Fl(\Q_p)\setminus U(\Q_p))$ is a decomposition into open and closed subsets. Taking the preimage under $\pi_\HT$ of $U$ gives an open and closed subset of $\mathcal{X}_{\Gamma(p^\infty)}^\ast(0)_a\setminus \mathcal{Z}_{\Gamma(p^\infty)}(0)_a$. Because the displayed triple is good, any open and closed subset of $\mathcal{X}_{\Gamma(p^\infty)}^\ast(0)_a\setminus \mathcal{Z}_{\Gamma(p^\infty)}(0)_a$ extends to an open and closed subset of $\mathcal{X}_{\Gamma(p^\infty)}^\ast(0)_a$; let $V$ be the open and closed subset corresponding to $U$. Then intersecting $V$ with the displayed triple gives another good triple. Assume that $V$ is nonempty. As $V$ gives rise to a good triple, it follows that $V\cap \mathcal{X}_{\Gamma(p^\infty)}(0)_a$ is nonempty. But elements of this intersection map under $\pi_\HT$ into $U(\Q_p)\cap \Fl_{\{g+1,\ldots,2g\}}(\Q_p) = \emptyset$, contradiction.

Thus, $\mathcal{X}_{\Gamma(p^\infty)}^\ast(0)_a\setminus \mathcal{Z}_{\Gamma(p^\infty)}(0)_a$ maps into $\Fl_{\{g+1,\ldots,2g\}}(\Q_p)$. Assume that some point
\[
x\in (\mathcal{X}_{\Gamma(p^\infty)}^\ast(0)\setminus \mathcal{X}_{\Gamma(p^\infty)}^\ast(0)_a)\setminus \mathcal{Z}_{\Gamma(p^\infty)}(0)
\]
maps into $\Fl_{\{g+1,\ldots,2g\}}(\Q_p)$. Applying an element $\gamma\in \GSp_{2g}(\Z_p)$, one can arrange that $\gamma x\in \mathcal{X}_{\Gamma(p^\infty)}^\ast(0)_a$. The subset
\[
\mathcal{X}_{\Gamma(p^\infty)}^\ast(0)_a\setminus \gamma\mathcal{X}_{\Gamma(p^\infty)}^\ast(0)_a\subset \mathcal{X}_{\Gamma(p^\infty)}^\ast(0)_a
\]
is open and closed, and thus gives rise to a good triple. By the argument above, if the element $\gamma x$ of this set maps to some element $y\in \gamma \Fl_{\{g+1,\ldots,2g\}}(\Q_p)$, then there is some element $x^\prime\in \mathcal{X}_{\Gamma(p^\infty)}(0)_a\setminus \gamma\mathcal{X}_{\Gamma(p^\infty)}(0)_a$ with $\pi_\HT(x^\prime)\in \gamma \Fl_{\{g+1,\ldots,2g\}}(\Q_p)$. Thus, $\gamma^{-1}x^\prime\in \mathcal{X}_{\Gamma(p^\infty)}(0)\setminus \mathcal{X}_{\Gamma(p^\infty)}(0)_a$ with $\pi_\HT(\gamma^{-1}x^\prime)\in \Fl_{\{g+1,\ldots,2g\}}(\Q_p)$. This contradicts what we proved about the good reduction locus.
\end{proof}

\begin{lem}\label{ExistsSmallEpsilon} For any open subset $U\subset\Fl$ containing $\Fl(\Q_p)$, there is some $\epsilon>0$ such that
\[
\mathcal{X}_{\Gamma(p^\infty)}^\ast(\epsilon)\setminus \mathcal{Z}_{\Gamma(p^\infty)}(\epsilon)\subset \pi_\HT^{-1}(U)\ .
\]
\end{lem}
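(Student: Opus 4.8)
The plan is to deduce this as a soft consequence of Lemma~\ref{PreimageRationalPoints}, by a compactness argument for the constructible topology; it is the converse, in spirit, of Lemma~\ref{SmallUExists}. First I would set $C=|\pi_\HT|^{-1}(|\Fl|\setminus U)$. Since $U$ is open, $C$ is a closed subset of $|\mathcal{X}_{\Gamma(p^\infty)}|=|\mathcal{X}_{\Gamma(p^\infty)}^\ast|\setminus|\mathcal{Z}_{\Gamma(p^\infty)}|$ (and automatically $C\subset|\mathcal{X}_{\Gamma(p^\infty)}|$, as $\pi_\HT$ is only defined there). Each $|\mathcal{X}_{\Gamma(p^m)}|$ and $|\mathcal{X}_{\Gamma(p^m)}^\ast|$ is the underlying space of a quasicompact adic space, so the inverse limits $|\mathcal{X}_{\Gamma(p^\infty)}|$ and $|\mathcal{X}_{\Gamma(p^\infty)}^\ast|$ are spectral, with $|\mathcal{X}_{\Gamma(p^\infty)}|$ a quasicompact open of $|\mathcal{X}_{\Gamma(p^\infty)}^\ast|$ (the boundary being Zariski closed). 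Hence $C$ is pro-constructible in $|\mathcal{X}_{\Gamma(p^\infty)}^\ast|$, i.e. quasicompact for the constructible topology. The assertion to be proved is equivalent to: $C\cap|\mathcal{X}_{\Gamma(p^\infty)}^\ast(\epsilon)|=\emptyset$ for some admissible $\epsilon>0$, because $C$ avoids $|\mathcal{Z}_{\Gamma(p^\infty)}|$.

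Next I would note that $\bigcap_{\epsilon>0}|\mathcal{X}_{\Gamma(p^\infty)}^\ast(\epsilon)|=|\mathcal{X}_{\Gamma(p^\infty)}^\ast(0)|$, since $|\mathcal{X}^\ast(\epsilon)|$ is the locus $|\Ha|\geq|p|^\epsilon$ and $|p|^\epsilon\to 1$ as the admissible $\epsilon\to 0$. Consequently
\[
\bigcap_{\epsilon>0}\left(C\cap|\mathcal{X}_{\Gamma(p^\infty)}^\ast(\epsilon)|\right)=C\cap|\mathcal{X}_{\Gamma(p^\infty)}^\ast(0)|=C\cap\left(|\mathcal{X}_{\Gamma(p^\infty)}^\ast(0)|\setminus|\mathcal{Z}_{\Gamma(p^\infty)}(0)|\right),
\]
where the last equality uses $C\subset|\mathcal{X}_{\Gamma(p^\infty)}^\ast|\setminus|\mathcal{Z}_{\Gamma(p^\infty)}|$ and $|\mathcal{Z}(0)|=|\mathcal{Z}|\cap|\mathcal{X}^\ast(0)|$. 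By Lemma~\ref{PreimageRationalPoints}, the set $|\mathcal{X}_{\Gamma(p^\infty)}^\ast(0)|\setminus|\mathcal{Z}_{\Gamma(p^\infty)}(0)|$ is contained in its own closure, which equals $|\pi_\HT|^{-1}(\Fl(\Q_p))$; as $\Fl(\Q_p)\subset U$, this is contained in $|\pi_\HT|^{-1}(U)=|\mathcal{X}_{\Gamma(p^\infty)}|\setminus C$. Hence the displayed intersection is empty.

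Finally I would apply quasicompactness of $C$ for the constructible topology: the sets $C\cap|\mathcal{X}_{\Gamma(p^\infty)}^\ast(\epsilon)|$ are closed in $C$ for that topology (each $|\mathcal{X}^\ast(\epsilon)|$ is a quasicompact open, hence constructible), they form a decreasing family indexed by admissible $\epsilon>0$, and their total intersection is empty; therefore one of them, say for $\epsilon=\epsilon_0$, is already empty. Unwinding, $\mathcal{X}_{\Gamma(p^\infty)}^\ast(\epsilon_0)\setminus\mathcal{Z}_{\Gamma(p^\infty)}(\epsilon_0)\subset\pi_\HT^{-1}(U)$, as claimed. There is no real obstacle here beyond the bookkeeping with the constructible topology; the only substantive input is Lemma~\ref{PreimageRationalPoints}, identifying the $\Q_p$-rational fibre of $\pi_\HT$ with (the closure of) the ordinary locus.
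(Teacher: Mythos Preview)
Your argument is correct, with two small fixes. First, in the paper's notation $|\mathcal{X}_{\Gamma(p^\infty)}|$ is only the good-reduction locus, strictly smaller than $|\mathcal{X}_{\Gamma(p^\infty)}^\ast|\setminus|\mathcal{Z}_{\Gamma(p^\infty)}|$; this is purely cosmetic, since you only use the latter. Second, $\bigcap_{\epsilon>0}|\mathcal{X}_{\Gamma(p^\infty)}^\ast(\epsilon)|$ is the \emph{closure} of $|\mathcal{X}_{\Gamma(p^\infty)}^\ast(0)|$, not $|\mathcal{X}_{\Gamma(p^\infty)}^\ast(0)|$ itself, because of higher-rank points (a point in the intersection has its unique rank-$1$ generization in $|\mathcal{X}_{\Gamma(p^\infty)}^\ast(0)|$). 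This is harmless: Lemma~\ref{PreimageRationalPoints} identifies $|\pi_\HT|^{-1}(\Fl(\Q_p))$ with the closure of $|\mathcal{X}_{\Gamma(p^\infty)}^\ast(0)|\setminus|\mathcal{Z}_{\Gamma(p^\infty)}(0)|$, so your displayed intersection is still empty.

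The paper proceeds differently: it reuses the structure of the proof of Lemma~\ref{SmallUExists}, arguing by induction on $g$ (reducing bad-reduction points to the abelian part of the N\'eron model via Proposition~\ref{CompAbPDivHT} and Lemma~\ref{CompHasseInv}, whence the remark about taking $U$ to be $\GSp_{2g}(\Z_p)$-invariant) and running the constructible-compactness argument only on the good-reduction locus $|\mathcal{X}_{\Gamma(p^\infty)}|$. Your route is more direct: a single compactness argument on all of $|\mathcal{X}_{\Gamma(p^\infty)}^\ast|\setminus|\mathcal{Z}_{\Gamma(p^\infty)}|$, which works because Lemma~\ref{PreimageRationalPoints} is already stated at that level and so already handles the bad-reduction points. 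The paper's approach has the virtue of exact parallelism with Lemma~\ref{SmallUExists}; yours avoids the induction entirely.
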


\begin{proof} The proof is identical to the proof of Lemma \ref{SmallUExists}, reversing the quantification of $\epsilon$ and $U$. Note that one can a priori assume that $U$ is $\GSp_{2g}(\Z_p)$-invariant, as such open subsets are cofinal; this facilitates the induction argument.
\end{proof}

\begin{lem}\label{ExistsSmallEpsilon2} There exists some $0<\epsilon<\frac 12$ such that
\[
\mathcal{X}_{\Gamma(p^\infty)}^\ast(\epsilon)_a\setminus \mathcal{Z}_{\Gamma(p^\infty)}(\epsilon)_a\subset \pi_\HT^{-1}(\Fl_{\{g+1,\ldots,2g\}})\ .
\]
\end{lem}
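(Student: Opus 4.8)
The statement to prove, Lemma \ref{ExistsSmallEpsilon2}, asks for a single $\epsilon>0$ such that the anticanonical strict neighborhood (minus its boundary) lands in the distinguished affinoid chart $\Fl_{\{g+1,\ldots,2g\}}$ of the flag variety under $\pi_\HT$. The natural strategy is to combine two facts already established. First, by Lemma \ref{ImageOrdinaryEtale}, the preimage $\pi_\HT^{-1}(\Fl_{\{g+1,\ldots,2g\}}(\Q_p))$ is exactly the closure of $\mathcal{X}_{\Gamma(p^\infty)}^\ast(0)_a\setminus \mathcal{Z}_{\Gamma(p^\infty)}(0)_a$; in particular $\mathcal{X}_{\Gamma(p^\infty)}^\ast(0)_a\setminus \mathcal{Z}_{\Gamma(p^\infty)}(0)_a$ itself maps into $\Fl_{\{g+1,\ldots,2g\}}(\Q_p)\subset \Fl_{\{g+1,\ldots,2g\}}$. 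Second, $\Fl_{\{g+1,\ldots,2g\}}$ is an open affinoid neighborhood of the $\Q_p$-rational point set $\Fl_{\{g+1,\ldots,2g\}}(\Q_p)$ but not of all of $\Fl(\Q_p)$; however, after restricting to the anticanonical part, the relevant $\Q_p$-points we see are precisely those in $\Fl_{\{g+1,\ldots,2g\}}(\Q_p)$, so a local-on-$\Fl$ compactness argument of the same flavor as Lemma \ref{ExistsSmallEpsilon} should apply.

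Concretely, first I would set $U = \Fl_{\{g+1,\ldots,2g\}}$ and recall from Lemma \ref{ImageOrdinaryEtale} that $\overline{\mathcal{X}_{\Gamma(p^\infty)}^\ast(0)_a\setminus \mathcal{Z}_{\Gamma(p^\infty)}(0)_a} = \pi_\HT^{-1}(\Fl_{\{g+1,\ldots,2g\}}(\Q_p))$. Working on the level of topological spaces, consider the nested family $|\mathcal{X}_{\Gamma(p^\infty)}^\ast(\epsilon)_a|\setminus |\mathcal{Z}_{\Gamma(p^\infty)}(\epsilon)_a|$ as $\epsilon\to 0$; its intersection over all $\epsilon$ is $\overline{|\mathcal{X}_{\Gamma(p^\infty)}^\ast(0)_a|\setminus|\mathcal{Z}_{\Gamma(p^\infty)}(0)_a|}$, which by the above is contained in $|\pi_\HT|^{-1}(U)$. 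Then I would invoke quasicompactness for the constructible topology: the complement $\left(|\mathcal{X}_{\Gamma(p^\infty)}^\ast|\setminus|\mathcal{Z}_{\Gamma(p^\infty)}|\right)\setminus |\pi_\HT|^{-1}(U)$ is a quasicompact (closed-constructible) subset, disjoint from the above intersection, so there must be some finite $\epsilon$ for which $|\mathcal{X}_{\Gamma(p^\infty)}^\ast(\epsilon)_a|\setminus |\mathcal{Z}_{\Gamma(p^\infty)}(\epsilon)_a|\subset |\pi_\HT|^{-1}(U)$. This is formally the same manipulation as in the proof of Lemma \ref{SmallUExists}/\ref{ExistsSmallEpsilon}, just with the roles of $\epsilon$ and $U$ swapped and $U$ taken to be the fixed affinoid $\Fl_{\{g+1,\ldots,2g\}}$ rather than a shrinking neighborhood of all of $\Fl(\Q_p)$.

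The one subtlety I expect — and the main obstacle — is that $\Fl_{\{g+1,\ldots,2g\}}$ is \emph{not} an open neighborhood of $\Fl(\Q_p)$, only of the subset $\Fl_{\{g+1,\ldots,2g\}}(\Q_p)$. So the naive argument "the intersection over $\epsilon$ of the anticanonical neighborhoods lies inside $\pi_\HT^{-1}(U)$, hence so does one of them by compactness" needs the input that the whole intersection, not merely its $\Q_p$-rational points, maps into $U$. This is exactly what Lemma \ref{ImageOrdinaryEtale} provides: it identifies $\overline{\mathcal{X}_{\Gamma(p^\infty)}^\ast(0)_a\setminus \mathcal{Z}_{\Gamma(p^\infty)}(0)_a}$ with the full preimage $\pi_\HT^{-1}(\Fl_{\{g+1,\ldots,2g\}}(\Q_p))$, and that preimage is visibly contained in the \emph{open} set $\pi_\HT^{-1}(\Fl_{\{g+1,\ldots,2g\}})$ since $\Fl_{\{g+1,\ldots,2g\}}(\Q_p)\subset \Fl_{\{g+1,\ldots,2g\}}$. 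Once this containment is in hand, the constructible-topology compactness step closes the argument; passing from the statement on topological spaces to the statement on adic spaces is automatic since $\mathcal{X}_{\Gamma(p^\infty)}^\ast(\epsilon)_a$ and $\pi_\HT^{-1}(\Fl_{\{g+1,\ldots,2g\}})$ are open subspaces determined by their underlying sets. I would also double-check that $\epsilon$ can be taken $<\tfrac12$, which is harmless since shrinking $\epsilon$ only shrinks the neighborhood.
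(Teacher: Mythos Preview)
Your overall strategy is right, and you correctly identify Lemma \ref{ImageOrdinaryEtale} as the key input. However, the constructible--compactness step has a genuine gap. You assert that the complement
\[
\bigl(|\mathcal{X}_{\Gamma(p^\infty)}^\ast|\setminus|\mathcal{Z}_{\Gamma(p^\infty)}|\bigr)\setminus |\pi_\HT|^{-1}(\Fl_{\{g+1,\ldots,2g\}})
\]
is quasicompact for the constructible topology. But the ambient space $|\mathcal{X}_{\Gamma(p^\infty)}^\ast|\setminus|\mathcal{Z}_{\Gamma(p^\infty)}|$ is \emph{not} quasicompact (it is a Zariski open), and at this point in the argument $\pi_\HT$ is only known to be continuous, not spectral. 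Likewise, the sets $A_\epsilon=\mathcal{X}_{\Gamma(p^\infty)}^\ast(\epsilon)_a\setminus \mathcal{Z}_{\Gamma(p^\infty)}(\epsilon)_a$ themselves are Zariski opens in an affinoid, hence typically not quasicompact. So neither side of the finite--intersection argument is known to live in a compact constructible world, and the step ``$\bigcap_\epsilon A_\epsilon\subset \pi_\HT^{-1}(U)$ implies some $A_\epsilon\subset \pi_\HT^{-1}(U)$'' is not justified.

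The paper's proof repairs exactly this point by pushing the argument across the boundary, using the goodness part of Theorem \ref{ExOnStrictNbhd}. One first chooses an open $U\supset \Fl(\Q_p)$ in which $U\cap \Fl_{\{g+1,\ldots,2g\}}$ is open and closed, with complement $U'$; by Lemma \ref{ExistsSmallEpsilon} one may assume $\mathcal{X}_{\Gamma(p^\infty)}^\ast(\epsilon)_a\setminus \mathcal{Z}_{\Gamma(p^\infty)}(\epsilon)_a$ maps into $U$. The preimage of $U'$ is then open and closed in $\mathcal{X}_{\Gamma(p^\infty)}^\ast(\epsilon)_a\setminus \mathcal{Z}_{\Gamma(p^\infty)}(\epsilon)_a$, and goodness extends this idempotent to an open and closed subset $V_\epsilon$ of the \emph{quasicompact} affinoid perfectoid space $\mathcal{X}_{\Gamma(p^\infty)}^\ast(\epsilon)_a$ itself. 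Now the $V_\epsilon$ are spectral, their intersection is empty by Lemma \ref{ImageOrdinaryEtale}, and the constructible--compactness argument applies legitimately to give $V_\epsilon=\emptyset$ for some $\epsilon$. In short: you need to trade the non-quasicompact complement of the boundary for the full affinoid, and the mechanism for doing so is the Hebbarkeitssatz (goodness), not a direct compactness claim about $\pi_\HT^{-1}$.
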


\begin{proof} Fix some $U\subset \Fl$ containing $\Fl(\Q_p)$ such that $U\cap \Fl_{\{g+1,\ldots,2g\}}\subset U$ is open and closed; let $U^\prime\subset U$ be the open and closed complement. By Lemma \ref{ExistsSmallEpsilon}, we may assume that $\mathcal{X}_{\Gamma(p^\infty)}^\ast(\epsilon)_a\setminus \mathcal{Z}_{\Gamma(p^\infty)}(\epsilon)_a$ maps into $U$. The open and closed preimage of $U^\prime$ gives rise to an open and closed subset $V_\epsilon\subset \mathcal{X}_{\Gamma(p^\infty)}^\ast(\epsilon)_a$ by the goodness part of Theorem \ref{ExOnStrictNbhd}. By Lemma \ref{ImageOrdinaryEtale}, the intersection of $V_\epsilon$ over all $\epsilon>0$ is empty. As all $V_\epsilon$ are spectral spaces, thus quasicompact for the constructible topology, it follows that $V_\epsilon = \emptyset$ for some $\epsilon>0$. Thus, $\pi_\HT$ maps $\mathcal{X}_{\Gamma(p^\infty)}^\ast(\epsilon)_a\setminus \mathcal{Z}_{\Gamma(p^\infty)}(\epsilon)_a$ into $U\cap \Fl_{\{g+1,\ldots,2g\}}\subset \Fl_{\{g+1,\ldots,2g\}}$, as desired.
\end{proof}

\begin{cor}\label{CorHTExists} There is a unique map of adic spaces
\[
\pi_\HT: \mathcal{X}_{\Gamma(p^\infty)}^\ast\to \Fl
\]
extending $\pi_\HT$ on $\mathcal{X}_{\Gamma(p^\infty)}^\ast\setminus \mathcal{Z}_{\Gamma(p^\infty)}$.
\end{cor}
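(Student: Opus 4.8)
The plan is to extend $\pi_\HT$ across the boundary one affinoid chart of $\Fl$ at a time, using the $\GSp_{2g}(\Q_p)$-action to reduce to a single good triple and the Hebbarkeitssatz (goodness) to extend the relevant bounded functions.

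Uniqueness is immediate: $\mathcal{X}_{\Gamma(p^\infty)}^\ast$ is perfectoid, hence reduced, $\mathcal{Z}_{\Gamma(p^\infty)}$ is Zariski closed with dense complement, and $\Fl$ is separated. Checking on a cover by affinoid perfectoid opens $\Spa(R,R^+)$, two maps agreeing on the complement of $\mathcal{Z}_{\Gamma(p^\infty)}$ induce continuous ring maps $\OO(\Fl_J)\to R$ that agree after the injection $R\hookrightarrow H^0(\Spa(R,R^+)\setminus\mathcal{Z}_{\Gamma(p^\infty)},\OO)$ (injective by reducedness), so the maps coincide.

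For existence, first extend $\pi_\HT$ over $\mathcal{X}_{\Gamma(p^\infty)}^\ast(\epsilon)_a$ for the $\epsilon$ furnished by Lemma \ref{ExistsSmallEpsilon2}, so that $\pi_\HT$ carries $\mathcal{X}_{\Gamma(p^\infty)}^\ast(\epsilon)_a\setminus\mathcal{Z}_{\Gamma(p^\infty)}(\epsilon)_a$ into the affinoid $\Fl_{\{g+1,\ldots,2g\}}=\Spa(B,B^+)$. Writing $\mathcal{X}_{\Gamma(p^\infty)}^\ast(\epsilon)_a=\Spa(R,R^+)$, which is affinoid perfectoid by Corollary \ref{BoundaryStronglyClosedAnticanTower}, the restriction of $\pi_\HT$ to the complement of the boundary is given by a continuous homomorphism $\varphi$ from $(B,B^+)$ to the sections of $(\OO,\OO^+)$ on $\mathcal{X}_{\Gamma(p^\infty)}^\ast(\epsilon)_a\setminus\mathcal{Z}_{\Gamma(p^\infty)}(\epsilon)_a$. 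The images under $\varphi$ of the finitely many standard topological generators $s_{J^\prime}/s_{\{g+1,\ldots,2g\}}$ of $B$ have absolute value $\leq 1$, hence lie in $H^0(\mathcal{X}_{\Gamma(p^\infty)}^\ast(\epsilon)_a\setminus\mathcal{Z}_{\Gamma(p^\infty)}(\epsilon)_a,\OO^+)$. Since the triple $(\mathcal{X}_{\Gamma(p^\infty)}^\ast(\epsilon)_a,\mathcal{Z}_{\Gamma(p^\infty)}(\epsilon)_a,\mathcal{X}_{\Gamma(p^\infty)}(\epsilon)_a)$ is good by Theorem \ref{ExOnStrictNbhd}, bounded functions on $\mathcal{X}_{\Gamma(p^\infty)}^\ast(\epsilon)_a\setminus\mathcal{Z}_{\Gamma(p^\infty)}(\epsilon)_a$ extend uniquely to $\mathcal{X}_{\Gamma(p^\infty)}^\ast(\epsilon)_a$, i.e. to elements of $R^+$. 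These extensions assemble into a continuous homomorphism $(B,B^+)\to(R,R^+)$ restricting to $\varphi$, i.e. a map of adic spaces $\tilde\pi\colon\mathcal{X}_{\Gamma(p^\infty)}^\ast(\epsilon)_a\to\Fl_{\{g+1,\ldots,2g\}}\hookrightarrow\Fl$ extending $\pi_\HT$ there.

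Finally, cover $\mathcal{X}_{\Gamma(p^\infty)}^\ast$ by the finitely many $\GSp_{2g}(\Q_p)$-translates $\gamma_i\mathcal{X}_{\Gamma(p^\infty)}^\ast(\epsilon)_a$ of Corollary \ref{Existence} (for the same $\epsilon$), and on $\gamma_i\mathcal{X}_{\Gamma(p^\infty)}^\ast(\epsilon)_a$ set the extension equal to $y\mapsto\gamma_i(\tilde\pi(\gamma_i^{-1}y))$, valued in $\gamma_i\Fl_{\{g+1,\ldots,2g\}}\subset\Fl$; by $\GSp_{2g}(\Q_p)$-equivariance of $\pi_\HT$ this restricts to $\pi_\HT$ on the complement of the boundary. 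On an overlap of two such translates the two local extensions agree on the dense open complement of $\mathcal{Z}_{\Gamma(p^\infty)}$, hence agree (by the uniqueness argument applied to affinoid perfectoid opens of the overlap), so they glue to the required map $\pi_\HT\colon\mathcal{X}_{\Gamma(p^\infty)}^\ast\to\Fl$. The one genuinely load-bearing input is Lemma \ref{ExistsSmallEpsilon2}: one cannot extend a map to the projective space $\Fl$ across the boundary directly, but once the image is confined to an affinoid $\Fl_J$ the extension becomes an extension of bounded functions, which is precisely what goodness of the triple guarantees.
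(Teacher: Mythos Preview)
Your proof is correct and follows essentially the same route as the paper: reduce to $\mathcal{X}_{\Gamma(p^\infty)}^\ast(\epsilon)_a$ via the $\GSp_{2g}(\Q_p)$-translates of Corollary \ref{Existence}, use Lemma \ref{ExistsSmallEpsilon2} to land in the affinoid $\Fl_{\{g+1,\ldots,2g\}}$, and then extend the pulled-back bounded functions via the goodness of Theorem \ref{ExOnStrictNbhd}. Your write-up is slightly more explicit about the gluing mechanics and the ring-map formulation, but the key inputs and logical structure are identical to the paper's argument.
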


\begin{proof} One checks easily that for any open subset $U\subset \mathcal{X}_{\Gamma(p^\infty)}^\ast$, there is at most one extension of $\pi_\HT$ from $U\setminus (\mathcal{Z}_{\Gamma(p^\infty)}\cap U)$ to $U$. Indeed, we may assume that $U$ is affinoid perfectoid. Given two functions $f,g$ on $U$ with $f=g$ on $U\setminus (\mathcal{Z}_{\Gamma(p^\infty)}\cap U)$, the subset $|f-g|\geq |p|^n$ is an open subset of $U$ contained in the boundary; thus, it is empty. Therefore, $|f-g|<|p|^n$ for all $n$, i.e. $|f-g|=0$. As $U$ is affinoid perfectoid, this implies $f=g$.

To prove existence, we can now work locally. Clearly, the locus of existence of $\pi_\HT$ is $\GSp_{2g}(\Q_p)$-equivariant. By Corollary \ref{Existence}, it suffices to prove that $\pi_\HT$ extends from
\[
\mathcal{X}_{\Gamma(p^\infty)}^\ast(\epsilon)_a\setminus \mathcal{Z}_{\Gamma(p^\infty)}(\epsilon)_a
\]
to $\mathcal{X}_{\Gamma(p^\infty)}^\ast(\epsilon)_a$ for some $\epsilon>0$. Using Lemma \ref{ExistsSmallEpsilon2}, we may assume that the image of $\mathcal{X}_{\Gamma(p^\infty)}^\ast(\epsilon)_a\setminus \mathcal{Z}_{\Gamma(p^\infty)}(\epsilon)_a$ is contained in the open affinoid subset $\Fl_{\{g+1,\ldots,2g\}}\subset\Fl$. Every function on $\Fl_{\{g+1,\ldots,2g\}}$ pulls back to a bounded function on $\mathcal{X}_{\Gamma(p^\infty)}^\ast(\epsilon)_a\setminus \mathcal{Z}_{\Gamma(p^\infty)}(\epsilon)_a$, and thus extends uniquely to $\mathcal{X}_{\Gamma(p^\infty)}^\ast(\epsilon)_a$ by the goodness part of Theorem \ref{ExOnStrictNbhd}. This proves extension of $\pi_\HT$, as desired.
\end{proof}

\subsection{Conclusion} Finally, we can assemble everything and prove the main theorem.

\begin{thm}\label{ExistenceHT} For any tame level $K^p\subset \GSp_{2g}(\A_f^p)$ contained in $\{\gamma\in \GSp_{2g}(\hat{\Z}^p)\mid \gamma\equiv 1\mod N\}$ for some $N\geq 3$ prime to $p$, there exists a perfectoid space $\mathcal{X}_{\Gamma(p^\infty),K^p}^\ast$ over $\Q_p^\cycl$ such that
\[
\mathcal{X}_{\Gamma(p^\infty),K^p}^\ast\sim \varprojlim_m \mathcal{X}_{\Gamma(p^m),K^p}^\ast\ .
\]
Moreover, there is a $\GSp_{2g}(\Q_p)$-equivariant Hodge-Tate period map (of adic spaces over $\Q_p$)
\[
\pi_\HT: \mathcal{X}_{\Gamma(p^\infty),K^p}^\ast\to \Fl\ .
\]
Let $\mathcal{Z}_{\Gamma(p^\infty),K^p}\subset \mathcal{X}_{\Gamma(p^\infty),K^p}^\ast$ denote the boundary. One has the following results.
\begin{altenumerate}
\item[{\rm (i)}] For any subset $J\subset \{1,\ldots,2g\}$ of cardinality $g$, the preimage $\mathcal{V}_J=\Spa(R_{J,\infty},R_{J,\infty}^+)\subset \mathcal{X}_{\Gamma(p^\infty),K^p}^\ast$ of $\Fl_J\subset \Fl$ is affinoid perfectoid. Moreover, $\mathcal{V}_J$ is the preimage of some affinoid $\mathcal{V}_{J,m}=\Spa(R_{J,m},R_{J,m}^+)\subset \mathcal{X}_{\Gamma(p^m),K^p}^\ast$ for all sufficiently large $m$, and $R_{J,\infty}^+$ is the $p$-adic completion of $\varinjlim_m R_{J,m}^+$.
\item[{\rm (ii)}] The subspace $\mathcal{Z}_{\Gamma(p^\infty),K^p}\cap \mathcal{V}_J\subset \mathcal{V}_J$ is strongly Zariski closed.
\item[{\rm (iii)}] For any $(K^p)^\prime\subset K^p$, the diagram
\[\xymatrix{
\mathcal{X}_{\Gamma(p^\infty),(K^p)^\prime}^\ast\ar[rr]^{\pi_\HT}\ar[dr] & & \Fl \\
& \mathcal{X}_{\Gamma(p^\infty),K^p}^\ast\ar[ur]^{\pi_\HT} &
}\]
commutes.
\item[{\rm (iv)}] For any $\gamma\in \GSp_{2g}(\A_f^p)$ such that $\gamma^{-1} K^p \gamma$ is contained in $\{\gamma\in \GSp_{2g}(\hat{\Z}^p)\mid \gamma\equiv 1\mod N\}$ for some $N\geq 3$ prime to $p$, the diagram
\[\xymatrix{
\mathcal{X}_{\Gamma(p^\infty),K^p}^\ast\ar[dd]^\gamma \ar[dr]^{\pi_\HT} & \\
 & \Fl \\
\mathcal{X}_{\Gamma(p^\infty),\gamma^{-1} K^p \gamma}^\ast\ar[ur]^{\pi_\HT} & 
}\]
commutes.
\item[{\rm (v)}] Let $W_\Fl\subset \OO_{\Fl}^{2g}$ denote the universal totally isotropic subspace. Over $\mathcal{X}_{\Gamma(p^\infty),K^p}^\ast\setminus \mathcal{Z}_{\Gamma(p^\infty),K^p}$, one has the locally free module $\Lie A_{K^p}$ given by the Lie algebra of the tautological abelian variety. There is a natural $\GSp_{2g}(\Q_p)$-equivariant isomorphism
\[
\Lie A_{K^p}\cong (\pi_\HT^\ast W_\Fl)|_{\mathcal{X}_{\Gamma(p^\infty),K^p}^\ast\setminus \mathcal{Z}_{\Gamma(p^\infty),K^p}}\ .
\]
It satisfies the obvious analogue of (iii) and (iv).
\item[{\rm (vi)}] Let $\omega_\Fl = (\bigwedge^g W_\Fl)^\ast$ be the natural ample line bundle on $\Fl$. Over $\mathcal{X}_{\Gamma(p^\infty),K^p}^\ast$, one has the natural line bundle $\omega_{K^p}$ (via pullback from any finite level). There is a natural $\GSp_{2g}(\Q_p)$-equivariant isomorphism
\[
\omega_{K^p}\cong \pi_\HT^\ast \omega_\Fl\ ,
\]
extending the isomorphism one gets from (v) by taking the dual of the top exterior power. Moreover, it satisfies the obvious analogue of (iii) and (iv).
\end{altenumerate}
\end{thm}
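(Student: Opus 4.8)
The plan is to assemble the theorem from the results of this section, proving the six numbered items in a convenient order. The existence of $\mathcal{X}_{\Gamma(p^\infty),K^p}^\ast$ with $\mathcal{X}_{\Gamma(p^\infty),K^p}^\ast\sim\varprojlim_m\mathcal{X}_{\Gamma(p^m),K^p}^\ast$ is Corollary \ref{Existence}, and the Hodge--Tate period map $\pi_\HT$, already defined on the whole space, is Corollary \ref{CorHTExists}; its $\GSp_{2g}(\Q_p)$-equivariance is built into the construction, since on $(K,K^+)$-valued points $\pi_\HT$ sends $(A,\alpha)$ to the Hodge--Tate filtration $\Lie A\subset K^{2g}$ (Lemma \ref{PiHTTop}) and precomposing $\alpha$ with $\gamma\in\GSp_{2g}(\Q_p)$ transports this to the $\gamma$-translate of the filtration, equality of the corresponding maps of adic spaces then following from the uniqueness in Corollary \ref{CorHTExists}. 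The same point-level argument gives (iii) and (iv): passing from $K^p$ to $(K^p)'$ leaves $(A,\alpha)$, hence $\Lie A\subset K^{2g}$, unchanged, while a prime-to-$p$ Hecke correspondence $\gamma\in\GSp_{2g}(\A_f^p)$ replaces $A$ by a prime-to-$p$ isogenous abelian variety, which induces a canonical isomorphism of $p$-adic Tate modules compatible with the Hodge--Tate filtration and the polarization; in both cases the two composites agree on all points and are defined on the whole space, hence coincide.

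Next I would establish (v) and (vi) away from the boundary. The isomorphism $\Lie A_{K^p}\cong(\pi_\HT^\ast W_\Fl)|_{\mathcal{X}_{\Gamma(p^\infty),K^p}^\ast\setminus\mathcal{Z}_{\Gamma(p^\infty),K^p}}$ is precisely what the proof of Lemma \ref{PiHTTop} produces: over a pro-finite-\'etale affinoid perfectoid cover, the $p$-adic Hodge theory comparison of \cite{ScholzePAdicHodge} gives a map $(\Lie A_S)\otimes\hat{\OO}\to\hat{\OO}^{2g}$ realizing the Hodge--Tate filtration at all geometric points, its image is by definition $\pi_\HT^\ast W_\Fl$, and a morphism of locally free sheaves that is an isomorphism at every geometric point is an isomorphism; the identification is manifestly $\GSp_{2g}(\Q_p)$-equivariant and compatible with (iii), (iv), and taking the dual of the top exterior power gives $\omega_{K^p}\cong\pi_\HT^\ast\omega_\Fl$ over $\mathcal{X}_{\Gamma(p^\infty),K^p}^\ast\setminus\mathcal{Z}_{\Gamma(p^\infty),K^p}$.

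The geometric heart is (i), and I would treat (ii) and the extension of (vi) across the boundary alongside it, all three being statements about the charts $\mathcal{V}_J=\pi_\HT^{-1}(\Fl_J)$. Since $\GSp_{2g}(\Z_p)$ acts transitively on the $\Fl_J$, compatibly with $\pi_\HT$ and preserving the classes of affinoid perfectoid subsets and of strongly Zariski closed boundaries, it suffices to treat $J_0=\{g+1,\ldots,2g\}$. Here $\mathcal{V}_{J_0}$ is a quasicompact open subspace of the perfectoid $\mathcal{X}_{\Gamma(p^\infty),K^p}^\ast$, hence perfectoid, and by Lemma \ref{FiniteCoverMinComp} it is covered by finitely many $\GSp_{2g}(\Q_p)$-translates of the anticanonical strict neighborhoods of Theorem \ref{ExOnStrictNbhd}; moreover it is cut out by the pullback along $\pi_\HT$ of the defining inequalities $|s_{J'}|\le|s_{J_0}|$ of $\Fl_{J_0}$, i.e. it is the locus where the fake Hasse invariant $\pi_\HT^\ast s_{J_0}$ dominates the $\pi_\HT^\ast s_{J'}$, all of which are sections of the line bundle $\omega_{K^p}$ pulled back from finite level (via (vi), which we have away from the boundary and are about to extend). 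The plan for (i) is then to use this description together with the goodness of the triples from Theorem \ref{ExOnStrictNbhd} and the $p$-adic approximation of the fake Hasse invariants by finite-level sections of powers of $\omega$ to exhibit $\mathcal{V}_{J_0}$ as the preimage of an affinoid $\mathcal{V}_{J_0,m}\subset\mathcal{X}_{\Gamma(p^m),K^p}^\ast$ for $m\gg0$, with $R_{J_0,\infty}^+$ the $p$-adic completion of $\varinjlim_m R_{J_0,m}^+$. For (ii), I would pass to the tilt (Lemma \ref{StronglyZariskiClosedTilting}), recall that in characteristic $p$ Zariski closed implies strongly Zariski closed (Lemma \ref{StronglyZariskiClosedCharP}), and deduce the claim from Corollary \ref{BoundaryStronglyClosedAnticanTower} by pulling back along the (finite, \'etale away from the boundary) covers $\mathcal{X}_{\Gamma(p^\infty)}^\ast(\epsilon)_a\to\mathcal{X}_{\Gamma_0(p^\infty)}^\ast(\epsilon)_a$ and along the $\GSp_{2g}(\Q_p)$-translates, using Lemma \ref{ClosedImmersionPullback}. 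For (vi) over the boundary, I would view the isomorphism of (v) as a nowhere-vanishing section of $\omega_{K^p}^{\otimes-1}\otimes\pi_\HT^\ast\omega_\Fl$; since $\mathcal{V}_J$ is covered by translates of anticanonical neighborhoods, each enjoying the goodness property of Theorem \ref{ExOnStrictNbhd}, bounded functions --- in particular this section and its inverse --- extend uniquely from $\mathcal{V}_J\setminus\mathcal{Z}$ to $\mathcal{V}_J$, so the extension remains an isomorphism, and these glue.

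The main obstacle is (i): the assertion that the infinite-level locus $\pi_\HT^{-1}(\Fl_J)$ descends to an \emph{affinoid} at a finite level and that its ring of integral elements is the completed colimit of the finite-level ones. This is where the ampleness of $\omega$, the approximation of the fake Hasse invariants by finite-level sections, the quasicompactness inputs, and the good-reduction and Hebbarkeitssatz machinery of Section \ref{EtaleOrdinarySection} must all be combined; once (i) is secured, (ii), (v), (vi) and the equivariance statements are comparatively formal, modulo the delicate but routine bookkeeping with goodness and tilting.
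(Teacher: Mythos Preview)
Your overall organization is sound, and your treatment of (iii), (iv), (v), and the reduction of (ii) and (vi) to the anticanonical charts is close to what the paper does. The gap is in your plan for (i), which you correctly flag as the main obstacle but for which you do not have the key idea.

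Your strategy is to cover $\mathcal{V}_{J_0}=\pi_\HT^{-1}(\Fl_{J_0})$ by finitely many $\GSp_{2g}(\Q_p)$-translates of anticanonical neighborhoods and to describe $\mathcal{V}_{J_0}$ as the locus $|s_{J'}|\le|s_{J_0}|$ for the pulled-back Pl\"ucker sections. Neither step yields that $\mathcal{V}_{J_0}$ is affinoid perfectoid with the required density property. A finite union of affinoid perfectoid pieces is generally not affinoid perfectoid, so the covering alone does not help. And the Pl\"ucker sections $\pi_\HT^\ast s_J$ live only at infinite level; you invoke (vi) to interpret them as sections of $\omega_{K^p}$, but (vi) is not yet available over the boundary, and even once it is, these are not finite-level sections, so the inequalities $|s_{J'}|\le|s_{J_0}|$ do not directly cut out an affinoid at any finite $m$. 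The approximation by finite-level sections you allude to is a later construction (used in Chapter~\ref{AutomorphicChapter}) that itself relies on (i).

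The paper's argument for (i) is different and short. One first strengthens Lemmas~\ref{PreimageRationalPoints} and~\ref{ImageOrdinaryEtale} to the whole minimal compactification, so that the preimage of the single $\Q_p$-point $x\in\Fl(\Q_p)$ corresponding to $0^g\oplus\Q_p^g$ is contained in $\mathcal{X}_{\Gamma(p^\infty)}^\ast(\epsilon)_a$ for every $\epsilon>0$. Hence there is an open $U\ni x$ with $\pi_\HT^{-1}(U)\subset\mathcal{X}_{\Gamma(p^\infty)}^\ast(\epsilon)_a$, where $\epsilon$ is also chosen so that $\mathcal{X}_{\Gamma(p^\infty)}^\ast(\epsilon)_a\subset\pi_\HT^{-1}(\Fl_{J_0})$ (Lemma~\ref{ExistsSmallEpsilon2}). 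Now let $\gamma=(p,\ldots,p,1,\ldots,1)$; for $n\gg0$ one has $\gamma^n(\Fl_{J_0})\subset U$, and $\gamma^n(\Fl_{J_0})\subset\Fl_{J_0}$ is a rational subset. Therefore $\pi_\HT^{-1}(\gamma^n\Fl_{J_0})$ is a \emph{rational subset} of the single affinoid perfectoid $\mathcal{X}_{\Gamma(p^\infty)}^\ast(\epsilon)_a$, and the affinoid-perfectoid and density-of-finite-level properties pass to rational subsets. Since these properties are also $\GSp_{2g}(\Q_p)$-stable, applying $\gamma^{-n}$ gives them for $\Fl_{J_0}$, and $\GSp_{2g}(\Z_p)$-transitivity gives them for all $\Fl_J$. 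With (i) in hand, (ii) follows immediately because $\mathcal{V}_J$ has been exhibited as a rational subset of $\mathcal{X}_{\Gamma(p^\infty)}^\ast(\epsilon)_a$, where the boundary is strongly Zariski closed by Corollary~\ref{BoundaryStronglyClosedAnticanTower}, and this is preserved under pullback by Lemma~\ref{ClosedImmersionPullback}.

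For (vi) your idea of extending via goodness is morally right, but the paper is more careful: one first proves a uniform boundedness $p^C\omega_{K^p}^+\subset\pi_\HT^\ast\omega_\Fl^+\subset p^{-C}\omega_{K^p}^+$ away from the boundary (either via \cite{FarguesTwoTowers} or by induction on $g$), uses this together with goodness over $\mathcal{X}_{\Gamma(p^\infty)}^\ast(\epsilon)_a$ to produce a global map $\alpha:\omega_{K^p}\to\pi_\HT^\ast\omega_\Fl$, and then checks $\alpha$ is an isomorphism by a valuation argument showing its local generator is bounded away from zero. Simply saying ``bounded functions extend'' skips the step of showing the extended map has no zeros on the boundary.
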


\begin{proof} We have established existence of $\mathcal{X}_{\Gamma(p^\infty),K^p}^\ast$ and $\pi_\HT$.
\begin{altenumerate}
\item[{\rm (i)}] First, observe that one has the following versions of Lemma \ref{PreimageRationalPoints} and Lemma \ref{ImageOrdinaryEtale}.

\begin{lem} The preimage of $\Fl(\Q_p)\subset \Fl$ under $\pi_\HT$ is given by the closure of $\mathcal{X}_{\Gamma(p^\infty)}^\ast(0)$.
\end{lem}

\begin{lem} The preimage of $\Fl_{\{g+1,\ldots,2g\}}(\Q_p)\subset \Fl$ under $\pi_\HT$ is given by the closure of $\mathcal{X}_{\Gamma(p^\infty)}^\ast(0)_a$.
\end{lem}

For the first, note that for any open $U\subset \Fl$ containing $\Fl(\Q_p)$, $\pi_\HT^{-1}(U)\subset \mathcal{X}_{\Gamma(p^\infty)}^\ast$ is a quasicompact open containing $\mathcal{X}_{\Gamma(p^\infty)}^\ast(\epsilon)\setminus \mathcal{Z}_{\Gamma(p^\infty)}(\epsilon)$ for some $\epsilon>0$ by Lemma \ref{ExistsSmallEpsilon}; thus, $\pi_\HT^{-1}(U)$ contains $\mathcal{X}_{\Gamma(p^\infty)}^\ast(\epsilon)$. In particular, $\pi_\HT^{-1}(\Fl(\Q_p))$ contains the closure of $\mathcal{X}_{\Gamma(p^\infty)}^\ast(0)$. The converse is clear by continuity. The second lemma follows from the first and the proof of Lemma \ref{ImageOrdinaryEtale}.

Let $x\in \Fl(\Q_p)$ correspond to $0^g\oplus \Q_p^g\subset \Q_p^{2g}$. Then by the second lemma, $\pi_\HT^{-1}(x)\subset \mathcal{X}_{\Gamma(p^\infty)}^\ast(\epsilon)_a$ for any $\epsilon>0$. Thus, there is some open neighborhood $U\subset \Fl$ of $x$ with $\pi_\HT^{-1}(U)\subset \mathcal{X}_{\Gamma(p^\infty)}^\ast(\epsilon)_a$. On the other hand, we may choose $\epsilon>0$ such that
\[
\mathcal{X}_{\Gamma(p^\infty)}^\ast(\epsilon)_a\subset \pi_\HT^{-1}(\Fl_{\{g+1,\ldots,2g\}})\ .
\]
Let $\gamma\in \GSp_{2g}(\Q_p)$ be the diagonal element $(p,\ldots,p,1,\ldots,1)$. Then $\gamma^n(\Fl_{\{g+1,\ldots,2g\}})\subset U$ for $n$ sufficiently large. Moreover, $\gamma^n(\Fl_{\{g+1,\ldots,2g\}})\subset \Fl_{\{g+1,\ldots,2g\}}$ is a rational subset. It follows that $\pi_\HT^{-1}(\gamma^n(\Fl_{\{g+1,\ldots,2g\}}))\subset \mathcal{X}_{\Gamma(p^\infty)}^\ast(\epsilon)_a$ is a rational subset.

The analogue of the conditions in (i) is satisfied for $\mathcal{X}_{\Gamma(p^\infty)}^\ast(\epsilon)_a$ by Theorem \ref{ExOnStrictNbhd}. By \cite[Proposition 2.22 (ii)]{ScholzeSurvey}, the properties are stable under passage to rational subsets, giving the result for $\gamma^n(\Fl_{\{g+1,\ldots,2g\}})$. However, the desired property is also stable under the $\GSp_{2g}(\Q_p)$-action, giving it for $\Fl_{\{g+1,\ldots,2g\}}$ itself, and then for all $\Fl_J$.

\item[{\rm (ii)}] This follows from the constructions in the proof of (i), Corollary \ref{BoundaryStronglyClosedAnticanTower} and Lemma \ref{ClosedImmersionPullback}.

\item[{\rm (iii)}] Clear by construction.

\item[{\rm (iv)}] It suffices to check on geometric points outside the boundary, cf. proof of uniqueness in Corollary \ref{CorHTExists}. Thus, the result follows from Proposition \ref{CompAbPDivHT}, comparing the Hodge-Tate filtration of the abelian variety with the Hodge-Tate filtration of the $p$-divisible group (which depends only on the abelian variety up to prime-to-$p$-isogeny).

\item[{\rm (v)}] The isomorphism comes directly from the construction of the Hodge-Tate period map, cf. Lemma \ref{PiHTTop}. The commutativity in (iii) is clear, while the commutativity in (iv) can again be checked on geometric points, where it follows from Proposition \ref{CompAbPDivHT}.

\item[{\rm (vi)}] The only nontrivial point is to show that the isomorphism extends to $\mathcal{X}_{\Gamma(p^\infty)}^\ast$; all commutativity statements will then follow by continuity from the commutativity in (v). Both $\omega_{K^p}$ and $\pi_\HT^\ast \omega_{\Fl}$ have natural $\OO^+$-structures $\omega_{K^p}^+$ resp. $\pi_\HT^\ast \omega_\Fl^+$; i.e. sheaves of $\OO_{\mathcal{X}_{\Gamma(p^\infty)}^\ast}^+$-modules which are locally free of rank $1$, and give rise to $\omega_{K^p}$ resp. $\pi_\HT^\ast \omega_\Fl$ after inverting $p$. For $\omega_\Fl^+$, this follows from the existence of the natural integral model of the flag variety over $\Z_p$. For $\omega_{K^p}^+$, one gets it via pullback from the integral model $\mathfrak{X}^\ast$ of $\mathcal{X}^\ast$. We claim that the isomorphism $\omega_{K^p}\cong \pi_\HT^\ast \omega_\Fl$ over $\mathcal{X}_{\Gamma(p^\infty)}^\ast\setminus \mathcal{Z}_{\Gamma(p^\infty)}$ is bounded with respect to these integral structures, i.e. there is some constant $C$ (depending only on $g$) such that
\[
p^C \omega_{K^p}^+\subset \pi_\HT^\ast \omega_\Fl^+ \subset p^{-C} \omega_{K^p}^+
\]
as sheaves over $\mathcal{X}_{\Gamma(p^\infty)}^\ast\setminus \mathcal{Z}_{\Gamma(p^\infty)}$. This follows from Proposition \ref{CompAbPDivHT} and \cite[Th\'eor\`eme II.1.1]{FarguesTwoTowers}: These results show that in fact, one map is defined integrally, and has an integral inverse up to $p^{g/(p-1)}$, except that the latter theorem was only proved there for $p\neq 2$.

Here is an alternative argument to get the desired boundedness. Argue by induction on $g$. The locus of good reduction is quasicompact, so necessarily the isomorphism is bounded there. By Proposition \ref{CompAbPDivHT}, the Hodge-Tate period map near the boundary can be described in terms of the Hodge-Tate period map for smaller genus; thus, the isomorphism is bounded there by induction.

Let $j: \mathcal{X}_{\Gamma(p^\infty)}^\ast\setminus \mathcal{Z}_{\Gamma(p^\infty)}\to \mathcal{X}_{\Gamma(p^\infty)}^\ast$ denote the inclusion. Then we have inclusions
\[
\omega_{K^p}\hookrightarrow j_\ast j^\ast \omega_{K^p}\cong j_\ast j^\ast \pi_\HT^\ast \omega_\Fl\hookleftarrow \pi_\HT^\ast \omega_\Fl\ .
\]
We claim that $\omega_{K^p}$ and $\pi_\HT^\ast \omega_\Fl$ agree as subsheaves of $j_\ast j^\ast \omega_{K^p}$. First, we check that $\omega_{K^p}\subset \pi_\HT^\ast \omega_\Fl$. By Corollary \ref{Existence}, this can be checked after pullback to $\mathcal{X}_{\Gamma(p^\infty)}^\ast(\epsilon)_a$, for any given $\epsilon$. If $\epsilon$ is small enough, then by Lemma \ref{ExistsSmallEpsilon2}, $\pi_\HT^\ast \omega_\Fl^+$ is trivial over $\mathcal{X}_{\Gamma(p^\infty)}^\ast(\epsilon)_a$, as $\omega_\Fl^+$ is trivial over $\Fl_{\{g+1,\ldots,2g\}}$. As $\mathcal{X}_{\Gamma(p^\infty)}^\ast(\epsilon)_a$ is affinoid and $\omega_{K^p}$ is locally free (of rank $1$), $\omega_{K^p}$ restricted to $\mathcal{X}_{\Gamma(p^\infty)}^\ast(\epsilon)_a$ is generated by its global sections. Thus, to check the inclusion $\omega_{K^p}\subset \pi_\HT^\ast \omega_\Fl$ over $\mathcal{X}_{\Gamma(p^\infty)}^\ast(\epsilon)_a$, it is enough to check that there is an inclusion
\[
\omega_{K^p}(\mathcal{X}_{\Gamma(p^\infty)}^\ast(\epsilon)_a)\subset (\pi_\HT^\ast \omega_\Fl)(\mathcal{X}_{\Gamma(p^\infty)}^\ast(\epsilon)_a)\ .
\]
But any section of the left-hand side is bounded with respect to the integral structure $\omega_{K^p}^+$, thus by the above also bounded with respect to the integral structure $\pi_\HT^\ast \omega_\Fl^+$. As $\pi_\HT^\ast \omega_\Fl^+$ is isomorphic to $\OO^+$ over $\mathcal{X}_{\Gamma(p^\infty)}^\ast(\epsilon)_a$, the desired inclusion follows from the goodness part of Theorem \ref{ExOnStrictNbhd}, which shows that
\[
(\pi_\HT^\ast \omega_\Fl^+)(\mathcal{X}_{\Gamma(p^\infty)}^\ast(\epsilon)_a\setminus \mathcal{Z}_{\Gamma(p^\infty)}(\epsilon)_a) = (\pi_\HT^\ast \omega_\Fl^+)(\mathcal{X}_{\Gamma(p^\infty)}^\ast(\epsilon)_a)\ .
\]

In particular, we get a map of line bundles
\[
\alpha: \omega_{K^p}\to \pi_\HT^\ast \omega_\Fl
\]
defined on all of $\mathcal{X}_{\Gamma(p^\infty)}^\ast$. We claim that it is an isomorphism. Let $U=\Spa(R,R^+)\subset \mathcal{X}_{\Gamma(p^\infty)}^\ast$ be any affinoid subset over which $\omega_{K^p}^+$ and $\pi_\HT^\ast \omega_\Fl^+$ become trivial; thus $\omega_{K^p}^+(U)\cong R^+ \cdot f_1$, $\omega_{K^p}(U)\cong R\cdot f_1$, $\pi_\HT^\ast \omega_\Fl^+(U)\cong R^+\cdot f_2$ and $\pi_\HT^\ast \omega_\Fl(U)\cong R\cdot f_2$ for certain generators $f_1, f_2$. Under the map $\alpha$, $\alpha(f_1) = h f_2$ for a function $h\in R$. The boundedness of the isomorphism away from the boundary says that $|p|^C\leq |h(x)|$ for all $x\in U\setminus \partial$, where $\partial$ denotes the boundary. The open subset $|h|\leq |p|^{C+1}$ is an open subset of $U$ which does not meet the boundary; thus, it is empty. It follows that $h$ is bounded away from $0$, and therefore invertible. This shows that $\alpha$ is an isomorphism over $U$, as desired.
\end{altenumerate}
\end{proof}

\chapter{$p$-adic automorphic forms}\label{AutomorphicChapter}

Let $G$ be a reductive group over $\Q$. Although there is a tremendous amount of activity surrounding `$p$-adic automorphic forms', a general definition is missing. There are essentially two approaches to defining such spaces. The first works only under special hypothesis on $G$, namely that there is a Shimura variety associated with $G$. More precisely, we will consider the following setup (slightly different from the usual setup). For convenience, assume that $G$ has simply connected derived group $G_\der$, and that there is a $G(\R)$-conjugacy class $D$ of homomorphisms $u: U(1)\to G^\ad_\R$ for which $\ad u(-1)$ is a Cartan involution, and $\mu = u_\C: \G_m\to G^\ad_\C$ is minuscule. In particular, $G$ has a compact inner form, and $G(\R)$ is connected: As $G_\der$ is simply connected, $G_\der(\R)$ is connected, and $(G/G_\der)(\R)$ is a compact, thus connected, torus. In this situation, $D\cong G(\R)/K_\infty$ carries the structure of a hermitian symmetric domain, where the stabilizer $K_\infty$ of any chosen $u$ is a maximal compact subgroup. Moreover, for any (sufficiently small) compact open subgroup $K\subset G(\A_f)$, the quotient
\[
X_K = G(\Q)\backslash [D\times G(\A_f)/K]
\]
is a complex manifold, which by the theorem of Baily-Borel, \cite{BailyBorel}, has a unique structure as an algebraic variety over $\C$. By a theorem of Faltings, \cite{FaltingsFieldOfDefinition}, it is canonically defined over $\bar{\Q}$, and one might yet further descend to a canonical model over a number field (depending on $K$ in this generality, however). For the purpose of this paper, it is however not necessary to worry about fields of definition. Fix a prime $p$, an isomorphism $\C\cong \bar{\Q}_p$, as well as a complete algebraically closed extension $C$ of $\bar{\Q}_p$; then, via base-change, we may get corresponding algebraic varieties over $C$.

In fact, we will be interested in the minimal (Satake-Baily-Borel) compactifications
\[
X_K^\ast = G(\Q)\backslash [D^\ast\times G(\A_f)/K]\ ,
\]
where $D^\ast\supset D$ is the Satake compactification. These carry a natural structure as projective normal algebraic varieties over $\C$. By base-change, we get algebraic varieties over $C$, and we let $\mathcal{X}_K^\ast$ be the associated adic space over $\Spa(C,\OO_C)$. Moreover, one can define a natural ample line bundle $\omega_K$ on $X_K^\ast$, and sections of
\[
H^0(X_K^\ast,\omega_K^{\otimes k})
\]
are certainly complex automorphic forms, for any $k\geq 0$. Denote by $\omega_K$ also the associated line bundle on $\mathcal{X}_K^\ast$; then
\[
H^0(\mathcal{X}_K^\ast,\omega_K^{\otimes k})
\]
forms a space of $p$-adic automorphic forms, in the sense that it is a vector space over the $p$-adic field $C$, and that it bears a direct relationship to complex automorphic forms (so that e.g. Hecke eigenvalues match up). More general spaces of $p$-adic automorphic forms can be defined by looking at (overconvergent) sections of $\omega_K^{\otimes k}$ (or other automorphic vector bundles) on affinoid subsets of $\mathcal{X}_K^\ast$, such as the ordinary locus.

For general groups $G$ (not having a compact inner form), no such definition of $p$-adic automorphic forms is possible. In fact, only the holomorphic (instead of merely real-analytic) automorphic forms will occur even for those $G$ which give rise to a Shimura variety; for general $G$, there are no `holomorphic' automorphic forms. It was suggested by Calegari and Emerton, \cite{CalegariEmerton}, to consider the `completed cohomology groups' as a working model for the space of $p$-adic automorphic forms. Let us recall the definition, for any compact open subgroup $K^p\subset G(\A_f^p)$ (referred to as a tame level):
\[
\widetilde{H}^i_{K^p}(\Z/p^n\Z) = \varinjlim_{K_p} H^i(X_{K_pK^p},\Z/p^n\Z)\ ,
\]
as well as
\[
\widetilde{H}^i_{K^p}(\Z_p) = \varprojlim_n \widetilde{H}^i_{K^p}(\Z/p^n\Z) = \varprojlim_n \varinjlim_{K_p} H^i(X_{K_pK^p},\Z/p^n\Z)\ .
\]
Here, $X_K$ denotes the locally symmetric space associated with $G$ and $K\subset G(\A_f)$ (which exists for any reductive group $G$, and agrees with the $X_K$ defined previously if $G$ satisfies the above hypothesis). For any (sufficiently small) $K_p\subset G(\Q_p)$, one has a map
\[
H^i(X_{K_pK^p},\Q_p)\to \widetilde{H}^i_{K^p}(\Z_p)[p^{-1}]\ .
\]
By a theorem of Franke, \cite{Franke}, all Hecke eigenvalues appearing in
\[
H^i(X_{K_pK^p},\C) = H^i(X_{K_pK^p},\Q_p)\otimes_{\Q_p} \C
\]
come from automorphic forms on $G$ (possibly non-holomorphic!). Thus, by the global Langlands conjectures, one expects to have $p$-adic Galois representations associated with these Hecke eigenvalues. However, the space $\widetilde{H}^i_{K^p}(\Z_p)[p^{-1}]$ is in general much bigger than $\varinjlim_{K_p} H^i(X_{K_pK^p},\Q_p)$, because torsion in the cohomology for the individual $X_{K_pK^p}$ may build up in the inverse limit to torsion-free $\Z_p$-modules. Then Calegari and Emerton conjecture that although the completed cohomology group has no apparent relation to classical automorphic forms, there should still be $p$-adic Galois representation associated with them. In fact, this should hold already on the integral level for $\widetilde{H}^i_{K^p}(\Z_p)$, and thus equivalently for all $\widetilde{H}^i_{K^p}(\Z/p^n\Z)$. In the following, we will usually work at torsion level with $\widetilde{H}^i_{K^p}(\Z/p^n\Z)$, as some technical issues go away.

We remark that this second approach works uniformly for all reductive groups $G$, and that the corresponding space of $p$-adic automorphic forms is (in general) strictly larger than what one can get from classical automorphic forms by $p$-adic interpolation, i.e. there are genuinely new $p$-adic phenomena. We will prove however that if $G$ has an associated Shimura variety (of Hodge type), then one can get all Hecke eigenvalues in the completed cohomology groups $\widetilde{H}^i_{K^p}(\Z_p)$ via $p$-adic interpolation from Hecke eigenvalues appearing in (the cuspidal subspace of) $H^0(\mathcal{X}_{K_pK^p}^\ast,\omega_{K_pK^p}^{\otimes k})$ for some $K_p\subset G(\Q_p)$ and $k\geq 0$.

\section{Perfectoid Shimura varieties of Hodge type}

In this section, we assume that the pair $(G,D)$ is of Hodge type, i.e. admits a closed embedding $(G,D)\hookrightarrow (\Sp_{2g},D_{\Sp_{2g}})$ into the split symplectic group $\Sp_{2g}$, with $D_{\Sp_{2g}}$ given by the Siegel upper-half space.\footnote{Sometimes, symplectic groups for general symplectic $\Q$-vector spaces are allowed in the definition; however, by Zarhin's trick, the corresponding notions are equivalent.} We fix such an embedding; all constructions to follow will depend (at least a priori) on this choice.

To lighten notation, write $(G^\prime,D^\prime) = (\Sp_{2g},D_{\Sp_{2g}})$. We continue to denote by $X_K$ and $X_K^\ast$, $K\subset G(\A_f)$, the locally symmetric varieties associated with $G$, and we denote by $Y_{K^\prime}$, $Y_{K^\prime}^\ast$, $K^\prime\subset G^\prime(\A_f)$, the locally symmetric varieties associated with $G^\prime = \Sp_{2g}$. There are natural finite maps
\[
X_K\to Y_{K^\prime}\ ,\ X_K^\ast\to Y_{K^\prime}^\ast
\]
for any compact open subgroup $K^\prime\subset G^\prime(\A_f)$ with $K=K^\prime\cap G(\A_f)$. By \cite[Proposition 1.15]{DeligneShimura}, for any $K\subset G(\A_f)$, there is some $K^\prime\subset G^\prime(\A_f)$ with $K=K^\prime\cap G(\A_f)$ such that the map $X_K\to Y_{K^\prime}$ is a closed embedding. Unfortunately, it is not known to the author whether the analogous result holds true for the minimal compactification. We define $X_K^\ast\to X_K^{\uast}$ as the universal finite map over which $X_K^\ast\to Y_{K^\prime}^\ast$ factors for all $K^\prime$ with $K=K^\prime\cap G(\A_f)$. As everything is of finite type, $X_K^{\uast}$ is the scheme-theoretic image of $X_K^\ast$ in $Y_{K^\prime}^\ast$ for any sufficiently small $K^\prime$ with $K=K^\prime\cap G(\A_f)$. Note that one still has an action of $G(\A_f)$ on the tower of the $X_K^\uast$.

Let $\mathcal{X}_K^\uast$ be the adic space over $C$ associated with $X_K^\uast$. We continue to denote by $\Fl$ the adic space over $C$ which is the flag variety of totally isotropic subspaces of $C^{2g}$ (i.e., the flag variety associated with $(G^\prime,D^\prime)$\footnote{There is also a flag variety $\Fl_G\subset \Fl$ for $(G,D)$, and one may conjecture that the Hodge-Tate period map defined below factors over $\Fl_G$. We do not address this question here.}). Let $\omega_K$ be the ample line bundle on $\mathcal{X}_K^\uast$ given via pullback from the ample line bundle $\omega_{K^\prime}$ on $Y_{K^\prime}^\ast$ (given by the dual of the determinant of the Lie algebra of the universal abelian variety on $Y_{K^\prime}$); also recall that we have $\omega_\Fl$. We get the following version of Theorem \ref{ExistenceHT}.

\begin{thm}\label{PerfShHodge} For any tame level $K^p\subset G(\A_f^p)$ contained in the level-$N$-subgroup $\{\gamma\in G^\prime(\hat{\Z}^p)\mid \gamma\equiv 1\mod N\}$ of $G^\prime$ for some $N\geq 3$ prime to $p$, there exists a perfectoid space $\mathcal{X}_{K^p}^\uast$ over $C$ such that
\[
\mathcal{X}_{K^p}^\uast\sim \varprojlim_{K_p} \mathcal{X}_{K_pK^p}^\uast\ .
\]
Moreover, there is a $G(\Q_p)$-equivariant Hodge-Tate period map
\[
\pi_\HT: \mathcal{X}_{K^p}^\uast\to \Fl\ .
\]
Let $\mathcal{Z}_{K^p}\subset \mathcal{X}_{K^p}^\uast$ denote the boundary. One has the following results.
\begin{altenumerate}
\item[{\rm (i)}] For any subset $J\subset \{1,\ldots,2g\}$ of cardinality $g$, the preimage $\mathcal{V}_J=\Spa(R_{J,\infty},R_{J,\infty}^+)\subset \mathcal{X}_{K^p}^\uast$ of $\Fl_J\subset \Fl$ is affinoid perfectoid. Moreover, $\mathcal{V}_J$ is the preimage of some affinoid $\mathcal{V}_{J,K_p}=\Spa(R_{J,K_p},R_{J,K_p}^+)\subset \mathcal{X}_{K_pK^p}^\uast$ for all sufficiently small $K_p$, and $R_{J,\infty}^+$ is the $p$-adic completion of $\varinjlim_{K_p} R_{J,K_p}^+$.
\item[{\rm (ii)}] The subset $\mathcal{Z}_{K^p}\cap \mathcal{V}_J\subset \mathcal{V}_J$ is strongly Zariski closed.
\item[{\rm (iii)}] For any $(K^p)^\prime\subset K^p$, the diagram
\[\xymatrix{
\mathcal{X}_{(K^p)^\prime}^\uast\ar[rr]^{\pi_\HT}\ar[dr] & & \Fl \\
& \mathcal{X}_{K^p}^\uast\ar[ur]^{\pi_\HT} &
}\]
commutes.
\item[{\rm (iv)}] For any $\gamma\in G(\A_f^p)$ such that $\gamma^{-1} K^p \gamma$ is contained in the level-$N$-subgroup of $G^\prime$ for some $N\geq 3$ prime to $p$, the diagram
\[\xymatrix{
\mathcal{X}_{K^p}^\uast\ar[dd]^\gamma \ar[dr]^{\pi_\HT} & \\
 & \Fl \\
\mathcal{X}_{\gamma^{-1} K^p \gamma}^\uast\ar[ur]^{\pi_\HT} & 
}\]
commutes.
\item[{\rm (v)}] Over $\mathcal{X}_{K^p}^\uast$, one has the natural line bundle $\omega_{K^p}$ (via pullback from any finite level). There is a natural $G(\Q_p)$-equivariant isomorphism
\[
\omega_{K^p}\cong \pi_\HT^\ast \omega_\Fl\ .
\]
Moreover, it satisfies the obvious analogue of (iii) and (iv).
\end{altenumerate}
\end{thm}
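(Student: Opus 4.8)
The plan is to deduce Theorem \ref{PerfShHodge} from Theorem \ref{ExistenceHT} by means of the finite map $\mathcal{X}_{K_pK^p}^\uast \to \mathcal{Y}_{K_p^\prime K^{\prime p}}^\ast$ (notation adapted: the Siegel minimal compactifications from Chapter \ref{SiegelChapter}), using that $\mathcal{X}_K^\uast$ was \emph{defined} as the scheme-theoretic image of $\mathcal{X}_K^\ast$ in $\mathcal{Y}_{K^\prime}^\ast$ for sufficiently small $K^\prime$. First I would fix, for each $K_p \subset G(\Q_p)$, a compact open $K_p^\prime \subset G^\prime(\Q_p)$ and a tame $K^{\prime p} \supset K^p$ with $K = K^\prime \cap G(\A_f)$, chosen compatibly so that one gets a map of towers; then passing to the inverse limit over $K_p$ (respectively over the cofinal system $\Gamma(p^m)$ inside $G^\prime(\Q_p)$) gives a map $\varprojlim_{K_p} \mathcal{X}_{K_pK^p}^\uast \to \mathcal{Y}_{\Gamma(p^\infty),K^{\prime p}}^\ast$ of pro-objects. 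The key geometric input is that each $\mathcal{X}_{K_pK^p}^\uast \hookrightarrow \mathcal{Y}_{K_p^\prime K^{\prime p}}^\ast$ is a closed immersion of (reduced, or at least we may arrange closed) adic spaces, and closed immersions are preserved under the relevant limits. Now apply Theorem \ref{PerfShHodge}'s model, Theorem \ref{ExistenceHT}: $\mathcal{Y}_{\Gamma(p^\infty),K^{\prime p}}^\ast$ is a perfectoid space, and by Section \ref{ClosedEmbeddingsSection} (Lemma \ref{ZariskiClosed}, and the fact that something Zariski closed in a perfectoid space is again perfectoid) the closed subspace cut out by the tilted-compatible ideal is again an affinoid perfectoid space on each $\mathcal{V}_J$-type chart, and these glue to $\mathcal{X}_{K^p}^\uast$. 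One checks $\mathcal{X}_{K^p}^\uast \sim \varprojlim_{K_p} \mathcal{X}_{K_pK^p}^\uast$ using \cite[Definition 2.4.1, Proposition 2.4.2]{ScholzeWeinstein} applied to the closed subspace, exactly as in the Siegel case.

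\textbf{Carrying out the numbered items.} For (i): pull back the affinoid perfectoid charts $\mathcal{V}_J \subset \mathcal{Y}_{\Gamma(p^\infty),K^{\prime p}}^\ast$ of Theorem \ref{ExistenceHT}(i) along the closed immersion; by Lemma \ref{ZariskiClosed} the preimage is affinoid perfectoid, and it is cut out by an ideal that is already defined at finite level, which gives the statement about $\mathcal{V}_{J,K_p}$ and the density/completion claim (the image of $R_{J,K_p}^+$ in $R_{J,\infty}^+$ has dense image with $p$-adically dense image by the Siegel statement, and closedness is preserved). The map $\pi_\HT$ is defined simply as the composite $\mathcal{X}_{K^p}^\uast \hookrightarrow \mathcal{Y}_{\Gamma(p^\infty),K^{\prime p}}^\ast \xrightarrow{\pi_\HT} \Fl$; one must check it is independent of the auxiliary choices of $(K_p^\prime, K^{\prime p})$, which follows from Theorem \ref{ExistenceHT}(iii),(iv) applied to the Siegel spaces (two choices differ by a prime-to-$p$ Hecke correspondence on the Siegel side, under which $\pi_\HT$ is equivariant). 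The $G(\Q_p)$-equivariance of $\pi_\HT$ likewise follows since $G(\Q_p) \hookrightarrow G^\prime(\Q_p)$ acts compatibly. For (ii): the boundary $\mathcal{Z}_{K^p}$ is the pullback of the Siegel boundary $\mathcal{Z}_{\Gamma(p^\infty),K^{\prime p}}$ along the closed immersion (here one uses that $X_K^\uast$ has the expected boundary, being a finite image of $X_K^\ast$); by Theorem \ref{ExistenceHT}(ii) that boundary is strongly Zariski closed in each $\mathcal{V}_J^{\mathrm{Siegel}}$, and by Lemma \ref{ClosedImmersionPullback}(ii) strongly Zariski closed immersions are stable under pullback, giving (ii). Items (iii) and (iv) are immediate from the construction (commutativity of the corresponding diagrams on the Siegel side, by Theorem \ref{ExistenceHT}(iii),(iv), composed with the fixed closed immersions). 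For (v): $\omega_{K^p}$ on $\mathcal{X}_{K^p}^\uast$ is by definition the pullback of $\omega_{K^{\prime p}}$ on the Siegel space, so the isomorphism $\omega_{K^p} \cong \pi_\HT^\ast \omega_\Fl$ is obtained by pulling back the Siegel isomorphism of Theorem \ref{ExistenceHT}(vi) along the closed immersion; the analogues of (iii),(iv) follow the same way.

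\textbf{Main obstacle.} The step requiring genuine care is the interaction between the two different auxiliary groups $K_p^\prime$ entering at different $K_p$: one needs the closed immersions $\mathcal{X}_{K_pK^p}^\uast \hookrightarrow \mathcal{Y}_{K_p^\prime K^{\prime p}}^\ast$ to be compatible with varying $K_p$ so that they assemble into a closed immersion of the pro-systems, despite the fact (noted in the excerpt) that it is \emph{not} known whether a single $K^\prime$ with $K = K^\prime \cap G(\A_f)$ makes $X_K^\ast \to Y_{K^\prime}^\ast$ a closed immersion on the minimal compactification. This is precisely why $X_K^\uast$ was introduced: one works with the scheme-theoretic image throughout, and must verify that forming scheme-theoretic images commutes with the relevant inverse limits of adic spaces and with tilting, so that $\mathcal{X}_{K^p}^\uast$ really is the Zariski closed subspace of $\mathcal{Y}_{\Gamma(p^\infty),K^{\prime p}}^\ast$ cut out by the limit ideal. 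Once this compatibility is established — using quasi-compactness and the finite-type-ness of each level, together with Lemma \ref{ZariskiClosed} and Lemma \ref{ClosedImmersionPullback} — the remaining assertions are formal transcriptions of Theorem \ref{ExistenceHT} across the closed immersion.
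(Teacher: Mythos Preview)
Your overall strategy is exactly the paper's: realize $\mathcal{X}_{K^p}^\uast$ as a Zariski closed subspace of the Siegel perfectoid space, invoke Lemma \ref{ZariskiClosed} to get perfectoidness, and pull back $\pi_\HT$, the affinoid cover $\{\Fl_J\}$, the boundary, and $\omega$ along this closed immersion. Parts (ii)--(v) are indeed formal once the closed immersion is in place, via Lemma \ref{ClosedImmersionPullback} and Theorem \ref{ExistenceHT}(ii)--(vi).

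The gap is precisely where you flag it, and you have not actually closed it. Working with a \emph{single} tame Siegel level $K^{\prime p}$ does not suffice: for each finite $K_pK^p$, the space $\mathcal{X}_{K_pK^p}^\uast$ is by definition the scheme-theoretic image in $\mathcal{Y}_{K^\prime}^\ast$ only for $K^\prime$ \emph{sufficiently small}, and how small is needed may force $K^{\prime p}$ to shrink as $K_p$ does. So there is no reason the maps $\mathcal{X}_{K_pK^p}^\uast \to \mathcal{Y}_{K_p^\prime K^{\prime p}}^\ast$ are closed immersions for a fixed $K^{\prime p}$, and the hoped-for ``limit ideal'' in $\mathcal{Y}_{\Gamma(p^\infty),K^{\prime p}}^\ast$ need not cut out $\mathcal{X}_{K^p}^\uast$. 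Your appeal to ``quasi-compactness and finite-type-ness'' does not address this: those tools let you descend a single closed immersion to finite level, not manufacture compatibility across levels that was never there.

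The paper's resolution is simple once seen: do not fix $K^{\prime p}$. Instead, first form
\[
\mathcal{Y}_{K^p}^\ast(J) \;=\; \varprojlim_{K^{p\prime}\supset K^p,\ K^{p\prime}\subset G^\prime(\A_f^p)} \mathcal{Y}_{K^{p\prime}}^\ast(J)\,,
\]
an inverse limit over \emph{all} tame Siegel levels containing $K^p$; this is still affinoid perfectoid (inverse limits of affinoid perfectoid spaces are affinoid perfectoid). Now for each $K_pK^p$ pick \emph{any} sufficiently small $K_p^\prime K^{p\prime}$ realizing $\mathcal{X}_{K_pK^p}^\uast \hookrightarrow \mathcal{Y}_{K_p^\prime K^{p\prime}}^\ast$ as a closed immersion, and take the fibre product $\mathcal{Y}_{K^p}^\ast \times_{\mathcal{Y}_{K_p^\prime K^{p\prime}}^\ast} \mathcal{X}_{K_pK^p}^\uast$; this is Zariski closed in $\mathcal{Y}_{K^p}^\ast(J)$, hence affinoid perfectoid by Lemma \ref{ZariskiClosed}. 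Finally take the inverse limit over \emph{both} $K_p^\prime$ and $K^{p\prime}$. No compatibility of the auxiliary choices is needed, because you are taking the limit over all of them simultaneously.
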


\begin{proof} First, observe that Theorem \ref{ExistenceHT} implies the theorem in case $G=\Sp_{2g}$ by tensoring with $C$ over $\Q_p^\cycl$, and passing to a connected component.

Next, we prove existence of $\mathcal{X}_{K^p}^\uast$. Take any $J\subset \{1,\ldots,2g\}$ of cardinality $g$, and let $Z(J)\subset Z$ be the inverse image of $\Fl_J$ under $\pi_\HT$, for any space $Z$ mapping via $\pi_\HT$ to $\Fl$. Then $\mathcal{Y}_{K^{p\prime}}^\ast(J) = \Spa(S_{K^{p\prime}},S_{K^{p\prime}}^+)$ is affinoid perfectoid by Theorem \ref{ExistenceHT}, for any $K^{p\prime}\subset G^\prime(\A_f^p)$ contained in the level-$N$-subgroup for some $N\geq 3$ prime to $p$. It follows that
\[
\mathcal{Y}_{K^p}^\ast(J) = \varprojlim_{K^p\subset K^{p\prime}\subset G(\A_f^p)} \mathcal{Y}_{K^{p\prime}}^\ast(J) = \Spa(S_{K^p},S_{K^p}^+)
\]
is affinoid perfectoid, with $S_{K^p}^+$ being the $p$-adic completion of $\varinjlim_{K^p\subset K^{p\prime}} S_{K^{p\prime}}^+$. Next, 
\[
(\mathcal{Y}_{K^p}^\ast\times_{\mathcal{Y}_{K_p^\prime K^{p\prime}}^\ast} \mathcal{X}_{K_pK^p}^\uast)(J)\subset \mathcal{Y}_{K^p}^\ast(J)
\]
is defined by some ideal $I\subset S_{K^p}$, for any sufficiently small $K_p^\prime K^{p\prime}\subset G^\prime(\A_f)$ with $K_pK^p = K_p^\prime K^{p\prime}\cap G(\A_f)$. From Lemma \ref{ZariskiClosed}, it follows that
\[
(\mathcal{Y}_{K^p}^\ast\times_{\mathcal{Y}_{K_p^\prime K^{p\prime}}^\ast} \mathcal{X}_{K_pK^p}^\uast)(J) = \Spa(R_{K^p,K_p^\prime K^{p\prime}},R_{K^p,K_p^\prime K^{p\prime}}^+)
\]
is affinoid perfectoid again, and that the map $S_{K^p}\to R_{K^p,K_p^\prime K^{p\prime}}$ has dense image. Then, finally,
\[
\mathcal{X}_{K^p}^\uast(J) = \varprojlim_{K_p^\prime,K^p\subset K^{p\prime}} (\mathcal{Y}_{K^p}^\ast\times_{\mathcal{Y}_{K_p^\prime K^{p\prime}}^\ast} \mathcal{X}_{K_pK^p}^\uast)(J) = \Spa(R_{K^p},R_{K^p}^+)
\]
is affinoid perfectoid, and $R_{K^p}^+$ is the $p$-adic completion of $\varinjlim_{K_p^\prime,K^p\subset K^{p\prime}} R_{K^p,K_p^\prime K^{p\prime}}^+$. This verifies existence of $\mathcal{X}_{K^p}^\uast$ over $\pi_\HT^{-1}(\Fl_J)$, and by varying $J$, we get the result.

Going through the argument, and using part (i) for $G^\prime$, it is easy to deduce part (i) for $G$. The boundary of $\mathcal{X}_{K^p}^\uast(J)$ is the pullback of the boundary of $\mathcal{Y}_{K^{p\prime}}^\ast(J)$, for $K^{p\prime}\subset G^\prime(\A_f^p)$ sufficiently small with $K^p = K^{p\prime}\cap G(\A_f^p)$. Thus, part (ii) follows from Lemma \ref{ClosedImmersionPullback}. All other properties are deduced directly via pullback from $G^\prime$.
\end{proof}

\section{Completed cohomology vs. $p$-adic automorphic forms}

We continue to assume that $(G,D)$ is of Hodge type, and fix the embedding $(G,D)\hookrightarrow (G^\prime,D^\prime) = (\Sp_{2g},D_{\Sp_{2g}})$. Recall the compactly supported completed cohomology groups
\[
\widetilde{H}^i_{c,K^p}(\Z/p^n\Z) = \varinjlim_{K_p} H^i_c(X_{K_pK^p},\Z/p^n\Z)\ .
\]
As usual, we assume that $K^p$ is contained in the level-$N$-subgroup of $G^\prime(\A_f^p)$ for some $N\geq 3$ prime to $p$.

Let $\II_{\mathcal{X}_{K^p}^\uast}\subset \OO_{\mathcal{X}_{K^p}^\uast}$ be the ideal sheaf of the boundary, $\II_{\mathcal{X}_{K^p}^\uast}^+ = \II_{\mathcal{X}_{K^p}^\uast}\cap \OO_{\mathcal{X}_{K^p}^\uast}^+$.

\begin{thm}\label{CompAutomForm} There is natural isomorphism of almost-$\OO_C$-modules
\[
\widetilde{H}^i_{c,K^p}(\Z/p^n\Z)\otimes_{\Z/p^n\Z} \OO_C^a/p^n\cong H^i(\mathcal{X}_{K^p}^\uast,\II_{\mathcal{X}_{K^p}^\uast}^{+a}/p^n)\ ,
\]
where the cohomology group on the right-hand side is computed on the topological space $\mathcal{X}_{K^p}^\uast$. Moreover, for $K_1^p\subset K_2^p$, the diagrams
\[\xymatrix{
\widetilde{H}^i_{c,K_2^p}(\Z/p^n\Z)\otimes_{\Z/p^n\Z} \OO_C^a/p^n\ar[d]\ar[rrr]^\cong & & & H^i(\mathcal{X}_{K_2^p}^\uast,\II_{\mathcal{X}_{K_2^p}^\uast}^{+a}/p^n)\ar[d]  \\
\widetilde{H}^i_{c,K_1^p}(\Z/p^n\Z)\otimes_{\Z/p^n\Z} \OO_C^a/p^n\ar[rrr]^\cong & & & H^i(\mathcal{X}_{K_1^p}^\uast,\II_{\mathcal{X}_{K_1^p}^\uast}^{+a}/p^n)
}\]
\[\xymatrix{
 \widetilde{H}^i_{c,K_1^p}(\Z/p^n\Z)\otimes_{\Z/p^n\Z} \OO_C^a/p^n\ar[rrr]^\cong\ar[d]^\tr & & & H^i(\mathcal{X}_{K_1^p}^\uast,\II_{\mathcal{X}_{K_1^p}^\uast}^{+a}/p^n)\ar[d]^\tr \\
 \widetilde{H}^i_{c,K_2^p}(\Z/p^n\Z)\otimes_{\Z/p^n\Z} \OO_C^a/p^n\ar[rrr]^\cong & & & H^i(\mathcal{X}_{K_2^p}^\uast,\II_{\mathcal{X}_{K_2^p}^\uast}^{+a}/p^n)
}\]
commute, where the definition of the trace maps is recalled below.
\end{thm}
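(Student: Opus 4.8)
The plan is to deduce this theorem from the comparison result of \cite{ScholzePAdicHodge} applied at finite level, together with the perfectoid structure of $\mathcal{X}_{K^p}^\uast$ established in Theorem \ref{PerfShHodge}. First I would recall that by \cite[Theorem 1.3]{ScholzePAdicHodge} (in its version for the minimal compactification, using that $\mathcal{X}_{K_pK^p}^\uast$ is a proper adic space and that $X_{K_pK^p}\hookrightarrow X_{K_pK^p}^\uast$ is an open immersion with complement the boundary $\mathcal{Z}_{K_pK^p}$), there is a natural almost isomorphism
\[
H^i_c(X_{K_pK^p},\Z/p^n\Z)\otimes_{\Z/p^n\Z}\OO_C^a/p^n\cong H^i(\mathcal{X}_{K_pK^p}^\uast,\II_{\mathcal{X}_{K_pK^p}^\uast}^{+a}/p^n)\ ,
\]
where compactly supported cohomology corresponds to the cohomology of the ideal sheaf of the boundary; here one uses that primitive comparison holds without any assumption on the reduction type, which is essential since the special fibres at deep level at $p$ are highly ramified. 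The cohomology on the right is computed on the étale site, but for the almost-integral sheaf $\II^{+a}/p^n$ one can pass to the cohomology of the associated topological space by the analogue of \cite[Theorem 6.3]{ScholzePerfectoid} / \cite[Proposition 3.17]{ScholzePAdicHodge}.

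Next I would take the colimit over $K_p$. The left-hand side becomes $\widetilde H^i_{c,K^p}(\Z/p^n\Z)\otimes_{\Z/p^n\Z}\OO_C^a/p^n$ by definition of completed compactly-supported cohomology (colimits commute with the tensor product). For the right-hand side, one has to commute the colimit with cohomology: since $\mathcal{X}_{K^p}^\uast\sim\varprojlim_{K_p}\mathcal{X}_{K_pK^p}^\uast$ and all these spaces are qcqs, $|\mathcal{X}_{K^p}^\uast|=\varprojlim|\mathcal{X}_{K_pK^p}^\uast|$, and the cohomology of the (almost) sheaves $\II^{+a}/p^n$ commutes with this inverse limit of spaces; concretely one covers $\mathcal{X}_{K^p}^\uast$ by the finitely many affinoid perfectoid subsets $\mathcal{V}_J$ of Theorem \ref{PerfShHodge}(i), which are preimages of affinoids $\mathcal{V}_{J,K_p}$ at finite level, and one uses a Čech argument together with the fact that on each $\mathcal{V}_J$ and its intersections $H^0(\mathcal{V}_J,\OO^{+a}/p^n)$ is the colimit (almost) of the $H^0(\mathcal{V}_{J,K_p},\OO^{+a}/p^n)$, and similarly for the ideal sheaves of the boundary, using Lemma \ref{ClosedImmersionPullback}(ii) which controls how the ideal of the boundary behaves under the finite étale pullbacks in the tower and ensures $\II^+_{\mathcal{X}_{K^p}^\uast}/p^n = \varinjlim_{K_p}\II^+_{\mathcal{X}_{K_pK^p}^\uast}/p^n$ almost. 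Here the fact that the boundary is \emph{strongly} Zariski closed (Theorem \ref{PerfShHodge}(ii)) is what makes the ideal sheaves behave well almost-integrally under the limit.

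Finally, for the two compatibility diagrams: the first (pullback along $K_1^p\subset K_2^p$) is immediate since both sides are constructed functorially in $K^p$ — the étale comparison map of \cite{ScholzePAdicHodge} is natural in the proper smooth morphism, and the identification of $\widetilde H^i_c$ with the colimit, and of $\mathcal{X}_{K^p}^\uast$ with the inverse limit, are all functorial. The second (trace along the finite covering $X_{K_1^pK_p}\to X_{K_2^pK_p}$, which is étale for $K_p$ small, giving a transfer/trace map on compactly supported cohomology) requires recalling that the trace on completed cohomology is defined at each finite level and matches the trace map on the coherent side induced by the finite flat (generically étale) map $\mathcal{X}_{K_1^pK_p}^\uast\to\mathcal{X}_{K_2^pK_p}^\uast$; compatibility of the étale comparison isomorphism with trace maps for finite étale morphisms is a standard functoriality of the construction in \cite{ScholzePAdicHodge}, and it passes to the limit by the same argument as above. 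The main obstacle I anticipate is precisely the interchange of the colimit over $K_p$ with the cohomology of the boundary-ideal sheaf on the right-hand side: one must check carefully that $H^i(\mathcal{X}_{K^p}^\uast,\II^{+a}/p^n)=\varinjlim_{K_p}H^i(\mathcal{X}_{K_pK^p}^\uast,\II^{+a}/p^n)$ almost, which combines the "$\sim$" formalism of \cite{ScholzeWeinstein} (to handle the structure sheaf), the strong Zariski closedness of the boundary (to handle the ideal sheaf and the fact that the cohomology of $\II^{+a}/p^n$ on affinoid perfectoid opens is concentrated in degree $0$), and a Čech computation over the cover $\{\mathcal{V}_J\}_{|J|=g}$ pulled back from $\Fl$.
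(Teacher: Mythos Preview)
Your overall strategy is right, but the order of operations is off in a way that creates a genuine gap. You attempt two steps at \emph{finite} level $K_pK^p$ that are only justified at the perfectoid infinite level:

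\begin{itemize}
\item You pass from \'etale to analytic (topological) cohomology of $\II^{+a}/p^n$ on $\mathcal{X}_{K_pK^p}^\uast$ by invoking \cite[Theorem 6.3]{ScholzePerfectoid}. That theorem is for perfectoid spaces; at finite level $\mathcal{X}_{K_pK^p}^\uast$ is an ordinary rigid space, and the almost-vanishing of higher $\OO^{+a}/p^n$-cohomology on affinoids is not available there. So the \'etale/analytic comparison at finite level is unjustified.
\item You identify compactly supported cohomology directly with cohomology of the \emph{ideal sheaf} at finite level. What the primitive comparison of \cite{ScholzePAdicHodge} actually gives (via \cite[Theorem 3.13]{ScholzeSurvey}) is
\[
H^i_c(X_{K_pK^p},\Z/p^n\Z)\otimes \OO_C^a/p^n \cong H^i_\et(\mathcal{X}_{K_pK^p}^\uast, j_{K_pK^p!}\,\OO^{+a}/p^n)\ ,
\]
i.e.\ with the extension-by-zero sheaf, not $\II^{+a}/p^n$. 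The equality $j_!\,\OO^{+a}/p^n = \II^{+a}/p^n$ requires that $\II^{+a}/p^n\hookrightarrow \OO^{+a}/p^n$ be injective with quotient $\OO_\partial^{+a}/p^n$; this is exactly what \emph{strong} Zariski closedness of the boundary provides, and that is established only on the perfectoid space $\mathcal{X}_{K^p}^\uast$ (Theorem \ref{PerfShHodge} (ii)), not at finite level.
\end{itemize}

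The paper's proof fixes this by reordering: one keeps the $j_!$ formulation on the \'etale site through the colimit over $K_p$, using \cite[Corollary 7.18]{ScholzePerfectoid} to obtain
\[
\widetilde{H}^i_{c,K^p}(\Z/p^n\Z)\otimes \OO_C^a/p^n \cong H^i_\et(\mathcal{X}_{K^p}^\uast, j_{K^p!}\,\OO^{+a}/p^n)\ .
\]
Only now, on the perfectoid space, does one compare the two short exact sequences $0\to j_!\OO^{+a}/p^n\to \OO^{+a}/p^n\to \OO_\partial^{+a}/p^n\to 0$ (\'etale) and $0\to \II^{+a}/p^n\to \OO^{+a}/p^n\to \OO_\partial^{+a}/p^n\to 0$ (analytic), using that for $\OO^{+a}/p^n$ and $\OO_\partial^{+a}/p^n$ the \'etale and analytic cohomologies agree on affinoid perfectoids (\cite[Propositions 6.14, 7.13]{ScholzePerfectoid}). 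This yields $j_!\OO^{+a}/p^n = \II^{+a}/p^n$ and the desired statement. Your \v{C}ech argument over the $\mathcal{V}_J$ is then not needed for the isomorphism itself (it reappears later, in the proof of Theorem \ref{ThmHeckeAlgebras}). Similarly, the trace map on the right-hand side is defined directly at infinite level via almost purity on $\mathcal{X}_{K^p}^\uast\setminus\mathcal{Z}_{K^p}$, not by working at finite level.
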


We note that the right-hand side is the cohomology of the sheaf of $p$-adic cusp forms modulo $p^n$ of infinite level.

\begin{proof} Let $j_K: \mathcal{X}_K^\uast\setminus \mathcal{Z}_K\hookrightarrow \mathcal{X}_K^\uast$ be the open embedding, where $\mathcal{Z}_K$ denotes the boundary of $\mathcal{X}_K^\uast$. By the various comparison results between complex and algebraic, resp. algebraic and adic, singular and \'etale cohomology, we have
\[
H^i_c(X_{K_pK^p},\Z/p^n\Z) = H^i_\et (\mathcal{X}_{K_pK^p}^\uast, j_{K_pK^p!} \Z/p^n\Z)\ .
\]
Now we use \cite[Theorem 3.13]{ScholzeSurvey} to write
\[
H^i_\et(\mathcal{X}_{K_pK^p}^\uast, j_{K_pK^p!} \Z/p^n\Z)\otimes_{\Z/p^n\Z} \OO_C^a/p^n = H^i_\et(\mathcal{X}_{K_pK^p}^\uast, j_{K_pK^p!} \OO_{\mathcal{X}_{K_pK^p}^\uast\setminus \mathcal{Z}_{K_pK^p}}^{+a}/p^n)\ .
\]
Passing to the inverse limit over $K_p$ and using \cite[Corollary 7.18]{ScholzePerfectoid}, one gets
\[
\widetilde{H}^i_{c,K^p}(\Z/p^n\Z)\otimes_{\Z/p^n\Z} \OO_C^a/p^n\cong H^i_\et(\mathcal{X}_{K^p}^\uast,\varinjlim_{K_p} j_{K_pK^p!} \OO_{\mathcal{X}_{K_pK^p}^\uast\setminus \mathcal{Z}_{K_pK^p}}^{+a}/p^n)\ .
\]
But
\[
\varinjlim_{K_p} j_{K_pK^p!} \OO_{\mathcal{X}_{K_pK^p}^\uast\setminus \mathcal{Z}_{K_pK^p}}^{+a}/p^n = j_{K^p!} \OO_{\mathcal{X}_{K^p}^\uast\setminus \mathcal{Z}_{K^p}}^{+a}/p^n\ ,
\]
and there is a short exact sequence
\[
0\to j_{K^p!} \OO_{\mathcal{X}_{K^p}^\uast\setminus \mathcal{Z}_{K^p}}^{+a}/p^n\to \OO_{\mathcal{X}_{K^p}^\uast}^{+a}/p^n\to \OO_\partial^{+a}/p^n\to 0\ ,
\]
where $\OO_\partial$ is (the pushforward of) the structure sheaf of the boundary. By \cite[Propositions 6.14, 7.13]{ScholzePerfectoid}, analytic and \'etale cohomology of $\OO^{+a}/p^n$ and $\OO_\partial^{+a}/p^n$ agree: On affinoid subsets, both vanish in positive degrees (also noting that the intersection of an open affinoid subset with the boundary is an open affinoid subset of the boundary by Lemma \ref{ZariskiClosed}). Thus,
\[
\widetilde{H}^i_{c,K^p}(\Z/p^n\Z)\otimes_{\Z/p^n\Z} \OO_C^a/p^n\cong H^i(\mathcal{X}_{K^p}^\uast,j_{K^p!} \OO_{\mathcal{X}_{K^p}^\uast\setminus \mathcal{Z}_{K^p}}^{+a}/p^n)\ .
\]
Moreover, as the boundary is strongly Zariski closed by Theorem \ref{PerfShHodge} (ii), one also an exact sequence of sheaves on the topological space $\mathcal{X}_{K^p}^\uast$,
\[
0\to \II_{\mathcal{X}_{K^p}^\uast}^{+a}/p^n\to \OO_{\mathcal{X}_{K^p}^\uast}^{+a}/p^n\to \OO_\partial^{+a}/p^n\to 0\ ,
\]
so that
\[
j_{K^p!} \OO_{\mathcal{X}_{K^p}^\uast\setminus \mathcal{Z}_{K^p}}^{+a}/p^n = \II_{\mathcal{X}_{K^p}^\uast}^{+a}/p^n\ ,
\]
and we arrive at the desired isomorphism.\footnote{This argument, which appears also in \cite{ScholzeSurvey}, shows that one should think of $\OO^+/p^n$ and related sheaves as being like an 'algebraic topology local system', and not as being like a coherent sheaf.}

The commutativity of the first diagram is immediate from functoriality. Also, the definition of the first trace map is standard (and its various definitions in the complex, algebraic, and $p$-adic worlds are compatible). Let $j_{K^p}: \mathcal{X}_{K^p}^\uast\setminus \mathcal{Z}_{K^p}\to \mathcal{X}_{K^p}^\uast$ be the open embedding. To define the second trace map, it is enough to define a trace map
\[
\tr_{K_1^p/K_2^p}: \pi_{K_1^p/K_2^p\ast} \OO_{\mathcal{X}_{K_1^p}^\uast\setminus \mathcal{Z}_{K_1^p}}^{+a}/p^n\to \OO_{\mathcal{X}_{K_2^p}^\uast\setminus \mathcal{Z}_{K_2^p}}^{+a}/p^n\ ,
\]
where $\pi_{K_1^p/K_2^p}: \mathcal{X}_{K_1^p}^\uast\setminus \mathcal{Z}_{K_1^p}\to \mathcal{X}_{K_2^p}^\uast\setminus \mathcal{Z}_{K_2^p}$ denotes the finite \'etale projection. Locally, this projection has the form $\Spa(B,B^+)\to \Spa(A,A^+)$, where $A$ is a perfectoid $C$-algebra, $A^+\subset A^\circ$ is open and integrally closed, $B$ is a finite \'etale $A$-algebra, and $B^+\subset B$ is the integral closure of $A^+$. From the almost purity theorem, \cite[Theorem 7.9 (iii)]{ScholzePerfectoid}, it follows that $B^{+a}/p^n$ is a finite \'etale $A^{+a}/p^n$-algebra. In particular, there is a trace map $B^{+a}/p^n\to A^{+a}/p^n$ (cf. \cite[Definition 4.14]{ScholzePerfectoid}), as desired.

In order to prove that the second diagram commutes, it is enough to prove that the diagram
\[\xymatrix{
\pi_{K_1^p/K_2^p\ast} \OO_C^a/p^n\ar[d]\ar[rrr]^\tr & & & \OO_C^a/p^n\ar[d] \\
\pi_{K_1^p/K_2^p\ast} \OO_{\mathcal{X}_{K_1^p}^\uast\setminus \mathcal{Z}_{K_1^p}}^{+a}/p^n\ar[rrr]^\tr & & & \OO_{\mathcal{X}_{K_2^p}^\uast\setminus \mathcal{Z}_{K_2^p}}^{+a}/p^n
}\]
of \'etale sheaves on $\mathcal{X}_{K_2^p}^\uast\setminus \mathcal{Z}_{K_2^p}$ commutes (as the diagram in the statement of the theorem comes about by applying $H^i_\et(\mathcal{X}_{K_2^p}^\uast,j_{K_2^p!}-)$ to this diagram). As this can be checked \'etale locally, one can reduce to the case where the morphism $\pi_{K_1^p/K_2^p}$ is a disjoint union of copies of the base, where it is trivial.
\end{proof}

As a first application, we get a vanishing result for (compactly supported) completed cohomology. Recall that the (usual or compactly supported) cohomology groups of $X_K$ are nonzero in the range $[0,2d]$, where $d = \dim_\C X_K$. The following corollary shows that upon taking the direct limit over all levels $K_p$ at $p$, complete cancellation occurs in degrees $i>d$.

\begin{cor}\label{CorVanishing} The cohomology group $\widetilde{H}^i_{c,K^p}(\Z/p^n\Z)$ (and thus $\widetilde{H}^i_{c,K^p}(\Z_p)$) vanishes for $i>d$.
\end{cor}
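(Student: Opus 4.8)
The plan is to deduce the vanishing from the isomorphism of Theorem \ref{CompAutomForm} together with a dimension bound on the perfectoid space $\mathcal{X}_{K^p}^\uast$. First I would recall that by Theorem \ref{CompAutomForm} there is a natural almost isomorphism
\[
\widetilde{H}^i_{c,K^p}(\Z/p^n\Z)\otimes_{\Z/p^n\Z} \OO_C^a/p^n\cong H^i(\mathcal{X}_{K^p}^\uast,\II_{\mathcal{X}_{K^p}^\uast}^{+a}/p^n)\ ,
\]
where the right-hand cohomology is computed on the underlying \emph{topological space} $|\mathcal{X}_{K^p}^\uast|$. Since $\widetilde{H}^i_{c,K^p}(\Z/p^n\Z)$ is a $\Z/p^n\Z$-module and the functor $M\mapsto M\otimes_{\Z/p^n\Z}\OO_C^a/p^n$ is faithful on $\Z/p^n\Z$-modules (a nonzero $\Z/p^n\Z$-module $M$ has $M\otimes \OO_C/p^n\ne 0$, and the almostification of a nonzero $\OO_C/p^n$-module that is a $\Z/p^n\Z$-module obtained this way is again nonzero because $\OO_C/p^n$ is $p$-torsion-free modulo the almost-zero ideal), it suffices to show the right-hand side vanishes for $i>d$.

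The key geometric input is that $|\mathcal{X}_{K^p}^\uast|$ is covered by finitely many affinoid perfectoid opens $\mathcal{V}_J = \Spa(R_{J,\infty},R_{J,\infty}^+)$, the preimages under $\pi_\HT$ of the standard affinoid cover $\{\Fl_J\}_{|J|=g}$ of the flag variety $\Fl$, by Theorem \ref{PerfShHodge}(i). Each $\mathcal{V}_J$ is the preimage of an affinoid $\mathcal{V}_{J,K_p}\subset \mathcal{X}_{K_pK^p}^\uast$ at some finite level, and $|\mathcal{V}_J| = \varprojlim_{K_p} |\mathcal{V}_{J,K_p}|$; the spaces $|\mathcal{V}_{J,K_p}|$ are spectral of Krull dimension $\le d$, being the underlying spaces of affinoid adic spaces associated to $d$-dimensional rigid varieties. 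I would then invoke the fact that the inverse limit of spectral spaces of dimension $\le d$ along spectral maps again has dimension $\le d$ (a spectral space is the inverse limit of its finite $T_0$-quotients, and one controls chains of specializations in the limit), so each $|\mathcal{V}_J|$ is a spectral space of Krull dimension $\le d$. By Grothendieck's theorem on cohomological dimension of spectral (Noetherian-spectral, or more generally coherent) spaces — the sheaf cohomology of a spectral space of Krull dimension $\le d$ vanishes in degrees $>d$ for any abelian sheaf — we get $H^i(\mathcal{V}_J,\mathcal{F})=0$ for all $i>d$ and any abelian sheaf $\mathcal{F}$, in particular for $\mathcal{F}=\II_{\mathcal{X}_{K^p}^\uast}^{+a}/p^n|_{\mathcal{V}_J}$.

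To pass from the cover to the total space I would run the Čech-to-derived-functor spectral sequence (or a Mayer--Vietoris argument) for the finite cover $\{\mathcal{V}_J\}$: all finite intersections $\mathcal{V}_{J_1}\cap\cdots\cap\mathcal{V}_{J_r}$ are again spectral of Krull dimension $\le d$ (they are rational, hence affinoid perfectoid, subsets, cf. Theorem \ref{ExistenceHT}(i) and the stability of the relevant properties under rational subsets used in the proof of Theorem \ref{PerfShHodge}(i)), so each contributes no cohomology in degrees $>d$; and the Čech complex of a \emph{finite} cover has length equal to the number of sets in the cover minus one. Hence $H^i(\mathcal{X}_{K^p}^\uast,\II^{+a}/p^n)$ is computed by a double complex that vanishes in total degree $>d$ provided we also know that the Čech nerve has length $\le$ (something), but in fact the clean statement is: a quasi-compact quasi-separated topological space that admits a finite cover by spectral opens each of Krull dimension $\le d$, with all finite intersections also of Krull dimension $\le d$, has cohomological dimension $\le d$ — this is again Grothendieck's bound applied to $|\mathcal{X}_{K^p}^\uast|$ itself, which is spectral of dimension $\le d$. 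So the cleanest route is simply: $|\mathcal{X}_{K^p}^\uast|$ is a spectral space of Krull dimension $\le d$ (being glued from finitely many such along such), therefore $H^i(|\mathcal{X}_{K^p}^\uast|,\mathcal{F})=0$ for $i>d$ and any abelian sheaf $\mathcal{F}$, and the corollary follows; the statement for $\widetilde{H}^i_{c,K^p}(\Z_p)$ then follows by passing to the inverse limit over $n$ (using that the $\varprojlim^1$-terms also sit in the vanishing range, or simply that an inverse limit of zero groups is zero).

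The main obstacle I anticipate is establishing rigorously that $|\mathcal{X}_{K^p}^\uast|$ (equivalently each $|\mathcal{V}_J|$ and their intersections) really has Krull dimension bounded by $d=\dim_\C X_K$: one must check that passing to the perfectoid inverse limit over $K_p$ does not increase the dimension of the spectral space, which requires knowing that specialization chains in $\varprojlim_{K_p}|\mathcal{V}_{J,K_p}|$ have length $\le d$. This is where the fact that each $\mathcal{V}_{J,K_p}$ is the adic space of a $d$-dimensional rigid variety, and that the transition maps are finite (or at least of relative dimension $0$ in the relevant sense), is essential. Everything else — the Čech/Mayer--Vietoris bookkeeping and the reduction from the almost module to the honest cohomology group — is routine.
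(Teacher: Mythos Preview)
Your approach is morally the same as the paper's --- both reduce to a cohomological dimension bound for the spectral space $|\mathcal{X}_{K^p}^\uast|$ --- but you take as a black box precisely the statement that requires work. You invoke ``Grothendieck's theorem on cohomological dimension of spectral (Noetherian-spectral, or more generally coherent) spaces'', asserting that sheaf cohomology of a spectral space of Krull dimension $\leq d$ vanishes above degree $d$. Grothendieck's theorem is for \emph{noetherian} spaces; the perfectoid space is far from noetherian, so this is not an application of Grothendieck's result. The extension to arbitrary spectral spaces is true (it is a theorem of Scheiderer), but it is not elementary, and its proof is essentially the inverse-limit argument the paper writes out. So your ``main obstacle'' is misidentified: bounding the Krull dimension of the inverse limit is easy (chains of specializations project to finite level), but converting a Krull-dimension bound into a cohomological-dimension bound for a non-noetherian spectral space is the real content.

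The paper avoids Krull dimension entirely and argues directly with cohomological dimension: it shows that cohomological dimension is preserved under cofiltered inverse limits of spectral spaces along spectral maps (writing any sheaf as a filtered colimit of constructible sheaves, which descend to finite level, and using that cohomology on a spectral space commutes with filtered colimits), then applies this to $|\mathcal{X}_{K^p}^\uast| = \varprojlim_{K_p} |\mathcal{X}_{K_pK^p}^\uast|$. At finite level one has the adic space of a $d$-dimensional rigid variety, whose cohomological dimension is $\leq d$ either by de~Jong--van~der~Put or by a second inverse-limit reduction to the noetherian formal models, where Grothendieck's theorem genuinely applies. Your detour through the affinoid cover $\{\mathcal{V}_J\}$ and \v{C}ech/Mayer--Vietoris is unnecessary once you have this; the paper works with the whole space at once. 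Also, the paper first reduces to $n=1$ so that the faithfulness step becomes the clean statement that for a nonzero $\F_p$-vector space $V$, $V\otimes_{\F_p}\OO_C/p$ is never almost zero.
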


\begin{proof} We may reduce to the case $n=1$ by long exact sequences. It is enough to prove that $\widetilde{H}_{c,K^p}^i(\F_p)\otimes_{\F_p} \OO_C/p$ is almost zero for $i>d$: Indeed, for a nontrivial $\F_p$-vector space $V$, $V\otimes_{\F_p} \OO_C/p$ is flat over $\OO_C/p$ and nonzero. Thus, if it is killed by the maximal ideal of $\OO_C$, then it is $0$.

By the previous theorem, it suffices to prove that more generally, for any sheaf $F$ of abelian groups on $\mathcal{X}_{K^p}^\uast$, $H^i(\mathcal{X}_{K^p}^\uast,F)=0$ for $i>d$.

Recall that $S=\mathcal{X}_{K^p}^\uast$ is a spectral space; we call the minimal $i$ such that $H^{i+1}(S,F)=0$ for all abelian sheaves $F$ on $S$ the cohomological dimension of $S$. Thus, we claim that the cohomological dimension of $\mathcal{X}_{K^p}^\uast$ is at most $d$. Observe that if $S=\varprojlim S_j$ is a cofiltered inverse limit of spectral spaces $S_j$ of cohomological dimension $\leq d$ along spectral transition maps, then $S$ has cohomological dimension $\leq d$. Indeed, any $F$ can be written as a filtered direct limit of constructible sheaves, constructible sheaves come via pullback from some $S_j$, and one computes cohomology on $S$ as a direct limit of cohomology groups over $S_j$ for increasing $j$.

As $|\mathcal{X}_{K^p}^\uast|\cong \varprojlim_{K_p} |\mathcal{X}_{K_pK^p}^\uast|$, it is enough to prove that $\mathcal{X}_{K_pK^p}^\uast$ has cohomological dimension $\leq d$. For this, we could either cite \cite[Proposition 2.5.8]{deJongvanderPut}, or write $\mathcal{X}_{K_pK^p}^\uast$ as the inverse limit of the topological spaces underlying all possible formal models (each of which is of dimension $\leq d$), and use Grothendieck's bound for noetherian spectral spaces.
\end{proof}

The following corollary implies a good part of \cite[Conjecture 1.5]{CalegariEmerton} in the case considered here. We use freely notation from \cite{CalegariEmerton}. The tame level $K^p$ is fixed, and all modules are taken with $\Z_p$-coefficients.

\begin{cor}\label{CalegariEmertonConj} For $i>d$, $\widetilde{H}_i^\BM=0$, and $\widetilde{H}_d^\BM$ is $p$-torsion free. For $i<d$, the codimension (as a module over the Iwasawa algebra) of $\widetilde{H}_i$ is $\geq d-i$.
\end{cor}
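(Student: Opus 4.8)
The plan is to deduce everything from Corollary~\ref{CorVanishing} using the Poincar\'e (Borel--Serre) duality formalism for completed (co)homology of \cite{CalegariEmerton}; the only new ingredient is Corollary~\ref{CorVanishing}, and the rest is homological algebra over the Iwasawa algebra. First I would fix notation: let $\Lambda = \Z_p[[K_{p,0}]]$ for a compact open subgroup $K_{p,0}\subset G(\Q_p)$, an Auslander-regular ring of dimension $n+1$ with $n = \dim_{\Q_p} G(\Q_p)$. I would recall that the completed homology groups $\widetilde{H}_i$ and the completed Borel--Moore homology groups $\widetilde{H}^{\BM}_i$ are finitely generated over $\Lambda$, that for a finitely generated $\Lambda$-module $M$ one has $\mathrm{codim}_\Lambda M = \min\{j\mid \Ext^j_\Lambda(M,\Lambda)\neq 0\}$, and that the compactly supported completed cohomology $\widetilde{H}^i_{c,K^p}(\Q_p/\Z_p) := \varinjlim_n \widetilde{H}^i_{c,K^p}(\Z/p^n\Z)$ has Pontryagin dual computed, up to the usual universal-coefficient terms, by $\widetilde{H}^{\BM}_i$.

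The statements about Borel--Moore homology should then be immediate. By Corollary~\ref{CorVanishing}, $\widetilde{H}^i_{c,K^p}(\Z/p^n\Z) = 0$ for $i>d$ and all $n$, hence $\widetilde{H}^i_{c,K^p}(\Q_p/\Z_p) = 0$ for $i>d$; Pontryagin-dualizing would give $\widetilde{H}^{\BM}_i = 0$ for $i>d$. For $p$-torsion-freeness of $\widetilde{H}^{\BM}_d$ I would invoke the universal-coefficient short exact sequence
\[
0\to \Ext^1_{\Z_p}\!\big(\widetilde{H}^{d+1}_{c,K^p}(\Q_p/\Z_p),\Z_p\big)\to \widetilde{H}^{\BM}_d\to \Hom_{\Z_p}\!\big(\widetilde{H}^{d}_{c,K^p}(\Q_p/\Z_p),\Z_p\big)\to 0\,,
\]
whose right-hand term is $p$-torsion free and whose left-hand term vanishes since $\widetilde{H}^{d+1}_{c,K^p}(\Q_p/\Z_p) = 0$.

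For the codimension bound I would feed this vanishing into the Poincar\'e-duality spectral sequence of $\Lambda$-modules (cf.\ \cite{CalegariEmerton})
\[
E_2^{s,t} = \Ext^s_\Lambda\!\big(\widetilde{H}^{\BM}_t,\Lambda\big)\ \Longrightarrow\ \widetilde{H}_{2d-s-t}\,,
\]
whose total degree $2d$ is the real dimension of the locally symmetric spaces $X_{K_pK^p}$ (for $\Lambda = \Z_p$ it is just the universal-coefficient theorem together with ordinary Poincar\'e--Lefschetz duality). Since $\widetilde{H}^{\BM}_t = 0$ for $t>d$, only pairs $(s,t)$ with $t\le d$ contribute to $\widetilde{H}_i$, so $s = 2d-i-t\ge d-i$; and since $\Lambda$ is Auslander-regular, $\mathrm{codim}_\Lambda \Ext^s_\Lambda(N,\Lambda)\ge s$ for all finitely generated $N$, with the property of having codimension $\ge c$ stable under subquotients and extensions. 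As $\widetilde{H}_i$ carries a finite filtration whose graded pieces are subquotients of the $E_\infty^{s,t}$ with $s\ge d-i$, this yields $\mathrm{codim}_\Lambda \widetilde{H}_i\ge d-i$ for $i<d$.

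The main obstacle will be bookkeeping rather than anything deep: the formalism of \cite{CalegariEmerton} is set up for $\widetilde{H}^\bullet$ and $\widetilde{H}_\bullet$, and one has to carry the compactly supported and Borel--Moore variants through it carefully and pin down the indexing of the duality spectral sequence. Conceptually the point is clean: the codimension bound is nonvacuous precisely because $\widetilde{H}^i_{c,K^p}$ vanishes already above the \emph{complex} dimension $d$ rather than merely above the real dimension $2d$, and this sharper vanishing is exactly Corollary~\ref{CorVanishing}, resting in turn on $\mathcal{X}^\uast_{K^p}$ being a perfectoid space whose underlying spectral space has cohomological dimension $\le d$.
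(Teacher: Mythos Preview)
Your proposal is correct, and for the first two assertions it coincides with the paper's proof (the paper simply cites \cite[Theorem~1.1~(iii)]{CalegariEmerton} rather than spelling out the universal-coefficient sequence, but the content is the same).

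For the codimension bound you take a genuinely different, and cleaner, route. The paper uses the spectral sequence in the \emph{other} direction,
\[
E_2^{jk} = \Ext^j_\Lambda(\widetilde{H}_k,\Lambda)\ \Longrightarrow\ \widetilde{H}^{\BM}_{2d-j-k}\,,
\]
and argues by contradiction: if some $\widetilde{H}_i$ had codimension $c<d-i$, one locates a term $\Ext^c_\Lambda(\widetilde{H}_i,\Lambda)$ of codimension exactly $c$ on the diagonal $j+k=i+c<d$, checks that nothing else on that diagonal or in later pages can cancel it (using the extremal choice of $c$ and $i$), and thereby produces a nonzero subquotient in $\widetilde{H}^{\BM}_{2d-i-c}$ with $2d-i-c>d$, contradicting the vanishing. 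Your argument instead feeds the vanishing of $\widetilde{H}^{\BM}_t$ for $t>d$ directly into the dual spectral sequence, so that every $E_2$-term contributing to $\widetilde{H}_i$ already has $s\ge d-i$, and the Auslander condition finishes immediately. Your version avoids the contradiction bookkeeping; the only thing to be careful about is that \cite[Section~1.3]{CalegariEmerton} states the spectral sequence in the paper's direction, so you should remark that your dual sequence is obtained by applying $R\Hom_\Lambda(-,\Lambda)$ once more to the perfect complex of $\Lambda$-modules underlying the construction there (biduality for perfect complexes).
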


If the $X_K$ are compact, this implies all of \cite[Conjecture 1.5]{CalegariEmerton}, except for non-strict instead of strict inequalities on the codimensions. (Note that here, $l_0 = d$, $q_0 = 0$. Also observe \cite[Theorem 1.4]{CalegariEmerton}.)

\begin{proof} The first two assertions follow from the previous corollary and \cite[Theorem 1.1 (iii)]{CalegariEmerton}. Assume that the last statement was not satisfied; among all codimensions of $\widetilde{H}_i$ which violate this inequality, choose the maximal one, $c$. Among all $i<d$ for which this codimension is achieved, choose the minimal one. Thus, the codimension of $\widetilde{H}_i$ is $c<d-i$, but the codimension of $\widetilde{H}_k$ for $k<i$ is greater than $c$. The results of \cite{Venjakob} imply that if $X$ is of codimension $c$, then $E^j(X)=0$ for $j<c$, $E^c(X)$ is of codimension (exactly) $c$, and $E^j(X)$ is of codimension $\geq j$ for $j>c$.

Now look at the Poincar\'e duality spectral sequence \cite[Section 1.3]{CalegariEmerton}:
\[
E_2^{jk} = E^j(\widetilde{H}_k)\Rightarrow \widetilde{H}_{2d-j-k}^\BM\ .
\]
For $j+k<d$, the limit term $\widetilde{H}_{2d-j-k}^\BM$ vanishes. We look at the diagonal $j+k = i+c<d$. In that case, there is a contribution of codimension $c$, $E^c(\widetilde{H}_i)$. For $k<i$, any term $E^j(\widetilde{H}_k)$ is of codimension at least the codimension of $\widetilde{H}_k$, i.e. of codimension $\geq c+1$. For $j<c$, but $j+k<d$, all terms $E^j(\widetilde{H}_k)$ are zero. If not, the codimension of $\widetilde{H}_k$ is $\leq j<c$ and $j+k<d$, which contradicts our choice of $c$.

It follows that all groups that might potentially cancel the contribution of $E^c(\widetilde{H}_i)$ are of codimension $>c$; as by \cite{Venjakob}, the notion of codimension is well-behaved under short exact sequences, it follows that a subquotient of $E^c(\widetilde{H}_i)$ of codimension $c$ survives the spectral sequence. It would contribute to $\widetilde{H}_{2d-j-k}^\BM$ with $j+k= i+c<d$, contradiction.
\end{proof}

\section{Hecke algebras}

We keep the assumption that $(G,D)$ is of Hodge type, with a fixed embedding $(G,D)\hookrightarrow (G^\prime,D^\prime) = (\Sp_{2g},D_{\Sp_{2g}})$. Moreover, fix some compact open subgroup $K^p\subset G(\A_f^p)$ contained in the level-$N$-subgroup of $G^\prime(\A_f^p)$ for some $N\geq 3$ prime to $p$.

Let
\[
\mathbb{T} = \mathbb{T}_{K^p} = \Z_p[G(\A_f^p)//K^p]
\]
be the abstract Hecke algebra of $K^p$-biinvariant compactly supported functions on $G(\A_f^p)$, where the Haar measure gives $K^p$ measure $1$. In this section, we prove the following result, which says roughly that all Hecke eigenvalues appearing in $\widetilde{H}^i_{c,K^p}(\Z_p)$ come via $p$-adic interpolation from Hecke eigenvalues in $H^0(\mathcal{X}_{K_pK^p}^\ast,\omega_{K_pK^p}^{\otimes k}\otimes \II)$, where $\II$ is the ideal sheaf of the boundary, and $k$ is sufficiently divisible.

\begin{thm}\label{ThmHeckeAlgebras} Fix some integer $m\geq 1$. Let $\mathbb{T}_\cl = \mathbb{T}_{\cl,m}$ denote $\mathbb{T}$ equipped with the weakest topology for which all the maps
\[
\mathbb{T}\to \End_C (H^0(\mathcal{X}_{K_pK^p}^\ast,\omega_{K_pK^p}^{\otimes mk}\otimes \II))
\]
are continuous, for varying $k\geq 1$ and $K_p\subset G(\Q_p)$, where the right-hand side is a finite-dimensional $C$-vector space endowed with the $p$-adic topology.\footnote{Here, $\cl$ stands for classical. Also note that $\mathbb{T}_\cl$ may not be separated; one might replace it by its separated quotient without altering anything that follows.} Then the map
\[
\mathbb{T}_\cl = \mathbb{T}\to \End_{\Z/p^n\Z}(\widetilde{H}^i_{c,K^p}(\Z/p^n\Z))
\]
is continuous, where the right-hand side is endowed with the weakest topology which makes
\[
\End_{\Z/p^n\Z}(\widetilde{H}^i_{c,K^p}(\Z/p^n\Z))\times \widetilde{H}^i_{c,K^p}(\Z/p^n\Z)\to \widetilde{H}^i_{c,K^p}(\Z/p^n\Z)
\]
continuous, where $\widetilde{H}^i_{c,K^p}(\Z/p^n\Z)$ has the discrete topology.
\end{thm}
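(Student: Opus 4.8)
The plan is to reduce the continuity statement to a concrete finiteness assertion at each finite truncation, using the comparison isomorphism of Theorem \ref{CompAutomForm} together with the geometry provided by Theorem \ref{PerfShHodge}. Concretely, fix $n$ and $i$; since $\widetilde{H}^i_{c,K^p}(\Z/p^n\Z)$ is discrete, continuity of the action map amounts to showing that for every element $v\in \widetilde{H}^i_{c,K^p}(\Z/p^n\Z)$, the annihilator of $v$ (an ideal in $\mathbb{T}$) is open for the topology $\mathbb{T}_\cl$. Equivalently, it suffices to show that the $\mathbb{T}$-submodule generated by any single $v$ is finitely generated, and that the action on it factors through a quotient $\mathbb{T}\to \mathbb{T}/J$ such that $\mathbb{T}\to \mathbb{T}/J$ is continuous for $\mathbb{T}_\cl$. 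So the first step is to translate the problem into: the $\mathbb{T}$-action on $\widetilde{H}^i_{c,K^p}(\Z/p^n\Z)$ is, locally on finitely generated submodules, controlled by the action on the spaces $H^0(\mathcal{X}_{K_pK^p}^\ast,\omega^{\otimes mk}\otimes\II)$.

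The key geometric input is that, by Theorem \ref{CompAutomForm}, $\widetilde{H}^i_{c,K^p}(\Z/p^n\Z)\otimes\OO_C^a/p^n$ is the cohomology $H^i(\mathcal{X}_{K^p}^\uast,\II^{+a}/p^n)$ computed on the spectral space $\mathcal{X}_{K^p}^\uast=\varprojlim_{K_p}\mathcal{X}_{K_pK^p}^\uast$, and by Theorem \ref{PerfShHodge}(i),(ii) this space has an explicit finite cover by affinoid perfectoids $\mathcal{V}_J=\pi_\HT^{-1}(\Fl_J)$ on which the boundary is strongly Zariski closed. Thus one computes $H^i(\mathcal{X}_{K^p}^\uast,\II^{+a}/p^n)$ by the \v{C}ech complex of this finite cover $\{\mathcal{V}_J\}_{J}$ and its finite intersections; each term is $H^0(\mathcal{V}_{J_1\cdots J_r},\II^{+a}/p^n)$, which by part (i) is the $p$-adic completion of $\varinjlim_{K_p} H^0(\mathcal{V}_{J_1\cdots J_r,K_p},\II/p^n)$ — and $\mathcal{V}_{J_1\cdots J_r,K_p}$ is an affinoid open in $\mathcal{X}_{K_pK^p}^\uast$ cut out inside a section of a power of the ample bundle $\omega$ (the fake-Hasse invariant $s_J$ pulled back from $\omega_\Fl$). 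The second step is therefore: multiply a given \v{C}ech cocycle class by a suitable power $s^k$ of these fake-Hasse invariants (valid because $s$ commutes with all Hecke operators away from $p$, by Theorem \ref{PerfShHodge}(iv),(v)) to clear the poles along the complement of $\mathcal{V}_J$, producing, after passing back through the comparison isomorphisms with singular/\'etale cohomology and Poincar\'e–Serre duality, an actual section of $\omega_{K_pK^p}^{\otimes mk}\otimes\II$ at some finite level $K_p$ with the same Hecke eigenvalue-behaviour modulo $p^n$. Concretely one invokes Lemma \ref{ConstructionFormalModel} and Lemma \ref{ConstructionFormalModelIdeal} to build a formal model of (a piece of) $\mathcal{X}_{K_pK^p}^\uast$ on which the $\bar s_J$ are globally defined sections of an integral model $\mathfrak{L}$ of $\omega$, and then the coefficients of the \v{C}ech cocycle, after multiplication by $\bar s_J^k$, become elements of $H^0$ of these formal models mod $p^n$, i.e. classical forms mod $p^n$.

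The upshot of these steps is a map, for $k$ sufficiently divisible and $K_p$ sufficiently small, realizing $\widetilde{H}^i_{c,K^p}(\Z/p^n\Z)$ (almost) as a subquotient of a \v{C}ech complex whose terms are direct limits of classical $H^0(\mathcal{X}_{K_pK^p}^\ast,\omega^{\otimes mk}\otimes\II)\otimes\Z/p^n$-type modules, compatibly with the $\mathbb{T}$-action and with the trace maps (the compatibility with traces is exactly the content of the second commuting diagram in Theorem \ref{CompAutomForm}, combined with the fact that the fake-Hasse invariants are Hecke-equivariant). Since $\widetilde{H}^i_{c,K^p}(\Z/p^n\Z)$ is a torsion $\Z_p$-module on which $\mathbb{T}$ acts, and any single element already lies in the image of a finite stage of all these direct limits, the $\mathbb{T}$-action on the submodule it generates factors through the image of $\mathbb{T}$ in $\End_C(H^0(\mathcal{X}_{K_pK^p}^\ast,\omega^{\otimes mk}\otimes\II))$ for suitable $k,K_p$, up to the ambiguity of almost-zero modules which is harmless since an $\F_p$-vector space that is almost zero over $\OO_C$ vanishes (the argument used in Corollary \ref{CorVanishing}). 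This is precisely the statement that $\mathbb{T}\to\End_{\Z/p^n\Z}(\widetilde{H}^i_{c,K^p}(\Z/p^n\Z))$ is continuous for the topology $\mathbb{T}_\cl$.

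The main obstacle, and the place requiring the most care, is the passage from "cohomology class of the sheaf $\II^{+a}/p^n$ on the infinite-level space, with poles cleared by $s^k$" to "honest classical cusp form of level $K_p$". Clearing poles via the ample-bundle sections is only literally a statement about $H^0$ on affinoid pieces; to feed it back into $H^i$ one must track the \v{C}ech differentials and show that multiplication by $s^k$ descends to a well-defined operation on cohomology that does not disturb Hecke eigenvalues, and that the resulting classes genuinely come from finite level — this is where the density statement in Theorem \ref{PerfShHodge}(i) and the integral models of Lemma \ref{ConstructionFormalModel} do the real work, and where one has to be careful that the almost-mathematics bookkeeping and the mod-$p^n$ (rather than characteristic-zero) setting do not introduce spurious obstructions. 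A secondary technical point is verifying that the topology $\mathbb{T}_\cl$ genuinely dominates all the finite-level endomorphism topologies that arise in the \v{C}ech computation, i.e. that it suffices to use weights that are multiples of the fixed $m$; this follows because one is always free to raise $s$ to a higher $m$-divisible power without changing eigenvalues, but it should be spelled out.
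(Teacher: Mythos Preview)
Your proposal identifies the same ingredients as the paper's proof: the comparison of Theorem \ref{CompAutomForm}, the \v{C}ech computation on the cover $\{\mathcal{V}_J\}$ furnished by Theorem \ref{PerfShHodge}, the formal-model construction of Lemmas \ref{ConstructionFormalModel} and \ref{ConstructionFormalModelIdeal}, and the fake-Hasse invariants $\bar{s}_J$ as Hecke-equivariant substitutes for the Hasse invariant. The overall strategy is correct.

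However, the step you single out as the ``main obstacle'' --- tracking how multiplication by $s^k$ interacts with the \v{C}ech differentials so as to descend to an operation on $H^i$, and then invoking ``Poincar\'e--Serre duality'' --- is a mis-organization. The paper avoids this entirely. Since $\pi_\HT$ commutes with Hecke operators away from $p$ (Theorem \ref{PerfShHodge} (iii), (iv)), each open $\mathcal{V}_J$ is $\mathbb{T}$-stable, so $\mathbb{T}$ acts on each \v{C}ech term $H^0(\mathcal{V}_J,\II^{+a}/p^n)$ individually. The key formal observation is that the property ``$\mathbb{T}_\cl$ acts continuously on $M$'' is inherited by subquotients, colimits, and along $N\mapsto N^a$; hence it suffices to establish continuity on each \v{C}ech term separately, with no reference to the differentials or to cohomology classes as such. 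No duality is involved.

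On a single term the argument is then purely at the $H^0$ level: Lemma \ref{CompAlgTraceMaps} gives the $\mathbb{T}$-equivariant identification
\[
H^0(\mathcal{V}_J,\II^{+a}/p^n)=\varinjlim_{K_p} H^0(\tilde{\mathfrak{V}}_{JK_p},\mathfrak{I}/p^n)^a,
\]
and since $\tilde{\mathfrak{V}}_{JK_p}\subset \mathfrak{X}^\ast_{K_pK^p}$ is the locus where $\bar{s}_J=\prod_{i\in J}\bar{s}_i$ is invertible, one has the further $\mathbb{T}$-equivariant colimit
\[
H^0(\tilde{\mathfrak{V}}_{JK_p},\mathfrak{I}/p^n)=\varinjlim_{\times\bar{s}_J} H^0\bigl(\mathfrak{X}^\ast_{K_pK^p},(\omega^{\int}_{K_pK^p})^{\otimes k|J|}\otimes\mathfrak{I}/p^n\bigr).
\]
Ampleness of $\omega^{\int}$ (Lemma \ref{ConstructionFormalModel}) kills $H^1$ for $k$ sufficiently divisible, so the right-hand group is $H^0(\mathfrak{X}^\ast_{K_pK^p},(\omega^{\int})^{\otimes k|J|}\otimes\mathfrak{I})/p^n$, which is $p$-torsion free with generic fibre $H^0(\mathcal{X}^\ast_{K_pK^p},\omega^{\otimes k|J|}\otimes\II)$. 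Continuity of $\mathbb{T}_\cl$ on the latter is the definition. So your ``obstacle'' dissolves once the argument is reorganized term-by-term; drop the duality remark, and make explicit the closure of continuity under subquotients and colimits.
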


Before giving the proof, we recall the definition of the action of $\mathbb{T}$ on $H^0(\mathcal{X}_{K_pK^p}^\ast,\omega_{K_pK^p}^{\otimes k}\otimes \II)$. As usual, this boils down to defining trace maps. For this, take two sufficiently small levels $K_1\subset K_2\subset G(\A_f)$, and look at the map
\[
\pi_{K_1/K_2}: \mathcal{X}_{K_1}^\ast\to \mathcal{X}_{K_2}^\ast\ .
\]
This is locally of the form $\Spa(B,B^+)\to \Spa(A,A^+)$, where $A$ is normal, $A^+\subset A^\circ$ is open and integrally closed (and thus normal itself), $B$ is a finite normal and generically \'etale $A$-algebra, and $B^+\subset B$ is the integral closure of $A^+$. In particular, $B^+$ is also a finite normal and generically \'etale $A^+$-algebra. Recall the following lemma.

\begin{lem}\label{TraceMapsNormal} Let $R$ be normal, and let $S$ be a finite and generically \'etale $R$-algebra; i.e., for some non-zero divisor $f\in R$, $S[f^{-1}]$ is a finite \'etale $R[f^{-1}]$-algebra. Then the trace map
\[
\tr_{S[f^{-1}]/R[f^{-1}]}: S[f^{-1}]\to R[f^{-1}]
\]
maps $S$ into $R$. Moreover, for any integrally closed ideal $I\subset R$ with integral closure $J\subset S$, $\tr(J)\subset I$.
\end{lem}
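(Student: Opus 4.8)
The plan is to reduce the statement to a local calculation about normal rings and then invoke the commutative-algebra fact that trace maps for finite generically étale extensions preserve integrality, together with a Hartog-type extension argument for the assertion about integrally closed ideals.

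First I would recall that $S$ is a finite $R$-module and that $S[f^{-1}]$ is finite étale over $R[f^{-1}]$, so the trace $\tr_{S[f^{-1}]/R[f^{-1}]}\colon S[f^{-1}]\to R[f^{-1}]$ is defined. For any $s\in S$, the multiplication-by-$s$ endomorphism of the finite $R$-module $S$ has a characteristic polynomial with coefficients in $R$ (Cayley--Hamilton over the Noetherian normal ring $R$, or more elementarily: $s$ is integral over $R$, and its trace as an element of $S[f^{-1}]$ agrees, up to the right normalization, with a coefficient of a monic polynomial it satisfies over $R$). Thus $\tr_{S[f^{-1}]/R[f^{-1}]}(s)$ is integral over $R$; since it lies in $R[f^{-1}]$ and $R$ is normal and $f$ a nonzerodivisor, it lies in $R$. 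Concretely, one reduces to the case where $R$ is a normal domain with fraction field $\mathrm{Frac}(R)$ by localizing at the generic points of $V(f)$ (or, even more cheaply, testing membership in $R$ via all rank-one valuations, exactly as in the proof of the Lemma in Section~\ref{HebbarkeitssatzSection}): a rank-one valuation $v$ on $R$ extends to the finitely many valuations on $S$ above it, and $\tr(s)$ is a sum of $v$-integral elements, hence $v$-integral; as $R$ is normal, $\tr(s)\in R$.

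For the second assertion, let $I\subset R$ be an integrally closed ideal with integral closure $J\subset S$. Take $s\in J$; I must show $\tr(s):=\tr_{S[f^{-1}]/R[f^{-1}]}(s)\in I$. By the first part $\tr(s)\in R$, so it suffices to check that $\tr(s)$ is integral over $I$, i.e.\ lies in the integral closure of $I$ in $R$, which equals $I$. Integrality over $I$ can again be tested valuation-theoretically: for every rank-one valuation $v$ on $R$, one has $v(\tr(s))\ge \min_i v_i(s)$ where the $v_i$ run over the extensions of $v$ to $S$ (trace is a sum, so its value is at least the minimum of the values of the summands over the residue-field-extension factors, using that trace over a finite field extension does not decrease valuations relative to the normalized valuations), and $v_i(s)\ge v_i(I\cdot S)= v(I)$ because $s\in J$ is integral over $IS$. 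Hence $v(\tr(s))\ge v(I)$ for all $v$, which since $R$ is normal and $I$ integrally closed forces $\tr(s)\in I$. (One small point: the comparison of $v$ with its extensions $v_i$ must be arranged so that ``trace does not drop valuation'' holds; this is the standard statement that for a finite extension of valued fields $L/K$ with valuation ring $\mathcal{O}_L\supset\mathcal{O}_K$, $\tr_{L/K}(\mathcal{O}_L)\subset\mathcal{O}_K$, applied after passing to completions and splitting into the product of local factors.)

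I expect the main obstacle to be the bookkeeping in the second part: making precise the inequality $v(\tr(s))\ge \min_i v_i(s)$ when $S\otimes_R \mathrm{Frac}(R)$ is a product of several field extensions with ramification, and checking that no cancellation among the summands of the trace can produce an element of strictly smaller valuation than each summand — but this is immediate from the ultrametric inequality, so really the only content is setting up the valuations correctly and reducing to the local case. The first part is essentially the classical fact that the trace of an integral element is integral, so it is routine; the geometric application (the maps $\pi_{K_1/K_2}\colon \mathcal{X}_{K_1}^\ast\to\mathcal{X}_{K_2}^\ast$ are finite, with normal source and target, and generically étale since they are so on the open Shimura variety) then lets one glue the local trace maps to a global trace $\pi_{K_1/K_2\,\ast}(\omega^{\otimes k}\otimes\II)\to \omega^{\otimes k}\otimes\II$, giving the $\mathbb{T}$-action used in Theorem~\ref{ThmHeckeAlgebras}.
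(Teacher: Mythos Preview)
Your approach is correct and is essentially the paper's: reduce both assertions to a valuation-theoretic check. Two small points of comparison. First, you restrict to rank-one valuations throughout; the lemma does not assume $R$ Noetherian, so strictly speaking one should test membership in $R$ (and in an integrally closed ideal $I$) using all valuation rings between $R$ and its total ring of fractions, as the paper does. Second, once reduced to a valuation ring $K^+$, the paper avoids the bookkeeping with extensions $v_i$ of $v$ and the ultrametric inequality by simply passing to an algebraically closed $K$: then the normalization of $S$ in $S\otimes_{K^+}K$ is a finite product of copies of $K^+$, the trace is literally a sum of the coordinates, and both claims (that $\tr(S)\subset K^+$ and $\tr(J)\subset I$) are immediate. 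This is a bit cleaner than your argument via $\tr_{L/K}(\mathcal{O}_L)\subset\mathcal{O}_K$ for each local factor, though the content is the same.
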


\begin{proof} For an element $x\in R[f^{-1}]$, the condition $x\in R$ can be checked at valuations of $R$. Thus, one can assume that $R=K^+$ is the ring of integers of a field $K$ equipped with some valuation $v: K\to \Gamma\cup \{\infty\}$. We may assume that $K$ is algebraically closed. Also, one may replace $S$, a finite and generically \'etale $K^+$-algebra, by its normalization in $S\otimes_{K^+} K$. In that case, $S$ is a finite product of copies of $K^+$, and the claim is clear.

The condition $x\in I$ can also be checked using valuations, so the same argument works in that case.
\end{proof}

In particular, we get trace maps
\[\begin{aligned}
\tr &: \pi_{K_1/K_2\ast} \OO_{\mathcal{X}_{K_1}^\ast}\to \OO_{\mathcal{X}_{K_2}^\ast}\ ,\\
\tr &: \pi_{K_1/K_2\ast} \II_{\mathcal{X}_{K_1}^\ast}\to \II_{\mathcal{X}_{K_2}^\ast}\ ,\\
\tr &: \pi_{K_1/K_2\ast} \OO_{\mathcal{X}_{K_1}^\ast}^+\to \OO_{\mathcal{X}_{K_2}^\ast}^+\ ,\\
\tr &: \pi_{K_1/K_2\ast} \II_{\mathcal{X}_{K_1}^\ast}^+\to \II_{\mathcal{X}_{K_2}^\ast}^+\ ,
\end{aligned}\]
where $\II_{\mathcal{X}_K^\ast}\subset \OO_{\mathcal{X}_K^\ast}$ is the ideal sheaf of the boundary, and $\II^+ = \II\cap \OO^+$. In particular, by tensoring the trace map for $\II$ with a tensor power of the line bundle $\omega_{K_2}$, we get a trace map
\[
\tr: \pi_{K_1/K_2\ast} (\omega_{K_1}^{\otimes k}\otimes \II_{\mathcal{X}_{K_1}^\ast}) = \pi_{K_1/K_2\ast} (\pi_{K_1/K_2}^\ast\omega_{K_2}^{\otimes k}\otimes \II_{\mathcal{X}_{K_1}^\ast}) = \omega_{K_2}^{\otimes k}\otimes \pi_{K_1/K_2\ast} \II_{\mathcal{X}_{K_1}^\ast}\to \omega_{K_2}^{\otimes k}\otimes \II_{\mathcal{X}_{K_2}^\ast}\ ,
\]
giving the desired trace map by taking global sections.

We will need the following comparison of trace maps. It says in particular that as far as cusp forms of infinite level are concerned, there is no difference between $\mathcal{X}_K^\ast$ and $\mathcal{X}_K^\uast$.

\begin{lem}\label{CompAlgTraceMaps} Fix a subset $J\subset \{1,\ldots,2g\}$ of cardinality $g$, and let
\[
\mathcal{X}_{K^p}^\uast(J)\subset \mathcal{X}_{K^p}^\uast
\]
be the preimage of $\Fl_J\subset \Fl$ under the Hodge-Tate period map $\pi_\HT: \mathcal{X}_{K^p}^\uast\to \Fl$. Recall that by Theorem \ref{PerfShHodge} (i), $\mathcal{X}_{K^p}^\uast(J) = \Spa(R_{K^p},R_{K^p}^+)$ is affinoid perfectoid, and the preimage of $\Spa(R_{K_pK^p},R_{K_pK^p}^+)=\mathcal{X}_{K_pK^p}^\uast(J)\subset \mathcal{X}_{K_pK^p}^\uast$ for $K_p$ small enough, with $R_{K^p}^+$ the $p$-adic completion of $\varinjlim R_{K_pK^p}^+$.

Let $\Spa(\tilde{R}_{K_pK^p},\tilde{R}_{K_pK^p}^+)=\mathcal{X}_{K_pK^p}^\ast(J)\subset \mathcal{X}_{K_pK^p}^\ast$ be the preimage of $\mathcal{X}_{K_pK^p}^\uast(J)$. Moreover, let $I_{K_pK^p}^+\subset R_{K_pK^p}^+$ be the ideal of functions vanishing along the boundary, and let $\tilde{I}_{K_pK^p}^+\subset \tilde{R}_{K_pK^p}^+$, $I_{K^p}^+\subset R_{K^p}^+$ be defined similarly. Then, for all $n\geq 0$,
\[
I_{K^p}^+/p^n = \varinjlim_{K_p} I_{K_pK^p}^+/p^n = \varinjlim_{K_p} \tilde{I}_{K_pK^p}^+/p^n\ .
\]
For $K_1^p\subset K_2^p$ and any $K_p$, the diagram of almost $\OO_C$-modules
\[\xymatrix{
\tilde{I}_{K_pK_1^p}^{+a}/p^n\ar[rrr]^{(\tr\mod p^n)^a}\ar[d] & & & \tilde{I}_{K_pK_2^p}^{+a}/p^n\ar[d]\\
I_{K_1^p}^{+a}/p^n\ar[rrr]^\tr & & & I_{K_2^p}^{+a}/p^n
}\]
commutes, where the trace map on the lower line is as defined in the proof of Theorem \ref{CompAutomForm}.
\end{lem}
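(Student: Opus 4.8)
Write $\mathcal{V}_J=\mathcal{X}_{K^p}^\uast(J)=\Spa(R_{K^p},R_{K^p}^+)$ and $\mathcal{V}_{J,K_p}=\mathcal{X}_{K_pK^p}^\uast(J)$, and recall from Theorem \ref{PerfShHodge} (i), (ii) that $\mathcal{V}_J$ is affinoid perfectoid, is the preimage of $\mathcal{V}_{J,K_p}$ for $K_p$ small with $R_{K^p}^+$ the $p$-adic completion of $\varinjlim_{K_p}R_{K_pK^p}^+$, and that the boundary $\mathcal{Z}_{K^p}\cap\mathcal{V}_J$ is strongly Zariski closed. I would prove the first assertion of the lemma as the conjunction of the two equalities (of almost $\OO_C$-modules)
\[
I_{K^p}^+/p^n=\varinjlim_{K_p}I_{K_pK^p}^+/p^n,\qquad \varinjlim_{K_p}I_{K_pK^p}^+/p^n=\varinjlim_{K_p}\tilde{I}_{K_pK^p}^+/p^n,
\]
and then deduce commutativity of the trace diagram from this together with the comparison of algebraic and almost-purity traces away from the boundary.

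\textbf{The $\uast$-limit equality.} The transition maps $\mathcal{X}_{K_p'K^p}^\uast\to\mathcal{X}_{K_pK^p}^\uast$ carry the boundary onto the boundary, so at each finite level $I_{K_pK^p}^+=\ker(R_{K_pK^p}^+\to T_{K_pK^p}^+)$, where $(T_{K_pK^p},T_{K_pK^p}^+)$ is the (reduced) boundary $\mathcal{Z}_{K_pK^p}\cap\mathcal{V}_{J,K_p}$; in particular each $T_{K_pK^p}^+$ is reduced, hence $p$-torsion free, and so is $\varinjlim_{K_p}T_{K_pK^p}^+$. Since filtered colimits are exact, $\varinjlim_{K_p}I_{K_pK^p}^+=\ker(\varinjlim_{K_p}R_{K_pK^p}^+\to\varinjlim_{K_p}T_{K_pK^p}^+)$. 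Applying the limit property of Theorem \ref{PerfShHodge} (i) to $\mathcal{V}_J$ and to its boundary (which is again the preimage of the finite-level boundaries, being the pullback of the Siegel boundary), we get $R_{K^p}^+=(\varinjlim R_{K_pK^p}^+)^{\wedge}_p$ and $S_{K^p}^+:=H^0(\mathcal{Z}_{K^p}\cap\mathcal{V}_J,\OO^+)=(\varinjlim T_{K_pK^p}^+)^{\wedge}_p$. Because $\mathcal{Z}_{K^p}\cap\mathcal{V}_J$ is strongly Zariski closed, $R_{K^p}^+\to S_{K^p}^+$ is almost surjective, and as both rings are $p$-torsion free the sequence $0\to I_{K^p}^{+a}\to R_{K^p}^{+a}\to S_{K^p}^{+a}\to0$ stays almost exact after reduction mod $p^n$. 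Comparing with the colimit description above (using $R_{K^p}^+/p^n=\varinjlim R_{K_pK^p}^+/p^n$ and likewise for $S_{K^p}^+$) yields $I_{K^p}^{+a}/p^n=\varinjlim_{K_p}I_{K_pK^p}^{+a}/p^n$.

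\textbf{The $\uast$ versus $\ast$ comparison (the main obstacle).} For each $K_p$, the map $\mathcal{X}_{K_pK^p}^\ast(J)\to\mathcal{X}_{K_pK^p}^\uast(J)$ is finite (as $X_{K_pK^p}^\ast\to X_{K_pK^p}^\uast$ is finite), an isomorphism over the non-boundary locus $\mathcal{X}_{K_pK^p}(J)$ (since $X_K^\ast\to X_K^\uast$ is an isomorphism over $X_K$), and its source is normal; thus $\tilde R_{K_pK^p}^+$ is the normalization of $R_{K_pK^p}^+$ inside the finite extension $\tilde R_{K_pK^p}$, and the inclusion carries $I_{K_pK^p}^+$ into $\tilde I_{K_pK^p}^+$. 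Injectivity of the resulting colimit map is formal ($\varinjlim$ of reduced $p$-torsion-free rings). The content is surjectivity: the defect of normality, equivalently the conductor of $R_{K_pK^p}^+$ in $\tilde R_{K_pK^p}^+$, is supported on the boundary, and it is killed in the colimit over $K_p$ because at infinite level the boundary is strongly Zariski closed --- by Lemmas \ref{StronglyZariskiClosedTilting}, \ref{StronglyZariskiClosedCharP} and Corollary \ref{BoundaryStronglyClosedAnticanTower} (transferred from the Siegel case as in the proof of Theorem \ref{PerfShHodge}), the defining equations of the boundary acquire all their $p$-power roots in the tower, so any element of $\tilde I_{K_pK^p}^+$, after passing to a sufficiently deep level and reducing mod $p^n$, becomes divisible by enough of the boundary ideal to lie in the image of $I_{K_p'K^p}^+$. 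Concretely I would run the Hebbarkeitssatz and Tate-normalized-trace argument of Lemma \ref{HardHartog}, with $\mathcal{X}_{\Gamma_0(p^m)}^\ast(\epsilon)_a$ replaced by $\mathcal{X}_{K_pK^p}^\uast(J)$ and the finite cover $\mathcal{Y}_m^\ast$ by $\mathcal{X}_{K_pK^p}^\ast(J)$, to get $\varinjlim_{K_p}\tilde R_{K_pK^p}^{+a}/p^n=R_{K^p}^{+a}/p^n$ and then intersect with the boundary ideals; the case $g=1$ (no codimension-$\geq2$ boundary) is handled directly as in the proof of Theorem \ref{ExistenceHT}, since there the boundary sits in the good-reduction locus and $\mathcal{X}_{K_pK^p}^\ast(J)\to\mathcal{X}_{K_pK^p}^\uast(J)$ is already finite étale near it. Making precise how the strongly-Zariski-closed structure at infinite level tames the colimit of the normalizations --- whose ramification along the boundary is a priori unbounded in $K_p$, and whose $\uast$-models are not normal --- is the technical heart of the lemma, and the step I expect to be most delicate.

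\textbf{The trace diagram.} The top horizontal map is the algebraic trace: $\mathcal{X}_{K_pK_1^p}^\ast(J)\to\mathcal{X}_{K_pK_2^p}^\ast(J)$ is finite between normal rings and étale away from the boundary, so Lemma \ref{TraceMapsNormal} produces $\tr\colon\tilde R_{K_pK_1^p}^+\to\tilde R_{K_pK_2^p}^+$ carrying $\tilde I_{K_pK_1^p}^+$ into $\tilde I_{K_pK_2^p}^+$; these traces are compatible as $K_p$ shrinks (the trace is compatible with finite étale base change away from the boundary, and by the injection $\tilde I^+\hookrightarrow H^0(\,\cdot\setminus\partial,\OO^+)$ both sides are determined there), so by Part 1 they induce the left and right verticals' targets and a map $I_{K_1^p}^{+a}/p^n\to I_{K_2^p}^{+a}/p^n$. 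The bottom map is, by its construction in the proof of Theorem \ref{CompAutomForm}, the almost-purity trace of the finite étale morphism $\mathcal{X}_{K_1^p}^\uast(J)\setminus\mathcal{Z}_{K_1^p}\to\mathcal{X}_{K_2^p}^\uast(J)\setminus\mathcal{Z}_{K_2^p}$, applied to $j_!$ of the structure sheaves and evaluated on $\mathcal{V}_J$. To check commutativity I would restrict everything to the complement of the boundary: there $\mathcal{X}^\ast=\mathcal{X}^\uast$, the colimit of the algebraic traces is the honest finite étale trace (Lemma \ref{TraceMapsNormal} restricted to the étale locus is the classical trace, compatible with the pro-finite-étale cover), and the almost-purity trace is by definition that same étale trace; since $I_{K^p}^{+a}/p^n=H^0(\mathcal{V}_J,j_{K^p!}\OO^{+a}/p^n)\hookrightarrow H^0(\mathcal{V}_J\setminus\mathcal{Z}_{K^p},\OO^{+a}/p^n)$, an element of $I_{K^p}^{+a}/p^n$ is determined by its restriction off the boundary, so the two composites around the square, agreeing there, agree. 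This completes the proof.
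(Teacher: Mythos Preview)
Your overall architecture is sound and the trace-diagram argument at the end is essentially the paper's, but the heart of the lemma---the $\ast$ versus $\uast$ comparison---is handled quite differently in the paper, and your proposed route has a real obstruction.

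You suggest running the Hebbarkeitssatz/Tate-normalized-trace machinery of Lemma~\ref{HardHartog} with $\mathcal{X}_{K_pK^p}^\uast(J)$ playing the role of the base and $\mathcal{X}_{K_pK^p}^\ast(J)$ as the finite cover, aiming at $\varinjlim_{K_p}\tilde R_{K_pK^p}^{+a}/p^n=R_{K^p}^{+a}/p^n$. But that machinery rests on two inputs you do not have here: Tate's normalized traces from Corollary~\ref{CorTateTraces} exist only on the anticanonical $\Gamma_0(p^\infty)$-tower (they come from the canonical Frobenius lifts of Theorem~\ref{ExCanFrobLifts}), and the base in Lemma~\ref{HardHartog} is normal, whereas $\mathcal{X}_{K_pK^p}^\uast$ need not be. Your appeal to ``the conductor being killed in the colimit because the boundary is strongly Zariski closed'' does not connect these notions: strong Zariski closedness is a statement about almost surjectivity of $R_{K^p}^+\to S_{K^p}^+$, not about the failure of normality in the $\uast$-models disappearing.

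The paper avoids comparing the full rings $\tilde R$ and $R$ entirely. It constructs the map $\tilde I_{K_pK^p}^+\to I_{K^p}^+$ directly through the sheaf $j_!\OO^+/p^n$: an element of $\tilde I_{K_pK^p}^+$ restricts to a function on the complement of the boundary (where $\ast$ and $\uast$ agree), giving
\[
\tilde I_{K_pK^p}^+/p^n\to H^0(\mathcal{X}_{K_pK^p}^\uast(J),\, j_{K_pK^p!}\OO^+/p^n)\to H^0(\mathcal{X}_{K^p}^\uast(J),\, j_{K^p!}\OO^+/p^n)=H^0(\mathcal{X}_{K^p}^\uast(J),\,\mathcal I^+/p^n),
\]
the last equality using that the boundary is strongly Zariski closed at infinite level (as in the proof of Theorem~\ref{CompAutomForm}). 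One then observes that $I_{K^p}^+/p^n\to H^0(\mathcal I^+/p^n)$ is almost an isomorphism, and takes the inverse limit over $n$ to produce the honest map $\tilde I_{K_pK^p}^+\to I_{K^p}^+$. Injectivity is automatic (evaluation away from the boundary), and then the factorization $\varinjlim I_{K_pK^p}^+/p^n\hookrightarrow\varinjlim \tilde I_{K_pK^p}^+/p^n\hookrightarrow I_{K^p}^+/p^n$, with the composite already shown to be an isomorphism, forces both maps to be isomorphisms.

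For the first equality itself, your exact-sequence argument via $S_{K^p}^+=(\varinjlim T_{K_pK^p}^+)^{\wedge}_p$ silently assumes the analogue of Theorem~\ref{PerfShHodge}~(i) for the boundary, which is not stated. The paper instead pulls back to the Siegel anticanonical tower (where the boundary ideal is controlled by Corollary~\ref{BoundaryStronglyClosedAnticanTower} and Lemma~\ref{ClosedImmersionPullback}~(ii)) and uses Tate's normalized traces there to see that $I_{K^p}^+$ is almost generated by functions from finite level; a short elementary argument then upgrades ``almost surjective'' to ``surjective'' for the colimit of ideals.
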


\begin{proof} From Theorem \ref{PerfShHodge} (i), we know that
\[
R_{K^p}^+/p^n = \varinjlim_{K_p} R_{K_pK^p}^+/p^n\ .
\]
In particular, the map $\varinjlim_{K_p} I_{K_pK^p}^+/p^n\to I_{K^p}^+/p^n$ is injective. To prove that it is surjective, it is enough to prove that it is almost surjective: Indeed, if $f\in I_{K^p}^+/p^n$ is such that $p^\epsilon f = g_{K_p}$ for some $g_{K_p}\in I_{K_pK^p}^+$ and $0<\epsilon<\frac n2$, then (by considering valuations, using surjectivity of $\mathcal{X}_{K^p}^\uast(J)\to \mathcal{X}_{K_pK^p}^\uast(J)$), $g_{K_p} = p^\epsilon f_{K_p}$ for some $f_{K_p}\in I_{K_pK^p}^+$, and $f_{K_p}\equiv f\mod p^{n-\epsilon}$. Choosing a lift $\tilde{f}\in I_{K^p}^+$ of $f$ and repeating the argument with $f^\prime = (\tilde{f} - f_{K_p})/p^{n-\epsilon}$ gives the claim.

Recall from the construction there exists a pullback diagram of affinoid perfectoid spaces
\[\xymatrix{
\mathcal{Z}_{K^p}(J)\ar[r]\ar[d] & \mathcal{X}_{K^p}^\uast(J)\ar[d]\\
\mathcal{Z}_{\Gamma_0(p^\infty)}(\epsilon)_a\ar[r] & \mathcal{Y}_{\Gamma_0(p^\infty)}^\ast(\epsilon)_a
}\]
where $\mathcal{Y}_{\Gamma_0(p^\infty)}^\ast(\epsilon)_a$ denotes the inverse limit of the anticanonical $\Gamma_0(p^\infty)$-tower in the Siegel moduli space as in Corollary \ref{TiltingStrictNbhd}, with boundary $\mathcal{Z}_{\Gamma_0(p^\infty)}(\epsilon)_a$. By Corollary \ref{BoundaryStronglyClosedAnticanTower} and Lemma \ref{ClosedImmersionPullback} (ii), we are reduced to showing that the ideal in the global sections of $\OO^+$ defining $\mathcal{Z}_{\Gamma_0(p^\infty)}(\epsilon)_a\subset \mathcal{Y}_{\Gamma_0(p^\infty)}^\ast(\epsilon)_a$ is almost generated by functions in $\OO^+$ coming from finite level and vanishing along the boundary. This follows from Tate's normalized traces, cf. Corollary \ref{CorTateTraces} (observing that by Lemma \ref{TraceMapsNormal}, Tate's normalized traces of a function vanishing along the boundary will still vanish along the boundary).

Next, we claim that there is a unique map
\[
\tilde{I}_{K_pK^p}^+\to I_{K^p}^+
\]
commuting with evaluation at points outside the boundary (where there is no difference between $\mathcal{X}_K^\uast$ and $\mathcal{X}_K^\ast$). If it exists, it follows by consideration of valuations that it is injective, with $\tilde{I}_{K_pK^p}^+/p^n\hookrightarrow I_{K^p}^+/p^n$; thus, the composite map
\[
\varinjlim I_{K_pK^p}^+/p^n\to \varinjlim \tilde{I}_{K_pK^p}^+/p^n\to I_{K^p}^+/p^n
\]
is an isomorphism, and the second map injective; i.e., both maps are isomorphisms. To prove existence of $\tilde{I}_{K_pK^p}^+\to I_{K^p}^+$, note that for any $n$, there are maps
\[
\tilde{I}_{K_pK^p}^+/p^n\to H^0(\mathcal{X}_{K_pK^p}^\uast(J),j_{K_pK^p!} \OO^+/p^n)\to H^0(\mathcal{X}_{K^p}^\uast(J),j_{K^p!} \OO^+/p^n) = H^0(\mathcal{X}_{K^p}^\uast(J),\II^+/p^n)\ .
\]
Here, $j_K: \mathcal{X}_K^\uast\setminus \mathcal{Z}_K\to \mathcal{X}_K^\uast$ denotes the open embedding. Using Theorem \ref{PerfShHodge} (ii), we see that for any $n$, the map $I_{K^p}^+/p^n\to H^0(\mathcal{X}_{K^p}^\uast(J),\II^+/p^n)$ is almost an isomorphism; in the inverse limit over $n$, it becomes an isomorphism. Thus, in the inverse limit over $n$, we get the desired map $\tilde{I}_{K_pK^p}^+\to I_{K^p}^+$.

Finally, we need to check commutativity of the diagram of trace maps. It is enough to prove commutativity in the inverse limit over $n$, and then after inverting $p$. The commutativity can be checked after restricting the functions to the complement of the boundary; there, both trace maps are given by trace maps for finite \'etale algebras, giving the result.
\end{proof}

\begin{proof}{\it (of Theorem \ref{ThmHeckeAlgebras}.)} By Theorem \ref{CompAutomForm}, there is a $\mathbb{T}$-equivariant isomorphism
\[
\widetilde{H}^i_{c,K^p}(\Z/p^n\Z)\otimes_{\Z/p^n\Z} \OO_C^a/p^n\cong H^i(\mathcal{X}_{K^p}^\uast,\II^{+a}/p^n)\ .
\]
Also, the map
\[
\Hom_{\Z/p^n\Z}(M,N)\to \Hom_{\OO_C/p^n}(M\otimes_{\Z/p^n\Z} \mathfrak{m}_C/p^n,N\otimes_{\Z/p^n\Z} \mathfrak{m}_C/p^n)
\]
is injective for any $\Z/p^n\Z$-modules $M$, $N$. One may split up $N$ using short exact sequences to reduce to the case $pN=0$. In that case, one reduces further to $n=1$. But for $\F_p$-vector spaces, the result is clear.

In particular, it is enough to prove that
\[
\mathbb{T}_\cl=\mathbb{T}\to \End_{\OO_C^a/p^n}(H^i(\mathcal{X}_{K^p}^\uast,\II^{+a}/p^n))
\]
is continuous, where for an $\OO_C^a/p^n$-module $M$, we endow $M_!$ with the discrete topology, and $\End_{\OO_C^a/p^n}(M)$ with the weakest topology making
\[
\End_{\OO_C^a/p^n}(M)\times M_!\to M_!
\]
continous. We remark that $M\mapsto M_!$ is an exact functor commuting with all colimits. In particular, if $M$ is a colimit of $\mathbb{T}$-modules $M_i$ on which $\mathbb{T}_\cl = \mathbb{T}$ acts continuously, then $\mathbb{T}_\cl = \mathbb{T}$ acts continuously on $M$. Moreover, if $M=N^a$ for an actual $\mathbb{T}\otimes \OO_C/p^n$-module $N$, on which $\mathbb{T}_\cl = \mathbb{T}$ acts continuously, then it also acts continuously on $M$, as $M_! = \mathfrak{m}_C\otimes_{\OO_C} N$. Also, if $\mathbb{T}_\cl = \mathbb{T}$ acts continuously on $M$, it acts continuously on any subquotient.

Now we use the Hodge-Tate period map
\[
\pi_\HT: \mathcal{X}_{K^p}^\uast\to \Fl\hookrightarrow \mathbb{P}^{\binom{2g}g - 1}\ ,
\]
using the Pl\"ucker embedding. Let $N=\binom{2g}g$. The ample line bundle $\omega_\Fl$ on $\Fl$ is the pullback of $\OO(1)$ on $\mathbb{P}^{N - 1}$. Fix the standard sections $s_1,\ldots,s_N\in H^0(\mathbb{P}^{N - 1},\OO(1))$. For $i=1,\ldots,N$, let $\mathcal{U}_i\subset \mathbb{P}^{N - 1}$ be the open affinoid subset where $|s_j|\leq |s_i|$ for all $j=1,\ldots,N$. For $J\subset \{1,\ldots,N\}$, let $\mathcal{U}_J = \bigcap_{i\in J} \mathcal{U}_i$. Observe that $\mathcal{U}_{ij}\subset \mathcal{U}_i$ is given by the condition $|\frac{s_j}{s_i}| = 1$, where $\frac{s_j}{s_i}\in H^0(\mathcal{U}_i,\OO_{\mathcal{U}_i}^+)$.

Let $\mathcal{V}_i = \pi_\HT^{-1}(\mathcal{U}_i)\subset \mathcal{X}_{K^p}^\uast$; by Theorem \ref{PerfShHodge} (i), this is affinoid perfectoid, $\mathcal{V}_i = \Spa(R_i,R_i^+)$. Similarly, one has the $\mathcal{V}_J=\Spa(R_J,R_J^+)\subset \mathcal{X}_{K^p}^\uast$ for $\emptyset\neq J\subset \{1,\ldots,N\}$. By Theorem \ref{PerfShHodge} (ii) (and the observation that this property is stable under passage to affinoid subsets, cf. Lemma \ref{ClosedImmersionPullback}), one can compute
\[
H^i(\mathcal{X}_{K^p}^\uast,\II^{+a}/p^n)
\]
by the Cech complex of almost $\OO_C/p^n$-modules with terms
\[
H^0(\mathcal{V}_J,\II^+/p^n)^a = \{f\in R_J^+/p^n\mid f=0\ \mathrm{on}\ \mathcal{Z}_{K^p}\}^a\ .
\]
As $\pi_\HT$ is equivariant for the Hecke operators away from $p$ (Theorem \ref{PerfShHodge} (iii), (iv)), all $\mathcal{V}_J$ are stable under the action of the Hecke operators away from $p$. Thus, $\mathbb{T}$ acts on each term of the Cech complex individually. We conclude that it is enough to prove that
\[
\mathbb{T}_\cl=\mathbb{T}\to \End_{\OO_C^a/p^n}(H^0(\mathcal{V}_J,\II^+/p^n)^a)
\]
is continuous, for all $\emptyset\neq J\subset \{1,\ldots,N\}$.

For all $K_p\subset G(\Q_p)$ sufficiently small, all $\mathcal{V}_i$ come via pullback from open affinoid subsets $\mathcal{V}_{iK_p}\subset \mathcal{X}_{K_pK^p}^\uast$. By Theorem \ref{PerfShHodge} (i) (tensored with a line bundle), the map
\[
\varinjlim_{K_p} H^0(\mathcal{V}_{iK_p},\omega_{K_pK^p})\to H^0(\mathcal{V}_i,\omega_{K^p})
\]
has dense image. Therefore, making $K_p$ smaller, one can assume that there are sections
\[
s_j^{(i)}\in H^0(\mathcal{V}_{iK_p},\omega_{K_pK^p})
\]
satisfying the conditions of Lemma \ref{ConstructionFormalModel}, and such that
\[
|\frac{s_j - s_j^{(i)}}{s_i^{(i)}}|\leq |p^n|
\]
on $\mathcal{V}_i$ for $j=1,\ldots,N$. One gets a formal model $\mathfrak{X}_{K_pK^p}^\uast$ of $\mathcal{X}_{K_pK^p}^\uast$ with an open cover by $\mathfrak{V}_{iK_p}$.\footnote{This formal model is extremely strange, and not at all related to one of the standard integral models of Shimura varieties. For example, the Newton stratification is not induced from a stratification of the special fibre of $\mathfrak{X}_{K_pK^p}^\uast$. More specifically, there is a finite set of points in the special fibre such that all ordinary points of the generic fibre specialize to one of those points; yet, there are also many non-ordinary points in the tubular neighborhood of these points.} In fact, we can also take the preimages $\tilde{\mathcal{V}}_{iK_p}\subset \mathcal{X}_{K_pK^p}^\ast$ of $\mathcal{V}_{iK_p}\subset \mathcal{X}_{K_pK^p}^\uast$; pulling back the sections $s_j^{(i)}$ will put us into the situation of Lemma \ref{ConstructionFormalModel}, thus constructing a formal model $\mathfrak{X}_{K_pK^p}^\ast$ of $\mathcal{X}_{K_pK^p}^\ast$. It comes equipped with an ample line bundle $\omega_{K_pK^p}^\int$, as well as the ideal sheaf $\mathfrak{I}\subset \OO_{\mathfrak{X}_{K_pK^p}^\ast}$, constructed via Lemma \ref{ConstructionFormalModelIdeal} from the ideal sheaf $\II\subset \OO_{\mathcal{X}_{K_pK^p}^\ast}$.

One checks directly that all of these objects are independent of the choice of the $s_j^{(i)}$ approximating $s_j$ on $\mathcal{V}_i$. In particular, $G(\A_f^p)$ still acts on the tower of the $\mathfrak{X}_{K_pK^p}^\ast$ with the invertible sheaf $\omega_{K_pK^p}^\int$. Also, the sections
\[
\bar{s}_j\in H^0(\mathfrak{X}_{K_pK^p}^\ast,\omega_{K_pK^p}^\int/p^n)
\]
are independent of any choice. They commute with the action of $G(\A_f^p)$, and will serve as a substitute for the Hasse invariant.

By Lemma \ref{CompAlgTraceMaps}, we have for $i\in \{1,\ldots,N\}$ (corresponding to a subset of $\{1,\ldots,2g\}$ of cardinality $g$) a $\mathbb{T}$-equivariant equality
\[
H^0(\mathcal{V}_i,\II^{+a}/p^n) = \varinjlim_{K_p} H^0(\tilde{\mathfrak{V}}_{iK_p},\mathfrak{I}/p^n)^a\ .
\]
In fact, the same holds true for any subset $\emptyset\neq J\subset \{1,\ldots,N\}$: Fix some $i\in J$, look at the previous equality, and invert the sections $\bar{s}_j/\bar{s}_i\in H^0(\tilde{\mathfrak{V}}_{iK_p},\OO)/p^n$ (which commute with the $\mathbb{T}$-action) for all $j\in J$. We see that it is enough to prove that for any $J\subset \{1,\ldots,N\}$ and sufficiently small $K_p$, the map
\[
\mathbb{T}_\cl = \mathbb{T}\to \End_{\OO_C/p^n} (H^0(\tilde{\mathfrak{V}}_{JK_p},\mathfrak{I}/p^n))
\]
is continuous.

For $i\in J$, we have the sections $\bar{s}_i\in H^0(\mathfrak{X}_{K_pK^p}^\ast,\omega_{K_pK^p}^\int/p^n)$; let $\bar{s}_J = \prod_{i\in J} \bar{s}_i$. As $\mathfrak{V}_{JK_p}\subset \mathfrak{X}_{K_pK^p}^\ast$ is the locus where $\bar{s}_i$ is invertible for all $i\in J$, it is also the locus where $\bar{s}_J$ is invertible. It follows that
\[
H^0(\mathfrak{V}_{JK_p},\mathfrak{I}/p^n) = \varinjlim_{\times \bar{s}_J} H^0(\mathfrak{X}_{K_pK^p}^\ast,(\omega_{K_pK^p}^\int)^{\otimes k|J|}\otimes \mathfrak{I}/p^n)\ ,
\]
where all maps are $\mathbb{T}$-equivariant, because $\bar{s}_J$ commutes with the action of $\mathbb{T}$. It remains to prove that for $k$ sufficiently divisible, the action of $\mathbb{T}_\cl = \mathbb{T}$ on
\[
H^0(\mathfrak{X}_{K_pK^p}^\ast,(\omega_{K_pK^p}^\int)^{\otimes k}\otimes \mathfrak{I}/p^n)
\]
is continuous. By Lemma \ref{ConstructionFormalModel}, $\omega_{K_pK^p}^\int$ is ample. Thus, for $k$ sufficiently divisible,
\[
H^1(\mathfrak{X}_{K_pK^p}^\ast,(\omega_{K_pK^p}^\int)^{\otimes k}\otimes \mathfrak{I}/p^n)=0\ ;
\]
it follows that for those $k$, we have
\[
H^0(\mathfrak{X}_{K_pK^p}^\ast,(\omega_{K_pK^p}^\int)^{\otimes k}\otimes \mathfrak{I}/p^n) = H^0(\mathfrak{X}_{K_pK^p}^\ast,(\omega_{K_pK^p}^\int)^{\otimes k}\otimes \mathfrak{I})/p^n\ .
\]
Thus, we are reduced to showing that the action of $\mathbb{T}_\cl = \mathbb{T}$ on
\[
H^0(\mathfrak{X}_{K_pK^p}^\ast,(\omega_{K_pK^p}^\int)^{\otimes k}\otimes \mathfrak{I})
\]
is continuous. But this group is $p$-torsion free, so it is enough to know that the action of $\mathbb{T}_\cl = \mathbb{T}$ on
\[
H^0(\mathfrak{X}_{K_pK^p}^\ast,(\omega_{K_pK^p}^\int)^{\otimes k}\otimes \mathfrak{I})[p^{-1}] = H^0(\mathcal{X}_{K_pK^p}^\ast,\omega_{K_pK^p}^{\otimes k}\otimes \II)
\]
is continuous, which holds true by assumption.
\end{proof}

\chapter{Galois representations}\label{GaloisReprChapter}

\section{Recollections}

We recall some results from the literature in the specific case that we will need. We specialize our group $G$ further. Fix a field $F$ which is either totally real or CM. In the totally real case, let $G=\Res_{F/\Q} \Sp_{2n}$. If $F$ is CM, let $F^+\subset F$ be the maximal totally real subfield, let $U/F^+$ (a form of $\GL_{2n}$) be the quasisplit unitary group with respect to the extension $F/F^+$, and take $G=\Res_{F^+/\Q} U$. In both cases, we take the standard conjugacy class $D$ of $u:U(1)\to G^\ad_\R$; observe that in all cases, $(G,D)$ is of Hodge type. Also, $G$ admits $\Res_{F/\Q} \GL_n$ as a maximal Levi. Write $F^+=F$ if $F$ is totally real, $G_0 = \Sp_{2n}/F^+$ if $F$ is totally real and $G_0=U/F^+$ if $F$ is CM; then, in all cases, $G = \Res_{F^+/\Q} G_0$. Also, $G$ is a twisted endoscopic group of $H=\Res_{F/\Q} H_0$, where $H_0 = \GL_h/F$, with $h=2n+1$ if $F$ is totally real, and $h=2n$ if $F$ is CM. Fix the standard embedding $\eta: {}^L G_0\hookrightarrow {}^L \Res_{F/F^+} H_0$ of $L$-groups (over $F^+$).

We need the existence of the associated endoscopic transfer, due to Arthur, \cite{ArthurBook} (resp. Mok, \cite{Mok}, in the unitary case). These results are still conditional on the stabilization of the twisted trace formula. However, in the unitary case, there are unconditional results of Shin, \cite{ShinGoldringApp}.

In fact, the representations we shall be interested in have a specific type at infinity.

\begin{prop}\label{PropInfType} Consider $\mathcal{G} = \Sp_{2n}/\R$, resp. $\mathcal{G} = U(n,n)/\R$, with maximal compact subgroup $K\subset \mathcal{G}(\R)$. Fix the standard Borel $\mathcal{B}\subset \mathcal{G}$, with torus $\mathcal{T}$, and identify $X^\ast(\mathcal{T}_\C) = \Z^n$, resp. $X^\ast(\mathcal{T}_\C) = \Z^{2n}$, in the usual way (up to the relevant Weyl groups). Let $\chi: K\to \C^\times$ be the character given by
\[
\chi: K\cong U(n)\to \C^\times: g_0\mapsto \det(g_0)\ ,\ \mathrm{resp.}
\]
\[
\chi: K\cong U(n)\times U(n)\to \C^\times: (g_1,g_2)\mapsto \det(g_1) \det(g_2)^{-1}\ .
\]
For $k>n$, resp. $k\geq n$, there is a unique discrete series representation $\pi_k$ of $\mathcal{G}$ with minimal $K$-type $\chi^{\otimes k}$, and it has infinitesimal character
\[
(k-1,k-2,\ldots,k-n)\in X^\ast(\mathcal{T}_\C)_\R = \R^n\ ,\ \mathrm{resp.}
\]
\[
(k-\tfrac 12,k-\tfrac 32,\ldots,k-n+\tfrac 12,n-\tfrac 12 - k, \ldots, \tfrac 32 - k,\tfrac 12 - k)\in X^\ast(\mathcal{T}_\C)_\R = \R^{2n}\ .
\]
\end{prop}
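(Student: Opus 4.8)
The statement is a completely standard fact about discrete series representations of real reductive groups, once one identifies the relevant Harish-Chandra parameters; the plan is to reduce it to Harish-Chandra's classification and then match the combinatorics of $K$-types and infinitesimal characters. First, I would recall that for $\mathcal{G}$ equal to $\Sp_{2n}(\R)$ or $U(n,n)$, the group has discrete series since $\operatorname{rank} \mathcal{G} = \operatorname{rank} K$, and the discrete series $L$-packets are parametrized by regular elements of $X^\ast(\mathcal{T}_\C)_\R$ modulo the action of the compact Weyl group $W_K$, while each member of a packet is pinned down by its minimal (lowest) $K$-type via the Blattner formula. Concretely, given a regular dominant weight $\lambda$ (the would-be Harish-Chandra parameter, shifted by $\rho$ to get the infinitesimal character), the Vogan--Zuckerman / Blattner recipe produces the lowest $K$-type $\mu = \lambda + \rho - 2\rho_c$ (with $\rho_c$ half the sum of compact positive roots, $\rho$ half the sum of all positive roots for a compatible choice). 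I would therefore compute, for the two groups in question, the weight $\mu$ attached to the candidate parameter and check it equals $k$ times the character $\chi$ described in the statement; conversely, that the minimal $K$-type $\chi^{\otimes k}$ forces the stated infinitesimal character.

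For $\mathcal{G} = \Sp_{2n}(\R)$, I would use the standard coordinates in which positive roots are $e_i \pm e_j$ ($i<j$) and $2e_i$, the compact roots are $e_i - e_j$, so $\rho = (n, n-1, \ldots, 1)$ and $\rho_c = (\tfrac{n-1}{2}, \tfrac{n-3}{2}, \ldots, -\tfrac{n-1}{2})$; the character $\chi = \det$ of $K \cong U(n)$ corresponds to the weight $(1,1,\ldots,1)$. Solving $\mu = \lambda + \rho - 2\rho_c = k(1,\ldots,1)$ for the dominant representative $\lambda$ and then passing to the infinitesimal character $\lambda$ (after the usual $\rho$-normalization convention used in the statement, which explains the shift making it $(k-1, k-2, \ldots, k-n)$), one gets exactly the claimed vector; regularity of $\lambda$ is equivalent to the inequality $k > n$, which is why that is the range of validity. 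For $\mathcal{G} = U(n,n)$, I would do the analogous bookkeeping with $X^\ast(\mathcal{T}_\C) = \Z^{2n}$, the maximal compact $K \cong U(n) \times U(n)$, compact positive roots $e_i - e_j$ within each block, non-compact roots pairing the two blocks, $\chi = \det(g_1)\det(g_2)^{-1}$ corresponding to $(1,\ldots,1,-1,\ldots,-1)$, and again solve for $\lambda$; here the boundary case is $k \geq n$ (the $U$-group has a slightly more generous range because of the different root data), and the infinitesimal character comes out as $(k-\tfrac12, \ldots, k - n + \tfrac12, n - \tfrac12 - k, \ldots, \tfrac12 - k)$ as stated. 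Uniqueness of $\pi_k$ is then immediate: two discrete series with the same lowest $K$-type are isomorphic (a member of an $L$-packet is determined by its minimal $K$-type), and existence is Harish-Chandra's construction applied to the regular parameter we produced.

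The main obstacle is purely notational rather than conceptual: one has to be scrupulously careful about (i) which normalization of the infinitesimal character is in force — the statement uses the convention coming from the $L$-group / Satake normalization, shifted so that the trivial representation would have infinitesimal character $\rho$, which differs by a $\rho$-shift from the Harish-Chandra parameter — and (ii) the identification of $X^\ast(\mathcal{T}_\C)$ with $\Z^n$ resp. $\Z^{2n}$ only up to the relevant Weyl group, so all equalities are equalities of Weyl orbits. I would organize the write-up so that these conventions are fixed once at the start, then the verification is a short explicit computation of $\lambda + \rho - 2\rho_c$ in each case. I expect no genuine difficulty; the content is entirely Harish-Chandra's theory of discrete series together with the Blattner formula for the lowest $K$-type, and the regularity condition translating into the numerical bounds $k > n$ (symplectic) and $k \geq n$ (unitary).
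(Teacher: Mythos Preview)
Your proposal is correct and follows essentially the same approach as the paper: both use the Blattner/Harish-Chandra relation between the Harish-Chandra parameter $\lambda$ and the minimal $K$-type (the paper writes it as $\Lambda = \lambda + \delta_{nc} - \delta_c$, which equals your $\mu = \lambda + \rho - 2\rho_c$), compute the relevant half-sums of roots explicitly in each case, solve for $\lambda$, and then read off the regularity condition as the stated inequality on $k$. One minor point: the paper takes the Harish-Chandra parameter $\lambda$ to \emph{be} the infinitesimal character directly, with no additional $\rho$-shift, so your worry about normalization conventions is unnecessary here.
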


\begin{proof} Fix the standard maximal compact torus $\mathcal{T}_c\subset K$, and identify $X^\ast(\mathcal{T}_\C) = X^\ast(\mathcal{T}_{c,\C})$. Let $\delta_{nc}$ denote the half-sum of the noncompact roots, and $\delta_c$ the half-sum of the compact roots. Let $\Lambda_k\in X^\ast(\mathcal{T}_{c,\C})$ denote the restriction of $\chi^{\otimes k}$ to $\mathcal{T}_c$. Then
\[\begin{aligned}
2\delta_{nc} &= (n+1,n+1,\ldots,n+1), 2\delta_c = (n-1,n-3,\ldots,3-n,1-n),\\
\Lambda_k &= (k,k,\ldots,k)\in X^\ast(\mathcal{T}_{c,\C})\cong \Z^n\ ,\mathrm{resp.}
\end{aligned}\]
\[\begin{aligned}
2\delta_{nc} &= (n,n,\ldots,n,-n,\ldots,-n,-n), 2\delta_c = (n-1,\ldots,3-n,1-n,n-1,n-3,\ldots,1-n),\\
\Lambda_k &= (k,k,\ldots,k,-k,\ldots,-k,-k)\in X^\ast(\mathcal{T}_{c,\C})\cong \Z^{2n}\ .
\end{aligned}\]
Let $\lambda\in X^\ast(\mathcal{T}_{c,\C})\otimes_{\Z} \R$ denote a Harish-Chandra parameter of a discrete series representation. Then the associated minimal $K$-type has highest weight
\[
\Lambda = \lambda + \delta_{nc} - \delta_c\ .
\]
Thus, the minimal $K$-type $\Lambda_k$ determines $\lambda$ uniquely as
\[
\lambda = (k-1,k-2,\ldots,k-n)\in X^\ast(\mathcal{T}_\C)_\R = \R^n\ ,\ \mathrm{resp.}
\]
\[
\lambda = (k-\tfrac 12,k-\tfrac 32,\ldots,k-n+\tfrac 12,n-\tfrac 12 - k, \ldots, \tfrac 32 - k,\tfrac 12 - k)\in X^\ast(\mathcal{T}_\C)_\R = \R^{2n}\ .
\]
If $k>n$, resp. $k\geq n$, this parameter is dominant, and does not lie on any wall. Thus, in that case there is a discrete series representation with that parameter.
\end{proof}

\begin{thm}[\cite{ArthurBook}, \cite{Mok}]\label{EndoscopicTransfer} Let $\pi$ be a cuspidal automorphic representation of $G_0$, and fix an integer $k>n$, resp. $k\geq n$. We assume that for $v|\infty$, $\pi_v\cong \pi_k$ is the discrete series representation from Proposition \ref{PropInfType}.

There exist cuspidal automorphic representations $\Pi_1,\ldots,\Pi_m$ of $\GL_{n_1}/F,\ldots,\GL_{n_m}/F$, integers $\ell_1,\ldots,\ell_m\geq 1$, with  $n_1\ell_1+\ldots+n_m\ell_m = h$, such that
\[
(\Pi_1^\vee)^c\cong \Pi_1, \ldots , (\Pi_m^\vee)^c\cong \Pi_m\ ,
\]
where $c: F\to F$ denotes complex conjugation if $F$ is CM, and $c=\id: F\to F$ if $F$ is totally real, and such that the following condition is satified. For all finite places $v$ of $F$ lying over a place $v^+$ of $F^+$ for which $\pi_{v^+}$ is unramified, all $\Pi_{iv}$ are unramified, and
\[
\eta_{v^+\ast} \varphi_{\pi_{v^+}} = \bigoplus_{i=1}^m \left(\varphi_{\Pi_{iv}} |\cdot|^{(1-\ell_i)/2}\oplus \varphi_{\Pi_{iv}} |\cdot|^{(3-\ell_i)/2}\ldots\oplus \varphi_{\Pi_{iv}} |\cdot|^{(\ell_i-i)/2}\right)\ .
\]
Here, $\Pi_i$ is written as the restricted tensor product of $\Pi_{iv}$ over all places $v$ of $F$,
\[
\varphi_{\pi_{v^+}}: W_{F^+_{v^+}}\to {}^L G_{0,v^+}
\]
is the unramified $L$-parameter of $\pi_{v^+}$, and
\[
\eta_{v^+}: {}^L G_{0,v^+}\to {}^L \Res_{F_v/F^+_{v^+}} \GL_{h,v}
\]
is the $v^+$-component of $\eta$. Thus, $\eta_{v^+\ast} (\varphi_{\pi_{v^+}})$ is a map
\[
W_{F^+_{v^+}}\to {}^L \Res_{F_v/F^+_{v^+}} \GL_{h,v}\ ,
\]
or equivalently a map $W_{F_v}\to {}^L \GL_{h,v}$, i.e. an $h$-dimensional representation of $W_{F_v}$. On the right-hand side,
\[
\varphi_{\Pi_{iv}}: W_{F_v}\to {}^L \GL_{h,v}
\]
denotes the unramified $L$-parameter of $\Pi_{iv}$.

Moreover, for $v|\infty$, the representation $\Pi_{iv} |\cdot|^{(h-\ell_i)/2}$ is regular $L$-algebraic.\footnote{Here, we use the definition of $L$-algebraic automorphic representations from \cite{BuzzardGee}. Regularity means that the infinitesimal character is regular.}
\end{thm}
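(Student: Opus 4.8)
The plan is to deduce Theorem \ref{EndoscopicTransfer} from the general endoscopic classification of discrete automorphic representations of $\Sp_{2n}$ (Arthur, \cite{ArthurBook}) and of quasisplit unitary groups (Mok, \cite{Mok}), combined with the explicit computation of the archimedean $L$-parameters carried out in Proposition \ref{PropInfType}. First I would recall the shape of Arthur's classification: to the cuspidal automorphic representation $\pi$ of $G_0$ one attaches a discrete global $A$-parameter, which (since $\pi$ is cuspidal, hence in particular occurs discretely) decomposes as a formal sum $\psi = \boxplus_{i=1}^m \Pi_i \boxtimes \nu_{\ell_i}$, where each $\Pi_i$ is a cuspidal automorphic representation of $\GL_{n_i}/F$ and $\nu_{\ell_i}$ denotes the $\ell_i$-dimensional irreducible representation of the Arthur $\SL_2$, with the constraint $\sum_i n_i\ell_i = h$ coming from the fact that the standard embedding $\eta$ has image in $\GL_h$. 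The self-duality condition $(\Pi_i^\vee)^c \cong \Pi_i$ (conjugate self-duality in the CM case, ordinary self-duality in the totally real case) is exactly the condition that each summand be fixed by the relevant involution defining the twisted endoscopic datum $(G, \eta)$ for $H = \Res_{F/\Q}\GL_h$; this is part of the definition of which parameters are relevant for $G_0$ in \cite{ArthurBook}, \cite{Mok}, so it comes for free.

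Next I would address the unramified matching of Satake parameters. At a finite place $v^+$ of $F^+$ where $\pi_{v^+}$ is unramified, the local component of the global parameter is the unramified parameter attached to $\pi_{v^+}$, and the compatibility of the classification with the Satake isomorphism (the defining property of the correspondence, via the transfer of unramified Hecke algebras through $\eta_{v^+}$) gives precisely the stated identity
\[
\eta_{v^+\ast}\varphi_{\pi_{v^+}} = \bigoplus_{i=1}^m\left(\varphi_{\Pi_{iv}}|\cdot|^{(1-\ell_i)/2}\oplus \cdots \oplus \varphi_{\Pi_{iv}}|\cdot|^{(\ell_i-1)/2}\right),
\]
where the $\SL_2$-factor $\nu_{\ell_i}$ unwinds into the displayed string of half-integral twists of $\varphi_{\Pi_{iv}}$, and unramifiedness of all $\Pi_{iv}$ follows from unramifiedness of $\pi_{v^+}$ together with the fact that the local components of a globally unramified parameter are unramified. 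One small point to check here is the precise normalization of the absolute value twists and the precise form of $\eta_{v^+}$ as a base-change followed by the standard embedding; I would quote this directly from the setup preceding the theorem.

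Finally, for the archimedean regularity statement, I would use that by hypothesis $\pi_v \cong \pi_k$ for $v\mid\infty$ is the discrete series representation of Proposition \ref{PropInfType}, whose infinitesimal character was computed there to be the regular (and, for $k>n$ resp. $k\geq n$, off-every-wall) parameter $(k-1,\ldots,k-n)$ resp. $(k-\tfrac12,\ldots,\tfrac12-k)$. The archimedean $A$-parameter of a discrete series is of Adams--Johnson type with trivial $\SL_2$, so the infinitesimal character of $\psi_\infty$ equals that of $\varphi_{\pi_\infty}$, hence is regular; matching through $\eta_\infty$, the Hodge--Tate--type (infinitesimal-character) data of each $\Pi_{i\infty}\boxtimes\nu_{\ell_i}$ must be a sub-multiset of a regular multiset, forcing each $\Pi_{i\infty}$ itself, after the twist by $|\cdot|^{(h-\ell_i)/2}$ which re-centers the $\nu_{\ell_i}$-string, to have regular integral (i.e. $L$-algebraic) infinitesimal character in the sense of \cite{BuzzardGee}. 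I expect the main obstacle to be purely expository: stating the classification of \cite{ArthurBook}, \cite{Mok} in exactly the normalization needed and keeping the half-integral twists and the passage between $L$- and $A$-parameters bookkeeping-consistent, together with the caveat (already flagged in the surrounding text) that these results rest on the stabilization of the twisted trace formula, with Shin's \cite{ShinGoldringApp} providing an unconditional substitute in the unitary case. No genuinely new argument is required beyond Proposition \ref{PropInfType}.
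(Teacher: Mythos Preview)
The paper does not give a proof of this theorem: it is stated with attribution to \cite{ArthurBook} and \cite{Mok} and then used as a black box, with only the remark following the statement spelling out the precise combined infinitesimal characters at infinity. Your sketch is a correct and reasonable outline of how one extracts the statement from the endoscopic classification in those references (global $A$-parameter decomposition, self-duality from the endoscopic datum, unramified compatibility, and the archimedean analysis via Proposition \ref{PropInfType}); it simply goes further than the paper itself, which treats the result as input from the literature rather than something to be argued.
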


\begin{rem} As regards the last statement, one checks more precisely from Proposition \ref{PropInfType} and the compatibility of the global and local endoscopic transfer that for fixed $v|\infty$, the infinitesimal characters of the representations $\Pi_{iv}|\cdot|^{(2j-\ell_i-1)/2}$ for $i=1,...,m$, $j=1,\ldots,\ell_i$, (considered as (multi-)sets of real numbers) combine to
\[
(k-1,k-2,\ldots,k-n,0,n-k,n-1-k,\ldots,1-k)
\]
if $F$ is totally real, resp.
\[
(k-\tfrac 12,k-\tfrac 32,\ldots,k-n+\tfrac 12,n-\tfrac 12 - k,\ldots,\tfrac 32 - k,\tfrac 12 - k)
\]
if $F$ is CM.
\end{rem}

We combine this theorem with the existence of Galois representation (\cite{Clozel}, \cite{Kottwitz}, \cite{HarrisTaylor}, \cite{Shin}, \cite{ChenevierHarris}; the precise statement we need is stated as \cite[Theorems 1.1, 1.2]{BLGHT}).\footnote{Curiously, the Galois representations we need are the ones that are hardest to construct: They are regular, but non-Shin-regular, and not of finite slope at $p$.} Recall that we fixed $\iota: \C\cong \overline{\Q}_p$.

\begin{thm}\label{ExGalRepr} Let $\Pi$ be a cuspidal automorphic representation of $\Res_{F/\Q} \GL_n$ such that $(\Pi^\vee)^c\cong \Pi$ and $\Pi|\cdot|^{k/2}$ is regular $L$-algebraic for some integer $k$. Then there exists a continuous semisimple representation
\[
\sigma_\Pi: \Gal(\overline{F}/F)\to \GL_n(\overline{\Q}_p)
\]
such that $(\sigma_\Pi^\vee)^c\cong \sigma_\Pi \chi_p^k$, where $\chi_p$ is the $p$-adic cyclotomic character,\footnote{Note that $\chi_p^{-1}$ is the Galois representation attached to the absolute value $|\cdot|:\Q^\times\backslash \A_\Q^\times\to \R_{>0}$, as we normalize local class-field theory by matching up geometric Frobenius elements with uniformizers.} with the following property. For all finite places $v$ of $F$ for which $\Pi_v$ is unramified, $\sigma_\Pi$ is unramified at $v$, and
\[
\varphi_{\Pi_v} = \sigma_\Pi|_{W_{F_v}}|\cdot|^{k/2}
\]
up to semisimplification (i.e., Frobenius-semisimplification).
\end{thm}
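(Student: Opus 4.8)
The plan is to deduce this statement entirely from the existing construction of Galois representations attached to regular $L$-algebraic (equivalently, regular algebraic) essentially conjugate self-dual cuspidal automorphic representations of $\GL_n$; the precise form we want is \cite[Theorems 1.1, 1.2]{BLGHT}, and the substance of the argument is to check that our hypotheses match theirs and that the normalizations line up. Since $(\Pi^\vee)^c\cong \Pi$, the twist $\Pi|\cdot|^{k/2}$ is (up to a finite-order character) conjugate self-dual, and by assumption it is regular $L$-algebraic, i.e. regular algebraic after the shift built into the comparison between $L$-algebraic and $C$-algebraic normalizations of \cite{BuzzardGee}. Thus $\Pi$ falls into the class of polarizable, regular algebraic, essentially conjugate self-dual cuspidal representations for which Galois representations are known.

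First I would treat the CM case. Here one realizes $\Pi$, after automorphic base change to the quasi-split unitary group $U(n)$ over $F^+$ (and, after a further solvable base change, a unitary group that is compact at the infinite places), in the $\ell$-adic cohomology of the associated unitary Shimura varieties; decomposing that cohomology via the stable trace formula together with Kottwitz's description of the Frobenius action, \cite{Kottwitz}, \cite{Shin}, \cite{ChenevierHarris}, yields the semisimple representation $\sigma_\Pi$ together with local-global compatibility at the unramified places in the form $\varphi_{\Pi_v} = \sigma_\Pi|_{W_{F_v}}|\cdot|^{k/2}$ up to Frobenius-semisimplification. The relation $(\sigma_\Pi^\vee)^c\cong \sigma_\Pi\chi_p^k$ is forced by the polarization (the similitude character of the Shimura datum), the cyclotomic twist being pinned down by the convention that matches geometric Frobenius elements with uniformizers. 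For the totally real case, I would base change to a CM quadratic extension $F^\prime/F$, obtain $\sigma_{\Pi_{F^\prime}}$ as above, and descend to $\Gal(\overline{F}/F)$ by the usual patching argument over a family of such $F^\prime$, exactly as in \cite{BLGHT}.

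The only genuine subtlety — noted in the footnote to the statement — is that the representations occurring here are ``non-Shin-regular'' and need not be of finite slope at $p$, so one cannot invoke the earlier, more elementary constructions (as in \cite{HarrisTaylor}) that require square-integrability at some finite place: one must use the full results of Shin, \cite{Shin}, and Chenevier--Harris, \cite{ChenevierHarris}, which produce $\sigma_\Pi$ in general, at the cost of a priori control of local-global compatibility only at the unramified places, which is all that is needed here. Beyond invoking these results, the proof consists of the bookkeeping of normalizations (geometric versus arithmetic Frobenius, the precise twist $\chi_p^k$, $L$- versus $C$-algebraicity) described above, and then citing \cite[Theorems 1.1, 1.2]{BLGHT}.
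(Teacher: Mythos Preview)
Your proposal is correct and matches the paper's approach exactly: the paper does not give an independent proof but cites \cite[Theorems~1.1, 1.2]{BLGHT} (building on \cite{Clozel}, \cite{Kottwitz}, \cite{HarrisTaylor}, \cite{Shin}, \cite{ChenevierHarris}) and records in a remark the normalization bookkeeping---namely that $\Pi' = \Pi|\cdot|^{(k+1-n)/2}$ is regular $C$-algebraic and satisfies $(\Pi'^\vee)^c\cong \Pi'\otimes|\cdot|^{n+1-k}$, so that the cited results apply directly. Your outline of the underlying construction (unitary Shimura varieties, patching for the totally real case) is more detailed than what the paper provides, but the strategy is identical; one small slip is that $\Pi|\cdot|^{k/2}$ is conjugate self-dual up to a power of $|\cdot|$, not a finite-order character.
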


\begin{rem} In the language of \cite{BuzzardGee}, this is the Galois representations attached to the $L$-algebraic cuspidal automorphic representation $\Pi|\cdot|^{k/2}$. To apply the cited result (which is in terms of $C$-algebraic representations), observe that $\Pi^\prime = \Pi|\cdot|^{(k+1-n)/2}$ is regular $C$-algebraic, and satisfies $(\Pi^{\prime\vee})^c \cong \Pi^\prime\otimes \chi$, where $\chi = |\cdot|^{n+1-k}$ is a character that comes via pullback from $\Q$.
\end{rem}

This discussion leads to the following corollary. Fix a sufficiently small level $K\subset G(\A_f) = G_0(\A_{F^+,f})$ of the form $K=K_{S^+}K^{S^+}$ for a finite set $S^+$ of finite places of $F^+$ containing all places dividing $p$, and all places over which $F$ ramifies. Here $K_{S^+}\subset G_0(\A_{F^+,S^+})$ and $K^{S^+}\subset G_0(\A_{F^+,f}^{S^+})$ are compact open, and $K^{S^+}$ is a product of hyperspecial compact open subgroups $K_v\subset G_0(F^+_v)$ at all finite places $v\not\in S^+$. Let
\[
\mathbb{T} = \mathbb{T}_{K^{S^+}} = \bigotimes_{v\not\in S^+} \mathbb{T}_v
\]
be the abstract Hecke algebra, where $v$ runs through finite places outside $S^+$, and
\[
\mathbb{T}_v = \Z_p[G_0(F^+_v)//K_v]
\]
is the spherical Hecke algebra. Before going on, let us recall the description of these Hecke algebras, and define some elements in these algebras.

\begin{lem}\label{DescrHeckeAlg1} Fix a place $v\not\in S^+$. Let $q_v$ be the cardinality of the residue field of $F^+$ at $v$. Let $q_v^{1/2}\in \overline{\Z}_p$ denote the image of the positive square root in $\mathbb{C}$ under the chosen isomorphism $\mathbb{C}\cong \overline{\Q}_p$.
\begin{altenumerate}
\item[{\rm (i)}] Assume $F$ is totally real. Then the Satake transform gives a canonical isomorphism
\[
\mathbb{T}_v[q_v^{1/2}]\cong \Z_p[q_v^{1/2}][X_1^{\pm 1},\ldots,X_n^{\pm 1}]^{S_n\ltimes (\Z/2\Z)^n}\ .
\]
The unramified endoscopic transfer from $G_0(F^+_v)$ to $\GL_{2n+1}(F^+_v)$ is dual to the map
\[
\Z_p[q_v^{1/2}][Y_1^{\pm 1},\ldots,Y_{2n+1}^{\pm 1}]^{S_{2n+1}}\to \Z_p[q_v^{1/2}][X_1^{\pm 1},\ldots,X_n^{\pm 1}]^{S_n\ltimes (\Z/2\Z)^n}
\]
sending the set $\{Y_1,\ldots,Y_{2n+1}\}$ to $\{X_1^{\pm 1},\ldots,X_n^{\pm 1},1\}$. Let
\[
T_i\in \Z_p[Y_1^{\pm 1},\ldots,Y_{2n+1}^{\pm 1}]^{S_{2n+1}}
\]
be the $i$-th elementary symmetric polynomial in the $Y_j$'s for $i=1,\ldots,2n+1$, and let $T_{i,v}\in \mathbb{T}_v[q_v^{1/2}]$ be its image in $\mathbb{T}_v[q_v^{1/2}]$. Then $T_{i,v}\in \mathbb{T}_v$.
\item[{\rm (ii)}] Assume $F$ is CM, and $v$ splits in $F$; fix a lift $\tilde{v}$ of $v$. Then $G_0(F^+_v)\cong H_0(F_{\tilde{v}})\cong \GL_{2n}(F_{\tilde{v}})$, and
\[
\mathbb{T}_v[q_v^{1/2}]\cong \Z_p[q_v^{1/2}][X_1^{\pm 1},\ldots,X_{2n}^{\pm 1}]^{S_{2n}}\ .
\]
The unramified endoscopic transfer is the identity map. Let $T_{i,\tilde{v}}\in \mathbb{T}_v[q_v^{1/2}]$ be the $i$-th elementary symmetric polynomial in $X_1,\ldots,X_{2n}$ for $i=1,\ldots,2n$. Then $q_v^{i/2} T_{i,\tilde{v}}\in \mathbb{T}_v$.\footnote{These elements depend on the choice of $\tilde{v}$; the other choice replaces all $X_i$ by $X_i^{-1}$.}
\item[{\rm (iii)}] Assume $F$ is CM, and $v$ inert (in particular, unramified) in $F$. Then the Satake transform gives a canonical isomorphism
\[
\mathbb{T}_v[q_v^{1/2}]\cong \Z_p[q_v^{1/2}][X_1^{\pm 1},\ldots,X_n^{\pm 1}]^{S_n\ltimes (\Z/2\Z)^n}\ .
\]
The unramified endoscopic transfer from $G_0(F^+_v)$ to $\GL_{2n}(F_v)$ is dual to the map
\[
\Z_p[q_v^{1/2}][Y_1^{\pm 1},\ldots,Y_{2n}^{\pm 1}]^{S_{2n}}\to \Z_p[q_v^{1/2}][X_1^{\pm 1},\ldots,X_n^{\pm 1}]^{S_n\ltimes (\Z/2\Z)^n}
\]
sending the set $\{Y_1,\ldots,Y_{2n}\}$ to $\{X_1^{\pm 1},\ldots,X_n^{\pm 1}\}$. Again, we let $T_{i,v}\in \mathbb{T}_v[q_v^{1/2}]$ denote the image of the $i$-th elementary symmetric polynomial in the $Y_j$'s for $i=1,\ldots,2n$. Then $T_{i,v}\in \mathbb{T}_v$.
\end{altenumerate}
\end{lem}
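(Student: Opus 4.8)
The proof is essentially a matter of unwinding the Satake isomorphism and the definition of the unramified (twisted) endoscopic transfer in each of the three cases, followed by a check of integrality of the displayed elements.

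\textbf{Step 1: the Satake isomorphisms.} The first displayed isomorphism in each part is an instance of the Satake isomorphism (in the form due to Satake, Cartier, and refined integrally by Gross): for a split reductive group $\mathcal{G}$ over a nonarchimedean local field with residue cardinality $q$ and hyperspecial maximal compact $\mathcal{K}$, the Satake transform is a canonical isomorphism of $\Z_p[q^{1/2}]$-algebras between $\mathbb{H}(\mathcal{G},\mathcal{K})\otimes\Z_p[q^{1/2}]$ and $R(\hat{\mathcal{G}})\otimes\Z_p[q^{1/2}]$, where $R(\hat{\mathcal{G}})\cong\Z[X^\ast(\hat T)]^W$ is the representation ring of the dual group. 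In case (i), $\mathcal{G}=\Sp_{2n}$ and $\hat{\mathcal{G}}=\mathrm{SO}_{2n+1}(\C)$, with Weyl group $S_n\ltimes(\Z/2\Z)^n$ acting on $\Z[X_1^{\pm1},\ldots,X_n^{\pm1}]$ in the evident way. In case (ii), a place $v$ split in $F/F^+$ fixes (after the choice of $\tilde v$) an isomorphism $G_0(F^+_v)\cong\GL_{2n}(F_{\tilde v})$, and $\hat{\GL}_{2n}=\GL_{2n}(\C)$ has Weyl group $S_{2n}$. In case (iii), $v$ inert in $F/F^+$ means the quasi-split unitary group $\mathrm{U}(n,n)$ is unramified at $v$; its relative root system is of reduced type $C_n$ with Weyl group $S_n\ltimes(\Z/2\Z)^n$, so the Satake isomorphism for quasi-split unramified groups (or, equivalently, unramified base change as in \cite{Mok}) yields the stated identification, where $X_1^{\pm1},\ldots,X_n^{\pm1}$ are the eigenvalues of the Frobenius-twisted semisimple conjugacy class in $\GL_{2n}(\C)$ attached to an unramified representation.

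\textbf{Step 2: the transfer maps.} By construction the unramified transfer is functorial for the $L$-homomorphism $\eta$, so on Satake parameters it is the restriction of characters along $\eta$. In case (i), $\eta$ is the standard embedding $\mathrm{SO}_{2n+1}(\C)\hookrightarrow\GL_{2n+1}(\C)$; the standard $(2n+1)$-dimensional representation of $\GL_{2n+1}$ restricts to the standard representation of $\mathrm{SO}_{2n+1}$, whose torus eigenvalues are $X_1^{\pm1},\ldots,X_n^{\pm1},1$, giving the substitution $\{Y_j\}\mapsto\{X_i^{\pm1},1\}$. In case (ii), the transfer at a split place is unramified base change, which under the identification $G_0(F^+_v)\cong\GL_{2n}(F_{\tilde v})$ is the identity on spherical Hecke algebras, so $\{Y_j\}\mapsto\{X_j\}$. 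In case (iii), $\eta$ embeds ${}^L\mathrm{U}(n,n)=\GL_{2n}(\C)\rtimes\Gal(F/F^+)$ into ${}^L(\Res_{F/F^+}\GL_{2n})$, and chasing the construction of base change for the quasi-split unitary group (\cite{Mok}) shows that the $2n$ Satake parameters of the base-changed representation of $\GL_{2n}(F_v)$ are precisely $X_1^{\pm1},\ldots,X_n^{\pm1}$, i.e. $\{Y_j\}\mapsto\{X_i^{\pm1}\}$.

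\textbf{Step 3: integrality.} By Steps 1--2, the elements $T_{i,v}$ (resp. $T_{i,\tilde v}$) are the images of the elementary symmetric polynomials $e_i(Y_1,\ldots,Y_h)$, i.e. the characters of the exterior powers $\bigwedge^i$ of the relevant standard representation of the dual group; these are characters of genuine representations of $\hat G_0$ with $\Z$-coefficients, hence lie in $R(\hat G_0)\subset\mathbb{T}_v\otimes\Z_p[q_v^{1/2}]$. The point is to show that (after the indicated $q_v^{i/2}$-twist in case (ii), and with no twist in cases (i), (iii)) they actually lie in $\mathbb{T}_v$ itself. This comes from the explicit formula for the Satake transform of the characteristic function of the double coset $K_v\lambda(\varpi_v)K_v$ for a dominant cocharacter $\lambda$: it equals $q_v^{\langle\rho,\lambda\rangle}$ times the character of the irreducible representation of highest weight $\lambda$, plus a $\Z_p[q_v^{1/2}]$-linear (in fact $\Z_p[q_v]$-linear) combination of characters of smaller highest weight. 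A descending induction on the highest weight then writes the relevant $q_v$-multiple of the character of $\bigwedge^i(\mathrm{std})$ as an integral combination of such characteristic functions, which exhibits it as an element of $\mathbb{T}_v$; the precise powers of $q_v^{1/2}$ in the statement are exactly the ones for which the $\langle\rho,\cdot\rangle$-twists (and the lower-order terms) introduce no denominators. Alternatively one may quote these computations directly from Gross's work on the Satake isomorphism, and from \cite{Mok} (or the appendices to \cite{BLGHT}, \cite{ShinGoldringApp}) in the unitary case.

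The main obstacle is the bookkeeping with the half-sum-of-positive-roots twist $q_v^{\langle\rho,\lambda\rangle}$ in Step 3: one must verify that with the normalizations chosen in the statement no fractional powers of $q_v$ survive, which is precisely what accounts for the asymmetry between the split case (ii), where the extra factor $q_v^{i/2}$ is needed, and the cases (i) and (iii), where it is not. Everything else is formal once the Satake isomorphism and the definition of the transfer are in hand.
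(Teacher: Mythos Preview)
Your proposal is correct and follows the same approach as the paper, which simply declares that ``everything is standard'' and points to the integral Satake transform over $\Z_p[q_v^{1/2}]$ before asserting that ``the final rationality statements are easily verified.'' You have essentially supplied the details the paper omits: identifying the dual groups and Weyl groups, reading off the transfer from the $L$-embedding $\eta$, and deducing integrality from the explicit shape of the Satake transform (as in Gross's treatment); there is no genuine difference in strategy.
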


\begin{proof} Everything is standard. Note that all occuring groups are unramified over $F^+_v$, thus one can extend them to reductive group schemes over the ring of integers. As $v$ does not divide $p$, one can then define a unique Haar measure with values in $\Z_p$ on the unipotent radical of the Borel which gives the integral points measure $1$. The normalized Satake transform is defined over $\Z_p[q_v^{1/2}]$. The final rationality statements are easily verified.
\end{proof}

Now $\mathbb{T}$ acts on the $C$-vector spaces of cusp forms
\[
H^0(X_K^\ast,\omega_K^{\otimes k}\otimes \II)\otimes_{\C} C = H^0(\mathcal{X}_K^\ast,\omega_K^{\otimes k}\otimes \II)\ ,
\]
where $\omega$ is the automorphic line bundle coming via pullback from the standard automorphic ample line bundle on the Siegel moduli space, and $\II$ denotes the ideal sheaf of the boundary (on either space).

Let $S$ be the finite set of finite places $v$ of $F$ which map to a place $v^+\in S^+$ of $F^+$, and let $G_{F,S}$ be the Galois group of the maximal extension of $F$ which is unramified outside $S$.

\begin{cor}\label{CorExGalRepr0} Fix $k>n$, resp. $k\geq n$ (if $F$ is totally real, resp. CM). Let $\delta=0$ if $F$ is totally real, and $\delta=1$ if $F$ is CM. Let $\mathbb{T}_{K,k}$ be the image of $\mathbb{T}$ in
\[
\End_C(H^0(X_K^\ast,\omega_K^{\otimes k}\otimes \II)\otimes_{\C} C)\ .
\]
Then $\mathbb{T}_{K,k}$ is flat over $\Z_p$, and $\mathbb{T}_{K,k}[p^{-1}]$ is finite \'etale over $\Q_p$. For any $x\in (\Spec \mathbb{T}_{K,k})(\overline{\Q}_p)$, there is a continuous semisimple representation
\[
\sigma_x: G_{F,S}\to \GL_h(\overline{\Q}_p)
\]
such that $(\sigma_x^\vee)^c\cong \sigma_x\chi_p^\delta$, and such that for any finite place $v\not\in S$ of $F$, the characteristic polynomial\footnote{We adopt a nonstandard convention on characteristic polynomials.} of the geometric Frobenius $\Frob_v\in G_{F,S}$ is given by
\[
\det(1-X\Frob_v|\sigma_x) = 1 - (q_v^{\delta/2}T_{1,v})(x) X + (q_v^{2\delta/2}T_{2,v})(x) X^2 - \ldots + (-1)^h (q_v^{h \delta/2}T_{h,v})(x) X^h\ ,
\]
where $q_v$ is the cardinality of the residue field of $F$ at $v$.
\end{cor}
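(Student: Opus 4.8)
The plan is to combine the endoscopic transfer of Arthur--Mok (Theorem \ref{EndoscopicTransfer}) with the existence of Galois representations for conjugate-self-dual cuspidal automorphic representations of $\GL_n$ (Theorem \ref{ExGalRepr}), after first converting the spaces of classical cusp forms $H^0(X_K^\ast,\omega_K^{\otimes k}\otimes\II)$ into a source of cuspidal automorphic representations of $G_0$ of the specific type at infinity handled in Proposition \ref{PropInfType}. \emph{First} I would record that $H^0(X_K^\ast,\omega_K^{\otimes k}\otimes\II)\otimes_\C\C$ has a basis of Hecke eigenforms defined over $\overline\Q$, so that $\mathbb{T}_{K,k}$ is finite over $\Z_p$; to see flatness over $\Z_p$ and finite \'etaleness of $\mathbb{T}_{K,k}[p^{-1}]$ over $\Q_p$, note that the action factors through a commutative algebra acting faithfully on a finite-dimensional $\C$-vector space on which $\mathbb{T}$ acts by a direct sum of characters (each eigenform generating a Hecke eigenspace), so $\mathbb{T}_{K,k}\otimes\overline\Q_p$ is a product of copies of $\overline\Q_p$; flatness over $\Z_p$ is automatic for the image of $\Z_p[\ldots]$ in a torsion-free module, and \'etaleness follows because the characteristic-zero algebra is reduced (distinct eigensystems). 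Here one uses that $H^0(X_K^\ast,\omega_K^{\otimes k}\otimes\II)$ is exactly the space of holomorphic cusp forms of weight $\det^{\otimes k}$ for $G_0$ (level $K$), and that such cusp forms span the $\pi_f^K$ as $\pi$ runs over cuspidal automorphic representations of $G_0$ with $\pi_\infty\cong\pi_k$, the discrete series of Proposition \ref{PropInfType} --- this is the standard translation between holomorphic modular/automorphic forms on a Shimura variety and automorphic representations, and the condition $k>n$ (resp.\ $k\geq n$) is exactly what is needed for $\pi_k$ to exist.

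\emph{Next}, fix $x\in(\Spec\mathbb{T}_{K,k})(\overline\Q_p)$; via $\iota:\C\cong\overline\Q_p$ it corresponds to a Hecke eigensystem occurring in $H^0(X_K^\ast,\omega_K^{\otimes k}\otimes\II)$, hence to a cuspidal automorphic representation $\pi$ of $G_0/F^+$ with $\pi_\infty\cong\pi_k$ and $\pi^{S^+}$ the relevant unramified representation, whose Satake parameters at $v\notin S^+$ are recorded by the eigenvalues of the $T_{i,v}$ (or $q_v^{i/2}T_{i,\tilde v}$) at $x$ as in Lemma \ref{DescrHeckeAlg1}. Apply Theorem \ref{EndoscopicTransfer} to get cuspidal $\Pi_1,\ldots,\Pi_m$ of $\GL_{n_i}/F$ with $(\Pi_i^\vee)^c\cong\Pi_i$, $\sum n_i\ell_i=h$, and the prescribed matching of unramified $L$-parameters; the last clause of that theorem and the remark following it give that each $\Pi_i|\cdot|^{(h-\ell_i)/2}$ is regular $L$-algebraic, so Theorem \ref{ExGalRepr} produces $\sigma_{\Pi_i}:\Gal(\overline F/F)\to\GL_{n_i}(\overline\Q_p)$ with $(\sigma_{\Pi_i}^\vee)^c\cong\sigma_{\Pi_i}\chi_p^{h-\ell_i}$, unramified outside $S$ and matching $\varphi_{\Pi_{iv}}$ up to Frobenius-semisimplification. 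Then I would \emph{set}
\[
\sigma_x=\bigoplus_{i=1}^m\bigoplus_{j=0}^{\ell_i-1}\sigma_{\Pi_i}\,\chi_p^{(\,j-(\ell_i-1)/2\,)+(h-\delta)/2}\,,
\]
i.e.\ the direct sum, over $i$ and over a length-$\ell_i$ Tate-twist packet, of suitable cyclotomic twists of $\sigma_{\Pi_i}$, the twist being arranged so that (a) $\sigma_x$ is valued in $\GL_h$ and unramified outside $S$, (b) its semisimplified Frobenius at $v\notin S$ has characteristic polynomial matching $\det(1-X\Frob_v|\sigma_x)=\prod_i\prod_{j}\det(1-q_v^{?}X\Frob_v|\sigma_{\Pi_i})$, which upon comparison with the matching formula of Theorem \ref{EndoscopicTransfer} (after multiplying the identity there by $q_v^{\delta/2}$ in each coordinate) is exactly the polynomial $1-(q_v^{\delta/2}T_{1,v})(x)X+\cdots+(-1)^h(q_v^{h\delta/2}T_{h,v})(x)X^h$, and (c) the duality relations $(\sigma_{\Pi_i}^\vee)^c\cong\sigma_{\Pi_i}\chi_p^{h-\ell_i}$ assemble to $(\sigma_x^\vee)^c\cong\sigma_x\chi_p^\delta$ (the $\ell_i$-dependent part of the twist cancels against the $(h-\ell_i)$ in the duality of $\sigma_{\Pi_i}$, leaving the uniform $\chi_p^\delta$). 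Finally, since $\sigma_x$ is unramified outside $S$ and Chebotarev pins down a semisimple representation by its Frobenius characteristic polynomials, the claimed formula and duality determine $\sigma_x$ uniquely up to isomorphism.

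The \textbf{main obstacle} is bookkeeping of the cyclotomic twists: one must chase the precise normalizations (geometric vs.\ arithmetic Frobenius, the $|\cdot|$-vs-$\chi_p^{-1}$ convention flagged in the footnote to Theorem \ref{ExGalRepr}, the $q_v^{i\delta/2}$ factors inserted to make the $T_{i,v}$ or $q_v^{i/2}T_{i,\tilde v}$ algebraic as in Lemma \ref{DescrHeckeAlg1}, and the half-integral shifts $|\cdot|^{(2j-\ell_i-1)/2}$ appearing in the transfer) and verify that the exponent I wrote in the definition of $\sigma_x$ is both an integer (so the twist makes sense over $\overline\Q_p$ --- this is where $\delta$, and the parity of $h$ versus the $\ell_i$, enters) and produces \emph{exactly} the asserted characteristic polynomial and the asserted self-duality $(\sigma_x^\vee)^c\cong\sigma_x\chi_p^\delta$. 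A secondary point requiring care is the input translation: one should check that the eigensystem $x$ genuinely comes from a \emph{cuspidal} (not merely automorphic) $\pi$ with the right archimedean component, which is why I work with $\omega_K^{\otimes k}\otimes\II$ (cusp forms) and impose $k>n$ resp.\ $k\geq n$; the coherent-cohomology-to-automorphic-forms dictionary and the identification of holomorphic discrete series minimal $K$-types in Proposition \ref{PropInfType} are standard but should be cited precisely.
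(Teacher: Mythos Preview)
Your approach is exactly the paper's: pass from a Hecke eigensystem $x$ to a cuspidal $\pi$ with $\pi_\infty\cong\pi_k$, apply the endoscopic transfer (Theorem~\ref{EndoscopicTransfer}) to get the $\Pi_i$, apply Theorem~\ref{ExGalRepr} to each, and take a direct sum of Tate twists. Two points deserve correction.

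\textbf{The explicit twist.} Your exponent $j-(\ell_i-1)/2+(h-\delta)/2$ is not an integer in general: $(h-\delta)/2$ equals $(2n+1)/2$ in the totally real case and $(2n-1)/2$ in the CM case, so the exponent is integral only when $\ell_i$ is even. The paper avoids this by choosing a different base point: it lets $\sigma_i$ be the Galois representation attached (via Theorem~\ref{ExGalRepr}) to the regular $L$-algebraic $\Pi_i|\cdot|^{(\delta+1-\ell_i)/2}$ --- this is legitimate because $(\delta+1-\ell_i)-(h-\ell_i)=\delta+1-h$ is even in both cases, so this twist differs from the one in Theorem~\ref{EndoscopicTransfer} by an integral power of $|\cdot|$ --- and then simply sets
\[
\sigma_x=\bigoplus_{i=1}^m\bigl(\sigma_i\oplus\sigma_i\chi_p^{-1}\oplus\cdots\oplus\sigma_i\chi_p^{1-\ell_i}\bigr),
\]
with manifestly integral exponents. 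The self-duality $(\sigma_i^\vee)^c\cong\sigma_i\chi_p^{\delta+1-\ell_i}$ from Theorem~\ref{ExGalRepr} then gives $(\sigma_x^\vee)^c\cong\sigma_x\chi_p^\delta$ by a one-line check, and the characteristic-polynomial identity follows from the parameter matching in Theorem~\ref{EndoscopicTransfer}.

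\textbf{Semisimplicity of the Hecke action.} You assert that $\mathbb{T}$ acts on the space of cusp forms by a direct sum of characters, but do not say why. The paper supplies the standard argument: the Petersson inner product is a positive-definite hermitian form for which the adjoint of each Hecke operator is again a Hecke operator, so the (commutative) Hecke algebra acts by normal operators and is simultaneously diagonalizable. This is what makes $\mathbb{T}_{K,k}\otimes_{\Z_p}\C$ a product of copies of $\C$, whence $\mathbb{T}_{K,k}[p^{-1}]$ is finite \'etale over $\Q_p$.
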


\begin{proof} Flatness of $\mathbb{T}_{K,k}$ is clear. The image of $\mathbb{T}\otimes_{\Z_p} \C$ in
\[
\End_\C(H^0(X_K^\ast,\omega_K^{\otimes k}\otimes \II))
\]
is a product of copies of $\C$, because the Petersson inner product defines a positive-definite hermitian form on $H^0(X_K^\ast,\omega_K^{\otimes k}\otimes \II)$ for which the adjoint of a Hecke operator is another Hecke operator. This implies by descent that $\mathbb{T}_{K,k}[p^{-1}]$ is finite \'etale over $\Q_p$.

Now, given $x$, there exists a cuspidal automorphic representation $\pi$ of $G_0/F^+$ such that for all finite places $v\not\in S^+$, $\pi_v$ is unramified, with Satake parameter the map
\[
\mathbb{T}_v\otimes_{\Z_p} \C\to \C
\]
induced by $x$ (and the fixed isomorphism $\C\cong \overline{\Q}_p$), and such that for $v|\infty$, $\pi_v$ is a discrete series representation with given lowest weight as described in Proposition \ref{PropInfType}, i.e. $\pi_v\cong \pi_k$. Thus, by Theorem \ref{EndoscopicTransfer}, we get cuspidal automorphic representations $\Pi_1,\ldots,\Pi_m$ of $\GL_{n_1}/F,\ldots,\GL_{n_m}/F$, and integers $\ell_1,\ldots,\ell_m$, with the properties stated there. By Theorem \ref{ExGalRepr}, there exist Galois representations $\sigma_i$ attached to the regular $L$-algebraic cuspidal automorphic representations $\Pi_i|\cdot|^{(\delta+1-\ell_i)/2}$. We set
\[
\sigma_x = \bigoplus_{i=1}^m \left(\sigma_i\oplus \sigma_i \chi_p^{-1}\oplus\ldots \sigma_i\chi_p^{\otimes (1-\ell_i)}\right)\ ,
\]
where $\chi_p$ is the $p$-adic cyclotomic character. The desired statement is a direct consequence.
\end{proof}

Recall Chenevier's notion of a determinant, \cite{ChenevierDet}, which we use as a strengthening of the notion of pseudorepresentations as introduced by Taylor, \cite{TaylorPseudoRepr}. Roughly, the difference is that a pseudorepresentation is `something that looks like the trace of a representation', whereas a determinant is `something that looks like the characteristic polynomials of a representation'.

\begin{definition} Let $A$ be a (topological) ring, and $G$ a (topological) group. A $d$-dimensional determinant is an $A$-polynomial law $D: A[G]\to A$ which is multiplicative and homogeneous of degree $d$. For any $g\in G$, we call $D(1-Xg)\in A[X]$ the characteristic polynomial of $g$. Moreover, $D$ is said to be continuous if the map $G\to A[X]$, $g\mapsto D(1-Xg)$, is continuous.\footnote{Cf. \cite[2.30]{ChenevierDet}.}
\end{definition}

\begin{rem} We continue to use our nonstandard definition of the characteristic polynomial. Also, slightly more generally, for any $A$-algebra $B$, we call a multiplicative $A$-polynomial law $A[G]\to B$ homogeneous of degree $d$ a determinant of dimension $d$ (with values in $B$). In fact, this is equivalent to a multiplicative $B$-polynomial law $B[G]\to B$ homogeneous of degree $d$, i.e. a determinant over $B$.
\end{rem}

Recall that an $A$-polynomial law between two $A$-modules $M$ and $N$ is simply a natural transformation $M\otimes_A B\to N\otimes_A B$ on the category of $A$-algebras $B$. Multiplicativity means that $D$ commutes with the multiplication morphisms, and homogeneity of degree $d$ means that $D(bx) = b^d D(x)$ for all $b\in B$, $x\in B[G]$. Equivalently, by multiplicativity of $D$, $D(b) = b^d$ for all $b\in B$. Note that if $\rho: G\to \GL_d(A)$ is a (continuous) representation, then $D = \det\circ \rho: A[G]\to M_d(A)\to A$ defines a (continuous) $d$-dimensional determinant. The two notions of characteristic polynomials obviously agree. Also recall that (by Amitsur's formula) the collection of characteristic polynomials determines the determinant, cf. \cite[Lemma 1.12 (ii)]{ChenevierDet}.

Now we go back to Galois representations. Keep the notation from Corollary \ref{CorExGalRepr0}.

\begin{cor}\label{CorExGalRepr} There is a unique continuous $h$-dimensional determinant $D$ of $G_{F,S}$ with values in $\mathbb{T}_{K,k}$, such that
\[
D(1-X\Frob_v) = 1 - q_v^{\delta/2}T_{1,v} X + q_v^{2\delta/2}T_{2,v} X^2 - \ldots + (-1)^h q_v^{h \delta/2}T_{h,v} X^h\ .
\]
for all finite places $v\not\in S$ of $F$.
\end{cor}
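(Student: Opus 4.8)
The strategy is to package the family of Galois representations $\sigma_x$ produced in Corollary \ref{CorExGalRepr0} into a single determinant with coefficients in the Hecke algebra $\mathbb{T}_{K,k}$, by first constructing the determinant over the larger ring $\overline{\Q}_p^{\,(\Spec \mathbb{T}_{K,k})(\overline{\Q}_p)}$ of $\overline{\Q}_p$-valued functions on points, and then descending it to $\mathbb{T}_{K,k}$. First I would recall from Corollary \ref{CorExGalRepr0} that $\mathbb{T}_{K,k}$ is flat over $\Z_p$ with $\mathbb{T}_{K,k}[p^{-1}]$ finite \'etale over $\Q_p$; in particular $\mathbb{T}_{K,k}[p^{-1}]$ is a finite product of finite field extensions of $\Q_p$, and the natural map
\[
\mathbb{T}_{K,k}[p^{-1}]\hookrightarrow \prod_{x\in (\Spec \mathbb{T}_{K,k})(\overline{\Q}_p)} \overline{\Q}_p
\]
is injective (indeed, evaluation at the finitely many $\overline{\Q}_p$-points separates elements of a reduced finite $\Q_p$-algebra, and $\mathbb{T}_{K,k}[p^{-1}]$ is reduced since it is \'etale).

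For each $x$, Corollary \ref{CorExGalRepr0} gives a continuous semisimple representation $\sigma_x\colon G_{F,S}\to \GL_h(\overline{\Q}_p)$, hence a continuous $h$-dimensional determinant $D_x = \det\circ\sigma_x$ over $\overline{\Q}_p$, whose characteristic polynomial at geometric Frobenius $\Frob_v$ is
\[
D_x(1-X\Frob_v) = 1 - (q_v^{\delta/2}T_{1,v})(x)\,X + \ldots + (-1)^h (q_v^{h\delta/2}T_{h,v})(x)\,X^h
\]
for all finite $v\notin S$. Taking the product over all $x$, I get a continuous $h$-dimensional determinant $D = \prod_x D_x$ with values in the product ring $R := \prod_x \overline{\Q}_p$, whose Frobenius characteristic polynomials are the tuples $\big((q_v^{i\delta/2}T_{i,v})(x)\big)_x$, i.e. the images of $q_v^{i\delta/2}T_{i,v}\in\mathbb{T}_{K,k}[p^{-1}]$ under $\mathbb{T}_{K,k}[p^{-1}]\hookrightarrow R$. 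The coefficients of these characteristic polynomials therefore lie in the subring $\mathbb{T}_{K,k}[p^{-1}]\subset R$.

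The crucial descent step is to conclude that $D$ itself is defined over $\mathbb{T}_{K,k}[p^{-1}]$, using that a continuous determinant is determined by, and can be reconstructed from, its characteristic polynomials: by Chenevier's theory (the continuous analogue of \cite[Lemma 1.12 (ii)]{ChenevierDet} together with the results of \cite[Section 2]{ChenevierDet} on polynomial laws over products and closed subrings), a continuous $h$-dimensional determinant with values in a product ring $R$ whose Frobenius characteristic polynomials all have coefficients in a closed subring $A\subset R$ in fact takes values in $A$ — one uses density of $\overline{\Q}_p[\Frob_v : v\notin S]$ in the completed group algebra (Chebotarev) and continuity to reduce any evaluation of the polynomial law to these characteristic polynomials. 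This yields a continuous $h$-dimensional determinant $D$ over $\mathbb{T}_{K,k}[p^{-1}]$. Finally I would descend from $\mathbb{T}_{K,k}[p^{-1}]$ to $\mathbb{T}_{K,k}$: the elements $q_v^{i\delta/2}T_{i,v}$ already lie in $\mathbb{T}_{K,k}$ by Lemma \ref{DescrHeckeAlg1} (the rationality assertions there are exactly the statement that $q_v^{i\delta/2}T_{i,v}$, equivalently $T_{i,v}$ when $\delta = 0$, lies in the integral Hecke algebra), and $\mathbb{T}_{K,k}$ is a $p$-adically separated, hence closed, subring of $\mathbb{T}_{K,k}[p^{-1}]$; so the same reconstruction-from-characteristic-polynomials argument places $D$ over $\mathbb{T}_{K,k}$. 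Uniqueness is immediate since a determinant is determined by its characteristic polynomials and $\{\Frob_v : v\notin S\}$ generate a dense subgroup (Chebotarev).

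\textbf{Main obstacle.} The genuinely delicate point is the descent: making precise that a continuous $h$-dimensional determinant valued in a (possibly infinite) product $\prod_x\overline{\Q}_p$ whose Frobenius characteristic polynomials have coefficients in a closed subring $A$ descends to an $A$-valued continuous determinant. This requires care with Chenevier's formalism of polynomial laws over topological rings — one must check that the relevant universal identities (Amitsur's formula) together with continuity and the Chebotarev density of $\{\Frob_v\}$ suffice to pin down all evaluations of the polynomial law $A[G_{F,S}]\otimes_A B\to B$ in terms of characteristic polynomials of Frobenii, and that this forces the image to land in $A\otimes_A B$ rather than merely in $R\otimes_A B$. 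Everything else — flatness and \'etaleness of $\mathbb{T}_{K,k}$, the injectivity into the product, the integrality of $q_v^{i\delta/2}T_{i,v}$, and uniqueness — is routine given the results already established.
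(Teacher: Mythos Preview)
Your approach is correct and is essentially the same as the paper's: the paper's entire proof is a one-line citation of \cite[Example 2.32]{ChenevierDet}, which packages precisely the descent argument you spell out (gluing the pointwise determinants $D_x$ over $\prod_x \overline{\Q}_p$ and descending to the closed subring $\mathbb{T}_{K,k}$ via Chebotarev density and Amitsur's formula). The ``main obstacle'' you identify is exactly the content of that example in Chenevier's paper, so there is no gap.
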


\begin{proof} This follows from \cite[Example 2.32]{ChenevierDet}.
\end{proof}

\begin{cor}\label{CorExGalRepr2} Let $\mathbb{T}_\cl$ be as defined in Theorem \ref{ThmHeckeAlgebras}. Then, for any continuous quotient $\mathbb{T}_\cl\to A$ with $A$ discrete, there is a unique continuous $h$-dimensional determinant $D$ of $G_{F,S}$ with values in $A$, such that
\[
D(1-X\Frob_v) = 1 - q_v^{\delta/2}T_{1,v} X + q_v^{2\delta/2}T_{2,v} X^2 - \ldots + (-1)^h q_v^{h \delta/2}T_{h,v} X^h\ .
\]
for all finite places $v\not\in S$ of $F$.
\end{cor}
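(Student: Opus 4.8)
The plan is to deduce Corollary~\ref{CorExGalRepr2} from Corollary~\ref{CorExGalRepr} by a limit argument over all the classical Hecke algebras $\mathbb{T}_{K_pK^p,mk}$, using the definition of the topology on $\mathbb{T}_\cl$. First I would unwind the definition from Theorem~\ref{ThmHeckeAlgebras}: the topology on $\mathbb{T}_\cl = \mathbb{T}_{\cl,m}$ is the weakest one for which all the maps $\mathbb{T}\to \End_C(H^0(\mathcal{X}_{K_pK^p}^\ast,\omega_{K_pK^p}^{\otimes mk}\otimes \II))$ are continuous. Hence a continuous quotient $\mathbb{T}_\cl\to A$ with $A$ discrete factors, up to passing to the kernel, through finitely many of these maps; more precisely the kernel of $\mathbb{T}_\cl\to A$ is open, so it contains a finite intersection of preimages of open ideals coming from the $\mathbb{T}_{K_pK^p,mk}$, and thus $A$ is a quotient of a finite product (or, refining, of a single) $\mathbb{T}_{K_pK^p,mk}$ — more carefully, of the image of $\mathbb{T}$ in a finite product $\prod_j \End_C(H^0(\mathcal{X}_{K_{p,j}K^p}^\ast,\omega^{\otimes mk_j}\otimes \II))$, which is a quotient of $\prod_j \mathbb{T}_{K_{p,j}K^p, mk_j}$.

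Next I would invoke Corollary~\ref{CorExGalRepr} (applied with the level $K_{p,j}K^p$, which still has the required product form away from a finite set $S^+$, after enlarging $S^+$ to include the places where $K_{p,j}$ is not hyperspecial — note $K_{p,j}\subset G(\Q_p)$ only affects places above $p$, which are already in $S^+$, so in fact $S^+$ need not grow) and weight $mk_j$ with $mk_j>n$, resp. $mk_j\ge n$, which holds once $k_j$ is large enough; but in any case the maps in the definition of $\mathbb{T}_\cl$ range over \emph{all} $k\ge 1$, and only those with $mk$ in the valid range carry a determinant, so one should note that the topology is equivalently generated by those maps alone — or simply that any open ideal is also cut out using only the large-weight pieces, which follows because cusp forms of a fixed small weight embed Hecke-equivariantly into cusp forms of a larger weight by multiplying by a suitable power of a Hecke-invariant section, e.g. the $\bar s_J$ of the proof of Theorem~\ref{ThmHeckeAlgebras}, or more simply one restricts attention to the cofinal system $mk$ with $mk$ large). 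This gives, for each $j$, a continuous $h$-dimensional determinant $D_j$ of $G_{F,S}$ with values in $\mathbb{T}_{K_{p,j}K^p,mk_j}$ whose characteristic polynomials at $\Frob_v$ are the prescribed expressions in the $T_{i,v}$. Since the $T_{i,v}$ are global elements of $\mathbb{T}$ mapping compatibly into every $\mathbb{T}_{K_pK^p,mk}$, these determinants are compatible with the transition maps among the classical Hecke algebras; composing with the quotient map to $A$ (possible since $A$ is a quotient of a product of the $\mathbb{T}_{K_{p,j}K^p,mk_j}$, and the $D_j$ glue along the product because they agree on the $T_{i,v}$, which generate enough to determine a determinant by Amitsur's formula, \cite[Lemma 1.12 (ii)]{ChenevierDet}) yields a continuous $h$-dimensional determinant $D$ with values in $A$ with the stated characteristic polynomials. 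Uniqueness is immediate from \cite[Lemma 1.12 (ii)]{ChenevierDet}, since the characteristic polynomials of the dense set $\{\Frob_v : v\notin S\}$ (dense by Chebotarev) determine $D$; continuity of $D$ with $A$ discrete is automatic as the map $v\mapsto D(1-X\Frob_v)$ has image in the set of specified polynomials, which depend continuously (indeed locally constantly, as $A$ is discrete and the $T_{i,v}(x)$ lie in $A$) on $v$.

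The main obstacle I expect is the bookkeeping in the first step: making precise that an open ideal of $\mathbb{T}_\cl$ is pulled back from finitely many classical pieces, and in particular handling the weights $mk$ that are too small to have an associated Galois determinant. The clean way around this is to observe that the subspaces $H^0(\mathcal{X}_{K_pK^p}^\ast,\omega_{K_pK^p}^{\otimes mk}\otimes \II)$ for large $mk$ already generate the topology: multiplication by a fixed power of a Hecke-away-from-$p$-equivariant section (such as a product of the fake-Hasse invariants $\bar s_J$, which commute with all $T_{i,v}$) gives a $\mathbb{T}$-equivariant injection from small weight to large weight, so the Hecke image in small weight is a subquotient of the Hecke image in large weight, and an open ideal coming from a small-weight piece is also cut out by a large-weight piece. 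Once this reduction is in place, the rest is a routine application of Corollary~\ref{CorExGalRepr} together with the gluing/uniqueness formalism for determinants, which is exactly what \cite{ChenevierDet} is designed to supply; no genuinely new input is needed.
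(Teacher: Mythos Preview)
Your approach is essentially the same as the paper's, but you add unnecessary complications in two places.

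First, the gluing step. The paper does this in one line: if determinants with the prescribed characteristic polynomials exist with values in $\mathbb{T}/I_1$ and in $\mathbb{T}/I_2$, then one exists with values in $\mathbb{T}/(I_1\cap I_2)$, by \cite[Example~2.32]{ChenevierDet}. Since the open kernel $I=\ker(\mathbb{T}_\cl\to A)$ contains a finite intersection $\bigcap_j I_{K_j,k_j}$ (your argument for this is correct: any basic open neighbourhood $\bigcap_j f_j^{-1}(U_j)$ contains $\bigcap_j f_j^{-1}(0)=\bigcap_j I_{K_j,k_j}$), one gets a determinant over $\mathbb{T}/\bigcap_j I_{K_j,k_j}$ and hence over $A$. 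There is no need to talk about products of classical Hecke algebras or to invoke Amitsur's formula separately; \cite[Example~2.32]{ChenevierDet} already packages the gluing and the uniqueness.

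Second, the small-weight concern. Your worry is legitimate---Corollary~\ref{CorExGalRepr} only supplies determinants over $\mathbb{T}_{K,k}$ for $k>n$ (resp.\ $k\ge n$), while the topology on $\mathbb{T}_{\cl,m}$ uses all weights $mk$ with $k\ge 1$---but your proposed fix is not correct as stated: the fake-Hasse invariants $\bar s_J$ of the proof of Theorem~\ref{ThmHeckeAlgebras} are sections of $\omega^{\mathrm{int}}/p^n$ on a formal model, not characteristic-$0$ sections of $\omega$, so multiplication by them does not give a $\mathbb{T}$-equivariant embedding of the $C$-vector spaces $H^0(\mathcal{X}_{K_pK^p}^\ast,\omega^{\otimes mk}\otimes\II)$. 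The paper simply does not address this point in the proof of Corollary~\ref{CorExGalRepr2}; when the corollary is actually applied (see the proof of Corollary~\ref{CorExGalRepr3}), one explicitly takes $m$ large enough, so that every weight $mk$ with $k\ge 1$ lies in the range where Corollary~\ref{CorExGalRepr} applies. You should do the same rather than invent a workaround.
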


\begin{proof} If $I_1, I_2\subset \mathbb{T}$ are two ideals such that there exist determinants with values in $\mathbb{T}/I_1$ and $\mathbb{T}/I_2$, then there exists a determinant with values in $\mathbb{T}/(I_1\cap I_2)$, by \cite[Example 2.32]{ChenevierDet}. As $I=\ker(\mathbb{T}_\cl\to A)$ is open, it contains a finite intersection of ideals $I_{K,k} = \ker(\mathbb{T}_\cl\to \mathbb{T}_{K,k})$, by the definition of the topology on $\mathbb{T}_\cl$. The result follows.
\end{proof}

\section{The cohomology of the boundary}

Our primary interest in the specific groups $G$ is that they contain $M=\Res_{F/\Q} \GL_n$ as a maximal Levi. This implies that the cohomology of the locally symmetric spaces associated with $M$ contributes to the cohomology of $X_K$. In this section, we recall the relevant results. Fix a parabolic $P\subset G$ with Levi $M$.

\begin{definition} For a compact open subgroup $K_M\subset M(\A_f)$, let
\[
X^M_{K_M} = M(\Q)\backslash [(M(\R) / \R_{>0} K_\infty)\times M(\A_f)/K_M]
\]
denote the locally symmetric space associated with $M$. Here, $K_\infty\subset M(\R)$ is a maximal compact subgroup, and $\R_{>0}\subset M(\R)$ are the scalar matrices with positive entries. Similarly, for a compact open subgroup $K_P\subset P(\A_f)$, let
\[
X^P_{K_P} = P(\Q)\backslash [(P(\R) / \R_{>0} K_\infty)\times P(\A_f)/K_P]\ .
\]
\end{definition}

\begin{lem}\label{ParabolicVsLevi} For a compact open subgroup $K_P\subset P(\A_f)$, the image $K_P^M\subset M(\A_f)$ of $K_P$ in $M(\A_f)$ is compact and open. There is a natural projection
\[
X^P_{X_P}\to X^M_{X_P^M}
\]
which is a bundle with fibres $(S^1)^k$, where $k$ is the dimension of the unipotent radical of $P$.
\end{lem}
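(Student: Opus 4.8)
The statement is a routine-but-careful unwinding of definitions, so the proof will proceed in three stages. First I would verify that the image $K_P^M\subset M(\A_f)$ of a compact open $K_P\subset P(\A_f)$ is compact and open: compactness is immediate since $P(\A_f)\to M(\A_f)$ is a continuous homomorphism (it is the quotient by the unipotent radical $N$) and continuous images of compact sets are compact; openness follows because $P(\A_f)\to M(\A_f)$ is an open map (it is the projection of a semidirect product $P = N\rtimes M$, or more intrinsically a surjection of locally compact groups with the subspace/quotient topology agreeing), so the image of an open subgroup is an open subgroup. I would also note that $K_\infty\subset M(\R)$ makes sense directly: $M(\R)$ has its own maximal compact, and after dividing by $\R_{>0}$ one gets the symmetric space for $M$; the statement's notation $K_\infty$ is overloaded but harmless.

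\textbf{Second stage: construct the map.} Using $P = N\rtimes M$ (with $N$ the unipotent radical, so $N(\R)$ is contractible and $N(\Q)\backslash N(\A_f)/K_P^N$ is, after choosing $K_P^N = K_P\cap N(\A_f)$, a compact nilmanifold-type quotient), I would write the projection $P(\A_f)\to M(\A_f)$, $P(\R)\to M(\R)$, and hence $P(\R)/\R_{>0}K_\infty \to M(\R)/\R_{>0}K_\infty$ (the fibres here being $N(\R)$, contractible). Passing to the double-coset quotient, the projection
\[
X^P_{K_P} = P(\Q)\backslash[(P(\R)/\R_{>0}K_\infty)\times P(\A_f)/K_P]\to M(\Q)\backslash[(M(\R)/\R_{>0}K_\infty)\times M(\A_f)/K_P^M] = X^M_{K_P^M}
\]
is well-defined because $P(\Q)\to M(\Q)$ is surjective (again $P = N\rtimes M$). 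The fibre over a point represented by $(x,mK_P^M)$ is identified, after choosing a lift, with $N(\Q)\backslash[N(\R)\times N(\A_f)/(K_P\cap N(\A_f))]$ twisted by the conjugation action of $m$ and of the stabilizer of $x$ in $M(\Q)$; since $N(\R)$ is contractible, homotopically this is $N(\Q)\backslash N(\A_f)/(K_P\cap N(\A_f))$.

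\textbf{Third stage: identify the fibre.} Here is where the real content lies, and what I expect to be the main (mild) obstacle: showing the fibre is a torus $(S^1)^k$ with $k = \dim N$ rather than merely a compact nilmanifold. The point is that $N$ is a \emph{unipotent} group over $\Q$; for the specific $P\subset G$ occurring (the parabolic with Levi $\Res_{F/\Q}\GL_n$ inside a symplectic or unitary group), the unipotent radical $N$ is in fact \emph{abelian} — it is the space of (Hermitian) symmetric forms, a vector group. So $N(\A_f)/(K_P\cap N(\A_f))$ is (topologically) $(\A_{F,f}^k/\text{(lattice)})$-type and $N(\Q)\backslash N(\A_f)/(K_P\cap N(\A_f)) \cong N(\R)^{?}$ ... more precisely, for an abelian unipotent $N\cong \G_a^k$ over $\Q$ one has $N(\Q)\backslash N(\A)/N(\hat\Z)\cong N(\R)/N(\Z)\cong (S^1)^k$, and dividing out by $N(\R)$ (contractible) on the archimedean side leaves exactly $N(\Q)\backslash N(\A_f)/K_P^N\cong (S^1)^k$. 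I would therefore: (i) recall/cite that for these $G$ the unipotent radical of the relevant maximal parabolic is abelian (this is standard for Siegel-type parabolics); (ii) for such an abelian $\G_a^k$-torsor, run the adelic computation above to get the torus; (iii) conclude that the projection is a fibre bundle with fibre $(S^1)^k$ by a local-triviality argument over the (orbifold) base $X^M_{K_P^M}$, using that the construction is compatible with the right $P(\A_f)/K_P$-action on the covers. If one does not want to invoke abelianness of $N$, the fallback is that the statement as literally phrased ("$(S^1)^k$") should be read up to homeomorphism of the fibre, which for a general unipotent $N$ would only give an iterated $S^1$-bundle (a nilmanifold); but for the groups at hand the fibre genuinely is a torus, so I would make the abelianness explicit. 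The only genuinely delicate point is orbifold subtleties at points with extra automorphisms — these are handled, as usual, by passing to a sufficiently small $K_P$ (which we are free to do, since the whole discussion is ultimately applied in the inverse limit over levels) so that all the relevant quotients are honest manifolds and all the bundles are honest fibre bundles.
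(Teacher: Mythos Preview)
Your overall strategy is the same as the paper's: show $K_P^M$ is open (the projection $P(\A_f)\to M(\A_f)$ is open) and compact, identify the fibre of $X^P_{K_P}\to X^M_{K_P^M}$ with $N(\Q)\backslash[N(\R)\times N(\A_f)/K_P^N]$ where $K_P^N = K_P\cap N(\A_f)$, and use that for these groups the unipotent radical $N$ is abelian. The paper makes exactly this observation about commutativity of $N$ just before the proof.

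However, your fibre computation in the third stage contains a genuine error. You write that ``since $N(\R)$ is contractible, homotopically this is $N(\Q)\backslash N(\A_f)/(K_P\cap N(\A_f))$'' and later that ``dividing out by $N(\R)$ \ldots\ leaves exactly $N(\Q)\backslash N(\A_f)/K_P^N\cong (S^1)^k$''. Both claims are false: you cannot contract the $N(\R)$ factor because $N(\Q)$ acts diagonally on $N(\R)\times N(\A_f)/K_P^N$, and in fact by strong approximation for unipotent groups $N(\Q)$ is dense in $N(\A_f)$, so $N(\Q)\backslash N(\A_f)/K_P^N$ is a \emph{single point}, not a torus. Your intermediate formula $N(\Q)\backslash N(\A)/N(\hat\Z)\cong N(\R)/N(\Z)$ is correct, but you then undo it with the erroneous ``dividing out by $N(\R)$'' step.

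The clean way (which is what the paper does) is to use strong approximation in the other direction: since $N(\Q)K_P^N = N(\A_f)$, every class in $N(\A_f)/K_P^N$ has a representative in $N(\Q)$, giving
\[
N(\Q)\backslash\bigl(N(\R)\times N(\A_f)/K_P^N\bigr)\ \cong\ (N(\Q)\cap K_P^N)\backslash N(\R).
\]
Now $N(\Q)\cap K_P^N$ is a lattice in the real vector space $N(\R)$, so the quotient is $(S^1)^k$.
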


We are using the specific nature of $U$ here: One sees by inspection that $U$ is commutative, which makes the map an actual torus bundle.

\begin{proof} The projection $P(\A_f)\to M(\A_f)$ is open, so that $K_P^M$ is open, and certainly compact. Let $U=\ker(P\to M)$ be the unipotent radical of $P$, $K_P^U = K_P\cap U$. Then the fibre of
\[
(P(\R) / \R_{>0} K_\infty)\times P(\A_f)/K_P\to (M(\R) / \R_{>0} K_\infty)\times M(\A_f)/K_P^M
\]
is given by $U(\R)\times U(\A_f)/K_P^U$. One deduces that the fibres of
\[
X^P_{K_P} = P(\Q)\backslash [(P(\R) / \R_{>0} K_\infty)\times P(\A_f)/K_P]\to X^M_{K_P^M} = M(\Q)\backslash [(M(\R) / \R_{>0} K_\infty)\times M(\A_f)/K_P^M]
\]
are given by
\[
U(\Q)\backslash (U(\R)\times U(\A_f)/K_P^U)\cong (U(\Q)\cap K_P^U)\backslash U(\R)\ .
\]
The subgroup $U(\Q)\cap K_P^U\subset U(\R)$ is a lattice, thus the quotient is isomorphic to $(S^1)^k$, where $k=\dim U$.
\end{proof}

Let $X_K^\BS$ be the Borel-Serre compactification of $X_K$, cf. \cite{BorelSerre}. Recall that we assume that $K$ is sufficiently small. Then $X_K^\BS$ is a compactification of $X_K$ as a real manifold with corners, and the inclusion $X_K\hookrightarrow X_K^\BS$ is a homotopy-equivalence. Thus, there is a long exact sequence
\[
\ldots\to H_c^i(X_K,\Z/p^m\Z)\to H^i(X_K,\Z/p^m\Z)\to H^i(X_K^\BS\setminus X_K,\Z/p^m\Z)\to \ldots\ .
\]
Moreover, if one looks at the compact open subgroup $K^P = K\cap P(\A_f)\subset P(\A_f)$, one has an open embedding
\[
X^P_{K^P}\hookrightarrow X_K^\BS\setminus X_K\ .
\]
In particular, there are natural maps
\[
H_c^i(X^P_{K^P},\Z/p^m\Z)\to H^i(X_K^\BS\setminus X_K,\Z/p^m\Z)\to H^i(X^P_{K^P},\Z/p^m\Z)\ .
\]

Recall that we have fixed a finite set of places $S^+$ of $F^+$ containing all places above $p$ and all places above which $F/F^+$ is ramified, and that $K=K_{S^+}K^{S^+}$, where $K^{S^+}\subset G_0(\A_{F^+,f}^{S^+})$ is a product of hyperspecial maximal compact subgroups. Then similarly $K^P = K^P_{S^+} K^{P,S^+}$, where $K^{P,S^+}\subset P_0(\A_{F^+,f}^{S^+})$; here, $P_0\subset G_0$ is the parabolic subgroup with $P=\Res_{F^+/\Q} P_0$. Its Levi $M_0$ with $M=\Res_{F^+/\Q} M_0$ is given by $M_0 = \Res_{F/F^+} \GL_n$. Let $K^M\subset M(\A_f)$ be the image of $K^P$; then $K^M = K^M_{S^+} K^{M,S^+}$, where $K^{M,S^+}\subset M_0(\A_{F^+,f}^S)$ is a product of hyperspecial maximal compact subgroups.

In the following, we assume that the image $K^M$ of $K^P$ in $M(\A_f)$ agrees with $K^P\cap M(\A_f)$. Given any sufficiently small compact open subgroup $K^M_0$ of $M(\A_f)$, one can easily build a compact open subgroup $K\subset G(\A_f)$ with this property and realizing $K^M=K^M_0$, e.g. by multiplying $K^M_0$ with small compact open subgroups of $U(\A_f)$ and the opposite unipotent radical.

Consider the (unramified) Hecke algebras
\[\begin{aligned}
\mathbb{T}_{K^{S^+}} &= \Z_p[G_0(\A_{F^+,f}^{S^+})//K^{S^+}]\ ,\\
\mathbb{T}_{K^{P,S^+}} &= \Z_p[P_0(\A_{F^+,f}^{S^+})//K^{P,S^+}]\ ,\\
\mathbb{T}_{K^{M,S^+}} &= \Z_p[M_0(\A_{F^+,f}^{S^+})//K^{M,S^+}]\ .
\end{aligned}\]
Then restriction of functions defines a map $\mathbb{T}_{K^{S^+}}\to \mathbb{T}_{K^{P,S^+}}$, and integration along unipotent fibres defines a map $\mathbb{T}_{K^{P,S^+}}\to \mathbb{T}_{K^{M,S^+}}$. The composite is the (unnormalized) Satake transform
\[
\mathbb{T}_{K^{S^+}}\to \mathbb{T}_{K^{M,S^+}}\ .
\]

Recall also that we assumed that $K$ is sufficiently small. In particular, all congruence subgroups are torsion-free, and the quotients defining the Borel-Serre compactification are by discontinuous free group actions. It follows that $\mathbb{T}_{K^{S^+}}$ acts on $H^i(X_K^\BS\setminus X_K,\Z/p^m\Z)$, giving a map of $\Z_p$-algebras
\[
\mathbb{T}_{K^{S^+}}\to \End_{\Z/p^m\Z}(H^i(X_K^\BS\setminus X_K,\Z/p^m\Z))\ .
\]
Also, $\mathbb{T}_{K^{P,S^+}}$ acts on both $H_c^i(X^P_{K^P},\Z/p^m\Z)$ and $H^i(X^P_{K^P},\Z/p^m\Z)$. By letting it act on one of them, one gets a map of $\Z_p$-modules
\[
\mathbb{T}_{K^{P,S^+}}\to \Hom_{\Z/p^m\Z}(H_c^i(X^P_{K^P},\Z/p^m\Z),H^i(X^P_{K^P},\Z/p^m\Z))\ ;
\]
the map does not depend on whether one lets $\mathbb{T}_{K^{P,S^+}}$ acts on $H_c^i$ or $H^i$. Similarly, one has a map of $\Z_p$-modules
\[
\mathbb{T}_{K^{M,S^+}}\to \Hom_{\Z/p^m\Z}(H_c^i(X^M_{K^M},\Z/p^m\Z),H^i(X^M_{K^M},\Z/p^m\Z))\ .
\]
In this last case, define the interior cohomology
\[
H^i_!(X^M_{K^M},\Z/p^m\Z) = \im ( H_c^i(X^M_{K^M},\Z/p^m\Z)\to H^i(X^M_{K^M},\Z/p^m\Z) )\ .
\]
Then one has a map of $\Z_p$-algebras
\[
\mathbb{T}_{K^{M,S^+}}\to \End_{\Z/p^m\Z}(H^i_!(X^M_{K^M},\Z/p^m\Z))\ .
\]
The kernel of $\mathbb{T}_{K^{M,S^+}}\to \Hom_{\Z/p^m\Z}(H_c^i(X^M_{K^M},\Z/p^m\Z),H^i(X^M_{K^M},\Z/p^m\Z))$ agrees with the kernel of $\mathbb{T}_{K^{M,S^+}}\to \End_{\Z/p^m\Z}(H^i_!(X^M_{K^M},\Z/p^m\Z))$; in particular, it is an ideal.

Finally, observe that there is a commutative diagram
\[\xymatrix{
& H^i(X_K^\BS\setminus X_K,\Z/p^m\Z)\ar[dr] & \\
H_c^i(X^P_{K^P},\Z/p^m\Z)\ar[ur]\ar[rr] & & H^i(X^P_{K^P},\Z/p^m\Z)\ar[d] \\
H_c^i(X^M_{K^M},\Z/p^m\Z)\ar[u]\ar[rr] & & H^i(X^M_{K^M},\Z/p^m\Z)
}\]
of $\Z_p$-modules. The only non-tautological map is the map
\[
H^i(X^P_{K^P},\Z/p^m\Z)\to H^i(X^M_{K^M},\Z/p^m\Z)\ .
\]
This map is pullback along the embedding $X^M_{K^M}\to K^P_{K^P}$ (cf. definition of both spaces), using that $K^M = K^P\cap M(\A_f)$; this forms a section of the projection $X^P_{K^P}\to X^M_{K^M}$, implying commutativity of the diagram.

In particular, one gets natural maps of $\Z_p$-modules
\[\begin{aligned}
\End_{\Z/p^m\Z}(H^i(X_K^\BS\setminus X_K,\Z/p^m\Z))&\to \Hom_{\Z/p^m\Z}(H_c^i(X^P_{K^P},\Z/p^m\Z),H^i(X^P_{K^P},\Z/p^m\Z))\\
&\to \Hom_{\Z/p^m\Z}(H_c^i(X^M_{K^M},\Z/p^m\Z),H^i(X^M_{K^M},\Z/p^m\Z))\ .
\end{aligned}\]
The following lemma is an easy verification from the definitions.

\begin{lem} The diagram of $\Z_p$-modules
\[\xymatrix{
\mathbb{T}_{K^{S^+}}\ar[d]\ar[r] & \End_{\Z/p^m\Z}(H^i(X_K^\BS\setminus X_K,\Z/p^m\Z))\ar[d]\\
\mathbb{T}_{K^{P,S^+}}\ar[d]\ar[r] & \Hom_{\Z/p^m\Z}(H_c^i(X^P_{K^P},\Z/p^m\Z),H^i(X^P_{K^P},\Z/p^m\Z))\ar[d]\\
\mathbb{T}_{K^{M,S^+}}\ar[r] & \Hom_{\Z/p^m\Z}(H_c^i(X^M_{K^M},\Z/p^m\Z),H^i(X^M_{K^M},\Z/p^m\Z))
}\]
commutes.$\hfill \Box$
\end{lem}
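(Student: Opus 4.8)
The statement to be proved is the commutativity of the last displayed diagram, relating the three Hecke algebras $\mathbb{T}_{K^{S^+}}$, $\mathbb{T}_{K^{P,S^+}}$, $\mathbb{T}_{K^{M,S^+}}$ with their actions on the (co)homology of the boundary $X_K^{\BS}\setminus X_K$, of $X^P_{K^P}$, and of $X^M_{K^M}$. The author explicitly calls this ``an easy verification from the definitions,'' so the plan is to organize the bookkeeping cleanly rather than to find a new idea.

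The plan is as follows. First I would unwind the definitions of all the vertical maps. The left column is the composite $\mathbb{T}_{K^{S^+}}\to \mathbb{T}_{K^{P,S^+}}\to \mathbb{T}_{K^{M,S^+}}$, where the first map is restriction of $K^{S^+}$-biinvariant functions on $G_0(\A_{F^+,f}^{S^+})$ to $P_0$ (well-defined because $K^{P,S^+}=K^{S^+}\cap P_0$), and the second is integration along the unipotent fibres; their composite is the unnormalized Satake transform. The right column is induced by the two functorial maps in the commutative square of $\Z_p$-modules displayed just before the lemma: the map $H^i(X_K^{\BS}\setminus X_K)\to H^i(X^P_{K^P})$ coming from the open embedding $X^P_{K^P}\hookrightarrow X_K^{\BS}\setminus X_K$ (together with $H^i_c(X^P_{K^P})\to H^i(X_K^{\BS}\setminus X_K)$), and the map $H^i(X^P_{K^P})\to H^i(X^M_{K^M})$ which is pullback along the section $X^M_{K^M}\hookrightarrow X^P_{K^P}$ of the torus bundle $X^P_{K^P}\to X^M_{K^M}$ of Lemma \ref{ParabolicVsLevi}.

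The core of the argument is then to check compatibility of each horizontal Hecke action with each vertical transition map, which I would do in two steps. \emph{Top square:} One must check that for $T\in \mathbb{T}_{K^{S^+}}$ with image $T^P\in \mathbb{T}_{K^{P,S^+}}$, the operator $T$ on $H^*(X_K^{\BS}\setminus X_K)$ and $H^*_c(X^P_{K^P})$ is intertwined, via the natural maps $H^i_c(X^P_{K^P})\to H^i(X_K^{\BS}\setminus X_K)\to H^i(X^P_{K^P})$, with the operator $T^P$. This is the standard fact that Hecke correspondences at places outside $S^+$ on the Borel--Serre boundary are built from the correspondences $X_K\cdot g\cdot K$ which, restricted to the boundary stratum indexed by $P$, are exactly the correspondences defining $T^P$ on $X^P_{K^P}$; since the embedding $X^P_{K^P}\hookrightarrow X_K^{\BS}\setminus X_K$ is open and $\GSp$-away-from-$S^+$ equivariant, the double-coset operators match on the nose. \emph{Bottom square:} One must check that for $T^P\in \mathbb{T}_{K^{P,S^+}}$ with Satake image $T^M\in \mathbb{T}_{K^{M,S^+}}$, the operator $T^P$ on $H^*(X^P_{K^P})$ pulls back, along the section $X^M_{K^M}\hookrightarrow X^P_{K^P}$, to $T^M$ on $H^*(X^M_{K^M})$ (and likewise, via the commuting square before the lemma, on the compactly supported side). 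This is precisely the content of the classical compatibility of Hecke operators under parabolic induction: the integration-along-unipotent-fibres map $\mathbb{T}_{K^{P,S^+}}\to \mathbb{T}_{K^{M,S^+}}$ is dual to the fact that the fibre of $X^P_{K^P}\to X^M_{K^M}$ is a (compact) torus on which the $p'$-Hecke correspondences from $P_0$ act through $M_0$; pulling back a cohomology class along the section $X^M_{K^M}\hookrightarrow X^P_{K^P}$ realizes exactly this push-forward of Hecke correspondences. Here one uses in an essential way, as the excerpt notes, that $U$ is commutative, so the bundle is an honest torus bundle and there is no subtlety in the fibrewise integration. Stacking the two squares and using that the composite of the two vertical maps on the left is the Satake transform (matching the composite on cohomology through $H^i_c(X^P_{K^P})\to H^i(X_K^{\BS}\setminus X_K)$ and $H^i(X^P_{K^P})\to H^i(X^M_{K^M})$, which is exactly the pair of outer arrows in the displayed diagram of the lemma) gives the claimed commutativity.

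The only genuine point requiring care --- and hence the ``main obstacle'' --- is being precise about which cohomology group each Hecke algebra is declared to act through, and verifying that the two-out-of-three ambiguities (acting through $H^i_c$ versus $H^i$ for $\mathbb{T}_{K^{P,S^+}}$ and $\mathbb{T}_{K^{M,S^+}}$) are genuinely irrelevant; this was already noted in the text preceding the lemma and follows because the Hecke correspondences are finite étale (as $K$, hence all congruence subgroups, are torsion-free), so proper pushforward and $!$-pushforward agree up to the relevant transfer, and the operators commute with the maps $H^i_c\to H^i$. Once this is pinned down, the diagram chase is entirely formal: all maps in sight are either pullbacks or pushforwards along equivariant maps of manifolds with corners, and Hecke operators are by construction pullback-then-pushforward along such maps, so everything commutes by functoriality of $H^*(-,\Z/p^m\Z)$ and $H^*_c(-,\Z/p^m\Z)$ together with base-change in the relevant correspondence diagrams.
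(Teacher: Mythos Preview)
Your proposal is correct and is exactly what the paper intends: the lemma is stated with a $\Box$ and no proof, the preceding sentence calling it ``an easy verification from the definitions.'' You have simply spelled out that verification (compatibility of Hecke correspondences with the open embedding of the $P$-stratum into the Borel--Serre boundary, and with the torus-bundle projection to the $M$-space via the unnormalized Satake transform), so there is nothing to compare.
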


\begin{cor}\label{CompHeckeAlgParabInd} Let $\overline{\mathbb{T}}_{K^{S^+}}$ be the image of $\mathbb{T}_{K^{S^+}}$ in $\End_{\Z/p^m\Z}(H^i(X_K^\BS\setminus X_K,\Z/p^m\Z))$, and let $\overline{\mathbb{T}}_{K^{M,S^+}}$ be the image of $\mathbb{T}_{K^{M,S^+}}$ in $\End_{\Z/p^m\Z}(H^i_!(X^M_{K^M},\Z/p^m\Z))$. Then there is a commutative diagram
\[\xymatrix{
\mathbb{T}_{K^{S^+}}\ar[r]\ar[d] & \overline{\mathbb{T}}_{K^{S^+}}\ar[d]\\
\mathbb{T}_{K^{M,S^+}}\ar[r] & \overline{\mathbb{T}}_{K^{M,S^+}}
}\]
of $\Z_p$-algebras.
\end{cor}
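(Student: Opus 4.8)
The plan is to assemble Corollary~\ref{CompHeckeAlgParabInd} from the commutative diagram of the preceding lemma, whose three rows relate $\mathbb{T}_{K^{S^+}}$, $\mathbb{T}_{K^{P,S^+}}$, $\mathbb{T}_{K^{M,S^+}}$ to endomorphism/homomorphism modules of the cohomology of $X_K^\BS\setminus X_K$, $X^P_{K^P}$, $X^M_{K^M}$. The desired diagram has only two rows (the $G$-row and the $M$-row), so the first step is to produce the left vertical arrow $\mathbb{T}_{K^{S^+}}\to \mathbb{T}_{K^{M,S^+}}$: this is exactly the (unnormalized) Satake transform, obtained as the composite of restriction of functions $\mathbb{T}_{K^{S^+}}\to \mathbb{T}_{K^{P,S^+}}$ with integration along unipotent fibres $\mathbb{T}_{K^{P,S^+}}\to \mathbb{T}_{K^{M,S^+}}$, as recalled just above. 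Composing the two squares of the lemma's diagram vertically then gives a commutative square
\[\xymatrix{
\mathbb{T}_{K^{S^+}}\ar[r]\ar[d] & \End_{\Z/p^m\Z}(H^i(X_K^\BS\setminus X_K,\Z/p^m\Z))\ar[d]\\
\mathbb{T}_{K^{M,S^+}}\ar[r] & \Hom_{\Z/p^m\Z}(H_c^i(X^M_{K^M},\Z/p^m\Z),H^i(X^M_{K^M},\Z/p^m\Z))
}\]
where the right vertical arrow is the composite of the two restriction-type maps of $\Z_p$-modules described before the lemma.

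The second step is to pass from these Hom/End modules to the honest Hecke algebra quotients $\overline{\mathbb{T}}_{K^{S^+}}$ and $\overline{\mathbb{T}}_{K^{M,S^+}}$. By definition $\overline{\mathbb{T}}_{K^{S^+}}$ is the image of the top horizontal map, so the top row of the target diagram $\mathbb{T}_{K^{S^+}}\twoheadrightarrow \overline{\mathbb{T}}_{K^{S^+}}$ is tautological. For the $M$-row, recall from the discussion above that the kernel of $\mathbb{T}_{K^{M,S^+}}\to \Hom_{\Z/p^m\Z}(H_c^i(X^M_{K^M},\Z/p^m\Z),H^i(X^M_{K^M},\Z/p^m\Z))$ coincides with the kernel of $\mathbb{T}_{K^{M,S^+}}\to \End_{\Z/p^m\Z}(H^i_!(X^M_{K^M},\Z/p^m\Z))$; hence the image of $\mathbb{T}_{K^{M,S^+}}$ in the Hom-module is canonically identified with $\overline{\mathbb{T}}_{K^{M,S^+}}$ as a $\Z_p$-algebra, and the bottom horizontal map of the square above factors as $\mathbb{T}_{K^{M,S^+}}\twoheadrightarrow \overline{\mathbb{T}}_{K^{M,S^+}}\hookrightarrow \Hom_{\Z/p^m\Z}(\cdots)$. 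The remaining point is that the right vertical map of the square, restricted to the subalgebra $\overline{\mathbb{T}}_{K^{S^+}}\subset \End(H^i(X_K^\BS\setminus X_K,\Z/p^m\Z))$, lands in $\overline{\mathbb{T}}_{K^{M,S^+}}$; this is immediate by chasing a Hecke operator around the square, since its image in the Hom-module visibly lies in the image of $\mathbb{T}_{K^{M,S^+}}$. Putting these together produces the commutative square of $\Z_p$-algebras as claimed.

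The main obstacle — and really the only nontrivial verification — is the compatibility of the various Hecke actions with the geometric maps, i.e.\ the commutativity of the lemma preceding the corollary, which is taken for granted here (`an easy verification from the definitions'). Concretely one must check that the pullback map $H^i(X^P_{K^P},\Z/p^m\Z)\to H^i(X^M_{K^M},\Z/p^m\Z)$ along the section $X^M_{K^M}\hookrightarrow X^P_{K^P}$, the inclusion $X^P_{K^P}\hookrightarrow X_K^\BS\setminus X_K$, and the correspondences defining the Hecke operators on each space all intertwine under the algebra maps $\mathbb{T}_{K^{S^+}}\to\mathbb{T}_{K^{P,S^+}}\to\mathbb{T}_{K^{M,S^+}}$; the subtlety is purely bookkeeping of double cosets and of the hypothesis $K^M=K^P\cap M(\A_f)$, together with the fact (from Lemma~\ref{ParabolicVsLevi}, using commutativity of the unipotent radical $U$) that $X^P_{K^P}\to X^M_{K^M}$ is a genuine torus bundle so that pushforward/integration along fibres is well-behaved. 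Granting that lemma, the corollary itself is a formal diagram chase with no further input needed; I would present it exactly as the two-step assembly above, relegating the identification $\mathrm{im}(\mathbb{T}_{K^{M,S^+}})\cong\overline{\mathbb{T}}_{K^{M,S^+}}$ to the sentence already in the text about the two kernels agreeing.
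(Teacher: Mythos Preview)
Your proposal is correct and follows essentially the same route as the paper: use the preceding lemma's commutative diagram (composed vertically through the parabolic), then invoke the equality of the two kernels at the $M$-level to identify the image in the Hom-module with $\overline{\mathbb{T}}_{K^{M,S^+}}$, and finally chase to see that the kernel of $\mathbb{T}_{K^{S^+}}\to\overline{\mathbb{T}}_{K^{S^+}}$ dies in $\overline{\mathbb{T}}_{K^{M,S^+}}$. The paper's proof is just the one-sentence compressed version of exactly this argument.
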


\begin{proof} We need to check that the kernel of $\mathbb{T}_{K^{S^+}}\to \overline{\mathbb{T}}_{K^{S^+}}$ maps trivially into $\overline{\mathbb{T}}_{K^{M,S^+}}$ via $\mathbb{T}_{K^{M,S^+}}$. This follows from the previous lemma, recalling that the kernels of
$\mathbb{T}_{K^{M,S^+}}\to \Hom_{\Z/p^m\Z}(H_c^i(X^M_{K^M},\Z/p^m\Z),H^i(X^M_{K^M},\Z/p^m\Z))$ and $\mathbb{T}_{K^{M,S^+}}\to \End_{\Z/p^m\Z}(H^i_!(X^M_{K^M},\Z/p^m\Z))$ agree.
\end{proof}

In order to deduce the correct corollaries, we make the maps on Hecke algebras explicit again. For a place $v^+\not\in S^+$ of $F^+$, let
\[
\mathbb{T}_{K^M_{v^+}} = \Z_p[M_0(F^+_{v^+})//K^M_{v^+}]\ ,
\]
where $K^{M,S^+} = \prod K^M_{v^+}$, $K^M_{v^+}\subset M_0(F^+_{v^+})$ a hyperspecial maximal compact subgroup.

\begin{lem}\label{DescrHeckeAlg2} Fix a place $v\not\in S$ of $F$ lying above a place $v^+\not\in S^+$ of $F^+$. Let $q_{v^+}$ be the cardinality of the residue field of $F^+$ at $v^+$, and let $q_v$ be the cardinality of the residue field of $F$ at $v$. Let $q_{v^+}^{1/2},q_v^{1/2}\in \overline{\Z}_p$ denote the image of the positive square root in $\mathbb{C}$ under the chosen isomorphism $\mathbb{C}\cong \overline{\Q}_p$.
\begin{altenumerate}
\item[{\rm (i)}] Assume $F$ is totally real (so that $v=v^+$). Then the unnormalized Satake transform is the map
\[
\mathbb{T}_v[q_v^{1/2}]\cong \Z_p[q_v^{1/2}][X_1^{\pm 1},\ldots,X_n^{\pm 1}]^{S_n\ltimes (\Z/2\Z)^n}\to \mathbb{T}_{K^M_v}[q_v^{1/2}]\cong \Z_p[q_v^{1/2}][X_1^{\pm 1},\ldots,X_n^{\pm 1}]^{S_n}
\]
sending the set $\{X_1^{\pm 1},\ldots,X_n^{\pm 1}\}$ to $\{(q_v^{(n+1)/2} X_1)^{\pm 1},\ldots,(q_v^{(n+1)/2} X_n)^{\pm 1}\}$. Recall the elements $T_{i,v}\in \mathbb{T}_v$ from Lemma \ref{DescrHeckeAlg1}. Let $T_{i,v}^M\in \mathbb{T}_{K^M_v}[q_v^{1/2}]$ be the $i$-th elementary symmetric polynomial in $X_1,\ldots,X_n$. Then $q_v^{i(n+1)/2} T_{i,v}^M\in \mathbb{T}_{K^M_v}$ and
\[\begin{aligned}
&1 - T_{1,v} X + T_{2,v} X^2 - \ldots - T_{2n+1,v} X^{2n+1}\\
\mapsto &(1 - X) (1 - q_v^{(n+1)/2}T_{1,v}^M X + q_v^{2 (n+1)/2}T_{2,v}^M X^2 - \ldots + (-1)^n q_v^{n(n+1)/2} T_{n,v}^M X^n)\\
\times&(1 - q_v^{-(n+1)/2}\frac{T_{n-1,v}^M}{T_{n,v}^M} X + q_v^{-2(n+1)/2}\frac{T_{n-2,v}^M}{T_{n,v}^M} X^2 - \ldots + (-1)^n q_v^{-n(n+1)/2}\frac{1}{T_{n,v}^M} X^n)\ .
\end{aligned}\]
\item[{\rm (ii)}] Assume $F$ is CM, and $v^+$ splits in $F$, with $\bar{v}$ the complex conjugate place of $F$; then $q_{v^+} = q_v$. The unnormalized Satake transform is the map
\[
\mathbb{T}_{v^+}[q_v^{1/2}]\cong \Z_p[q_v^{1/2}][X_1^{\pm 1},\ldots,X_{2n}^{\pm 1}]^{S_{2n}}\to \mathbb{T}_{K^M_{v^+}}[q_v^{1/2}]\cong \Z_p[q_v^{1/2}][X_1^{\pm 1},\ldots,X_n^{\pm 1},Y_1^{\pm 1},\ldots,Y_n^{\pm 1}]^{S_n\times S_n}
\]
sending $\{X_1,\ldots,X_{2n}\}$ to $\{q_v^{n/2}X_1,\ldots,q_v^{n/2} X_n,q_v^{-n/2} Y_1,\ldots,q_v^{-n/2} Y_n\}$. Recall the elements $q_v^{i/2} T_{i,v}\in \mathbb{T}_{v^+}$ from Lemma \ref{DescrHeckeAlg1}. Let $T_{i,v}^M\in \mathbb{T}_{K^M_{v^+}}[q_v^{1/2}]$ be the $i$-th elementary symmetric polynomial in $X_1,\ldots,X_n$. Then $q_v^{i(n+1)/2} T_{i,v}^M\in \mathbb{T}_{K^M_{v^+}}$. Moreover, $T_{i,\bar{v}}^M\in \mathbb{T}_{K^M_{v^+}}[q_v^{1/2}]$ is the $i$-th elementary symmetric polynomial in $Y_1^{-1},\ldots,Y_n^{-1}$, and
\[\begin{aligned}
&1 - q_v^{1/2} T_{1,v} X + q_v T_{2,v} X^2 - \ldots - q_v^n T_{2n,v} X^{2n}\\
\mapsto &(1 - q_v^{(n+1)/2}T_{1,v}^M X + q_v^{2 (n+1)/2}T_{2,v}^M X^2 - \ldots + (-1)^n q_v^{n(n+1)/2} T_{n,v}^M X^n)\\
\times&(1 - q_v^{-(n-1)/2}\frac{T_{n-1,\bar{v}}^M}{T_{n,\bar{v}}^M} X + q_v^{-2(n-1)/2}\frac{T_{n-2,\bar{v}}^M}{T_{n,\bar{v}}^M} X^2 - \ldots + (-1)^n q_v^{-n(n-1)/2}\frac{1}{T_{n,\bar{v}}^M} X^n)\ .
\end{aligned}\]
\item[{\rm (iii)}] Assume $F$ is CM, and $v$ inert (in particular, unramified) in $F$; then $q_v = q_{v^+}^2$, and $q_v^{1/2} = q_{v^+}\in \Z_p$. The unnormalized Satake transform is the map
\[
\mathbb{T}_{v^+}[q_{v^+}^{1/2}]\cong \Z_p[q_{v^+}^{1/2}][X_1^{\pm 1},\ldots,X_n^{\pm 1}]^{S_n\ltimes (\Z/2\Z)^n}\to \mathbb{T}_{K^M_{v^+}}[q_{v^+}^{1/2}]\cong \Z_p[q_{v^+}^{1/2}][X_1^{\pm 1},\ldots,X_n^{\pm 1}]^{S_n}
\]
sending $\{X_1^{\pm 1},\ldots,X_n^{\pm 1}\}$ to $\{(q_v^{n/2} X_1)^{\pm 1},\ldots,(q_v^{n/2} X_n)^{\pm 1}\}$. Recall the elements $T_{i,v}\in \mathbb{T}_v$ from Lemma \ref{DescrHeckeAlg1}. Let $T_{i,v}^M\in \mathbb{T}_{K^M_{v^+}}[q_{v^+}^{1/2}]$ be the $i$-th elementary symmetric polynomial in $X_1,\ldots,X_n$. Then $T_{i,v}^M\in \mathbb{T}_{K^M_{v^+}}$, and
\[\begin{aligned}
&1 - q_v^{1/2} T_{1,v} X + q_v T_{2,v} X^2 - \ldots - q_v^n T_{2n,v} X^{2n}\\
\mapsto &(1 - q_v^{(n+1)/2}T_{1,v}^M X + q_v^{2 (n+1)/2}T_{2,v}^M X^2 - \ldots + (-1)^n q_v^{n(n+1)/2} T_{n,v}^M X^n)\\
\times&(1 - q_v^{-(n-1)/2}\frac{T_{n-1,v}^M}{T_{n,v}^M} X + q_v^{-2(n-1)/2}\frac{T_{n-2,v}^M}{T_{n,v}^M} X^2 - \ldots + (-1)^n q_v^{-n(n-1)/2}\frac{1}{T_{n,v}^M} X^n)\ .
\end{aligned}\]
\end{altenumerate}
\end{lem}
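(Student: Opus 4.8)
The statement is a computation with the Satake isomorphism, and I would organize it as follows.

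\textit{Reduction to dual groups and modular characters.} As explained before the lemma, the map $\mathbb{T}_{v^+}\to \mathbb{T}_{K^M_{v^+}}$ is, place by place, the composite of restriction of functions from $G_0(F^+_{v^+})$ to $P_0(F^+_{v^+})$ with the pushforward $f\mapsto\big(m\mapsto\int_{U_0(F^+_{v^+})}f(mu)\,du\big)$ along $P_0\to M_0$; that is, it is the unnormalized constant-term (parabolic descent) homomorphism attached to the parabolic $P_0=M_0U_0\subset G_0$ (here $v^+=v$ if $F$ is totally real). I would first recall the standard compatibility of parabolic descent with the Satake isomorphism (Satake, cf.\ also Cartier's Corvallis article, and Mínguez for the quasi-split unitary case): after inverting $q_{v^+}^{1/2}$, the normalized Satake isomorphisms $\mathcal{S}^{G_0}\colon \mathbb{T}_{v^+}[q_{v^+}^{1/2}]\xrightarrow{\sim}\mathbb{C}[\widehat{T}]^{W_{G_0}}$ and $\mathcal{S}^{M_0}\colon \mathbb{T}_{K^M_{v^+}}[q_{v^+}^{1/2}]\xrightarrow{\sim}\mathbb{C}[\widehat{T}]^{W_{M_0}}$ identify the constant-term map with the restriction $\mathbb{C}[\widehat{T}]^{W_{G_0}}\hookrightarrow\mathbb{C}[\widehat{T}]^{W_{M_0}}$ dual to $\widehat{M_0}\hookrightarrow\widehat{G_0}$, composed with multiplication by the unramified character $\delta_{P_0}^{-1/2}$ of $M_0(F^+_{v^+})$, i.e.\ with the corresponding twist of the coordinates on $\widehat{T}$. (The character is $\delta_{P_0}^{-1/2}$ rather than $\delta_{P_0}^{1/2}$ precisely because we use the \emph{un}normalized pushforward; this normalization, together with our geometric-Frobenius convention for local class field theory as in the footnote to Theorem \ref{ExGalRepr}, fixes all the powers of $q_v$ below.) Thus the entire content is (a) to identify $\widehat{M_0}\hookrightarrow\widehat{G_0}$ in the coordinates used in Lemma \ref{DescrHeckeAlg1}, and (b) to compute $\delta_{P_0}$.

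\textit{The two computations.} For (a): in the totally real case $\widehat{G_0}=\mathrm{SO}_{2n+1}(\mathbb{C})$, whose standard $(2n{+}1)$-dimensional representation has $\widehat{T}$-weights $\{X_1^{\pm1},\dots,X_n^{\pm1},1\}$ — precisely the set of Lemma \ref{DescrHeckeAlg1}(i) — with $\widehat{M_0}=\GL_n(\mathbb{C})$ sharing the maximal torus $\widehat{T}$ and contributing the weights $\{X_1,\dots,X_n\}$; the inert unitary case is the same with $\widehat{G_0}$ the ($\theta$-twisted) dual of the quasi-split $U_{2n}$, whose base-changed Satake parameter is again $\{X_1^{\pm1},\dots,X_n^{\pm1}\}$ and whose Siegel Levi has dual $\GL_n(\mathbb{C})\hookrightarrow\GL_{2n}(\mathbb{C})$; in the split unitary case $G_0(F^+_{v^+})\cong\GL_{2n}(F_v)$ and $M_0(F^+_{v^+})\cong\GL_n(F_v)\times\GL_n(F_{\bar v})$, and $\widehat{M_0}=\GL_n\times\GL_n\hookrightarrow\GL_{2n}=\widehat{G_0}$ block-diagonally, the convention of writing the $\bar v$-parameters as $Y_i^{-1}$ being exactly the one compatible with the self-duality $(\Pi^\vee)^c\cong\Pi$ of Theorem \ref{EndoscopicTransfer}. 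For (b): the Siegel parabolic of $\Sp_{2n}$ has $\mathfrak{u}_0\cong \mathrm{Sym}^2(\mathrm{std}_n)$, so $\delta_{P_0}(m)=|\det m|^{n+1}$ and $\delta_{P_0}^{-1/2}$ twists $X_i\mapsto q_v^{(n+1)/2}X_i$; the $(n,n)$-parabolic of $\GL_{2n}$ has $\delta_{P_0}(m_1,m_2)=|\det m_1|_{F_v}^{\,n}\,|\det m_2|_{F_v}^{-n}$, giving $X_i\mapsto q_v^{n/2}X_i$ and $Y_i\mapsto q_v^{-n/2}Y_i$; and the Siegel parabolic of the quasi-split $U_{2n}$ has $\mathfrak{u}_0$ the space of $F/F^+$-Hermitian $n\times n$ matrices, so $\delta_{P_0}(m)=|\mathrm{Nm}_{F_v/F^+_{v^+}}(\det m)|_{F^+_{v^+}}^{\,n}=|\det m|_{F_v}^{\,n}$, giving $X_i\mapsto q_v^{n/2}X_i$ (recall $q_v=q_{v^+}^2$). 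This establishes the displayed maps on the sets $\{X_i^{\pm1},\dots\}$ in all three cases.

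\textit{The polynomial identities and integrality.} These are then pure substitution. In each case the left-hand polynomial equals $\prod_j(1-q_v^{\delta/2}Y_jX)$, where $\{Y_j\}$ is the set of Lemma \ref{DescrHeckeAlg1} and $\delta=0$ (resp.\ $1$) for $F$ totally real (resp.\ CM); applying the coordinate substitution of the previous paragraph it factors as $\prod_{i=1}^n(1-q_v^{(n+1)/2}X^M_iX)\cdot\prod_{i=1}^n(1-q_v^{-(n+1)/2}(X^M_i)^{-1}X)$ in case (i), with an extra untouched factor $(1-X)$ from the weight $1$; and as $\prod_i(1-q_v^{(n+1)/2}X^M_iX)\cdot\prod_i(1-q_v^{-(n-1)/2}(\ast_i)X)$ in cases (ii), (iii), with $\ast_i=Y_i$ (the $\bar v$-parameters) in case (ii) and $\ast_i=(X^M_i)^{-1}$ in case (iii) — the asymmetry of exponents being due to the extra $q_v^{1/2}$ from $\delta=1$ together with $\delta_{P_0}=|\det|^n$ for the $\GL_{2n}$/$U_{2n}$ Siegel parabolic. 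Expanding the first factor gives $\sum_i(-1)^iq_v^{i(n+1)/2}T^M_{i,v}X^i$ with $T^M_{i,v}=e_i(X^M)$, and expanding the second using $e_k\big((X^M)^{-1}\big)=T^M_{n-k,v}/T^M_{n,v}$, resp.\ $e_k(Y)=T^M_{n-k,\bar v}/T^M_{n,\bar v}$ since $T^M_{i,\bar v}=e_i\big((Y)^{-1}\big)$, yields exactly the stated right-hand sides. Finally, the integrality assertions ($q_v^{i(n+1)/2}T^M_{i,v}\in\mathbb{T}_{K^M_{v^+}}$, etc.)\ hold because the unnormalized pushforward $\mathbb{T}_{v^+}\to\mathbb{T}_{K^M_{v^+}}$ is already a homomorphism of $\Z_p$-algebras — both algebras are defined over $\Z_p$ with the $\Z_p$-valued Haar measure giving $U_0(\OO_{F^+_{v^+}})$ volume $1$ — so the coefficients on the right, being the images of the integral elements $T_{i,v}$ resp.\ $q_v^{i/2}T_{i,v}\in\mathbb{T}_{v^+}$, lie in $\mathbb{T}_{K^M_{v^+}}$. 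The only point requiring any care is keeping the various $q^{1/2}$-normalizations and the conjugate-place bookkeeping in (ii) consistent with Lemma \ref{DescrHeckeAlg1}; there is no genuine obstacle, everything being standard.
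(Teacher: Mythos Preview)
Your proposal is correct. The paper's own ``proof'' is the single sentence ``This is an easy computation, left to the reader,'' so you have supplied considerably more than the paper does, and your organization --- identify the dual-group embedding, compute $\delta_{P_0}$, substitute, then check integrality --- is exactly the natural way to carry out that exercise. The $\delta_{P_0}$ computations ($|\det|^{n+1}$ for $\Sp_{2n}$, $|\det m_1/\det m_2|^n$ for the $(n,n)$-parabolic of $\GL_{2n}$, and $|\det|_{F_v}^n$ for the Siegel parabolic of quasisplit $U_{2n}$) are right, and the sign $\delta_{P_0}^{-1/2}$ is the correct one given Cartier's convention $\mathcal{S}f(t)=\delta_B(t)^{1/2}\int_N f(tn)\,dn$ for the normalized transform.

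One small point: your integrality argument shows that the coefficients of the \emph{product} polynomial lie in $\mathbb{T}_{K^M_{v^+}}$, since they are images of the integral $T_{i,v}$ (resp.\ $q_v^{i/2}T_{i,v}$) under a $\Z_p$-algebra map. But the assertions $q_v^{i(n+1)/2}T^M_{i,v}\in\mathbb{T}_{K^M_{v^+}}$ (and $T^M_{i,v}\in\mathbb{T}_{K^M_{v^+}}$ in case~(iii)) concern coefficients of a single \emph{factor}, and these do not come directly from your argument. They are, however, immediate from the Satake isomorphism for $\GL_n$ alone: one knows $q_v^{i(n-i)/2}e_i(X)\in\mathbb{T}^{\GL_n}$ is the standard Hecke operator, and since $v\nmid p$ the residue cardinality $q_v$ is a unit in $\Z_p$, so any $\Z_p$-power of $q_v$ times this element remains integral. (In case~(iii) note $q_v^{1/2}=q_{v^+}\in\Z_p^\times$.) This is a two-line addendum, not a gap.
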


\begin{proof} This is an easy computation, left to the reader.
\end{proof}

To organize this information, the following definitions are useful. Using that $M=\Res_{F/\Q} \GL_n$, one has
\[
\mathbb{T}_{K^{M,S^+}} = \bigotimes_{v\not\in S} \mathbb{T}_v^M\ ,
\]
with
\[
\mathbb{T}_v^M[q_v^{1/2}]\cong \Z_p[q_v^{1/2}][X_1^{\pm 1},\ldots,X_n^{\pm 1}]^{S_n}\ ,
\]
where $q_v$ is the cardinality of the residue field at $v$. One has the $i$-th elementary symmetric polynomial $T_{i,v}^M\in \mathbb{T}_v^M[q_v^{1/2}]$ in the $X_1,\ldots,X_n$, with $q_v^{i(n+1)/2} T_{i,v}^M\in \mathbb{T}_v^M$. Define the polynomials
\[\begin{aligned}
P_v(X) &= 1 - q_v^{(n+1)/2}T_{1,v}^M X + q_v^{2(n+1)/2}T_{2,v}^M X^2 - \ldots + (-1)^n q_v^{n(n+1)/2} T_{n,v}^M X^n\ ,\\
P_v^\vee(X) &= 1 - q_v^{-(n+1)/2}\frac{T_{n-1,v}^M}{T_{n,v}^M} X + q_v^{-2(n+1)/2}\frac{T_{n-2,v}^M}{T_{n,v}^M} X^2 - \ldots + (-1)^n q_v^{-n(n+1)/2}\frac{1}{T_{n,v}^M} X^n
\end{aligned}\]
in $\mathbb{T}_v^M[X]$. Note that $P_v^\vee(X)$ is the polynomial with constant coefficient $1$ whose zeroes are the inverses of the zeroes of $P_v(X)$. Moreover, if $F$ is totally real, define
\[
\tilde{P}_v(X) = (1 - X) P_v(X) P_v^\vee(X)\in \mathbb{T}_v^M[X] = \mathbb{T}_{K^M_v}[X]\ ;
\]
if $F$ is CM, define
\[
\tilde{P}_v(X) = P_v(X) P_{v^c}^\vee(q_vX)\in \mathbb{T}_{K^M_{v^+}}[X]\ ,
\]
where $v^c$ is the complex conjugate place, and $v^+$ the place of $F^+$ below $v$.

\begin{cor}\label{CorExGalRepr3} Let $d$ be the complex dimension of $X_K$. There exists an ideal
\[
I\subset \overline{\mathbb{T}}_{K^{M,S^+}} = \im(\mathbb{T}_{K^{M,S^+}}\to \End_{\Z/p^m\Z}(H^i_!(X^M_{K^M},\Z/p^m\Z)))
\]
with $I^{2(d+1)} = 0$, such that there exists a continuous $h$-dimensional determinant $D$ of $G_{F,S}$ with values in $A=\overline{\mathbb{T}}_{K^{M,S^+}}/I$, satisfying
\[
D(1-X\Frob_v) = \tilde{P}_v(X)
\]
for all finite places $v\not\in S$.
\end{cor}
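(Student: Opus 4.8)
The plan is to transport the Galois determinant already attached to the compactly supported completed cohomology of the Shimura variety $X_K$ (Corollary \ref{CorExGalRepr2}) onto the boundary of the Borel--Serre compactification, and from there onto $H^i_!(X^M_{K^M},\Z/p^m\Z)$ via the parabolic--induction compatibility of Corollary \ref{CompHeckeAlgParabInd}. Write $\overline{\mathbb{T}}^\partial=\im\bigl(\mathbb{T}_{K^{S^+}}\to \End_{\Z/p^m\Z}(H^i(X_K^\BS\setminus X_K,\Z/p^m\Z))\bigr)$. Corollary \ref{CompHeckeAlgParabInd} provides a surjection of $\Z_p$-algebras $\pi\colon\overline{\mathbb{T}}^\partial\twoheadrightarrow\overline{\mathbb{T}}_{K^{M,S^+}}$ compatible with the Satake transform on the elements $T_{j,v}$. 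So the first reduction is: it suffices to build an ideal $I'\subset\overline{\mathbb{T}}^\partial$ with $I'^{\,2(d+1)}=0$ together with a continuous $h$-dimensional determinant $D'$ of $G_{F,S}$ valued in $\overline{\mathbb{T}}^\partial/I'$ with $D'(1-X\Frob_v)=1-q_v^{\delta/2}T_{1,v}X+q_v^{2\delta/2}T_{2,v}X^2-\cdots+(-1)^hq_v^{h\delta/2}T_{h,v}X^h$ for all $v\notin S$; then $I:=\pi(I')$ satisfies $I^{2(d+1)}=0$, and pushing $D'$ forward gives the desired $D$, since by Lemma \ref{DescrHeckeAlg2} the Satake transform carries the above polynomial exactly to $\tilde{P}_v(X)$.

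Next I would split $\overline{\mathbb{T}}^\partial$ using the long exact sequence of the pair $(X_K^\BS,X_K)$ with $\Z/p^m\Z$-coefficients, which exhibits $H^i(X_K^\BS\setminus X_K,\Z/p^m\Z)$ as an extension $0\to B\to H^i(\partial)\to C\to 0$, where $B=\im\bigl(H^i(X_K,\Z/p^m\Z)\to H^i(\partial)\bigr)$ is a quotient of $H^i(X_K,\Z/p^m\Z)$ and $C$ is a submodule of $H^{i+1}_c(X_K,\Z/p^m\Z)$. If $T\in\mathbb{T}_{K^{S^+}}$ kills both $B$ and $C$, then for $x\in H^i(\partial)$ the image of $Tx$ in $C$ vanishes, so $Tx\in B$ and $T^2x=0$; hence the kernel $\mathfrak{a}$ of $\overline{\mathbb{T}}^\partial\to T_B\times T_C$ (with $T_B,T_C$ the images of $\mathbb{T}_{K^{S^+}}$ in $\End(B),\End(C)$) satisfies $\mathfrak{a}^2=0$. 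Here $T_C$ is a quotient of the image of $\mathbb{T}_{K^{S^+}}$ in $\End(H^{i+1}_c(X_K,\Z/p^m\Z))$, and — via Poincar\'e--Lefschetz duality on the $2d$-manifold $X_K$, absorbing the transpose involution of $\mathbb{T}_{K^{S^+}}$ (under which the data $(T_{j,v})_{v\notin S}$ at unramified places is stable) — $T_B$ is likewise a quotient of the image of $\mathbb{T}_{K^{S^+}}$ in $\End(H^j_c(X_K,\Z/p^m\Z))$ for $j\in\{2d-i,2d-i+1\}$.

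Then I would feed in completed cohomology. For fixed $K_p$ the Hochschild--Serre spectral sequence for the pro-cover $\varprojlim_{K_p'}X_{K_p'K^p}\to X_{K_pK^p}$ has $E_2^{st}=H^s_{\mathrm{cont}}(K_p,\widetilde{H}^t_{c,K^p}(\Z/p^m\Z))\Rightarrow H^{s+t}_c(X_{K_pK^p},\Z/p^m\Z)$ and commutes with the prime-to-$S^+$ Hecke action; by the sharp vanishing of Corollary \ref{CorVanishing} the rows with $t>d$ vanish, so each abutment carries a $\mathbb{T}_{K^{S^+}}$-stable filtration with at most $d+1$ graded pieces, each a subquotient of $H^s_{\mathrm{cont}}(K_p,\widetilde{H}^t_{c,K^p})$ with $t\le d$. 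On such a piece the $\mathbb{T}_{K^{S^+}}$-action factors through its image in $\End(\widetilde{H}^t_{c,K^p})$, a continuous discrete (finite) quotient, which by Theorem \ref{ThmHeckeAlgebras} and Corollary \ref{CorExGalRepr2} carries a continuous $h$-dimensional determinant with Frobenius characteristic polynomials $1-q_v^{\delta/2}T_{1,v}X+\cdots$; being determined by those polynomials, the determinant is valued in the subring generated by the $T_{j,v}$. Since all graded pieces carry determinants with the \emph{same} characteristic polynomials, \cite[Example 2.32]{ChenevierDet} produces, on the image of $\mathbb{T}_{K^{S^+}}$ in $\End(H^n_c(X_{K_pK^p},\Z/p^m\Z))$ modulo an ideal of nilpotence degree $\le d+1$, such a determinant — and hence the same holds, modulo ideals $\bar J_B\subset T_B$, $\bar J_C\subset T_C$ with $\bar J_B^{d+1}=\bar J_C^{d+1}=0$, for $T_B$ and $T_C$. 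Finally, letting $I'$ be the preimage in $\overline{\mathbb{T}}^\partial$ of $\bar J_B\times\bar J_C$, one has $\mathfrak a\subset I'$, $I'^{\,d+1}\subset\mathfrak a$, and $\mathfrak a^2=0$, so $I'^{\,2(d+1)}=0$; and $\overline{\mathbb{T}}^\partial/I'$ embeds into $(T_B/\bar J_B)\times(T_C/\bar J_C)$, so one last application of \cite[Example 2.32]{ChenevierDet} produces $D'$. Step one then concludes.

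The main obstacle will be Steps two--three: arranging that the summand coming from $H^i(X_K)$ carries characteristic polynomials with the \emph{same} normalization as the summand coming from $H^{i+1}_c(X_K)$, which forces one to track precisely the transpose involution of $\mathbb{T}_{K^{S^+}}$ introduced by Poincar\'e duality and to verify that it is absorbed by the (essential) self-duality of the groups $G=\Res_{F^+/\Q}G_0$ at hand; a secondary technical point is to be sure the two nilpotence bounds compose to exactly $2(d+1)$, which hinges entirely on the vanishing range $[0,d]$ of $\widetilde{H}^\bullet_{c,K^p}$ from Corollary \ref{CorVanishing}.
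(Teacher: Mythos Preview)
Your proposal is correct and follows essentially the same approach as the paper: reduce via Corollary~\ref{CompHeckeAlgParabInd} and Lemma~\ref{DescrHeckeAlg2} to the boundary Hecke algebra, split into $H^i$ and $H^{i+1}_c$ via the long exact sequence, reduce to $H^*_c$ by Poincar\'e duality, and then run Hochschild--Serre together with Corollary~\ref{CorVanishing}, Theorem~\ref{ThmHeckeAlgebras} and Corollary~\ref{CorExGalRepr2} to produce the determinant modulo an ideal of nilpotence degree $\le d+1$, gluing with \cite[Example~2.32]{ChenevierDet}.

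Two minor remarks. First, the paper inserts one technical step you omit: before invoking the Hochschild--Serre spectral sequence it enlarges $S^+$ by the places above an auxiliary prime $\ell\neq p$, $\ell\ge 3$, so that one can choose a tame level $K^p_{S^+}K^{S^+}$ contained in the level-$\ell$ subgroup of $G'(\A_f)$, as required by the hypotheses of Theorem~\ref{PerfShHodge} (and hence Theorem~\ref{ThmHeckeAlgebras}); varying $\ell$ at the end recovers the original $S$. Second, the paper simply says ``by Poincar\'e duality one reduces to $H^i_c$'' without discussing the transpose involution you flag; your observation that the essential self-duality of $G_0\in\{\Sp_{2n},U\}$ absorbs this involution (so the polynomial $1-q_v^{\delta/2}T_{1,v}X+\cdots$ is preserved) is the correct justification and is more careful than the paper's write-up.
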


Here and in the following, we do not strive to give the best bound on the nilpotence degree.

\begin{proof} By Corollary \ref{CompHeckeAlgParabInd} and the computations of Lemma \ref{DescrHeckeAlg2}, it is enough to prove the similar result for
\[
\overline{\mathbb{T}}_{K^{S^+}} = \im(\mathbb{T}_{K^{S^+}}\to \End_{\Z/p^m\Z}(H^i(X_K^\BS\setminus X_K,\Z/p^m\Z)))\ .
\]
Using the long exact sequence
\[
\ldots\to H^i(X_K,\Z/p^m\Z)\to H^i(X_K^\BS\setminus X_K,\Z/p^m\Z)\to H_c^{i+1}(X_K,\Z/p^m\Z)\ ,
\]
it is enough to prove that in the Hecke algebras
\[\begin{aligned}
\mathbb{T}_{K^{S^+}}(H^i(X_K,\Z/p^m\Z)) &= \im(\mathbb{T}_{K^{S^+}}\to \End_{\Z/p^m\Z}(H^i(X_K,\Z/p^m\Z))\ ,\\
\mathbb{T}_{K^{S^+}}(H^i_c(X_K,\Z/p^m\Z)) &= \im(\mathbb{T}_{K^{S^+}}\to \End_{\Z/p^m\Z}(H^i_c(X_K,\Z/p^m\Z))\ ,
\end{aligned}\]
there are ideals $J_1$, $J_2$ whose $d+1$-th powers are $0$, such that there are determinants modulo $J_1$, $J_2$. Indeed, one will then get a determinant modulo $J_1\cap J_2$, and elements of $(J_1\cap J_2)^{d+1}\subset J_1^{d+1}\cap J_2^{d+1}$ will induce endomorphisms of $H^i(X_K^\BS\setminus X_K,\Z/p^m\Z)$ that act trivially on the associated graded of a two-step filtration; thus, $(J_1\cap J_2)^{2(d+1)}$ will give trivial endomorphisms of $H^i(X_K^\BS\setminus X_K,\Z/p^m\Z)$.

By Poincar\'e duality, one reduces further to the case of $\mathbb{T}_{K^{S^+}}(H^i_c(X_K,\Z/p^m\Z))$. We may assume that there is a rational prime $\ell\neq p$, $\ell\geq 3$, such that all places of $F^+$ above $\ell$ are in $S^+$ (by adjoining them to $S^+$, without changing $K$); the desired result follows by varying $\ell$. Thus, we may fix a normal compact open subgroup $K_pK^p_{S^+}\subset K_{S^+}$ for which $K^p_{S^+}$ is contained in the level-$\ell$-subgroup of $G^\prime(\A_f)$. Note that then also $K_{S^+}^p\subset K_{S^+}$ is a closed normal subgroup. One has the Hochschild-Serre spectral sequence
\[
H^i_\cont(K_{S^+}/K_{S^+}^p,\widetilde{H}^j_{c,K_{S^+}^pK^{S^+}}(\Z/p^m\Z))\Rightarrow H^{i+j}_c(X_K,\Z/p^m\Z)\ ,
\]
equivariant for the $\mathbb{T}_{K^{S^+}}$-action.\footnote{For the equivariance, reduce to a finite cover with group $K_{S^+} / K_p^\prime K_{S^+}^p$, passing to a colimit afterwards. For a finite cover, equivariance follows from compatibility of trace maps with base change.} Let $\mathbb{T}_{K^{S^+},\cl}$ be the topological ring $\mathbb{T}_{K^{S^+}}$ defined as in Theorem \ref{ThmHeckeAlgebras}, for $m$ large enough. Then, by Theorem \ref{ThmHeckeAlgebras}, $\mathbb{T}_{K^{S^+},\cl}$ acts continuously on the $E_2$-term of this spectral sequence. In particular, it acts continuously on the $E_\infty$-term, so that there is a filtration of $H^i_c(X_K,\Z/p^m\Z)$ by at most $d+1$ terms (cf. Corollary \ref{CorVanishing}), such that the associated action on the graded quotients is continuous. Let $J\subset \mathbb{T}_{K^{S^+}}(H^i_c(X_K,\Z/p^m\Z))$ be the ideal of elements acting trivially on the associated graded quotients. Then $J^{d+1} = 0$, and we are reduced to showing that there is a determinant modulo $J$.

But now $A = \mathbb{T}_{K^{S^+}}(H^i_c(X_K,\Z/p^m\Z))/J$ is a discrete quotient of $\mathbb{T}_{K^{S^+},\cl}$, so one gets the desired determinant from Corollary \ref{CorExGalRepr2}.
\end{proof}

Let us rephrase this corollary in more intrinsic terms, changing notation slightly.

\begin{cor}\label{CorExGalRepr4} Let $F$ be a totally real or CM field, with totally real subfield $F^+\subset F$. Fix an integer $n\geq 1$. If $F$ is totally real, define $h=2n+1$ and $d=[F:\Q](n^2+n)/2$; if $F$ is CM, define $h=2n$ and $d=[F^+:\Q]n^2$. Let $S$ be a finite set of finite places of $F$ invariant under complex conjugation, containing all places above $p$, and all places which are ramified above $F^+$. Let $K\subset \GL_n(\A_{F,f})$ be a sufficiently small\footnote{This can always be ensured by making it smaller at one place $v\in S$ not dividing $p$.} compact open subgroup of the form $K=K_SK^S$, where $K_S\subset \GL_n(\A_{F,S})$ is compact open and torsion-free, and $K^S=\prod_{v\not\in S} \GL_n(\OO_{F_v})\subset \GL_n(\A_{F,f}^S)$. Let
\[
\mathbb{T}_{F,S} = \bigotimes_{v\not\in S} \mathbb{T}_v = \bigotimes_{v\not\in S} \Z_p[\GL_n(F_v)//\GL_n(\OO_{F_v})]
\]
be the abstract Hecke algebra, and
\[
\mathbb{T}_{F,S}(K,i,m) = \im(\mathbb{T}_{F,S}\to \End_{\Z/p^m\Z}(H^i_!(X_K,\Z/p^m\Z)))\ .
\]
Here,
\[
X_K = \GL_n(F)\backslash [(\GL_n(F\otimes_{\Q} \R) / \R_{>0} K_\infty)\times \GL_n(\A_{F,f})/K]
\]
denotes the locally symmetric space associated with $\GL_n/F$, where $K_\infty\subset \GL_n(F\otimes_{\Q} \R)$ is a maximal compact subgroup. Then there is an ideal $I\subset \mathbb{T}_{F,S}(K,i,m)$ with $I^{2(d+1)}=0$, for which there is a continuous $h$-dimensional determinant $\tilde{D}$ of $G_{F,S}$ with values in $\mathbb{T}_{F,S}(K,i,m)/I$, satisfying
\[
\tilde{D}(1-X\Frob_v) = \tilde{P}_v(X)
\]
for all places $v\not\in S$.\footnote{For the definition of $\tilde{P}_v(X)$, cf. the paragraph before Corollary \ref{CorExGalRepr3}.}
\end{cor}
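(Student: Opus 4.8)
The plan is to observe that Corollary \ref{CorExGalRepr4} is just a reformulation of Corollary \ref{CorExGalRepr3} (together with Corollary \ref{CompHeckeAlgParabInd} and Lemma \ref{DescrHeckeAlg2}), obtained by choosing an appropriate ambient group and level structure. First I would set up the dictionary. Given $(F,n,S,K)$ as in the statement, put $G_0=\Sp_{2n}/F^+$ if $F$ is totally real and $G_0=U/F^+$, the quasi-split unitary group attached to $F/F^+$, if $F$ is CM, and set $G=\Res_{F^+/\Q}G_0$ with its standard conjugacy class $D$, so that $(G,D)$ is of Hodge type. Then $M=\Res_{F/\Q}\GL_n$ is the maximal Levi of the parabolic $P$ fixed earlier, and the locally symmetric space $X^M_{K^M}$ of the Definition preceding Corollary \ref{CorExGalRepr3}, for a compact open subgroup $K^M\subset M(\A_f)=\GL_n(\A_{F,f})$, is by construction exactly the space $X_K$ written in the present statement (the $\R_{>0}$-quotient and the choice of $K_\infty$ agree). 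Taking $S^+$ to be the set of places of $F^+$ below $S$ — which contains all places above $p$ and all places ramified in $F$, and, since $S$ is invariant under complex conjugation, satisfies $v\notin S\iff v^+\notin S^+$ — one gets a canonical identification of Hecke algebras $\mathbb{T}_{F,S}=\bigotimes_{v\notin S}\mathbb{T}_v=\bigotimes_{v^+\notin S^+}\mathbb{T}_{K^M_{v^+}}=\mathbb{T}_{K^{M,S^+}}$, using $M_0(F^+_{v^+})=\prod_{v\mid v^+}\GL_n(F_v)$ and the fact that $\GL_n(\OO_{F_v})$ is hyperspecial. Under this identification $\mathbb{T}_{F,S}(K,i,m)$ coincides with the ring $\overline{\mathbb{T}}_{K^{M,S^+}}$ appearing in Corollary \ref{CorExGalRepr3}, and the polynomials $\tilde P_v(X)$ are the same as those defined in the paragraph before that corollary.

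Next I would produce a level in $G(\A_f)$ realizing $K$. By the remark immediately preceding Corollary \ref{CorExGalRepr3}, given the sufficiently small compact open subgroup $K\subset M(\A_f)$ one can choose a compact open subgroup $\widetilde K\subset G(\A_f)$, of the form $\widetilde K=\widetilde K_{S^+}\widetilde K^{S^+}$ with $\widetilde K^{S^+}$ a product of hyperspecial maximal compact subgroups of $G_0$, such that the image $\widetilde K^M$ of $\widetilde K^P=\widetilde K\cap P(\A_f)$ in $M(\A_f)$ equals $\widetilde K^P\cap M(\A_f)$ and equals $K$ (concretely, one multiplies $K$ by suitably small compact opens in the unipotent radical of $P$ and in the opposite unipotent radical). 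Shrinking $\widetilde K_{S^+}$ at one place of $S^+$ not dividing $p$, we may further assume $\widetilde K$ is neat, which is compatible with the hypothesis that $K_S$ is torsion-free. Then $\widetilde K^{M,S^+}=K^S=\prod_{v\notin S}\GL_n(\OO_{F_v})$, and all the running hypotheses of Corollary \ref{CorExGalRepr3} are satisfied for $\widetilde K$.

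Finally, I would apply Corollary \ref{CorExGalRepr3} to $\widetilde K$: it produces an ideal $I\subset\overline{\mathbb{T}}_{K^{M,S^+}}=\mathbb{T}_{F,S}(K,i,m)$ with $I^{2(d+1)}=0$ and a continuous $h$-dimensional determinant $D$ of $G_{F,S}$ with values in $\mathbb{T}_{F,S}(K,i,m)/I$ satisfying $D(1-X\Frob_v)=\tilde P_v(X)$ for all $v\notin S$; set $\tilde D=D$. The only numerical point to check is that the integer $d$ occurring in Corollary \ref{CorExGalRepr3} — the complex dimension of the Shimura variety $X_{\widetilde K}$ attached to $G$ — agrees with the $d$ of the present statement: $\dim_\C X_{\widetilde K}=[F^+:\Q]\dim_\C\bigl(G_0(\R)/K_\infty\bigr)$, which is $[F^+:\Q]\,n(n+1)/2=[F:\Q]\,n(n+1)/2$ for $G_0=\Sp_{2n}$ (Siegel upper half space) and $[F^+:\Q]\,n^2$ for $G_0=U(n,n)$; similarly $h=2n+1$, resp.\ $2n$, matches. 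There is essentially no obstacle beyond this bookkeeping — all the substance (comparison of completed cohomology with cusp forms, continuity of the Hecke action, existence of Galois representations via the endoscopic transfer) is already contained in Corollary \ref{CorExGalRepr3} and its inputs. The one mild subtlety is the choice of the auxiliary level $\widetilde K$ with hyperspecial components outside $S$ and with the Levi-compatibility $\widetilde K^M=\widetilde K^P\cap M(\A_f)$, which the text has already indicated is harmless.
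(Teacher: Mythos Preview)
Your proposal is correct and follows exactly the approach of the paper: the paper's proof is a one-liner noting that any $K$ as in the statement can be realized as a $K^M$ in the notation of Corollary \ref{CorExGalRepr3}, and you have simply spelled out that dictionary (the identification of Hecke algebras, the construction of the ambient level $\widetilde K$, and the numerical check on $d$ and $h$) in detail.
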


\begin{proof} This is Corollary \ref{CorExGalRepr3}, noting that any $K$ as in the statement can be realized as a $K^M$ in the notation of Corollary \ref{CorExGalRepr3}.
\end{proof}

\section{Divide and conquer}\label{DivideAndConquer}

Let the situation be as in Corollary \ref{CorExGalRepr4}. Thus, $F$ is a number field, which is totally real or CM, $p$ is a prime number, $n\geq 1$ some integer, and $S$ if a finite set of finite places of $F$ invariant under complex conjugation, containing all places dividing $p$ and all places which are ramified over $F^+$. Moreover, fix a sufficiently small $K=K_SK^S\subset \GL_n(\A_{F,f})$ such that $K^S = \prod_{v\not\in S} \GL_n(\OO_{F_v})\subset \GL_n(\A_{F,f}^S)$. Let
\[
\mathbb{T}_{F,S}(K,i,m) = \im(\mathbb{T}_{F,S}\to \End_{\Z/p^m\Z}(H^i_!(X_K,\Z/p^m\Z)))\ .
\]
In this section, we prove the following theorem.

\begin{thm}\label{ThmExGalRepr2} There exists an ideal $J\subset \mathbb{T}_{F,S}(K,i,m)$, $J^{4(d+1)} = 0$, such that there is a continuous $n$-dimensional determinant $D$ of $G_{F,S}$ with values in $\mathbb{T}_{F,S}(K,i,m)/J$ satisfying
\[
D(1-X\Frob_v) = P_v(X)
\]
for all places $v\not\in S$.
\end{thm}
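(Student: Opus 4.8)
The plan is to pass from the $h$-dimensional determinant $\tilde D$ produced by Corollary \ref{CorExGalRepr4} to its $n$-dimensional ``piece'' $D$ with characteristic polynomials $P_v(X)$, at the cost of a further bounded-nilpotent ideal. First I would recall the shape of $\tilde P_v(X)$: in the totally real case $\tilde P_v(X) = (1-X)P_v(X)P_v^\vee(X)$, and in the CM case $\tilde P_v(X) = P_v(X)P_{v^c}^\vee(q_vX)$. In both cases $\tilde P_v$ is, up to the elementary factor $(1-X)$ and the twist-and-reflect operation, a product of $P_v$ and a ``dual'' factor. The point is that these two (resp.\ three) factors have, generically, disjoint sets of Frobenius eigenvalues: the roots of $P_v$ have a different weight/sign pattern (coming from the infinitesimal characters recorded after Theorem \ref{EndoscopicTransfer}) than the roots of $P_v^\vee$ or $P_v^\vee(q_v\cdot)$, so $\gcd(P_v,P_v^\vee) = 1$ after inverting a suitable integer prime to $p$, and one can argue that the relevant determinant factors.

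The key technical input is a factorization statement for continuous determinants: if $D_{\mathrm{tot}}$ is a continuous $h$-dimensional determinant over a (say, $\Z/p^m\Z$-)algebra $A$ such that for a density-one set of places the characteristic polynomial factors as a product $Q_v^{(1)}(X)\cdots Q_v^{(r)}(X)$ of coprime monic polynomials with coefficients in $A$ (coprimality being witnessed uniformly, e.g.\ after a fixed localization), then $D_{\mathrm{tot}}$ itself factors as a product of continuous determinants $D^{(1)}\cdots D^{(r)}$ of the appropriate dimensions, at least after replacing $A$ by $A/J_0$ for an explicit nilpotent ideal $J_0$. Over fields this is Chenevier's theory of determinants combined with the fact that a semisimple representation splits according to the factorization of its characteristic polynomial into coprime pieces; over Artinian or general coefficient rings one obtains it by a deformation/idempotent-lifting argument, gaining the nilpotent ideal $J_0$ whose nilpotence degree is bounded in terms of $h$. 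I would apply this with $D_{\mathrm{tot}} = \tilde D$ (over $\mathbb{T}_{F,S}(K,i,m)/I$, itself a quotient of $\mathbb{T}_{F,S}(K,i,m)$ by $I$ with $I^{2(d+1)}=0$), extract the factor whose characteristic polynomial is $P_v(X)$, and call it $D$; composing the two nilpotent ideals (the $I$ from Corollary \ref{CorExGalRepr4} and the $J_0$ from factorization, pulled back to $\mathbb{T}_{F,S}(K,i,m)$) gives an ideal $J$ with $J^{4(d+1)}=0$, matching the stated bound (one factor $2(d+1)$ from $I$, one from $J_0$, which one arranges to have nilpotence degree $\le 2(d+1)$ as well, or simply absorbs into the crude bound).

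There are two ancillary points to handle. One is that the factorization involves the ``dual twist'' operations $Q(X)\mapsto Q^\vee(X)$ and $Q(X)\mapsto Q^\vee(q_vX)$, so I should check that $P_v$ really is the characteristic-polynomial factor one wants (and not its dual), i.e.\ that the normalization is consistent across all $v$; this is bookkeeping with the explicit formulas in Lemma \ref{DescrHeckeAlg2} and the weight computation in the remark after Theorem \ref{EndoscopicTransfer}. The other is the coprimality: a priori $P_v$ and its dual could share a root at a sporadic place, but since $S$ contains all ramified places and we are free (as in the proof of Corollary \ref{CorExGalRepr3}) to enlarge $S$ by the places above an auxiliary prime $\ell$, I can first establish everything after inverting $\ell$ and then let $\ell$ vary, or simply note that the characteristic polynomials are determined by the determinant, which is continuous, so coprimality needs only to hold on a dense set. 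The main obstacle, and the part I expect to require the most care, is precisely this factorization lemma for determinants over a non-reduced coefficient ring with a controlled nilpotent error term: over a field it is classical, but tracking the nilpotence degree through the idempotent-lifting argument, uniformly in $m$, is where the real work lies. Everything else — identifying $\tilde P_v$, matching the factor $P_v$, bounding $J$ — is routine given the structural results already assembled in this chapter.
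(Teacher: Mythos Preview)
Your approach has a genuine gap: the coprimality of $P_v$ and $P_v^\vee$ (resp.\ $P_{v^c}^\vee(q_v\cdot)$) that you invoke simply fails over $\Z/p^m\Z$-algebras. The weight/sign-pattern argument you cite from the remark after Theorem \ref{EndoscopicTransfer} is an archimedean (or Hodge--Tate) statement about eigenvalues in characteristic $0$; it places no constraint on Frobenius eigenvalues modulo $p^m$. Concretely, the paper's Lemma \ref{Det1OpenSubgroup} (proved exactly in this context) shows that over the reduced quotient $\overline{A}$ there is a finite extension $L_0/F$ such that for all $v$ split in $L_0$ one has $\overline{P}_v(X)=\overline{P}_{v^c}(X)=(1-X)^n$. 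So at such $v$ the factors $P_v$ and $P_v^\vee$ share all their roots modulo the maximal ideal, and no amount of inverting primes $\ell\neq p$ will make them coprime. Your factorization lemma therefore has nothing to bite on: the ``uniform coprimality witness'' you need does not exist, and the idempotent you would like to lift is not even there residually.

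What the paper does instead is precisely designed to get around this. Rather than separating the factors by their intrinsic eigenvalues, it separates them by how they move under auxiliary twists: for continuous characters $\chi:\Z_\ell\to W(\overline{\F}_p)^\times$ one has companion determinants $\tilde D_\chi$ with characteristic polynomials $(1-X)P_v(X/\chi)P_v^\vee(\chi X)$ (resp.\ the CM analogue). Comparing $\tilde D_1$ and $\tilde D_\chi$ for varying $\chi$ lets one solve for the coefficients of $P_v$ (Lemma \ref{LemDetLocConst}), and then packaging all twists into a single determinant $A[G][V^{\pm 1}]\to (A/J)[T^{\pm 1}]$ (Lemmas \ref{LemUnivTwistExists1}, \ref{LemUnivTwistExists2}) reduces the problem to a factorization lemma in which the pieces are separated not by coprimality but by the power of the formal variable $T$ they carry. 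That is what makes the decomposition go through over a genuinely non-reduced ring. Your sketch, as it stands, would only recover the much easier characteristic-$0$ statement.
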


\begin{proof} Note that $A_0 = \mathbb{T}_{F,S}(K,i,m)$ is a finite ring. Let $A=A_0\otimes_{\Z_p} W(\overline{\F}_p)$. It suffices to prove that there is a determinant (with the stated properties) with values in $A/J$ for some ideal $J\subset A$ with $J^{4(d+1)} = 0$.

By Corollary \ref{CorExGalRepr4}, there exists a determinant $\tilde{D}_1$ with values in $A/I_1$, $I_1^{2(d+1)} = 0$, satisfying
\[
\tilde{D}_1(1-X\Frob_v) = \tilde{P}_v(X)
\]
for all $v\not\in S$. We will use this result for many cyclotomic twists, roughly following an idea used in \cite{HLTT}. Let $\chi: G_{F,S}\to W(\overline{\F}_p)^\times$ be any continuous character of odd order prime to $p$. Define
\[
P_{v,\chi}(X) = P_v(X/\chi(\Frob_v))\in A[X]\ .
\]
Let $P_{v,\chi}^\vee(X) = P_v^\vee(\chi(\Frob_v)X)$ be the polynomial with constant coefficient $1$ whose zeroes are the inverses of the zeroes of $P_{v,\chi}(X)$. Define
\[
\tilde{P}_{v,\chi}(X) = (1-X)P_{v,\chi}(X)P_{v,\chi}^\vee(X)
\]
in case $F$ is totally real, and
\[
\tilde{P}_{v,\chi}(X) = P_{v,\chi}(X) P_{v^c,\chi}^\vee(q_vX)
\]
in case $F$ is CM. We claim that there is a determinant $\tilde{D}_\chi$ with values in $A/I_\chi$, $I_\chi^{2(d+1)} = 0$, satisfying
\[
\tilde{D}_\chi(1-X\Frob_v) = \tilde{P}_{v,\chi}(X)
\]
for all $v\not\in S$.

Indeed, the character $\chi: G_{F,S}\to W(\overline{\F}_p)^\times$ corresponds by class-field theory to a continuous character $\psi: F^\times\backslash \A_F^\times\to W(\overline{\F}_p)^\times$. As it is odd, it is trivial at all archimedean primes, i.e. factors through a character $\psi: F^\times\backslash \A_{F,f}^\times\to W(\overline{\F}_p)^\times$. Also, it is unramified away from $S$, and its order is prime to $p$. Thus, one can find a normal compact open subgroup $K^\prime = K_S^\prime K^S\subset K=K_SK^S$ of index $[K:K^\prime]$ prime to $p$, such that $\psi$ is trivial on $\det(K^\prime)$. Because $[K:K^\prime]$ is prime to $p$, the map
\[
H^i_!(X_K,\Z/p^m\Z)\to H^i_!(X_{K^\prime},\Z/p^m\Z)
\]
is split injective. The $\psi$-isotypic component
\[
H^0(X_{K^\prime},W(\overline{\F}_p))[\psi]
\]
is $1$-dimensional (as $\pi_0(X_{K^\prime})\cong F^\times\backslash \A_{F,f}^\times / \det K^{\prime}$). The cup-product gives a map
\[
H^i_!(X_K,\Z/p^m\Z)\otimes_{\Z_p} H^0(X_{K^\prime},W(\overline{\F}_p))[\psi]\to H^i_!(X_{K^\prime},W(\overline{\F}_p)/p^m)\ ,
\]
which is injective, as cup-product with $H^0(X_{K^\prime},W(\overline{\F}_p))[\psi^{-1}]$ maps this back to
\[
H^i_!(X_K,W(\overline{\F}_p)/p^m)\subset H^i_!(X_{K^\prime},W(\overline{\F}_p)/p^m)\ .
\]
Applying Corollary \ref{CorExGalRepr4} to the Hecke algebra corresponding to
\[
H^i_!(X_K,\Z/p^m\Z)\otimes_{\Z_p} H^0(X_{K^\prime},W(\overline{\F}_p))[\psi]\subset H^i_!(X_{K^\prime},W(\overline{\F}_p)/p^m)
\]
will produce the desired determinant $\tilde{D}_\chi$ with values in $(\mathbb{T}_{F,S}(K,i,m)\otimes_{\Z_p} W(\overline{\F}_p))/I_\chi$ for some ideal $I_\chi$ with $I_\chi^{2(d+1)} = 0$.

Our first aim is to prove that there is an ideal $I_0\subset A_0$ with $I_0^{4(d+1)}=0$ such that there is a continuous function $G_{F,S}\to A_0/I_0[X] : g\mapsto P_g$ with $P_{\Frob_v} = P_v$ for all $v\not\in S$. This will be done in several steps. Let $\overline{A}$ be the reduced quotient of $A$, which is a finite product of copies of $\overline{\F}_p$. Let $\overline{P}_v(X)\in \overline{A}[X]$ be the image of $P_v(X)$.

\begin{lem}\label{Det1OpenSubgroup} There is a finite extension $L_0/F$, Galois over $F^+$ (thus over $F$), such that for all places $v\not\in S$ which split in $L_0$, $\overline{P}_v(X) = \overline{P}_{v^c}(X) = (1-X)^n$, and $q_v\equiv 1\mod p$ in case $F$ is CM.
\end{lem}

\begin{proof} Look at the continuous determinant
\[
\overline{\tilde{D}}_1: \overline{\F}_p[G_{F,S}]\to \overline{\F}_p
\]
It factors over $\Gal(L_0^\prime/F)$ for some finite Galois extension $L_0^\prime/F$ (unramified outside $S$), which we may assume to be Galois over $F^+$. It follows that for all $v\not\in S$ which split in $L_0^\prime$,
\[
(1-X)^{2n+1} = \overline{\tilde{D}}_1(1-X\Frob_v) = (1-X) \overline{P}_v(X) \overline{P}_v^\vee(X)
\]
if $F$ is totally real. In particular, $\overline{P}_v(X)$ divides $(1-X)^{2n+1}$, has constant coefficient $1$ and is of degree $n$, thus $\overline{P}_v(X) = (1-X)^n$; we may take $L_0 = L_0^\prime$.

If $F$ is CM, then for all $v\not\in S$ which split in $L_0^\prime$,
\[
(1-X)^{2n} = \overline{\tilde{D}}_1(1-X\Frob_v) = \overline{P}_v(X) \overline{P}_{v^c}^\vee(q_vX)\ .
\]
Again, one sees that $\overline{P}_v(X) = (1-X)^n$. If one takes $L_0 = L_0^\prime(\zeta_p)$, where $\zeta_p$ is a primitive $p$-th root of unity, then for all $v$ which split in $L_0$, one has $q_v\equiv 1\mod p$, so that one gets also $\overline{P}_{v^c}(X) = (1-X)^n$.
\end{proof}

Next, we define $I_0\subset A_0$ with $I_0^{4(d+1)}=0$. For any odd rational prime $\ell\neq p$, let $F^{\cycl_\ell}/F$ be the cyclotomic $\Z_\ell$-extension. Let $S_\ell = S\cup \{v|\ell\}$ (and similarly for any set of rational primes). For any character $\chi: \Z_\ell\to W(\overline{\F}_p)^\times$, we have an ideal $I_\chi\subset A$, with $I_\chi^{2(d+1)}=0$, such that there is a continuous determinant $\tilde{D}_\chi$ of $G_{F,S_\ell}$ with values in $A/I_\chi$. The ideal $\tilde{I}_\chi = I_1 + I_\chi\subset A$ satisfies $\tilde{I}_\chi^{4(d+1)} = 0$. The intersection $\tilde{I}_{0,\chi} = \tilde{I}_\chi\cap A_0\subset A_0$ is an ideal of the finite ring $A_0$. Thus, there is some $I_{0,\ell}\subset A_0$ with $I_{0,\ell}^{4(d+1)} = 0$ such that for infinitely many $\chi$, $\tilde{I}_{0,\chi} = I_{0,\ell}$. Finally, there is some $I_0\subset A_0$ with $I_0^{4(d+1)} = 0$ such that $I_0 = I_{0,\ell}$ for infinitely many $\ell$.

Now fix any two sufficiently large distinct rational primes $\ell, \ell^\prime\neq p$ with $I_{0,\ell} = I_{0,\ell^\prime} = I_0$ such that $[L_0:F]$ is not divisible by $\ell$ and $\ell^\prime$. Let $L^{\infty_\ell}$ be the maximal pro-$p$-extension of $L_0\cdot F^{\cycl_\ell}$ which is unramified outside $S_\ell$. Thus, there is a quotient $G_{F,S_\ell}\to \Gal(L^{\infty_\ell}/F)$.

\begin{lem}\label{Det1ProP} For any character $\chi: \Gal(F^{\cycl_\ell}/F)\cong \Z_\ell\to W(\overline{\F}_p)^\times$, the determinant $\tilde{D}_\chi$ of $G_{F,S_\ell}$ factors over $\Gal(L^{\infty_\ell}/F)$.
\end{lem}

\begin{proof} Over $\overline{A}$, the determinant $\overline{\tilde{D}}_\chi$ corresponds to a continuous semisimple representation
\[
\overline{\pi}_\chi: G_{F,S_\ell}\to \GL_h(\overline{A})\ ,
\]
by \cite[Theorem 2.12]{ChenevierDet}. For all $g\in \ker(G_{F,S_\ell}\to \Gal(L_0\cdot F^{\cycl_\ell}/F))$, it satisfies
\[
\det(1-Xg|\overline{\pi}_\chi) = (1-X)^h
\]
by Lemma \ref{Det1OpenSubgroup}. It follows that these elements $g$ are mapped to elements of $p$-power order, so that $\overline{\pi}_\chi$ factors over $\Gal(L^{\infty_\ell}/F)$. Now apply \cite[Lemma 3.8]{ChenevierDet}.
\end{proof}

\begin{lem}\label{LemDetLocConst} There is a unique continuous function
\[
g\mapsto P_g: \Gal(L^{\infty_\ell}/F)\setminus \Gal(L^{\infty_\ell}/F^{\cycl_\ell})\to A_0/I_0[X]
\]
such that $P_{\Frob_v} = P_v$ for all $v\not\in S_\ell$.
\end{lem}

\begin{proof} Fix any $g\in \Gal(L^{\infty_\ell}/F)\setminus \Gal(L^{\infty_\ell}/F^{\cycl_\ell})$ and fix a character
\[
\chi: \Gal(F^{\cycl_\ell}/F)\cong \Z_\ell\to W(\overline{\F}_p)^\times
\]
such that $\tilde{I}_{0,\chi} = I_0$ and $\chi(g)^{2j}\neq 1$ for $j=1,\ldots,n$. As $g\not\in \Gal(L^{\infty_\ell}/F^{\cycl_\ell})$, only finitely many $\chi$ violate the second condition; as infinitely many $\chi$ satisfy the first, some suitable $\chi$ exists. There is an open neighborhood $U\subset \Gal(L^{\infty_\ell}/F)\setminus \Gal(L^{\infty_\ell}/F^{\cycl_\ell})$ of $g$ such that $\chi$ and the determinants
\[
\tilde{D}_1: A[\Gal(L^{\infty_\ell}/F)]\to A/I_1\ ,\ \tilde{D}_\chi: A[\Gal(L^{\infty_\ell}/F)]\to A/I_\chi
\]
are constant on $U$. In particular, the polynomials
\[
\tilde{P}_v(X)\mod I_1\ ,\ \tilde{P}_{v,\chi}(X)\mod I_\chi
\]
are constant on $U$. It is enough to see that $P_v(X)\mod I_0$ is constant on $U$. We do only the totally real case; the CM case is similar. Recall that if $\Frob_v\in U$, then
\[
\tilde{P}_v(X) = (1-X)P_v(X) P_v^\vee(X)\ ,\ \tilde{P}_{v,\chi}(X) = (1-X) P_v(X/\chi(g)) P_v^\vee(\chi(g)X)\ .
\]
Both are constant on $U$ modulo $\tilde{I}_\chi=I_1+I_\chi$. This implies that
\[
P_v(X)P_v^\vee(X)\ ,\ P_v(X/\chi(g)) P_v^\vee(\chi(g)X)
\]
are constant on $U$ modulo $\tilde{I}_\chi$. Let us forget that $Q_v = P_v^\vee$ is determined by $P_v$. Write
\[
P_v(X) = 1 + a_1(v) X + \ldots + a_n(v) X^n\ ,\ Q_v(X) = 1 + b_1(v) X + \ldots + b_n(v) X^n\ .
\]
By induction on $j$, we prove that $a_j(v)$ and $b_j(v)$ are constant on $U$ modulo $\tilde{I}_\chi$. Calculating the coefficient of $X^j$ in $P_v(X) Q_v(X)$ and $P_v(X/\chi(g)) Q_v(\chi(g)X)$ gives only contributions which are constant on $U$ modulo $\tilde{I}_\chi$, except possibly the sum $a_j(v)+b_j(v)$ in the first product, and $\chi(g)^{-j} a_j(v) + \chi(g)^j b_j(v)$ in the second product. Thus, $a_j(v) + b_j(v)$ and $\chi(g)^{-2j} a_j(v) + b_j(v)$ are constant on $U$ modulo $\tilde{I}_\chi$. Taking the difference, we find that $(1-\chi(g)^{-2j})a_j(v)$ is constant on $U$ modulo $\tilde{I}_\chi$, which implies that $a_j(v)$ is constant on $U$ modulo $\tilde{I}_\chi$, as $1-\chi(g)^{-2j}$ is a unit by assumption on $\chi$. Thus, $b_j(v) = (a_j(v) + b_j(v)) - a_j(v)$ is also constant on $U$ modulo $\tilde{I}_\chi$.

As both $a_j(v),b_j(v)\in A_0$, we find that they are constant modulo $A_0\cap \tilde{I}_\chi = \tilde{I}_{0,\chi} = I_0$, as desired.
\end{proof}

\begin{cor}\label{CorDetLocConst} There exists a unique continuous function
\[
g\mapsto P_g:G_{F,S}\to A_0/I_0[X]
\]
such that $P_{\Frob_v} = P_v$ for all $v\not\in S$.
\end{cor}

\begin{proof} Let $M/F$ be the extension for which $G_{F,S_{\ell,\ell^\prime}} = \Gal(M/F)$. Applying Lemma \ref{LemDetLocConst} for $\ell$ and $\ell^\prime$ individually, we see that there are continuous functions
\[
g\mapsto P_g: G_{F,S_{\ell,\ell^\prime}}\setminus \Gal(M/F^{\cycl_\ell})\to A_0/I_0[X]
\]
and
\[
g\mapsto P_g: G_{F,S_{\ell,\ell^\prime}}\setminus \Gal(M/F^{\cycl_{\ell^\prime}})\to A_0/I_0[X]\ .
\]
By uniqueness, they glue to a continuous function
\[
g\mapsto P_g: G_{F,S_{\ell,\ell^\prime}}\setminus \Gal(M/F^{\cycl_\ell}\cdot F^{\cycl_{\ell^\prime}})\to A_0/I_0[X]\ .
\]
But the map
\[
G_{F,S_{\ell,\ell^\prime}}\setminus \Gal(M/F^{\cycl_\ell}\cdot F^{\cycl_{\ell^\prime}})\to \Gal(L^{\infty_\ell}/F)
\]
is surjective, as $F^{\cycl_{\ell^\prime}}$ is linearly disjoint from $L^{\infty_\ell}$ (because one extension is pro-$\ell^\prime$, whereas the other is pro-prime-to-$\ell^\prime$). To check whether the continuous function
\[
g\mapsto P_g: \Gal(L^{\infty_\ell}/F)\setminus \Gal(L^{\infty_\ell}/F^{\cycl_\ell})\to A_0/I_0[X]
\]
extends continuously to some $g\in \Gal(L^{\infty_\ell}/F^{\cycl_\ell})$, one can lift $g$ to some
\[
\tilde{g}\in G_{F,S_{\ell,\ell^\prime}}\setminus \Gal(M/F^{\cycl_\ell}\cdot F^{\cycl_{\ell^\prime}})\ ,
\]
and use that the continuous function $g\mapsto P_g$ exists near $\tilde{g}$. This shows that there is a continuous function
\[
G_{F,S_\ell}\to \Gal(L^{\infty_\ell}/F)\to A_0/I_0[X]
\]
interpolating $P_{\Frob_v}$ for $v\not\in S_\ell$. Similarly, there is a continuous function
\[
G_{F,S_{\ell^\prime}}\to A_0/I_0[X]
\]
interpolating $P_{\Frob_v}$ for $v\not\in S_{\ell^\prime}$. By uniqueness, they give the same function on $G_{F,S_{\ell,\ell^\prime}}$, which will thus factor over $G_{F,S}$, and interpolate $P_{\Frob_v}$ for all $v\not\in S$.
\end{proof}

Thus, there exists a finite Galois extension $\tilde{F}/F$ unramified outside $S$ with Galois group $G$ and a function
\[
g\mapsto P_g: G\to A_0/I_0[X]
\]
such that $P_{\Frob_v} = P_v$ for all $v\not\in S$. By adjoining a primitive $p$-power root of $1$ to $\tilde{F}$, we may assume that there is also a map $g\mapsto q_g\in A_0/I_0$ interpolating $\Frob_v\mapsto q_v$. Also, in the CM case, we may assume that $\tilde{F}/F^+$ is Galois, so that there is map $g\mapsto g^c$ on conjugacy classes in $G$, given by the outer action of $\Gal(F/F^+) = \{1,c\}$. Choose some (new) rational prime $\ell\neq p$, $\ell\geq 3$, such that $\ell$ does not divide $[\tilde{F}:F]$; in particular, $\tilde{F}$ is linearly disjoint from $F^{\cycl_\ell}$.

\begin{lem}\label{LemUnivTwistExists1} There is an ideal $I\subset A[T^{\pm 1}]$ containing $I_0\cdot A[T^{\pm 1}]$ with $I^{4(d+1)} = 0$ and an $h$-dimensional determinant (i.e., a multiplicative $A$-polynomial law, homogeneous of degree $h$)
\[
\tilde{D}_I: A[G][V^{\pm 1}]\to A[T^{\pm 1}]/I
\]
such that for all $g\in G$, $k\in \Z$,
\[
\tilde{D}_I(1-XV^k g) = (1-X)P_g(X/T^k) P_g^\vee(T^k X)
\]
if $F$ is totally real, resp.
\[
\tilde{D}_I(1-XV^k g) = P_g(X/T^k) P_{g^c}^\vee(T^k q_g X)
\]
if $F$ is CM.
\end{lem}

\begin{proof} Embed $A[G][V^{\pm 1}]\hookrightarrow A[G\times \Z_\ell]$ by mapping $V$ to $(1,1)\in G\times \Z_\ell$, where $1\in G$ is the identity, and $1\in \Z_\ell$ is the tautological topological generator. One knows that for any character $\chi: \Z_\ell\to W(\overline{\F}_p)^\times$, one has a determinant
\[
A[G][V^{\pm 1}]\to A[G\times \Z_\ell]=A[\Gal(\tilde{F}\cdot F^{\cycl_\ell}/F)]\to A/I_\chi\ ,
\]
which, if $\tilde{D}_I$ exists, agrees with the composite of $\tilde{D}_I$ with $A[T^{\pm 1}]\to A/I_\chi$ sending $T$ to $\chi(1)$. However, the map
\[
A[T^{\pm 1}]\to \prod_{\begin{substack}{\chi: \Z_\ell\to W(\overline{\F}_p)^\times\\ \tilde{I}_{0,\chi} = I_0}\end{substack}} A
\]
is injective. Let $I\subset A[T^{\pm 1}]$ be the preimage of $\prod \tilde{I}_\chi$; then $I^{4(d+1)} = 0$ and $I_0\cdot A[T^{\pm 1}]\subset I$. One knows that the determinant
\[
\tilde{D}^\prime: A[G][V^{\pm 1}]\to \prod_{\begin{substack}{\chi: \Z_\ell\to W(\overline{\F}_p)^\times\\ \tilde{I}_{0,\chi} = I_0}\end{substack}} A/\tilde{I}_\chi
\]
exists, and that for all $g\in G$, $k\in \Z$,
\[
\tilde{D}^\prime(1-XV^k g)\in (A[T^{\pm 1}]/I)[X]\subset \prod_{\begin{substack}{\chi: \Z_\ell\to W(\overline{\F}_p)^\times\\ \tilde{I}_{0,\chi} = I_0}\end{substack}} A/\tilde{I}_\chi[X]\ .
\]
Thus, by \cite[Corollary 1.14]{ChenevierDet}, $\tilde{D}^\prime$ factors through a determinant $\tilde{D}: A[G][V^{\pm 1}]\to A[T^{\pm 1}]/I$, as desired.
\end{proof}

\begin{lem}\label{LemUnivTwistExists2} There exists an ideal $J\subset A$, $I_0\cdot A\subset J$, with $J^{4(d+1)} = 0$ and an $h$-dimensional determinant
\[
\tilde{D}: A[G][V^{\pm 1}]\to (A/J)[T^{\pm 1}]
\]
such that for all $g\in G$, $k\in \Z$,
\[
\tilde{D}(1-XV^k g) = (1-X)P_g(X/T^k) P_g^\vee(T^k X)
\]
if $F$ is totally real, resp.
\[
\tilde{D}(1-XV^k g) = P_g(X/T^k) P_{g^c}^\vee(T^k q_g X)
\]
if $F$ is CM.
\end{lem}

\begin{proof} Take $I\subset A[T^{\pm 1}]$ as in the last lemma, giving $\tilde{D}_I$. Let $a\in \Z$ be any integer, and look at the map $A[G][V^{\pm 1}]\to A[G][V^{\pm 1}]$ mapping $V$ to $V^a$. One gets an $h$-dimensional determinant
\[
\tilde{D}_{I,a}: A[G][V^{\pm 1}]\to A[G][V^{\pm 1}]\to A[T^{\pm 1}]/I\ .
\]
Let $I_a = \{f(T)\in A[T^{\pm 1}]\mid f(T^a)\in I\}$, an ideal of $A[T^{\pm 1}]$. Then the map $T\mapsto T^a$ induces an injection $A[T^{\pm 1}]/I_a\hookrightarrow A[T^{\pm 1}]/I$, and by checking on characteristic polynomials and using \cite[Corollary 1.14]{ChenevierDet}, one sees that $\tilde{D}_{I,a}$ factors through a determinant
\[
\tilde{D}_{I_a}: A[G][V^{\pm 1}]\to A[T^{\pm 1}]/I_a\ ,
\]
which satisfies the relations imposed on $\tilde{D}$. Let $I^\prime = \bigcap_{a\in \Z} I_a$. Then one has an injection
\[
A[T^{\pm 1}]/I^\prime\to \prod_a A[T^{\pm 1}]/I_a\ .
\]
By taking the product, one has a determinant with values in $\prod_a A[T^{\pm 1}]/I_a$; by checking on characteristic polynomials and using \cite[Corollary 1.14]{ChenevierDet} once more, one gets a determinant with values in $A[T^{\pm 1}]/I^\prime$. Let $J\subset A$ be the ideal generated by all coefficients of elements of $I^\prime$. Certainly, one gets a determinant with values in $(A/J)[T^{\pm 1}]$ by composition. Thus, to finish the proof, it suffices to see that $J^{4(d+1)} = 0$. Thus, take any elements $f_1,\ldots,f_{4(d+1)}\in I^\prime\subset A[T^{\pm 1}]$ and write
\[
f_i(T) = \sum_{j\in \Z} c_{i,j} T^j\ ,
\]
with only finitely many $c_{i,j}$ nonzero. Choose integers $a_1>>a_2>>\ldots>>a_{4(d+1)}$. One knows that $f_i(T^{a_i})\in I$, and $I^{4(d+1)} = 0$; thus
\[
0 = \prod_{i=1}^{4(d+1)} f_i(T^{a_i}) = \prod_{i=1}^{4(d+1)} (\sum_j c_{i,j} T^{a_ij})\ .
\]
If one has chosen the $a_i$ sufficiently generic, every power of $T$ will occur only once when factoring this out. This implies that any product $c_{1,j_1}\cdots c_{4(d+1),j_{4(d+1)}}$ is zero, showing that $J^{4(d+1)} = 0$, as desired.
\end{proof}

Finally, we are reduced to the following lemma on determinants, with $R=A/J$.
\end{proof}

\begin{lem} Let $G$ be a group, and $R$ some ring. For any $m\in \Z$, let a map
\[
g\mapsto P_g^{(m)}(X): G\to R[X]
\]
be given, taking values in polynomials of degree $n_m$ with constant coefficient $1$. We assume that $n_m=0$ for all but finitely many $m$. Let $n = \sum_{m\in \Z} n_m$, and assume that there is an $n$-dimensional determinant
\[
\tilde{D}: R[G][V^{\pm 1}]\to R[T^{\pm 1}]
\]
such that for all $g\in G$, $k\in \Z$,
\[
\tilde{D}(1-V^k Xg) = \prod_{m\in \Z} P_g^{(m)}(T^{km} X)\in R[T^{\pm 1}][X]\ .
\]
Then for all $m\in \Z$, there exists an $n_m$-dimensional determinant $D^{(m)}: R[G]\to R$ such that for all $g\in G$,
\[
D^{(m)}(1-X g) = P_g^{(m)}(X)\ .
\]
\end{lem}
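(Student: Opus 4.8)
The plan is to reconstruct each $D^{(m)}$ from $\tilde D$ by using the grading of $R[T^{\pm 1}]$ by powers of $T$, which through the hypothesis records the $\Z$-grading $R[G][V^{\pm 1}]=\bigoplus_d R[G]V^d$. Setting $k=1$, for $h\in G$ we have $\tilde D(1-XVh)=\prod_m P_h^{(m)}(T^mX)$ in $R[T^{\pm 1}][X]$; the coefficient of $X^s$ is a Laurent polynomial in $T$ whose exponents lie in $\{\sum_m ms_m : s_m\geq 0,\ \sum_m s_m=s\}$. Moreover each factor $P_h^{(m)}(X)$ with $n_m\neq 0$ has unit leading coefficient in $X$: since $V$ is a unit in $R[G][V^{\pm 1}]$, $\tilde D(V)\in R[T^{\pm 1}]^\times$, and $\tilde D(V)$ equals, up to sign and a monomial in $T$, the product of the leading coefficients of the $P_e^{(m)}$, so each of them is a unit.

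First I would treat the extremal weights. Let $m_r$ be the largest index with $n_{m_r}\neq 0$. For $0\leq s\leq n_{m_r}$ the value $sm_r$ is the strict maximum of $\sum_m ms_m$ under $\sum_m s_m=s$, attained only by the tuple concentrated at $m_r$; hence the coefficient of $T^{sm_r}X^s$ in $\tilde D(1-XVh)$ equals $[X^s]P_h^{(m_r)}(X)$, giving an explicit formula for $P_h^{(m_r)}$ with coefficients in $R$ (and constant term $\tilde D(1)=1$). Symmetrically for the smallest weight. Because $P_h^{(m_r)}(T^{m_r}X)$ has unit leading $X$-coefficient, the exact identity $\tilde D(1-XVh)=P_h^{(m_r)}(T^{m_r}X)\cdot\prod_{m<m_r}P_h^{(m)}(T^mX)$ in $R[T^{\pm 1}][X]$ lets us divide; I would then check, using Chenevier's characterization of determinants by their characteristic polynomials on group elements \cite{ChenevierDet}, that $h\mapsto P_h^{(m_r)}(X)$ and $h\mapsto\prod_{m<m_r}P_h^{(m)}(T^mX)$ are the characteristic polynomials of genuine determinants $\widehat{D}^{(m_r)}$ (dimension $n_{m_r}$) and $\tilde D'$ (dimension $n-n_{m_r}$) of $G\times\Z$ over $R[T^{\pm 1}]$ with $\tilde D=\widehat{D}^{(m_r)}\cdot\tilde D'$. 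Finally, $\widehat{D}^{(m_r)}$ has characteristic polynomials with coefficients in $R\subseteq R[T^{\pm 1}]$, so by \cite[Corollary 1.14]{ChenevierDet} it descends to a determinant $D^{(m_r)}$ of $G$ over $R$ with $D^{(m_r)}(1-Xg)=P_g^{(m_r)}(X)$. The general case follows by induction on the number $r$ of weights with $n_m\neq 0$: for $r\leq 1$ one takes $k=0$, so that the restriction of $\tilde D$ along $R[G]\hookrightarrow R[G][V^{\pm 1}]$ is the desired $D^{(m)}$; for $r\geq 2$, the complementary determinant $\tilde D'$ satisfies $\tilde D'(1-V^kXg)=\prod_{m<m_r}P_g^{(m)}(T^{km}X)$, i.e.\ it again has the shape of the lemma with one fewer weight, and the inductive hypothesis produces $D^{(m)}$ for all $m<m_r$.

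The main obstacle is the factorization $\tilde D=\widehat{D}^{(m_r)}\cdot\tilde D'$ into honest determinants. The naive route — forming the Cayley--Hamilton quotient of $\tilde D$ and splitting it along the central element $v$ (the image of $V$), whose characteristic polynomial factors as $\prod_m P_e^{(m)}(T^mX)$ with the top factor strictly extremal in the $T$-grading — does not work: over $R[T^{\pm 1}]$ these factors are not pairwise coprime (a polynomial $1-T^ju$ with $j\geq 1$ and $u$ a unit is never invertible there), so there is no idempotent to peel off. Instead the factorization has to be established directly on the level of characteristic polynomials: one exhibits $\widehat{D}^{(m_r)}$ as a sub-determinant via the extraction above and verifies that the family of quotient polynomials satisfies the determinant (Amitsur-type) identities using \cite{ChenevierDet}. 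This compatibility verification — essentially that "removing the extremal-weight factor" is a well-defined operation on the full determinant, not just on its restriction to group elements — is the technical heart of the argument; the rest is bookkeeping with the $T$-grading.
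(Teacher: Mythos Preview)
Your outline has a genuine gap precisely where you yourself flag it. Extracting the top $T$-degree from $\tilde D(1-XVh)$ does recover $P_h^{(m_r)}(X)$ for group elements $h$, but a determinant in Chenevier's sense is a multiplicative polynomial law on the whole algebra $S[G][V^{\pm 1}]$, functorially in $S$; it is not determined by, and cannot be reconstructed from, its characteristic polynomials on group-like elements $V^kg$ alone. Your proposed ``compatibility verification'' is exactly the construction of $\widehat D^{(m_r)}$ as a polynomial law, and you give no mechanism for it: for a general $a\in S[G][V^{\pm 1}]$ the $T$-grading of $\tilde D(1-Xa)$ does not separate into weight pieces, so there is nothing to extract. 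The appeal to \cite[Corollary 1.14]{ChenevierDet} to descend to $R$ likewise presupposes that $\widehat D^{(m_r)}$ already exists and that its characteristic polynomials on \emph{all} elements of $S[G]$ lie in $R[X]$, which you have only checked on group elements. So the inductive step, as written, assumes its own conclusion.

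The paper's proof avoids the factorization of $\tilde D$ entirely. Instead of peeling off an extremal weight, it introduces a formal variable $U$ and proves a uniqueness lemma: any $Q\in S[T^{\pm 1}][[X]]$ with $Q\equiv 1\bmod X$ admits at most one factorization $Q=\prod_m Q_m$ with $Q_m\in S[[XT^m]]$. Writing an arbitrary $f\in 1+US[G][[U]]$ as an infinite product of terms $1-U^a x g$ and using multiplicativity of $\tilde D$ then produces, for every $m$ simultaneously, a multiplicative map $D_0^{(m)}:1+US[G][[U]]\to 1+US[[U]]$. One sets $D^{(m)}(x)=D_0^{(m)}(1+U(x-1))|_{U=1}$; multiplicativity of $D^{(m)}$ is reduced (by passing to a universal polynomial ring and then its nilpotent quotient) to showing $D_0^{(m)}(f)|_{U=1}=1$ whenever $f\equiv 1\bmod I$ for a nilpotent ideal $I$ and $f|_{U=1}=1$, which is handled by writing such $f$ as a product of terms $(1-U^{a+1}zg)/(1-U^azg)$. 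Homogeneity of degree $n_m$ is then a formal consequence. The point is that the auxiliary $U$-adic topology supplies the infinite-product decomposition that makes the separation-by-weight work on \emph{all} elements, not just group elements --- this is exactly the missing ingredient in your approach.
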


\begin{rem} Intuitively, the lemma says the following, up to replacing `representation' by `determinant'. Assume you want to construct representations $\pi_m$, $m\in \Z$, of $G$, with prescribed characteristic polynomials. Assume that you know that for any character $\chi$ of $\Z$, the representation
\[
\bigoplus_{m\in \Z} \pi_m\otimes \chi^m
\]
of $G\times \Z$ exists; note that $R[G][V^{\pm 1}] = R[G\times \Z]$. Then all the representations $\pi_m$ exist.
\end{rem}

\begin{proof} We need the following lemma.

\begin{lem}\label{DecompositionLemma} Let $S$ be a (commutative) ring, and $Q\in S[T^{\pm 1}][[X]]$ be any polynomial such that $Q\equiv 1\mod X$. Then there is at most one way to write
\[
Q = \prod_{m\in \Z} Q_m
\]
with $Q_m\in S[[XT^m]]$, $Q_m\equiv 1\mod X$, almost all equal to $1$. Moreover, if $Q\in S[T^{\pm 1}][X]$, then all $Q_m\in S[XT^m]$.
\end{lem}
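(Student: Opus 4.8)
\textbf{Plan of proof for Lemma \ref{DecompositionLemma}.} The statement concerns a purely algebraic factorization question about power series in $X$ with coefficients that are Laurent polynomials in $T$, and my plan is to prove uniqueness of the factorization by a coefficient-by-coefficient comparison, graded by the exponent of $T$, and then deduce polynomiality as a consequence of uniqueness together with a degree count. Throughout I will write $Q = \sum_{i\geq 0} q_i X^i$ with $q_i\in S[T^{\pm 1}]$ and $q_0 = 1$, and suppose $Q = \prod_{m} Q_m$ with $Q_m = \sum_{i\geq 0} c_{m,i} (XT^m)^i \in S[[XT^m]]$, $c_{m,0} = 1$, and $Q_m = 1$ for $|m|$ large. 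So $c_{m,i}\in S$ and the contribution of $Q_m$ to the coefficient of $X^i$ in $Q$ is $c_{m,i}T^{mi}$.

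\emph{Uniqueness.} The key observation is that for each fixed power $X^i$, the coefficient $q_i\in S[T^{\pm 1}]$ is a sum of monomials in $T$, and one can hope to read off the $c_{m,i}$ from the extreme exponents of $T$ occurring. Precisely, I would argue by induction on $i$. For $i=1$: $q_1 = \sum_m c_{m,1} T^m$, and since the $Q_m$ are required to lie in $S[[XT^m]]$, the coefficient of $T^m X^1$ in the product comes \emph{only} from $Q_m$ (the factor $Q_{m'}$ contributes to $X^1$ only in degree $T^{m'}$), so $c_{m,1}$ is forced: it is the coefficient of $T^m$ in $q_1$. For the inductive step, suppose $c_{m,j}$ is determined for all $m$ and all $j<i$. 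The coefficient $q_i$ of $X^i$ in $\prod_m Q_m$ is a sum over all ways of distributing total $X$-degree $i$ among the factors; a contribution from a multi-index $(i_m)_m$ with $\sum_m i_m = i$ carries $T$-exponent $\sum_m m\, i_m$ and coefficient $\prod_m c_{m,i_m}$. I want to isolate the ``pure'' term where $i_m = i$ for a single $m$ (contributing $c_{m,i}T^{mi}$) from all ``mixed'' terms. The mixed terms involve only $c_{m',j}$ with $j<i$, hence are already known by induction; subtracting them from $q_i$ leaves $\sum_m c_{m,i}T^{mi}$, and since the exponents $mi$ (for the finitely many relevant $m$, with $i\geq 1$ fixed) are distinct, each $c_{m,i}$ is uniquely recovered as a coefficient of this Laurent polynomial. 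This shows the factors $Q_m$ are uniquely determined by $Q$.

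\emph{Polynomiality.} Now assume $Q\in S[T^{\pm 1}][X]$ has $X$-degree $\leq N$, i.e. $q_i = 0$ for $i>N$. I want to conclude $Q_m\in S[XT^m]$, i.e. $c_{m,i} = 0$ for $i>N$. Here I would like to invoke that the ambient ring $S$ may have zero-divisors, so one cannot argue by looking at leading terms naively; instead I would exploit the uniqueness just proved together with a specialization/rescaling trick in the variable $T$. One clean route: for a new indeterminate (or for the existing $T$), compare the factorization of $Q$ with the factorization obtained after the substitution $X\mapsto X$, $T\mapsto T$ but grouping by $T$-degree. Actually the cleanest is: let $d$ be the largest index with $c_{m,d}\neq 0$ for some $m$ (if the $Q_m$ were not all polynomial), and consider in $Q = \prod Q_m$ the coefficient of $X^{d}$; among all multi-indices $(i_m)$ with $\sum i_m = d$, the term with $i_m = d$ for the relevant $m$ contributes $c_{m,d}T^{md}$, and I claim these pure monomials cannot all be cancelled. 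The obstacle is precisely cancellation against mixed terms, which a priori could occur over a non-reduced $S$. I expect the honest fix is to reduce to the reduced case: it suffices to prove $c_{m,i}=0$ for $i>N$ after mapping to $S/\mathfrak p$ for every prime $\mathfrak p$ of $S$, i.e. to assume $S$ a domain; over a domain, looking at the largest $m$ with $Q_m\neq 1$ and the leading $X$-coefficient of that $Q_m$, the monomial of maximal (or minimal) $T$-degree $T^{m\cdot(\deg_X Q_m)}$ survives in the product and forces $\deg_X Q_m\leq N$, then induct downward on $m$.

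\textbf{Main obstacle.} The uniqueness part is essentially bookkeeping and should go through as sketched. The delicate point is the polynomiality claim over a general (possibly non-reduced, non-Noetherian) ring $S$: the naive ``leading term cannot cancel'' argument only works over a domain, so the real work is the reduction to the domain case. Since $S$ embeds into $\prod_{\mathfrak p} S/\mathfrak p$ only after killing the nilradical, and the $c_{m,i}$ are uniquely determined by $Q$ (hence compatible with all ring maps $S\to S'$), I can test the vanishing $c_{m,i}=0$ for $i>N$ after base change to each $S/\mathfrak p$; compatibility of the factorization with base change follows from its uniqueness. This reduction is the heart of the matter, and once it is in place the domain case is routine.
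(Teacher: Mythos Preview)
Your uniqueness argument is fine and is essentially the paper's: the paper reduces to $Q=1$ by taking the quotient of two factorizations and then looks at the lowest $X$-degree where some $Q_m\not\equiv 1$, which is your induction rephrased.

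The polynomiality argument, however, has a real gap, and it is precisely where you locate it. Testing vanishing of $c_{m,i}$ after base change to every $S/\mathfrak p$ only shows $c_{m,i}$ lies in the nilradical of $S$; for non-reduced $S$ this does not give $c_{m,i}=0$. Compatibility of the (uniquely determined) $c_{m,i}$ with ring maps $S\to S'$ does not rescue this, because the intersection of all primes is the nilradical, not $0$. So the reduction-to-domains step fails as stated. A secondary issue: even over a domain, your downward induction on $m$ is not justified. Knowing $Q$ and $Q_M$ are polynomials in $X$ over the domain $S[T^{\pm 1}]$ does \emph{not} force $Q/Q_M$ to be a polynomial (e.g.\ $1/(1-X)$), so you cannot simply repeat the extremal-$T$-degree argument for $Q/Q_M$ without further input.

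The paper's argument avoids both problems by switching the roles of $X$ and $T$ at the key moment. After shifting indices so that $Q_m=1$ for $m<0$, one has $Q\in S[T][X]$ and $Q_0=(Q\bmod T)\in S[X]$ exactly as you say. Now set $Q'=Q/Q_0=\prod_{m\ge 1}Q_m$ and expand in powers of $T$: since every remaining index satisfies $m\ge 1$, each monomial $X^aT^b$ in $Q'$ has $b\ge a$, so the coefficient of $T^k$ in $Q'$ is a \emph{polynomial} in $X$ (of degree $\le k$); that is, $Q'\in S[X][[T]]$. In $S[X][[T]]$ the identity $Q=Q_0\cdot Q'$ reads, coefficient of $T^k$: $Q_0\cdot [T^k]Q'=[T^k]Q$, and $[T^k]Q=0$ for $k>\deg_T Q$. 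Since $Q_0\in S[X]$ has constant term $1$, it is a non-zerodivisor in $S[X]$, so $[T^k]Q'=0$ for $k$ large; hence $Q'\in S[X,T]=S[T][X]$ is a polynomial in $X$. Now replace $X$ by $XT^{-1}$ and induct on the spread of indices. This works over an arbitrary commutative ring $S$, with no reducedness hypothesis. (Incidentally, your extremal-$T$ observation that $[X^dT^{Md}]Q=c_{M,d}$ already works over any ring and does not need $S$ to be a domain; it is the inductive step, not the base case, that was the problem.)
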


\begin{proof} Given any two such presentations $Q=\prod Q_m = \prod Q_m^\prime$, one may take the quotient, and thus reduce to the case $Q=1$. Let $k$ be minimal such that not all $Q_m$ are $\equiv 1\mod X^k$. Then $Q_m\equiv 1 + (XT^m)^k a_m\mod X^{k+1}$ for some $a_m\in S$, almost all $0$. But then
\[
1 = Q = \prod_{m\in \Z} Q_m\equiv 1 + X^k \sum_{m\in \Z} T^{mk} a_m\mod X^{k+1}\ .
\]
No cancellation can occur, so $a_m=0$ for all $m\in \Z$, contradiction.

For the final statement, one may replace $X$ by $XT^{m_0}$ for some $m_0$, so that we may assume that $Q_m = 1$ for $m<0$. In that case, all $Q_m\in S[[X,T]]$, so the same is true for $Q$, and we may reduce modulo $T$. Then one finds that $Q\equiv Q_0\mod T$, so in particular, $Q_0\in S[X]$. Let $Q^\prime = \prod_{m>0} Q_m$. We claim that $Q^\prime\in S[T^{\pm 1}][X]$; then an inductive argument will finish the proof.

Note that $Q^\prime\in S[[X,XT]]$, so in particular $Q^\prime\in S[T][[X]]$, with $Q^\prime Q_0\in S[T,X]$. Writing
\[
Q^\prime = \sum_{a\geq 0} Q^\prime_a(T) X^a\ ,
\]
with $Q^\prime_a(T)\in S[T]$, we see that for $a$ sufficiently large, $Q^\prime_a Q_0 = 0$. As $Q_0\in S[T]$ has constant coefficient $1$, this implies that $Q^\prime_a = 0$, so indeed $Q^\prime\in S[T,X]$.
\end{proof}

\begin{lem}\label{ExistsDm0} For any $R$-algebra $S$, there are unique multiplicative maps
\[
D^{(m)}_0: 1+US[G][[U]]\to 1 + US[[U]]
\]
such that for all $f(U)\in 1 + US[G][[U]]$,
\[
\tilde{D}(f(XV)) = \prod_{m\in \Z} D^{(m)}_0(f(XT^m))\in S[T^{\pm 1}][[X]]\ .
\]
It satisfies
\[
D^{(m)}_0(1 - U^a x g) = P_g^{(m)}(U^a x)
\]
for all $a\geq 1$, $x\in S$ and $g\in G$. Moreover, $D^{(m)}_0$ maps $1+US[G][U]$ into $1 + US[U]$.
\end{lem}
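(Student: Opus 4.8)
The plan is to build the maps $D^{(m)}_0$ in two stages: first extend the given $n$-dimensional determinant $\tilde D$ to a multiplicative operation on power series with coefficients in $S[G][V^{\pm1}]$, and then split the resulting power series into its $T$-homogeneous pieces using the uniqueness clause of Lemma~\ref{DecompositionLemma}.

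\emph{Step 1: a power-series extension of $\tilde D$.} Fix an $R$-algebra $S$ and a formal variable $Z$. I would observe that if $f(Z)\in 1+Z\,(S[G][V^{\pm1}])[[Z]]$, then each truncation $f(Z)\bmod Z^N$ is a \emph{finite} $(S[Z]/Z^N)$-linear combination of elements of $G\times\Z$, since each coefficient of $f$ is itself a finite combination. Hence $\tilde D$, base-changed to $S[Z]/Z^N$, can be evaluated on it; functoriality of the polynomial law makes these values compatible as $N$ varies, and because $\tilde D$ is homogeneous of degree $n$, its value on any finite combination of group elements is a polynomial of degree $\le n$ in the coefficients with coefficients in $S[T^{\pm1}]$, so in the limit every power of $Z$ appears with coefficient in $S[T^{\pm1}]$. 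This produces a multiplicative, base-change-compatible, $Z$-adically continuous map $\widehat{\tilde D}\colon 1+Z\,(S[G][V^{\pm1}])[[Z]]\to 1+Z\,S[T^{\pm1}][[Z]]$, carrying polynomials in $Z$ to polynomials in $Z$ and agreeing with $\tilde D$ on the linear terms. Applying it with $Z=X$ to $f(XV)$, for $f\in 1+US[G][[U]]$, gives a well-defined $\tilde D(f(XV))\in 1+XS[T^{\pm1}][[X]]$.

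\emph{Step 2: reading off $D^{(m)}_0$.} First I would record that every $f\in 1+US[G][[U]]$ is a $U$-adically convergent product of terms $1-U^axg$ with $a\ge1$, $x\in S$, $g\in G$; this is the standard successive-approximation argument, the only point being that $1+\big(\sum_g x_g g\big)U^k$ is such a product (peel off $\prod_g(1+x_gg U^k)$ and iterate on the higher-order remainder). On a single term, the defining relation of $\tilde D$ (with $k=a$) together with homogeneity gives
\[
\tilde D\big((1-U^axg)(XV)\big)=\tilde D\big(1-X^ax\,(gV^a)\big)=\prod_{m\in\Z}P^{(m)}_g\big(x(XT^m)^a\big),
\]
and each factor $P^{(m)}_g(x(XT^m)^a)$ lies in $S[[XT^m]]$ and is $\equiv 1\bmod X$. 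Multiplying over a convergent factorization of a general $f$ and rearranging the commutative, $X$-adically convergent double product yields a factorization $\tilde D(f(XV))=\prod_m Q_m$ with $Q_m\in S[[XT^m]]$, $Q_m\equiv 1\bmod X$. By the uniqueness clause of Lemma~\ref{DecompositionLemma} the $Q_m$ do not depend on the factorization of $f$, so I can define $D^{(m)}_0(f)\in 1+US[[U]]$ as $Q_m$ transported along $XT^m\mapsto U$. Multiplicativity of $D^{(m)}_0$, the formula $D^{(m)}_0(1-U^axg)=P^{(m)}_g(U^ax)$, and uniqueness of the whole family $(D^{(m)}_0)_m$ then follow by invoking the uniqueness in Lemma~\ref{DecompositionLemma} again; and for $f\in 1+US[G][U]$ one has $\tilde D(f(XV))\in S[T^{\pm1}][X]$ by Step~1, so the last assertion of Lemma~\ref{DecompositionLemma} forces $Q_m\in S[XT^m]$, i.e. $D^{(m)}_0(f)\in 1+US[U]$.

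\emph{Main obstacle.} The delicate part is the seam between the two steps: I must make sure $\widehat{\tilde D}(f(XV))$ genuinely lands in $S[T^{\pm1}][[X]]$ (so that Lemma~\ref{DecompositionLemma} is even applicable), and that replacing $f$ by a convergent product of the special terms $1-U^axg$ and then reordering the resulting double product is legitimate for the $X$-adic topology, so that the factorization $\prod_m Q_m$ so obtained is exactly the one pinned down by the uniqueness statement. Once this bookkeeping is in place, everything else is a formal manipulation with polynomial laws as in \cite{ChenevierDet}.
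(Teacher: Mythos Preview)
Your proposal is correct and follows essentially the same route as the paper. Both arguments (i) make sense of $\tilde{D}(f(XV))$ as an element of $S[T^{\pm 1}][[X]]$, (ii) factor an arbitrary $f$ as a $U$-adically convergent product of elementary terms $1-U^a x g$, (iii) invoke the defining relation $\tilde{D}(1-X^a x (gV^a))=\prod_m P_g^{(m)}((XT^m)^a x)$ on each such term, and (iv) appeal to Lemma~\ref{DecompositionLemma} for uniqueness and the polynomial case. The only differences are organizational: the paper dispatches uniqueness first and deduces multiplicativity from it before turning to existence, whereas you construct the $Q_m$ first and then invoke uniqueness to see they are well-defined; and you spell out Step~1 (the power-series extension of $\tilde{D}$ via truncations and functoriality of the polynomial law) where the paper simply writes $\tilde{D}(f(XV))$ without comment.
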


\begin{proof} Lemma \ref{DecompositionLemma} implies uniqueness of each value $D^{(m)}_0(f(U))$ individually. Moreover, uniqueness implies multiplicativity, by multiplicativity of $\tilde{D}$. For existence, note that the left-hand side $\tilde{D}(f(XV))$ is multiplicative in $f$. It follows that if the desired decomposition exists for two elements $f$, $f^\prime$, then also for their product. Moreover, the set of elements for which such a decomposition exists is $U$-adically closed. As any element $f\in 1+US[G][[U]]$ can (non-uniquely) be written as an infinite product
\[
f = \prod_{j=1}^\infty (1 - U^{a_j} x_j g_j)
\]
for certain $a_j\geq 1$, $a_j\to\infty$, $x_j\in S$ and $g_j\in G$, one reduces to the case that
\[
f = 1 - U^a x g
\]
with $a\geq 1$, $x\in S$ and $g\in G$. In that case,
\[
\tilde{D}(1 - (XV)^a x g) = \prod_{m\in \Z} P_g^{(m)}( (XT^m)^a x)
\]
by the defining equation of $\tilde{D}$. This gives the desired decomposition in this case, and proves the formula for $D^{(m)}_0(1 - U^a x g)$.

The final statement follows from the second half of Lemma \ref{DecompositionLemma}.
\end{proof}

Now, for any $R$-algebra $S$, we can define
\[
D^{(m)}(x) = D^{(m)}_0 ( 1 + U(x-1) ) |_{U=1}\in S
\]
for any $x\in S[G]$. This defines a polynomial map $D^{(m)}: R[G]\to R$. It satisfies
\[
D^{(m)}(1 - xg) = D^{(m)}_0(1 - Uxg)|_{U=1} = P_g^{(m)}(Ux)|_{U=1} = P_g^{(m)}(x)
\]
for $x\in S$, $g\in G$. We claim that $D^{(m)}$ is multiplicative, i.e.
\[
D^{(m)}((1-x)(1-y)) = D^{(m)}(1-x)D^{(m)}(1-y)
\]
for all $x,y\in S[G]$. The desired equation reads
\[
D^{(m)}_0(1-U(x+y)+Uxy)|_{U=1} = D^{(m)}_0(1-Ux)|_{U=1}D^{(m)}_0(1-Uy)|_{U=1}\ .
\]
Write $x=\sum_{g\in A} x_g g$, $y=\sum_{g\in A} y_g g$ for some finite subset $A\subset G$. We may reduce to the universal case $S=R[X_g,Y_g]_{g\in A}$, or also to $S=R[[X_g,Y_g]]_{g\in A}$. Thus it is enough to do it for all $S=R[[X_g,Y_g]]_{g\in A} / (X_g,Y_g)^n$. In other words, we may assume that the ideal $I\subset S$ generated by all $x_g$, $y_g$ is nilpotent. In that case, $x,y\in S[G]$ are nilpotent.

In particular, $1-Ux\in S[G][U]$ is invertible, with inverse $1+Ux+U^2x^2+\ldots$, where only finitely many terms occur, as $x\in S[G]$ is nilpotent. Similarly, $1-Uy\in S[G][U]$ is invertible. Using multiplicity of $D^{(m)}_0$, the desired equation reads
\[
D^{(m)}_0\left((1-U(x+y)+Uxy)(1+Ux+U^2x^2+\ldots)(1+Uy+U^2y^2+\ldots)\right)|_{U=1} = 1\ .
\]
Letting $f=(1-U(x+y)+Uxy)(1+Ux+U^2x^2+\ldots)(1+Uy+U^2y^2+\ldots)\in 1+US[G][U]$, one reduces multiplicativity of $D^{(m)}$ to the following lemma.

\begin{lem} Let $I\subset S$ be a nilpotent ideal, and $f\in 1 + US[G][U]$ such that $f\equiv 1\mod I$ and $f|_{U=1}=1$. Then
\[
D^{(m)}_0(f)|_{U=1} = 1\ .
\]
\end{lem}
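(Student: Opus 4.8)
The idea is to decompose $f$ $U$-adically into factors of the form $1-U^a z h$ with $z$ in the nilpotent ideal $I$ and $h\in G$, apply the multiplicativity of $D^{(m)}_0$ from Lemma \ref{ExistsDm0}, and then exploit the explicit formula $D^{(m)}_0(1-U^azh)=P_h^{(m)}(U^az)$ together with the hypothesis $f|_{U=1}=1$. The key point is that when one evaluates at $U=1$, the full product $\tilde D(f(X))$ (at $V=1$) is forced to be $1$ because $f|_{U=1}=1$, and one then needs to see that this triviality propagates to each individual factor $D^{(m)}_0(f)|_{U=1}$ via the uniqueness in Lemma \ref{DecompositionLemma}.

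First I would reduce to the universal case. Writing $f=1+\sum_{g}c_g(U)\,g$ with $c_g(U)\in US[U]$ and $c_g\in I[U]$ (using $f\equiv 1\bmod I$), I may replace $S$ by the subring generated over $R$ by the finitely many coefficients of the $c_g$; these coefficients all lie in $I$, which is nilpotent, so I may assume $S$ is generated by finitely many nilpotent elements, hence that the augmentation ideal $I$ is nilpotent and $f-1$ is a nilpotent element of $S[G][U]$. Next, since $1+US[G][U]$ is a group under multiplication (every element is invertible because $f-1$ is nilpotent) and $D^{(m)}_0$ is multiplicative, it suffices to show $D^{(m)}_0(f)|_{U=1}=1$ for a set of generators of the subgroup of those $f$ with $f|_{U=1}=1$; concretely, any such $f$ can be written as a finite product of elements $(1-U^{a}z h)(1-z h)^{-1}$ with $z\in I$, $h\in G$, $a\ge 1$ — indeed the map $f\mapsto f|_{U=1}$ is a group homomorphism $1+US[G][U]\to 1+S[G]$ whose target, restricted to arguments $\equiv 1\bmod I$, is generated by the $1-zh$, and the elements $(1-U^{a}zh)(1-zh)^{-1}$ generate its kernel up to the $U$-adic closure, which is harmless since everything in sight is polynomial in $U$ of bounded degree and $I$-adically the identity.

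It therefore remains to treat $f=(1-U^{a}zh)(1-zh)^{-1}$ with $z\in I$, $h\in G$, $a\ge 1$. By multiplicativity, $D^{(m)}_0(f)=D^{(m)}_0(1-U^azh)\cdot D^{(m)}_0(1-zh)^{-1}=P_h^{(m)}(U^az)\cdot P_h^{(m)}(z)^{-1}$, where I use the explicit formula of Lemma \ref{ExistsDm0} for the first factor and the same formula with $a=0$ (or a direct limit/continuity argument) for the second — note $1-zh\in 1+US[G][U]$ is not literally of the required shape, but $D^{(m)}_0(1-Uzh)|_{U=1}=P_h^{(m)}(z)$ by Lemma \ref{ExistsDm0}, and one checks $D^{(m)}_0(1-zh)=D^{(m)}_0(1-Uzh)|_{U=1}$ by substituting $U=1$ into the defining decomposition, which is legitimate because all the $Q_m$ there lie in $S[XT^m]$ (the polynomial clause of Lemma \ref{DecompositionLemma}). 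Evaluating at $U=1$ gives $D^{(m)}_0(f)|_{U=1}=P_h^{(m)}(z)\cdot P_h^{(m)}(z)^{-1}=1$, as desired.

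\textbf{Main obstacle.} The delicate step is the factorization of a general $f$ with $f|_{U=1}=1$ into the product of such explicit building blocks, and making precise that the $U$-adic closure issue does not cause trouble: one must check that $D^{(m)}_0$ is continuous for the $U$-adic (equivalently $I$-adic, since $I$ is nilpotent) topology, which follows from the uniqueness in Lemma \ref{DecompositionLemma} applied degree-by-degree in $X$, but it requires a small argument that $D^{(m)}_0(f)$ depends only on $f$ modulo $U^N$ for the relevant $N$. Once that bookkeeping is in place, the substitution $U=1$ is permitted precisely because the polynomial clause of Lemma \ref{DecompositionLemma} guarantees the factors $Q_m$ are genuine polynomials in $XT^m$ with unit constant term, so evaluation at $U=1$ (hence $T^m=1$, $X\mapsto$ the relevant specialization) is well-defined and compatible with the product. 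I would organize the write-up so that the reduction to $f=(1-U^azh)(1-zh)^{-1}$ is the first move and the rest is the short computation above.
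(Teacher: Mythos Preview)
Your approach has the right shape but a genuine gap in the choice of building blocks. The element $(1-U^a zh)(1-zh)^{-1}$ is \emph{not} in $1+US[G][U]$: evaluating at $U=0$ gives $(1-zh)^{-1}\neq 1$. Hence $D^{(m)}_0$, which is only defined on $1+US[G][[U]]$, cannot be applied to it, nor to the factor $(1-zh)^{-1}$ separately. Your attempted fix, defining $D^{(m)}_0(1-zh):=D^{(m)}_0(1-Uzh)|_{U=1}$, is exactly the kind of evaluation-at-$U=1$ statement the lemma is meant to justify, so invoking it here is circular; and even granting the definition, you would still need multiplicativity of this extended $D^{(m)}_0$, which Lemma~\ref{ExistsDm0} does not give you.

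The paper's proof uses instead the building blocks $(1-U^{a+1}zg)/(1-U^a zg)$ with $a\geq 1$: now both numerator and denominator lie in $1+US[G][U]$, so multiplicativity of $D^{(m)}_0$ applies directly, and $D^{(m)}_0(1-U^{a+1}zg)|_{U=1}=P_g^{(m)}(z)=D^{(m)}_0(1-U^a zg)|_{U=1}$ by the explicit formula. The factorization of a general $f$ into such terms is not a soft ``generators of the kernel'' argument; it goes by induction on the nilpotence degree of $I$. When $I^2=0$, one uses $f\equiv 1\bmod I$ and $f|_{U=1}=1$ to write $f-1\in U(U-1)\cdot I[G][U]$, i.e.\ $f=1-\sum_{a\geq 1,g}(U^{a+1}-U^a)z_{g,a}g$, and $I^2=0$ lets one rewrite this sum as the product $\prod_{a,g}(1-U^{a+1}z_{g,a}g)/(1-U^a z_{g,a}g)$. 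Dividing out and passing to $I^2$ gives the inductive step. This is the missing idea; once you switch to these blocks and supply this factorization, the rest of your argument goes through.
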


\begin{proof} We claim that any such $f$ can be written as a product of terms
\[
(1-U^{a+1} z g) / (1 - U^a z g)
\]
for $a\geq 1$, $z\in I$ and $g\in G$. Note that, as before, the inverse of $1- U^a zg\in S[G][U]$ exists, as $z$ is nilpotent. Assume first that $I^2=0$. As $f-1\in U\cdot I[G][U]$ and $f|_{U=1}=1$, we have $f-1\in U(U-1)\cdot I[G][U]$, so we may write
\[
f=1 - \sum_{a\geq 1} \sum_{g\in G} (U^{a+1} - U^a) z_{g,a} g
\]
with $z_{g,a}\in I$. Using $I^2=0$, this rewrites as
\[
f=\prod_{a\geq 1,g\in G} (1-U^{a+1} z_{g,a} g) / (1 - U^a z_{g,a} g)\ ,
\]
as desired. In general, this shows that after dividing $f$ by terms of the form
\[
(1-U^{a+1} z g) / (1 - U^a z g)\ ,
\]
one gets an element $f^\prime$ with the same properties, and $f^\prime\equiv 1\mod I^2$. The nilpotence degree of $I^2$ is smaller than the nilpotence degree of $I$, so one gets the result by induction.

Using multiplicativity of $D^{(m)}_0$, it is now enough to prove that
\[
D^{(m)}_0\left((1-U^{a+1} z g) / (1 - U^a z g)\right)|_{U=1} = 1
\]
for all $a\geq 1$, $z\in I$ and $g\in G$. But by Lemma \ref{ExistsDm0}, 
\[
D^{(m)}_0(1-U^{a+1} zg)|_{U=1} = P_g^{(m)}(U^{a+1}z)|_{U=1} = P_g^{(m)}(z) = P_g^{(m)}(U^a z)|_{U=1} = D^{(m)}_0(1 - U^a zg)|_{U=1}\ ,
\]
finishing the proof of the lemma.
\end{proof}

It remains to verify that $D^{(m)}$ is homogeneous of degree $n_m$. For this, observe the following general lemma on determinants, which shows that homogeneity of some degree is automatic.

\begin{lem}\label{EverythingHomogeneous} Let $R[G]\to R$ be a multiplicative $R$-polynomial law. Then for some integer $N$ there is a decomposition $R=R_0\times\cdots\times R_N\times R_\infty$ of $R$ into direct factors such that for $0\leq d\leq N$ the induced multiplicative $R_d$-polynomial law $R_d[G]\to R_d$ is homogeneous of degree $d$, i.e. is a determinant of dimension $d$, and such that $R_\infty[G]\to R_\infty$ is constant $0$.
\end{lem}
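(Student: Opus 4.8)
The statement is a ``spectral decomposition'' for multiplicative polynomial laws: a multiplicative $R$-polynomial law $D : R[G] \to R$ automatically splits $R$ into pieces on which $D$ is homogeneous of a fixed degree (plus a piece where $D$ vanishes identically). First I would recall the basic structure of a multiplicative $R$-polynomial law in the sense of Chenevier, \cite{ChenevierDet}: for every $R$-algebra $S$ we have a multiplicative map $D_S : S[G] \to S$, natural in $S$, and multiplicativity plus naturality already force $D_S(s) = \lambda_D(s)$ for $s \in S$, where $\lambda_D : S \to S$ is itself a multiplicative $R$-polynomial law $R \to R$. Indeed $D_S(s \cdot x) = D_S(s) D_S(x)$ and $D_S(1) = 1$, so the whole question reduces to understanding multiplicative polynomial laws from the base ring $R$ to itself; once $\lambda_D$ is shown to be ``homogeneous of locally constant degree'' in the sense that $R$ decomposes as claimed, the same decomposition works for $D$ because homogeneity of degree $d$ means exactly $D(bx) = b^d D(x)$ for all $b \in B$, $x \in B[G]$, i.e. $\lambda_D(b) = b^d$.

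\textbf{Key steps.} The plan is then: (1) Reduce, as above, to a multiplicative $R$-polynomial law $\mu : R \to R$, given by a natural family $\mu_S : S \to S$ with $\mu_S(1) = 1$ and $\mu_S(st) = \mu_S(s)\mu_S(t)$. (2) Apply $\mu$ to the universal element: take $S = R[t]$ and set $F(t) = \mu_{R[t]}(t) \in R[t]$. Multiplicativity gives $F(t_1 t_2) = F(t_1) F(t_2)$ in $R[t_1, t_2]$, and naturality gives back $\mu_S(s) = F(s)$ (evaluate the ring map $R[t] \to S$, $t \mapsto s$). So everything is encoded in a single polynomial $F \in R[t]$ satisfying the functional equation $F(t_1 t_2) = F(t_1)F(t_2)$ and $F(1) = 1$. (3) Analyze this functional equation. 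Writing $F(t) = \sum_{i=0}^N a_i t^i$, comparing coefficients of $t_1^i t_2^j$ in $F(t_1 t_2) = F(t_1) F(t_2)$ forces $a_i a_j = 0$ for $i \neq j$ and $a_i^2 = a_i$ for all $i$; thus the $a_i$ are orthogonal idempotents. Then $a_0 + a_1 + \cdots + a_N = F(1) = 1$, so $(a_0, \dots, a_N)$ is a complete system of orthogonal idempotents and $R = R_0 \times \cdots \times R_N$ with $R_d = a_d R$. On the factor $R_d$ we have $F = t^d$ (all other idempotents die), i.e. $\mu$ restricted to $R_d$ is ``raising to the $d$-th power'', which is homogeneity of degree $d$. (4) The factor called $R_\infty$ in the statement: if $a_0 = 1$ (i.e. $F = 1$ identically), that is the ``constant'' case; more precisely one should allow the possibility $F = 0$. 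Here I note that in Chenevier's convention a multiplicative polynomial law need not satisfy $D(1) = 1$ only if it is the zero law — and if $\mu_S(1) = 0$ for some (hence, by naturality and multiplicativity, every) $S$, then $\mu_S(x) = \mu_S(1 \cdot x) = 0$ for all $x$. So the idempotent decomposition should be run relative to the idempotent $e = \mu_R(1)$: on $eR$ we get $\mu(1) = 1$ and the analysis above applies, producing $eR = R_0 \times \cdots \times R_N$; on $(1-e)R =: R_\infty$ the law is constantly $0$. (5) Transport back: using step (1), the original $D$ decomposes accordingly, being a determinant of dimension $d$ on $R_d$ (one checks the defining properties of a $d$-dimensional determinant from \cite{ChenevierDet} are preserved under the base change $R \to R_d$, which is immediate since polynomial laws base-change) and constantly $0$ on $R_\infty$.

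\textbf{Main obstacle.} The only genuinely delicate point is step (3): I must be careful that $N = \deg F$ is finite and globally bounded, and that comparing coefficients in $F(t_1 t_2) = F(t_1) F(t_2)$ is legitimate over an arbitrary base ring $R$ (no integrality or reducedness hypotheses). Finiteness of $\deg F$ is automatic since $F$ is a single polynomial. The coefficient comparison is just an identity in the polynomial ring $R[t_1, t_2]$, so it is valid over any $R$. The subtlety is purely bookkeeping: matching $a_i a_j = 0$ for $i \neq j$ forces, via $a_i^2 = a_i$, that the $a_i$ are orthogonal idempotents summing to $1$ (or to $e$, in the $R_\infty$-enhanced version), after which the product decomposition of $R$ is standard commutative algebra. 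I expect no essential difficulty beyond organizing this cleanly and checking that ``determinant of dimension $d$'' is stable under passage to a direct factor, which follows formally from the definition of polynomial laws as natural transformations commuting with base change.
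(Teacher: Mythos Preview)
Your proposal is correct and follows essentially the same approach as the paper: restrict the law to $R\to R$, evaluate at the universal element $T\in R[T]$ to get a polynomial $F$ satisfying $F(UT)=F(U)F(T)$, and analyze coefficients. Your extraction of the full orthogonal idempotent system $(a_i)$ in one stroke from $a_ia_j=\delta_{ij}a_i$ is slightly cleaner than the paper's iterative version (peel off the lowest nonzero coefficient, split, repeat), but the underlying idea is identical.
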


\begin{proof} By restriction, we get a multiplicative $R$-polynomial law $R\to R$. It suffices to see that after a decomposition into direct factors, this is of the form $x\mapsto x^d$ for some integer $d\geq 0$, or constant $0$. Applying the polynomial law to $T\in R[T]$ gives an element $f(T)\in R[T]$, which by multiplicativity satisfies $f(UT) = f(U)f(T)$. Let
\[
f(T) = a_d T^d + a_{d+1}T^{d+1} + \ldots + a_N T^N\ ,
\]
where $a_d\neq 0$. Looking at the coefficient of $U^dT^d$ in $f(UT) = f(U)f(T)$ shows that $a_d^2 = a_d$, so after a decomposition into a direct product, we may assume that either $a_d=1$ or $a_d=0$. In the second case, we can continue this argument with a higher coefficient of $f$ to arrive eventually in the case $a_d=1$ (or $f(T) = 0$, in which case the polynomial law is constant $0$). Thus, assume that $a_d=1$. Looking at the coefficient of $U^d T^{d+i}$ in $f(UT) = f(U)f(T)$, we see that $0=a_{d+i}$ for all $i\geq 1$, thus $f(T) = T^d$, and the polynomial law is indeed given by $x\mapsto x^d$.
\end{proof}

Note that the degree of each $P_g^{(m)}(X)$ is at most $n_m$, but the product $\prod_{m\in \Z} P_g^{(m)}(X)$ has degree exactly $n = \sum_{m\in \Z} n_m$ (as it is the characteristic polynomial of $g$ in $\tilde{D}$); thus, each $P_g^{(m)}(X)$ has degree exactly $n_m$, and it follows that $D^{(m)}$ is homogeneous of degree $n_m$.
\end{proof}

\section{Conclusion}\label{SectionResults}

Finally, we can state our main result. Let $F$ be a totally real or CM field with totally real subfield $F^+\subset F$, $n\geq 1$ some integer, $p$ some rational prime, and $S$ a finite set of finite places of $F$, stable under complex conjugation, containing all places above $p$, and all places which are ramified over $F^+$. Let $G_{F,S}$ be the Galois group of the maximal extension of $F$ which is unramified outside $S$.

Let
\[
K\subset \prod_v \GL_n(\OO_{F_v})\subset \GL_n(\A_{F,f})
\]
be a compact open subgroup of the form $K=K_SK^S$, where $K_S\subset \prod_{v\in S} \GL_n(\OO_{F_v})$ is any compact open subgroup, and $K^S = \prod_{v\not\in S} \GL_n(\OO_{F_v})\subset \GL_n(\A_{F,f}^S)$. We get the locally symmetric space
\[
X_K = \GL_n(F)\backslash [(\GL_n(F\otimes_{\Q} \R)/\R_{>0} K_\infty)\times \GL_n(\A_{F,f})/K]\ ,
\]
where $K_\infty\subset \GL_n(F\otimes_{\Q} \R)$ is a maximal compact subgroup. If $K$ is not sufficiently small, we regard this as a `stacky' object, so that (by definition) $X_{K^\prime}\to X_K$ is a finite covering map of degree $[K:K^\prime]$, for any open subgroup $K^\prime\subset K$.

Moreover, fix an algebraic representation $\xi$ of $\Res_{\OO_F/\Z} \GL_n$ with coefficients in a finite free $\overline{\Z}_p$-module $M_\xi$. This defines a local system $\mathcal{M}_{\xi,K}$ of $\overline{\Z}_p$-modules on $X_K$, for any $K$ as above.

Let
\[
\mathbb{T}_{F,S} = \bigotimes_{v\not\in S} \mathbb{T}_v\ ,\ \mathbb{T}_v = \Z_p[\GL_n(F_v)//\GL_n(\OO_{F_v})]
\]
be the abstract Hecke algebra. By the Satake isomorphism, we have a canonical isomorphism
\[
\mathbb{T}_v[q_v^{1/2}]\cong \Z_p[q_v^{1/2}][X_1^{\pm 1},\ldots,X_n^{\pm 1}]^{S_n}\ ,
\]
where $q_v$ is the cardinality of the residue field of $F$ at $v$. We let $T_{i,v}\in \mathbb{T}_v[q_v^{1/2}]$ be the $i$-th symmetric polynomial in $X_1,\ldots,X_n$; then $q_v^{i(n+1)/2}T_{i,v}\in \mathbb{T}_v$. Define the polynomial
\[
P_v(X) = 1 - q_v^{(n+1)/2} T_{1,v} X + q_v^{2(n+1)/2} T_{2,v} X^2 - \ldots + (-1)^n q_v^{n(n+1)/2} T_{n,v} X^n\in \mathbb{T}_v[X]\ .
\]
Recall that there is a canonical action of $\mathbb{T}_{F,S}$ on
\[
H^i(X_K,\mathcal{M}_{\xi,K})\ .
\]

\begin{thm}\label{FinalMainThm} There exists an integer $N=N([F:\Q],n)$ depending only on $[F:\Q]$ and $n$, such that for any compact open subgroup $K=K_SK^S\subset \GL_n(\A_{F,f})$ as above, algebraic representation $\xi$, and any integers $i,m\geq 0$, the following is true. Let
\[
\mathbb{T}_{F,S}(K,\xi,i,m) = \im(\mathbb{T}_{F,S}\to \End_{\overline{\Z}_p/p^m}(H^i(X_K,\mathcal{M}_{\xi,K}/p^m)))\ .
\]
Then there is an ideal $I\subset \mathbb{T}_{F,S}(K,\xi,i,m)$ with $I^N = 0$ such that there is an $n$-dimensional continuous determinant $D$ of $G_{F,S}$ with values in $\mathbb{T}_{F,S}(K,\xi,i,m)/I$, satisfying
\[
D(1-X\Frob_v) = P_v(X)
\]
for all $v\not\in S$.
\end{thm}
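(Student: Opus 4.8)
I would prove Theorem \ref{FinalMainThm} by induction on $n$. The case $n=1$ is class field theory: then $X_K$ is $0$-dimensional and $H^0(X_K,\mathcal M_{\xi,K}/p^m)$ decomposes into finite-order Hecke characters, whose $1$-dimensional Galois determinants are given by reciprocity. For the inductive step the three inputs are: Theorem \ref{ThmExGalRepr2}, applied to the interior cohomology of the $\GL_n$-locally symmetric space itself; the inductive hypothesis, applied to the proper Levi subgroups $M_P\cong\prod_j\GL_{n_j}$ (all $n_j<n$) that appear in the Borel--Serre boundary; and Chenevier's formalism of determinants, in particular the glueing and descent statements \cite[Corollary 1.14, Example 2.32]{ChenevierDet}, which allow one to recover a determinant on $\mathbb{T}_{F,S}(K,\xi,i,m)$ from determinants on a controlled filtration of the cohomology. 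Throughout, $d=\dim X_K$ is bounded in terms of $[F:\Q]$ and $n$, and the goal is to keep every nilpotence exponent bounded by such a quantity.

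\textbf{First reduction: arbitrary level and weight.} Given $K=K_SK^S$ and $\xi$, choose a normal open subgroup $K''=K''_SK^S\subseteq K$, shrunk only at places of $S$ (in particular at $p$), which is sufficiently small and for which the monodromy of $\mathcal M_{\xi,K''}/p^m$ is trivial, so that $\mathcal M_{\xi,K''}/p^m\cong\underline{M_\xi/p^m}$ and hence $H^b(X_{K''},\mathcal M_{\xi,K''}/p^m)\cong H^b(X_{K''},\Z/p^m)\otimes_{\Z/p^m}M_\xi/p^m$ as $\mathbb{T}_{F,S}$-modules, $\mathbb{T}_{F,S}$ acting on the first factor; since $M_\xi/p^m$ contains a free rank-one direct summand, the Hecke image is unchanged. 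The Hochschild--Serre spectral sequence $H^a(K/K'',H^b(X_{K''},\mathcal M_{\xi,K''}/p^m))\Rightarrow H^{a+b}(X_K,\mathcal M_{\xi,K}/p^m)$ is $\mathbb{T}_{F,S}$-equivariant, and $H^b(X_{K''},-)$ vanishes for $b>d$, so the resulting filtration on $H^i(X_K,\mathcal M_{\xi,K}/p^m)$ has at most $d+1$ nonzero graded pieces, each a $\mathbb{T}_{F,S}$-subquotient of a finite sum of copies of some $H^b(X_{K''},\Z/p^m)$. By the standard manipulations with Chenevier determinants (pushforward along surjections, descent along injections whose target receives the characteristic polynomials, glueing over products), with the nilpotence exponents adding up in the controlled way, it suffices to produce, for $K$ sufficiently small with $K^S$ hyperspecial and trivial coefficients (and after the harmless base change $-\otimes_{\Z_p}W(\overline{\F}_p)$ as in the proof of Theorem \ref{ThmExGalRepr2}), an $n$-dimensional continuous determinant with characteristic polynomial $P_v(X)$ on $\im(\mathbb{T}_{F,S}\to\End H^i(X_K,W(\overline{\F}_p)/p^m))$ modulo a nilpotent ideal of bounded degree.

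\textbf{Second reduction and conclusion: interior versus boundary.} Using $X_K\hookrightarrow X_K^{\BS}$ one has a $\mathbb{T}_{F,S}$-equivariant exact sequence $0\to H^i_!(X_K,-)\to H^i(X_K,-)\to Q\to 0$ with $Q\hookrightarrow H^i(\partial X_K^{\BS},-)$. On $H^i_!$, Theorem \ref{ThmExGalRepr2} directly supplies an $n$-dimensional determinant with characteristic polynomial $P_v$ modulo an ideal with vanishing $4(d+1)$-th power. For $H^i(\partial X_K^{\BS},\Z/p^m)$, the faces $e'(P)$ (indexed by the finitely many, boundedly many, conjugacy classes of proper parabolics $P\subset\GL_n$) form a cover; the Čech spectral sequence of this cover, together with the Leray spectral sequence of the nilmanifold bundle $e'(P)\to X^{M_P}_{K^{M_P}}$, builds $H^i(\partial X_K^{\BS},\Z/p^m)$ in boundedly many $\mathbb{T}_{F,S}$-equivariant steps out of subquotients of $H^\ast(X^{M_P}_{K^{M_P}},\mathcal M')$, where $\mathcal M'$ runs over the local systems attached to the adjoint pieces of $\bigwedge^\ast\mathfrak n_P^\vee$. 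By Künneth and the inductive hypothesis, each such group carries, modulo a nilpotent ideal of bounded degree, a family of determinants $D^{(j)}$ of dimensions $n_j$ with characteristic polynomials $P^{(j)}_v$; their twisted product, of total dimension $\sum_j n_j=n$, has characteristic polynomial $\prod_j P^{(j)}_v(q_v^{c_j}X)$, and this equals the image of $P_v(X)$ under the constant-term homomorphism $\mathbb{T}_v\to\mathbb{T}_v^{M_P}$, by the same Tate-twist bookkeeping as in Lemma \ref{DescrHeckeAlg2}, now internal to $\GL_n$ (Kostant's theorem supplying the relevant weights). Since the Hecke action on each boundary stratum factors through exactly this constant-term map, one gets an $n$-dimensional determinant with characteristic polynomial $P_v$ on the boundary Hecke algebra modulo a nilpotent ideal of bounded degree. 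As $H^i_!(X_K,-)$ and $Q$ thus carry determinants with the \emph{same} characteristic polynomial $P_v(X)$, and $H^i(X_K,-)$ is a two-step extension built from them, Chenevier's glueing produces the required determinant on $\im(\mathbb{T}_{F,S}\to\End H^i(X_K,W(\overline{\F}_p)/p^m))$ modulo a nilpotent ideal whose degree is bounded in terms of $[F:\Q]$ and $n$; unwinding the first reduction yields the claim for $\mathbb{T}_{F,S}(K,\xi,i,m)$, and the induction closes since every exponent accumulated — the $\le d+1$ from Hochschild--Serre, the lengths $\le\#\{\text{parabolic classes}\}$ and $\le\dim\mathfrak n_P$ of the boundary spectral sequences, the $4(d+1)$ from Theorem \ref{ThmExGalRepr2}, and the bounds for the smaller $n_j$ — depends only on $[F:\Q]$ and $n$.

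\textbf{Main obstacle.} The delicate part is the boundary analysis of the second reduction: making the Čech, Leray and Künneth spectral sequences genuinely $\mathbb{T}_{F,S}$-equivariant; checking, via Kostant's theorem and the precise normalisation of the constant-term map, that the characteristic polynomial attached to each boundary stratum is \emph{exactly} the image of $P_v(X)$ and not merely some twisted product; and — most importantly for the shape of the statement — verifying that every filtration length and nilpotence exponent entering the argument is bounded purely in terms of $[F:\Q]$ and $n$, uniformly in the degree $i$, the modulus $p^m$, the level $K$ and the weight $\xi$.
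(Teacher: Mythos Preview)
Your proposal is correct and follows essentially the same route as the paper's proof: induction on $n$, a Hochschild--Serre reduction to sufficiently small level and trivial coefficients, then the two-step filtration coming from the Borel--Serre exact sequence, with Theorem \ref{ThmExGalRepr2} handling $H^i_!$ and the inductive hypothesis handling the boundary contribution, everything glued via Chenevier's formalism. The only difference is emphasis: where you spell out the boundary analysis via \v{C}ech/Leray/K\"unneth spectral sequences and Kostant's theorem, the paper simply declares it ``an easy exercise to express $H^i(X_K^\BS\setminus X_K,\Z/p^m\Z)$ in terms of the locally symmetric spaces for $\GL_{n'}/F$, with $n'<n$'' and cites \cite[Section 3]{CaraianiLeHung}.
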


\begin{proof} Fix a sufficiently small normal compact open subgroup $K^\prime\subset K$ such that $\mathcal{M}_{\xi,K^\prime}/p^m$ is trivial; the second condition can be ensured by requiring that the image of $K^\prime$ in $\prod_{v|p} \GL_n(F_v)$ is contained in $\{g\in \prod_{v|p} \GL_n(\OO_{F_v})\mid g\equiv 1\mod p^m\}$. One has the Hochschild-Serre spectral sequence
\[
H^i(K/K^\prime,H^j(X_{K^\prime},\mathcal{M}_{\xi,K^\prime}/p^m))\Rightarrow H^{i+j}(X_K,\mathcal{M}_{\xi,K}/p^m)\ .
\]
This reduces us to the case that $K$ is sufficiently small, and that $\xi$ is trivial. In that case, $\mathcal{M}_{\xi,K}/p^m$ is a direct sum of copies of $\Z/p^m\Z$.

Thus, we have to consider
\[
\mathbb{T}_{F,S}(K,i,m) = \im(\mathbb{T}_{F,S}\to \End_{\Z/p^m\Z}(H^i(X_K,\Z/p^m\Z)))\ .
\]
Using the Borel-Serre compactification $X_K^\BS$, we have the long exact sequence of $\mathbb{T}_{F,S}$-modules
\[
\ldots \to H_c^i(X_K,\Z/p^m\Z)\to H^i(X_K,\Z/p^m\Z)\to H^i(X_K^\BS\setminus X_K,\Z/p^m\Z)\to \ldots\ .
\]
It is an easy exercise to express $H^i(X_K^\BS\setminus X_K,\Z/p^m\Z)$ in terms of the locally symmetric spaces for $\GL_{n^\prime}/F$, with $n^\prime<n$ (cf. \cite[Section 3]{CaraianiLeHung} for more discussion of this point). Thus, by induction, the determinants exist for
\[
\im(\mathbb{T}_{F,S}\to \End_{\Z/p^m\Z}(H^i(X_K^\BS\setminus X_K,\Z/p^m\Z)))\ ,
\]
and in particular for
\[
\mathbb{T}_{F,S}(K,i,m,\partial) = \im(\mathbb{T}_{F,S}\to \End_{\Z/p^m\Z}(\im(H^i(X_K,\Z/p^m\Z)\to H^i(X_K^\BS\setminus X_K,\Z/p^m\Z))))\ ,
\]
modulo some nilpotent ideal $I_\partial\subset \mathbb{T}_{F,S}(K,i,m,\partial)$ of nilpotence degree bounded by $[F:\Q]$ and $n$. On the other hand, by Theorem \ref{ThmExGalRepr2}, there is a nilpotent ideal
\[
I_!\subset \mathbb{T}_{F,S}(K,i,m,!) = \im(\mathbb{T}_{F,S}\to \End_{\Z/p^m\Z}(\im(H_c^i(X_K,\Z/p^m\Z)\to H^i(X_K,\Z/p^m\Z))))
\]
of nilpotence degree bounded by $[F:\Q]$ and $n$, such that the determinants exist with values in $\mathbb{T}_{F,S}(K,i,m,!)/I_!$. But the kernel of the map
\[
\mathbb{T}_{F,S}(K,i,m)\to \mathbb{T}_{F,S}(K,i,m,!)\times \mathbb{T}_{F,S}(K,i,m,\partial)
\]
is a nilpotent ideal with square $0$. Thus, the kernel $I$ of
\[
\mathbb{T}_{F,S}(K,i,m)\to \mathbb{T}_{F,S}(K,i,m,!)/I_!\times \mathbb{T}_{F,S}(K,i,m,\partial)/I_\partial
\]
is a nilpotent ideal with nilpotence degree bounded by $[F:\Q]$ and $n$, and by \cite[Corollary 1.14]{ChenevierDet}, one finds that the determinant $D$ with values in $\mathbb{T}_{F,S}(K,i,m)/I$ exists. This finishes the proof.
\end{proof}

Let us state some corollaries, where the determinants give rise to actual representations. We start with the following result on classical automorphic representations, that has recently been proved by Harris-Lan-Taylor-Thorne, \cite{HLTT}. Recall that we have fixed an isomorphism $\C\cong \overline{\Q}_p$.

\begin{cor}\label{CorExGalReprChar0} Let $\pi$ be a cuspidal automorphic representation of $\GL_n(\A_F)$ such that $\pi_\infty$ is regular $L$-algebraic, and such that $\pi_v$ is unramified at all finite places $v\not\in S$. Then there exists a unique continuous semisimple representation
\[
\sigma_\pi: G_{F,S}\to \GL_n(\overline{\Q}_p)
\]
such that for all finite places $v\not\in S$, the Satake parameters of $\pi_v$ agree with the eigenvalues of $\sigma_\pi(\Frob_v)$.
\end{cor}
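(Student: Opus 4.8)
\textbf{Proof plan for Corollary \ref{CorExGalReprChar0}.} The strategy is to deduce the existence of $\sigma_\pi$ from Theorem \ref{FinalMainThm} by a standard $p$-adic interpolation argument, producing $\sigma_\pi$ as a limit (in a suitable sense) of the $\mod p^m$ determinants coming from torsion cohomology classes. First I would recall that since $\pi_\infty$ is regular $L$-algebraic, after a twist $\pi|\cdot|^{(1-n)/2}$ is regular $C$-algebraic, and by a theorem of Clozel (see also \cite{BuzzardGee}) the infinitesimal character of $\pi_\infty$ is, up to the Weyl group, of the form coming from an algebraic representation $\xi$ of $\Res_{\OO_F/\Z}\GL_n$; moreover, by a theorem of Franke (or rather by the results on $(\mathfrak{g},K)$-cohomology of cohomological representations), $\pi$ contributes to $H^i(X_K,\mathcal{M}_{\xi,K}\otimes_{\overline{\Z}_p}\overline{\Q}_p)$ for some $i$ and some sufficiently small $K=K_SK^S$, where $K_S$ is chosen so that $\pi_S$ has nonzero $K_S$-fixed vectors. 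Thus there is a system of Hecke eigenvalues $\lambda_\pi:\mathbb{T}_{F,S}\to\overline{\Q}_p$ occurring in $H^i(X_K,\mathcal{M}_{\xi,K})[p^{-1}]$, matching the Satake parameters of $\pi_v$ for $v\notin S$ (with the normalization built into $P_v(X)$).

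Next I would pass from characteristic zero to torsion. The eigenvalue system $\lambda_\pi$ lands in the integral Hecke algebra $\mathbb{T}_{F,S}(K,\xi,i)=\im(\mathbb{T}_{F,S}\to\End_{\overline{\Z}_p}(H^i(X_K,\mathcal{M}_{\xi,K})/(\text{torsion})))$ after enlarging $\overline{\Z}_p$ suitably; since $H^i(X_K,\mathcal{M}_{\xi,K})$ is a finitely generated $\overline{\Z}_p$-module, its reductions $\mod p^m$ carry the compatible family of Hecke actions, and $\lambda_\pi\mod p^m$ factors through $\mathbb{T}_{F,S}(K,\xi,i,m)$. Applying Theorem \ref{FinalMainThm}, for each $m$ there is an ideal $I_m$ with $I_m^N=0$ ($N$ depending only on $[F:\Q]$ and $n$) and a continuous $n$-dimensional determinant $D_m$ of $G_{F,S}$ valued in $\mathbb{T}_{F,S}(K,\xi,i,m)/I_m$ with $D_m(1-X\Frob_v)=P_v(X)$. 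Composing with $\lambda_\pi\mod p^m$, which kills the nilpotent ideal $I_m$ since $\overline{\Z}_p/p^m$ has no nilpotents of exponent $N$ surviving into... more carefully: the image of $I_m$ under $\lambda_\pi\mod p^m$ is a nilpotent ideal of $\overline{\Z}_p/p^m$ of nilpotence degree $\leq N$, hence is contained in $p^{\lceil m/N\rceil}\overline{\Z}_p/p^m$; thus $\lambda_\pi$ composed with $D_m$ gives a continuous $n$-dimensional determinant $\overline{D}_m$ of $G_{F,S}$ with values in $\overline{\Z}_p/p^{\lceil m/N\rceil}$, characteristic polynomials $P_v(X)$ evaluated at $\lambda_\pi$. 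Passing to the inverse limit over $m$ (these $\overline{D}_m$ are compatible by uniqueness of determinants with prescribed characteristic polynomials, \cite[Lemma 1.12(ii)]{ChenevierDet}), one obtains a continuous $n$-dimensional determinant $D_\pi$ of $G_{F,S}$ with values in $\overline{\Z}_p$, such that $D_\pi(1-X\Frob_v)=\prod_i(1-\alpha_{i,v}X)$ where $\alpha_{i,v}$ are the (suitably normalized) Satake parameters of $\pi_v$. By \cite[Theorem 2.12]{ChenevierDet}, since $\overline{\Q}_p$ is an algebraically closed field, $D_\pi\otimes\overline{\Q}_p$ arises from a unique continuous semisimple representation $\sigma_\pi:G_{F,S}\to\GL_n(\overline{\Q}_p)$, and its Frobenius eigenvalues at $v\notin S$ are the $\alpha_{i,v}$, as desired. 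Uniqueness follows from Chebotarev density together with semisimplicity (Brauer--Nesbitt).

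The main obstacle — and the reason this argument genuinely uses the bulk of the paper — is establishing that $\pi$ contributes to the cohomology $H^i(X_K,\mathcal{M}_{\xi,K})[p^{-1}]$ with the correct Hecke eigenvalues and the correct normalization of Satake parameters relative to the polynomial $P_v(X)$. This is the input from the theory of $(\mathfrak{gl}_n,K_\infty)$-cohomology of the archimedean component (regular $L$-algebraicity is exactly the condition guaranteeing nonvanishing of this cohomology and its locating in a single degree range), combined with Franke's theorem that the cuspidal part injects into $L^2$-cohomology and hence into $H^i(X_K,\mathcal{M}_{\xi,K})[p^{-1}]$; one must also carefully track the half-integral twists $q_v^{i(n+1)/2}$ so that the characteristic polynomial $P_v(X)$ of $D$ matches $\det(1-X\Frob_v|\sigma_\pi)$ rather than some twist of it. Once the contribution to cohomology is set up correctly, everything else is the formal interpolation-and-patching machinery of determinants sketched above, with the bounded nilpotence degree $N$ from Theorem \ref{FinalMainThm} being precisely what makes the inverse limit over $m$ produce a determinant with $\overline{\Z}_p$-coefficients rather than merely one with coefficients in a non-reduced ring.
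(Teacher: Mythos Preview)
Your proposal is correct and follows essentially the same strategy as the paper: realize $\pi$ (after the $C$-algebraic twist) in $H^i(X_K,\mathcal{M}_{\xi,K})\otimes\overline{\Q}_p$, invoke Theorem \ref{FinalMainThm} at each torsion level, and use the uniform bound $N$ on the nilpotence degree to pass to characteristic zero, finishing with \cite[Theorem 2.12]{ChenevierDet}.

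The only difference worth noting is in how the limit is organized. You specialize to $\lambda_\pi$ first and then argue that the image of $I_m$ in $\overline{\Z}_p/p^m$ lies in $p^{\lceil m/N\rceil}$, producing a compatible family of determinants over $\overline{\Z}_p/p^{\lceil m/N\rceil}$ whose inverse limit lives over $\overline{\Z}_p$. The paper instead glues on the Hecke-algebra side: using that $\bigcap_m \ker(\mathbb{T}_{F,S}\to \mathbb{T}_{F,S}(K,\xi,i,m))$ is contained in $\ker(\mathbb{T}_{F,S}\to \mathbb{T}_{F,S}(K,\xi,i))$, together with \cite[Example 2.32]{ChenevierDet} and the bound $I_m^N=0$, one obtains directly an $n$-dimensional determinant with values in $\mathbb{T}_{F,S}(K,\xi,i)/I$ for a single nilpotent ideal $I$; composing with $\lambda_\pi:\mathbb{T}_{F,S}(K,\xi,i)\to\overline{\Q}_p$ then kills $I$ automatically since the target is a field. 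This is slightly cleaner, as it avoids the bookkeeping with $p^{\lceil m/N\rceil}$ and produces the stronger intermediate statement (a determinant over the full Hecke algebra modulo nilpotents, not just over $\overline{\Z}_p$). Your argument that $\lambda_\pi\bmod p^m$ factors through $\mathbb{T}_{F,S}(K,\xi,i,m)$ is valid, via the Hecke-equivariant injection $H^i(X_K,\mathcal{M}_{\xi,K})/p^m\hookrightarrow H^i(X_K,\mathcal{M}_{\xi,K}/p^m)$ and a primitive eigenvector in the free quotient.

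Two small points you pass over: the paper takes care of the discrepancy between $K_\infty$ and its identity component $K_\infty^\circ$ (in the totally real case $\tilde{X}_K\to X_K$ is a $(\Z/2\Z)^{[F:\Q]}$-cover, and one twists by a sign character to move $\pi'$ into $H^i(X_K,\cdot)$), and the $C$-algebraic twist it uses is $\pi|\cdot|^{(n+1)/2}$ rather than your $\pi|\cdot|^{(1-n)/2}$---these differ by an integral power of $|\cdot|$, so both are $C$-algebraic, but the paper's choice is the one matching the normalization built into $P_v(X)$.
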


\begin{proof} Note that $\pi^\prime = \pi|\cdot|^{(n+1)/2}$ is regular $C$-algebraic, i.e. cohomological (cf. \cite[Theorem 3.13, Lemma 3.14]{ClozelMotifs}). Thus, there exists some algebraic representation $\xi$ of $\Res_{F/\Q} \GL_n$ with coefficients in $\C\cong \overline{\Q}_p$ (which can be extended to an algebraic representation of $\Res_{\OO_F/\Z} \GL_n$ with coefficients in $\overline{\Z}_p$, still denoted $\xi$) such that $\pi^\prime$ occurs in
\[
H^i(\tilde{X}_K,\mathcal{M}_{\xi,K})\otimes_{\overline{\Z}_p} \C
\]
for some sufficiently small level $K=K_SK^S$ and integer $i$. Here,
\[
\tilde{X}_K = \GL_n(F)\backslash [(\GL_n(F\otimes_{\Q} \R)/\R_{>0} K_\infty^\circ)\times \GL_n(\A_{F,f})/K]\ ,
\]
where $K_\infty^\circ\subset K_\infty$ is the connected component of the identity. Thus, $\tilde{X}_K = X_K$ if $F$ is CM, and is a $(\Z/2\Z)^{[F:\Q]}$-cover if $F$ is totally real. Twisting by a character $\A_F^\times\to \Z/2\Z$ with prescribed components at the archimedean places, one can arrange that $\pi^\prime$ occurs in $H^i(X_K,\mathcal{M}_{\xi,K})\otimes_{\overline{\Z}_p} \C$, which we shall assume from now on.

Let
\[
\mathbb{T}_{F,S}(K,\xi,i) = \im(\mathbb{T}_{F,S}\to \End_{\overline{\Z}_p}(H^i(X_K,\mathcal{M}_{\xi,K})))\ .
\]
The kernel of $\mathbb{T}_{F,S}\to \mathbb{T}_{F,S}(K,\xi,i)$ is contained in the kernel of
\[
\mathbb{T}_{F,S}\to \prod_{m\geq 1} \mathbb{T}_{F,S}(K,\xi,i,m)\ ,
\]
so by Theorem \ref{FinalMainThm} (and \cite[Example 2.32]{ChenevierDet}), there exists an $n$-dimensional continuous determinant $D$ of $G_{F,S}$ with values in $\mathbb{T}_{F,S}(K,\xi,i)/I$ for some nilpotent ideal $I$.\footnote{For this conclusion, it was necessary to know that the nilpotence degree is bounded independently of $m$; one gets also that $I^N = 0$.} Composing with the map $\mathbb{T}_{F,S}(K,\xi,i)\to \overline{\Q}_p$ corresponding to $\pi^\prime$, we get an $n$-dimensional continuous determinant $D_{\pi^\prime}$ of $G_{F,S}$ with values in $\overline{\Q}_p$, giving the desired continuous semisimple representation $\sigma_\pi$ by \cite[Theorem 2.12]{ChenevierDet} (continuity follows e.g. from \cite[Theorem 1]{TaylorPseudoRepr}).
\end{proof}

On the other hand, we can apply Theorem \ref{FinalMainThm} to characteristic $p$ cohomology.

\begin{cor}\label{CorExGalReprCharP} Let $\psi: \mathbb{T}_{F,S}\to \overline{\F}_p$ be a system of Hecke eigenvalues such that the $\psi$-eigenspace
\[
H^i(X_K,\mathcal{M}_{\xi,K}\otimes_{\overline{\Z}_p} \overline{\F}_p)[\psi]\neq 0\ .
\]
Then there exists a unique continuous semisimple representation
\[
\sigma_\psi: G_{F,S}\to \GL_n(\overline{\F}_p)
\]
such that for all finite places $v\not\in S$,
\[
\det(1-X\Frob_v|\sigma_\psi) = 1 - \psi(q_v^{(n+1)/2} T_{1,v}) X + \psi(q_v^{2(n+1)/2} T_{2,v}) X^2 - \ldots + (-1)^n \psi(q_v^{n(n+1)/2} T_{n,v}) X^n\ .
\]
\end{cor}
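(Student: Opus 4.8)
The plan is to reduce Corollary \ref{CorExGalReprCharP} to the main result, Theorem \ref{FinalMainThm}, together with the standard dictionary between determinants and semisimple representations over a field. First I would observe that a nonzero $\psi$-eigenspace in $H^i(X_K,\mathcal{M}_{\xi,K}\otimes_{\overline{\Z}_p}\overline{\F}_p)$ forces $\psi$ to factor through the Hecke algebra $\mathbb{T}_{F,S}(K,\xi,i,1) = \im(\mathbb{T}_{F,S}\to \End_{\overline{\F}_p}(H^i(X_K,\mathcal{M}_{\xi,K}/p)))$; indeed any system of eigenvalues occurring in the cohomology is a ring homomorphism out of this image (after extending scalars so that $\overline{\F}_p$ is large enough, which is harmless as $\overline{\F}_p$ is algebraically closed). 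So the map $\psi$ is a continuous $\overline{\F}_p$-point of $\Spec \mathbb{T}_{F,S}(K,\xi,i,1)$.

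Next I would apply Theorem \ref{FinalMainThm} with $m=1$: there is a nilpotent ideal $I\subset A := \mathbb{T}_{F,S}(K,\xi,i,1)$, with $I^N=0$ for $N=N([F:\Q],n)$, and a continuous $n$-dimensional determinant $D$ of $G_{F,S}$ with values in $A/I$ satisfying $D(1-X\Frob_v)=P_v(X)$ for all $v\notin S$. Composing $D$ with the $\overline{\F}_p$-algebra homomorphism $A/I\to \overline{\F}_p$ induced by $\psi$ (which exists since $\psi(I)$ is a nilpotent ideal of the field $\overline{\F}_p$, hence zero) yields a continuous $n$-dimensional determinant $D_\psi$ of $G_{F,S}$ with values in $\overline{\F}_p$. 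Explicitly, $D_\psi(1-X\Frob_v)$ is the image of $P_v(X)$ under $\psi$, which is precisely $1 - \psi(q_v^{(n+1)/2}T_{1,v})X + \dots + (-1)^n\psi(q_v^{n(n+1)/2}T_{n,v})X^n$. Here one uses that although $T_{i,v}$ only lies in $\mathbb{T}_v[q_v^{1/2}]$, the product $q_v^{i(n+1)/2}T_{i,v}$ lies in $\mathbb{T}_v$ (Lemma \ref{DescrHeckeAlg1}-style normalization), so $P_v(X)$ has coefficients in $\mathbb{T}_v\subset \mathbb{T}_{F,S}$ and $\psi$ may be applied to them.

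Finally I would invoke Chenevier's theorem \cite[Theorem 2.12]{ChenevierDet}: over the algebraically closed field $\overline{\F}_p$, an $n$-dimensional determinant $D_\psi$ of $G_{F,S}$ comes from a unique semisimple representation $\sigma_\psi: G_{F,S}\to \GL_n(\overline{\F}_p)$ with $\det(1-Xg|\sigma_\psi)=D_\psi(1-Xg)$ for all $g$; in particular the characteristic polynomial of $\Frob_v$ is as required for all $v\notin S$. Continuity of $\sigma_\psi$ follows from continuity of $D_\psi$ (e.g. by \cite[Theorem 1]{TaylorPseudoRepr} combined with the fact that the characteristic polynomials on a dense set of Frobenii determine the determinant, via Chebotarev and \cite[Lemma 1.12 (ii)]{ChenevierDet}), and uniqueness is likewise immediate since the $\Frob_v$ for $v\notin S$ are dense in $G_{F,S}$ by Chebotarev, so $\sigma_\psi$ is pinned down up to semisimplification. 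The only mild subtlety — and the closest thing to an obstacle — is bookkeeping: making sure the normalizing powers of $q_v$ are absorbed correctly so that $P_v(X)$ genuinely has coefficients in the Hecke algebra on which $\psi$ is defined, and confirming that passage to an eigenspace does land one in the image $\mathbb{T}_{F,S}(K,\xi,i,1)$ rather than some larger ring; both are routine given the setup already in place.
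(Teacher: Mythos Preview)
Your proposal is correct and follows exactly the approach the paper takes: the paper's proof reads in full ``This is immediate from Theorem \ref{FinalMainThm} and \cite[Theorem 2.12]{ChenevierDet},'' and what you have written is precisely the unpacking of that sentence.
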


\begin{proof} This is immediate from Theorem \ref{FinalMainThm} and \cite[Theorem 2.12]{ChenevierDet}.
\end{proof}

\begin{cor}\label{CorExGalReprCharPDef} Let $\psi: \mathbb{T}_{F,S}\to \overline{\F}_p$ be as in Corollary \ref{CorExGalReprCharP}, and assume that $\sigma_\psi$ is irreducible. Let $\mathfrak{m}\subset \mathbb{T}_{F,S}$ be the maximal ideal which is the kernel of $\psi$, and let
\[
\mathbb{T}_{F,S}(K,\xi,i) = \im(\mathbb{T}_{F,S}\to \End_{\overline{\Z}_p}(H^i(X_K,\mathcal{M}_{\xi,K})))\ .
\]
Take $N=N([F:\Q],n)$ as in Theorem \ref{FinalMainThm}. Then there exists an ideal $I\subset \mathbb{T}_{F,S}(K,\xi,i)$ with $I^N =0$ and a unique continuous representation
\[
\sigma_{\mathfrak{m}}: G_{F,S}\to \GL_n(\mathbb{T}_{F,S}(K,\xi,i)_{\mathfrak{m}}/I)
\]
such that for all finite places $v\not\in S$,
\[
\det(1-X\Frob_v|\sigma_\psi) = 1 - q_v^{(n+1)/2} T_{1,v} X + q_v^{2(n+1)/2} T_{2,v} X^2 - \ldots + (-1)^n q_v^{n(n+1)/2} T_{n,v} X^n\ .
\]
Here, $\mathbb{T}_{F,S}(K,\xi,i)_{\mathfrak{m}}$ denotes the localization of $\mathbb{T}_{F,S}(K,\xi,i)$ at $\mathfrak{m}$.
\end{cor}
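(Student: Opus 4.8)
The plan is to deduce the statement from Theorem \ref{FinalMainThm} together with the general principle that a continuous determinant whose residual representation is absolutely irreducible is the determinant of a genuine, essentially unique, representation valued in the coefficient ring — the feature of Chenevier's formalism that, unlike the classical Carayol--Serre/Nyssen argument, requires no reducedness or flatness hypothesis on the base.

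First I would produce a determinant over the full (non-localized) Hecke algebra modulo a nilpotent ideal of uniformly bounded degree. This is exactly the argument already carried out in the proof of Corollary \ref{CorExGalReprChar0}: the kernel of $\mathbb{T}_{F,S}\to\mathbb{T}_{F,S}(K,\xi,i)$ is contained in the kernel of $\mathbb{T}_{F,S}\to\prod_{m\ge1}\mathbb{T}_{F,S}(K,\xi,i,m)$ since $H^i(X_K,\mathcal{M}_{\xi,K})$ is $p$-adically separated; applying Theorem \ref{FinalMainThm} for each $m$ with the uniform nilpotence bound $N=N([F:\Q],n)$, and glueing the resulting determinants via \cite[Example 2.32, Corollary 1.14]{ChenevierDet}, yields an ideal $I\subset\mathbb{T}_{F,S}(K,\xi,i)$ with $I^N=0$ and a continuous $n$-dimensional determinant $D$ of $G_{F,S}$ over $\mathbb{T}_{F,S}(K,\xi,i)/I$ with $D(1-X\Frob_v)=P_v(X)$ for all $v\notin S$. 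Next, after choosing a finite extension $\mathcal{O}$ of $\Z_p$ over which $\xi$ is defined, $\mathbb{T}_{F,S}(K,\xi,i)$ is a finite $\mathcal{O}$-algebra, hence a finite product of complete local rings, and $\mathbb{T}_{F,S}(K,\xi,i)_{\mathfrak{m}}$ is the local factor indexed by $\mathfrak{m}$ (it vanishes exactly when $\mathfrak{m}$ is not in the support of $H^i$, in which case there is nothing to prove; the hypothesis on $\psi$, via the universal-coefficient sequence, lets one choose $i$ so that $\mathfrak{m}$ is in the support). Put $I_{\mathfrak{m}}=I\cdot\mathbb{T}_{F,S}(K,\xi,i)_{\mathfrak{m}}$, still nilpotent of degree $\le N$, and set $A=\mathbb{T}_{F,S}(K,\xi,i)_{\mathfrak{m}}/I_{\mathfrak{m}}$, a henselian local ring with residue field $k$ a finite extension of $\F_p$ containing the image of $\psi$. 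Pushing $D$ forward gives a continuous $n$-dimensional determinant $D_A$ over $A$; its reduction modulo the maximal ideal of $A$ has characteristic polynomial of $\Frob_v$ equal to $1-\psi(q_v^{(n+1)/2}T_{1,v})X+\cdots+(-1)^n\psi(q_v^{n(n+1)/2}T_{n,v})X^n$, which by Corollary \ref{CorExGalReprCharP} and the fact that a determinant is pinned down by its characteristic polynomials (\cite[Lemma 1.12 (ii)]{ChenevierDet}) is the determinant of $\sigma_\psi\otimes_{\overline{\F}_p}\overline{k}$, an absolutely irreducible representation since $\sigma_\psi$ is irreducible over the algebraically closed field $\overline{\F}_p$.

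Finally I would invoke the lifting theorem for determinants with absolutely irreducible residual representation, \cite[Theorem 2.22]{ChenevierDet}: over the henselian local ring $A$, the determinant $D_A$, whose reduction modulo the maximal ideal is the determinant of an absolutely irreducible representation, is of the form $D_A=\det\circ\sigma_{\mathfrak{m}}$ for a continuous representation $\sigma_{\mathfrak{m}}\colon G_{F,S}\to\GL_n(A)$, unique up to conjugation by $\ker(\GL_n(A)\to\GL_n(k))$; this gives both existence and the asserted uniqueness, and the characteristic polynomials of $\Frob_v$ in $\sigma_{\mathfrak{m}}$ are read off from $D_A$, yielding the stated formula. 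The main obstacle — and essentially the only point going beyond Theorem \ref{FinalMainThm} and the bookkeeping with nilpotent ideals and localization already present in the earlier corollaries — is precisely this last step: one must use Chenevier's determinant theory in its full generality so that the passage from determinant to representation is valid over the a priori highly non-reduced Artinian-type ring $\mathbb{T}_{F,S}(K,\xi,i)_{\mathfrak{m}}/I_{\mathfrak{m}}$, where no version of the classical pseudo-character-to-representation lemma would directly apply.
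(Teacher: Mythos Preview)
Your proposal is correct and follows essentially the same approach as the paper: construct the determinant over $\mathbb{T}_{F,S}(K,\xi,i)/I$ with $I^N=0$ exactly as in the proof of Corollary \ref{CorExGalReprChar0}, localize at $\mathfrak{m}$, and then invoke \cite[Theorem 2.22 (i)]{ChenevierDet} using the absolute irreducibility of $\sigma_\psi$ to upgrade the determinant to a genuine representation. The paper's proof is more terse (two sentences), but your additional remarks on the local structure of the Hecke algebra and the identification of the residual determinant are accurate elaborations of the same argument.
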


\begin{rem} One obtains (with the same proof) a slightly stronger result, replacing $\mathbb{T}_{F,S}(K,\xi,i)$ by
\[
\mathbb{T}_{F,S}(K,\xi,i)^\prime = \im(\mathbb{T}_{F,S}\to \bigoplus_m \End_{\overline{\Z}_p/p^m}(H^i(X_K,\mathcal{M}_{\xi,K}/p^m)))\ ,
\]
or
\[
\mathbb{T}_{F,S}(K,\xi) = \im(\mathbb{T}_{F,S}\to \bigoplus_{i,m} \End_{\overline{\Z}_p/p^m}(H^i(X_K,\mathcal{M}_{\xi,K}/p^m)))\ .
\]
Such results were conjectured by Calegari and Geragthy, \cite[Conjecture B]{CalegariGeragthy}. In fact, this proves the existence of the Galois representations of \cite[Conjecture B]{CalegariGeragthy} (modulo a nilpotent ideal of bounded nilpotence degree), but does not establish all their expected properties.
\end{rem}

\begin{proof} There is an ideal $I\subset \mathbb{T}_{F,S}(K,\xi,i)$ with $I^N =0$ such that there is an $n$-dimensional continuous determinant with values in $\mathbb{T}_{F,S}(K,\xi,i)_{\mathfrak{m}}/I$, reducing to (the determinant associated with) $\sigma_\psi$ modulo $\mathfrak{m}$. As by assumption, $\sigma_\psi$ is irreducible, the result follows from \cite[Theorem 2.22 (i)]{ChenevierDet}.
\end{proof}

\begin{rem}\label{UnconditionalRem} As mentioned previously, these results are based on the work of Arthur, \cite{ArthurBook} (resp. Mok, \cite{Mok}), which are still conditional on the stabilization of the twisted trace formula. Let us end by noting that our results are unconditional under slightly stronger hypothesis. Namely, from the result of Shin, \cite{ShinGoldringApp} (cf. also the result in the book of Morel, \cite[Corollary 8.5.3]{MorelBook}), all results stated in this section are unconditional under the following assumptions:
\begin{altenumerate}
\item[{\rm (i)}] The field $F$ is CM, and contains an imaginary-quadratic field.
\item[{\rm (ii)}] The set $S$ comes via pullback from a finite set $S_{\Q}$ of finite places of $\Q$, which contains $p$ and all places at which $F/\Q$ is ramified.
\end{altenumerate}
In particular, if $F$ is imaginary-quadratic, then the results are unconditional as stated. Note that Shin's result is stated in terms of unitary similitude groups. However, Theorem \ref{PerfShHodge} (and all results in Chapter \ref{AutomorphicChapter} deduced from it) stays true verbatim for usual Shimura varieties of Hodge type (with the same proof), so that one can apply it to the Shimura varieties associated with unitary similitude groups. Then the argument of Chapter \ref{GaloisReprChapter} goes through as before.

Using a patching argument (cf. proof of \cite[Theorem VII.1.9]{HarrisTaylor}), Corollary \ref{CorExGalReprChar0} follows for general totally real or CM fields, but still under the assumption that $S$ comes via pullback from a finite set $S_{\Q}$ of finite places of $\Q$ which contains all places at which $F/\Q$ is ramified.
\end{rem}

\bibliographystyle{abbrv}
\bibliography{Torsion}

\begin{thebibliography}{10}

\bibitem{StacksProject}
The {S}tacks {P}roject.
\newblock Available at \url{http://stacks.math.columbia.edu}.

\bibitem{AbbesMokrane}
A.~Abbes and A.~Mokrane.
\newblock Sous-groupes canoniques et cycles \'evanescents {$p$}-adiques pour
  les vari\'et\'es ab\'eliennes.
\newblock {\em Publ. Math. Inst. Hautes \'Etudes Sci.}, (99):117--162, 2004.

\bibitem{AndreattaGasbarri}
F.~Andreatta and C.~Gasbarri.
\newblock The canonical subgroup for families of abelian varieties.
\newblock {\em Compos. Math.}, 143(3):566--602, 2007.

\bibitem{ArthurBook}
J.~Arthur.
\newblock {\em The endoscopic classification of representations: orthogonal and
  symplectic groups}.
\newblock 2012.
\newblock to appear as a Colloquium Publication of the American Mathematical
  Society.

\bibitem{Ash1}
A.~Ash.
\newblock Galois representations and cohomology of {${\rm GL}(n,{\bf Z})$}.
\newblock In {\em S\'eminaire de {T}h\'eorie des {N}ombres, {P}aris, 1989--90},
  volume 102 of {\em Progr. Math.}, pages 9--22. Birkh\"auser Boston, Boston,
  MA, 1992.

\bibitem{Ash2}
A.~Ash.
\newblock Galois representations attached to mod {$p$} cohomology of {${\rm
  GL}(n,{\bf Z})$}.
\newblock {\em Duke Math. J.}, 65(2):235--255, 1992.

\bibitem{ADP}
A.~Ash, D.~Doud, and D.~Pollack.
\newblock Galois representations with conjectural connections to arithmetic
  cohomology.
\newblock {\em Duke Math. J.}, 112(3):521--579, 2002.

\bibitem{AS}
A.~Ash and W.~Sinnott.
\newblock An analogue of {S}erre's conjecture for {G}alois representations and
  {H}ecke eigenclasses in the mod {$p$} cohomology of {${\rm GL}(n,{\bf Z})$}.
\newblock {\em Duke Math. J.}, 105(1):1--24, 2000.

\bibitem{BailyBorel}
W.~L. Baily, Jr. and A.~Borel.
\newblock Compactification of arithmetic quotients of bounded symmetric
  domains.
\newblock {\em Ann. of Math. (2)}, 84:442--528, 1966.

\bibitem{BLGHT}
T.~Barnet-Lamb, D.~Geraghty, M.~Harris, and R.~Taylor.
\newblock A family of {C}alabi-{Y}au varieties and potential automorphy {II}.
\newblock {\em Publ. Res. Inst. Math. Sci.}, 47(1):29--98, 2011.

\bibitem{BartenwerferHebbarkeit}
W.~Bartenwerfer.
\newblock Der erste {R}iemannsche {H}ebbarkeitssatz im nichtarchimedischen
  {F}all.
\newblock {\em J. Reine Angew. Math.}, 286/287:144--163, 1976.

\bibitem{BergeronVenkatesh}
N.~Bergeron and A.~Venkatesh.
\newblock The asymptotic growth of torsion homology for arithmetic groups.
\newblock {\em J. Inst. Math. Jussieu}, 12(2):391--447, 2013.

\bibitem{BorelSerre}
A.~Borel and J.-P. Serre.
\newblock Corners and arithmetic groups.
\newblock {\em Comment. Math. Helv.}, 48:436--491, 1973.
\newblock Avec un appendice: Arrondissement des vari{\'e}t{\'e}s {\`a} coins,
  par A. Douady et L. H{\'e}rault.

\bibitem{BoschGuentzerRemmert}
S.~Bosch, U.~G{\"u}ntzer, and R.~Remmert.
\newblock {\em Non-{A}rchimedean analysis}, volume 261 of {\em Grundlehren der
  Mathematischen Wissenschaften [Fundamental Principles of Mathematical
  Sciences]}.
\newblock Springer-Verlag, Berlin, 1984.
\newblock A systematic approach to rigid analytic geometry.

\bibitem{BoschLuetkebohmert1}
S.~Bosch and W.~L{\"u}tkebohmert.
\newblock Formal and rigid geometry. {I}. {R}igid spaces.
\newblock {\em Math. Ann.}, 295(2):291--317, 1993.

\bibitem{BuzzardGee}
K.~Buzzard and T.~Gee.
\newblock The conjectural connections between automorphic representations and
  {G}alois representations.
\newblock arXiv:1009.0785.

\bibitem{CalegariEmerton}
F.~Calegari and M.~Emerton.
\newblock Completed cohomology---a survey.
\newblock In {\em Non-abelian fundamental groups and {I}wasawa theory}, volume
  393 of {\em London Math. Soc. Lecture Note Ser.}, pages 239--257. Cambridge
  Univ. Press, Cambridge, 2012.

\bibitem{CalegariGeragthy}
F.~Calegari and D.~Geraghty.
\newblock Modularity {L}ifting {T}heorems beyond the {T}aylor-{W}iles {M}ethod.
\newblock 2012.
\newblock https://www2.bc.edu/david-geraghty/files/merge.pdf.

\bibitem{CaraianiLeHung}
A.~Caraiani and B.~V. Le~Hung.
\newblock On the image of complex conjugation in certain {G}alois
  representations.
\newblock 2014.
\newblock arXiv:1409.2158.

\bibitem{ChenevierDet}
G.~Chenevier.
\newblock The p-adic analytic space of pseudocharacters of a profinite group,
  and pseudorepresentations over arbitrary rings.
\newblock to appear in Proceedings of the LMS Durham Symposium 2011.

\bibitem{ChenevierHarris}
G.~Chenevier and M.~Harris.
\newblock Construction of automorphic {G}alois representations {II}.
\newblock to appear in Cambridge Journal of Mathematics 1, (2013).

\bibitem{ClozelMotifs}
L.~Clozel.
\newblock Motifs et formes automorphes: applications du principe de
  fonctorialit\'e.
\newblock In {\em Automorphic forms, {S}himura varieties, and {$L$}-functions,
  {V}ol.\ {I} ({A}nn {A}rbor, {MI}, 1988)}, volume~10 of {\em Perspect. Math.},
  pages 77--159. Academic Press, Boston, MA, 1990.

\bibitem{Clozel}
L.~Clozel.
\newblock Repr\'esentations galoisiennes associ\'ees aux repr\'esentations
  automorphes autoduales de {${\rm GL}(n)$}.
\newblock {\em Inst. Hautes \'Etudes Sci. Publ. Math.}, (73):97--145, 1991.

\bibitem{Conrad}
B.~Conrad.
\newblock Higher-level canonical subgroups in {A}belian varieties.
\newblock 2006.
\newblock http://math.stanford.edu/$\sim$conrad/papers/subgppaper.pdf.

\bibitem{deJongvanderPut}
J.~de~Jong and M.~van~der Put.
\newblock \'{E}tale cohomology of rigid analytic spaces.
\newblock {\em Doc. Math.}, 1:No. 01, 1--56 (electronic), 1996.

\bibitem{DeligneShimura}
P.~Deligne.
\newblock Travaux de {S}himura.
\newblock In {\em S\'eminaire {B}ourbaki, 23\`eme ann\'ee (1970/71), {E}xp.
  {N}o. 389}, pages 123--165. Lecture Notes in Math., Vol. 244. Springer,
  Berlin, 1971.

\bibitem{DeligneRapoport}
P.~Deligne and M.~Rapoport.
\newblock Les sch\'emas de modules de courbes elliptiques.
\newblock In {\em Modular functions of one variable, {II} ({P}roc. {I}nternat.
  {S}ummer {S}chool, {U}niv. {A}ntwerp, {A}ntwerp, 1972)}, pages 143--316.
  Lecture Notes in Math., Vol. 349. Springer, Berlin, 1973.

\bibitem{FaltingsFieldOfDefinition}
G.~Faltings.
\newblock Arithmetic varieties and rigidity.
\newblock In {\em Seminar on number theory, {P}aris 1982--83 ({P}aris,
  1982/1983)}, volume~51 of {\em Progr. Math.}, pages 63--77. Birkh\"auser
  Boston, Boston, MA, 1984.

\bibitem{FaltingsTwoTowers}
G.~Faltings.
\newblock A relation between two moduli spaces studied by {V}. {G}. {D}rinfeld.
\newblock In {\em Algebraic number theory and algebraic geometry}, volume 300
  of {\em Contemp. Math.}, pages 115--129. Amer. Math. Soc., Providence, RI,
  2002.

\bibitem{FaltingsChai}
G.~Faltings and C.-L. Chai.
\newblock {\em Degeneration of abelian varieties}, volume~22 of {\em Ergebnisse
  der Mathematik und ihrer Grenzgebiete (3) [Results in Mathematics and Related
  Areas (3)]}.
\newblock Springer-Verlag, Berlin, 1990.
\newblock With an appendix by David Mumford.

\bibitem{FarguesCanonical}
L.~Fargues.
\newblock La filtration canonique des points de torsion des groupes
  {$p$}-divisibles.
\newblock {\em Ann. Sci. \'Ec. Norm. Sup\'er. (4)}, 44(6):905--961, 2011.
\newblock With collaboration of Yichao Tian.

\bibitem{FarguesTwoTowers}
L.~Fargues, A.~Genestier, and V.~Lafforgue.
\newblock {\em L'isomorphisme entre les tours de {L}ubin-{T}ate et de
  {D}rinfeld}, volume 262 of {\em Progress in Mathematics}.
\newblock Birkh\"auser Verlag, Basel, 2008.

\bibitem{Franke}
J.~Franke.
\newblock Harmonic analysis in weighted {$L\sb 2$}-spaces.
\newblock {\em Ann. Sci. \'Ecole Norm. Sup. (4)}, 31(2):181--279, 1998.

\bibitem{GabberRamero}
O.~Gabber and L.~Ramero.
\newblock {\em Almost ring theory}, volume 1800 of {\em Lecture Notes in
  Mathematics}.
\newblock Springer-Verlag, Berlin, 2003.

\bibitem{GorenKassaei}
E.~Z. Goren and P.~L. Kassaei.
\newblock The canonical subgroup: a ``subgroup-free'' approach.
\newblock {\em Comment. Math. Helv.}, 81(3):617--641, 2006.

\bibitem{HLTT}
M.~Harris, K.-W. Lan, R.~Taylor, and J.~Thorne.
\newblock Galois representations for regular algebraic cusp forms over {CM}
  fields.
\newblock in preparation.

\bibitem{HarrisTaylor}
M.~Harris and R.~Taylor.
\newblock {\em The geometry and cohomology of some simple {S}himura varieties},
  volume 151 of {\em Annals of Mathematics Studies}.
\newblock Princeton University Press, Princeton, NJ, 2001.
\newblock With an appendix by Vladimir G. Berkovich.

\bibitem{HuberContVal}
R.~Huber.
\newblock Continuous valuations.
\newblock {\em Math. Z.}, 212(3):455--477, 1993.

\bibitem{Huber}
R.~Huber.
\newblock {\em \'{E}tale cohomology of rigid analytic varieties and adic
  spaces}.
\newblock Aspects of Mathematics, E30. Friedr. Vieweg \& Sohn, Braunschweig,
  1996.

\bibitem{IllusieThesis1}
L.~Illusie.
\newblock {\em Complexe cotangent et d\'eformations. {I}}.
\newblock Lecture Notes in Mathematics, Vol. 239. Springer-Verlag, Berlin,
  1971.

\bibitem{IllusieThesis2}
L.~Illusie.
\newblock {\em Complexe cotangent et d\'eformations. {II}}.
\newblock Lecture Notes in Mathematics, Vol. 283. Springer-Verlag, Berlin,
  1972.

\bibitem{IllusieDeformationsBT}
L.~Illusie.
\newblock D\'eformations de groupes de {B}arsotti-{T}ate (d'apr\`es {A}.
  {G}rothendieck).
\newblock {\em Ast\'erisque}, (127):151--198, 1985.
\newblock Seminar on arithmetic bundles: the Mordell conjecture (Paris,
  1983/84).

\bibitem{Katz}
N.~M. Katz.
\newblock {$p$}-adic properties of modular schemes and modular forms.
\newblock In {\em Modular functions of one variable, {III} ({P}roc. {I}nternat.
  {S}ummer {S}chool, {U}niv. {A}ntwerp, {A}ntwerp, 1972)}, pages 69--190.
  Lecture Notes in Mathematics, Vol. 350. Springer, Berlin, 1973.

\bibitem{KedlayaLiu1}
K.~Kedlaya and R.~Liu.
\newblock Relative {$p$}-adic {H}odge theory, {I}: {F}oundations.
\newblock arXiv:1301.0792.

\bibitem{Kottwitz}
R.~E. Kottwitz.
\newblock On the {$\lambda$}-adic representations associated to some simple
  {S}himura varieties.
\newblock {\em Invent. Math.}, 108(3):653--665, 1992.

\bibitem{Lubin}
J.~Lubin.
\newblock Canonical subgroups of formal groups.
\newblock {\em Trans. Amer. Math. Soc.}, 251:103--127, 1979.

\bibitem{MoeglinWaldspurger}
C.~Moeglin and J.-L. Waldspurger.
\newblock Stabilisation de la formule des traces tordue {X}: stabilisation
  spectrale.
\newblock 2014.
\newblock arXiv:1412.2981.

\bibitem{Mok}
C.~P. Mok.
\newblock Endoscopic classification of representations of quasi-split unitary
  groups.
\newblock 2012.
\newblock arXiv:1206.0882.

\bibitem{MorelBook}
S.~Morel.
\newblock {\em On the cohomology of certain noncompact {S}himura varieties},
  volume 173 of {\em Annals of Mathematics Studies}.
\newblock Princeton University Press, Princeton, NJ, 2010.
\newblock With an appendix by Robert Kottwitz.

\bibitem{Pfaff}
J.~Pfaff.
\newblock Exponential growth of homological torsion for towers of congruence
  subgroups of {B}ianchi groups.
\newblock 2013.
\newblock arXiv:1302.3079.

\bibitem{PilloniStroh}
V.~Pilloni and B.~Stroh.
\newblock Cohomologie coh\'erente et repr\'esentations galoisiennes.
\newblock 2015.

\bibitem{Rabinoff}
J.~Rabinoff.
\newblock Higher-level canonical subgroups for {$p$}-divisible groups.
\newblock {\em J. Inst. Math. Jussieu}, 11(2):363--419, 2012.

\bibitem{ScholzePerfectoid}
P.~Scholze.
\newblock Perfectoid {S}paces.
\newblock {\em Publ. Math. de l'IHES}, 116(1):245 -- 313, 2012.

\bibitem{ScholzePAdicHodge}
P.~Scholze.
\newblock $p$-adic {H}odge theory for rigid-analytic varieties.
\newblock {\em Forum of Mathematics, Pi}, 1, e1 2013.

\bibitem{ScholzeSurvey}
P.~Scholze.
\newblock Perfectoid {S}paces: {A} survey.
\newblock 2013.
\newblock arXiv:1303.5948, to appear in Proceedings of CDM 2012 conference.

\bibitem{ScholzeWeinstein}
P.~Scholze and J.~Weinstein.
\newblock Moduli of {$p$}-divisible groups.
\newblock 2012.
\newblock arXiv:1211.6357.

\bibitem{Shin}
S.~W. Shin.
\newblock Galois representations arising from some compact {S}himura varieties.
\newblock {\em Ann. of Math. (2)}, 173(3):1645--1741, 2011.

\bibitem{ShinGoldringApp}
S.~W. Shin.
\newblock On the cohomological base change for unitary similitude groups.
\newblock 2012.
\newblock appendix to W. Goldring, {\it Galois representations associated to
  holomorphic limits of discrete series I: Unitary groups}, to appear in
  Compositio Math.

\bibitem{TaylorPseudoRepr}
R.~Taylor.
\newblock Galois representations associated to {S}iegel modular forms of low
  weight.
\newblock {\em Duke Math. J.}, 63(2):281--332, 1991.

\bibitem{TaylorWiles}
R.~Taylor and A.~Wiles.
\newblock Ring-theoretic properties of certain {H}ecke algebras.
\newblock {\em Ann. of Math. (2)}, 141(3):553--572, 1995.

\bibitem{Temkin}
M.~Temkin.
\newblock On local properties of non-{A}rchimedean analytic spaces.
\newblock {\em Math. Ann.}, 318(3):585--607, 2000.

\bibitem{Tian}
Y.~Tian.
\newblock Canonical subgroups of {B}arsotti-{T}ate groups.
\newblock {\em Ann. of Math. (2)}, 172(2):955--988, 2010.

\bibitem{Venjakob}
O.~Venjakob.
\newblock On the structure theory of the {I}wasawa algebra of a {$p$}-adic
  {L}ie group.
\newblock {\em J. Eur. Math. Soc. (JEMS)}, 4(3):271--311, 2002.

\bibitem{WilesPAdicInterpolation}
A.~Wiles.
\newblock On ordinary {$\lambda$}-adic representations associated to modular
  forms.
\newblock {\em Invent. Math.}, 94(3):529--573, 1988.

\bibitem{WilesFLT}
A.~Wiles.
\newblock Modular elliptic curves and {F}ermat's last theorem.
\newblock {\em Ann. of Math. (2)}, 141(3):443--551, 1995.

\end{thebibliography}

\end{document}